\documentclass[12pt,letterpaper,reqno]{amsart}
\usepackage[T1]{fontenc}
\usepackage[utf8]{inputenc}
\usepackage{lmodern}
\usepackage[margin=0.8in,headheight=14pt,headsep=15pt,footskip=25pt]{geometry}
\usepackage{amsmath,amssymb,amsfonts,mathtools}
\usepackage{array,booktabs,longtable,tabularx,multirow}
\usepackage{enumitem}
\usepackage{microtype}
\usepackage{graphicx}
\usepackage{xcolor}
\usepackage{cite}
\usepackage[hidelinks,pdfusetitle,hypertexnames=false]{hyperref}
\usepackage{url}
\usepackage{bm}

\DeclareMathOperator{\Aut}{Aut}
\DeclareMathOperator{\core}{core}
\DeclareMathOperator{\ord}{ord}
\DeclareMathOperator{\meo}{meo}
\DeclareMathOperator{\supp}{supp}
\DeclareMathOperator{\lcm}{lcm}
\DeclareMathOperator{\Fix}{Fix}
\DeclareMathOperator{\diag}{diag}
\DeclareMathOperator{\tr}{tr}
\DeclareMathOperator{\Tr}{Tr}
\DeclareMathOperator{\GL}{GL}
\DeclareMathOperator{\SL}{SL}
\DeclareMathOperator{\PGL}{PGL}
\DeclareMathOperator{\PSL}{PSL}
\DeclareMathOperator{\PSU}{PSU}
\DeclareMathOperator{\UT}{UT}
\DeclareMathOperator{\SD}{SD}
\newcommand{\PSp}{\mathrm{PSp}}
\newcommand{\POmega}{\mathrm{P\Omega}}

\newtheorem{theorem}{Theorem}[section]
\newtheorem{lemma}[theorem]{Lemma}
\newtheorem{proposition}[theorem]{Proposition}
\newtheorem{corollary}[theorem]{Corollary}
\newtheorem{conjecture}[theorem]{Conjecture}
\theoremstyle{definition}
\newtheorem{definition}[theorem]{Definition}

\newcolumntype{L}[1]{>{\raggedright\arraybackslash}p{#1}}
\setlist[itemize,1]{leftmargin=2.4em,itemsep=3pt,topsep=5pt}
\allowdisplaybreaks[1]

\numberwithin{equation}{section}
\title[The EGZ theorem for finite groups]{The Erd\H{o}s--Ginzburg--Ziv theorem for finite groups}
\author{Yongke Qu}
\address{Department of Mathematics, Luoyang Normal University, Luoyang, 471934, P.R. China}
\email{yongke1239@163.com}
\subjclass[2020]{Primary 11B75; Secondary 20D10, 20D60}
\keywords{Erd\H{o}s--Ginzburg--Ziv theorem, Gao constant, Davenport constant, product-one sequence, solvable group, finite simple group, product-set growth}
\date{}
\begin{document}
\begin{abstract}
Let $G$ be a finite group, and let $p$ be the smallest prime divisor of $|G|$. The Gao constant $\mathsf{E}(G)$ is the least integer $\ell$ such that every sequence over $G$ of length $\ell$ contains a subsequence of length $|G|$ whose terms can be ordered to have product equal to the identity. In 1961, Erd\H{o}s, Ginzburg and Ziv proved that every sequence of $2n-1$ integers has an $n$-term subsequence whose sum is divisible by $n$; equivalently, $\mathsf{E}(C_n)\leq 2n-1$, where $C_n$ denotes the cyclic group of order $n$. In this paper, we prove that $\mathsf{E}(G)\leq |G|+|G|/p-1$ whenever $G$ contains no cyclic subgroup of index $p$. Consequently, every finite non-cyclic group satisfies $\mathsf{E}(G)\leq |G|+|G|/p+p-2$, with equality if and only if $G$ contains a cyclic subgroup of index $p$. This establishes the conjectured upper bound for the Gao constant and provides a complete characterization of the equality case.
\end{abstract}
\maketitle

\section{Introduction}\label{sec:intro}
Let $G$ be a finite group written multiplicatively. A sequence over $G$ is a finite multiset of elements of $G$. Such a sequence is called product-one when its terms can be ordered so that their product is the identity. The Gao constant $\mathsf{E}(G)$ is the least integer $\ell$ such that every sequence over $G$ of length $\ell$ has a product-one subsequence of length $|G|$. The small Davenport constant $\mathsf{d}(G)$ is the maximal length of a sequence with no nonempty product-one subsequence.

The Erd\H{o}s--Ginzburg--Ziv theorem~\cite{ErdosGinzburgZiv1961}, proved in 1961, asserts that every sequence of $2n-1$ integers has an $n$-term subsequence whose sum is divisible by $n$. The bound is sharp, as shown by the sequence consisting of $n-1$ zeros and $n-1$ ones. Consequently, $\mathsf{E}(C_n)=2n-1$, where $C_n$ denotes the cyclic group of order $n$. Together with the study of the Davenport constant, this theorem forms one of the foundations of zero-sum theory. Although much of the early work concerned abelian groups, product-one problems provide a natural extension to arbitrary finite groups.

In 1976, Olson~\cite{Olson1976} proved that $\mathsf{E}(G)\leq 2|G|-1$ for every finite group. Subsequent work established sharper bounds for non-cyclic solvable groups. In 1984, Yuster and Peterson~\cite{YusterPeterson1984} obtained the bound $2|G|-2$, and Yuster~\cite{Yuster1988} derived further improvements in 1988. Gao~\cite{Gao1996b} proved the bound $11|G|/6-1$ in 1996, and Gao and Li~\cite{GaoLi2010} subsequently strengthened it to $7|G|/4-1$ in 2010. The latter authors also proposed the following conjecture~\cite[Conjecture 3]{GaoLi2010}.

\begin{conjecture}\label{conj:gao-li}
Every finite non-cyclic group $G$ satisfies $\mathsf{E}(G)\leq 3|G|/2$.
\end{conjecture}

In 2015, Han~\cite{Han2015} proved that $\mathsf{E}(G)\leq |G|+\frac{|G|}{p}+p-2$ for every finite non-cyclic nilpotent group, where $p$ is the smallest prime divisor of $|G|$. This result establishes Conjecture~\ref{conj:gao-li} for nilpotent groups. He also proposed the following stronger conjecture~~\cite[Conjecture 3.1]{Han2015}.

\begin{conjecture}\label{conj:han}
Every finite non-cyclic group $G$ satisfies $\mathsf{E}(G)\leq |G|+\frac{|G|}{p}+p-2$, where $p$ is the smallest prime divisor of $|G|$.
\end{conjecture}

Further cases of Conjecture~\ref{conj:gao-li} have since been established. In 2021, Gao, Li and Qu~\cite{GaoLiQu2021} established it for non-cyclic groups of odd order. Recently, in 2026, Godara, Joshi, and Mazumdar~\cite{GodaraJoshiMazumdar2026} verified it for non-cyclic groups of the form $A\rtimes_{-1}C_2$, where $A$ is a finite abelian $p$-group and the nonidentity element of $C_2$ acts on $A$ by inversion. In 2026, Zhao and Wang~\cite{ZhaoWang2026} proved it for non-cyclic groups whose orders are not divisible by four and characterized the equality case.

The problem of determining $\mathsf d(G)$ for a finite abelian group $G$ was proposed by Davenport in 1966, motivated by connections between $\mathsf d(G)+1$ and algebraic number theory. This problem has received considerable attention (see \cite{GirardSchmid2019,Liu2020} for example). Nevertheless, exact values are known only for certain classes of finite abelian groups. In 1996, Gao \cite{Gao1996a} proved that $\mathsf E(G)=\mathsf d(G)+|G|$ for every finite abelian group. For non-abelian groups, the Davenport constant also has close connections with the Noether number (see \cite{CziszterDomokos2014,CziszterDomokosGeroldinger2016,CziszterDomokosSzollosi2018,HanZhang2019,Schmid1991} for further details). The extension of this identity to non-abelian groups has also attracted considerable interest. In 2005, Zhuang and Gao \cite{ZhuangGao2005} established it for dihedral groups of order $2p$ where $p\geq 4001$ is a prime, and proposed the following conjecture~\cite[Conjecture~2]{ZhuangGao2005}.

\begin{conjecture}\label{conj:zhuang-gao}
Every finite group $G$ satisfies $\mathsf{E}(G)=|G|+\mathsf{d}(G)$.
\end{conjecture}

Subsequent research has devoted considerable attention to finite non-abelian groups containing a cyclic subgroup of index two. In 2008, Gao and Lu \cite{GaoLu2008} extended the preceding result to dihedral groups of order $2n$ for all $n\geq 23$. Extending their method, Bass \cite{Bass2007} proved the conjecture for all dihedral groups and dicyclic groups, as well as for non-abelian groups of order $pq$, where $p$ and $q$ are primes. In 2023, Avelar, Brochero Mart\'{\i}nez and Ribas \cite{AvelarBrocheroRibas2023} verified it for non-abelian semidirect products $C_2\ltimes C_n$, apart from a single exceptional case, which was subsequently resolved by Oh, Ribas, Zhao and Zhong \cite{OhRibasZhaoZhong2026}. Yang, Zhang and Feng \cite{YangZhangFeng2024} in 2024 and Ribas \cite{Ribas2025} in 2025 established the conjecture for certain non-split metacyclic groups containing a cyclic subgroup of index two. More recently, Qu, Gao and Li~\cite{QuGaoLi2026} proved it for every finite non-cyclic group containing a cyclic subgroup whose index is the smallest prime divisor of the group order.

For the small Davenport constant, Qu, Li and Teeuwsen~\cite{QuLiTeeuwsen2025} established the optimal upper bound $\mathsf d(G)\leq |G|/p-1$ for every non-cyclic group $G$ containing no cyclic subgroup of index $p$, where $p$ is the smallest prime divisor of $|G|$. In this paper, we further investigate the Gao constant $\mathsf{E}(G)$ for these groups and establish the following main result.

\begin{theorem}\label{thm:main-bound}
Let $G$ be a finite non-cyclic group, and let $p$ be the smallest prime divisor of $|G|$. If $G$ has no cyclic subgroup of index $p$, then
$$ \mathsf{E}(G)\leq |G|+\frac{|G|}{p}-1. $$
In particular, if $G$ is non-solvable, then $\mathsf{E}(G)\leq 3|G|/2-1$.
\end{theorem}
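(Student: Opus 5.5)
We argue by induction on $|G|$, using the standard reduction through a normal subgroup. The key tool is the multiplicative inequality for the Gao constant. Let $N\trianglelefteq G$. From a long sequence over $G$ we repeatedly extract subsequences of length $|N|$ whose images in $G/N$ are product-one, each of which yields an element of $N$. Collecting $\mathsf{E}(N)$ such blocks and applying the Gao property of $N$ (or of $G/N$, with the roles swapped) gives the bound
$$\mathsf{E}(G)\le (\mathsf{E}(N)-|N|)\,|G/N|+\mathsf{E}(G/N)$$
or its symmetric variant, provided we keep track of orderings so that the products actually land in $N$. We also use Olson's bound $\mathsf{E}(H)\le 2|H|-1$, the cyclic case $\mathsf{E}(C_n)=2n-1$, the bound $\mathsf{E}(H)\le|H|+|H|/q+q-2$ for groups with a cyclic subgroup of index equal to their smallest prime $q$ (from~\cite{QuGaoLi2026}), and the Davenport bound $\mathsf d(G)\le |G|/p-1$ of~\cite{QuLiTeeuwsen2025}.

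\textbf{Solvable case.} If $G$ is solvable, choose a normal subgroup $N$ with $G/N\cong C_q$ of prime order $q\ge p$. We then split into cases according to $N$.
\begin{itemize}
\item If $N$ is non-cyclic with no cyclic subgroup of index equal to its smallest prime, induction gives $\mathsf{E}(N)-|N|\le |N|/p-1$. Combining this with $\mathsf{E}(C_q)=2q-1$ yields $\mathsf{E}(G)\le q(|N|/p-1)+2q-1$. For $q=p$ this is at most $|G|+|G|/p-1$ after rearranging, using the block construction with $N$ inside.
\item If $N$ is cyclic, or has a cyclic subgroup of index $p$, the crude bound loses a term of size about $p$. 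Here we instead choose a different normal series. Since $G$ has no cyclic subgroup of index $p$, we can pick a normal subgroup $M$ with $G/M$ non-cyclic (for instance $C_p\times C_p$) or with $M$ good, and use $\mathsf{E}(G/M)\le |G/M|+|G/M|/p-1$ together with Olson's bound or EGZ for $M$.
\end{itemize}
Doing the arithmetic is routine once the right $N$ is chosen. I expect the main obstacle to be the case analysis showing that a suitable $N$ always exists. The minimal counterexamples are essentially the groups all of whose maximal normal subgroups have a cyclic subgroup of index $p$. For these, I would first try to use the Davenport bound directly: by a Gao-type argument, a sequence of length $|G|+\mathsf d(G)$ containing an element of high multiplicity forces a product-one subsequence of length $|G|$. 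If a general $\mathsf{E}=|G|+\mathsf d$ transfer is not available, I would treat these groups (metacyclic groups and $p$-groups of maximal class type) explicitly.

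\textbf{Non-solvable case.} Here $p=2$ and the claimed bound is $3|G|/2-1$. Take a normal subgroup $N$ that is maximal among those with $G/N$ non-abelian simple, or take $N$ the solvable radical. A non-abelian simple group $S$ satisfies $|S|\ge 60$ and has no cyclic subgroup of index $2$. For such $S$ we aim for $\mathsf{E}(S)\le |S|+|S|/2-1$ via product-set growth: in a long sequence we find many distinct elements, and products of distinct elements cover $S$ quickly. Concretely, a sequence over $S$ of length $|S|+k$ either has an element of multiplicity at least $|S|/2$, handled via its order and EGZ in a cyclic subgroup, or has many distinct values. In the latter case Cauchy--Davenport/Kneser-type growth in non-abelian groups, where no proper subgroup has index $2$, produces all of $S$ as ordered products. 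Then we lift through $N$ with the multiplicative inequality, using induction or Olson's bound for $N$.
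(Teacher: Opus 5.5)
Your lifting inequality is misstated. With $N=G$ it would give $\mathsf{E}(G)\le\mathsf{E}(G)-|G|+1$, and your first bullet then yields $\mathsf{E}(G)\le |G|/p+q-1<|G|$. The correct form is $\mathsf{E}(G)\le |G/N|(\mathsf{E}(N)-1)+\mathsf{E}(G/N)$, i.e.\ $\overline{\mathsf{E}}(G)\le\overline{\mathsf{E}}(N)$. The quotient version $\overline{\mathsf{E}}(G)\le\overline{\mathsf{E}}(G/Z)$ holds only for cyclic normal $Z$, and it needs the Bialostocki--Dierker matching theorem. With this correction your first bullet works for every prime $q$; it is Claim~1 of the paper's reduction.

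The genuine gap is your assertion that a suitable normal subgroup always exists and that the rest is arithmetic. That is false, as these examples show:
\begin{itemize}
\item For $A_4$, the only nontrivial proper subnormal subgroups are $V_4$ and its subgroups of order two, with $\overline{\mathsf{E}}(V_4)=7/4$ and $\overline{\mathsf{E}}(C_2)=2$. There is no nontrivial cyclic normal subgroup.
\item For $C_5^2\rtimes C_3$ with irreducible action (target $99$), lifting through $C_5^2$ gives only $3\cdot 32+5=101$. The other proper subnormal subgroups are cyclic, and there is no cyclic normal subgroup.
\item For the extraspecial group of order $p^3$ and exponent $p$, every maximal subgroup and the central quotient are $C_p^2$. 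This group is itself bad, since $\mathsf{E}(C_p^2)=p^2+2p-2$, so your suggested quotient $C_p\times C_p$ cannot help.
\end{itemize}
Your fallback is unavailable here. A general $\mathsf{E}=|G|+\mathsf{d}$ transfer is the open Zhuang--Gao conjecture. The multiplicity trick of Lemma~\ref{lem:central-multiplicity} needs a central element, and the first two groups have trivial centre. Handling exactly such groups is the substance of the paper's solvable proof: Koshitani's Loewy-length theorem for $p$-groups, and a structure theorem reducing a minimal counterexample to (S1)--(S5). Each configuration then gets its own direct zero-sum argument, such as the $\mathsf{s}_6(A_4)\le 11$ computation, DeVos--Goddyn--Mohar setpartition bounds with orbit insertion, cyclic inverse theorems, and Reiher's theorem.

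The non-solvable part fails similarly. Olson's bound for $N$ in the normal-subgroup inequality gives only $\overline{\mathsf{E}}(G)\le 2$, and lifting from a simple quotient $G/N$ requires $N$ to be cyclic. The paper instead takes a minimal non-solvable counterexample. Its proper normal subgroups are solvable and it has no cyclic normal subgroup, so a minimal normal subgroup is $C_q^r$ with $r\ge 2$. Lemma~\ref{lem:elem-abelian-bound} leaves only $C_2^2$ and $C_3^2$, and solvability of their automorphism groups forces that subgroup to be central, a contradiction.

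More seriously, your sketch does not prove the simple-group case. There is no Kneser-type theorem in non-abelian groups turning ``many distinct values'' into a full product set; the bounded-height case needs the balanced-subsequence and stabilizer-descent arguments of Section~\ref{sec:bounded-height}. A heavily repeated element $a$ also cannot be handled inside $\langle a\rangle$. If $a$ occurs $M$ times with $|G|/2<M<|G|$ in a sequence of length $3|G|/2-1$, the copies of $a$ alone never reach length $|G|$. One must find, among the other terms, a subsequence of prescribed length whose product is the complementary power of $a$. Controlling this is the core of Sections~\ref{sec:high-multiplicity}--\ref{sec:structural-coverage}. Those sections use bounds on $\mathsf{d}(G\times C_t)$, compatible extraction, conjugation-orbit growth, and a CFSG-based case coverage with explicit calculations in $\PSL_2(7)$, $\PSL_2(8)$, $A_6$, $A_7$, $\PSL_3(3)$ and others.
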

Combining Theorem~\ref{thm:main-bound} with \cite[Theorem~1.5]{QuGaoLi2026} yields the following corollary.
\begin{corollary}\label{cor:prime-index-equality}
Let $G$ be a finite non-cyclic group, and let $p$ be the smallest prime divisor of $|G|$. Then
$$ \mathsf{E}(G)\leq |G|+\frac{|G|}{p}+p-2, $$
with equality if and only if $G$ has a cyclic subgroup of index $p$. Moreover, 
$$\mathsf{E}(G)\leq 3|G|/2,$$ 
with equality if and only if $G$ has a cyclic subgroup of index $2$. Thus, Conjectures~\ref{conj:gao-li} and \ref{conj:han} both hold, with equality cases described above.
\end{corollary}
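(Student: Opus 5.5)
The statement to prove is Corollary~\ref{cor:prime-index-equality}; it follows by combining Theorem~\ref{thm:main-bound} with \cite[Theorem~1.5]{QuGaoLi2026}. I will use the latter as the statement that a finite non-cyclic group $G$ with a cyclic subgroup of index $p$ satisfies $\mathsf{E}(G)=|G|+\mathsf d(G)=|G|+|G|/p+p-2$. This is the equality-case result those authors establish, since such groups have $\mathsf d(G)=|G|/p+p-2$.

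The plan is a dichotomy on whether $G$ has a cyclic subgroup of index $p$.

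\emph{Case 1: $G$ has a cyclic subgroup of index $p$.} Here \cite[Theorem~1.5]{QuGaoLi2026} gives $\mathsf{E}(G)=|G|+|G|/p+p-2$ directly.

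\emph{Case 2: $G$ has no such subgroup.} Theorem~\ref{thm:main-bound} gives
$$\mathsf E(G)\le |G|+|G|/p-1<|G|+|G|/p+p-2,$$
where the strict inequality holds because $p\ge 2$ means $p-2>-1$. So the inequality always holds, and equality holds exactly in Case~1.

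For the $3|G|/2$ statement, put $f(p)=|G|/p+p-2$. Since $G$ is non-cyclic, $|G|$ has at least two prime factors counted with multiplicity, all at least $p$, so $|G|\ge p^2$. I would consider two subcases.

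\emph{Subcase $p=2$.} Then $f(2)=|G|/2$. Hence $\mathsf E(G)\le 3|G|/2$, with equality iff $G$ has a cyclic subgroup of index $2$.

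\emph{Subcase $p\ge3$.} I claim $|G|/p+p-2<|G|/2$. This is equivalent to $|G|(p-2)/(2p)>p-2$, i.e.\ $|G|>2p$, which follows from $|G|\ge p^2>2p$. So the bound is strict, and no group of odd order attains $3|G|/2$. Such a group cannot have a cyclic subgroup of index $2$ either, since its order is odd. So the equivalence holds in this subcase too.

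Finally, both conjectures follow from these inequalities. The only real content is the main theorem; the corollary itself is bookkeeping. The one point to check carefully is the exact formulation of \cite[Theorem~1.5]{QuGaoLi2026}, namely that it gives the equality value and not merely an upper bound.
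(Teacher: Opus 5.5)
Your proposal is correct and follows the paper's proof essentially step for step. It uses the same dichotomy: \cite[Theorem~1.5]{QuGaoLi2026} (recorded in the paper as Lemma~\ref{lem:gao-identities}(ii)) for the equality case, Theorem~\ref{thm:main-bound} for the strict inequality, and then the split into $p=2$ and $p\ge 3$ via $|G|\ge p^2$. The only cosmetic difference is that you reduce the odd case to $|G|>2p$, while the paper writes the margin as $(p-2)^2/2>0$.
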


\subsection*{Proof strategy of Theorem~\ref{thm:main-bound}}

The proof mainly consists of the following two aspects.
\begin{itemize}
\item \emph{Solvable groups.} We first establish the bound for nilpotent groups. Theorem~\ref{thm:solvable-configs} then shows that a solvable counterexample of minimal order must fall into one of five configurations. The estimates ruling out these configurations are established in Sections~\ref{sec:a4-extensions}--\ref{sec:prime-square-quotients} and summarized in Section~\ref{sec:solvable-completion}.
\item \emph{Non-solvable groups.} Proposition~\ref{prop:simple-reduction} reduces the problem to non-abelian simple groups. For these groups, Theorem~\ref{thm:bounded-height} handles sequences of bounded height, while Lemma~\ref{lem:high-mult-completion} provides a method for treating the remaining sequences. The subsequent arguments are organized according to the product constructions they employ, and a final proposition verifies that all relevant groups and element orders are covered.
\end{itemize}

Section~\ref{sec:preliminaries} collects the preliminary results, and Section~\ref{sec:simple-reduction} presents the reduction for non-solvable groups. Sections~\ref{sec:solvable-reduction}--\ref{sec:prime-square-quotients} establish the result for solvable groups, while Sections~\ref{sec:growth-compatibility}--\ref{sec:high-multiplicity} develop the bounded-height argument and the common completion criteria. The treatment of sequences containing a term of high multiplicity is organized into four parts: subgroup and compatible extraction in Section~\ref{sec:subgroup-extraction}, short product-one subsequences in Section~\ref{sec:short-subsequences}, growth of conjugation orbits in Section~\ref{sec:orbit-growth}, and cyclic seeds in Section~\ref{sec:cyclic-seeds}. Numerical applications of each method are summarized in tables. Section~\ref{sec:structural-coverage} verifies that all relevant cases are covered and completes the proof for simple groups. Finally, Section~\ref{sec:main-results} combines these results with the argument for solvable groups to complete the proof of Theorem~\ref{thm:main-bound}.

\section{Notation and preliminary results}\label{sec:preliminaries}

Throughout the paper, all groups are finite. The trivial group is denoted by $1$, and $C_m$ denotes the cyclic group of order $m$. A finite abelian group of rank at most two is written as $C_{n_1}\times C_{n_2}$, where $n_1$ and $n_2$ are positive integers satisfying $n_1\mid n_2$; the case $n_1=1$ includes cyclic groups. We use multiplicative notation unless an abelian group is explicitly written additively.

The free abelian monoid on $G$ is denoted by $\mathcal{F}(G)$. The multiplication in $\mathcal{F}(G)$ is denoted by the bold dot symbol $\bm\cdot$ rather than by juxtaposition. Its elements are finite sequences, or multisets, of elements of $G$. Let $S=g_1\bm\cdot\ldots\bm\cdot g_\ell$ be a sequence over $G$. We write $|S|=\ell$ for the length and $v_g(S)=|\{i:g_i=g\}|$ for the multiplicity of $g$. The support and height of a nonempty sequence are
$$ \supp(S)=\{g:v_g(S)>0\} \qquad\text{and}\qquad h(S)=\max_g v_g(S). $$
Brackets distinguish sequence powers from group powers: $g^{[a]}$ denotes a sequence of $a$ copies of $g$, whereas $g^a$ is a group power. A sequence $T \in \mathcal F(G)$ is called a subsequence of $S$ and is denoted by $T \mid S$ when $\mathsf v_g(T) \leq \mathsf v_g(S)$ for all $g\in G$. Denote by $S\bm\cdot T^ {[-1]}$ the subsequence of $S$ obtained by removing the terms of $T$ from $S$. For $X\subseteq G$, let $S_X$ be the subsequence of terms lying in $X$.

We define
$$ \pi(S)=\{g_{\tau(1)}\cdots g_{\tau(\ell)}:\tau \mbox{ a permutation of } [1, \ell]\} \qquad\text{and}\qquad \Pi_k(S)=\bigcup_{\substack{T\mid S, \ |T|=k}}\pi(T). $$
The empty sequence has product set $\{1\}$, so $\Pi_0(S)=\{1\}$. A sequence is product-one when $1\in\pi(S)$, and it is product-one free when none of its nonempty subsequences is product-one. The small Davenport constant $\mathsf{d}(G)$ is the maximal length of a product-one free sequence. Thus every sequence of length at least $\mathsf{d}(G)+1$ has a nonempty product-one subsequence.

Write $\exp(G)$ for the exponent of $G$ and $\meo(G)$ for its maximum element order. For positive integers $e$, $k$, let $\mathsf{\eta}_e(G)$, respectively $\mathsf{s}_k(G)$, be the least integer $L$ forcing a nonempty product-one subsequence of length at most $e$, respectively exactly $k$, in every sequence of length at least $L$, whenever this threshold exists. These constants exist precisely when $e\geq \meo(G)$, respectively $\exp(G)\mid k$: necessity follows from constant sequences, and sufficiency from sufficiently many equal terms. Thus $\mathsf{E}(G)=\mathsf{s}_{|G|}(G)$. In additive notation, we write $\sigma(S)$ for a sequence sum and $\Sigma_k(S)$ for its $k$-term subsums. A homomorphism of $G$ acts on sequences over $G$ term by term. We use $\exp(G)$ for the exponent, $G'$ for the commutator subgroup, $Z(G)$ for the centre, and $\Phi(G)$ for the Frattini subgroup. For $H\triangleleft G$, the quotient map is denoted by $\varphi_H:G\rightarrow G/H$.

\subsection{Classical sequence bounds and inverse results}\label{sec:sequence-bounds}

We first record the values and bounds for $\mathsf{d}(G)$ and $\mathsf{E}(G)$ used below.

\begin{lemma}\label{lem:davenport-bounds}
The following assertions hold.
\begin{itemize}
\item[\textup{(i)}] If $r\geq1$ and $p$ is prime, then
$$ \mathsf{d}(C_p^r)=r(p-1). $$
If $G\cong C_{n_1}\times C_{n_2}$ is a finite abelian group of rank at most two, where $n_1\mid n_2$, then
$$ \mathsf{d}(G)=n_1+n_2-2. $$
\item[\textup{(ii)}] Suppose that $G$ is non-cyclic and that $p$ is the smallest prime divisor of $|G|$. If $G$ has a cyclic subgroup of index $p$, then
$$ \mathsf{d}(G)=|G|/p+p-2. $$
\item[\textup{(iii)}] Suppose that $G$ is non-cyclic and that $p$ is the smallest prime divisor of $|G|$. If $G$ has no cyclic subgroup of index $p$, then
$$ \mathsf{d}(G)\leq |G|/p-1. $$
\end{itemize}
\end{lemma}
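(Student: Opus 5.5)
This lemma collects known results, so the plan is to assemble them from the literature, sketching the classical arguments where they are short. It is not something to prove from scratch.

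For (i), I would use the standard two-sided argument for $C_p^r$. For the lower bound, take the sequence $e_1^{[p-1]}\bm\cdot\ldots\bm\cdot e_r^{[p-1]}$ over a basis $e_1,\dots,e_r$. No nonempty subsequence of it has sum zero, so $\mathsf d(C_p^r)\ge r(p-1)$. For the upper bound I would invoke Olson's theorem for $p$-groups. The quickest self-contained route is the Chevalley--Warning theorem: given $S=g_1\bm\cdot\ldots\bm\cdot g_\ell$ with $\ell=r(p-1)+1$, write $g_i=(a_{i1},\dots,a_{ir})$ and consider the $r$ polynomials $\sum_i a_{ij}x_i^{p-1}$ over $\mathbb F_p$. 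The total degree is $r(p-1)<\ell$ and the zero vector is a common zero, so Chevalley--Warning gives a nontrivial common zero. Its support is a nonempty zero-sum subsequence. For rank two, $C_{n_1}\times C_{n_2}$, the lower bound comes from $e_1^{[n_1-1]}\bm\cdot e_2^{[n_2-1]}$. The upper bound $n_1+n_2-1$ is the classical theorem of Olson (and van Emde Boas--Kruyswijk). That argument reduces multiplicatively: the prime case $C_p\oplus C_p$ comes from Chevalley--Warning as above, and the extension to general rank-two groups goes through the exact sequence $C_{m}\oplus C_{m}\to C_{mn_1}\oplus C_{mn_2}\to\cdots$ together with the Erdős--Ginzburg--Ziv theorem $\mathsf E(C_n)=2n-1$, used to extract disjoint short zero-sum blocks in the quotient. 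I would simply cite this.

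For (ii), the lower bound is a construction. Let $N=\langle x\rangle$ be cyclic of index $p$ and let $y\notin N$. Then the sequence $x^{[|G|/p-1]}\bm\cdot y^{[p-1]}$ has length $|G|/p+p-2$. Using that $p$ is the smallest prime divisor of $|G|$, one checks that it is product-one free: a product-one subsequence would need its number of $y$-terms to be divisible by $p$ modulo $N$, which forces zero $y$-terms, leaving a proper power of $x$, which is not $1$. The matching upper bound is the cited result of Qu--Gao--Li (and the earlier dihedral, dicyclic and metacyclic cases), and I would quote it rather than reprove it.

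For (iii), I would cite directly the theorem of Qu, Li and Teeuwsen~\cite{QuLiTeeuwsen2025} quoted in the introduction. The only real content in this lemma is the upper bounds in (ii) and (iii), which are deep and external. The main obstacle, if one insisted on proving them here, would be (iii). That would require the full structural analysis of groups without a cyclic subgroup of index $p$, so citation is the intended route.
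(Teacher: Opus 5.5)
Your proposal is correct and follows the paper, which also proves this lemma purely by citation: Olson's results (see also the Gao--Geroldinger survey) for (i), and Qu--Li--Teeuwsen for both (ii) and (iii). Your Chevalley--Warning argument for $C_p^r$ and the two lower-bound constructions are correct additions, with two minor points: the paper credits (ii) to Qu--Li--Teeuwsen rather than Qu--Gao--Li, and your sketch of the rank-two reduction is garbled---the standard step takes $mG\cong C_{n_1}\times C_{n_2}$ inside $G=C_{mn_1}\times C_{mn_2}$, with quotient $C_m^2$, and uses $\mathsf{\eta}_m(C_m^2)=3m-2$, not Erd\H{o}s--Ginzburg--Ziv---but since you defer to the citation, nothing depends on it.
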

\begin{proof}
Part (i) follows from Olson's results~\cite{Olson1969a,Olson1969b}; see also~\cite[Theorem~3.1]{GaoGeroldinger2006}. Parts (ii) and (iii) are established by Qu, Li and Teeuwsen~\cite{QuLiTeeuwsen2025}.
\end{proof}

\begin{lemma}\label{lem:gao-identities}
The following assertions hold.
\begin{itemize}
\item[\textup{(i)}] For every finite abelian group $A$, one has $\mathsf{E}(A)=|A|+\mathsf{d}(A)$. In particular,
$$ \mathsf{E}(C_m)=2m-1 \qquad\text{and}\qquad \mathsf{E}(C_p^r)=p^r+r(p-1). $$
\item[\textup{(ii)}] If $G$ is non-cyclic, $p$ is the smallest prime divisor of $|G|$, and $G$ has a cyclic subgroup of index $p$, then
$$ \mathsf{E}(G)=|G|+\mathsf{d}(G)=|G|+|G|/p+p-2. $$
\item[\textup{(iii)}] The equality $\mathsf{E}(G)=|G|+\mathsf{d}(G)$ is also known for
$$ G=C_{uv}\rtimes C_u \qquad\text{and}\qquad G=D_{2v}\times C_2. $$
In the first family, $u,v\geq1$, and every action is allowed. In the second family, $v\geq4$ is even and $D_{2v}$ has order $2v$. The equality also holds for $G=\UT_3(\mathbb{F}_q)$ when $q$ is an odd prime.
\item[\textup{(iv)}] If $G$ satisfies the hypotheses of Lemma~\ref{lem:davenport-bounds}(iii) and the equality $\mathsf{E}(G)=|G|+\mathsf{d}(G)$ is known for $G$, then $\mathsf{E}(G)\leq |G|+|G|/p-1$.
\end{itemize}
\end{lemma}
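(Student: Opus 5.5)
This lemma is a collection of known results, and I would prove it mainly by citation, with one genuine deduction in part (iv). For part (i), I will invoke Gao's theorem~\cite{Gao1996a} that $\mathsf{E}(A)=|A|+\mathsf{d}(A)$ for every finite abelian group $A$. The two displayed special cases then follow by substituting the values from Lemma~\ref{lem:davenport-bounds}(i). For the cyclic case, take $n_1=1$, $n_2=m$, so $\mathsf{d}(C_m)=m-1$ and $\mathsf{E}(C_m)=2m-1$, recovering Erd\H{o}s--Ginzburg--Ziv. For the elementary abelian case, $\mathsf{d}(C_p^r)=r(p-1)$ gives $\mathsf{E}(C_p^r)=p^r+r(p-1)$.

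For part (ii), I will cite the result of Qu, Gao and Li~\cite[Theorem~1.5]{QuGaoLi2026}, which gives $\mathsf{E}(G)=|G|+\mathsf{d}(G)$ for non-cyclic $G$ with a cyclic subgroup of index $p$. Combining this with Lemma~\ref{lem:davenport-bounds}(ii) yields the value $|G|+|G|/p+p-2$. For part (iii), I will assemble citations from the literature on Conjecture~\ref{conj:zhuang-gao}. The family $C_{uv}\rtimes C_u$ with arbitrary action and the groups $D_{2v}\times C_2$ are covered by the papers extending the Bass/Gao--Lu method to metacyclic and related groups. The Heisenberg-type groups $\UT_3(\mathbb{F}_q)$ for odd prime $q$ are treated in the corresponding work on that family. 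I would check carefully that each cited source covers exactly the stated parameter ranges, namely $u,v\geq 1$ with every action allowed, and $v\geq4$ even. This bookkeeping is the only real obstacle, since an overstated range would be a gap.

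Part (iv) is the one step with actual content, and it is immediate. If $\mathsf{E}(G)=|G|+\mathsf{d}(G)$ holds and $G$ satisfies the hypotheses of Lemma~\ref{lem:davenport-bounds}(iii), then $\mathsf{d}(G)\leq |G|/p-1$. Substituting gives $\mathsf{E}(G)\leq |G|+|G|/p-1$, as claimed.
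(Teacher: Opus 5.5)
Your proposal matches the paper's proof: (i) by Gao's theorem \cite[Theorem~1]{Gao1996a} with the values from Lemma~\ref{lem:davenport-bounds}(i), (ii) by \cite[Theorem~1.5]{QuGaoLi2026}, and (iv) by substituting $\mathsf{d}(G)\leq|G|/p-1$ into the assumed equality. The only thing you leave vague is (iii). There the paper names its sources: \cite[Theorem~1.2]{HanZhang2019} for $C_{uv}\rtimes C_u$, \cite[Theorem~4.2]{BrocheroMartinezLemosMoriyaRibas2023} for $D_{2v}\times C_2$, and \cite{QuWang2026} for $\UT_3(\mathbb{F}_q)$, with \cite{GodaraSarkar2025} also covering $q=3$. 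You should cite these specific results rather than a general ``extension of the Bass/Gao--Lu method''.
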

\begin{proof}
Part (i) is Gao's theorem~\cite[Theorem~1]{Gao1996a}; its cyclic case is the Erd\H{o}s--Ginzburg--Ziv theorem~\cite{ErdosGinzburgZiv1961}. The elementary abelian value follows from Lemma~\ref{lem:davenport-bounds}(i). Part (ii) is the theorem for groups with a cyclic subgroup of prime index~\cite[Theorem~1.5]{QuGaoLi2026}. The assertions in (iii) follow, respectively, from~\cite[Theorem~1.2]{HanZhang2019}, \cite[Theorem~4.2]{BrocheroMartinezLemosMoriyaRibas2023}, and~\cite{QuWang2026}; for $q=3$, see also~\cite{GodaraSarkar2025}. Part (iv) follows by applying Lemma~\ref{lem:davenport-bounds}(iii) to the assumed equality.
\end{proof}

\begin{lemma}\label{lem:gao-bounds}
The following assertions hold.
\begin{itemize}
\item[\textup{(i)}] For every $G$,
$$ |G|+\mathsf{d}(G)\leq \mathsf{E}(G)\leq 2|G|-1. $$
\item[\textup{(ii)}] If $N\triangleleft G$ and $G/N\cong C_m\rtimes C_m$ for $m\geq2$, then
$$ \mathsf{E}(G)\leq |G|+|G|/m+m-2. $$
\item[\textup{(iii)}] If $G$ is non-cyclic and nilpotent and $p$ is the smallest prime divisor of $|G|$, then $\mathsf{E}(G)\leq |G|+|G|/p+p-2$.
\item[\textup{(iv)}] If $G\cong S_3\times C_3$ or $G\cong(C_3\times C_3)\rtimes_{-1}C_2$, then $\mathsf{E}(G)\leq26$.
\end{itemize}
\end{lemma}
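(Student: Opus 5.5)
This lemma collects four bounds, and I will prove each part either by citation or by a short argument built on the basic "product-one block extraction" principle.

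For (i), the lower bound $|G|+\mathsf d(G)\le \mathsf E(G)$ is witnessed by the sequence $T\bm\cdot 1^{[|G|-1]}$, where $T$ is a product-one free sequence of length $\mathsf d(G)$. Any product-one subsequence of length $|G|$ must use at least one term of $T$. Removing the identity terms from it then leaves a nonempty product-one subsequence of $T$, which is impossible. The upper bound $2|G|-1$ is Olson's theorem~\cite{Olson1976}.

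For (ii), (iii) and (iv) I would use the standard quotient argument. Let $N\triangleleft G$ with $|N|=n$ and $H=G/N$. Suppose every sequence over $H$ of length $\mathsf{s}_{k}(H)$ contains a product-one subsequence of length $k$. Then from a sequence over $G$ we repeatedly extract disjoint blocks $T_1,\dots,T_r$, each of length $k$, such that each block has a product (in some order) lying in $N$. This continues while the remaining length is at least $\mathsf s_k(H)$. Next, apply $\mathsf E(N)$, or more generally $\mathsf s_{n}$-type information on $N$, to the sequence of block products. This produces $|N|$ blocks whose products multiply to $1$, giving a product-one subsequence of length $k|N|$.

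For (ii), take $k=m$ and $H=C_m\rtimes C_m$. We have $\mathsf{s}_m(H)$ bounded via $\mathsf{E}$-type results for groups with a cyclic subgroup of index $m$. We also need $n$ sets of block products, where the $j$-th product can be chosen from a set built through reorderings of the block. The count is then the following: with $\ell=|G|+|G|/m+m-2=mn+n+m-2$ terms, we can extract $2n-1$ blocks as long as $\ell-(2n-2)m\ge \mathsf s_m(H)$. This is where the naive argument fails, since it gives only about $2|G|$. The genuine argument refines it: it extracts blocks and uses the conjugation freedom in $N$, the refinement following Han~\cite{Han2015}, which is what I would cite. Assertion (iii) is exactly Han's nilpotent theorem~\cite{Han2015}.

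For (iv), both groups have order $18$, and the bound $26 = 18+8$ should follow from (ii) or direct computation. In both groups the quotient by a suitable normal subgroup is small. For $(C_3\times C_3)\rtimes_{-1}C_2$ I would apply the block method with $N=C_3\times C_3$ and $k=2$. Here $\mathsf s_2(C_2)=3$, and the block products range over elements of $N$; when one orders a block $x\bm\cdot y$ of two non-$N$ terms as $xy$ or $yx$, these two orders give mutually inverse elements. The needed $\mathsf E(C_3^2)=11$ blocks, with length counting $2\cdot 11+(3-1)=24\le 26$, leave slack to handle the mixed case of blocks made of $N$-terms. For $S_3\times C_3$ I would use $N=C_3\times C_3$ as well, which is normal, with quotient $C_2$, and run the same argument.

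The main obstacle I expect is (ii): the naive block count overshoots, and one must exploit the flexibility of orderings to make each block product range over a set whose size is large enough for a Cauchy--Davenport/Kneser-type growth in $N$. I would try first to cite the existing literature for it rather than reprove it.
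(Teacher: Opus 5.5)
\textbf{Parts (i)--(iii).} Your treatment of (i) and (iii) is fine. The construction $T\bm\cdot 1^{[|G|-1]}$ is the standard argument behind \cite[Lemma~4]{ZhuangGao2005}, which the paper cites. Olson's theorem and Han's nilpotent theorem are exactly the paper's references.

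For (ii), the paper cites Han--Zhang \cite[Theorem~1.2]{HanZhang2019}, which covers a possibly non-abelian quotient $C_m\rtimes C_m$. Check that the source you cite handles that case, and not only $C_m\times C_m$, which is all the nilpotent application needs. Your length bookkeeping there is also off. Since $|G|=m^2|N|$, the target length is $m^2n+mn+m-2$, not $mn+n+m-2$.

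\textbf{The gap is in (iv).} Your count uses $\mathsf{E}(C_3^2)=11$, but Lemma~\ref{lem:gao-identities}(i) gives $\mathsf{E}(C_3^2)=9+2\cdot2=13$. With two-term blocks over $G/N\cong C_2$, where $\mathsf{E}(C_2)=3$, extracting $13$ blocks needs $2\cdot 12+3=27$ terms. This is precisely \eqref{eq:normal-subgroup-lift}, and it yields only $\mathsf{E}(G)\le 27$. From $26$ terms you obtain just $12$ blocks, so there is no slack: you are exactly one short of the required value $|G|+|G|/2-1=26$.

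Parts (ii) and (iii) cannot rescue this. Both groups are non-nilpotent, so (iii) does not apply. Neither group has a quotient $C_m\rtimes C_m$ with $m\ge 2$, so (ii) does not apply either. The only candidate is $m=3$, which would need a normal, hence central, subgroup of order two, but the centres are $C_3$ and $1$.

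Closing the gap from $27$ to $26$ needs a genuine additional argument. One option is to analyse the case where the $12$ block products form a sequence over $C_3^2$ of length $\mathsf{E}(C_3^2)-1$ with no $9$-term zero-sum subsequence. You would then use the inverse structure of such sequences. Finally, show that re-pairing the terms, or reversing the order inside blocks from the nontrivial coset, destroys that structure; for example, $xy$ and $yx$ are mutually inverse in $(C_3\times C_3)\rtimes_{-1}C_2$. The other option is to cite Zhao--Wang \cite[Lemma~3.11]{ZhaoWang2026}, as the paper does. As written, your argument for (iv) does not establish the bound.
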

\begin{proof}
The upper bound in (i) is Olson's theorem~\cite{Olson1976} and the lower bound follows from~\cite[Lemma 4]{ZhuangGao2005}. Parts (ii), (iii), and (iv) follow from~\cite[Theorem~1.2]{HanZhang2019}, \cite[Theorem~1.4]{Han2015}, and~\cite[Lemma~3.11]{ZhaoWang2026}, respectively.
\end{proof}

\begin{lemma}\label{lem:repeated-extraction}
For positive integers $j$ and $m$,
$$ \mathsf{s}_{jm}(C_m)=(j+1)m-1. $$
More generally, a sequence over $G$ of length at least $\mathsf{E}(G)+(j-1)|G|$ contains $j$ disjoint product-one subsequences, each of length $|G|$. Consequently, if $|S|=N+d$, where $N=|G|$, and $1\notin\Pi_N(S)$, then every subgroup $H\leq G$ satisfies
\begin{equation} |S_{G\setminus H}|\geq d+|H|-\mathsf{E}(H)+1. \tag{2.1}\label{eq:subgroup-concentration} \end{equation}
\end{lemma}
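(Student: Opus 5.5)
The statement has three parts: the value $\mathsf{s}_{jm}(C_m)=(j+1)m-1$, the extraction of $j$ disjoint product-one subsequences, and the concentration inequality \eqref{eq:subgroup-concentration}. I treat them in this order.

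\emph{Cyclic value.} For the lower bound, take the sequence $0^{[jm-1]}\bm\cdot 1^{[m-1]}$ over $C_m$, written additively. It has length $(j+1)m-2$. A $jm$-term subsequence must use $k\geq 1$ ones, and $k\leq m-1$, so its sum is $k\not\equiv 0\pmod m$. For the upper bound, induct on $j$. The case $j=1$ is the Erd\H{o}s--Ginzburg--Ziv theorem, that is, Lemma~\ref{lem:gao-identities}(i). For $j\geq2$, a sequence of length $(j+1)m-1\geq 2m-1$ contains an $m$-term zero-sum subsequence $T$. The remainder has length $jm-1$, so by induction it contains a $(j-1)m$-term zero-sum subsequence $U$. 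Then $T\bm\cdot U$ has $jm$ terms and sum zero.

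\emph{Disjoint extraction.} Induct on $j$ again; the case $j=1$ is the definition of $\mathsf{E}(G)$. Given $|S|\geq \mathsf{E}(G)+(j-1)|G|$, pick a product-one subsequence $T_1$ with $|T_1|=|G|$. The remainder $S\bm\cdot T_1^{[-1]}$ has length at least $\mathsf{E}(G)+(j-2)|G|$. By induction it contains $j-1$ disjoint product-one subsequences of length $|G|$, and together with $T_1$ these give $j$.

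\emph{Concentration inequality.} Let $H\leq G$ and write $|H|=h$, so $N=h\cdot[G:H]$. Suppose, for contradiction, that $|S_{G\setminus H}|\leq d+h-\mathsf{E}(H)$. Then
$$|S_H|\geq N+d-(d+h-\mathsf{E}(H))=\mathsf{E}(H)+(N/h-1)h.$$
Applying the disjoint-extraction part to $S_H$ over the group $H$ with $j=N/h$ produces $N/h$ disjoint product-one subsequences, each of length $h$. Concatenating the orderings of these blocks gives a product-one ordering of a subsequence of length $N$. Hence $1\in\Pi_N(S)$, which is a contradiction.

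The only point needing care is that the concatenation of product-one blocks is product-one in a non-abelian group. This is immediate, since each ordered block multiplies to $1$, so their juxtaposition does too. Nothing is a real obstacle here; the argument is routine bookkeeping.
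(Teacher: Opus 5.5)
Your proof is correct, and for the disjoint-extraction statement and the concentration inequality \eqref{eq:subgroup-concentration} it is exactly the paper's argument: extract $N/|H|$ disjoint $|H|$-term product-one blocks from $S_H$ and concatenate them. The only difference is the cyclic value $\mathsf{s}_{jm}(C_m)=(j+1)m-1$, which the paper simply quotes from Gao--Geroldinger, whereas you derive it from the Erd\H{o}s--Ginzburg--Ziv theorem by induction on $j$, using the extremal sequence $0^{[jm-1]}\bm\cdot 1^{[m-1]}$ for the lower bound; this is the standard argument behind that citation and makes the lemma self-contained.
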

\begin{proof}
The first result follows from~\cite[Theorem~6.10(2)]{GaoGeroldinger2006}. If $|S_H|\geq N+\mathsf{E}(H)-|H|$, then this extraction in $H$ would yield $N/|H|$ disjoint $|H|$-term product-one subsequences. Their union would be an $N$-term product-one subsequence of $S$, contrary to the hypothesis. Hence $|S_H|\leq N+\mathsf{E}(H)-|H|-1$, which proves the last assertion.
\end{proof}

We use Reiher's theorem~\cite{Reiher2007} in the following form.

\begin{lemma}\label{lem:reiher}
For every integer $m\geq2$, one has $\mathsf{s}_m(C_m^2)=4m-3$.
\end{lemma}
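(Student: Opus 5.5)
This statement is Reiher's theorem, which settled the Kemnitz conjecture: every sequence of $4m-3$ elements of $C_m^2$ has an $m$-term zero-sum subsequence. The paper cites it from~\cite{Reiher2007} rather than reproving it. A self-contained proof would follow Reiher's argument, in three steps.

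\textbf{Lower bound.} The sequence consisting of $m-1$ copies each of $(0,0),(1,0),(0,1),(1,1)$ has length $4m-4$. Any $m$ of its terms must use at least two distinct vectors, and a short check of the coordinate sums shows that none of these $m$-term sums is zero modulo $m$. Hence $\mathsf{s}_m(C_m^2)\ge 4m-3$.

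\textbf{Reduction to primes.} The invariant $\mathsf{s}_m$ is multiplicative in the following sense. Suppose $m=ab$ and every sequence of length $4a-3$ over $C_a^2$ has an $a$-term zero-sum subsequence, with the same for $b$. Given $4ab-3$ terms over $C_{ab}^2$, project them to $C_a^2$. Repeatedly extract disjoint $a$-term subsequences whose sums vanish in $C_a^2$; this can be done at least $4b-3$ times. Dividing each of their sums by $a$ gives $4b-3$ elements of $C_b^2$. Choosing $b$ of these with zero sum produces the required $ab$-term zero-sum subsequence. So it suffices to treat $m=p$ prime.

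\textbf{Prime case via Chevalley--Warning.} Let $S=(a_i,b_i)_{i\le 4p-3}$ be a sequence over $\mathbb F_p^2$ with no $p$-term zero-sum subsequence. Apply Chevalley--Warning to systems of the form
\[
\sum x_i^{p-1}\equiv k,\qquad \sum a_ix_i^{p-1}\equiv 0,\qquad \sum b_ix_i^{p-1}\equiv 0,
\]
taken over subsequences of lengths $3p-2$, $3p-3$ and $4p-3$, and also with the first equation replaced by weighted variants. This yields congruences modulo $p$ counting zero-sum subsequences of lengths $p$, $2p$ and $3p$. For instance, one gets $1-N_p+N_{2p}-N_{3p}\equiv0$ over all of $S$, where $N_k$ denotes the number of zero-sum subsequences of length $k$. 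Separately, every zero-sum subsequence of length $3p$ contains one of length $p$, by the $\mathsf{s}_p$ bound for length $3p-2$ in the Alon--Dubiner style.

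\textbf{Main obstacle.} The difficulty is combining these congruences so that $N_p=0$ forces a contradiction. Reiher's key step is a further identity for sequences of length $3p$, counting $(p-1)$- and $(2p-1)$-term zero-sum subsequences, together with a parity argument. Since the result is standard and cited, the paper simply invokes it.
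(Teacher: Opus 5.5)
Your proposal ends up where the paper does. The paper gives no argument for this lemma and simply invokes Reiher's theorem, and your text also ultimately rests on that citation. Your lower-bound example is correct. So is the reduction to prime $m$: from $4ab-3$ terms you can extract $4b-3$ disjoint $a$-term subsequences that are zero-sum in $C_a^2$.

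The prime-case outline should not be mistaken for a proof, and two of its steps are misjustified.

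\begin{itemize}
\item \emph{The system on $3p-3$ terms.} Chevalley--Warning requires the total degree $3(p-1)$ to be smaller than the number of variables. So the displayed system cannot be applied to a subsequence of length $3p-3$. Instead, adjoin the zero vector and apply the $(3p-2)$-term congruence $1-N_p+N_{2p}\equiv0$. This gives $1-N_{p-1}-N_p+N_{2p-1}+N_{2p}\equiv0$.
\item \emph{The claim about zero-sum sequences of length $3p$.} The claim itself, that such a sequence contains a $p$-term zero-sum subsequence, is true. But it cannot follow from an $\mathsf{s}_p$ bound at length $3p-2$, because no such bound exists. Your own extremal example of length $4p-4$ already contains sequences of length $3p-2$ with no $p$-term zero-sum subsequence. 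The correct argument is as follows. Suppose $N_p=0$ for a zero-sum sequence $T$ of length $3p$. Remove one term, and apply $1-N_p+N_{2p}\equiv0$ to the remaining $3p-1$ terms. This forces a $2p$-term zero-sum subsequence, and its complement in $T$ is a $p$-term zero-sum subsequence, a contradiction.
\end{itemize}

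The decisive combinatorial input that turns these congruences into a contradiction when $N_p=0$ for $4p-3$ terms is only named, not stated or proved. Its description as an identity relating $(p-1)$- and $(2p-1)$-term counts for sequences of length $3p$ is too vague to check. So the lemma is established here, as in the paper, by the citation of Reiher, not by your sketch.
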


The following lemma follows from \cite[Corollary~2.4]{QuGaoLi2026}.

\begin{lemma}\label{lem:cyclic-inverse}
Let $m\geq2$, let $j\geq1$, and let $W\in\mathcal{F}(C_m)$ have length $(j+1)m-2$. If $1\notin\Pi_{jm}(W)$, then
$$ W=u^{[j_1m-1]}\bm\cdot v^{[j_2m-1]}, $$
where $j_1,j_2\geq1$, $j_1+j_2=j+1$, and $\langle uv^{-1}\rangle=C_m$. Moreover, if $U\mid W$ contains $u^{[m-1]}\bm\cdot v^{[2m-1]}$, then
$$ \Pi_{|U|-m+1}(U)=C_m. $$
For $j=2$, the two multiplicities are $m-1$ and $2m-1$.
\end{lemma}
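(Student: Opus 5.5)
The plan is to work additively in $C_m=\mathbb{Z}/m\mathbb{Z}$ and prove the structural statement by induction on $j$. The base case $j=1$ is the classical inverse form of the Erd\H{o}s--Ginzburg--Ziv theorem. A sequence of length $2m-2$ with no $m$-term zero-sum subsequence has the form $u^{[m-1]}\bm\cdot v^{[m-1]}$ with $v-u$ a generator. I would either quote this or reprove it from the Cauchy--Davenport argument underlying $\mathsf{E}(C_m)=2m-1$ (Lemma~\ref{lem:gao-identities}(i)).

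A normalisation is used throughout. For $c\in C_m$ and a unit $\lambda\in(\mathbb{Z}/m\mathbb{Z})^\times$, the affine map $x\mapsto\lambda(x-c)$ sends $k$-term subsums $\sigma$ to $\lambda(\sigma-kc)$. Since $m\mid jm$, it preserves the property of having no $jm$-term zero-sum subsequence. So once two values $u,v$ with $v-u$ a generator appear, we may assume $u=0$ and $v=1$.

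For the inductive step, let $j\geq2$. Then $|W|=(j+1)m-2\geq 2m-1$, so Lemma~\ref{lem:gao-identities}(i) gives an $m$-term zero-sum subsequence $T\mid W$. The sequence $W'=W\bm\cdot T^{[-1]}$ has length $jm-2$ and no $(j-1)m$-term zero-sum subsequence, since adjoining $T$ to such a subsequence would produce a $jm$-term zero-sum subsequence of $W$. By induction, after normalising, $W'=0^{[a]}\bm\cdot 1^{[b]}$ with $a=j_1'm-1$, $b=j_2'm-1$ and $j_1'+j_2'=j$. It remains to show that $T$ is $0^{[m]}$ or $1^{[m]}$.

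This is the main obstacle, and I would handle it by an exchange argument. The $k$-term subsums of $0^{[a]}\bm\cdot 1^{[b]}$, viewed as integers, form the whole interval $[\max(0,k-a),\min(k,b)]$. If $T$ contains a term $t\notin\{0,1\}$, take $t$ together with $k=jm-1$ terms of $W'$. The admissible range of sums then has length $\min(k,b)-\max(0,k-a)=a+b-k=m-1$. Hence it contains $m$ consecutive integers and therefore hits $-t$ modulo $m$, which gives a $jm$-term zero-sum subsequence, a contradiction. So $T$ is supported on $\{0,1\}$. Its sum is $v_1(T)\equiv0\pmod m$, so $T=0^{[m]}$ or $T=1^{[m]}$. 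In either case $W=0^{[j_1m-1]}\bm\cdot1^{[j_2m-1]}$ with $j_1+j_2=j+1$, which undoes the normalisation to give the stated form. When $j=2$, the condition $j_1+j_2=3$ with $j_1,j_2\geq1$ forces the multiplicities to be $m-1$ and $2m-1$.

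For the ``moreover'' part I would use complements. The $(|U|-m+1)$-term subsums of $U$ are exactly $\sigma(U)-\Sigma_{m-1}(U)$. After normalisation, $U$ contains $0^{[m-1]}\bm\cdot1^{[2m-1]}$ (up to swapping the roles of $u$ and $v$, which the affine normalisation also permits). Taking $i$ ones and $m-1-i$ zeros for $0\leq i\leq m-1$ shows that $\Sigma_{m-1}(U)\supseteq\{0,1,\dots,m-1\}=C_m$. Therefore $\Pi_{|U|-m+1}(U)=C_m$.
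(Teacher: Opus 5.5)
Your framework is sound: induction on $j$ from the inverse EGZ theorem, removal of an $m$-term zero-sum $T$, and normalisation to $W'=0^{[a]}\bm\cdot1^{[b]}$ with $a=j_1'm-1$ and $b=j_2'm-1$. The complement argument for the ``moreover'' part and the $j=2$ remark are also correct.

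\textbf{The gap.} The exchange step, which you yourself flag as the crux, fails as written. You take $t$ together with $k=jm-1$ terms of $W'$, but $|W'|=jm-2$, so no such choice exists. Consistently with this, $a+b-k=-1$, not $m-1$. The value of $k$ that does give a range of length $m-1$ is $k=(j-1)m-1$. But then $t$ and those terms form only a $(j-1)m$-term zero-sum subsequence. The hypothesis $1\notin\Pi_{jm}(W)$ does not forbid this: the extremal sequence $0^{[jm-1]}\bm\cdot1^{[m-1]}$ contains $0^{[(j-1)m]}$. So no contradiction is reached, and the key claim $\supp(T)\subseteq\{0,1\}$ remains unproved.

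\textbf{The repair.} Keep the rest of $T$ instead of discarding it.
\begin{itemize}
\item The sequence $T\bm\cdot t^{[-1]}$ has $m-1$ terms and sum $-t$.
\item Complete it with $k=(j-1)m+1$ terms of $W'$. This is possible because a term $t\notin\{0,1\}$ forces $m\geq3$, so $k\leq jm-2$.
\item The number $x$ of ones among the chosen terms of $W'$ runs over exactly the integers in $[k-a,b]=[(j_2'-1)m+2,\ j_2'm-1]$. Their residues modulo $m$ are precisely $2,\dots,m-1$.
\item Since $t\not\equiv0,1$, some choice has $x\equiv t$. This gives $(m-1)+k=jm$ terms with sum $0$, the desired contradiction.
\end{itemize}
This also explains why exactly the two values $0$ and $1$ survive.

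With this correction the rest of your induction is valid. It then gives a self-contained proof, whereas the paper simply cites \cite[Corollary~2.4]{QuGaoLi2026}. For the base case $j=1$, quote the classical inverse EGZ theorem for length $2m-2$. For composite $m$ it is not a direct consequence of the Cauchy--Davenport argument.
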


The following estimate is~\cite[Lemma~7]{GaoLu2008}.

\begin{lemma}\label{lem:abelian-products}
Let $K$ be abelian of order $a\geq2$, let $b\geq2$, and let $|W|=a+b-2$. If $1\notin\Pi_a(W)$, then $|\Pi_{a-2}(W)|\geq b-1$.
\end{lemma}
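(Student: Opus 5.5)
We use additive notation in $K$ and pass to complements. Put $\sigma=\sigma(W)$. Since $|W|=a+b-2$, an $a$-term subsequence is the complement of a $(b-2)$-term subsequence, and an $(a-2)$-term subsequence is the complement of a $b$-term one. Hence
$$ \Pi_a(W)=\sigma-\Sigma_{b-2}(W),\qquad \Pi_{a-2}(W)=\sigma-\Sigma_{b}(W). $$
So the hypothesis says $\sigma\notin\Sigma_{b-2}(W)$, and we must show $|\Sigma_{a-2}(W)|=|\Sigma_b(W)|\geq b-1$. Two preliminary remarks narrow the range of $b$.

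\begin{itemize}
\item If $b\geq a+2$, then $|W|\geq 2a>\mathsf{E}(K)-1$. By Lemma~\ref{lem:gao-bounds}(i) we get $0\in\Pi_a(W)$, so the statement is vacuous.
\item If $b=2$, the bound $|\Pi_{a-2}(W)|\geq1$ is trivial.
\end{itemize}

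Hence we may assume $3\leq b\leq a+1$, and in particular $b-1\leq a=|K|$.

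The main tool will be the DeVos--Goddyn--Mohar generalization of Kneser's theorem to $k$-term subsums. We apply it with $k=a-2$. Let $H$ be the stabilizer of $X=\Sigma_{a-2}(W)$. The theorem gives
$$ |X|\geq |H|\Bigl(1-(a-2)+\sum_{Q\in K/H}\min\{a-2,\,v_Q(W)\}\Bigr), $$
where $v_Q(W)$ counts the terms of $W$ lying in the coset $Q$. The argument splits according to $H$.

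\begin{itemize}
\item If $H=K$, then $X=K$ and $|X|=a\geq b-1$.
\item If $H$ is proper, we compare with the hypothesis. Because $X+H=X$, we can add two further terms of $W$ to the $(a-2)$-term sums. This should show that $X$ meets $\Pi_a(W)$ in a way that forces $0\in\Pi_a(W)$ unless the cosets of $H$ carry few terms. Concretely, if $\sum_Q\min\{a-2,v_Q(W)\}\geq a-3+(b-1)/|H|$, the bound above already gives $|X|\geq b-1$.
\item In the opposite case, $W$ is concentrated on few cosets of $H$, some of them with multiplicity above $a-2$. Then we project by $\varphi_H$ to $K/H$. There the hypothesis $0\notin\Pi_a(W)$, combined with Lemma~\ref{lem:gao-identities}(i) for $K/H$ and for $H$ (extracting $|H|$-term zero-sum blocks inside cosets), should yield a zero-sum $a$-term subsequence. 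This contradicts the hypothesis.
\end{itemize}

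I expect the main obstacle to be this last, concentrated case. The extreme configuration is $W=u^{[a-1]}\bm\cdot v^{[b-1]}$ with $u-v$ of order $a$, as in Lemma~\ref{lem:cyclic-inverse}. Here the sums of $a-2$ terms are $(a-2-i)u+iv$ for $0\leq i\leq\min\{b-1,a-2\}$, which gives exactly $b-1$ distinct values, so the bound is sharp and no slack is available.

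If the Kneser-type bookkeeping becomes unwieldy, the fallback is a direct exchange argument.

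\begin{itemize}
\item Choose a term $g$ of maximal multiplicity $h=h(W)$, and fix an $(a-2)$-term subsequence $T$ containing as many copies of $g$ as possible.
\item Successively replace copies of $g$ in $T$ by terms $g_i\neq g$ of $W\bm\cdot T^{[-1]}$. This produces the sums $\sigma(T)+\sum_{i\leq j}(g_i-g)$.
\item Show that these sums are pairwise distinct for $j=0,\dots,b-2$. A coincidence would give a zero-sum block $\sum_{i\in I}(g_i-g)=0$. Trading that block back into an $a$-term subsequence, padded with copies of $g$ and using $h\geq |W|/a$, then contradicts $0\notin\Pi_a(W)$.
\end{itemize}

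The delicate point in this fallback is keeping enough copies of $g$ available for the padding, which is exactly where the restriction $b\leq a+1$ enters.
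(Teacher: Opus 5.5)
The paper does not prove this statement; it quotes \cite[Lemma~7]{GaoLu2008}. Your DGM route can be made to work, but the proposal stops exactly where the content is. The case in which the DGM bound falls below $b-1$ is dismissed with ``should yield''. The tool you point to there, projecting to $K/H$ and using $\mathsf{E}(K/H)$, is not what is needed. What you must show is that this case forces a \emph{single} coset $g+H$ to carry at least $a+|H|-1$ terms. Set $X=\Sigma_{a-2}(W)$, $h=|H|<a$, $c=\max_Q v_Q(W)$ and $o=a+b-2-c$. You may assume $3\le b\le a$, since $b=a+1$ already gives $|W|=2a-1\ge\mathsf{E}(K)$, and $a\ge3$, so that $k=a-2\ge1$ in Lemma~\ref{lem:dgm}. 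The cases are then as follows. If $c\le a-2$, DGM gives $|X|\ge h(b+1)$. If two cosets carry at least $a-1$ terms, then $b=a$ and $|X|\ge h(a-1)\ge b-1$. If exactly one coset $g+H$ does, DGM gives $|X|\ge h(o+1)$, while $b-1=o+(c-a+1)$. The identity $h(o+1)-(o+h-1)=(h-1)o+1>0$ then shows $|X|\ge b-1$ whenever $c\le a+h-2$. Only $c\ge a+h-1$ remains. There the terms in $g+H$, translated by $-g$, form a sequence over $H$ of length at least $\mathsf{E}(H)+(a/h-1)h$, because $\mathsf{E}(H)\le 2h-1$. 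Lemma~\ref{lem:repeated-extraction} then yields $a$ of them summing to $0$ in $H$, i.e.\ $a$ terms of $W$ with sum $ag=0$. That short computation is the missing step.

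Your diagnosis of the obstacle is also off. In the extremal example $u^{[a-1]}\bm\cdot v^{[b-1]}$ the index $i$ takes $\min\{b,a-1\}$ values, not $b-1$, so the bound is attained only when $b=a$. Moreover, that example has $H=1$ and a coset of multiplicity $a-1>a-2$, yet DGM already gives its exact value. So it is not a configuration from which a contradiction could be derived, and the worry that there is no slack in the concentrated case is misplaced.

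The fallback exchange argument does not close the gap either. Padding a vanishing block of $r\le b-2$ exchanges to length $a$ requires $a-r\ge a-b+2$ copies of $g$. Producing the $b-2$ exchanges also requires $b-2$ copies of $g$ inside $T$. But $h(W)\ge|W|/a$ only guarantees $h(W)\ge2$, so sequences of small height are not covered.
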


The following lemma follows from \cite[Lemma~2.9]{QuGaoLi2026}.

\begin{lemma}\label{lem:central-multiplicity}
Let $d\geq \mathsf{d}(G)$ be an integer, and let $|S|=|G|+d$. If $v_z(S)\geq d$ for some $z\in Z(G)$, then $1\in\Pi_{|G|}(S)$.
\end{lemma}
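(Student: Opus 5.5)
The idea is to shift every term by $z^{-1}$. Because $z$ is central, this turns the problem into extracting product-one subsequences in an auxiliary sequence, and the missing terms are then filled with copies of $z$.

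Write $S=z^{[d]}\bm\cdot T$, where $|T|=|G|$ and $T=g_1\bm\cdot\ldots\bm\cdot g_{|G|}$. This is possible because $v_z(S)\geq d$. Form the auxiliary sequence
$$T^*=(g_1z^{-1})\bm\cdot\ldots\bm\cdot(g_{|G|}z^{-1}).$$
Since $z\in Z(G)$, for any set $I$ of indices with $|I|=k$ and any ordering, the product of the $g_iz^{-1}$ ($i\in I$) equals $z^{-k}$ times the product of the $g_i$ in the same order. Hence a product-one subsequence of $T^*$ indexed by $I$ corresponds to a subsequence $W$ of $T$ with $|W|=k$ and $z^k\in\pi(W)$.

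Next I would extract product-one pieces greedily from $T^*$. As long as the remaining part of $T^*$ has length at least $\mathsf d(G)+1$, it contains a nonempty product-one subsequence, and I remove it. The process stops when the remainder has length at most $\mathsf d(G)\leq d$. The removed pieces together form a subsequence of $T^*$ indexed by some $I$ with $k=|I|\geq |G|-d$. Concatenating the chosen orderings of the pieces gives an ordering of the union with product $1$. So the corresponding subsequence $W$ of $T$ satisfies $|W|=k$ and $z^k\in\pi(W)$.

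Finally, put $U=W\bm\cdot z^{[|G|-k]}$. This is a subsequence of $S$, because $0\leq |G|-k\leq d$ and the $d$ copies of $z$ in $S$ are disjoint from $T$. Its length is $|G|$. Order $W$ so that its product is $z^k$ and append the $|G|-k$ copies of $z$; the product is $z^{|G|}=1$, since the order of $z$ divides $|G|$. Thus $1\in\Pi_{|G|}(S)$.

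The only delicate step is the bookkeeping that the greedy extraction leaves a remainder of length at most $d$. This is exactly where the hypothesis $d\geq\mathsf d(G)$ enters. Centrality of $z$ is what makes the shift $g_i\mapsto g_iz^{-1}$ compatible with arbitrary orderings.
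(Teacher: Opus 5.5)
Your proof is correct and complete: the shift $g_i\mapsto g_iz^{-1}$ is compatible with every ordering because $z$ is central, and greedy extraction leaves at most $\mathsf{d}(G)\leq d$ terms. Hence $0\leq |G|-k\leq d$, the reserved copies of $z$ fill the gap, and $z^{|G|}=1$ finishes the argument. The paper gives no argument of its own and simply cites \cite[Lemma~2.9]{QuGaoLi2026}, so there is no proof to compare against; your proposal supplies a self-contained one.
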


\subsection{Matching and lifting}\label{sec:matching-lifting}

We use the cyclic zero-sum matching theorem~\cite[Theorem~8]{BialostockiDierker1992}; see also~\cite{FrickEtAl2025}. The colouring in this theorem is an arbitrary map on the $e$-element subsets.

\begin{lemma}\label{lem:zero-sum-matching}
Let $m,e\geq1$, and $|X|=m(e+1)-1$. Every map $f:\binom{X}{e}\rightarrow C_m$ admits pairwise disjoint $e$-element subsets $X_1,\ldots,X_m$ such that
$$ f(X_1)\cdots f(X_m)=1. $$
\end{lemma}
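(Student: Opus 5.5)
This is the zero-sum matching theorem of Bialostocki and Dierker, which the paper cites from \cite[Theorem~8]{BialostockiDierker1992}. To prove it from scratch I would induct on $m$ via its prime factorization, reducing to the case $m=q$ prime. The reduction is the standard EGZ multiplicativity argument. Write $m=qm'$ and suppose the statement holds for $C_q$ and for $C_{m'}$, with $\varphi:C_m\to C_{m'}$ the quotient by the subgroup $C_q$ of order $q$. Starting from $|X|=qm'(e+1)-1$, I repeatedly apply the prime case to the colouring $f$ restricted to $\binom{Y}{e}$ with $|Y|=q(e+1)-1$. Each application extracts $q$ disjoint $e$-sets whose colours multiply into $C_q$. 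Their union $B_i$ is a $qe$-set, and I record the element $f(B_i)\in C_q$; since $C_m$ is abelian, the order of multiplication is irrelevant.

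I continue while at least $q(e+1)-1$ points remain, which yields $2m'-1$ blocks. Now regard the blocks as points and define a colouring on single blocks, i.e.\ the case $e=1$ with $X$ of size $2m'-1$, and apply the EGZ-type statement over $C_{m'}$. That step fails to use the $C_q$ structure correctly, so I would redo it the other way round. First extract blocks of $m'$ sets whose colour product lies in $\ker\varphi\cong C_q$, using the $m'$ case on $\varphi\circ f$. Each block uses $m'e$ points, and the induction hypothesis needs $m'(e+1)-1$ available points. The number of blocks obtainable from $qm'(e+1)-1$ points is at least $2q-1$. Then apply plain EGZ (Lemma~\ref{lem:gao-identities}(i)) in $C_q$ to the $2q-1$ block-values, obtaining $q$ blocks whose product is $1$. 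These blocks contain $qm'=m$ disjoint $e$-sets, as required.

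For the prime case $m=q$, I would use a Chevalley--Warning style polynomial argument. Since $f$ is arbitrary, there is no linear structure on $X$ to exploit. Instead, I introduce variables $x_A\in\{0,1\}$ for $A\in\binom{X}{e}$ and impose constraints for disjointness and for selecting exactly $q$ sets with $\sum f(A)x_A\equiv 0$. Counting over $\mathbb{F}_q$ then shows that the number of solutions is divisible by $q$. The empty selection is a trivial solution, which forces a nontrivial one. The difficulty is that disjointness is not a low-degree condition. My first attempt would instead induct on $e$: split off a point and use the pigeonhole argument in the style of Alon--Kleitman, or the topological (Borsuk--Ulam/Tverberg-type) argument in \cite{FrickEtAl2025}. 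The degree count $q(e+1)-1$ suggests a configuration-space/test-map scheme with $\mathbb{Z}_q$ symmetry, and this I expect to be the main obstacle. Since the paper only invokes the result, I would ultimately defer the prime case to the cited theorem.
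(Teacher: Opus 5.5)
Your reduction to prime $m$ fails at the counting step. It fails in exactly the regime the paper needs, since Lemma~\ref{lem:lifting-bounds}(ii) applies the statement with $e=\mathsf{E}(G/Z)$, which is large. Each block produced by the $m'$ case occupies $m'e$ points, and a block can be extracted only while $m'(e+1)-1$ points remain. Obtaining $2q-1$ blocks from $qm'(e+1)-1$ points therefore requires
\[
(2q-2)m'e+m'(e+1)-1\leq qm'(e+1)-1,
\]
that is, $2(q-1)e\leq(q-1)(e+1)$, which means $e\leq1$. For $e\geq2$ the $2q-1$ blocks do not even fit inside $X$. For instance, $m=4$, $q=m'=2$, $e=2$ and $|X|=11$ give only two blocks. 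Greedy extraction followed by plain EGZ is the multiplicativity argument for EGZ itself, and it works only at $e=1$. Your discarded first attempt has the same counting defect.

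The repair is to feed in the $C_q$ case of the lemma itself, at a larger uniformity, instead of EGZ. Put $E=m'(e+1)-1$. For each $E$-subset $Y\subseteq X$, use the $m'$ case for $\varphi\circ f$ on $\binom{Y}{e}$ to choose $m'$ disjoint $e$-subsets of $Y$. Let $g(Y)\in\ker\varphi\cong C_q$ be the product of their $f$-values. Since $q(E+1)-1=m(e+1)-1=|X|$, the $C_q$ case applied to $g$ yields $q$ disjoint $E$-sets with $\prod_i g(Y_i)=1$. These contain $qm'=m$ disjoint $e$-sets with product $1$. This is the same device the paper uses in proving Lemma~\ref{lem:lifting-bounds}(ii), and it requires the prime case for arbitrary uniformity.

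Beyond the reduction, you give no argument for the prime case. The Chevalley--Warning sketch is abandoned (as you note, disjointness is not a low-degree condition), the topological route is only named, and you end by deferring to \cite{BialostockiDierker1992}. The paper also gives no proof here; it cites \cite[Theorem~8]{BialostockiDierker1992} (see also \cite{FrickEtAl2025}) for all $m$ and $e$ at once. Once you rely on that citation, the reduction, even corrected, adds nothing.
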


\begin{lemma}\label{lem:lifting-bounds}
The following inequalities hold.
\begin{itemize}
\item[\textup{(i)}] For every subnormal subgroup $H\leq G$,
$$ \mathsf{E}(G)+1\leq |G:H|\bigl(\mathsf{E}(H)+1\bigr). $$
\item[\textup{(ii)}] For every cyclic normal subgroup $Z\triangleleft G$,
$$ \mathsf{E}(G)+1\leq |Z|\bigl(\mathsf{E}(G/Z)+1\bigr). $$
\end{itemize}
\end{lemma}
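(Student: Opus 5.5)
Both inequalities come from the same block-decomposition argument. In each case we produce disjoint product-one blocks in a subgroup or quotient, then assemble them. Write $N=|G|$.

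\emph{Part (i).} First reduce to $H\triangleleft G$. If $H=H_0\triangleleft H_1\triangleleft\cdots\triangleleft H_r=G$, then applying the normal case repeatedly gives
$$\mathsf E(G)+1\leq |G:H_{r-1}|(\mathsf E(H_{r-1})+1)\leq\cdots\leq |G:H|(\mathsf E(H)+1),$$
since the indices multiply.

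Now let $H\triangleleft G$ with $k=|G:H|$, and let $|S|=k(\mathsf E(H)+1)-1$. Set $Q=G/H$, so $|Q|=k$. We work with the image $\varphi_H(S)$ over $Q$.
\begin{itemize}
\item By Olson's bound (Lemma~\ref{lem:gao-bounds}(i)), every sequence over $Q$ of length at least $2k-1$ contains a $k$-term subsequence whose image has product one in $Q$.
\item Ordering such a subsequence $T$ to achieve product-one in $Q$, its product lies in $H$. So each extraction yields an element $h_T\in H$ while consuming exactly $k$ terms.
\item Repeat greedily until fewer than $2k-1$ terms remain. This gives at least $\bigl(|S|-(2k-2)\bigr)/k$ disjoint blocks.
\end{itemize}
The number of blocks is therefore at least
$$\frac{k\,\mathsf E(H)+k-1-2k+2}{k}=\mathsf E(H)-1+\frac1k.$$
Since the count is an integer, we get at least $\mathsf E(H)$ blocks.

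The elements $h_{T_1},\dots,h_{T_{\mathsf E(H)}}$ form a sequence over $H$ of length $\mathsf E(H)$. By definition it has a product-one subsequence of length $|H|$. Concatenating the corresponding blocks, each internally ordered, in the order realizing the product-one ordering in $H$ gives $|H|\cdot k=N$ terms of $S$ with product $1$. Hence $\mathsf E(G)\leq k(\mathsf E(H)+1)-1$.

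The main subtlety is the initial reduction. It is important that only the normal case is used in the induction: each step needs $H_{i}\triangleleft H_{i+1}$, which the subnormal series provides.

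\emph{Part (ii).} Here the roles are swapped, and the blocks are taken in the quotient. Let $m=|Z|$, let $\bar G=G/Z$, and let $|S|=m(\mathsf E(\bar G)+1)-1$.
\begin{itemize}
\item Repeatedly extract $|\bar G|$-term subsequences whose image in $\bar G$ is product-one; each ordered block then has product in $Z$.
\item As long as at least $\mathsf E(\bar G)$ terms remain we can continue. Each block uses $|\bar G|$ terms, and $\mathsf E(\bar G)\geq|\bar G|$.
\end{itemize}
The number of blocks obtained is at least
$$\frac{|S|-\mathsf E(\bar G)+1}{|\bar G|}=\frac{(m-1)\mathsf E(\bar G)+m}{|\bar G|}.$$

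We need $2m-1$ blocks in order to apply EGZ in $Z\cong C_m$ (Lemma~\ref{lem:gao-identities}(i)), which yields $m$ blocks whose $Z$-values multiply to $1$. Since $Z$ is central-like only up to normality, we concatenate these blocks in a suitable order. Because $Z$ is abelian, any order of the $m$ block-products gives the same product, namely $1$. The total length is $m|\bar G|=N$.

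The obstacle is that the simple count above gives $2m-1$ blocks only when $\mathsf E(\bar G)$ is close to $2|\bar G|$, so it fails in general. To fix this, use the more refined iteration: the identity $\mathsf s_{jm}(C_m)=(j+1)m-1$ from Lemma~\ref{lem:repeated-extraction} suggests balancing. Alternatively, mimic the proof of Lemma~\ref{lem:zero-sum-matching}: partition $S$ arbitrarily into $m$ groups, one of size $\mathsf E(\bar G)+m-1$... I would try the cleaner route of applying Lemma~\ref{lem:zero-sum-matching} with $X$ the index set of a set of $m(e+1)-1$ blocks. Here $f$ maps a union of $e$ blocks to its $Z$-product, chosen so that the total length works out. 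Getting the bookkeeping exactly to $m(\mathsf E(\bar G)+1)-1$ is the step I expect to be hardest.
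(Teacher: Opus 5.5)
Your part (i) is correct and is essentially the paper's argument. The paper extracts the $\mathsf{E}(H)$ blocks via Lemma~\ref{lem:repeated-extraction} in $G/H$, first obtaining $\mathsf{E}(G)\leq |G:H|(\mathsf{E}(H)-1)+\mathsf{E}(G/H)$, and only then uses $\mathsf{E}(G/H)\leq 2|G:H|-1$. You insert Olson's bound from the start instead; the reduction along a subnormal chain is the same.

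Part (ii), however, is not proved, and the repair you sketch would not work as stated. You correctly see that greedy extraction in $G/Z$ produces too few blocks for EGZ in $Z$. But the matching theorem cannot be applied to an index set of $m(e+1)-1$ \emph{blocks*}: that many disjoint blocks would need $(m(e+1)-1)|G:Z|$ terms, far more than $|S|$. Moreover, $m$ disjoint unions of $e$ blocks would have $me|G:Z|$ terms rather than $|G|$.

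The missing idea is to apply Lemma~\ref{lem:zero-sum-matching} to the \emph{positions of $S$ itself}.
\begin{itemize}
\item Let $X$ be the set of positions, so $|X|=m(e+1)-1$ with $e=\mathsf{E}(G/Z)$.
\item For every $e$-element subset $Y\subseteq X$, the definition of $e$ gives a $|G:Z|$-term subsequence of the terms in $Y$ whose image in $G/Z$ is product-one.
\item Fix one such subsequence and an ordering, and let $f(Y)\in Z$ be the resulting product. The theorem allows an arbitrary map $f:\binom{X}{e}\to Z\cong C_m$, so these choices need not be consistent with one another.
\item The theorem yields pairwise disjoint $Y_1,\ldots,Y_m$ with $f(Y_1)\cdots f(Y_m)=1$.
\item The chosen subsequences inside the $Y_i$ are disjoint. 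Concatenating their orderings gives $m|G:Z|=|G|$ terms with product one.
\end{itemize}
The bookkeeping you were worried about is therefore exact and automatic, since $|S|=|X|$.

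Conceptually, the matching theorem generalizes EGZ, which is its case $e=1$. It lets the $Z$-value depend on an arbitrary $e$-subset rather than on blocks fixed in advance, and that flexibility is exactly what the greedy approach lacks.
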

\begin{proof}
For (i), suppose first that $H\triangleleft G$, and let $q=|G:H|$. We prove that
\begin{equation} \mathsf{E}(G)\leq q\bigl(\mathsf{E}(H)-1\bigr)+\mathsf{E}(G/H). \tag{2.2}\label{eq:normal-subgroup-lift} \end{equation}
Consider a sequence whose length is the right-hand side of this inequality. By Lemma~\ref{lem:repeated-extraction} applied in $G/H$, there are $\mathsf{E}(H)$ pairwise disjoint subsequences $T_i$ of length $q$, each admitting an ordering whose product $h_i$ lies in $H$. The sequence $h_1\bm\cdot\ldots\bm\cdot h_{\mathsf{E}(H)}$ has a product-one subsequence of length $|H|$. Arrange the corresponding subsequences $T_i$ in the order of these products, keeping the chosen internal ordering of each $T_i$. Their concatenation has $q|H|=|G|$ terms and product one, proving~\eqref{eq:normal-subgroup-lift}. Since $\mathsf{E}(G/H)\leq2q-1$, the right-hand side of~\eqref{eq:normal-subgroup-lift} is at most $q(\mathsf{E}(H)+1)-1$.

If $H$ is only subnormal, then apply the normal-subgroup inequality along a chain
$$ H=H_0\triangleleft H_1\triangleleft\cdots\triangleleft H_s=G $$
and multiply the indices.

For (ii), let $m=|Z|$, $q=|G:Z|$, and $e=\mathsf{E}(G/Z)$. The case $m=1$ is immediate. Consider a sequence $S=g_1\bm\cdot\ldots\bm\cdot g_{m(e+1)-1}$, and let $X$ be its set of positions. For each $e$-element subset $Y\subseteq X$, the sequence on $Y$ contains a $q$-term subsequence whose image in $G/Z$ is product-one. Choose one such subsequence and one ordering, and let $f(Y)\in Z$ be the resulting product. These choices define a map $f:\binom{X}{e}\rightarrow Z\cong C_m$.

By Lemma~\ref{lem:zero-sum-matching}, there are disjoint subsets $Y_1,\ldots,Y_m$ such that $f(Y_1)\cdots f(Y_m)=1$. The chosen $q$-term subsequences inside these disjoint position sets are disjoint. Concatenating their chosen orderings yields $mq=|G|$ terms and product one. Hence $\mathsf{E}(G)\leq m(e+1)-1$.
\end{proof}

We normalize the invariant by defining
\begin{equation} \overline{\mathsf{E}}(G)=\frac{\mathsf{E}(G)+1}{|G|}. \tag{2.3}\label{eq:norm-gao} \end{equation}
By Lemma~\ref{lem:lifting-bounds},
\begin{equation} \overline{\mathsf{E}}(G)\leq\overline{\mathsf{E}}(H) \qquad\text{and}\qquad \overline{\mathsf{E}}(G)\leq\overline{\mathsf{E}}(G/Z). \tag{2.4}\label{eq:norm-lifting} \end{equation}
The first inequality holds when $H$ is subnormal in $G$, and the second holds when $Z$ is cyclic and normal in $G$. Thus the desired bound is $\overline{\mathsf{E}}(G)\leq1+1/p$, and the non-solvable bound is $\overline{\mathsf{E}}(G)\leq3/2$.

\subsection{Group algebras and finite-group structure}\label{sec:algebra-structure}

\begin{lemma}\label{lem:augmentation-bounds}
Let $P$ be a finite $p$-group of order $n$, and let $J$ be the augmentation ideal of $\mathbb{F}_p[P]$. Define $t(P)=\min\{b\geq1:J^b=0\}$. Then
$$ \mathsf{d}(P)+1\leq t(P) \qquad\text{and}\qquad \mathsf{E}(P)\leq n+t(P)-1. $$
If $P$ is non-cyclic and has no cyclic subgroup of index $p$, then $t(P)\leq n/p$.
\end{lemma}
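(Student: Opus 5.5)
The lemma has three assertions, and I would prove each by a classical group-algebra argument.

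\emph{The bound $\mathsf{d}(P)+1\leq t(P)$.} This is Olson's argument. Let $S=g_1\bm\cdot\ldots\bm\cdot g_\ell$ be a product-one free sequence over $P$ with $\ell=\mathsf{d}(P)$. I claim that the element
$$(1-g_1)(1-g_2)\cdots(1-g_\ell)\in J^{\ell}$$
is nonzero. Expanding the product gives $\sum_{I\subseteq[1,\ell]}(-1)^{|I|}\prod_{i\in I}g_i$, where each product is taken in increasing order of the indices. The coefficient of $1$ is $1+\sum_{I\neq\emptyset,\ \prod_I g_i=1}(-1)^{|I|}$. Each such $I$ would give an ordered product-one subsequence, so no nonempty $I$ contributes. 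Hence the coefficient of $1$ is $1\neq0$ in $\mathbb{F}_p$, so $J^{\ell}\neq0$ and $t(P)\geq\ell+1$.

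\emph{The bound $\mathsf{E}(P)\leq n+t(P)-1$.} I would use the standard facts that $J$ is the Jacobson radical of $\mathbb{F}_p[P]$ and that $\dim J^k/J^{k+1}$ is governed by the Jennings filtration. The route I would try first is a counting argument, as in Gao's proof of $\mathsf{E}=\mathsf{d}+|G|$ or Olson's proof of $\mathsf{E}\le 2|G|-1$. Take a sequence $S$ of length $n+t-1$, where $t=t(P)$, and consider
$$\prod_{i}(1-x_i g_i)\in \mathbb{F}_p[P][x_1,\dots,x_{|S|}]$$
in commuting indicator variables. I would like to show that the sum of all $n$-term ordered products has coefficient of $1$ nonzero modulo $p$. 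The mechanism is to set $x_i=x$ and look at $\prod(1-xg_i)$. Each factor is $1-x+x(1-g_i)$, so the product equals $\sum_k (1-x)^{|S|-k}x^k e_k$ with $e_k\in J^k$, and $e_k=0$ for $k\geq t$. The coefficient of $x^{n}$ then comes only from terms with $k<t$, and these are controlled by binomial coefficients $\binom{|S|-k}{n-k}$ modulo $p$. Since $|S|-k\geq n$ and $n$ is a power of $p$, Lucas' theorem should give the vanishing or non-vanishing needed.

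A cleaner alternative I would check is to argue by induction on $|P|$ using a central subgroup $Z\cong C_p$ and the lifting estimate \eqref{eq:normal-subgroup-lift}. I expect this step to be the main obstacle, because noncommutative products make the clean polynomial argument delicate.

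\emph{The bound $t(P)\leq n/p$ for non-cyclic $P$ with no cyclic subgroup of index $p$.} By Jennings' theorem, $t(P)=1+(p-1)\sum_i i\,d_i$, where $d_i$ is the rank of $M_i/M_{i+1}$ in the Jennings series $M_i$. Non-cyclicity forces $d_1\geq2$. I would then show that $t(P)$ is maximized, among groups avoiding a cyclic subgroup of index $p$, by groups such as $C_p\times C_{n/p}$, whose structure is excluded here. The plan is to compare the Jennings sum with the corresponding sum for $C_{p^a}\times C_{p^b}$. For $C_{p^a}\times C_{p^b}$ with $a\le b$ the Jennings formula gives $t=p^a+p^b-1$, which is at most $n/p$ whenever $a\geq2$ or the rank is at least three. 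I would handle this comparison by induction on $|P|$, passing to $P/\Phi$-type quotients where the Jennings ranks are additive.
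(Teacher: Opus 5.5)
Your proof of $\mathsf{d}(P)+1\le t(P)$ is the paper's argument. The genuine gap is in the third assertion. Your second assertion is left unfinished, although the route you chose there does work.

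\textbf{The bound $t(P)\le n/p$.} This is Koshitani's theorem: for non-cyclic $P$, one has $n/p<t(P)<n$ if and only if $P$ has a cyclic subgroup of index $p$. It must be combined with the fact that $t(P)=n$ only for cyclic $P$. The paper simply cites both results, and your sketch proves neither. Jennings' formula $t(P)=1+(p-1)\sum_i i\,d_i$ is correct. What has to be shown, however, is an inverse statement. Writing $n=p^N$, you need that
$$\sum_i i\,d_i>\frac{p^{N-1}-1}{p-1}$$
forces a cyclic subgroup of index $p$.

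Your comparison with $C_{p^a}\times C_{p^b}$ treats only abelian groups. Saying that $t$ is ``maximized by groups such as $C_p\times C_{n/p}$'' is not the relevant statement, because those groups lie outside the class under consideration. For non-abelian $P$, already of rank two where $d_1=2$, the size of $\sum_i i\,d_i$ depends on how commutators and $p$-th powers act on the graded pieces $M_i/M_{i+1}$. Nothing in your plan controls this.

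The proposed induction also rests on a principle that is not true. Jennings ranks are not additive under quotients: for $N\trianglelefteq P$ one only has $M_i(P/N)=M_i(P)N/N$, hence $d_i(P/N)\le d_i(P)$. Unless you supply an actual proof of this characterization, you have to cite Koshitani's theorem, as the paper does.

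\textbf{The bound $\mathsf{E}(P)\le n+t(P)-1$.} You stop at ``Lucas' theorem should give'', but the computation does go through. Noncommutativity is harmless, because $x$ is central and the expansion preserves the index order. A single variable $x$ suffices.

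Set $\ell=n+t-1$ with $t=t(P)$. Let $g_I$ denote the product of the $g_i$, $i\in I$, in increasing order, and let $e_k=\sum_{|K|=k}\prod_{i\in K}(1-g_i)\in J^k$. Comparing the coefficients of $x^n$ in your two expansions of $\prod_i(1-xg_i)$ gives
$$(-1)^n\sum_{|I|=n}g_I=\sum_{k=0}^{t-1}(-1)^{n-k}\binom{n+(t-1-k)}{t-1}e_k .$$
Next, $t\le n$, because the powers of the nilpotent ideal $J$ strictly decrease and $\dim J=n-1$. So Lucas' theorem with $n=p^a$ gives $\binom{n+r}{t-1}\equiv\binom{r}{t-1}\pmod p$ for $0\le r<n$. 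This equals $1$ for $r=t-1$ and $0$ for $r<t-1$.

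Hence $\sum_{|I|=n}g_I=1$ in $\mathbb{F}_p[P]$. The number of $n$-subsets whose increasing-order product is $1$ is therefore $\equiv1\pmod p$, which proves the bound.

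The paper argues differently. Its augmentation ideal of $\mathbb{F}_p[P\times C_n]$ is generated by $J$ and the central element $c-1$, which satisfies $(c-1)^n=0$. It follows that $I^{n+t-1}=0$, so $\mathsf{d}(P\times C_n)+1\le n+t-1$, and the paper then lifts $g\mapsto(g,c)$. Your binomial vanishing plays the role of $(c-1)^n=0$, and your version gives an extra congruence.

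Your fallback via \eqref{eq:normal-subgroup-lift} with a central $Z\cong C_p$ would fail. It yields only $\mathsf{E}(P)\le 2n-2n/p+\mathsf{E}(P/Z)$, which is at least $2n-n/p$.
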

\begin{proof}
Write $b=t(P)$. If $g_1\bm\cdot\ldots\bm\cdot g_b$ were product-one free, then the coefficient of $1$ in $\prod_i(1-g_i)$ would be $1$, because only the empty sequence contributes. This contradicts $J^b=0$ and proves that $\mathsf{d}(P)+1\leq b$.

The known bound $b\leq n$, with equality only for cyclic $P$, is recorded in~\cite[Section~2]{AltunbulakAksuGreen2016}. Let $c$ generate $C_n$, and let $u=c-1$. The augmentation ideal $I$ of
$$ \mathbb{F}_p[P\times C_n]\cong \mathbb{F}_p[P]\otimes_{\mathbb{F}_p}\mathbb{F}_p[u]/(u^n) $$
is generated by $J$ and the central element $u$. Every term in $I^{n+b-1}$ has at least $b$ factors from $J$ or $n$ factors $u$, so $I^{n+b-1}=0$. By the preceding coefficient argument,
$$ \mathsf{d}(P\times C_n)+1\leq n+b-1. $$
Lift each term $g$ of a sequence of length $n+b-1$ to $(g,c)$. A nonempty product-one subsequence has length divisible by $n$ and less than $2n$, so its length is exactly $n$. This proves the bound for $\mathsf{E}(P)$.

Finally, Koshitani's theorem~\cite[Theorem~1.6]{Koshitani1977} states that, for non-cyclic $P$, one has $n/p<b<n$ if and only if $P$ has a cyclic subgroup of index $p$. Excluding such a subgroup and the equality case for cyclic $P$ yields $b\leq n/p$.
\end{proof}

\begin{lemma}\label{lem:group-structure}
The following structural assertions hold.
\begin{itemize}
\item[\textup{(i)}] A nontrivial solvable group has a normal subgroup of prime index. A solvable minimal nontrivial normal subgroup of any group is elementary abelian.
\item[\textup{(ii)}] The coset action on $G/H$ has kernel $\core_G(H)=\bigcap_{g\in G}H^g$. If $|G:H|$ is the smallest prime divisor of $|G|$, then $H\triangleleft G$.
\item[\textup{(iii)}] If $Z\leq Z(G)$ and $G/Z$ is cyclic, then $G$ is abelian. If $G'\leq Z(G)$, then $G$ is nilpotent of class at most two.
\item[\textup{(iv)}] For a $p$-group $P$, one has $\Phi(P)=P^p[P,P]$, and $P/\Phi(P)$ is elementary abelian. A subset generates $P$ if and only if its image generates $P/\Phi(P)$. In particular, a non-cyclic $p$-group has a quotient $C_p^2$.
\item[\textup{(v)}] A normal Hall subgroup has a complement. The product of two nilpotent normal subgroups is nilpotent.
\item[\textup{(vi)}] A representation in characteristic not dividing the group order is completely reducible.
\item[\textup{(vii)}] Every group of odd order is solvable.
\item[\textup{(viii)}] If $A$ acts on a finite abelian group $V$ and $\gcd(|A|,|V|)=1$, then
$$ V=C_V(A)\times[V,A] \qquad\text{and}\qquad C_{V/D}(A)=C_V(A)D/D. $$
The second equality holds for every $A$-invariant subgroup $D\leq V$. If $V$ is a nontrivial cyclic $q$-group, then the actions on $V$ and $V/\Phi(V)\cong C_q$ have the same kernel. Their common image is cyclic of order dividing $q-1$.
\end{itemize}
\end{lemma}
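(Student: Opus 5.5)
The statement collects eight standard facts from finite group theory, so I would prove each item by citing or sketching the classical argument, in the order listed.

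For (i), a nontrivial solvable group $G$ satisfies $G'<G$. The abelian quotient $G/G'$ is nontrivial and therefore has a subgroup of prime index. Its preimage is normal of prime index in $G$. If $N$ is a solvable minimal normal subgroup, then $N'$ is characteristic in $N$ and hence normal in $G$. Since $N$ is solvable, $N'<N$, and minimality forces $N'=1$, so $N$ is abelian. For a prime $q$ dividing $|N|$, the subgroup $\{x\in N:x^q=1\}$ is characteristic and nontrivial, so it equals $N$. Thus $N$ is elementary abelian.

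For (ii), the kernel of the coset action is the set of $g$ fixing every coset $xH$, that is, $g\in\bigcap_x xHx^{-1}=\core_G(H)$. Now let $|G:H|=p$ be the smallest prime divisor of $|G|$. Then $G/\core_G(H)$ embeds in $S_p$, so its order divides $\gcd(|G|,p!)$. Because every prime divisor of $|G|$ is at least $p$, this gcd is $p$. Hence $|G:\core_G(H)|=p=|G:H|$, so $H=\core_G(H)$ is normal. For (iii), choose $g$ whose image generates $G/Z$. Every element then has the form $g^iz$ with $z\in Z$, and any two such elements commute. If $G'\leq Z(G)$, the lower central series reaches $1$ after at most two steps, so $G$ is nilpotent of class at most two.

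Item (iv) is the Burnside basis theorem, and I would cite any standard text. In outline, maximal subgroups of a $p$-group are normal of index $p$, so they contain $P^p[P,P]$. Conversely, $P/P^p[P,P]$ is elementary abelian, and the intersection of its maximal subgroups is trivial. A subset generates $P$ exactly when it lies in no maximal subgroup, which happens exactly when its image generates $P/\Phi(P)$. If $P$ is non-cyclic, then $P/\Phi(P)\cong C_p^r$ with $r\geq2$, and this has a quotient $C_p^2$.

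Item (v) consists of the Schur--Zassenhaus theorem and Fitting's theorem. Item (vi) is Maschke's theorem, and (vii) is the Feit--Thompson theorem; I would cite all of these. Item (viii) is the standard coprime action package, which I would also cite (for example Gorenstein, Chapter 5). The decomposition comes from Fitting's lemma, and the quotient equality follows from averaging over $A$, which is possible since $|A|$ is invertible on $V$.

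For the cyclic case, $\Aut(V)\cong(\mathbb{Z}/q^k)^\times$. The kernel of the reduction map to $\Aut(C_q)\cong C_{q-1}$ is a $q$-group. The image of $A$ has order coprime to $q$, so it meets this kernel trivially. Therefore the two actions have the same kernel, and the common image embeds in $C_{q-1}$, which makes it cyclic of order dividing $q-1$.

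The only mildly delicate step is this last assertion in (viii), where the coprimality hypothesis has to be used to rule out any nontrivial intersection with the $q$-part of $\Aut(V)$. Everything else is citation.
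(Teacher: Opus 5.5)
Your proposal is correct and follows essentially the same arguments as the paper for every item, including the coprime-order argument in the cyclic case of (viii). The differences are minor: in (i) you pass to the abelian quotient $G/G'$ rather than to the quotient by a maximal normal subgroup, and you drop the Sylow step, which is indeed unnecessary. Otherwise you cite Burnside's basis theorem, Maschke's theorem, and Fitting's lemma where the paper writes out the normalizer-growth, projection-averaging, and averaging-idempotent proofs.
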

\begin{proof}
For (i), take the quotient of a solvable group by a maximal proper normal subgroup. This quotient is simple and solvable. Its last nontrivial derived subgroup is abelian and normal, so the quotient is abelian. An abelian simple group is cyclic of prime order. If $N$ is a solvable minimal nontrivial normal subgroup, then $N'$ is characteristic in $N$, hence normal in the ambient group. Solvability and minimality imply $N'=1$. Each Sylow subgroup of the abelian group $N$ is characteristic, so only one prime $q$ divides $|N|$. Finally, $\{x\in N:x^q=1\}$ is nontrivial and characteristic; minimality implies that it is all of $N$. Thus $N$ is elementary abelian.

The kernel formula in (ii) follows because an element fixes every coset if and only if it lies in every conjugate of $H$. Suppose that $|G:H|=p$ is the smallest prime divisor of $|G|$. The transitive coset-action image has order dividing both $|G|$ and $p!$. No prime smaller than $p$ can divide this order, and $p!$ has only one factor $p$. Transitivity forces the image to have order $p$. The image therefore acts regularly, its point stabilizer is trivial, and $H$ is the kernel.

For (iii), a lift $g$ of a generator of $G/Z$ satisfies $G=\langle g\rangle Z$, which is abelian. The other assertion follows from the central series $1\leq Z(G)\leq G$.

For (iv), first let $H<P$. The action of $H$ on the cosets $P/H$ has $|N_P(H):H|$ fixed points, while each nontrivial orbit has size divisible by $p$. Since $|P:H|$ is divisible by $p$ and there is at least one fixed point, $|N_P(H):H|$ is a positive multiple of $p$. Thus $H<N_P(H)$. If $H$ is maximal, then this forces normality, and the resulting simple $p$-group quotient has order $p$. Consequently, $P^p[P,P]\leq\Phi(P)$. The quotient by $P^p[P,P]$ is a vector space over $\mathbb{F}_p$, and the intersection of its hyperplanes is zero. Taking the preimages of those hyperplanes proves the reverse inclusion.

If a set has images generating $P/\Phi(P)$ but generates a proper subgroup of $P$, then a maximal subgroup containing that proper subgroup gives a contradiction. This proves the assertion about generators and the existence of the quotient $C_p^2$. Part (v) follows from the Schur--Zassenhaus theorem and Fitting's product theorem~\cite[Theorem~3.8 and Corollary~1.29]{Isaacs2008}.

For (vi), let $p:V\rightarrow W$ be a linear projection onto an invariant subspace, and average its conjugates:
$$ \overline{p}=|A|^{-1}\sum_{a\in A}apa^{-1}. $$
This is an equivariant projection onto $W$, so $\ker\overline{p}$ is an invariant complement. Induction on the dimension proves complete reducibility. Part (vii) is the Feit--Thompson theorem~\cite{FeitThompson1963}.

For the abelian assertions in (viii), write $V$ additively, and choose an integer $c$ satisfying $c|A|\equiv1\pmod{\exp(V)}$. The averaging endomorphism
$$ e(v)=c\sum_{a\in A}a(v) $$
has image $C_V(A)$ and acts as the identity on that image. It annihilates every $a(v)-v$, and
$$ v-e(v)=c\sum_{a\in A}\bigl(v-a(v)\bigr) $$
lies in $[V,A]$. Hence $\ker e=[V,A]$, proving the direct product. If $v+D$ is fixed by $A$, then $e(v)+D=v+D$. Every fixed coset therefore has a fixed representative, proving the quotient formula.

Finally, for $V\cong C_{q^b}$, the kernel of $\Aut(V)\rightarrow\Aut(V/\Phi(V))$ consists of the units congruent to $1$ modulo $q$, and it is a $q$-group. Its intersection with the image of $A$, whose order is coprime to $q$, is trivial. That image therefore embeds in $\Aut(C_q)\cong C_{q-1}$.
\end{proof}

\subsection{Setpartitions and insertion}\label{sec:setpartition-insertion}

For an abelian group $N$, a setpartition $\mathcal{A}=A_1\bm\cdot\ldots\bm\cdot A_t$ is a sequence of nonempty subsets of $N$. Define $\Pi_k(\mathcal{A})$ by choosing one element from each of $k$ distinct terms. For a nonempty subset $F\subseteq N$, its stabilizer is $H(F)=\{x\in N:xF=F\}$. This is a subgroup, and $F$ is a union of its cosets.

The following is the setpartition form of the DeVos--Goddyn--Mohar theorem~\cite[Theorem~1.3]{DeVosGoddynMohar2009}, abbreviated DGM below. The integer $\nu_Q$ counts the sets meeting $Q$, rather than the elements in their intersections with $Q$.

\begin{lemma}\label{lem:dgm}
Let $1\leq k\leq t$, and $\mathcal{A}=A_1\bm\cdot\ldots\bm\cdot A_t$ be a setpartition of an abelian group $N$. Let $F=\Pi_k(\mathcal{A})$, and $M=H(F)$. For a coset $Q\in N/M$, define $\nu_Q=|\{i:A_i\cap Q\neq\varnothing\}|$. Then
$$ |F|\geq |M|\left(1-k+\sum_{Q\in N/M}\min\{k,\nu_Q\}\right). $$
\end{lemma}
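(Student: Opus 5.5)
This lemma is the DeVos--Goddyn--Mohar theorem~\cite[Theorem~1.3]{DeVosGoddynMohar2009} restated for setpartitions. The plan is to translate the setpartition statement into the original DGM setting and read off the bound. DGM is formulated for a sequence (multiset) $S$ over an abelian group $N$ and the set $\Sigma_k(S)$ of $k$-term subsums. With $M=H(\Sigma_k(S))$, it asserts
$$ |\Sigma_k(S)|\geq |M|\Bigl(1-k+\sum_{Q\in N/M}\min\{k,v_Q(S)\}\Bigr), $$
where $v_Q(S)$ is the number of terms of $S$ lying in $Q$. Equivalently, the same result is stated for setpartitions: in~\cite{DeVosGoddynMohar2009} the main theorem is given for sequences of sets $A_1,\ldots,A_t$, and $\Pi_k(\mathcal A)$ is exactly the set of sums of $k$ representatives chosen from distinct sets. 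So the first step is to match notation: multiplicative $N$, $\Pi_k$ in place of the $k$-fold sumset of distinct sets, and $H(F)$ as the stabilizer.

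The only genuine point is the meaning of $\nu_Q$. In the setpartition version of DGM, the quantity attached to a coset $Q$ of the stabilizer $M$ counts how many of the sets $A_i$ meet $Q$, not how many elements lie in $\bigcup_i(A_i\cap Q)$. I will check this against the statement in~\cite{DeVosGoddynMohar2009}, where the relevant count is $|\{i: A_i\cap Q\neq\varnothing\}|$. If one only had the sequence version available, I would derive the set version by a reduction. Take one representative from each $A_i$, chosen to maximize the number of distinct cosets hit. This does not directly give $\nu_Q$ for all $Q$ simultaneously, so this reduction is the step I expect to be delicate. For that reason I would rely on the set version as stated by DGM rather than reprove it.

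Thus the proof is a citation with a notational dictionary. The main thing to verify is that the hypothesis $1\le k\le t$ matches DGM's standing assumptions, so that $\Pi_k(\mathcal A)$ is nonempty and $M$ is well defined. I would also confirm that the sum runs over all cosets of $M$ in $N$, with cosets met by no $A_i$ contributing $0$.
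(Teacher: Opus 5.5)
Your proposal matches the paper, which likewise gives no proof beyond citing \cite[Theorem~1.3]{DeVosGoddynMohar2009} and noting that $\nu_Q$ counts the sets $A_i$ meeting $Q$, not the elements in those intersections. Theorem~1.3 of DGM is already stated for a sequence of sets $A_1,\ldots,A_t$ with $1\leq k\leq t$, and the multiset form you recalled first is its singleton special case, so the reduction you flagged as delicate is never needed.
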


\begin{lemma}\label{lem:kneser-cd}
If $A$ and $B$ are nonempty subsets of an abelian group and $M=H(AB)$, then
$$ |AB|\geq |AM|+|BM|-|M|. $$
If $p$ is a prime and $A_1,\ldots,A_t$ are nonempty subsets of $C_p$, then
$$ |A_1\cdots A_t|\geq \min\left\{p,\;1+\sum_{i=1}^t(|A_i|-1)\right\}. $$
\end{lemma}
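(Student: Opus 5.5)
The first inequality is the Kneser theorem, and the second is the Cauchy--Davenport inequality with the standard extension to $t$ summands. Both are classical; the proof will cite them and indicate the reductions.

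\emph{Kneser's inequality.} I would invoke Kneser's theorem in its usual form: for nonempty finite $A,B$ in an abelian group with $M=H(AB)$, one has $|AB|\geq |AM|+|BM|-|M|$. If a self-contained argument is wanted, the route runs as follows.

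\begin{itemize}
\item First pass to the quotient $N/M$. Here $AB$ is a union of $M$-cosets, so $AB=AM\cdot BM$ and $|AB|=|M|\,|\overline{A}\,\overline{B}|$, where bars denote images in $N/M$.
\item The set $\overline{A}\,\overline{B}$ then has trivial stabilizer. It remains to prove $|\overline{A}\,\overline{B}|\geq|\overline{A}|+|\overline{B}|-1$ when the stabilizer is trivial.
\item This is done by Kneser's induction on $|B|$ using the Dyson $e$-transform $A\mapsto A\cup eB$, $B\mapsto B\cap e^{-1}A$ (written multiplicatively). This transform preserves $|A|+|B|$ and does not enlarge the product set.
\end{itemize}

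The main obstacle in such a proof is the step where the transform stalls. There one must show that the stabilizer of the product of the transformed pair controls the original one, which is the delicate part of Kneser's argument. Since this is a standard theorem, I would simply cite it (e.g.\ Nathanson's or Geroldinger--Halter-Koch's treatment).

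\emph{Cauchy--Davenport for $t$ sets.} The case $t=1$ is trivial. For $t=2$ in $C_p$, apply the first inequality with $M=H(A_1A_2)$. The only subgroups of $C_p$ are $1$ and $C_p$.
\begin{itemize}
\item If $M=C_p$, then $A_1A_2=C_p$ has size $p$.
\item If $M=1$, the inequality gives $|A_1A_2|\geq|A_1|+|A_2|-1$.
\end{itemize}
In either case $|A_1A_2|\geq\min\{p,|A_1|+|A_2|-1\}$.

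For general $t$, induct on $t$. Set $B=A_1\cdots A_{t-1}$, so that by induction $|B|\geq\min\{p,1+\sum_{i<t}(|A_i|-1)\}$. Then $|BA_t|\geq\min\{p,|B|+|A_t|-1\}$.
\begin{itemize}
\item If $|B|=p$, then $BA_t=C_p$ and the bound holds.
\item Otherwise $|B|\geq 1+\sum_{i<t}(|A_i|-1)$, and substituting gives $|BA_t|\geq\min\{p,1+\sum_{i\leq t}(|A_i|-1)\}$.
\end{itemize}
This completes the induction.
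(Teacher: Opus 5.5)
Your proposal is correct and matches the paper, which also cites Kneser's theorem for the first assertion and obtains the $t$-set bound by repeatedly applying the two-set Cauchy--Davenport inequality. The one difference is that you derive the two-set case from Kneser, using the dichotomy $H(A_1A_2)\in\{1,C_p\}$, rather than citing Cauchy--Davenport separately; this is a harmless and slightly more self-contained variant.
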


The first assertion is Kneser's theorem~\cite[Theorem~4.3]{Nathanson1996}. The second assertion follows by repeated application of the two-set Cauchy--Davenport theorem~\cite[Theorem~2.2]{Nathanson1996}.

\begin{lemma}\label{lem:orbit-insertion}
Let $N\triangleleft G$ be abelian. Suppose that the ordering $g_1,\ldots,g_\ell$ of a sequence $T$ has product $c=g_1\cdots g_\ell\in N$. Let $b_0=1$ and $b_j=g_1\cdots g_j$ for $1\leq j\leq\ell$. Let $U_1,\ldots,U_t$ be pairwise disjoint sequences that are disjoint from $T$, and choose $u_i\in\pi(U_i)\cap N$. For $v\in N$, define
$$ O_T(v)=\{b_jvb_j^{-1}:0\leq j\leq\ell\} \qquad\text{and}\qquad O(v)=\{gvg^{-1}:g\in G\}. $$
Then, for every $I\subseteq\{1,\ldots,t\}$,
$$ c\prod_{i\in I}O_T(u_i)\subseteq \pi\left(T\bm\cdot\prod_{i\in I}U_i\right). $$
If the prefixes represent every coset of $N$ in $G$, then
$$ O(c)\prod_{i\in I}O(u_i)\subseteq \pi\left(T\bm\cdot\prod_{i\in I}U_i\right). $$
\end{lemma}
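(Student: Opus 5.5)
The first inclusion follows by inserting the sequences $U_i$ into the fixed ordering of $T$ at suitable prefix positions. The second then comes from choosing those positions and, in addition, conjugating $T$'s ordering.

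Fix $I\subseteq\{1,\ldots,t\}$ and, for each $i\in I$, an element $x_i=b_{j_i}u_ib_{j_i}^{-1}\in O_T(u_i)$ with $0\le j_i\le\ell$. Also fix an ordering of $U_i$ with product $u_i$. Build an ordering of $T\bm\cdot\prod_{i\in I}U_i$ by walking through $g_1,\ldots,g_\ell$ in order and, right after the prefix $b_j$ (before $g_1$ when $j=0$), inserting the blocks $U_i$ with $j_i=j$ in any order. Since $N$ is abelian and normal, every $b_juy b_j^{-1}$ with $u,y\in N$ behaves well. Moving each inserted factor $u_i$ to the front using $b u = (bub^{-1})b$, the product becomes
$$\Bigl(\prod_{i\in I} b_{j_i}u_ib_{j_i}^{-1}\Bigr)\, g_1\cdots g_\ell .$$
To see this, induct on the number of inserted blocks, inserting them from the rightmost position. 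Each time a block $u$ sits right after the prefix $b_j$, write $b_j u\, w=(b_jub_j^{-1})\,b_j w$. The factors $b_{j}ub_j^{-1}$ all lie in $N$ and hence commute with each other. The product is therefore $\prod x_i\cdot c=c\prod x_i$, again because $c\in N$ and $N$ is abelian. This proves the first inclusion.

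For the second inclusion, assume the prefixes $b_0,\ldots,b_\ell$ meet every coset of $N$. Then for $g\in G$ we have $g\in b_jN$ for some $j$, and since $N$ is abelian $gvg^{-1}=b_jvb_j^{-1}$. Hence $O(v)=O_T(v)$ for every $v\in N$. This already gives $c\prod O(u_i)$, and it remains to replace $c$ by an arbitrary conjugate $gcg^{-1}$.

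For this, cyclically rotate the ordering of $T$: the ordering $g_{k+1},\ldots,g_\ell,g_1,\ldots,g_k$ has product $b_k^{-1}cb_k$, which lies in $N$. Its prefixes are $b_k^{-1}b_j$ for $j\ge k$ and $b_k^{-1}c\,b_j$ for $j<k$. Since $c\in N$, these prefixes still represent all cosets of $N$ (left multiplication by $b_k^{-1}$ permutes cosets). Applying the first part to the rotated ordering therefore gives $b_k^{-1}cb_k\prod O(u_i)$. As $k$ ranges over $0,\ldots,\ell$, the elements $b_k^{-1}cb_k$ run through the conjugates of $c$ by representatives of all cosets of $N$ (inverses of coset representatives are again coset representatives). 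Since $N$ centralizes $c$, these exhaust $O(c)$.

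The main point requiring care is the bookkeeping in the first step, namely verifying that insertion after prefix $b_j$ yields exactly the conjugate $b_ju_ib_j^{-1}$ independently of the other insertions. This holds precisely because all inserted products and $c$ lie in the abelian normal subgroup $N$.
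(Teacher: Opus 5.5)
Your proof is correct and follows the paper's argument: you insert the blocks $U_i$ at the chosen prefix positions to obtain $\prod_{i\in I}b_{j_i}u_ib_{j_i}^{-1}\cdot c$, then use $O_T(v)=O(v)$ together with cyclic rotations of $T$ (whose products $b_k^{-1}cb_k$ sweep out $O(c)$ while the prefixes still cover $G/N$), exactly as the paper does. One cosmetic slip: the tail prefixes of the rotated word are $b_k^{-1}cb_j$ for $1\le j\le k$, not for $j<k$; this is harmless because $b_k^{-1}cb_jN=b_k^{-1}b_jN$, and the full product lies in $N$, which the empty prefix already represents.
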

\begin{proof}
Choose an ordering of $U_i$ with product $u_i$. Inserting that ordering immediately after $g_j$ changes the product to $b_ju_ib_j^{-1}c$; insertion at the beginning corresponds to $j=0$. Since $N$ is abelian, two conjugating elements in the same coset of $N$ induce the same automorphism of $N$.

For simultaneous insertions, arrange the chosen orderings at their selected prefix positions, allowing several subsequences at one position when needed. Expanding the entire product in this order yields the product of the corresponding factors $b_ju_ib_j^{-1}$, followed by $c$. All these factors lie in $N$, so they commute. Thus every independent choice from the sets $O_T(u_i)$ is obtained.

If the prefixes cover $G/N$, then $O_T(v)=O(v)$. Rotate the ordering of $T$ after the prefix $b_j$. Its product becomes $b_j^{-1}cb_j$. Modulo $N$, its proper prefixes are the left translate by $b_j^{-1}$ of the original proper prefixes. They still cover $G/N$, since $c\in N$ and the final prefix repeats the identity coset. As $j$ varies, the rotated products cover $O(c)$. Applying the insertion argument to each rotation proves the last assertion.
\end{proof}

\subsection{Three consequences of the setpartition bound}\label{sec:setpartition-conseq}

The next lemma collects the DGM estimates used in the solvable case.

\begin{lemma}\label{lem:setpartition-conseq}
Let $1\leq k\leq t$, and $\mathcal{A}=A_1\bm\cdot\ldots\bm\cdot A_t$ be a setpartition of an abelian group $N$. Let $|N|=m$, let $F=\Pi_k(\mathcal{A})$, let $M=H(F)$, and let $h=|M|$. Define $\sigma_M=\sum_{Q\in N/M}\nu_Q$.
\begin{itemize}
\item[\textup{(i)}] Let $\mathcal{C}=\{Q:\nu_Q>k\}$. Then
$$ |F|\geq h\left(1-k+\sigma_M-\sum_{Q\in\mathcal{C}}(\nu_Q-k)\right). $$
If $\mathcal{C}$ is empty, then this bound is $h(1-k+\sigma_M)$. If $\mathcal{C}=\{Q_0\}$, then it is $h(1+\sigma_M-\nu_{Q_0})$. If $|\mathcal{C}|\geq2$, then it is at least $h(k+1)$.
\item[\textup{(ii)}] If $\exp(N)\mid k$ and $1\notin F$, then $\nu_Q<k$ for every coset $Q$, and
$$ h(t-k+1)\leq |F|\leq m-h. $$
In particular, if $m\geq3$ and $t=k+m-3$, then
$$ M=1 \qquad\text{and}\qquad \sum_{j=1}^t(|A_j|-1)\leq1. $$
\item[\textup{(iii)}] Suppose that an automorphism $\alpha$ fixes every $A_j$ setwise and preserves $M$. Suppose also that every nonidentity orbit on $N/M$ has at least $s\geq2$ elements, every $A_j$ is contained in or disjoint from $M$, and $h(1+(s-1)k)\geq m$. If $F\neq N$, then $\nu_Q\leq k$ for every $Q\neq M$. Moreover, with $i=|\{j:A_j\subseteq M\}|$, one has
$$ |F|\geq h\bigl(1-k+\min\{k,i\}+s(t-i)\bigr). $$
\end{itemize}
\end{lemma}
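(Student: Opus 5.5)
All three parts follow from Lemma~\ref{lem:dgm} by bookkeeping with the truncated sum $\sum_Q\min\{k,\nu_Q\}$. The stabilizer $M=H(F)$ and the coset structure are used throughout.

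For (i), write $\min\{k,\nu_Q\}=\nu_Q-\max\{0,\nu_Q-k\}$. The correction vanishes exactly for $Q\notin\mathcal{C}$, so summing gives
$$\sum_Q\min\{k,\nu_Q\}=\sigma_M-\sum_{Q\in\mathcal{C}}(\nu_Q-k),$$
and the first bound follows. The case $\mathcal{C}=\varnothing$ is immediate, and $\mathcal{C}=\{Q_0\}$ is the substitution $-k-(\nu_{Q_0}-k)=-\nu_{Q_0}$. If $|\mathcal{C}|\geq2$, I will bound the sum in the original DGM form instead: two cosets each contribute $k$, so $1-k+\sum\min\geq 1-k+2k=k+1$.

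For (ii), the key observation is that $1\notin F$ forces $\nu_Q<k$. If some coset $Q=xM$ met $k$ distinct sets, pick one element from each of them. Their product lies in $x^kM=M$, because $\exp(N)\mid k$ and $N$ is abelian. This product belongs to $F$, and $F$ is a union of $M$-cosets, so $M\subseteq F$ and $1\in F$, a contradiction. Hence $\min\{k,\nu_Q\}=\nu_Q$ and $\sum_Q\nu_Q=\sum_j|\{Q:A_j\cap Q\ne\varnothing\}|\geq t$, which gives $|F|\geq h(t-k+1)$. For the upper bound, $F$ is a union of $M$-cosets avoiding the coset $M$, so $|F|\leq m-h$.

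For the particular case $t=k+m-3$, we get $h(m-2)\leq m-h$, i.e.\ $h(m-1)\leq m$, so $h=1$ once $m\geq3$. With $M=1$ every coset is a singleton, $\sigma_M=\sum_j|A_j|$, and DGM gives $|F|\geq 1-k+\sum_j|A_j|$. Combined with $|F|\leq m-1$ this yields $\sum_j(|A_j|-1)\leq m-2-t+k=1$.

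For (iii), first note that $\alpha$ permutes $N/M$ and preserves each $A_j$, hence preserves $\nu_Q$ along orbits. Suppose $\nu_{Q}>k$ for some $Q\neq M$. Its orbit has at least $s$ cosets, each with $\nu\geq k+1$, so DGM gives $|F|\geq h(1-k+sk)=h(1+(s-1)k)\geq m$. Then $F=N$, a contradiction, so $\nu_Q\leq k$ for all $Q\neq M$.

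Now split the sum over cosets. The coset $M$ has $\nu_M=i$, since each $A_j$ lies inside or outside $M$; it contributes $\min\{k,i\}$. The remaining cosets contribute $\sum_{Q\ne M}\nu_Q$, which counts pairs $(j,Q)$ with $A_j\cap Q\neq\varnothing$ and $A_j\not\subseteq M$. For each such $A_j$, the set of cosets it meets is an $\alpha$-invariant set of nonidentity cosets. It is therefore a union of orbits, each of size at least $s$, so it has at least $s$ elements. Summing over the $t-i$ such sets gives at least $s(t-i)$, and the stated bound follows.

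The main subtlety I expect is in (iii): justifying that the cosets met by $A_j$ form an $\alpha$-invariant set (this uses $\alpha(A_j)=A_j$ and $\alpha(M)=M$), and handling the edge case where no $A_j$ lies outside $M$ so that the $s(t-i)$ term is empty. Both look routine.
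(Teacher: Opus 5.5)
Your proposal is correct and follows essentially the same route as the paper: in each part you reduce to Lemma~\ref{lem:dgm} via the identity $\min\{k,\nu_Q\}=\nu_Q-\max\{0,\nu_Q-k\}$. For (ii) you use the same argument that $k$ elements from one coset multiply into $M$ when $\exp(N)\mid k$, and for (iii) the same $\alpha$-invariance of incidences along orbits of $N/M$, exactly as the paper does.
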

\begin{proof}
For (i), substitute $\min\{k,\nu_Q\}=\nu_Q-\max\{0,\nu_Q-k\}$ into the bound in Lemma~\ref{lem:dgm}. The first two cases follow immediately. If at least two cosets have incidence exceeding $k$, then those cosets contribute $2k$ to the sum in that lemma. The resulting bound is $h(1-k+2k)=h(k+1)$.

For (ii), suppose that $\nu_Q\geq k$. Choose one element of $Q$ from each of $k$ different sets. Their product lies in $Q^k=M$, since $\exp(N)\mid k$. Hence $F\cap M\neq\varnothing$. Since $F$ is a union of $M$-cosets, it would contain $M$, including the identity. This contradiction proves that $\nu_Q<k$ for every coset $Q$. Therefore each summand $\min\{k,\nu_Q\}$ in Lemma~\ref{lem:dgm} equals $\nu_Q$. Each nonempty $A_j$ contributes at least one incidence, so $|F|\geq h(1-k+t)$. The missing coset $M$ yields $|F|\leq m-h$.

When $t=k+m-3$, these bounds imply $h(m-1)\leq m$. Since $m\geq3$ and $h$ is a positive integer, $h=1$. The cosets are now singletons, so $\sigma_M=\sum_j|A_j|$, and
$$ m-1\geq |F|\geq1-k+\sum_j|A_j| =m-2+\sum_j(|A_j|-1). $$
This proves the additional assertion.

For (iii), the incidences are constant on $\alpha$-orbits of cosets. If a nonidentity coset has incidence greater than $k$, then at least $s$ distinct cosets contribute $k$ to the sum in Lemma~\ref{lem:dgm}. Hence $|F|\geq h(1-k+sk)\geq m$, contrary to $F\neq N$. Therefore $\nu_Q\leq k$ for every $Q\neq M$, so $\min\{k,\nu_Q\}=\nu_Q$ for all these cosets. Each of the $i$ sets inside $M$ contributes one incidence to $M$, and each remaining set meets at least $s$ cosets because it is $\alpha$-invariant and disjoint from $M$. Applying Lemma~\ref{lem:dgm} with these incidence bounds proves the assertion.
\end{proof}

\section{Reduction to non-abelian simple groups}\label{sec:simple-reduction}

We first record an estimate for elementary abelian groups.

\begin{lemma}\label{lem:elem-abelian-bound}
Let $p$ be prime, let $r\geq2$, and let $A=C_p^r$, an elementary abelian group of rank $r$. Then $\overline{\mathsf{E}}(A)\leq3/2$ if and only if
$$ 2\bigl(r(p-1)+1\bigr)\leq p^r. $$
This inequality holds for $r\geq3$, and for $r=2$ whenever $p\geq5$. Its only failures are $(p,r)=(2,2)$ and $(3,2)$.
\end{lemma}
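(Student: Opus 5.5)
The argument is a direct computation from the exact value of the Gao constant recorded in Lemma~\ref{lem:gao-identities}(i):
$$\mathsf{E}(C_p^r)=p^r+r(p-1).$$
Substituting this into the definition \eqref{eq:norm-gao} gives
$$\overline{\mathsf{E}}(A)=\frac{p^r+r(p-1)+1}{p^r}.$$
Hence $\overline{\mathsf{E}}(A)\leq 3/2$ holds exactly when $r(p-1)+1\leq p^r/2$, that is, when $2\bigl(r(p-1)+1\bigr)\leq p^r$. This settles the stated equivalence with no further input.

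It remains to decide for which pairs $(p,r)$ the inequality $2r(p-1)+2\leq p^r$ holds, with $p$ prime and $r\geq 2$. I would treat $r=2$ and $r\geq3$ separately.

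For $r=2$ the inequality is $4p-2\leq p^2$, i.e. $(p-2)^2\geq 2$. This holds for $p\geq 4$, hence for every prime $p\geq5$. It fails for $p=2$ (where $6>4$) and for $p=3$ (where $10>9$).

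For $r\geq3$ I would argue by induction on $r$ for fixed $p$.
\begin{itemize}
\item \emph{Base case.} At $r=3$ the inequality reads $6p-4\leq p^3$. This holds for $p=2$ (where $8\leq 8$), and for $p\geq3$ because $p^3\geq 9p>6p$.
\item \emph{Inductive step.} Passing from $r$ to $r+1$ raises the left side by $2(p-1)$ and multiplies the right side by $p$. The right side therefore grows by $(p-1)p^r\geq 2(p-1)$.
\end{itemize}
The inequality therefore propagates to all $r\geq3$.

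Collecting the cases, the only failures are $(p,r)=(2,2)$ and $(3,2)$. The one point needing care is the boundary case $p=2$, $r=3$, where equality holds, so the induction must start exactly there; everything else is routine.
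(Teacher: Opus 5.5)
Your proof is correct and follows the paper's argument. You obtain the equivalence directly from $\mathsf{E}(C_p^r)=p^r+r(p-1)$, reduce $r=2$ to $p^2-4p+2\geq0$, and handle $r\geq3$ by induction from $r=3$, where $p=2$ gives equality. The differences are cosmetic: you check the base case via $p^3\geq9p$ instead of a derivative argument, and you phrase the inductive step additively instead of via the identity $p\bigl(r(p-1)+1\bigr)-\bigl((r+1)(p-1)+1\bigr)=r(p-1)^2$.
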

\begin{proof}
By Lemma~\ref{lem:gao-identities}(i), one has $\mathsf{E}(A)=p^r+r(p-1)$, which proves the equivalence. For $r=2$, the inequality becomes $p^2-4p+2\geq0$. This fails for $p=2,3$ and holds for $p\geq5$.

For $r=3$, the inequality becomes $p^3-6p+4\geq0$. This polynomial vanishes at $p=2$ and has positive derivative for real $p\geq2$. Finally,
$$ p\bigl(r(p-1)+1\bigr)-\bigl((r+1)(p-1)+1\bigr) =r(p-1)^2\geq0. $$
Multiplying the inequality for $r$ by $p$ therefore implies the inequality for $r+1$. Induction completes the proof.
\end{proof}

Now, we transform the case of non-solvable groups into non-abelian simple groups.

\begin{proposition}\label{prop:simple-reduction}
The following assertions are equivalent:
\begin{itemize}
\item[\textup{(i)}] every non-solvable finite group $G$ satisfies $\mathsf{E}(G)\leq3|G|/2-1$;
\item[\textup{(ii)}] every non-abelian finite simple group $S$ satisfies $\mathsf{E}(S)\leq3|S|/2-1$.
\end{itemize}
\end{proposition}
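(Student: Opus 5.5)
The implication (i)$\Rightarrow$(ii) is immediate, since every non-abelian simple group is non-solvable. For (ii)$\Rightarrow$(i) I would work with the normalized invariant $\overline{\mathsf{E}}$ from \eqref{eq:norm-gao}. The target $\mathsf{E}(G)\leq 3|G|/2-1$ is exactly $\overline{\mathsf{E}}(G)\leq 3/2$, and the basic tool is the monotonicity \eqref{eq:norm-lifting}: $\overline{\mathsf{E}}(G)\leq\overline{\mathsf{E}}(H)$ for every subnormal $H\leq G$. The plan is to argue by induction on $|G|$ and locate a subnormal subgroup of $G$ whose normalized Gao constant is known to be at most $3/2$.

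Let $G$ be non-solvable. First I would take a composition series of $G$. Since $G$ is non-solvable, some composition factor $H_{i+1}/H_i$ is a non-abelian simple group $S$. This gives a chain $H_i\triangleleft H_{i+1}$ with $H_{i+1}$ subnormal in $G$. However, $S$ is a quotient of $H_{i+1}$, not a subnormal subgroup of it, so \eqref{eq:norm-lifting} does not directly give $\overline{\mathsf{E}}(H_{i+1})\leq \overline{\mathsf{E}}(S)$. I would therefore use the general normal-subgroup inequality \eqref{eq:normal-subgroup-lift} in the proof of Lemma~\ref{lem:lifting-bounds}(i). For $N\triangleleft K$ with $q=|K:N|$ it gives
$$\mathsf{E}(K)\leq q(\mathsf{E}(N)-1)+\mathsf{E}(K/N).$$
Dividing by $|K|$ yields
$$\overline{\mathsf{E}}(K)\leq \overline{\mathsf{E}}(N)+\frac{\overline{\mathsf{E}}(K/N)-\overline{\mathsf{E}}(N)}{|N|}\cdot\ldots$$
and the exact bookkeeping gives something like $\overline{\mathsf{E}}(K)\lesssim\overline{\mathsf{E}}(N)-2/|N|+\overline{\mathsf{E}}(K/N)/|N|$, essentially an average. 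That alone is not enough, so instead I would swap the roles. Apply \eqref{eq:normal-subgroup-lift} with the normal subgroup $N$ and the quotient $K/N\cong S$, sequencing extractions in $K/N$ first. Lemma~\ref{lem:repeated-extraction} extracts $\mathsf{E}(N)$ disjoint blocks of length $|S|$ with product in $N$, using $|S|(\mathsf{E}(N)-1)+\mathsf{E}(S)$ terms. With Olson's bound $\mathsf{E}(N)\leq 2|N|-1$ this is too weak, so the argument must instead reduce to a minimal counterexample $G$ and choose $N$ cleverly.

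Concretely, let $G$ be a minimal non-solvable counterexample and take a minimal normal subgroup $N$ of $G$. If $N$ is solvable, it is elementary abelian $C_p^r$ by Lemma~\ref{lem:group-structure}(i). Then $G/N$ is non-solvable of smaller order, so $\overline{\mathsf{E}}(G/N)\leq 3/2$ by induction. I would then use \eqref{eq:normal-subgroup-lift} in the form that extracts in $N$ via blocks from $G/N$; a crude estimate gives $\overline{\mathsf{E}}(G)\leq \overline{\mathsf{E}}(N)$-type control, and Lemma~\ref{lem:elem-abelian-bound} handles every $N$ except $C_2^2$ and $C_3^2$. I expect this to be the main obstacle. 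When $N\cong C_2,C_3$ is cyclic, Lemma~\ref{lem:lifting-bounds}(ii) gives $\overline{\mathsf{E}}(G)\leq\overline{\mathsf{E}}(G/N)\leq 3/2$ directly. For $N\cong C_2^2$ or $C_3^2$, I would try to find a larger subnormal subgroup. For example, $C_G(N)$ has index dividing $|\GL_2|$, which is $6$ or $48$, so it is solvable-by-small. In $C_G(N)$ the subgroup $N$ is central, and I would peel off a cyclic central subgroup $Z\leq N$ using Lemma~\ref{lem:lifting-bounds}(ii) twice, reducing to a smaller non-solvable group. If $N$ is non-solvable, then $N\cong S^k$ is subnormal, and its factor $S$ is subnormal in $G$. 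In that case (ii) and \eqref{eq:norm-lifting} give $\overline{\mathsf{E}}(G)\leq\overline{\mathsf{E}}(S)\leq3/2$ immediately.
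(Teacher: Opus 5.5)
Your final argument (the ``Concretely'' paragraph) is correct and is essentially the paper's proof: a minimal counterexample, a minimal normal subgroup $N$, Lemma~\ref{lem:elem-abelian-bound} to reduce to $N\cong C_2^2$ or $C_3^2$, solvability of $G/C_G(N)\leq\GL_2(q)$ to make $C_G(N)$ non-solvable, and Lemma~\ref{lem:lifting-bounds}(ii) applied to a central cyclic subgroup of $N$. The paper instead first uses minimality to force $C_G(N)=G$ and then obtains a cyclic normal subgroup of $G$, whereas you apply the lemma inside $C_G(N)$; both work. One slip: Lemma~\ref{lem:elem-abelian-bound} only covers rank $r\geq2$ (indeed $\overline{\mathsf{E}}(C_p)=2$), so every cyclic $N\cong C_p$, not just $C_2$ and $C_3$, must be handled by Lemma~\ref{lem:lifting-bounds}(ii), which treats all of them uniformly.
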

\begin{proof}
It remains to show that (ii) implies (i). Assume (ii), and choose a non-solvable counterexample $G$ of least order, so that $\overline{\mathsf{E}}(G)>3/2$. Every proper normal subgroup of $G$ is solvable. Otherwise, minimality and Lemma~\ref{lem:lifting-bounds}(i) would imply $\overline{\mathsf{E}}(G)\leq\overline{\mathsf{E}}(H)\leq3/2$.

Moreover, $G$ has no nontrivial cyclic normal subgroup $Z$. Indeed, $G/Z$ would be non-solvable, since an extension of a solvable group by a solvable normal subgroup is solvable. Minimality and Lemma~\ref{lem:lifting-bounds}(ii) would again imply $\overline{\mathsf{E}}(G)\leq\overline{\mathsf{E}}(G/Z)\leq3/2$.

If $G$ is simple, then assumption (ii) is already a contradiction. Otherwise, choose a minimal nontrivial normal subgroup $N<G$. The subgroup $N$ is solvable, hence elementary abelian by Lemma~\ref{lem:group-structure}(i). The absence of cyclic normal subgroups implies that $N\cong C_q^r$ with $r\geq2$. By Lemmas~\ref{lem:lifting-bounds}(i) and~\ref{lem:elem-abelian-bound}, necessarily $N\cong C_2^2$ or $C_3^2$.

In either case, $\Aut(N)$ is solvable. For $C_2^2$, the action on the three nonzero vectors identifies the automorphism group with $S_3$. For $C_3^2$, the group $\GL_2(3)$ acts on the four one-dimensional subspaces. The kernel consists of scalar matrices: a matrix preserving each coordinate line is diagonal, and preservation of the line spanned by $(1,1)$ forces its diagonal entries to be equal. Thus the kernel has order two, and the image is a subgroup of $S_4$. Hence $\GL_2(3)$ is also solvable.

The conjugation homomorphism now shows that $G/C_G(N)\leq\Aut(N)$ is solvable. Since $G$ is non-solvable, $C_G(N)$ must be non-solvable. It is normal in $G$, so the first paragraph forces $C_G(N)=G$. Consequently, $N\leq Z(G)$, and any subgroup of $N$ of order $q$ is a nontrivial cyclic normal subgroup of $G$. This is the required contradiction.
\end{proof}

\section{The solvable reduction}\label{sec:solvable-reduction}

We first settle the nilpotent case and then reduce a solvable counterexample of smallest order to five configurations. The estimates that exclude these configurations are proved in Sections~\ref{sec:a4-extensions}--\ref{sec:prime-square-quotients}.

\subsection{Nilpotent groups}\label{sec:nilpotent}

\begin{proposition}\label{prop:nilpotent-bound}
Let $G$ be a finite non-cyclic nilpotent group, and let $p$ be the smallest prime divisor of $|G|$. If $G$ has no cyclic subgroup of index $p$, then
$$ \mathsf{E}(G)\leq |G|+\frac{|G|}{p}-1. $$
\end{proposition}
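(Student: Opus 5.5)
The plan is to reduce to a single Sylow subgroup and then use the normalized invariant $\overline{\mathsf{E}}$ from \eqref{eq:norm-gao}. The target bound is equivalent to $\overline{\mathsf{E}}(G)\leq 1+1/p$. Since $G$ is nilpotent, write $G=P\times Q$, where $P$ is the Sylow $p$-subgroup and $Q$ is the direct product of the remaining Sylow subgroups. Here $P\neq1$, because $p$ divides $|G|$. I will split into two cases according to whether $P$ has a cyclic subgroup of index at most $p$.

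\emph{Case 1: $P$ is non-cyclic and has no cyclic subgroup of index $p$.} Lemma~\ref{lem:augmentation-bounds} gives $t(P)\leq |P|/p$. It therefore also gives
$$\mathsf{E}(P)\leq |P|+|P|/p-1,$$
that is, $\overline{\mathsf{E}}(P)\leq 1+1/p$. Since $P\triangleleft G$, the first inequality in \eqref{eq:norm-lifting} yields $\overline{\mathsf{E}}(G)\leq\overline{\mathsf{E}}(P)\leq 1+1/p$.

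\emph{Case 2: $P$ is cyclic, or $P$ has a cyclic subgroup $C$ of index $p$.} Suppose $Q$ were cyclic. If $P$ is cyclic, then $G=P\times Q$ is cyclic. Otherwise $C\times Q$ is a cyclic subgroup of index $p$ in $G$, since $\gcd(|C|,|Q|)=1$. Both outcomes are excluded by hypothesis, so $Q$ is non-cyclic. Hence some Sylow $q$-subgroup $Q_q$ of $Q$ is non-cyclic, where $q>p$ is prime.

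By Lemma~\ref{lem:group-structure}(iv), $Q_q$ has a quotient isomorphic to $C_q^2$. Because $G=Q_q\times(\text{complement})$, $G$ itself has a normal subgroup $N$ with $G/N\cong C_q\times C_q$. This is a (trivial-action) instance of $C_q\rtimes C_q$. Lemma~\ref{lem:gao-bounds}(ii) with $m=q$ then gives $\mathsf{E}(G)\leq |G|+|G|/q+q-2$. It remains to verify the elementary inequality
$$ \frac{|G|}{q}+q-2\;\leq\;\frac{|G|}{p}-1, \qquad\text{i.e.}\qquad q-1\leq |G|\,\frac{q-p}{pq}. $$
Since $p\mid |G|$ and $q^2\mid |G|$, we have $|G|\geq pq^2$. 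The right-hand side is therefore at least $q(q-p)\geq q>q-1$, using $q-p\geq1$. This completes Case 2 and the proof.

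The only genuinely deep input is Case 1, which rests on Koshitani's theorem through Lemma~\ref{lem:augmentation-bounds}; everything else is bookkeeping. The step needing the most care is Case 2: one must check that the hypothesis \emph{no cyclic subgroup of index $p$} really forces a non-cyclic Sylow subgroup for a prime $q>p$. The quotient bound of Lemma~\ref{lem:gao-bounds}(ii) is then strong enough only because the factor $P\neq1$ makes $|G|\geq pq^2$. A naive use of Han's bound for $Q_q$ alone would fail here, for instance when $p=2$ and $q=3$.
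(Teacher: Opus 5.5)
Your proof is correct and uses the same two arguments as the paper: the augmentation/Koshitani bound for the Sylow $p$-subgroup, lifted via $\overline{\mathsf{E}}(G)\leq\overline{\mathsf{E}}(P)$, and the Frattini quotient $C_q^2$ combined with Lemma~\ref{lem:gao-bounds}(ii) and the estimate $|G|\geq pq^2$. The only difference is organizational. You split cases on the structure of $P$, whereas the paper splits on whether some Sylow $q$-subgroup with $q>p$ is non-cyclic, and in each version the hypothesis forces the condition needed for the other argument.
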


\begin{proof}
By Lemma~\ref{lem:augmentation-bounds}, every non-cyclic $p$-group $P$ with no cyclic subgroup of index $p$ satisfies
\begin{equation} \mathsf{E}(P)\leq |P|+t(P)-1\leq |P|+\frac{|P|}{p}-1. \tag{4.1}\label{eq:pgroup-gao} \end{equation}
We apply this estimate to $G$, whose Sylow subgroups form a direct product.

\medskip\noindent\textbf{Case 1.} Some Sylow $q$-subgroup is non-cyclic for a prime $q>p$.\nopagebreak[4]

By the Frattini quotient theorem, this Sylow subgroup has a quotient $C_q^2$, so $G$ also has such a quotient. By Lemma~\ref{lem:gao-bounds}(ii),
$$ \mathsf{E}(G)\leq |G|+\frac{|G|}{q}+q-2. $$
Let $N=|G|$. Since $N\geq pq^2$, we have
$$ \frac{N}{p}-\frac{N}{q} =\frac{N(q-p)}{pq} \geq q(q-p)\geq q-1. $$
This proves the required estimate.

\medskip\noindent\textbf{Case 2.} Every Sylow $q$-subgroup with $q>p$ is cyclic.\nopagebreak[4]

Then $G=P\times D$, where $P$ is the Sylow $p$-subgroup of $G$ and $D$ is cyclic of order prime to $p$. The group $P$ is non-cyclic. A cyclic subgroup of index $p$ in $P$, together with $D$, would give a cyclic subgroup of index $p$ in $G$. Thus $P$ has no cyclic subgroup of index $p$. Therefore, the estimate from Lemma~\ref{lem:augmentation-bounds} applies to $P$. By Lemma~\ref{lem:lifting-bounds}(i),
$$ \mathsf{E}(G)+1 \leq |D|\bigl(\mathsf{E}(P)+1\bigr) \leq |D|\left(|P|+\frac{|P|}{p}\right) =|G|+\frac{|G|}{p}. $$
\end{proof}

\subsection{The five configurations}\label{sec:solvable-configs}

\begin{theorem}\label{thm:solvable-configs}
Suppose that the asserted bound fails for a finite non-cyclic solvable group, and choose a counterexample $G$ of smallest order. Let $n=|G|$, and let $p$ be the smallest prime divisor of $n$. Thus $G$ has no cyclic subgroup of index $p$, and
$$ \mathsf{E}(G)>n+\frac{n}{p}-1. $$
Then $G$ is not nilpotent, and it satisfies one of the following five alternatives.
\begin{itemize}
\item[(S1)] The prime $p$ is $2$, and $G$ has a cyclic normal subgroup $C$ such that $G/C\cong A_4$.
\item[(S2)] The prime $p$ is odd, $q=2p-1$ is prime, and $G\cong C_q^2\rtimes C_p$, with an irreducible action on $C_q^2$.
\item[(S3)] The numbers $p$ and $q$ are odd primes with $p<q<2p$, and $s>q$ is a prime such that $q\mid s-1$. Moreover,
$$ G\cong C_p\times(C_s\rtimes C_q) \qquad\text{and}\qquad s(q-p)<p(q-1). $$
The action of $C_q$ on $C_s$ is nontrivial.
\item[(S4)] The group $G$ has a cyclic normal subgroup $B$ such that $G/B\cong C_p^2$.
\item[(S5)] The group $G$ is isomorphic to $C_m\rtimes C_{p^2}$, where $\gcd(m,p)=1$ and the image of the action of $C_{p^2}$ has order $p^2$.
\end{itemize}
\end{theorem}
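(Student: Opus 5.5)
The plan is a minimal-counterexample reduction driven by the normalized inequalities \eqref{eq:norm-lifting}. The target is $\overline{\mathsf{E}}(G)\leq 1+1/p$; the counterexample satisfies $\overline{\mathsf{E}}(G)>1+1/p$. First, $G$ is not nilpotent by Proposition~\ref{prop:nilpotent-bound}.

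The key observation is that the target gets easier under passage to subgroups and quotients. Every subnormal subgroup $H$ and every quotient $G/Z$ by a cyclic normal $Z$ has smallest prime divisor $p'\geq p$. If such a group is non-cyclic with no cyclic subgroup of index $p'$, then minimality gives $\overline{\mathsf{E}}\leq 1+1/p'\leq 1+1/p$, and \eqref{eq:norm-lifting} yields a contradiction. Hence every proper subnormal subgroup and every quotient by a nontrivial cyclic normal subgroup is \emph{exceptional}, meaning one of the following:
\begin{itemize}
\item it is cyclic, so $\overline{\mathsf{E}}=2-1/|\cdot|$;
\item it has a cyclic subgroup of index $p'$ and satisfies
$$1+\frac{1}{p'}+\frac{p'-1}{|\cdot|}>1+\frac1p,$$
using Lemma~\ref{lem:gao-identities}(ii);
\item it is a non-abelian subgroup covered by some other explicit value, such as Lemma~\ref{lem:gao-identities}(i) for abelian pieces.
\end{itemize}
I will also use Lemma~\ref{lem:gao-bounds}(ii): any quotient $C_m\rtimes C_m$ gives $\mathsf{E}(G)\leq n+n/m+m-2$. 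The Case~1 computation in Proposition~\ref{prop:nilpotent-bound} shows this already suffices whenever $m>p$ is prime and $n\geq pm^2$.

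Next, take a minimal normal subgroup $N\cong C_q^r$ (Lemma~\ref{lem:group-structure}(i)).

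If $r=1$, then $G/N$ is exceptional. If $G/N$ is cyclic, then $G\cong C_m\rtimes C_k$, and I analyse it by Sylow decomposition and the kernel of the action (Lemma~\ref{lem:group-structure}(v),(viii)). The condition that $G$ has no cyclic subgroup of index $p$ forces the acting cyclic $p$-part to have faithful image of order $p^2$, which is (S5). Otherwise a second prime is involved; that case either produces a $C_q^2$ quotient handled as above, or reduces to the subgroup analysis below. If $G/N$ has a cyclic subgroup of index $p'$, the numerical inequality forces $p'=p$ except for small orders. Then $G/N$ having a cyclic normal subgroup of index $p$ with quotient structure $C_p^2$ over a cyclic normal $B$ gives (S4). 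In the $p=2$ case, the remaining exceptional structures (dihedral-type quotients with a $C_2^2$ layer acted on by $C_3$) lead to $G/C\cong A_4$, which is (S1).

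If $r\geq2$, I use the subnormal subgroup $N$ together with Lemma~\ref{lem:elem-abelian-bound}. Here
$$\overline{\mathsf{E}}(C_q^r)=\frac{q^r+r(q-1)+1}{q^r},$$
and this is $\leq 1+1/p$ unless $r=2$ and $q$ is small relative to $p$. Precisely, the failure is
$$\frac{2(q-1)+1}{q^2}>\frac1p, \qquad\text{that is,}\qquad p(2q-1)>q^2.$$
Combined with $q\geq p$ and a further subnormal subgroup $N\rtimes C_p$ in which the action is irreducible, this should leave $q=2p-1$ (and $C_2^2$ when $p=2$, returning to (S1)). Minimality then forces $G=C_q^2\rtimes C_p$, giving (S2).

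Finally, (S3) arises when all Sylow subgroups are cyclic (a metacyclic $Z$-group) but the smallest prime $p$ acts trivially. Then $G\cong C_p\times K$, and subnormal subgroups $C_s\rtimes C_q$ with $p<q$ must be exceptional. The condition that $C_s\rtimes C_q$ has a cyclic subgroup of index $q$ and violates the $1+1/p$ bound is exactly
$$1+\frac1q+\frac{q-1}{sq}>1+\frac1p,$$
which is equivalent to $s(q-p)<p(q-1)$, and it forces $q<2p$.

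I expect the main obstacle to be this bookkeeping. The exceptional subgroups and quotients have a different smallest prime, and one must show that the numerical inequalities from Lemma~\ref{lem:gao-identities}(ii) leave only the five listed shapes, without overlooking mixed cases where both a subgroup and a quotient are exceptional. I would attack it by first applying the quotient inequality with $Z$ ranging over cyclic minimal normal subgroups, and only then using subnormal subgroups to pin down the action.
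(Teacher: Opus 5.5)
\textbf{Gap 1: two groups of order 18.} Your reduction principle is sound: every proper subnormal subgroup, and every quotient by a nontrivial cyclic normal subgroup, must be cyclic or must have a cyclic subgroup of index equal to its smallest prime with $\overline{\mathsf{E}}>1+1/p$. This is the paper's Claim~1 combined with the lifting inequality it uses in Case~2, and your inequalities $s(q-p)<p(q-1)$ and $p(2q-1)>q^2$ are correct. (One slip: $\overline{\mathsf{E}}(C_k)=2$, not $2-1/k$.)

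The principle cannot finish the proof by itself, however, and your case analysis never supplies the missing input. Take $S_3\times C_3$ and $(C_3\times C_3)\rtimes_{-1}C_2$. Their proper subnormal subgroups are cyclic, $C_3^2$ (with $\overline{\mathsf{E}}=14/9>3/2$), or $S_3$ (with $\overline{\mathsf{E}}=5/3>3/2$). Their quotients by nontrivial cyclic normal subgroups are $C_6$ or $S_3$, and they have no quotient $C_m\rtimes C_m$ with $m\geq2$. So these groups satisfy every constraint you impose, yet neither fits (S1)--(S5). In your $r=1$ branch, take $N$ to be the central $C_3$ of $S_3\times C_3$. Then $G/N\cong S_3$ has a cyclic subgroup of index $2$, but $G$ has no cyclic normal subgroup of index $4$, since $4\nmid18$. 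So your claim that this situation yields (S4) is false. The paper excludes these two groups only through the explicit bound $\mathsf{E}\leq 26=n+n/p-1$ of Lemma~\ref{lem:gao-bounds}(iv), in its Case~2.1, and your reduction has no way to exclude them.

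\textbf{Gap 2: no mechanism for the cyclic kernel or for $|G|$.} Decomposing along a minimal normal subgroup $N$ gives no way to produce the cyclic normal subgroups in (S1), (S4), (S5), or to pin down $|G|$ in (S2), (S3). If $N\cong C_q$ and $G/N$ has a cyclic subgroup of index $p$, its preimage in $G$ is only metacyclic, so a cyclic normal $B$ with $G/B\cong C_p^2$ does not follow. Likewise nothing in your $r\geq2$ branch bounds $|G:N|$, so ``minimality then forces $G=C_q^2\rtimes C_p$'' and ``$G\cong C_p\times K$'' are unsupported.

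The paper instead splits on whether $G$ has a normal subgroup of index $p$. Its key tool is Claim~2: if $H\trianglelefteq G$ has a cyclic subgroup $A$ of index $p$ that is not normal in $G$, then $H=AA^g$ is nilpotent and $H/\core_G(A)\cong C_p^2$.
\begin{itemize}
\item If $H$ has prime index $q>p$, the nontrivial $C_q$-action on $C_p^2$ forces $q\mid p+1$. Hence $(p,q)=(2,3)$ and $G/\core_G(A)\cong A_4$, which is (S1).
\item If $H$ has index $p$ and $p\mid|H|$, the paper takes a second index-$p$ normal subgroup $K$; its cyclic index-$p$ subgroup $L$ must equal $K\cap H$. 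This gives a cyclic normal $B$ of index $p^2$ (Claim~3), and (S4) or (S5) is read off from $G/B$.
\item If $p\nmid|H|$, the paper applies your inequality to $H$ itself. From $|H|(q-p)<pq(q-1)$ and $|H|\geq q^2$ it gets $|H|<q^3$, so $|H|=qs$ and $|G|=pqs$.
\end{itemize}
The direct-product form in (S3) then needs one more observation. An action of order $pq$ on $C_s$ would require $pq\mid s-1$, which contradicts $s<pq$.
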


\begin{proof}
By Proposition~\ref{prop:nilpotent-bound}, the group $G$ is not nilpotent. All splittings over normal Hall subgroups below use the Schur--Zassenhaus theorem, as recorded in Lemma~\ref{lem:group-structure}. We first record the consequences of minimality and of the existence of a cyclic subgroup of prime index.

\medskip\noindent\textbf{Claim 1.} Every nontrivial proper subnormal subgroup $K$ of $G$ has a cyclic subgroup of index equal to the smallest prime divisor $q$ of $|K|$. Moreover, if $C\mathbin{\trianglelefteq} G$ is nontrivial and cyclic, and $G/C$ is non-cyclic with smallest prime divisor $p$, then $G/C$ has a cyclic subgroup of index $p$.\nopagebreak[4]

\begin{proof}[Proof of Claim~1]
The assertion for cyclic $K$ is immediate. If a non-cyclic subgroup $K$ failed to have the asserted property, then minimality and Lemma~\ref{lem:lifting-bounds} would imply
$$ \overline{\mathsf{E}}(G)\leq \overline{\mathsf{E}}(K)\leq 1+\frac{1}{q}\leq 1+\frac{1}{p}, $$
contrary to the choice of $G$. If the assertion about $G/C$ failed, then the same argument, applied to the cyclic normal quotient, would yield
$$ \overline{\mathsf{E}}(G)\leq \overline{\mathsf{E}}(G/C)\leq 1+\frac{1}{p}. $$
This is again a contradiction.
\end{proof}

\medskip\noindent\textbf{Claim 2.} Suppose that $H\mathbin{\trianglelefteq} G$ has a cyclic normal subgroup $A$ of index $p$, and that $A$ is not normal in $G$. Then $H$ is nilpotent. Moreover, if $C=\core_G(A)$, then\nopagebreak[4]
\begin{equation} H/C\cong C_p^2. \tag{4.2}\label{eq:elem-abelian-quotient} \end{equation}

\begin{proof}[Proof of Claim~2]
Since $A$ is not normal in $G$, there is an element $g\in G$ such that $A^g\neq A$. The normality of $H$ in $G$ implies that $A^g\leq H$. Moreover, conjugating $A\triangleleft H$ by $g$ gives $A^g\triangleleft H^g=H$, and $|H:A^g|=|H:A|=p$. Thus $A$ and $A^g$ are distinct maximal subgroups of $H$, so $H=AA^g$. Both factors are cyclic and normal in $H$. Fitting's theorem therefore implies that $H$ is nilpotent.

We now describe the diagonal map explicitly. For every $g\in G$, let
$$ \pi_g:H\longrightarrow H/A^g,\qquad\text{and}\qquad \pi_g(h)=hA^g. $$
Each $\pi_g$ is a homomorphism because $A^g\triangleleft H$, and each quotient $H/A^g$ has order $p$, so it is isomorphic to $C_p$. Taking all these quotient maps as coordinates gives the diagonal homomorphism
$$ \Delta:H\longrightarrow\prod_{g\in G}H/A^g,$$
where $\Delta(h)=(hA^g)_{g\in G}$.
Its kernel consists of the elements lying in every conjugate of $A$; hence
$$ \ker\Delta=\bigcap_{g\in G}A^g=\core_G(A)=C. $$
In particular, $C\triangleleft H$, and $\Delta$ induces the injective homomorphism
$$ \overline{\Delta}:H/C\longrightarrow\prod_{g\in G}H/A^g,$$
where $\overline{\Delta}(hC)=(hA^g)_{g\in G}$.

The direct product on the right is elementary abelian of exponent $p$. Its subgroup $\overline{\Delta}(H/C)$, and consequently $H/C$, is also elementary abelian. Since $C\leq A$ and $A$ is cyclic, $A/C$ is cyclic. It is nontrivial: equality $C=A$ would make $A$ normal in $G$. A nontrivial cyclic subgroup of an elementary abelian $p$-group has order $p$, so $|A:C|=p$. Finally,
$$ |H:C|=|H:A|\,|A:C|=p\cdot p=p^2. $$
Thus $H/C\cong C_p^2$, proving the claim.
\end{proof}

We distinguish three cases.

\medskip\noindent\textbf{Case 1.} The group $G$ has no normal subgroup of index $p$.\nopagebreak[4]

Solvability supplies a normal subgroup $H_0$ of prime index $q$, and the assumption in this case implies that $q>p$. Since $p\mid |H_0|$, Claim~1 supplies a cyclic subgroup $A$ of index $p$ in $H_0$. A subgroup whose index is the smallest prime divisor of the group order is normal; hence $A\mathbin{\trianglelefteq} H_0$. If $A\mathbin{\trianglelefteq} G$, then $G/A$ has order $pq$. Its Sylow $q$-subgroup is normal, because the number of such subgroups divides $p$ and is congruent to $1$ modulo $q$. The inverse image of this Sylow subgroup would be normal of index $p$ in $G$, a contradiction.

Consequently, $A$ is not normal in $G$. By Claim~2, the subgroup $C=\core_G(A)$ satisfies $H_0/C\cong C_p^2$. Conjugation by $G$ on this quotient factors through $G/H_0\cong C_q$. This action is nontrivial: otherwise $A/C$ would be invariant under $G/C$, and $A$ would be normal in $G$. Therefore
$$ q\mid |\GL_2(p)|=p(p-1)^2(p+1). $$
Since $q>p$, this forces $q\mid p+1$, and hence $q=p+1$. The only consecutive primes with this property are $(p,q)=(2,3)$. By the Schur--Zassenhaus theorem, $G/C\cong C_2^2\rtimes C_3$. The nontrivial action is the unique action of order three on the three nonidentity elements of $C_2^2$. Thus $G/C\cong A_4$. Since $C\leq A$ is cyclic, alternative (S1) holds.

For the remaining cases, fix $H\mathbin{\trianglelefteq} G$ with $|G:H|=p$. The hypothesis on $G$ implies that $H$ is non-cyclic.

\medskip\noindent\textbf{Case 2.} The prime $p$ does not divide $|H|$.\nopagebreak[4]

Let $q$ be the smallest prime divisor of $|H|$. Then $q>p$. By Claim~1, the group $H$ has a cyclic subgroup of index $q$. By Lemmas~\ref{lem:gao-identities} and~\ref{lem:lifting-bounds},
$$\mathsf{E}(G)+1 \leq p\bigl(\mathsf{E}(H)+1\bigr)=p\left(|H|+\frac{|H|}{q}+q-1\right) =n+\frac{n}{q}+p(q-1).$$
Comparison with $\mathsf{E}(G)+1>n+n/p$ yields
\begin{equation} |H|(q-p)<pq(q-1). \tag{4.3}\label{eq:hall-size-constraint} \end{equation}
Since $H$ is non-cyclic, $|H|\geq q^2$. Hence $q(q-p)<p(q-1)$, which implies that $q<2p$. Moreover,
$$ |H|<\frac{pq(q-1)}{q-p}<q^3. $$
The last inequality follows from $p<q$ and $q-p\geq1$. Every prime divisor of $|H|$ is at least $q$, and $|H|$ has at least two prime factors counted with multiplicity. Since $|H|<q^3$, it has exactly two such prime factors. Therefore $|H|=qs$ for a prime $s\geq q$.

\medskip\noindent\textbf{Case 2.1.} The prime $p$ is $2$.\nopagebreak[4]

Then $q=3$, and \eqref{eq:hall-size-constraint} implies that $|H|<12$, so $s=3$. Since $H$ is non-cyclic, $H\cong C_3^2$. The Schur--Zassenhaus theorem gives $G\cong C_3^2\rtimes C_2$. The action cannot be trivial, because $G$ is not nilpotent. An involution over $\mathbb{F}_3$ is diagonalizable with eigenvalues in $\{1,-1\}$. The two nontrivial possibilities give $S_3\times C_3$ and $C_3^2\rtimes_{-1}C_2$. By Lemma~\ref{lem:gao-bounds}(iv), both groups satisfy $\mathsf{E}(G)\leq26$, whereas $n+n/p-1=26$. This contradiction shows that $p$ is odd.

\medskip\noindent\textbf{Case 2.2.} The prime $p$ is odd and $s=q$.\nopagebreak[4]

Then $H\cong C_q^2$ and $G\cong C_q^2\rtimes C_p$. The action is nontrivial, so $p\mid q(q-1)^2(q+1)$. Since $p<q<2p$ and both primes are odd, we have $p\nmid q-1$: the only possible multiple of $p$ in the relevant interval would give $q=p+1$, which is even. It follows that $p\mid q+1$, and therefore $q+1=2p$.

Maschke's theorem implies that the action is completely reducible. Every one-dimensional action would be trivial, since $p\nmid q-1$. A reducible action would therefore be trivial, so the action is irreducible. Thus alternative (S2) holds.

\medskip\noindent\textbf{Case 2.3.} The prime $p$ is odd and $s>q$.\nopagebreak[4]

The non-cyclic group $H$ of order $qs$ is isomorphic to $C_s\rtimes C_q$ with a nontrivial action, and $q\mid s-1$. Indeed, its Sylow $s$-subgroup is normal, and a trivial complementary action would make $H$ cyclic. Its unique Sylow $s$-subgroup $H_s$ is characteristic in $H$, and hence normal in $G$. The quotient $G/H_s$ has order $pq$ and is cyclic: its Sylow $q$-subgroup is normal, and $p\nmid q-1$ rules out a nontrivial action. Thus $G\cong C_s\rtimes C_{pq}$.

The action has order $q$ or $pq$, because its restriction to the subgroup of order $q$ is nontrivial. However, \eqref{eq:hall-size-constraint} implies that
$$ s<\frac{p(q-1)}{q-p}<pq, $$
whereas an action of order $pq$ on $C_s$ would require $pq\mid s-1$. The action therefore has order $q$, and the subgroup of order $p$ in the complement acts trivially. This proves both the direct product description and the inequality in alternative (S3).

\medskip\noindent\textbf{Case 3.} The prime $p$ divides $|H|$.\nopagebreak[4]

By Claim~1, there is a cyclic subgroup $A\mathbin{\trianglelefteq} H$ of index $p$.

\medskip\noindent\textbf{Claim 3.} The group $G$ has a cyclic normal subgroup $B$ of index $p^2$.\nopagebreak[4]

\begin{proof}[Proof of Claim~3]
If $A\mathbin{\trianglelefteq} G$, then take $B=A$. Otherwise, by Claim~2, the group $H$ is nilpotent. Let $D$ be its Hall $p'$-subgroup. The subgroup $D$ is characteristic in $H$, lies in $A$, and is cyclic. Consequently $D\mathbin{\trianglelefteq} G$. Moreover, $H\leq C_G(D)$. If $C_G(D)=G$, then the cyclic normal Hall subgroup $D$ is central, and a complement gives $G=D\times P$ for a $p$-group $P$. This would make $G$ nilpotent. Hence $C_G(D)=H$.

The quotient $H/D$ is a non-cyclic $p$-group. Otherwise the nilpotent group $H$ would be a direct product of two cyclic groups of coprime orders, and hence would be cyclic. Thus $G/D$ is also a non-cyclic $p$-group. Its Frattini quotient has dimension at least two, so $G/D$ has a maximal subgroup different from $H/D$. The inverse image $K$ is normal of index $p$ in $G$ and differs from $H$.

By Claim~1, the group $K$ has a cyclic normal subgroup $L$ of index $p$. Since $|\varphi_L(D)|\mid \gcd(|D|, |K/L|)=1$, the image of $D$ in $K/L$ is trivial, so $D\leq L$. Since $L$ is cyclic, it centralizes $D$, and hence $L\leq C_G(D)=H$. Both $L$ and $K\cap H$ have order $n/p^2$, so $L=K\cap H$. Thus $B=L$ is cyclic and normal in $G$, with index $p^2$.
\end{proof}

Fix $B$ as in Claim~3, and let $b=|B|$.

\medskip\noindent\textbf{Case 3.1.} The quotient $G/B$ is isomorphic to $C_p^2$.\nopagebreak[4]

This is alternative (S4).

\medskip\noindent\textbf{Case 3.2.} The quotient $G/B$ is isomorphic to $C_{p^2}$ and $p\nmid b$.\nopagebreak[4]

Then $G\cong C_b\rtimes C_{p^2}$. If the action had order at most $p$, then the subgroup of order $p$ in the complement would centralize $C_b$. Its product with $C_b$ would then be cyclic of order $bp$, and hence of index $p$ in $G$. This contradicts the hypothesis on $G$. Thus the action has order $p^2$, and alternative (S5) holds.

\medskip\noindent\textbf{Case 3.3.} The quotient $G/B$ is isomorphic to $C_{p^2}$ and $p\mid b$.\nopagebreak[4]

Let $Z$ be the unique subgroup of order $p$ in $B$. The subgroup $Z$ is normal in $G$, and the image of its conjugation action has order dividing both $|G|$ and $p-1$. Since $p$ is the smallest prime divisor of $|G|$, this image is trivial. Hence $Z\leq Z(G)$. The quotient $G/Z$ is non-cyclic, since a cyclic quotient by a central subgroup would make $G$ abelian. Since $p^3\mid |G|$, the smallest prime divisor of $|G/Z|$ is still $p$. Claim~1 therefore supplies a cyclic subgroup of index $p$ in $G/Z$.

Let $H_1$ be the inverse image of a cyclic subgroup of index $p$ in $G/Z$. Then $H_1\mathbin{\trianglelefteq} G$, $|G:H_1|=p$, and $H_1/Z$ is cyclic. Since $Z$ is central, $H_1$ is abelian. The hypothesis on $G$ implies that $H_1$ is non-cyclic. Also $B\leq H_1$: otherwise $G=BH_1$ would be a product of two nilpotent normal subgroups and hence nilpotent.

Choose $u\in H_1$ whose image generates the cyclic group $H_1/Z$, which has order $b$. The order of $u$ is $b$ or $pb$. The latter possibility would make $H_1$ cyclic, so $|u|=b$ and $\langle u\rangle\cap Z=1$. Since $B/Z$ is the unique subgroup of index $p$ in $H_1/Z$, we obtain
$$ H_1=\langle u\rangle\times Z \qquad\text{and}\qquad B=\langle u^p\rangle\times Z. $$
The cyclicity of $B$ forces $\gcd(b/p,p)=1$. Thus $D=\langle u^p\rangle$ is the characteristic Hall $p'$-subgroup of $B$, so $D\mathbin{\trianglelefteq} G$.

Now $Q=G/D$ has order $p^3$. The subgroup $B/D$ has order $p$ and is central in $Q$, while $Q/(B/D)\cong C_{p^2}$ is cyclic. Therefore $Q$ is abelian. It contains $H_1/D\cong C_p^2$, which excludes the cyclic group of order $p^3$, and it has a cyclic quotient of order $p^2$, which excludes $C_p^3$. Consequently $Q\cong C_p\times C_{p^2}$.

Let $K$ be the inverse image in $G$ of $\Phi(Q)=Q^p$, which has order $p$. The Frattini subgroup lies in every maximal subgroup, so $K\leq H_1$. Thus $K$ is abelian, its Hall $p'$-subgroup $D$ is cyclic, and its Sylow $p$-subgroup has order $p$. It follows that $K$ is cyclic. Finally,
$$ G/K\cong Q/\Phi(Q)\cong C_p^2. $$
This is alternative (S4), and the reduction is complete.
\end{proof}

\subsection{Completion of the solvable proof}\label{sec:solvable-completion}

The estimates proved below exclude the five configurations as follows.
\begin{center}
\begin{tabular}{@{}lll@{}}
\hline
Case & Structure & Estimate\\
\hline
(S1) & Cyclic normal kernel, quotient $A_4$ & Proposition~\ref{prop:a4-extension}\\
(S2) & $C_q^2\rtimes C_p$, irreducible action & Proposition~\ref{prop:irred-frobenius}\\
(S3) & $C_p\times(C_s\rtimes C_q)$ & Proposition~\ref{prop:frobenius-direct-product}\\
(S4) & Cyclic normal kernel, quotient $C_p^2$ & Proposition~\ref{prop:elem-p2-quotient}\\
(S5) & $C_m\rtimes C_{p^2}$, action of order $p^2$ & Proposition~\ref{prop:cyclic-p2-quotient}\\
\hline
\end{tabular}
\end{center}

\begin{corollary}\label{cor:solvable-bound}
Let $G$ be a finite non-cyclic solvable group, and let $p$ be the smallest prime divisor of $|G|$. If $G$ has no cyclic subgroup of index $p$, then
$$ \mathsf{E}(G)\leq |G|+\frac{|G|}{p}-1. $$
\end{corollary}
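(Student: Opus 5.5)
The corollary is a minimal-counterexample argument assembled from Theorem~\ref{thm:solvable-configs} and the five exclusion estimates listed in the table. Suppose the corollary fails, and among all finite non-cyclic solvable groups with no cyclic subgroup of index equal to the smallest prime divisor of the order and with $\mathsf{E}(G)>|G|+|G|/p-1$, choose $G$ of smallest order. This is exactly the situation of Theorem~\ref{thm:solvable-configs}. That theorem first uses Proposition~\ref{prop:nilpotent-bound} to conclude that $G$ is not nilpotent, and then shows that $G$ falls into one of the configurations (S1)--(S5).

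The next step is to rule out each configuration with the proposition indicated in the table. Proposition~\ref{prop:a4-extension} handles (S1), Proposition~\ref{prop:irred-frobenius} handles (S2), Proposition~\ref{prop:frobenius-direct-product} handles (S3), Proposition~\ref{prop:elem-p2-quotient} handles (S4), and Proposition~\ref{prop:cyclic-p2-quotient} handles (S5). I expect each of these propositions to take one of two forms. Either it shows directly that a group of the stated shape, still assumed to have no cyclic subgroup of index $p$, satisfies $\mathsf{E}(G)\leq |G|+|G|/p-1$. Or it proves the bound under the extra minimality hypotheses recorded in Claims~1--3, namely that every proper subnormal subgroup, and every quotient by a nontrivial cyclic normal subgroup, has a cyclic subgroup of prime index.

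In the second form, I would check at this point that $G$ really inherits those hypotheses from minimality, which is precisely what Claim~1 of Theorem~\ref{thm:solvable-configs} supplies. Once this is verified, every alternative contradicts $\mathsf{E}(G)>|G|+|G|/p-1$. Hence no counterexample exists, and the bound holds for all such solvable groups.

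The main obstacle is the bookkeeping of hypotheses rather than any new estimate. For each configuration I must confirm that the proposition's hypotheses match exactly what Theorem~\ref{thm:solvable-configs} delivers. For instance, in (S4) the quotient $G/B\cong C_p^2$ arises in two different ways, from Case 3.1 and from Case 3.3, so the proposition must cover both. I must also confirm that the standing assumption that $G$ has no cyclic subgroup of index $p$ is carried into each proposition.
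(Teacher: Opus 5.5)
Your argument is correct and is the paper's proof: take a counterexample of least order, use Theorem~\ref{thm:solvable-configs} to place it in one of (S1)--(S5), and contradict each alternative with the corresponding proposition from the table. The hypothesis bookkeeping you anticipate is lighter than you expect. Each of the five propositions is stated unconditionally for its configuration, and its bound equals $|G|+|G|/p-1$ for that group. Only Proposition~\ref{prop:elem-p2-quotient} carries the hypothesis of no cyclic subgroup of index $p$, which $G$ satisfies, and it covers both sources of (S4). So no minimality hypotheses need to be transferred.
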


\begin{proof}
If the statement were false, then choose a counterexample of smallest order. By Theorem~\ref{thm:solvable-configs}, it satisfies one of alternatives (S1)--(S5). The corresponding proposition in the table establishes the asserted bound in each case, a contradiction.
\end{proof}

\section{Cyclic extensions of \texorpdfstring{$A_4$}{A4}}\label{sec:a4-extensions}

\begin{lemma}\label{lem:a4-six-term}
Every sequence of length $11$ over $A_4$ has a product-one subsequence of length six. Equivalently, $\mathsf{s}_6(A_4)\leq11$.
\end{lemma}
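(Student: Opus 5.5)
We prove that every sequence $S$ of length $11$ over $A_4$ contains a product-one subsequence of length six. Write $V=C_2^2\triangleleft A_4$ and $\varphi=\varphi_V:A_4\to C_3$. The plan is to build the six-term subsequence as a concatenation of blocks whose products lie in $V$, and then to adjust the $V$-components of these blocks using Lemma~\ref{lem:orbit-insertion}. We split according to the number $a=|S_V|$ of terms of $S$ lying in $V$.

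\emph{Step 1: few terms in $V$.} Suppose $a$ is small. Then at least $11-a$ terms of $S$ map to nonzero elements of $C_3$. Their images take only the two values $1$ and $2$, so at least $\lceil (11-a)/2\rceil$ of them share one image. This gives a large multiplicity class of terms $g\notin V$ with a common image in $C_3$. Any three such terms, or a pair $g,h$ with $\varphi(g)\neq\varphi(h)$, form a block whose product lies in $V$. We pick two disjoint such blocks of total length six, or one block together with terms from $V$. The cosets $xV$ and $x^2V$ are each mapped to one another's inverses, and elements of order $3$ in these cosets conjugate transitively on $V\setminus\{1\}$. We order each block so that its prefixes meet all three cosets of $V$, and then apply the second part of Lemma~\ref{lem:orbit-insertion} with $N=V$. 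For a block product $c\neq1$ this gives $O(c)=V\setminus\{1\}$. For two such blocks, the product set then contains $(V\setminus\{1\})(V\setminus\{1\})=V\ni1$. A block with product $1$ only helps. Rotating a block (as in that lemma) also realizes every conjugate, so within each block we may reorder freely.

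\emph{Step 2: many terms in $V$.} If $a$ is large, we use $\mathsf{E}(C_2^2)=6$ from Lemma~\ref{lem:gao-identities}(i) together with $\mathsf{s}_{6}(C_2^2)=\mathsf{s}_{3\cdot 2}(C_2)$-type extraction. By Lemma~\ref{lem:repeated-extraction}, $\mathsf{s}_{2k}(C_2^2)\le 2k+2$ for $k\ge2$, so eight terms of $V$ already contain a six-term zero-sum subsequence. In the intermediate range, we combine one three-term or two-term block from outside $V$, of product $c\in V$, with terms from $V$. Inserting a $V$-pair or a single $V$-term at varying prefix positions realizes a whole conjugation orbit. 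The counting then reduces to showing that the available products in $V$, namely sums over subsequences of $S_V$ of the required length plus an orbit $O(c)$, contain $1$. This is a finite check in $C_2^2$ done with Lemma~\ref{lem:dgm} or by hand.

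\emph{Main obstacle.} The delicate cases are the intermediate values of $a$ (around $3$ to $5$) with very unbalanced coset multiplicities outside $V$. An example is a sequence with many copies of a single element $g$ of order $3$, where $g^{[3]}$ is product-one but may use up too few distinct terms. There Step~1 gives only one non-trivial block. We will handle these cases by direct enumeration up to conjugation and the automorphism swapping the two nontrivial cosets, using that $h(S)\ge3$ terms $g$ with $g^3=1$ give blocks of product $1$. The extremal example, $5$ copies of $1$ and $5$ copies of a fixed $3$-cycle, has length $10$ and no six-term product-one subsequence, which confirms that $11$ is sharp. We will use it to guide the case analysis.
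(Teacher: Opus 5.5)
There is a genuine gap. Your proposal defers exactly the cases that carry the content of the lemma (``a finite check'', ``direct enumeration up to conjugation''), and the steps you do spell out fail as stated.

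\begin{itemize}
\item The second part of Lemma~\ref{lem:orbit-insertion} needs the prefixes of the block to represent all three cosets of $V$. For a two-term block $g\bm\cdot h$ with $\varphi(g)\neq\varphi(h)$, the prefixes $1,g$ meet only two cosets. You therefore get only the sets $O_T(v)=\{v,gvg^{-1}\}$, not full orbits.
\item The claim that ``a block with product $1$ only helps'' is false. Two blocks combine to give $1$ in this way only if their products are both trivial or both nontrivial, and nothing in your Step~1 guarantees this. Concretely, identify $V$ with $(\mathbb{F}_4,+)$ and write $A_4=V\rtimes\langle x\rangle$ with $xvx^{-1}=\omega v$. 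An ordering $(\alpha_1x,\ldots,\alpha_6x)$ of six terms of $Vx$ has product $\sum_{i=1}^{6}\omega^{i-1}\alpha_i\in V$, where the identity is $0\in\mathbb{F}_4$. Take $y=\alpha x$ and $w=\gamma x$ with $\alpha\neq\gamma$. Then $y^{[3]}$ has product set $\{1\}$, while $y\bm\cdot y\bm\cdot w$ has only nonidentity products. In fact $y^{[5]}\bm\cdot w$ has no product-one ordering at all: with $w$ in position $i_0$, the product is $\omega^{i_0-1}(\alpha+\gamma)\neq0$.
\item Your extremal example is not one, since $1^{[3]}\bm\cdot g^{[3]}$ is a six-term product-one subsequence of $1^{[5]}\bm\cdot g^{[5]}$.
\item Lemma~\ref{lem:repeated-extraction} does not give $\mathsf{s}_6(C_2^2)\leq8$, because $6$ is not a multiple of $4$. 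The bound is true but needs its own argument.
\end{itemize}

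The missing idea is a rigidity statement that collapses the case analysis. Suppose $|S|=11$ and $1\notin\Pi_6(S)$. Let $T$ be any three terms whose image in $C_3$ is product-one.
\begin{itemize}
\item Applying EGZ in $C_3$ twice to the other eight terms gives two further disjoint such triples $U$ and $W$.
\item No product of an element of one of $\pi(T),\pi(U),\pi(W)$ with an element of another equals $1$. Lemma~\ref{lem:setpartition-conseq}(ii), with $N=V$, $m=4$, $k=2$ and $t=3=k+m-3$, then gives $|\pi(T)|+|\pi(U)|+|\pi(W)|\leq4$. Hence $|\pi(T)|\leq2$ for every such $T$.
\item In a nonidentity coset with at least three terms, all terms are equal. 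A nonconstant triple there has two distinct products, one of them nonidentity, and the cyclic rotations of that ordering realize all of $V\setminus\{1\}$.
\item If both nonidentity cosets occur, every term of $S$ in $V$ is $1$. For $1\neq v\in V$, $g\in Vx$ and $h\in Vx^2$, the orderings $(v,g,h)$ and $(g,v,h)$, together with those obtained by moving $h$ to the front, give at least three products.
\end{itemize}
After this the count is short. It uses two disjoint triples of equal terms, or three equal pairs in $V$, or three nonidentity terms of $V$ inserted into $(y,y,y)$ via Lemma~\ref{lem:orbit-insertion}.

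Without some such structural input, your block pairing has to be done by hand. That is needed exactly in the configurations where it currently breaks down: unequal terms within one coset, and terms of $V$ alongside both nonidentity cosets.
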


\begin{proof}
Identify the normal Klein four subgroup with the additive group of $\mathbb{F}_4$, and write
$$ A_4=(\mathbb{F}_4,+)\rtimes\langle x\rangle \qquad\text{and}\qquad xax^{-1}=\omega a. $$
Here $\omega\in\mathbb{F}_4^{\times}$ has order three. Thus $1+\omega+\omega^2=0$. The field element $0$ is the group identity, whereas the field element $1$ is a nonidentity group element. Every element outside $\mathbb{F}_4$ has order three, since
$$ (ax^j)^3=a+\omega^ja+\omega^{2j}a=0. $$
This holds for $j=1,2$.

Let $S$ have length $11$, and suppose that $1\notin\Pi_6(S)$. Take any three-term subsequence $T$ with products in $\mathbb{F}_4$. By the cyclic Erd\H{o}s--Ginzburg--Ziv theorem, the other eight terms, projected onto $A_4/\mathbb{F}_4\cong C_3$, contain two disjoint three-term zero-sum subsequences $U$ and $W$. No sum of two of the three product sets $\pi(T)$, $\pi(U)$, and $\pi(W)$ contains zero, since such a sum would give six terms with product one.

Apply Lemma~\ref{lem:setpartition-conseq}(ii) to these sets in the additive group of $\mathbb{F}_4$, with $m=4$, $k=2$, and $t=3=k+m-3$. This group has exponent two, so
$$ |\pi(T)|+|\pi(U)|+|\pi(W)|\leq4. $$
In particular,
\begin{equation} |\pi(T)|\leq2. \tag{5.1}\label{eq:a4-products-cap} \end{equation}
This inequality holds whenever $|T|=3$ and $\pi(T)\subseteq\mathbb{F}_4$.

\medskip\noindent\textbf{Claim.} Under \eqref{eq:a4-products-cap}, all terms in a coset $\mathbb{F}_4x^j$ are equal whenever that coset contains at least three terms, for $j=1,2$. If both nonidentity cosets contain terms of $S$, then every term of $S$ in $\mathbb{F}_4$ is $0$.\nopagebreak[4]

\begin{proof}[Proof of the claim]
Two elements $ax^j$ and $bx^j$ commute if and only if
$$ a+\omega^jb=b+\omega^ja. $$
Since $1+\omega^j\neq0$, this equality holds if and only if $a=b$. A nonconstant triple in this coset therefore has two distinct products, obtained by exchanging two distinct adjacent terms. At least one of these products is nonzero in $\mathbb{F}_4$. The three cyclic rotations of an ordering with that product give its three distinct scalar multiples under multiplication by $\omega^j$. This contradicts \eqref{eq:a4-products-cap}.

For the second assertion, suppose that both nonidentity cosets occur. Take a nonzero term $v\in\mathbb{F}_4$ and terms $g\in\mathbb{F}_4x$ and $h\in\mathbb{F}_4x^2$. Let $s=gh\in\mathbb{F}_4$. The orderings $(v,g,h)$ and $(g,v,h)$ have distinct products
$$ q=v+s \qquad\text{and}\qquad q'=\omega v+s. $$
Moving the final term $h$ to the beginning of these two orderings gives the additional products $\omega^2q$ and $\omega^2q'$. If one of $q$ and $q'$ is zero, then these four products include zero and two distinct nonzero elements. If both are nonzero, then the set $\{q,q'\}$ cannot be invariant under the three-cycle induced by multiplication by $\omega^2$. In that case, the four products include all three nonzero field elements. Either possibility contradicts \eqref{eq:a4-products-cap}.
\end{proof}

Let $a$, $b$, and $c$ count the terms in $\mathbb{F}_4$, $\mathbb{F}_4x$, and $\mathbb{F}_4x^2$, respectively. Thus $a+b+c=11$. If $b\geq6$ or $c\geq6$, then six equal terms from that coset already have product one. We may therefore assume that $b,c\leq5$.

\medskip\noindent\textbf{Case 1.} Both $b$ and $c$ are positive.\nopagebreak[4]

By the claim, all field terms are identities. Every three terms in either of the other cosets are equal and have product one. The three cosets therefore supply
$$ \left\lfloor\frac{a}{3}\right\rfloor +\left\lfloor\frac{b}{3}\right\rfloor +\left\lfloor\frac{c}{3}\right\rfloor \geq\frac{a+b+c-6}{3} =\frac{5}{3}>1 $$
disjoint product-one triples. Two of these triples give the required six terms.

\medskip\noindent\textbf{Case 2.} At most one nonidentity coset occurs.\nopagebreak[4]

Interchanging $x$ and $x^{-1}$ when necessary, assume that $c=0$. If $b\leq2$, then $a\geq9$. A maximal family of equal pairs among the field terms leaves at most four unpaired terms, one for each field element. It therefore contains at least $\lceil(a-4)/2\rceil\geq3$ pairs. Each pair sums to zero, so three pairs suffice.

If $3\leq b\leq5$, then all terms outside the field equal an element $y$ of order three, and $a\geq6$. Three field identities together with three copies of $y$ would suffice. If there are fewer than three field identities, then choose three nonzero field terms $w_1$, $w_2$, and $w_3$. The prefixes of $(y,y,y)$ represent all cosets of $\mathbb{F}_4$, and the conjugacy orbit of each $w_i$ is $\mathbb{F}_4^{\times}$. By Lemma~\ref{lem:orbit-insertion}, inserting these three terms into $(y,y,y)$ realizes the sum $1+\omega+\omega^2=0$. This gives six terms with product one.
\end{proof}

\begin{proposition}\label{prop:a4-extension}
If $G$ has a cyclic normal subgroup $C$ such that $G/C\cong A_4$, then
$$ \mathsf{E}(G)\leq\frac{3|G|}{2}-1. $$
In particular, configuration (S1) cannot be a counterexample.
\end{proposition}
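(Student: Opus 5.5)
We prove the bound by lifting from the quotient $A_4$ along the cyclic kernel $C$, using the six-term estimate of Lemma~\ref{lem:a4-six-term}. A direct appeal to Lemma~\ref{lem:lifting-bounds}(ii) with $\mathsf{E}(A_4)$ is too weak. Indeed $\mathsf{E}(A_4)\geq 12+\mathsf{d}(A_4)$, so $\overline{\mathsf{E}}(A_4)>3/2$. Instead we first extract from $A_4$ a structure of index-two size and lift that.

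\textbf{Step 1: from six-term subsequences to a sequence over $V$.} Let $m=|C|$, so $|G|=12m$ and the target is $\mathsf{E}(G)\leq 18m-1$. Let $V\cong C_2^2$ be the Klein subgroup of $A_4$, and let $\widetilde V$ be its preimage in $G$, so that $|\widetilde V|=4m$ and $\widetilde V\triangleleft G$. Take a sequence $S$ over $G$ of length $18m-1$. Lemma~\ref{lem:a4-six-term} says that any $11$ terms contain six whose image in $A_4$ is product-one. Hence greedily, as long as at least $11$ terms remain, we can remove disjoint six-term blocks $T_i$. Each block comes with an ordering whose product $h_i$ lies in $C\leq\widetilde V$. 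From $18m-1$ terms this yields $\lfloor(18m-11)/6\rfloor+1=3m-1$ blocks, which is not quite the $3m$ needed. So we do not go all the way down to $C$.

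\textbf{Step 2: lift to $\widetilde V$ using Gao's theorem.} Work with $|G:\widetilde V|=3$ in place of blocks of size $6$. By the EGZ theorem in $C_3$, repeated extraction (Lemma~\ref{lem:repeated-extraction}) from $18m-1$ terms gives disjoint three-term subsequences with products in $\widetilde V$. Their number is $(18m-1-2)/3$ rounded down, namely $6m-1$. We then need $\mathsf{E}(\widetilde V)\leq 6m-1$, i.e.\ $\overline{\mathsf{E}}(\widetilde V)\leq 3/2$. Here $\widetilde V$ is a cyclic-by-$C_2^2$ group with $|\widetilde V|=4m$. This again is the critical borderline, since Lemma~\ref{lem:gao-bounds}(ii) gives only $4m+2m$, i.e.\ $\mathsf{E}\leq 6m$.

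\textbf{Step 3: the mixed argument.} We therefore combine the two approaches. We apply Lemma~\ref{lem:zero-sum-matching} to $X$ equal to the $18m-1=m(17+1)-1$ positions, with $e=17$. This needs a map $f$ assigning to each $17$-subset a twelve-term subsequence that is product-one in $A_4$, which requires $\mathsf{s}_{12}(A_4)\leq 17$. That bound follows from Lemma~\ref{lem:a4-six-term}: $17$ terms give one six-block, and the remaining $11$ give a second. The two blocks, concatenated, form a twelve-term subsequence with product in $C$. Lemma~\ref{lem:zero-sum-matching} then produces $m$ disjoint such subsequences whose $C$-products multiply to $1$. Their concatenation has $12m=|G|$ terms and product one. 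This gives $\mathsf{E}(G)\leq 18m-1=3|G|/2-1$.

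The main point to verify is that the matching lemma applies: $f$ must be well defined with values in the cyclic group $C$, and the product of the chosen orderings must lie in $C\cong C_m$. Both hold because each chosen subsequence maps to the identity of $A_4$. Finally, in (S1) we have $p=2$, so the bound reads $|G|+|G|/p-1$, and configuration (S1) cannot be a counterexample.
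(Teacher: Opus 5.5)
Your Step~3 is correct and is exactly the paper's argument: the paper obtains $\mathsf{E}(A_4)\le 17$ from Lemma~\ref{lem:a4-six-term} by the same two-block extraction and then applies Lemma~\ref{lem:lifting-bounds}(ii), whose proof is precisely the zero-sum matching argument (Lemma~\ref{lem:zero-sum-matching} with $e=17\ge\mathsf{E}(A_4)$) that you write out, so Steps~1 and~2 are unnecessary. Your opening claim is, however, false and contradicts your own Step~3: since $\mathsf{d}(A_4)=4$, the lower bound gives only $\mathsf{E}(A_4)\ge 16$, while your bound $\mathsf{E}(A_4)\le 17$ yields $\overline{\mathsf{E}}(A_4)\le 3/2$, so the direct appeal to Lemma~\ref{lem:lifting-bounds}(ii) is not too weak.
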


\begin{proof}
By Lemma~\ref{lem:a4-six-term}, every sequence of length $17$ over $A_4$ contains a product-one subsequence of length six. The remaining $11$ terms contain another such subsequence. Concatenating product-one orderings of these two subsequences gives $12$ terms with product one. Thus $\mathsf{E}(A_4)\leq17$. By Lemma~\ref{lem:lifting-bounds}(ii),
$$ \mathsf{E}(G)+1 \leq |C|\bigl(\mathsf{E}(A_4)+1\bigr) \leq18|C|=\frac{3|G|}{2}. $$
The smallest prime divisor of $|G|$ is $2$, so this is exactly the bound required to exclude alternative (S1).
\end{proof}

\section{The two normal Hall subgroup cases}\label{sec:hall-cases}

The following propositions exclude alternatives (S2) and (S3) of Theorem~\ref{thm:solvable-configs}. Products of indexed sets use one element from each selected set, and distinct occurrences retain distinct indices.

\subsection{An irreducible Frobenius group}\label{sec:irred-frobenius}

We use \cite[Lemma~3.2]{QuLiTeeuwsen2022} in the following form. If $q$ is a prime and $U$ is a zero-sum-free sequence of length $2q-3$ over $C_q^2$, then there is an affine line $x+H$ such that
\begin{equation} \bigl(C_q^2\setminus\{0\}\bigr)\setminus\Sigma(U)\subseteq x+H. \tag{6.1}\label{eq:plane-subsum-complement} \end{equation}
Here $\Sigma(U)$ denotes the set of nonempty subsums of $U$.

\begin{proposition}\label{prop:irred-frobenius}
Let $p$ be an odd prime such that $q=2p-1$ is prime. Suppose that $G=V\rtimes C_p$, where $V\cong C_q^2$ and the action is irreducible. Then
$$ \mathsf{E}(G)\leq pq^2+q^2-1. $$
\end{proposition}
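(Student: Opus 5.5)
We argue by contradiction, following the lifting template of Lemma~\ref{lem:lifting-bounds}(i). Put $n=pq^2$ and let $S$ be a sequence over $G$ of length $n+q^2-1$ with $1\notin\Pi_n(S)$. Since $G/V\cong C_p$ and $\mathsf{s}_{jp}(C_p)=(j+1)p-1$ by Lemma~\ref{lem:repeated-extraction}, the sequence $S$ splits into many disjoint $p$-term blocks $T_i$, each with $\pi(T_i)\cap V\neq\varnothing$. We need $q^2$ such blocks whose chosen $V$-products have total sum zero. Disjoint blocks can be concatenated in any order, and $V$ is abelian, so this reduces to a zero-sum problem in $V\cong C_q^2$ for the setpartition $A_i=\pi(T_i)\cap V$. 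The number of blocks we can extract is
\[
\left\lfloor\frac{|S|-p+1}{p}\right\rfloor \;\geq\; q^2+\frac{q^2-p}{p}\;\geq\; q^2+2q-3,
\]
using $q=2p-1$, i.e.\ $q^2/p\approx 2q+2$. That is one more block than the $q^2+2q-3$ blocks' worth we will use.

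\textbf{Growth of the block product sets.} Suppose first that some block $T_i$ contains a term outside $V$. We may take $T_i$ to contain $p$ terms from nonidentity cosets, or a mix of cosets. Then the prefixes of a suitable ordering meet several cosets of $V$. By Lemma~\ref{lem:orbit-insertion}, $A_i$ then contains translates of orbits of the irreducible $C_p$-action. Each nonzero orbit has size $p$ and spans $V$, because the action is irreducible and fixed-point-free. It follows that $|A_i|\geq2$ unless the block is degenerate, for instance when all its terms are equal, or all lie in $V$ and are mutually conjugation-stable. Once many blocks satisfy $|A_i|\geq 2$, Lemma~\ref{lem:dgm} applies with $k=q^2$ and $\exp(V)=q\mid k$, in the form of Lemma~\ref{lem:setpartition-conseq}(ii)/(iii) with $s=p$. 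Its conclusion $h(t-k+1)\leq |F|\leq q^2-h$ becomes untenable as soon as
\[
\sum_i\bigl(|A_i|-1\bigr)\geq 2,
\]
so in that case $1\in\Pi_n(S)$.

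\textbf{Degenerate structure.} It remains to handle the case where almost all blocks have $|A_i|=1$. Lemma~\ref{lem:setpartition-conseq}(ii) forces $\sum_j(|A_j|-1)\leq1$ with stabilizer $M=1$. This says $S$ is extremely rigid: outside $V$ its terms are concentrated on very few elements, essentially a single element $y$ of order $p$ with high multiplicity, and the remaining terms lie in $V$. The inequality~\eqref{eq:subgroup-concentration} with $H=V$ gives
\[
|S_{G\setminus V}|\geq q^2-1+q^2-\mathsf{E}(V)+1=q^2-2q+2 .
\]
With $H=\langle y\rangle$ or a conjugate, it shows that $S_V$ is long. Taking $j$ consecutive copies of $y$ with $p\mid j$ gives prefixes covering $G/V$, so Lemma~\ref{lem:orbit-insertion} lets us insert the $V$-terms one orbit at a time. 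The question becomes whether a suitable subsequence $U\mid S_V$ of length about $2q-3$ has sums covering enough of $V$. If $U$ is zero-sum free, then \eqref{eq:plane-subsum-complement} confines the missing nonzero sums to an affine line $x+H$. An orbit of size $p$ cannot lie inside a line unless it is degenerate, since irreducibility prevents it. Hence some inserted orbit element hits the required residue, and we obtain an $n$-term product-one subsequence.

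The main obstacle is this last step: making the case split between ``many blocks with $|A_i|\geq2$'' and ``rigid structure'' exhaustive, and tracking lengths so that after extracting $q^2-1$ product-one blocks there are still $2q-3$ spare $V$-terms available for \eqref{eq:plane-subsum-complement}. I would first try choosing blocks greedily so as to maximize mixed-coset blocks, and invoke the degenerate analysis only when fewer than two blocks have $|A_i|\ge 2$.
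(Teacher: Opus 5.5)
There is a genuine gap, and it sits where you expected: the split into ``many blocks with $|A_i|\ge 2$'' versus ``degenerate structure'' does not work. First, the rigidity you invoke is not available at this length. The ``in particular'' clause of Lemma~\ref{lem:setpartition-conseq}(ii) needs $t=k+m-3$ sets, which is $2q^2-3$ blocks when $k=m=q^2$. You only have $q^2+2q-3$ blocks. With your $t$ and trivial stabilizer, the DGM bound only gives $|F|\ge 2q-2+\sum_i(|A_i|-1)$. To force $1\in F$ you would therefore need $\sum_i(|A_i|-1)\ge(q-1)^2+1$, and product sets of size two come nowhere near this. Part (iii) is also unavailable, because your sets $A_i=\pi(T_i)\cap V$ are not $\alpha$-invariant in general.

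Second, Lemma~\ref{lem:orbit-insertion} does not place orbits inside the product set of a single $p$-term block. It needs a host sequence whose prefixes cover $G/V$, together with \emph{disjoint} inserted subsequences whose products lie in $V$. A block of $p$ terms in one coset can have $|A_i|=1$, and a block inside $V$ always does. Finally, the degenerate-case claims are not derived from anything: the concentration on a single element $y$, and the insertion of individual terms of $S_V$, which also breaks the length bookkeeping.

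The missing idea is to fix one host block and split on a different quantity. By \eqref{eq:subgroup-concentration} with $H=V$, one has $|S_{G\setminus V}|\ge 2q^2-\mathsf{E}(V)=(q-1)^2+1=4(p-1)^2+1$. Hence some nonidentity coset contains $p$ terms; call them $B$. Every ordering of $B$ has prefixes representing all cosets of $V$. Since $|S|-p=p(\ell+1)$ with $\ell=q^2+2q-4$, you can extract disjoint blocks $T_1,\ldots,T_\ell$ from the rest. Fix $b\in\pi(B)$ and $a_i\in\pi(T_i)\cap V$. Insertion then gives $O(b)+\sum_{i\in I}O(a_i)\subseteq\pi(B\cdot\prod_{i\in I}T_i)$. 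Irreducibility shows that every nonzero orbit has exactly $p$ elements and lies in no affine line: $\alpha-I$ is invertible, orbit sums vanish, and $p\neq q$.

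Now split according to $z=|\{i:a_i=0\}|\le q^2-1$, not according to $|A_i|$.
\begin{itemize}
\item If $z\le q^2-3$, apply Lemma~\ref{lem:setpartition-conseq}(iii) with $s=p$, $k=q^2-1$ and trivial stabilizer. It gives $|\Sigma_k(O(a_1),\ldots,O(a_\ell))|\ge 1-k+z+p(\ell-z)\ge q^2+p-2$. So this sumset is $V$ and contains $-b$.
\item If $z\in\{q^2-2,q^2-1\}$, the subcase $b=0$, $z=q^2-1$ is immediate. Otherwise take $U$ to be $2q-3$ of the nonzero \emph{block products}. $U$ is zero-sum free, since a zero-sum of length $d$ could be padded with $q^2-d\le z$ zero blocks. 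Apply \eqref{eq:plane-subsum-complement} to $U$, not to terms of $S_V$. Then $-O(b)$, or $-O(c)$ for the leftover nonzero product when $b=0$, meets $\Sigma(U)$. Padding with zero blocks gives exactly $q^2$ blocks, that is, length $pq^2$.
\end{itemize}
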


\begin{proof}
Write $V$ additively, and let $\alpha$ be the automorphism induced by a generator of the complement. Define
$$ O(v)=\{\alpha^j(v):0\leq j<p\}. $$
Every nonzero orbit has exactly $p$ elements. A shorter orbit would be fixed by the group of prime order, yielding a nonzero fixed vector and an invariant line. In particular, $\alpha-I$ is invertible. Since
$$ (\alpha-I)\sum_{j=0}^{p-1}\alpha^j=\alpha^p-I=0, $$
it follows that
$$ \sum_{j=0}^{p-1}\alpha^j(v)=0. $$
No nonzero orbit is contained in an affine line. Indeed, suppose that $O(v)\subseteq x+H$, where $H$ has dimension one. Summing the orbit gives $px\in H$. Since $p\neq q$, multiplication by $p$ is invertible on $V/H$, so $x\in H$. The span of the nonzero orbit would then be an $\alpha$-invariant line, a contradiction.

Let $S$ have length $(p+1)q^2-1$, and suppose that $1\notin\Pi_{pq^2}(S)$. By Lemmas~\ref{lem:davenport-bounds}(i) and~\ref{lem:gao-identities}(i), we have $\mathsf{E}(V)=q^2+2q-2$. By the concentration inequality in Lemma~\ref{lem:repeated-extraction},
$$ |S_{G\setminus V}|\geq2q^2-\mathsf{E}(V)=(q-1)^2+1. $$
Since $(q-1)^2=4(p-1)^2$, some nonidentity coset contains at least $p$ terms. Choose $p$ of these terms to form a subsequence $B$. Its image in $G/V$ is product-one.

Let $\ell=q^2+2q-4$. The identity $q=2p-1$ gives
$$ |S|-p=p(\ell+1). $$
By Lemma~\ref{lem:repeated-extraction}, applied in $G/V\cong C_p$, there are pairwise disjoint subsequences $T_1,\ldots,T_\ell$, all disjoint from $B$, each of length $p$ and each with a selected product in $V$. Fix
$$ b\in\pi(B) \qquad\text{and}\qquad a_i\in\pi(T_i). $$
The latter choice is made for each $1\leq i\leq\ell$. The prefixes of any ordering of $B$ represent all cosets of $V$. By Lemma~\ref{lem:orbit-insertion},
\begin{equation} O(b)+\sum_{i\in I}O(a_i) \subseteq\pi\left(B\bm\cdot\prod_{i\in I}T_i\right). \tag{6.2}\label{eq:plane-orbit-insertion} \end{equation}
This holds for every $I\subseteq[1,\ell]$, and the product on the right is taken in the sequence monoid. Let $z=|\{i:a_i=0\}|$. We have $z\leq q^2-1$, since $q^2$ of these subsequences with selected product zero would already have total length $pq^2$.

\medskip\noindent\textbf{Case 1.} We have $z\leq q^2-3$.\nopagebreak[4]

Let $k=q^2-1$, and define
$$ P=\Sigma_k\bigl(O(a_1),\ldots,O(a_\ell)\bigr). $$
If $P\neq V$, then its stabilizer is a proper $\alpha$-invariant subgroup of $V$, and hence a proper invariant vector subspace. Irreducibility forces this stabilizer to be $\{0\}$. Each indexed orbit is either $\{0\}$ or has $p$ nonzero elements, and $1+(p-1)k\geq q^2$. By Lemma~\ref{lem:setpartition-conseq}(iii),
$$ \begin{aligned} |P| &\geq1-k+z+p(\ell-z)\\ &\geq1-k+(q^2-3)+p(\ell-q^2+3)\\ &=q^2+p-2>q^2, \end{aligned} $$
a contradiction. Consequently $P=V$. In particular, $-b$ is realized by one orbit element from each of $q^2-1$ indexed subsequences. By Lemma~\ref{lem:orbit-insertion}, applied as above, these subsequences together with $B$ yield the prohibited product-one subsequence of length $pq^2$.

\medskip\noindent\textbf{Case 2.} We have $z\in\{q^2-2,q^2-1\}$.\nopagebreak[4]

At least $2q-3$ of the elements $a_i$ are nonzero. Select that many, and let $U$ be the resulting sequence. If a nonempty subsequence of $U$ has sum zero and length $d$, then $2\leq d\leq2q-3$. The $d$ corresponding subsequences, together with $q^2-d$ subsequences whose selected products are zero, have product zero and total length $pq^2$. The required subsequences with selected product zero exist because $q^2-d\leq q^2-2\leq z$. Thus $U$ is zero-sum-free, and \cite[Lemma~3.2]{QuLiTeeuwsen2022} applies.

If $b=0$ and $z=q^2-1$, then use $B$ together with those $q^2-1$ subsequences with selected product zero. Otherwise, choose a nonzero orbit as follows. If $b\neq0$, then use $O(b)$. If $b=0$, then $z=q^2-2$, and there are exactly $2q-2$ nonzero elements $a_i$; use $O(c)$ for the one not included in $U$. The negative of the chosen orbit is not contained in the affine line supplied by \cite[Lemma~3.2]{QuLiTeeuwsen2022}. Thus the chosen orbit contains $-v$ for some $v\in\Sigma(U)$. Choose a nonempty subsequence $W\mid U$ with sum $v$.

By Lemma~\ref{lem:orbit-insertion}, zero can now be realized using $B$, the subsequences indexed by $W$, and, when $b=0$, the additional subsequence with selected product $c$. Between $2$ and $2q-1\leq q^2$ subsequences have been used. Complete them to $q^2$ subsequences by adjoining subsequences with selected product zero; at most $q^2-2\leq z$ are required. Their total length is $pq^2$, a contradiction.
\end{proof}

\subsection{A direct product with a non-abelian group of order \texorpdfstring{$bc$}{bc}}\label{sec:frobenius-direct-product}

We first bound the number of disjoint subsequences with selected products in the center, and then determine the exceptional case in the estimate for products of orbits.

\begin{lemma}\label{lem:central-extraction-cap}
Let $a<b<2a$ be odd primes, let $c$ be a prime such that $b\mid c-1$, and let
$$ G=C_a\times(C_c\rtimes C_b). $$
The action of $C_b$ on $C_c$ is assumed to be nontrivial. Let $Z=C_a$. If $|S|=abc+bc-1$ and $1\notin\Pi_{abc}(S)$, then the following assertions hold.
\begin{itemize}
\item[(i)] For every $z\in Z$, we have $v_z(S)\leq bc-2$.
\item[(ii)] There do not exist $ca+a-2$ pairwise disjoint subsequences of $S$, each of length $b$ and each having a selected product in $Z$.
\end{itemize}
\end{lemma}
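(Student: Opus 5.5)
For (i) I would apply Lemma~\ref{lem:central-multiplicity} with $d=bc-1$, since $|S|=abc+(bc-1)$. This needs $\mathsf d(G)\leq bc-1$, which comes from Lemma~\ref{lem:davenport-bounds}(iii) once $G$ is seen to have no cyclic subgroup of index $a$, the smallest prime divisor of $|G|$. A subgroup of index $a$ has order $bc$. As $\gcd(a,bc)=1$, it is a Hall subgroup. It therefore equals the normal Hall factor $C_c\rtimes C_b$, which is non-abelian because the action is nontrivial. So $\mathsf d(G)\leq bc-1$. If $v_z(S)\geq bc-1$ for some $z\in Z\leq Z(G)$, Lemma~\ref{lem:central-multiplicity} gives $1\in\Pi_{abc}(S)$, a contradiction. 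Hence $v_z(S)\leq bc-2$.

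For (ii), suppose $T_1,\ldots,T_\ell$ are disjoint, of length $b$, with selected products $z_i\in Z$, where $\ell=ca+a-2=(c+1)a-2$. Since $Z$ is central, any $ac$ of the $T_i$ whose $z_i$ multiply to $1$ concatenate to a product-one subsequence of length $abc$, in any order. So $W=z_1\bm\cdot\ldots\bm\cdot z_\ell$ satisfies $1\notin\Pi_{ca}(W)$ over $Z\cong C_a$. Its length is exactly $(j+1)a-2$ with $j=c$, one short of $\mathsf{s}_{ca}(C_a)$ from Lemma~\ref{lem:repeated-extraction}. Lemma~\ref{lem:cyclic-inverse} then forces
\[
W=u^{[j_1a-1]}\bm\cdot v^{[j_2a-1]},\qquad j_1+j_2=c+1,\qquad \langle uv^{-1}\rangle=Z .
\]
The goal is to contradict this rigid structure. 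The first thing I would try is to produce one more block. This means finding a further $b$-term subsequence, disjoint from all $T_i$, whose image in $G/Z\cong C_c\rtimes C_b$ is product-one. Then $W$ grows to length $(c+1)a-1$, and Lemma~\ref{lem:repeated-extraction} gives the forbidden product. The leftover sequence $R$ has length $b(c-a+2)-1\geq 3b-1$, since $c>b>a$.

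The second tool is an exchange. Swapping a term of some $T_i$ with a term of $R$, or with a term of some $T_{i'}$, while keeping the $G/Z$-image product-one, changes $z_i$ by a prescribed element of $Z$. The $Z$-component of a product of a fixed multiset is independent of the ordering, since $G\to Z$ is the projection homomorphism. Any exchange producing a third value in $W$, or shifting the multiplicities off the pattern $j_1a-1$, $j_2a-1$, is again ruled out by Lemma~\ref{lem:cyclic-inverse}. The "moreover" clause of Lemma~\ref{lem:cyclic-inverse} ($\Pi_{|U|-a+1}(U)=C_a$) provides the target value needed to close such an exchange.

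To control which exchanges exist, I would project to $G/Z$ and to $C_b$. The $C_b$-components of the $T_i$ must sum to $0$ modulo $b$. The $C_c$-parts can be adjusted by reordering, using Lemma~\ref{lem:orbit-insertion} with the normal subgroup $C_c$, because nontrivial $C_b$-orbits on $C_c$ have size $b$. I would combine this with (i), which caps any single central value at $bc-2$. This should prevent $S$ from being essentially constant on $Z$-cosets, and thus prevent the rigid configuration.

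I expect the main obstacle to be exactly this last step. One must show that some exchange or extra block is always available, and exclude the degenerate case where all blocks and leftovers lie in the subgroup $Z\times C_b$-cosets in a way that freezes both the $Z$-values and the $C_b$-values simultaneously.
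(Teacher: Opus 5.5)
Part (i) is correct and is essentially the paper's argument. Part (ii) is not proved. After obtaining the rigid form of $W$ from Lemma~\ref{lem:cyclic-inverse}, you leave the contradiction to an unspecified exchange argument. Your concrete first plan, extracting one further $b$-term block from $R$ with product in $Z$, cannot work in general. Since $\exp(G/Z)=bc$ does not divide $b$, no length of sequence forces a $b$-term subsequence with product-one image in $G/Z\cong C_c\rtimes C_b$. For example, $R$ could consist of copies of one element whose image has order $c$. Nor can you extract a $bc$-term block from $R$ alone, because $|R|=bc-ab+2b-1<\mathsf{E}(G/Z)=bc+b+c-2$. The obstacle you flag at the end is therefore exactly where the proof is missing.

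The missing idea is to use the second assertion of Lemma~\ref{lem:cyclic-inverse} quantitatively and to sacrifice some blocks.

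\begin{itemize}
\item[(1)] Since $W$ contains $x^{[a-1]}\bm\cdot y^{[2a-1]}$ with $\langle xy^{-1}\rangle=Z$, choose $U\mid W$ containing these terms with $|U|=(a-1)(c+1)$. Then $\Pi_{c(a-1)}(U)=Z$.
\item[(2)] By centrality of $Z$, the blocks indexed by $U$ realize every $z\in Z$ using exactly $bc(a-1)$ terms, concatenated in any order.
\item[(3)] Release the remaining $c-1$ blocks. Together with $R$, the terms outside the $U$-blocks number $2bc-ab+b-1$.
\item[(4)] This is at least $\mathsf{E}(C_c\rtimes C_b)=bc+c+b-2$ by Lemma~\ref{lem:gao-identities}(ii). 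The difference $(b-1)c-ab+1$ is positive because $c\geq 2b+1$ and $a<b$.
\item[(5)] Hence those terms contain a single $bc$-term subsequence with product some $z\in Z$. The $U$-blocks supply $bc(a-1)$ disjoint terms with product $z^{-1}$, giving $abc$ terms with product one.
\end{itemize}

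No exchange argument and no analysis of the $C_b$- or $C_c$-components is needed.
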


\begin{proof}
The primes $a$, $b$, and $c$ are distinct, and $a$ is the smallest. Every subgroup of order $bc$ has trivial image under the projection onto $C_a$, since its image has order dividing both $bc$ and $a$. Thus the only subgroup of index $a$ is the non-abelian direct factor $C_c\rtimes C_b$. In particular, $G$ has no cyclic subgroup of index $a$. By Lemma~\ref{lem:davenport-bounds}(iii), we have $\mathsf{d}(G)\leq bc-1$. Lemma~\ref{lem:central-multiplicity}, applied with $d=bc-1$, proves (i). The estimate $\mathsf{d}(G)\leq bc-1$ also follows from \cite[Theorem~2]{KarthickBabuBeraGhoshSury2027}, since $G\cong C_{ac}\rtimes C_b$ with an action of order $b$, and that theorem gives $\mathsf{d}(G)=ac+b-2$.

For (ii), suppose that such subsequences exist, and let $W$ be their sequence of selected products in $Z$. Then
$$ |W|=(c+1)a-2 \qquad\text{and}\qquad 1\notin\Pi_{ca}(W). $$
The second assertion follows because $ca$ of the selected subsequences would concatenate to $abc$ terms with product one. By the inverse theorem in Lemma~\ref{lem:cyclic-inverse}, the sequence $W$ contains $x^{[a-1]}\cdot y^{[2a-1]}$, where $\langle xy^{-1}\rangle=Z$. Indeed, its two multiplicities have the form $j_1a-1$ and $j_2a-1$, with $j_1+j_2=c+1$, so one is at least $2a-1$ and the other is at least $a-1$.

Choose a subsequence $U\mid W$ containing these terms and satisfying
$$ |U|=ca+a-c-1=(a-1)(c+1). $$
Such a choice is possible: $c\geq2b+1$ implies that $|U|\geq3a-2$, and clearly $|U|\leq|W|$. By the second part of Lemma~\ref{lem:cyclic-inverse},
$$ \Pi_{ca-c}(U)=Z. $$
Consequently, the union of the original subsequences indexed by $U$ can realize every $z\in Z$ using exactly $b(ca-c)=bc(a-1)$ terms.

The number of terms outside all subsequences indexed by $U$ is
$$ abc+bc-1-b|U|=2bc-ab+b-1. $$
By Lemma~\ref{lem:gao-identities}(ii), this is at least $bc+c+b-2=\mathsf{E}(G/Z)$. Indeed, the difference is $(b-1)c-ab+1>0$, because $c\geq2b+1$ and $a<b$. Therefore the remaining terms contain a subsequence of length $bc$ with a selected product $z\in Z$. The subsequences indexed by $U$ supply $bc(a-1)$ other terms with product $z^{-1}$. Concatenation yields $abc$ terms with product one, a contradiction.
\end{proof}

\begin{lemma}\label{lem:orbit-extremal}
Let $b$ and $c$ be odd primes with $c=bt+1$ and $t\geq2$, and let $\alpha$ be an automorphism of $N\cong C_c$ of order $b$. Define
$$ O(u)=\{\alpha^j(u):0\leq j<b\}. $$
For an integer $a\geq1$, let
$$ A_i=\prod_{j=1}^{a}O(u_{ij}) \qquad\text{and}\qquad P=\Pi_{c-1}(A_1\bm\cdot\ldots\bm\cdot A_\ell). $$
Here $0\leq i\leq\ell$ and $\ell\geq c+t-2$. Suppose that $1\notin A_0P$ and that at most $c-1$ of the sets $A_1,\ldots,A_\ell$ are $\{1\}$. Then $\ell=c+t-2$, precisely $c-1$ of the sets $A_0,\ldots,A_\ell$ equal $\{1\}$, and each of the other $t$ sets avoids $1$ and has exactly one nonidentity orbit factor. In particular, at least
$$ (c-1)a+t(a-1) $$
of the elements $u_{ij}$ equal $1$.
\end{lemma}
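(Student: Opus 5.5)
The plan is to run a counting argument in $N\cong C_c$: apply Lemma~\ref{lem:dgm} to the setpartition $A_1\bm\cdot\ldots\bm\cdot A_\ell$ with $k=c-1$, treat $A_0$ separately by Cauchy--Davenport, and show that the resulting lower bound for $|P|$ meets the upper bound forced by $1\notin A_0P$ up to a slack of at most $b-1$. Tracking where equality can occur should then give every assertion.

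\textbf{Basic facts.} Each $A_i$ is $\alpha$-invariant, since $\alpha$ is an automorphism and each factor $O(u_{ij})$ is invariant. Hence $A_i\setminus\{1\}$ is a union of $\alpha$-orbits, each of size $b$. If $A_i$ has $r_i\geq1$ nonidentity orbit factors, then Lemma~\ref{lem:kneser-cd} gives $|A_i|\geq\min\{c,1+r_i(b-1)\}$. So $r_i\geq2$ forces $|A_i|\geq 2b-1>b$. In particular $|A_i|=b$ with $1\notin A_i$ happens exactly when $A_i$ is a single orbit coming from one nonidentity factor. Since $1\notin A_0P$, Cauchy--Davenport gives $|A_0|+|P|\leq c$, and also $P\neq N$.

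\textbf{Lower bound for $|P|$.} Because $|N|=c$ is prime and $P\neq N$, the stabilizer $H(P)$ is trivial, so cosets are singletons. Let $\nu_x$ be the number of sets $A_i$ with $i\geq1$ that contain $x$. This count is constant on $\alpha$-orbits. If some $x\neq1$ had $\nu_x\geq c-1$, then $b$ elements would each contribute $c-1$ in Lemma~\ref{lem:dgm}, giving $|P|\geq 1-(c-1)+b(c-1)\geq c$, which is impossible. (This is the argument of Lemma~\ref{lem:setpartition-conseq}(iii).) Let $z\leq c-1$ be the number of $A_i=\{1\}$ with $i\geq1$, and let $w=\ell-z$. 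Then
$$|P|\geq 2-c+\min\{c-1,\nu_1\}+\sum_{i\geq1}|A_i\setminus\{1\}|\geq 2-c+z+bw.$$
Substituting $\ell\geq c+t-2$ and $z\leq c-1$ gives
$$|P|\geq 2-c+bc+bt-2b-(b-1)z\geq 1+bt-b=c-b.$$
Combined with $|P|\leq c-|A_0|$, this yields $|A_0|\leq b$.

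\textbf{Equality analysis.} This is the delicate step, and I would organise it by the size of $A_0$.
\begin{itemize}
\item If $A_0\neq\{1\}$, then $|A_0|=b$, so $A_0$ is a single orbit from one nonidentity factor and avoids $1$. Moreover, every inequality above is an equality. Thus $z=c-1$ and $\ell=c+t-2$. Also $\nu_1=z$, so no nontrivial $A_i$ contains $1$. Finally $|A_i\setminus\{1\}|=b$ for each of the $w=t-1$ nontrivial $A_i$.
\item If $A_0=\{1\}$, the total slack is only $b-1$. Increasing $\ell$ beyond $c+t-2$ costs at least $b$, and each nontrivial $A_i$ that contains $1$ or has extra orbits costs at least $1$ or $b-1$. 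The only freedom left is lowering $z$ by one, to $z=c-2$, which costs exactly $b-1$ and leaves no slack for anything else. So either $z=c-2$ with all remaining quantities tight, or $z=c-1$, in which case $|P|\geq c-b+(b-1)$ already contradicts $|P|\leq c-1$ unless $t=1$ (or similar bookkeeping). In the surviving case the number of trivial sets among $A_0,\ldots,A_\ell$ is again $c-1$ and $\ell=c+t-2$. The other $t$ sets are single orbits avoiding $1$.
\end{itemize}
The main obstacle is this bookkeeping. I need to be sure that a nontrivial $A_i$ containing $1$ is genuinely penalised, either through $\nu_1$ when $z<c-1$ or through $|A_i\setminus\{1\}|\geq 2b-1$. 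I would check this with Cauchy--Davenport, noting that $1\in A_i$ with $r_i\geq1$ forces $|A_i|\geq 1+b$, hence $|A_i|\geq 2b-1$ once $r_i\geq2$.

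\textbf{Counting identity factors.} Each of the $c-1$ trivial sets has all $a$ of its factors $u_{ij}$ equal to $1$. Each of the remaining $t$ sets has exactly one nonidentity factor, hence $a-1$ identity factors. This gives at least $(c-1)a+t(a-1)$ factors equal to $1$.
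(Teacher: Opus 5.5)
Your overall route is the paper's: the DGM bound with $k=c-1$, orbit invariance to cap nonidentity incidences, the inequality $|A_0|+|P|\le c$, and an equality analysis in $z$. The branch $A_0=\{1\}$, however, has a genuine gap.

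\textbf{The gap: $A_0=\{1\}$ and $z=c-1$.} In this case your counting yields only $|P|\ge c-b$, attained with $\ell=c+t-2$ and $t-1$ single-orbit sets. That bound is fully compatible with $|P|\le c-1$. Your sentence ``$|P|\ge c-b+(b-1)$ already contradicts $|P|\le c-1$'' is wrong on two counts. First, $z=c-1$ costs no slack, so the bound $c-b+(b-1)$ is not available. Second, even $|P|\ge c-1$ would not contradict $|P|\le c-1$.

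No bookkeeping in the DGM bound can exclude this configuration. If it survived, there would be $c$ trivial sets among $A_0,\ldots,A_\ell$, contradicting the conclusion. The missing observation is structural, not numerical. If $z=c-1$, choosing $1$ from each of the $c-1$ trivial sets shows $1\in P$, so $1\notin A_0P$ forces $1\notin A_0$. In particular $A_0\neq\{1\}$, and the case is impossible. With this, the branch $A_0=\{1\}$ has $z\le c-2$. Your slack argument then correctly gives $z=c-2$, $\ell=c+t-2$, and no other waste, since a set containing $1$ now costs at least $1$ through $\nu_1$.

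\textbf{A smaller flaw: the branch $A_0\neq\{1\}$.} Here $z=c-1$. Equality in $\min\{c-1,\nu_1\}\ge z$ therefore does not give $\nu_1=z$, because the minimum is already capped at $c-1$. A nontrivial $A_i$ containing $1$ costs nothing in that term.

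What actually excludes $1\in A_i$ is your other equality, $|A_i\setminus\{1\}|=b$, combined with a fact you never state. A nontrivial $A_i$ with exactly one nonidentity orbit factor is just $O(u)$ with $u\ne1$, which avoids $1$. Hence $1\in A_i$ forces at least two nonidentity factors. Cauchy--Davenport then gives $|A_i|\ge 2b-1$, and since $|A_i\setminus\{1\}|$ is a multiple of $b$, $A_i$ has at least $2b$ nonidentity elements.

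Your remark that ``$1\in A_i$ with $r_i\ge1$ forces $|A_i|\ge 1+b$'' is not enough on its own. An extra identity element adds nothing to $\sum_i|A_i\setminus\{1\}|$. The penalty comes from the fact that one orbit alone avoids $1$, which is exactly what the paper uses.
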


\begin{proof}
A product of nonempty sets has cardinality at least that of each factor. Consequently $A_i$ is a singleton if and only if all its factors are $\{1\}$. Every nonsingleton is $\alpha$-invariant and contains at least $b$ nonidentity elements. If it has two nonidentity orbit factors, then the Cauchy--Davenport theorem gives $|A_i|\geq2b-1$. If $A_i$ avoids $1$, then orbit invariance improves this lower bound to $2b$. If $A_i$ contains $1$, then it improves the bound to $2b+1$. A nonsingleton containing $1$ has at least two nonidentity orbit factors, since one orbit alone avoids $1$. Such a nonsingleton therefore has at least $2b$ nonidentity elements.

Since $1\notin A_0P$, we have $P\neq N$, so the stabilizer of $P$ is trivial. No nonidentity element can occur in $c$ or more of the indexed sets $A_1,\ldots,A_\ell$. Otherwise every element of its orbit, which has $b\geq3$ elements, would occur in at least $c$ indexed sets. In the DGM bound with $k=c-1$, these elements alone would contribute $b(c-1)$, giving $|P|\geq2-c+b(c-1)\geq c$, a contradiction. Thus every nonidentity element has incidence at most $c-1$ and contributes its full incidence to the bound.

Let $z$ count the singleton sets among $A_1,\ldots,A_\ell$, and let $q$ count the nonsingletons containing $1$. Since $z\leq c-1$ and the identity has incidence $z+q$, its contribution is $\min\{c-1,z+q\}\geq z$. The preceding orbit counts therefore imply that
\begin{equation} |P|\geq2-c+z+b(\ell-z+q). \tag{6.3}\label{eq:orbit-incidence-bound} \end{equation}
Write $\ell=c+t-2+d$ and $z=c-1-s$, where $d,s\geq0$. Since $c=bt+1$, the right-hand side of \eqref{eq:orbit-incidence-bound} is
$$ c-b+bd+(b-1)s+bq. $$
If $d\geq1$, $q\geq1$, or $s\geq2$, then this expression is at least $c$. Hence
$$ \ell=c+t-2,\qquad q=0, \qquad\text{and}\qquad z\in\{c-2,c-1\}. $$

The sets $A_0$ and $P^{-1}$ are disjoint, so $|A_0|+|P|\leq c$. If $z=c-2$, then inequality~\eqref{eq:orbit-incidence-bound} implies that $|P|\geq c-1$, and hence $A_0=\{1\}$. If $z=c-1$, then the singleton sets show that $1\in P$, so $A_0$ avoids $1$. Thus precisely $c-1$ of all the sets are singletons, and the other $t$ sets avoid $1$.

Two nonidentity orbit factors in any $A_i$ with $i\geq1$ would increase its contribution to \eqref{eq:orbit-incidence-bound} by at least $b$, giving $|P|\geq c$. If $A_0$ had two such factors, then $z=c-1$, $|A_0|\geq2b$, and $|P|\geq c-b$, contrary to $|A_0|+|P|\leq c$. Hence every nonsingleton has exactly one nonidentity orbit factor. Each singleton supplies $a$ identity factors, and each of the other $t$ sets supplies $a-1$, as required.
\end{proof}

\begin{proposition}\label{prop:frobenius-direct-product}
Let $a<b<2a$ be odd primes, and let $c$ be a prime such that $b\mid c-1$. If the action in $C_c\rtimes C_b$ is nontrivial, then
$$ \mathsf{E}\bigl(C_a\times(C_c\rtimes C_b)\bigr)\leq abc+bc-1. $$
\end{proposition}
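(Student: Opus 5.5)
Set $G=C_a\times(C_c\rtimes C_b)$, $Z=C_a$ (central), $N=C_c$ (normal), and $c=bt+1$ with $t\geq2$. Note $t\geq2$ because $c-1$ is even and divisible by $b$. We argue by contradiction. Let $|S|=abc+bc-1$ and suppose $1\notin\Pi_{abc}(S)$. Put $\ell=c+t-2$. The strategy is the one used in Proposition~\ref{prop:irred-frobenius}, with $G/(Z\times N)\cong C_b$ as the ambient quotient. We extract one seed $B$ and $\ell$ blocks, each of $b$ terms whose selected products lie in $Z\times N\cong C_{ac}$. Each block is then thickened so that its product set lies in $Z\times N$ and contains a full $\alpha$-orbit in the $N$-coordinate.

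\textbf{Step 1: extraction.} First use~\eqref{eq:subgroup-concentration} with $H=Z\times N$. This gives $|S_{G\setminus(Z\times N)}|\geq bc-1+ac-(2ac-1)+1$. The hypothesis $c(b-a)<a(b-1)$ from (S3) is needed to guarantee this is large. Then some nontrivial coset of $Z\times N$ contains at least $b$ terms, and $b$ of them form $B$. The prefixes of any ordering of $B$ cover $G/(Z\times N)$.

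Next, to match Lemma~\ref{lem:orbit-extremal}, group the remaining terms into $\ell$ super-blocks $T_i$. Each $T_i$ consists of $a$ blocks of length $b$ with products in $Z\times N$, so $|T_i|=ab$. Repeated extraction (Lemma~\ref{lem:repeated-extraction}) in $C_b$ supplies enough disjoint blocks. Passing to the $Z$-coordinate, we need the total $Z$-part to cancel. Lemma~\ref{lem:central-extraction-cap}(ii) ensures there are fewer than $ca+a-2$ blocks with product in $Z$. Hence many blocks have a nontrivial $N$-component, i.e.\ $u_{ij}\neq1$.

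\textbf{Step 2: insertion.} Apply Lemma~\ref{lem:orbit-insertion} with the abelian normal subgroup $Z\times N$. Since $Z$ is central, conjugation only moves the $N$-coordinate. So inserting blocks into $B$ realizes $O(b)\cdot\prod O(u_{ij})$ in $N$, while the $Z$-coordinates simply multiply.

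We want $c-1$ super-blocks together with $B$ to give a product that is trivial in $N$. The total length is then $b+ab(c-1)$, which is not $abc$. So the count must be adjusted: take $B$ to consist of $ab$ terms, namely $a$ blocks, one of them the covering seed, and use $c-1$ further super-blocks, for total length $abc$. With $A_0$ the orbit product of the seed super-block and $P=\Pi_{c-1}(A_1\bm\cdot\ldots\bm\cdot A_\ell)$, if $1\notin A_0P$, Lemma~\ref{lem:orbit-extremal} forces a rigid structure. In that case at least $(c-1)a+t(a-1)$ blocks have trivial $N$-product, i.e.\ product in $Z$.

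\textbf{Step 3: the two coordinates.} In the rigid case, the count $(c-1)a+t(a-1)\geq ca+a-2$ holds since $t\geq2$. This contradicts Lemma~\ref{lem:central-extraction-cap}(ii). Otherwise $1\in A_0P$ in the $N$-coordinate, and we must also make the $Z$-coordinate trivial. For this, reserve flexibility: choose among alternative selections, or exploit that the blocks with product in $Z$ are few. Then apply EGZ-type control in $C_a$, via Lemma~\ref{lem:cyclic-inverse} on the $Z$-components, to adjust which blocks are used without destroying the $N$-solution.

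\textbf{Main obstacle.} I expect this simultaneous control of both coordinates to be the hardest part. It means handling $C_{ac}=Z\times N$ with orbits acting only on $N$, and I would first try to run Lemma~\ref{lem:setpartition-conseq}(iii) directly in $Z\times N$. There the $\alpha$-orbits on nonidentity cosets of any stabilizer not containing $N$ have size $b$. The stabilizer cases, $M\supseteq N$ or $M\supseteq Z$, are treated separately using (i) and (ii) of Lemma~\ref{lem:central-extraction-cap}, reserving Lemma~\ref{lem:orbit-extremal} for the extremal case $M=Z$. The length bookkeeping, so that exactly $abc$ terms are used, must be checked against the inequality $c(b-a)<a(b-1)$.
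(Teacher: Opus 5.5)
There is a genuine gap at the first step, and a second one in how you handle the $Z$-coordinate.

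\textbf{The seed $B$ need not exist.} Lemma~\ref{lem:repeated-extraction} with $H=Z\times N$ gives only $|S_{G\setminus H}|\geq (b-a)c+1$. To force $b$ terms into one of the $b-1$ nontrivial $H$-cosets by pigeonhole, you need at least $(b-1)^2+1$ outside terms. The inequality $c(b-a)<a(b-1)$ you invoke is not a hypothesis of this proposition, and the paper does not use it. It also points the wrong way: it gives $(b-a)c+1\leq a(b-1)\leq (b-2)(b-1)<(b-1)^2+1$. So in the (S3) range the concentration bound \emph{never} yields a seed. For example, $a=11$, $b=13$, $c=53$ gives only $107$ outside terms spread over $12$ cosets.

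The paper therefore splits on $Q=|S_{G\setminus H}|$.
\begin{itemize}
\item When $Q\leq ab+(b-1)^2-1$, it uses no seed at all. It concatenates all outside terms into minimal product-one blocks modulo $H$, with product $d\in H$ and last term $g\notin H$. Each term $h_j=z_ju_j$ of $S_H$ is inserted at prefix $1$ or $dg^{-1}$, giving two-point sets $B_j=z_j\{u_j,u_j^\lambda\}$. A DGM stabilizer analysis in $H\cong C_{ac}$ then proves $\Pi_v(B_1\bm\cdot\ldots\bm\cdot B_\ell)=H$. The coset of excessive incidence is controlled by Lemma~\ref{lem:central-extraction-cap}(i) when the stabilizer is trivial, and by a translated cyclic-extraction bound when the stabilizer is $Z$ or $N$.
\item Only when $Q\geq ab+(b-1)^2$ does a seed appear. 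Even then it must sit inside a $2ab$-term subsequence with trivial image in $G/N$, which is why the threshold carries the extra $ab$.
\end{itemize}
Your fallback of applying Lemma~\ref{lem:setpartition-conseq}(iii) in $Z\times N$ also fails for trivial stabilizer. The points of $Z\setminus\{1\}$ are fixed by $\alpha$, so the orbit-size hypothesis is false there.

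\textbf{The $Z$-coordinate must be built into the super-blocks.} It is not something to repair afterwards. Otherwise Lemma~\ref{lem:orbit-extremal}, which concerns subsets of $N$, does not apply, and neither does its hypothesis that at most $c-1$ of the $A_i$ equal $\{1\}$. The paper proceeds as follows.
\begin{itemize}
\item It first extracts $ab$-term subsequences whose images in the abelian quotient $G/N\cong C_{ab}$ are product-one. Only then does it refine each into $a$ blocks of length $b$ with products in $H$.
\item The $Z$-components of each super-block then multiply to $1$ automatically. Hence $A_i=\{1\}$ means $T_i$ itself has selected product $1$, and $1\in A_0P$ immediately yields $abc$ terms with product one.
\item For the super-block containing $B$, it takes the $2ab$-term subsequence $T$ with trivial image mod $N$ and splits it into $2a$ blocks of length $b$, retaining $B$ as one of them. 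The cyclic EGZ theorem in $Z\cong C_a$ then splits these into two halves, each with trivial $Z$-product, and one half contains $B$.
\end{itemize}
Your Step~3 plan to ``reserve flexibility'' via Lemma~\ref{lem:cyclic-inverse} is not an argument. It becomes unnecessary once the blocks are constructed this way.
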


\begin{proof}
Let $G=C_a\times(C_c\rtimes C_b)$, $N=C_c$, $Z=C_a$, and $H=N\times Z\cong C_{ac}$. Suppose that $S$ has length $L=abc+bc-1$ and that $1\notin\Pi_{abc}(S)$. Lemma~\ref{lem:central-extraction-cap} applies. Since $b$ and $c$ are odd and $b\mid c-1$, we have $c\geq2b+1$. By Lemma~\ref{lem:repeated-extraction}, applied to the cyclic subgroup $H$,
$$ Q:=|S_{G\setminus H}| \geq bc+ac-\mathsf{E}(H)=(b-a)c+1\geq b. $$
We distinguish two cases.

\medskip\noindent\textbf{Case 1.} We have $Q\leq ab+(b-1)^2-1$.\nopagebreak[4]

Successively remove nonempty subsequences of the terms outside $H$ whose images in $G/H\cong C_b$ are minimal product-one sequences. When this is no longer possible, the remainder has length $e\leq b-1$, since $\mathsf{d}(C_b)=b-1$. All extracted subsequences have products in $H$. Let
$$ w=Q-e,\qquad \ell=L-Q=|S_H|, \qquad\text{and}\qquad v=abc-w. $$
The concatenation of the extracted subsequences has length $w$, and
$$ \ell-v=bc-1-e\geq bc-b. $$
Also,
$$ v\geq abc-ab-b^2+2b>bc. $$
For the last inequality, subtract $bc$ and use $c\geq2b+1$ to obtain the positive lower bound $b((2a-3)b+1)$. In particular, $\ell\geq v\geq bc$.

Fix orderings of all extracted subsequences, and let $T$ be their concatenation, with product $d\in H$ and final term $g$. Write $S_H=h_1\bm\cdot\ldots \bm\cdot h_\ell$, where $h_j=z_ju_j$, $z_j\in Z$, and $u_j\in N$. Since $g\notin H$, its conjugation action on $N$ is nontrivial. Write $g^{-1}ug=u^\lambda$, and define
$$ B_j=z_j\{u_j,u_j^\lambda\} \qquad\text{and}\qquad F=\Pi_v(B_1\bm\cdot\ldots \bm\cdot B_\ell)\subseteq H. $$
The prefixes $1$ and $dg^{-1}$ of $T$ induce the identity and conjugation by $g^{-1}$ on $H$, respectively. By the first assertion of Lemma~\ref{lem:orbit-insertion},
\begin{equation} dF\subseteq\Pi_{abc}(S). \tag{6.4}\label{eq:hall-pair-insertion} \end{equation}
It remains to prove the following claim.

\medskip\noindent\textbf{Claim 1.} We have $F=H$.\nopagebreak[4]

\begin{proof}[Proof of Claim~1]
Let $M$ be the stabilizer of $F$, and let $x=|M|$. For an $M$-coset $R$, let $c_R$ count the sets $B_j$ that meet $R$. Let $J$ count the sets $B_j$ contained in $ZM$. If $M=H$, then the claim follows. Suppose, therefore, that $M<H$, so $x\in\{1,a,c\}$.

The two points of $B_j$ either lie in a single coset in $ZM/M$ or lie in two distinct cosets outside $ZM/M$. Indeed, if $M=N$, then $ZM=H$, and both points have the same $Z$-component. If $M=Z$ or $M=1$, then the two points have the same image modulo $M$ if and only if $u_j=1$, because $\lambda-1$ acts invertibly on $N$. Therefore
$$ \sum_R c_R=2\ell-J. $$
Let $\mu$ be the number of cosets with $c_R>v$. If $\mu=0$, then, by Lemma~\ref{lem:setpartition-conseq}(i),
$$ |F|\geq x(1-v+2\ell-J) \geq x(1+\ell-v)\geq ac, $$
since $bc-b+1\geq ac$. If $\mu\geq2$, then the same lemma implies that $|F|\geq x(v+1)\geq ac$. Either conclusion contradicts $F\neq H$.

Suppose, then, that $\mu=1$, and let $R$ be the unique coset with $c_R>v$. By Lemma~\ref{lem:setpartition-conseq}(i),
$$ |F|\geq x(1+2\ell-J-c_R). $$
If $R\notin ZM/M$, then the sets counted by $c_R$ and $J$ are disjoint, so $c_R+J\leq\ell$ and $|F|\geq x(\ell+1)\geq ac$. If $R\in ZM/M$ and $x=1$, then $R=\{z\}$ for some $z\in Z$, and $B_j$ meets $R$ if and only if $h_j=z$. Hence
$$ v_z(S)\geq c_R\geq v+1\geq bc+1, $$
contrary to Lemma~\ref{lem:central-extraction-cap}(i).

It remains to consider $R\in ZM/M$ and $x\in\{a,c\}$. The corresponding original terms $h_j$ lie in the same coset $R$. We must have
$$ c_R\leq abc+x-2. $$
Otherwise, take $abc+x-1$ such terms and translate them into the cyclic group $M$. Since $x\mid abc$, there are $abc$ translated terms with product one by Lemma~\ref{lem:repeated-extraction}. Translating back preserves this product, because every element of $H$ has order dividing $ac$, and hence dividing $abc$. This would contradict the choice of $S$.

Using $J\leq\ell$, the bound on $c_R$, and $\ell\geq abc+bc-ab-b^2+2b-1$, we obtain
$$ |F|\geq f(x):=x(bc-ab-b^2+2b+2-x). $$
Let $B_0=(b-1)c-ab-b^2+2b+2-a$. Direct calculation yields
$$ f(a)-ac=aB_0 \qquad\text{and}\qquad f(c)-ac=cB_0. $$
Moreover, $c\geq2b+1$ implies that $B_0\geq(b-a)(b+1)+1>0$. Thus $|F|>ac$, another contradiction. This proves that $F=H$.
\end{proof}

By Claim~1 and Lemma~\ref{lem:orbit-insertion}, applied as above, we have $1\in\Pi_{abc}(S)$, contrary to the assumption. This completes Case~1.

\medskip\noindent\textbf{Case 2.} We have $Q\geq ab+(b-1)^2$.\nopagebreak[4]

Choose $3ab-1$ terms of $S$ containing as many terms outside $H$ as possible. By Lemma~\ref{lem:repeated-extraction}, applied in $G/N\cong C_{ab}$, these terms contain a subsequence $T$ of length $2ab$ with product-one image modulo $N$. At most $ab-1$ of the chosen terms were discarded, so $T$ contains at least
$$ \min\{Q-ab+1,2ab\}\geq(b-1)^2+1 $$
terms outside $H$. Here $2ab>(b-1)^2$ follows from $b<2a$. Among the $b-1$ nonidentity $H$-cosets, one therefore contains at least $b$ terms of $T$. Choose a subsequence $B\mid T$ of length $b$ in that coset.

\medskip\noindent\textbf{Claim 2.} The sequence $T$ is a product of two subsequences of length $ab$ with selected products in $N$, one of which contains $B$.\nopagebreak[4]

\begin{proof}[Proof of Claim~2]
Retain $B$ as one subsequence, and factor the rest into $b$-term subsequences with product-one images modulo $H$. Repeated application of the cyclic Erd\H{o}s--Ginzburg--Ziv theorem provides all but the last of these subsequences. The last also has product-one image, because the whole sequence $T$ does. There are $2a$ subsequences in total.

Their selected products lie in $H$, and their $Z$-components have total product one, since $T$ has product-one image in the abelian quotient $G/N$. The cyclic Erd\H{o}s--Ginzburg--Ziv theorem in $Z\cong C_a$ selects $a$ components with product one. The complementary $a$ components then also have product one. Grouping the corresponding subsequences gives two $ab$-term subsequences with selected products in $N$, one containing $B$.
\end{proof}

Starting with the two subsequences in Claim~2, continue extracting $ab$-term subsequences with products in $N$ until at most $2ab-2$ terms remain. Denote the subsequence containing $B$ by $T_0$, and denote the others by $T_1,\ldots,T_\ell$. Let $t=(c-1)/b\geq2$. Counting terms gives
$$ \ell\geq c+\left\lfloor\frac{c}{a}\right\rfloor-2 \geq c+t-2. $$
The first inequality follows from
$$ (\ell+1)ab\geq L-(2ab-2)=abc+bc-2ab+1. $$
Since $c$ and $a$ are distinct primes, rounding gives the asserted expression.

Refine each $T_i$ into $b$-term subsequences as in Claim~2, retaining $B$ in $T_0$. Write the $a$ selected products associated with $T_i$ as $z_{ij}u_{ij}\in H$, where $z_{ij}\in Z$ and $u_{ij}\in N$. For every $i$, we have
$$ \prod_{j=1}^{a}z_{ij}=1, $$
because $T_i$ has product-one image modulo $N$. Let $\alpha$ generate the action of $G/H$ on $N$, and define
$$ O(u)=\{\alpha^j(u):0\leq j<b\}. $$
Also define
$$ A_i=\prod_{j=1}^{a}O(u_{ij}) \qquad\text{and}\qquad P=\Pi_{c-1}(A_1\bm\cdot\ldots\bm\cdot A_\ell). $$
The prefixes of $B$ represent all $H$-cosets. By Lemma~\ref{lem:orbit-insertion}, every independent orbit choice from the $b$-term subsequences, including $B$, can therefore be realized. Conjugation fixes $Z$, whose components multiply to one in each $T_i$. Taking $T_0$ and $c-1$ other subsequences $T_i$ uses $cab$ terms and gives
\begin{equation} A_0P\subseteq\Pi_{abc}(S). \tag{6.5}\label{eq:hall-orbit-insertion} \end{equation}

If $A_i=\{1\}$, then all its orbit factors are $\{1\}$, and $T_i$ itself has a selected product one. Thus at most $c-1$ of $A_1,\ldots,A_\ell$ are singletons. Otherwise, $c$ such subsequences would already yield $abc$ terms with product one. The inclusion above implies that $1\notin A_0P$, so Lemma~\ref{lem:orbit-extremal} applies. By that lemma, at least
$$ (c-1)a+t(a-1)\geq ca+a-2 $$
of the factors $u_{ij}$ are identities. The difference between the two expressions is $(t-2)(a-1)\geq0$. The corresponding original $b$-term subsequences are pairwise disjoint and have selected products $z_{ij}\in Z$. This contradicts Lemma~\ref{lem:central-extraction-cap}(ii) and completes the proof.
\end{proof}

\section{Cyclic kernels with quotients of prime-square order}\label{sec:prime-square-quotients}

We exclude configurations (S4) and (S5) of Theorem~\ref{thm:solvable-configs}. Throughout this section, we use the subgroup concentration inequality from Lemma~\ref{lem:repeated-extraction}.

\subsection{An elementary abelian quotient}\label{sec:elem-abelian-quotient}

\begin{lemma}\label{lem:pair-centralization}
Let $B\mathbin{\trianglelefteq} G$ be cyclic of order $m\geq3$, with $G/B$ abelian. Suppose that $1\notin\Pi_{4m}(S)$ and that $S$ contains $3m-2$ disjoint two-term subsequences whose products belong to $B$. Then every term in those subsequences centralizes $B$.
\end{lemma}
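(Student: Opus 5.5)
Assume, for contradiction, that some term $g$ in one of the given pairs fails to centralize $B$. We then aim to build a product-one subsequence of length $4m$ from these pairs, which contradicts $1\notin\Pi_{4m}(S)$. We only use the $3m-2$ given pairs $P_i=x_i\bm\cdot y_i$. Each pair has two orderings, with products $x_iy_i$ and $y_ix_i=x_i^{-1}(x_iy_i)x_i$, and both lie in $B$. So each pair gives a two-element subset
$$ A_i=\{x_iy_i,\;x_i^{-1}(x_iy_i)x_i\}\subseteq B, $$
and the $A_i$ are the conjugation orbits of the pair products under the inner automorphism induced by $x_i$ (equivalently by $y_i$). A product of $2m$ pairs uses $4m$ terms. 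Since $B$ is abelian and normal, concatenating the chosen orderings gives every element of
$$ \Pi_{2m}(A_1\bm\cdot\ldots\bm\cdot A_{3m-2}). $$
The hypothesis therefore forces $1\notin F:=\Pi_{2m}(\mathcal{A})$, and we apply Lemma~\ref{lem:setpartition-conseq}(ii) with $N=B$, $k=2m$ (note $\exp(B)=m\mid 2m$) and $t=3m-2=k+m-2$.

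Lemma~\ref{lem:setpartition-conseq}(ii) gives $h(t-k+1)\leq |F|\leq m-h$, that is, $h(m-1)\leq m-h$, so $hm\leq m$ and $h=1$. Then every coset is a singleton, $\nu_{\{b\}}<2m$ for every $b\in B$, and Lemma~\ref{lem:dgm} gives
$$ m-1\geq|F|\geq 1-2m+\sum_i|A_i| = m-1+\sum_i(|A_i|-1). $$
Hence every $A_i$ is a singleton, so $x_i$ commutes with $x_iy_i$ for every $i$.

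It remains to upgrade ``$x_i$ centralizes the pair product $c_i=x_iy_i$'' to ``$x_i$ and $y_i$ centralize all of $B$''. I expect this step to be the main obstacle. The first idea is to re-pair terms: since $G/B$ is abelian, any two pairs $P_i,P_j$ give products $x_iy_j\cdot y_ix_j$ lying in $B$, but the individual cross products need not lie in $B$. So I would instead insert elements. Take a pair $P_j$ whose product $c_j$ generates a large subgroup, or combine several products, and conjugate by $x_i$ using Lemma~\ref{lem:orbit-insertion}: inserting the ordering of $P_j$ between $x_i$ and $y_i$ realizes $x_ic_jx_i^{-1}c_i$. Grouping $P_i,P_j$ into one four-term block then yields sets $\{c_ic_j,\,c_i\,x_ic_jx_i^{-1},\dots\}$. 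Running the same setpartition count with blocks, or with mixed pairs and blocks, again forces all these sets to be singletons. So $x_i$ centralizes every $c_j$, and likewise every product of the $c_j$.

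Finally, I need the $c_j$ to generate $B$. If they lie in a proper subgroup $B'<B$, then $F\subseteq B'$, and applying Lemma~\ref{lem:setpartition-conseq}(ii) inside $B'$ (with $m'=|B'|$, using $m'\mid 2m$, and allowing the degenerate cases $m'\leq 2$ to be checked directly) shows that the $3m-2\geq 2m+m'-1$ singletons already force $1\in F$ by Lemma~\ref{lem:repeated-extraction}, since $\mathsf{s}_{2m}(C_{m'})\leq 2m+m'-1$. That is a contradiction. Thus $\langle c_j\rangle=B$, every $x_i$ centralizes $B$, and then $y_i=x_i^{-1}c_i$ does too.
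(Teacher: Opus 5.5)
Your first step is correct, and it is a legitimate substitute for the paper's appeal to the inverse theorem. With $k=2m$ and $t=3m-2=k+m-2$, Lemma~\ref{lem:setpartition-conseq}(ii) forces $h=1$ and then $\sum_i(|A_i|-1)=0$, so the two terms of each pair commute. Your generation step is also correct, since $3m-2\geq 2m+m'-1=\mathsf{s}_{2m}(C_{m'})$ for every proper subgroup of order $m'$.

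The gap is the middle step, which you flag yourself and which carries the lemma: the claim that $x_i$ centralizes every $c_j$ is asserted, not proved. The count you propose to ``run again'' does not extend to four-term blocks, for two reasons.
\begin{itemize}
\item The setpartition bound chooses one element from each of $k$ distinct sets. A block $P_i\bm\cdot P_j$ therefore cannot be one set among ordinary pairs while the total length stays $4m$. If you instead fix the block and study $\pi(P_i\bm\cdot P_j)\,\Pi_{2m-2}(W')$, where $W'$ is the sequence of the other $3m-4$ pair products, then $\exp(B)=m\nmid 2m-2$, so part (ii) of that lemma is no longer available.
\item More seriously, cardinality information alone is insufficient. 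In the extremal configuration $W=u^{[m-1]}\bm\cdot v^{[2m-1]}$, removing two $u$-products leaves $W'=u^{[m-3]}\bm\cdot v^{[2m-1]}$ with $|\Pi_{2m-2}(W')|=m-2$. The count then yields only $|\pi(P_i\bm\cdot P_j)|\leq 2$, not a singleton.
\end{itemize}
So ``forces all these sets to be singletons'' is not something a counting argument delivers. You must know, for each $i$, which $j$ to pair it with, and that the complementary product set misses only one element.

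The missing ingredient is the inverse structure of Lemma~\ref{lem:cyclic-inverse}. Since $|W|=3m-2$ and $1\notin\Pi_{2m}(W)$, one has $W=u^{[m-1]}\bm\cdot v^{[2m-1]}$ with $\langle uv^{-1}\rangle=B$. Take a pair with product $u$ and a pair with product $v$, and let $T$ be their union. The other products form $u^{[m-2]}\bm\cdot v^{[2m-2]}$, and its $(2m-2)$-term products are exactly $B\setminus\{(uv)^{-1}\}$. Hence every ordering of $T$ has product $uv$.

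In particular, your insertion product $x_ic_jx_i^{-1}c_i$ equals $c_ic_j$. A term of the $u$-pair therefore centralizes $v$ as well as $u$, and symmetrically for the $v$-pair. Each such term then centralizes the generator $uv^{-1}$ of $B$. Every pair can be matched with a pair of the other product, which finishes the proof and makes your separate generation argument unnecessary.
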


\begin{proof}
Choose one product from each pair to form a sequence $W$ over $B$. Since $1\notin\Pi_{2m}(W)$, we have, by Lemma~\ref{lem:cyclic-inverse},
$$ W=u^{[m-1]}\bm\cdot v^{[2m-1]} \qquad\text{and}\qquad \langle uv^{-1}\rangle=B, $$
after interchanging $u$ and $v$ when necessary. Reversing one pair cannot change its product. A change from $u$ to $v$ would give $2m$ copies of $v$, while a change from $v$ to $u$ would give $m$ copies of each. Both changes would yield a product-one subsequence of length $2m$. Any other change would leave three distinct products, because $m-1\geq2$, contrary to Lemma~\ref{lem:cyclic-inverse}. Thus the terms of each pair commute.

Take a pair with product $u$ and a pair with product $v$, and let $T$ be their union. The other selected products form $W'=u^{[m-2]}\bm\cdot v^{[2m-2]}$, and
$$ \Pi_{2m-2}(W') =v^{-2}\{(uv^{-1})^j:0\leq j\leq m-2\} =B\setminus\{(uv)^{-1}\}. $$
Every ordering of $T$ has product in $B$. A product other than $uv$ could therefore be cancelled using $2m-2$ other pairs. Consequently $\pi(T)=\{uv\}$.

Moving an initial term to the end of an ordering shows that every term of $T$ centralizes $uv$. A term in the pair with product $u$ also centralizes $u$, and hence it centralizes $v$ and the generator $uv^{-1}$ of $B$. The pair with product $v$ is treated in the same way. Matching every pair with a pair having the other product proves the assertion.
\end{proof}

\begin{lemma}\label{lem:affine-concentration}
Let $p$ be an odd prime, let $B\mathbin{\trianglelefteq} G$ be cyclic of order $m\geq5$, and suppose that $G/B\cong C_p^2$. Let $S$ be a sequence satisfying
$$ |S|=p(p+1)m-1 \qquad\text{and}\qquad 1\notin\Pi_{p^2m}(S). $$
Every $p$-term subsequence with products in $B$ has at most two products, and every $2p$-term subsequence with products in $B$ has at most four. Suppose, in addition, that
$$ \rho:G\longrightarrow\mathbb{F}_q\rtimes C_p $$
is an epimorphism, where $q$ is prime, the scalar action is faithful, and $\rho(B)$ consists of translations. If a $B$-coset with nontrivial scalar component contains at least $p$ terms of $S$, then there is a subgroup $H$ of index $q$ such that every $B$-coset containing at least $p$ terms has at most one term outside $H$. In particular,
\begin{equation} |S_{G\setminus H}|\leq p^2(p-1). \tag{7.1}\label{eq:affine-outside-cap} \end{equation}
\end{lemma}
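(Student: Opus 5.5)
The plan has two stages: first the product-counting assertions, then the concentration statement for $\rho$.

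\emph{Counting products.} The idea is to fill up the remaining length with many disjoint product-in-$B$ subsequences and apply Lemma~\ref{lem:setpartition-conseq}(ii) in $B\cong C_m$. Let $T$ be a $p$-term subsequence with $\pi(T)\subseteq B$. The remaining $|S|-p=p(p+1)m-p-1$ terms are extracted in $G/B\cong C_p^2$. Since $\mathsf{E}(C_p^2)=p^2+2p-2$, repeated use of Lemma~\ref{lem:repeated-extraction} gives about $(p+1)m-3$ disjoint $p$-term subsequences $T_i$ with product-one images; we extract $p$ terms at a time, and $\mathsf{s}_p(C_p^2)=4p-3$ by Lemma~\ref{lem:reiher}. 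Setting $A_0=\pi(T)$ and $A_i=\pi(T_i)$, each set lies in $B$. Choosing one element from each of $pm$ of them gives $p^2m$ terms with product one, since $B$ is abelian and we may concatenate. Hence $1\notin\Pi_{pm}(A_0\bm\cdot\ldots\bm\cdot A_t)$, and $\exp(B)\mid pm$. If the count reaches $t+1=pm+m-2$ sets, then Lemma~\ref{lem:setpartition-conseq}(ii) (with $k=pm$ and $m\geq5\geq3$) gives $\sum(|A_j|-1)\leq1$. This yields $|\pi(T)|\leq2$. The $2p$-term statement works the same way: $T$ splits into two $p$-blocks modulo $B$, or we treat $T$ as two slots, so $|\pi(T)|\leq 1+$ (something small). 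More carefully, we view $\pi(T)$ as containing a product of two such sets and run the same argument with $k=pm$ and $T$ counted as occupying two slots, which bounds $|\pi(T)|\leq4$. I expect the length bookkeeping, checking that enough blocks can be extracted, to be routine but tight.

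\emph{Concentration under $\rho$.} Suppose a $B$-coset $gB$ with nontrivial scalar component $\lambda$ contains at least $p$ terms. For $p$ terms $g_1,\dots,g_p$ from this coset, the product lies in $B$, since $g^p\in B$. Under $\rho$, the rotations and transpositions of the ordering move the translation part. Taking $H=\rho^{-1}(C_p)$ for the point stabilizer $C_p$ fixing the right point, we get index $q$. If two of the chosen terms lie outside $H$, meaning their affine maps have distinct fixed points, then swapping adjacent terms and rotating gives at least three distinct images $\rho(\pi(T))$, hence three distinct products, contradicting $|\pi(T)|\leq2$. This is the same mechanism as the claim in Lemma~\ref{lem:a4-six-term}, where non-commuting elements $ax^j,bx^j$ gave distinct products. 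Concretely, elements of the same scalar coset commute under $\rho$ iff they have the same fixed point, so all but at most one of the terms share a fixed point. The main obstacle is showing that this common fixed point, and hence $H$, is the same for all heavy cosets. For this I would use the $2p$-term bound: combine $p$ terms from two heavy cosets, whose product again lies in $B$. If the fixed points differ, then the insertion and rotation products (via Lemma~\ref{lem:orbit-insertion} applied to $\rho(B)$-translations) produce at least five distinct images.

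\emph{Counting outside $H$.} For \eqref{eq:affine-outside-cap}, there are at most $p^2$ cosets. Each coset with at most $p-1$ terms contributes at most $p-1$ terms, and each heavy coset contributes at most $1\leq p-1$ term outside $H$. Hence $|S_{G\setminus H}|\leq p^2(p-1)$.
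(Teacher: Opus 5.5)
The $p$-term bound is handled as in the paper: Reiher extraction, then Lemma~\ref{lem:setpartition-conseq}(ii) with $k=pm$. Your count is off by one: including $\pi(T)$ there are exactly $k+m-3$ sets, since after $k+m-4$ blocks only $4p-1$ terms remain, and $k+m-3$ is precisely what (ii) requires.

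\textbf{The $2p$-term bound is a genuine gap.} Neither of your two suggestions works.
\begin{itemize}
\item A $2p$-term subsequence with products in $B$ need not split into two $p$-term blocks with products in $B$, because $\mathsf{s}_p(C_p^2)=4p-3>2p$. For example, the image $e_1^{[p-1]}\bm\cdot e_2^{[p-1]}\bm\cdot(e_1+e_2)\bm\cdot 0$ is zero-sum but has no $p$-term zero-sum subsequence.
\item Even when $T=T_1\bm\cdot T_2$ does split, $\pi(T)$ contains interleaved products outside $\pi(T_1)\pi(T_2)$. Bounding the two block sets says nothing about $|\pi(T)|$.
\item A set cannot ``occupy two slots'' in Lemma~\ref{lem:setpartition-conseq}(ii). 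That lemma counts choices of one element from each of exactly $k$ sets. Padding with a dummy $\{1\}$ admits selections that use the dummy but not $\pi(T)$, and these correspond to no $p^2m$-term subsequence.
\end{itemize}
What is needed is a lower bound for $(k-2)$-fold products of a \emph{sequence}. After removing $T$, extract $k+m-5$ blocks and let $W$ be their selected products, so $|W|=k+m-5$ and $1\notin\Pi_k(W)$. Embed $B$ in $C_k$ and apply Lemma~\ref{lem:abelian-products} with $a=k$ and $b=m-3$; this gives $|\Pi_{k-2}(W)|\ge m-4$. Then $|\pi(T)|\ge5$ forces $\pi(T)^{-1}\cap\Pi_{k-2}(W)\ne\varnothing$ inside $B$. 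That produces $(k-2)p+2p=p^2m$ terms with product one. This is where the constant $4$ comes from.

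\textbf{The concentration stage starts correctly but its main step is only asserted.} Your swap-and-rotate argument actually shows more than you claim: \emph{all} terms of a heavy coset with nontrivial scalar component have the same image, not merely that all but one share a fixed point. The cross-coset step, however, is where the work is.

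Suppose two such heavy cosets have common images $g$ and $h\notin\langle g\rangle$. Inserting $h$-terms via Lemma~\ref{lem:orbit-insertion} gives nothing, since a single $h$-term has no product among the translations and a block of $p$ of them has product $1$. You need explicit interleavings such as $ghg^{p-1}h^{p-1}$, $hgh^{p-1}g^{p-1}$ and $g^ph^p$. These have products $d$, $d^{-1}$ and $1$, where $d=ghg^{-1}h^{-1}$ is a nonidentity translation. Their rotations give the full scalar orbits of $d$ and $d^{-1}$. These orbits are disjoint because a scalar group of odd order does not contain $-1$, giving $2p+1>4$ products. The disjointness matters at $p=3$, where the orbit of $d$ together with $1$ gives only $p+1=4$ products.

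Heavy cosets with \emph{trivial} scalar component have translation images and no fixed points, so your fixed-point comparison does not touch them. Yet your final count uses ``at most one term outside $H$'' for them. They need a separate argument:
\begin{itemize}
\item Take two terms there with nonidentity translation images, together with $p-2$ further terms of that coset.
\item Insert them into an ordering of $p$ terms of the reference coset.
\item By Lemma~\ref{lem:orbit-insertion}, this realizes a sumset of two scalar orbits of size $p$.
\item By Cauchy--Davenport, that sumset has at least $\min\{q,2p-1\}\ge5$ elements, using $q\equiv1\pmod p$ and hence $q\ge2p+1$.
\end{itemize}
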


\begin{proof}
Let $k=pm$ and $R=k+m-3$. Starting with any prescribed $p$-term subsequence having product-one image in $G/B$, use Lemma~\ref{lem:reiher} to extract $R$ disjoint subsequences of this kind. Before the last choice, there are $4p-1\geq4p-3$ terms. If $A_i$ is the product set of the $i$th subsequence, then $1\notin\Pi_k(A_1\bm\cdot\ldots \bm\cdot A_R)$. By Lemma~\ref{lem:setpartition-conseq}(ii),
$$ \sum_i(|A_i|-1)\leq1. $$
This proves the first bound.

After removing a prescribed $2p$-term subsequence $T$, the same argument extracts $R-2$ subsequences of length $p$. Their selected products form a sequence $W$ of length $k+m-5$ with $1\notin\Pi_k(W)$. Embed $B=C_m$ in $C_k$. By Lemma~\ref{lem:abelian-products}, applied with $a=k$ and $b=m-3$, we have $|\Pi_{k-2}(W)|\geq m-4$. If $|\pi(T)|\geq5$, then its inverse set meets $\Pi_{k-2}(W)$ inside $B$. This produces $(k-2)p+2p=p^2m$ terms with product one, a contradiction. The second bound follows.

Write an affine map as $z\mapsto az+t$. A map with $a\neq1$ has order $p$ and a unique fixed point. Its centralizer is the subgroup of order $p$ fixing that point. In particular, two commuting maps with the same nonidentity scalar component are equal.

Consider any $p$ terms in a fixed $B$-coset with scalar component $a\neq1$. Cyclic rotation of an ordering whose image is a nonidentity translation would give $p$ distinct products, because the prefixes realize all powers of $a$. The bound of two products therefore forces every ordering to have identity image. Adjacent transpositions show that any two of the images commute. Hence all terms in any such coset containing at least $p$ terms have a common image of order $p$.

Choose $p$ terms $U$ from the coset in the hypothesis, with common image $g$, and let $T=\langle g\rangle$. Consider another coset with at least $p$ terms. If its scalar component is nontrivial, then its common image is some element $h$ of order $p$. If $h\notin T$, then the commutator $d=ghg^{-1}h^{-1}$ is a nonidentity translation. Using $p$ copies of $g$ and $p$ copies of $h$, the orderings
$$ ghg^{p-1}h^{p-1},\qquad hgh^{p-1}g^{p-1}, \qquad\text{and}\qquad g^ph^p $$
have products $d$, $d^{-1}$, and $1$, respectively. The prefixes of the first two orderings realize all scalar components, so their cyclic rotations give the complete scalar orbits of $d$ and $d^{-1}$. These orbits are disjoint, since a scalar group of odd order does not contain $-1$. There would therefore be at least $2p+1>4$ products, a contradiction. Thus the common image lies in $T$.

If the other coset has trivial scalar component, then two nonidentity translation images would also be impossible. To see this, choose $p$ terms including the two in question and insert them among the terms of $U$. By Lemma~\ref{lem:orbit-insertion}, the product of their scalar orbits can be realized. Two of these orbits have size $p$, so the Cauchy--Davenport theorem gives at least $\min\{q,2p-1\}\geq5$ products. Here $q\equiv1\pmod{p}$ and $q\geq2p+1$. This again contradicts the bound of four products.

Consequently $H=\rho^{-1}(T)$ has the stated property and index $q$. Each of the $p^2$ cosets contributes at most $p-1$ terms outside $H$, which proves \eqref{eq:affine-outside-cap}.
\end{proof}

\begin{proposition}\label{prop:elem-p2-quotient}
Let $p$ be the smallest prime divisor of $|G|$. Suppose that $B\mathbin{\trianglelefteq} G$ is cyclic and $G/B\cong C_p^2$. If $G$ has no cyclic subgroup of index $p$, then
$$ \mathsf{E}(G)\leq |G|+\frac{|G|}{p}-1. $$
\end{proposition}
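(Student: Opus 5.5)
We would argue by contradiction. Write $n=|G|=p^2m$ with $m=|B|$, and fix a sequence $S$ of length $n+n/p-1=p(p+1)m-1$ with $1\notin\Pi_{n}(S)$. The first step disposes of small cases.
\begin{itemize}
\item If $m\le 4$, then $|G|\le 4p^2$, and we would rely on the known equalities: Lemma~\ref{lem:gao-identities}(iii),(iv) for $C_{uv}\rtimes C_u$-type groups and for $D_{2v}\times C_2$, and Lemma~\ref{lem:gao-bounds}(ii) when $G/N\cong C_p\rtimes C_p$.
\item If $G$ is nilpotent, Proposition~\ref{prop:nilpotent-bound} applies.
\item If $G/B\cong C_p^2$ and $B$ is central, then $G$ is nilpotent of class at most two, so this case is also covered.
\end{itemize}
So we may assume $m\ge5$ and that some element of $G$ acts nontrivially on $B$. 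For $p$ odd we can then invoke Lemma~\ref{lem:affine-concentration}, which gives that every $p$-term subsequence with product in $B$ has at most two products, and every $2p$-term one at most four. For $p=2$ we would use Lemma~\ref{lem:pair-centralization} in place of these product bounds.

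The second step is to locate a subgroup carrying too many terms and apply the concentration inequality \eqref{eq:subgroup-concentration}. For $p=2$, pair up terms into two-term subsequences with products in $B$ via the Erd\H{o}s--Ginzburg--Ziv theorem in $G/B\cong C_2^2$, obtaining at least $3m-2$ disjoint pairs. Lemma~\ref{lem:pair-centralization} then forces all paired terms to centralize $B$. Hence all but a bounded number of terms lie in $C_G(B)$, a proper subgroup containing $B$. Its index is $2$ or $4$, and it is abelian, since it is a central extension of a cyclic group by a subgroup of $C_2^2$ and contains $B$ with quotient of order $\le 4$. Comparing $|S_{G\setminus C_G(B)}|$ with $d+|H|-\mathsf E(H)+1$, using $\mathsf E(H)=|H|+\mathsf d(H)$ from Lemma~\ref{lem:gao-identities}(i) and Lemma~\ref{lem:davenport-bounds}(i), should give a contradiction.

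For $p$ odd, pick a prime $q\mid m$ on which some element of $G$ acts nontrivially. Since $p$ is the smallest prime divisor, the image of that action has order $p$. We then obtain an epimorphism $\rho:G\to\mathbb F_q\rtimes C_p$ with $\rho(B)$ the translations, by passing to the quotient by the $q'$-part and the Frattini of the Sylow $q$ of $B$, and factoring through a cyclic image. By \eqref{eq:subgroup-concentration} applied to $H_0=C_G(B_q)$, many terms lie outside it, so some coset with nontrivial scalar component contains at least $p$ terms. Lemma~\ref{lem:affine-concentration} then yields $H$ of index $q$ with $|S_{G\setminus H}|\le p^2(p-1)$. Finally apply \eqref{eq:subgroup-concentration} to $H$, bounding $\mathsf E(H)$ by minimality: $H$ has order $n/q$, so $\overline{\mathsf E}(H)\le 1+1/p'+\dots$, via Lemma~\ref{lem:gao-bounds} or Lemma~\ref{lem:gao-identities}(ii) when $H$ has a cyclic subgroup of index $p$. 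Then $|H|-\mathsf E(H)+d+1\ge n/p-|H|/p-p+1$, which is much larger than $p^2(p-1)$ since $q\ge 2p+1$ and $m\ge5$ make $n/p$ large.

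The main obstacle is this final counting, and more generally the case where the action is faithful only on a small part of $B$, so that the inequality in the last step is tight. In particular, the case where $p^2(p-1)$ competes with $n/p(1-1/q)$ when $m$ is small relative to $p^2$ needs care; there we would instead use the four-product bound directly on cosets with trivial scalar component.
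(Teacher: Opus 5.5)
Your $p=2$ argument is essentially the paper's. For odd $p$, your ordering is a harmless rearrangement of the paper's claim and its final count on $C_G(A)$: you pigeonhole the terms outside $H_0=C_G(B_q)$ to trigger Lemma~\ref{lem:affine-concentration}, then refute its cap by concentration on $H$. The genuine gap is the one you flag yourself. Every count in the odd case needs $m$ to be of order $p^2$, and you never establish this.
\begin{itemize}
\item Concentration on $H_0$, with $\mathsf{E}(H_0)=pm+m+p-2$, gives only $(p-1)m-p+2$ terms in the $p^2-p$ cosets of $B$ outside $H_0$. To force a coset with $p$ terms you need $(p-1)m-p+2>(p-1)(p^2-p)$.
\item Your final step needs $pm-pm/q-p+2>p^2(p-1)$.
\end{itemize}
Neither inequality follows from the facts you cite ($q\mid m$, $q\ge 2p+1$, $m\ge5$): at $p=5$, $m=q=11$ they read $41\le 80$ and $47\le 100$. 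So the assertion that $n/p$ is ``much larger'' than $p^2(p-1)$ is unjustified. The proposed fallback, using the four-product bound on cosets with trivial scalar component, is not an argument.

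The missing step is a short structural observation that makes the small-$m$ case vacuous.
\begin{itemize}
\item If $m=q$, then $G=C_q\rtimes P$ with $P\cong C_p^2$.
\item The action of $P$ on $C_q$ has image of order $p$, being a cyclic quotient of $C_p^2$ inside the cyclic group $\Aut(C_q)$. Hence it has a kernel $P_0$ of order $p$.
\item Then $C_q\times P_0$ is a cyclic subgroup of index $p$, contrary to hypothesis.
\end{itemize}
Since every prime divisor of $m$ is at least $p$, this gives $m\ge pq\ge p(2p+1)$, which is \eqref{eq:active-prime-size}. With this bound both of your counts close, even using only $\mathsf{E}(H)\le 2|H|-1$: one gets $|S_{G\setminus H}|\ge pm-p^2m/q+1\ge p^2(q-p)+1>p^2(p-1)$.

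Some smaller corrections:
\begin{itemize}
\item The proposition is standalone, so ``minimality'' is not available for bounding $\mathsf{E}(H)$. Use the universal bound, or Lemma~\ref{lem:gao-bounds}(ii) via $H/(H\cap B)\cong C_p^2$.
\item You must choose the active prime $q\neq p$; it exists because $G$ is not nilpotent.
\item The order-$p$ image comes from the cyclicity of $\Aut(B_q)$ and the factorization of the action through $G/B$, not from $p$ being the smallest prime.
\item Your case $m\le 4$ is vacuous for a similar structural reason, not by the lemmas you cite. Such groups are nilpotent, or equal to $S_3\times C_2$, which has a cyclic subgroup of index two.
\end{itemize}
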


\begin{proof}
By Proposition~\ref{prop:nilpotent-bound}, we may assume that $G$ is not nilpotent. Let $m=|B|$ and $n=p^2m$, and suppose that $|S|=n+pm-1$ but $1\notin\Pi_n(S)$.

\medskip\noindent\textbf{Case 1.} The prime $p$ is $2$.\nopagebreak[4]

We have $m\geq3$. Otherwise $B\leq Z(G)$ and $G'\leq B$ would imply that $G$ is nilpotent. A maximal family of pairs, each contained in a single $B$-coset, leaves one or three terms. One remaining term would give $3m-1=\mathsf{s}_{2m}(B)$ selected products, already a contradiction. Thus there are $3m-2$ pairs. By Lemma~\ref{lem:pair-centralization}, all but three terms lie in $K=C_G(B)$.

We have $K<G$, again because $G'\leq B$. If $[G:K]=4$, then $K=B$. By Lemma~\ref{lem:repeated-extraction}, at least $m+1\geq4$ terms lie outside $K$. If $[G:K]=2$, then $B\leq Z(K)$ and $K/B$ is cyclic, so $K$ is abelian. By hypothesis, $K$ is non-cyclic, and hence $K\cong C_2\times C_m$, where $m\geq4$ is even. By Lemmas~\ref{lem:davenport-bounds}(i) and~\ref{lem:gao-identities}(i), we have $\mathsf{E}(K)=3m$. By Lemma~\ref{lem:repeated-extraction}, at least $m\geq4$ terms then lie outside $K$. Both possibilities contradict the fact that at most three terms lie outside $K$.

\medskip\noindent\textbf{Case 2.} The prime $p$ is odd.\nopagebreak[4]

Let $A$ be the Hall $p'$-subgroup of $B$. It is normal, so the Schur--Zassenhaus theorem gives $G=A\rtimes P$ for a Sylow $p$-subgroup $P$. Thus
$$ K=C_G(A)=A\times C_P(A) $$
is nilpotent, contains $B$, and has index $p$ or $p^2$. On every primary component of $A$, the action factors through $C_p^2$ and has cyclic image, hence order at most $p$. Call a prime $q\mid |A|$ active when this image is nontrivial. By Lemma~\ref{lem:group-structure}(viii), the induced action on $C_q$ also has order $p$, so $q\equiv1\pmod{p}$ and $q\geq2p+1$.

Moreover, $m>q$. If $m=q$, then the kernel of the action of $P\cong C_p^2$ would have order $p$, making $K\cong C_q\times C_p$ a cyclic subgroup of index $p$, a contradiction. Since $p$ is the smallest prime divisor of $|G|$, it follows that
\begin{equation} m\geq pq\geq p(2p+1). \tag{7.2}\label{eq:active-prime-size} \end{equation}
This holds for every active prime $q$.

\medskip\noindent\textbf{Claim.} Each $B$-coset outside $K$ contains at most $p-1$ terms of $S$.\nopagebreak[4]

\begin{proof}[Proof of the claim]
Suppose that $x\notin K$ and $|S_{xB}|\geq p$. Choose a primary component on which $x$ acts nontrivially, and project it onto $C_q$. Factoring out the other components of $A$ and taking the image of the action of $P$ gives an epimorphism $\rho:G\rightarrow C_q\rtimes C_p$ to which Lemma~\ref{lem:affine-concentration} applies. By that lemma, the resulting subgroup $H$ of index $q$ has at most $p^2(p-1)$ terms of $S$ outside it. However, the universal bound $\mathsf{E}(H)\leq2|H|-1$, Lemma~\ref{lem:repeated-extraction}, and \eqref{eq:active-prime-size} imply that
$$ |S_{G\setminus H}| \geq pm-\frac{p^2m}{q}+1 \geq p^2(q-p)+1>p^2(p-1), $$
a contradiction.
\end{proof}

By the claim, at most $(p^2-1)(p-1)$ terms lie outside $K$. Whether $K=B$ or $[K:B]=p$, the cyclic bound or the nilpotent bound in Lemma~\ref{lem:gao-bounds}(iii), respectively, implies that
$$ \mathsf{E}(K)\leq |K|+m+p-2. $$
In the latter case, $K$ is non-cyclic by hypothesis. Consequently Lemma~\ref{lem:repeated-extraction} and \eqref{eq:active-prime-size} require
$$ |S_{G\setminus K}| \geq(p-1)m-p+2>(p^2-1)(p-1), $$
which is the final contradiction.
\end{proof}

\subsection{Cyclic quotients and coprime kernels}\label{sec:cyclic-coprime-quotient}

\begin{lemma}\label{lem:p2-orbit-saturation}
Let $p$ be prime, let $N=C_m$ with $\gcd(m,p)=1$, and let $\alpha$ have order $p^2$ and satisfy $C_N(\alpha)=1$. For a sequence $W=u_1\bm\cdot\ldots\bm\cdot u_\ell$ over $N$, define
$$ A_j=\{\alpha^e(u_j):0\leq e<p^2\}. $$
If $\ell\geq m+(m-1)/p-2$, then either
$$ 1\in\Pi_m(W) \qquad\text{or}\qquad \Pi_{m-1}(A_1\bm\cdot\ldots\bm\cdot A_\ell)=N. $$
\end{lemma}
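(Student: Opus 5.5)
The plan is to apply the setpartition bound of Lemma~\ref{lem:dgm} in the form of Lemma~\ref{lem:setpartition-conseq}. Take $k=m-1$, the setpartition $\mathcal{A}=A_1\bm\cdot\ldots\bm\cdot A_\ell$ of $N$, and
$$F=\Pi_{m-1}(\mathcal{A}).$$
Assume $1\notin\Pi_m(W)$ and $F\neq N$; we seek a contradiction. Let $M=H(F)$ and $h=|M|$. Then $M$ is a proper subgroup of the cyclic group $N$, hence characteristic, so $\alpha$ preserves $M$ and every $A_j$ is $\alpha$-invariant by construction. Because $C_N(\alpha)=1$ and $\gcd(m,p)=1$, each nonidentity $\alpha$-orbit on $N$ has size $p$ or $p^2$, never $1$. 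I also expect, by Lemma~\ref{lem:group-structure}(viii) applied to the quotient, that $\alpha$ acts fixed-point-freely on $N/M$, i.e.\ $C_{N/M}(\alpha)=C_N(\alpha)M/M$ is trivial. Hence every nonidentity orbit on $N/M$ has at least $s=p$ elements.

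Next I check that each $A_j$ lies in or is disjoint from $M$. If $u_j\in M$, then $A_j\subseteq M$. Otherwise the orbit of $u_j$ avoids $M$, since $M$ is $\alpha$-invariant. Let $i$ be the number of $j$ with $u_j\in M$.

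The central tool is Lemma~\ref{lem:setpartition-conseq}(iii), which needs $h(1+(p-1)(m-1))\geq m$; this holds for $p\geq2$. It gives that $\nu_Q\leq m-1$ for $Q\neq M$, and
$$|F|\geq h\bigl(2-m+\min\{m-1,i\}+p(\ell-i)\bigr).$$
The hypothesis $1\notin\Pi_m(W)$ must be used to control the terms inside $M$. The terms $u_j\in M$ form a sequence over the cyclic group $M$ of order $h\mid m$ with no product-one subsequence of length $m$. By Lemma~\ref{lem:repeated-extraction}, $\mathsf{s}_m(C_h)=m+h-1$, so $i\leq m+h-2$.

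Now split into cases. If $i\leq m-1$, the bound becomes
$$|F|\geq h\bigl(2-m+i+p(\ell-i)\bigr)\geq h\bigl(2-m+(m-1)+p(\ell-m+1)\bigr).$$
With $\ell\geq m+(m-1)/p-2$ we get $p(\ell-m+1)\geq m-1-p$, so $|F|\geq h(m-p)$. This is not yet enough by itself, so I would refine it. Elements $u_j\neq1$ outside $M$ have orbits of size $p^2$ when $\alpha^p$ also acts fixed-point-freely; when it does not, the orbits still meet at least $p$ cosets. Combining this with $|F|\leq m-h$, which holds because $F$ is a union of $M$-cosets missing the coset containing the needed element, should force $h=1$ and then a numeric contradiction from $\ell\geq m+(m-1)/p-2$.

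If $i\geq m$, the terms in $M$ alone give $\Pi_{m-1}\supseteq M$-related products. I would argue that $M\neq N$ forces $h\leq m/q$ for the smallest prime $q\mid m$. Then $i\leq m+h-2$ means the case $M=1$, with $i$ counting identities, is the dangerous one. There $i\leq m-1$ directly, since $m$ identities give $1\in\Pi_m(W)$.

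The main obstacle is making the estimate tight in the case $M=1$ with exactly $i$ identity terms. The bound reads
$$|F|\geq 2-m+i+p(\ell-i),$$
and we need this to be at least $m$. Substituting $\ell\geq m+(m-1)/p-2$ gives
$$|F|\geq 2-m+i+pm+m-1-2p-pi=(p-1)(m-i)+i+1-2p+\ldots.$$
This may be insufficient when $i$ is close to $m-1$. In that regime I would exploit the identity terms more directly. The $i$ identities, together with a few nonidentity orbit sets, yield $\Pi_{m-1}\supseteq\Pi_{m-1-i}(\text{nonidentity sets})$. Then Cauchy--Davenport or Kneser's theorem (Lemma~\ref{lem:kneser-cd}) on the $\ell-i$ orbit sets of size at least $p$ should show these products cover $N$.
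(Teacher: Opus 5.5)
Your setup matches the paper's: DGM through Lemma~\ref{lem:setpartition-conseq}(iii) with $k=m-1$ and $s=p$, the characteristic stabilizer $M$, $C_{N/M}(\alpha)=1$, and $i\le m+h-2$ from Lemma~\ref{lem:repeated-extraction}. The gaps are exactly where you write ``not yet enough'' and ``should force''.

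\textbf{The case $h>1$.} Here you need the arithmetic of $m$, which you never derive. Since $C_N(\alpha)=1$, $\alpha$ acts nontrivially on every primary component $C_{q^b}$ of $N$. By Lemma~\ref{lem:group-structure}(viii), its image embeds in $C_{q-1}$. Hence every prime $q\mid m$ satisfies $q\equiv1\pmod p$. Moreover, at least one prime $q_0\mid m$ satisfies $q_0\equiv1\pmod{p^2}$, namely one on whose component $\alpha$ has order $p^2$; thus $m\ge q_0\ge p^2+1$. The bracket $2-m+\min\{m-1,i\}+p(\ell-i)$ decreases in $i$, so the worst case is $i=m+h-2$. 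Your treatment of $i\ge m$ drifts to $M=1$, although $i\ge m$ actually forces $h\ge2$. Substituting $i=m+h-2$ gives $|F|\ge h(m-ph)$. Since $h\ge p+1$ and $m\ge(p+1)h$, we get $h(m-ph)-m\ge h(h-p-1)\ge0$, contradicting $F\ne N$. Even your subcase bound $|F|\ge h(m-p)$ contradicts $F\neq N$ only once you know $m\ge 2p$.

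\textbf{The case $h=1$: the serious gap.} Here the bound is $|F|\ge 2-m+\sum_j|A_j|$. If every nonidentity orbit has size $p$, this gives only $m-p$. Your fallback via Cauchy--Davenport or Kneser on orbit sets of size at least $p$ cannot work. If all $u_j$ lie in the proper subgroup $D=C_N(\alpha^p)$, then every product lies in $D$, so no sumset argument can reach $N$.

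The missing idea is to use $1\notin\Pi_m(W)$ to force an orbit of size $p^2$.
\begin{itemize}
\item The element $\alpha^p$ acts nontrivially on the socle of the $q_0$-component, again by Lemma~\ref{lem:group-structure}(viii). So it has no nonidentity fixed points there, and $|D|\le m/q_0$.
\item The inequality $(q_0-p)m\ge q_0(p+1)$ gives $(m-1)/p-1\ge m/q_0$.
\item Hence if all $u_j\in D$, then $\ell\ge m+|D|-1$. Lemma~\ref{lem:repeated-extraction} in the cyclic group $D$ then gives $1\in\Pi_m(W)$, a contradiction.
\end{itemize}
So some $A_j$ has size $p^2$, and $\sum_j|A_j|\ge i+p(\ell-i)+(p^2-p)$. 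Combining this with $i\le m-1$ and the bound on $\ell$ gives $|F|\ge m+p(p-2)\ge m$, which is the required contradiction.
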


\begin{proof}
On every primary component, the action has order $p$ or $p^2$. By Lemma~\ref{lem:group-structure}(viii), we have $q\equiv1\pmod{p}$ for every prime $q\mid m$, and $q_0\equiv1\pmod{p^2}$ for the prime associated with at least one component. Thus
$$ m\equiv1\pmod{p} \qquad\text{and}\qquad m\geq q_0\geq p^2+1\geq2p+1. $$

Assume that $1\notin\Pi_m(W)$. At least one $A_j$ has size $p^2$. Otherwise all $u_j$ lie in $D=C_N(\alpha^p)$, whose order is at most $m/q_0$, and
$$ \ell\geq m+|D|-1. $$
Indeed,
$$ \frac{m-1}{p}-1\geq\frac{m}{q_0}, $$
because $(q_0-p)m\geq q_0(q_0-p)\geq q_0(p+1)$. By Lemma~\ref{lem:repeated-extraction}, applied in the cyclic group $D$, we would have $1\in\Pi_m(W)$, a contradiction.

Let $F=\Pi_{m-1}(A_1\bm\cdot\ldots\bm\cdot A_\ell)$, and suppose that $F\neq N$. This set is nonempty, since $\ell\geq m-1$. Write $M=H(F)$ and $h=|M|$. Cyclicity makes $M$ characteristic, so each $A_j$ is either contained in $M$ or disjoint from $M$. Coprime action gives $C_{N/M}(\alpha)=1$, and every nonidentity orbit on $N/M$ has size at least $p$.

If $i$ of the sets lie in $M$, then $i\leq m+h-2$ by Lemma~\ref{lem:repeated-extraction}. Since $h(1+(p-1)(m-1))\geq m$, we have, by Lemma~\ref{lem:setpartition-conseq}(iii),
\begin{equation} |F|\geq h\bigl(2-m+\min\{m-1,i\}+p(\ell-i)\bigr) \geq h(m-ph). \tag{7.3}\label{eq:p2-stabilizer-bound} \end{equation}
For the last inequality, the expression in parentheses decreases as $i$ increases. Substitute $i=m+h-2$ and the lower bound on $\ell$.

If $h>1$, then every prime divisor of $h$ and of $m/h$ is at least $p+1$. Hence $h\geq p+1$ and $m\geq(p+1)h$, so
$$ h(m-ph)-m =(h-1)m-ph^2 \geq h(h-p-1)\geq0. $$
This contradicts $F\neq N$.

If $h=1$, then $i\leq m-1$. By Lemma~\ref{lem:setpartition-conseq}(iii), every nonidentity element has incidence at most $m-1$. The identity has incidence $i\leq m-1$ as well. Thus every summand in the DeVos--Goddyn--Mohar bound equals the corresponding incidence. Using the orbit of size $p^2$ found above, we obtain
$$ \begin{aligned} |F| &\geq2-m+\sum_j|A_j|\\ &\geq2-m+p\ell-(p-1)i+(p^2-p)\\ &\geq m-p+(p^2-p)\geq m. \end{aligned} $$
This gives the same contradiction and proves the lemma.
\end{proof}

\begin{lemma}\label{lem:p2-distinct-prefixes}
Let $p$ be prime, and let $W$ be a sequence over $\mathbb{Z}/p^2\mathbb{Z}$ with no $p^2$-term subsequence admitting a zero-sum ordering whose proper partial sums, including zero, are pairwise distinct. Then
$$ \sum_{a\equiv c\, (p)}\left\lfloor\frac{v_a(W)}{p}\right\rfloor\leq p \qquad\text{and}\qquad \sum_{a\equiv c\, (p)}v_a(W)\leq2p^2-p. $$
These inequalities hold for every nonzero $c\in\mathbb{Z}/p\mathbb{Z}$.
\end{lemma}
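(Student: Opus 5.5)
The second inequality follows from the first, so the plan is to prove the first by a direct construction and then deduce the second by counting. Fix a nonzero residue $c$ modulo $p$. The elements $a\in\mathbb{Z}/p^2\mathbb{Z}$ with $a\equiv c\pmod p$ are exactly the $p$ elements $a=\tilde c+pb$ with $b\in\mathbb{Z}/p\mathbb{Z}$, where $\tilde c$ is a fixed lift of $c$. All of them are generators. Call a \emph{block} a subsequence $a^{[p]}$ of $p$ equal terms. Since $p\cdot pb=0$, every block has sum $pa=p\tilde c$. Hence any $p$ blocks have total sum $p^2\tilde c=0$, whichever elements they are built from.

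First suppose the number of disjoint blocks $\sum_{a\equiv c}\lfloor v_a(W)/p\rfloor$ is at least $p+1$. I would choose $p$ of these blocks with elements $a_i=\tilde c+pb_i$ and concatenate them in some order $\sigma$. The prefix sums are
\[
(i-1)p\tilde c+ja_{\sigma(i)}\qquad (1\le i\le p,\ 0\le j\le p-1).
\]
Modulo $p$ such a prefix is $jc$, so prefixes with different $j$ are automatically distinct. For a fixed $j$, the prefix equals $j\tilde c+p\bigl((i-1)c+jb_{\sigma(i)}\bigr)$. Distinctness therefore amounts to requiring that, for every $j$, the map
\[
i\longmapsto (i-1)c+jb_{\sigma(i)}\pmod p
\]
is injective on $\{1,\dots,p\}$. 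If this can be arranged, we obtain a zero-sum ordering of $p^2$ terms whose proper prefixes are pairwise distinct, contrary to the hypothesis on $W$.

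The easy subcase is when some single element supplies $p$ blocks, that is, $v_a(W)\ge p^2$. Then all $b_{\sigma(i)}$ are equal and the map is $i\mapsto (i-1)c+\text{const}$, which is injective. This is just the ordering $a^{[p^2]}$, whose prefixes $ja$ are all distinct.

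\textbf{Main obstacle.} In general each element supplies at most $p-1$ blocks, so at least two distinct values $b$ occur among the $p+1$ available blocks. We then need to pick $p$ of them and a permutation $\sigma$ that is simultaneously good for all $j$. Grouping equal $b$'s into consecutive runs does not obviously work, because the jump between runs may cause a collision for some $j$. I would therefore allow a more flexible ordering than plain concatenation of blocks. One option is to exchange single terms between adjacent blocks, which shifts the prefix sums in one row only. Another is to treat the $p$ chosen terms at each "height" $j$ together, using Cauchy--Davenport (Lemma~\ref{lem:kneser-cd}) in $C_p$ to force the $p$ row values in each row to be distinct. The extra $(p+1)$st block is what gives room to choose. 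I expect this combinatorial choice of ordering to be the heart of the proof.

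Given the first inequality, the second is a count. For each $a\equiv c$ we have $v_a(W)\le p\lfloor v_a(W)/p\rfloor+(p-1)$. Summing over the $p$ such elements gives
\[
\sum_{a\equiv c}v_a(W)\le p\cdot p+p(p-1)=2p^2-p .
\]
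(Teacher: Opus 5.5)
Your deduction of the second inequality from the first is correct and is the same count the paper uses. The first inequality, however, is the whole content of the lemma, and it is left unproved. Worse, the block-concatenation scheme you set up for it cannot work in general.

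Suppose the $p^2$ terms are a concatenation of $p$ blocks. For odd $p$ and each $j\neq0$, your injectivity condition says that the values $(i-1)c+jb_{\sigma(i)}$, $1\le i\le p$, form a complete residue system modulo $p$. Summing them gives $c\binom{p}{2}+j\sum_i b_{\sigma(i)}\equiv\binom{p}{2}\equiv0$, so you would need $\sum_i b_{\sigma(i)}\equiv0\pmod p$.

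This condition can be unattainable. Take $p=3$ and let $W$ consist of six copies of $1$ and six copies of $4$ in $\mathbb{Z}/9\mathbb{Z}$. There are $4=p+1$ blocks, with $b$-values $0,0,1,1$, and every three of them have $b$-sum $1$ or $2$. So no choice of blocks and no $\sigma$ succeeds; for instance, $1,1,1,1,1,1,4,4,4$ repeats the partial sums $1$ and $5$. The repairs you sketch (swapping terms between blocks, Cauchy--Davenport on rows) are not carried out.

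The missing idea is to interleave instead of concatenating whole blocks.

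Form the auxiliary sequence in which each $a\equiv c$ appears $\lfloor v_a(W)/p\rfloor$ times, and take $p+1$ of its terms $a_1,\dots,a_{p+1}$. Some $p$ of them have nonzero sum in $\mathbb{Z}/p^2\mathbb{Z}$. Otherwise every single deletion from $a_1+\cdots+a_{p+1}$ vanishes, so all $a_i$ equal one element $a$ with $pa=0$, i.e.\ $p\mid a$, contrary to $c\neq0$. Such a sum has the form $pu$ with $p\nmid u$, because each term is $\equiv c$.

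Now repeat the ordering $a_1,\dots,a_p$ exactly $p$ times. This stays inside $W$, since a label occurring $r$ times among $a_1,\dots,a_p$ is used $rp\le v_a(W)$ times. The proper partial sums are $kpu+s_j$ with $0\le k,j<p$, where $s_j=a_1+\cdots+a_j\equiv jc\pmod p$.
\begin{itemize}
\item Reduction modulo $p$ separates distinct $j$.
\item The condition $p\nmid u$ separates distinct $k$.
\item The total is $p^2u=0$.
\end{itemize}
In the example above, $1,1,4$ repeated three times has proper partial sums $0,1,2,6,7,8,3,4,5$, all distinct.

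In your parameters, this construction only needs $\sum_i b_i\not\equiv -c\pmod p$. That means avoiding one residue rather than hitting one, which is why the extra $(p+1)$st block always suffices.
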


\begin{proof}
If the first inequality fails, then select $p+1$ terms from the auxiliary sequence having $\lfloor v_a(W)/p\rfloor$ copies of each $a\equiv c\pmod{p}$. Some $p$ of the selected terms have nonzero sum modulo $p^2$. Otherwise, comparison of all single-term deletions would force the $p+1$ terms to equal an element $a$ with $pa=0$, contrary to $c\neq0$.

Order the selected $p$ terms, whose sum is $pu$ with $p\nmid u$, and repeat this ordering $p$ times. This construction uses available terms of $W$. If $s_j$ is the sum of the first $j$ terms of one copy of the ordering, then the proper partial sums are
$$ kpu+s_j. $$
Here $0\leq k,j<p$. Equality of two such sums first gives $jc=j'c$ modulo $p$, and hence $j=j'$. It then gives $k=k'$. Thus these proper partial sums are distinct, while the total sum is zero, a contradiction.

Write $v_a=p\lfloor v_a/p\rfloor+\lambda_a$, where $0\leq \lambda_a<p$. Summing over the $p$ residues above $c$ gives the second inequality.
\end{proof}

\begin{proposition}\label{prop:cyclic-p2-quotient}
Let $p$ be the smallest prime divisor of $|G|$, and let
$$ G=C_m\rtimes_\lambda C_{p^2} =\langle a,b\mid a^m=b^{p^2}=1,\;bab^{-1}=a^\lambda\rangle. $$
Assume that $\gcd(m,p)=1$ and $\ord_m(\lambda)=p^2$. Then
$$ \mathsf{E}(G)\leq(p^2+p)m-1. $$
\end{proposition}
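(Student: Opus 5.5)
We argue by contradiction, following the pattern of Proposition~\ref{prop:irred-frobenius}. Write $N=\langle a\rangle\cong C_m$, $n=p^2m$, and suppose $|S|=n+pm-1$ with $1\notin\Pi_n(S)$. Let $\alpha$ be the automorphism $u\mapsto u^\lambda$ of $N$.

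\textbf{Step 1: reduce to a fixed-point-free action.} If $C_N(\alpha)=D\neq1$, then $D$ is central. We would pass to $G/D$, or use the cyclic subgroup $D\times\langle b\rangle$ via Lemma~\ref{lem:lifting-bounds}, to reduce to the case $C_N(\alpha)=1$. Alternatively, $D$ can be carried along inside the orbit arguments. This is exactly the hypothesis of Lemma~\ref{lem:p2-orbit-saturation}, and in the minimal-counterexample setting it is the case that matters. As in the proof of that lemma, every prime divisor of $m$ is $\equiv1\pmod p$, and $m\geq p^2+1$.

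\textbf{Step 2: find a seed covering all cosets.} By Lemma~\ref{lem:repeated-extraction} applied to $H=N\langle b^p\rangle$, which has index $p$, and to $N$ itself, we get
$$|S_{G\setminus N}|\geq pm+m-\mathsf{E}(N)=(p-1)m+1$$
terms outside $N$, and many of them outside $N\langle b^p\rangle$. Project to $G/N\cong\mathbb{Z}/p^2\mathbb{Z}$. By Lemma~\ref{lem:p2-distinct-prefixes}, the terms whose images lie in a fixed nonzero residue class mod $p$ number at most $2p^2-p$ unless $S$ contains a $p^2$-term subsequence $B$ with a zero-sum ordering whose proper partial sums are pairwise distinct. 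Summing over the $p-1$ nonzero classes gives at most $(p-1)(2p^2-p)$ such terms. We compare this with the concentration lower bound for terms outside $N\langle b^p\rangle$, which is of order $(p-1)m/p$ after applying Lemma~\ref{lem:repeated-extraction} to $N\langle b^p\rangle$, a group with a cyclic subgroup of index $p$. Using $m\geq p^2+1$ together with sharper congruence information on $m$, this comparison should force the existence of such a $B$. The prefixes of $B$ then represent every coset of $N$.

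\textbf{Step 3: orbit insertion and saturation.} Remove $B$, and use Lemma~\ref{lem:repeated-extraction} in $G/N\cong C_{p^2}$ to extract disjoint $p^2$-term subsequences $T_1,\dots,T_\ell$ with selected products $u_i\in N$. The count is
$$\ell\geq\frac{|S|-p^2-(2p^2-1)}{p^2}\approx m+\frac{m}{p}-3.$$
We need $\ell\geq m+(m-1)/p-2$, and I expect the exact value $|S|=(p^2+p)m-1$ to give precisely this after careful rounding, using $m\equiv1\pmod p$. Set $W=u_1\bm\cdot\ldots\bm\cdot u_\ell$. If $1\in\Pi_m(W)$, then $m$ of the $T_i$ give $n$ terms with product one, a contradiction. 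Otherwise Lemma~\ref{lem:p2-orbit-saturation} gives
$$\Pi_{m-1}(A_1\bm\cdot\ldots\bm\cdot A_\ell)=N,$$
where $A_i=O(u_i)$ is the full $\alpha$-orbit. Let $c\in\pi(B)\cap N$. Lemma~\ref{lem:orbit-insertion} realizes $c\prod_{i\in I}O(u_i)$, so $c^{-1}\cdot c=1$ is attained using $B$ together with $m-1$ subsequences $T_i$. This is $p^2m$ terms with product one, a contradiction.

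\textbf{Main obstacle.} The hardest part is Step 2: guaranteeing a seed $B$ whose prefixes cover all $p^2$ cosets while still leaving enough extractable subsequences. If the count from Lemma~\ref{lem:p2-distinct-prefixes} is not strong enough, the fallback is to note what the alternative forces. In that case the terms outside $N\langle b^p\rangle$ are few and concentrated, so most of $S$ lies in $N\langle b^p\rangle$. There the subgroup $N\langle b^p\rangle$ is split off, and the corresponding bound is applied through Lemma~\ref{lem:repeated-extraction} to derive a contradiction.
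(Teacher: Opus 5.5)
Your overall route is the paper's. The paper's Claim is exactly the contrapositive of your Step~3: a seed, the $\ell\ge m+(m-1)/p-2$ extracted blocks, Lemma~\ref{lem:p2-orbit-saturation} and Lemma~\ref{lem:orbit-insertion} together give $1\in\Pi_{p^2m}(S)$.

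Step~1 is fine only via the quotient route. Here $D=C_N(\alpha)$ is central and cyclic, $G=D\times K$ with $K=[N,\alpha]\rtimes\langle b\rangle$, and Lemma~\ref{lem:lifting-bounds} transfers the bound from $K\cong G/D$. The alternative through the cyclic subgroup $D\times\langle b\rangle$ only yields $2p^2m$.

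Your count in Step~2 is wrong. The group $H=\langle a,b^p\rangle$ has a cyclic subgroup of index $p$, so Lemma~\ref{lem:gao-identities}(ii) gives $\mathsf{E}(H)=(p+1)m+p-2$. Then \eqref{eq:subgroup-concentration} gives $|S_{G\setminus H}|\ge(p-1)m-p+2$, not something of order $(p-1)m/p$. Your estimate would only give $m\le 2p^3-p^2$, which contradicts nothing. With the correct bound, Lemma~\ref{lem:p2-distinct-prefixes} forces $m\le 2p^2-p$. For odd $p$ this contradicts $m\ge 2p^2+1$: a prime $q_0\equiv1\pmod{p^2}$ dividing $m$ cannot be the even number $p^2+1$.

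The genuine gap is $p=2$. There the comparison leaves exactly $m=5$, i.e.\ the Frobenius group $C_5\rtimes C_4$ with $|S|=29$, where you only get $5\le|S_{G\setminus H}|\le 6$. No counting can force a seed here. For example, three terms in $bN$, two in $b^3N$, one in $b^2N$ and $23$ in $N$ satisfy these constraints, yet admit no $4$-term quotient ordering with zero sum and distinct proper partial sums. Your fallback is just the concentration inequality in $H$, which is precisely what fails to give a contradiction.

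The missing idea is a shorter seed covering only part of $G/N$. First, both $bN$ and $b^3N$ occur, since each holds at most three terms. Next, $|S_{b^2N}|\le1$, because two such terms with one from each outside coset give the seed ordering $(1,2,3,2)$. Hence $|S_N|\ge22$. Since $\mathsf{d}(G)\le9$ by Lemma~\ref{lem:davenport-bounds}(iii), Lemma~\ref{lem:central-multiplicity} gives $v_1(S)\le8$. So you can choose $18$ terms $u_1,\ldots,u_{18}$ of $N$ including four nonidentity ones.

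Now take $x\in bN$ and $y\in b^3N$. The first assertion of Lemma~\ref{lem:orbit-insertion}, applied to $(x,y)$ with prefixes $1$ and $x$, gives
$\pi(x\bm\cdot y\bm\cdot u_1\bm\cdot\ldots\bm\cdot u_{18})\supseteq xy\prod_{j}\{u_j,xu_jx^{-1}\}$.
Conjugation by $x$ has order $4$ on $C_5$, so the four nonidentity $u_j$ give two-element sets. By Cauchy--Davenport (Lemma~\ref{lem:kneser-cd}) the product is all of $N$. This yields $20$ terms with product one.
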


\begin{proof}
Let $N=\langle a\rangle$ and $A=\langle b\rangle$. Define $D=C_N(A)$, $N_0=[N,A]$, and $K=N_0\rtimes A$. By Lemma~\ref{lem:group-structure}(viii),
$$ N=D\times N_0 \qquad\text{and}\qquad G=D\times K. $$
The action on $N_0$ is faithful, and $C_{N_0}(A)=1$. Once the assertion is proved for $K$, we obtain, by Lemma~\ref{lem:lifting-bounds}(i),
$$ \mathsf{E}(G)+1\leq |D|\bigl(\mathsf{E}(K)+1\bigr)\leq(p^2+p)m. $$
We may therefore assume that $C_N(A)=1$.

As in the proof of Lemma~\ref{lem:p2-orbit-saturation}, we have $m\equiv1\pmod{p}$, and there is a prime $q_0\leq m$ such that $q_0\equiv1\pmod{p^2}$. Thus $m\geq p^2+1$, and $m\geq2p^2+1$ when $p$ is odd. Suppose that $|S|=(p^2+p)m-1$ and $1\notin\Pi_{p^2m}(S)$. Identify $G/N$ with $\mathbb{Z}/p^2\mathbb{Z}$ by $bN\mapsto1$.

\medskip\noindent\textbf{Claim.} No $p^2$-term subsequence of $S$ has a zero-sum quotient ordering with distinct proper partial sums.\nopagebreak[4]

\begin{proof}[Proof of the claim]
If $T_0$ had such an ordering, then let its product be $c\in N$. In $ST_0^{-1}$, extract a maximal family of $p^2$-term subsequences with product-one quotient image. At most $2p^2-2$ terms remain, so the number of extracted subsequences satisfies
$$ \ell\geq \left\lceil\frac{(p^2+p)m-3p^2+1}{p^2}\right\rceil =m+\frac{m-1}{p}-2. $$
Select products $u_j\in N$ from these subsequences. The sequence of selected products has no product-one subsequence of length $m$. By Lemma~\ref{lem:p2-orbit-saturation}, the products of $m-1$ of their conjugation orbits therefore fill $N$. The prefixes of $T_0$ represent every quotient coset. By Lemma~\ref{lem:orbit-insertion}, every element of $cN=N$ can be realized on $T_0$ and $m-1$ selected subsequences, using $p^2m$ terms. This contradiction proves the claim.
\end{proof}

Let $H=\langle a,b^p\rangle$. This subgroup is non-cyclic of order $pm$ and has a cyclic subgroup of index $p$. By Lemma~\ref{lem:gao-identities}(ii),
$$ \mathsf{E}(H)=(p+1)m+p-2. $$
Apply Lemma~\ref{lem:p2-distinct-prefixes} to the quotient image of $S$ and Lemma~\ref{lem:repeated-extraction} to $H$. These lemmas imply that
$$ (p-1)m-p+2\leq |S_{G\setminus H}| \leq(p-1)(2p^2-p). $$
Since $m$ is an integer, this forces $m\leq2p^2-p$. The preceding lower bounds on $m$ leave only $(p,m)=(2,5)$.

In this case, $|S|=29$. By Lemma~\ref{lem:repeated-extraction}, at least five terms lie outside $H$. The claim forbids four terms in either $bN$ or $b^3N$. Thus there are five or six terms outside $H$, and both cosets occur. Two terms in $b^2N$, together with one from each of the two outside cosets, would have quotient ordering $(1,2,3,2)$, with proper partial sums $0,1,3,2$. Therefore
$$ |S_{b^2N}|\leq1 \qquad\text{and}\qquad |S_N|\geq22. $$

Faithfulness implies that $C_G(N)=N$. A cyclic subgroup of order ten would contain and centralize $N$, so no such subgroup exists. By Lemmas~\ref{lem:davenport-bounds}(iii) and~\ref{lem:central-multiplicity}, we have $\mathsf{d}(G)\leq9$ and $v_1(S)\leq8$. Choose eighteen terms $U=u_1\bm\cdot\ldots\bm\cdot u_{18}\mid S_N$, including four nonidentity terms, and choose $x\mid S_{bN}$ and $y\mid S_{b^3N}$. By the first assertion of Lemma~\ref{lem:orbit-insertion}, applied to the ordered pair $(x,y)$ and its prefixes $1$ and $x$, we have
$$ \pi(x\bm\cdot y\bm\cdot U) \supseteq xy\prod_{j=1}^{18}\{u_j,xu_jx^{-1}\}=N. $$
Indeed, conjugation by $x$ has order four on $N=C_5$, so the four chosen nonidentity terms give two-element factors. By the Cauchy--Davenport theorem, their product is $C_5$, and the remaining factors do not diminish it. Thus twenty terms have product one, which is the final contradiction.
\end{proof}

\section{Product growth and compatible lengths}\label{sec:growth-compatibility}

Throughout this section and the next, let $n=|G|=2m$. Unless otherwise stated, $G$ is non-abelian simple. We shall prove that every sequence of length $3m-1$ has an $n$-term product-one subsequence. The lemmas below allow a selected subsequence to be extended to the required length.

\begin{lemma}\label{lem:simple-group-basics}
One has $n\geq 60$. Every proper subgroup has index at least five, and an index-five subgroup can occur only when $G\cong A_5$. Moreover,
$$ \ord(g)\mid m \qquad\text{and}\qquad \mathsf{d}(G)\leq m-1. $$
The divisibility assertion holds for every $g\in G$.
\end{lemma}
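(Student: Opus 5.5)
The statement gathers four separate facts about a non-abelian simple group $G$ with $|G|=n=2m$: $n\geq60$, the index bounds, $\ord(g)\mid m$, and $\mathsf{d}(G)\leq m-1$. I will prove them one at a time, relying only on standard finite group theory and on results recorded earlier.

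\emph{Order and index.} By Lemma~\ref{lem:group-structure}(vii), $G$ has even order, so writing $n=2m$ makes sense. Let $H<G$ be a proper subgroup of index $k$. The coset action gives a homomorphism $G\to S_k$. Its kernel is a proper normal subgroup, hence trivial because $G$ is simple, so $G$ embeds in $S_k$. Composing with the sign map, the image has index at most two in $G$ and $G$ is simple and non-abelian, so $G$ in fact embeds in $A_k$. The groups $A_k$ with $k\leq4$ are solvable, which forces $k\geq5$. If $k=5$, then $G$ embeds in $A_5$. Since $A_5$ is simple, $G\cong A_5$ provided no proper subgroup of $A_5$ is a non-abelian simple group. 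This holds because every proper subgroup of $A_5$ has order at most $12$ and is solvable. For $n\geq60$ I would invoke the classical fact that every group of order less than $60$ is solvable, which follows from Sylow and Burnside counting. Alternatively, it follows from Burnside's $p^aq^b$ theorem together with a short check of the remaining orders $30$ and $42$.

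\emph{Element orders.} We need $\ord(g)\mid m=n/2$. This can fail only if $\langle g\rangle$ contains a full Sylow $2$-subgroup. In that case the Sylow $2$-subgroup is cyclic. Burnside's transfer theorem then gives a normal $2$-complement, because a cyclic Sylow $2$-subgroup $P$ satisfies $N_G(P)/C_G(P)\leq\Aut(P)$, which is a $2$-group, and $P\leq C_G(P)$. A normal $2$-complement contradicts simplicity, since $n$ is even. For the same reason $G$ has no cyclic subgroup of index $2$: such a subgroup would be normal by Lemma~\ref{lem:group-structure}(ii), again contradicting simplicity.

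\emph{Davenport bound.} The group $G$ is non-cyclic, its smallest prime divisor is $2$, and by the previous paragraph it has no cyclic subgroup of index $2$. Lemma~\ref{lem:davenport-bounds}(iii) therefore gives $\mathsf{d}(G)\leq n/2-1=m-1$.

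\emph{Main obstacle.} The only delicate point is the bound $n\geq60$ and the index-five case, where one must rule out every simple group of order below $60$. I would cite the standard classification rather than redo the counting. The remaining steps follow directly from the transfer argument and the earlier lemmas.
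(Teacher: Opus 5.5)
Your proof is correct. The Davenport step is the same as the paper's: $G$ is non-cyclic, $p=2$, there is no subgroup of index two, and Lemma~\ref{lem:davenport-bounds}(iii) applies. The other steps use heavier tools than the paper does.

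\textbf{Element orders.} For $\ord(g)\mid m$ you pass through a cyclic Sylow $2$-subgroup and Burnside's transfer theorem. The paper instead computes the sign of $g$ in the regular representation: for $t=\ord(g)$ it is $(-1)^{n-n/t}=(-1)^{n/t}$. A simple non-abelian group acts only by even permutations, so $n/t$ is even. This is fully elementary. It is exactly the first step of the usual proof that a cyclic Sylow $2$-subgroup forces a normal $2$-complement, and that step already produces the index-two subgroup contradicting simplicity, so transfer is unnecessary. As a by-product it gives $4\mid n$.

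\textbf{The bound $n\geq60$ and the index-five case.} The paper uses $4\mid n$, the index bound, and Sylow counting, treating orders $40$ and $56$ separately, to prove $n\geq60$ from scratch. It then settles the index-five case in one line: $G\hookrightarrow A_5$ together with $n\geq60$ forces $G\cong A_5$. You instead cite the solvability of groups of order below $60$ (or Burnside's $p^aq^b$ theorem plus orders $30$ and $42$) and the subgroup structure of $A_5$. Both are legitimate, but they import outside results, and the $p^aq^b$ theorem rests on character theory. The paper's argument avoids this and is self-contained.

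\textbf{Minor point.} In your transfer argument, ``$n$ even'' only shows that the normal $2$-complement is proper. It is nontrivial because otherwise $G$ would equal its cyclic Sylow $2$-subgroup and hence be abelian.
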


\begin{proof}
The odd-order theorem implies that $2\mid n$. A proper subgroup of index at most four would embed $G$ in a solvable symmetric group via its faithful coset action, which is impossible. Every permutation action of $G$ has trivial sign, since the sign is a homomorphism to $C_2$. In the regular action, an element of order $t$ has sign $(-1)^{n-n/t}=(-1)^{n/t}$, so $n/t$ is even and $t\mid m$. In particular, an involution implies that $4\mid n$.

Suppose that $n<60$. A nontrivial finite $2$-group has nontrivial center and is not non-abelian simple. The other multiples of four below sixty are $12,20,24,28,36,40,44,48,52,56$. Except for $40$ and $56$, each has a Sylow subgroup of index three or four, which has already been excluded. At order $40$, a Sylow $2$-subgroup of index five embeds $G$ in $A_5$, contradicting $40\nmid 60$. At order $56$, simplicity forces eight Sylow $7$-subgroups, which contain forty-eight distinct nonidentity elements. The remaining eight elements must constitute every Sylow $2$-subgroup, making that subgroup unique and normal. Thus $n\geq 60$.

An index-five action embeds $G$ in $A_5$, and the order bound now forces $G\cong A_5$. Finally, $G$ has no subgroup of index two, so by Lemma~\ref{lem:davenport-bounds}(iii), $\mathsf{d}(G)\leq m-1$.
\end{proof}
\begin{lemma}\label{lem:product-set-tools}
Let $F$ be a finite group of order $N$.
\begin{itemize}
\item[(i)] If $|A|+|B|>N$, then $AB=F$.
\item[(ii)] If $1\in A\cap B$ and the equation $ab=1$, with $a\in A$ and $b\in B$, has only the solution $a=b=1$, then $|AB|\geq |A|+|B|-1$.
\item[(iii)] Define $\lambda_A(x)=|Ax\setminus A|$. If $X\subseteq F\setminus\{1\}$ generates $F$ and $\varnothing\neq A\subsetneq F$, then some $x\in X$ satisfies
$$ \lambda_A(x)\geq \min\left\{\frac{\min(|A|,N-|A|)+1}{2}, \frac{|X\cup X^{-1}|+2}{4}\right\}. $$
\end{itemize}
\end{lemma}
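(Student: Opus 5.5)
Parts (i) and (ii) are counting arguments; part (iii) is an expansion estimate obtained by comparing a symmetric difference with the right action of the generators. The plan is to prove them in that order.

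For (i), fix $g\in F$ and compare $A$ with $gB^{-1}=\{gb^{-1}:b\in B\}$. Both lie in $F$, and $|A|+|gB^{-1}|=|A|+|B|>N$. Hence they meet: some $a=gb^{-1}$, so $g=ab\in AB$. Since $g$ was arbitrary, $AB=F$.

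For (ii), the set $AB$ contains $A\cdot 1=A$ and $1\cdot B=B$. Suppose some $x\neq 1$ lies in $A\cap B$. Then $x\in A$ and $x^{-1}\in B^{-1}$, and we need to rule this out. The hypothesis forbids $a\in A\setminus\{1\}$ with $a^{-1}\in B$, that is, $A\cap B^{-1}=\{1\}$; it does not directly say $A\cap B=\{1\}$. I would therefore count differently, using $A$ and $A\cdot b$ for a suitable $b$, or by noting that the $|A|+|B|-1$ products $a\cdot 1$ $(a\in A)$ and $1\cdot b$ $(b\in B\setminus\{1\})$ need not be distinct. The cleanest route I expect is to interpret the hypothesis as making the union of $A$ and $B$ essentially disjoint apart from $1$. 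If that interpretation is wrong, I would instead show $|AB|\ge|A|+|B|-1$ by a Kemperman-type argument: $1$ has a unique representation $1=1\cdot 1$ in $AB$, and Kemperman's theorem (a product set containing an element with a unique representation has size at least $|A|+|B|-1$) gives the bound directly. I would cite Kemperman's result, or reprove it by the standard Dyson-style transform induction on $|B|$. This is the step I am least sure how the author intends to handle.

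For (iii), put $a=|A|$, and let $Y$ be the set of $x\in X\cup X^{-1}$ with $\lambda_A(x)<\lambda$, where $\lambda$ is the claimed minimum. First, $\lambda_A(x)=\lambda_A(x^{-1})$, since $|Ax\setminus A|=|A\setminus Ax^{-1}|$, so $Y$ is symmetric. Second, $\lambda_A(xy)\le\lambda_A(x)+\lambda_A(y)$, because $Axy\setminus A\subseteq (Ax\setminus A)y\cup(Ay\setminus A)$.

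Suppose the conclusion fails, so $Y=X\cup X^{-1}$. Then every element of $Y$, and every product of two elements of $Y$, moves fewer than $2\lambda$ elements of $A$ out of $A$. Since $2\lambda\le\min(a,N-a)+1$, each such element $g$ satisfies $|Ag\cap A|>a/2$, roughly. I would then show that the set $\{g:|Ag\cap A|>a/2\}$ is closed under products in a controlled way. This is the stabilizer-of-large-overlap trick: if $|Ag\cap A|$ and $|Ah\cap A|$ both exceed $a/2$, one gets a bound on $|Agh\setminus A|$ via subadditivity. Combined with the counting bound $\lambda<(|X\cup X^{-1}|+2)/4$, this should force the generated subgroup, which is all of $F$, to nearly fix $A$. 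That gives $|Ag\setminus A|<a$ for all $g$, which is impossible when we average over $g\in F$: $\sum_g|Ag\setminus A|=a(N-a)$ forces some $g$ with $|Ag\setminus A|\ge a(N-a)/N\ge\min(a,N-a)/2$.

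The main obstacle is making this averaging/closure argument yield exactly the two-term minimum. I would first try a direct argument: sum $\lambda_A(x)$ over $x\in X\cup X^{-1}$ and use the symmetric-difference subadditivity along a Cayley-graph path.
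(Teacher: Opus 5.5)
Part (i) is the paper's argument verbatim. For (ii) the paper gives no proof and simply cites \cite[Lemma~2.2]{QuGao2024}; your fallback is the right one. The hypothesis says exactly that $A\cap B^{-1}=\{1\}$, i.e.\ $1$ has the unique representation $1\cdot 1$ in $AB$. Kemperman's extension of Scherk's theorem to arbitrary groups then gives $|AB|\ge|A|+|B|-1$. Drop the ``essentially disjoint'' reading, which the hypothesis does not imply: from $A\cup B\subseteq AB$ you only get $|A|+|B|-|A\cap B|$. Also do not promise a reproof by the Dyson $e$-transform. That transform uses commutativity to keep the new product set inside $AB$, so in a nonabelian group you should cite Kemperman rather than reprove it.

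The genuine gap is in (iii). Your identities $\lambda_A(x)=\lambda_A(x^{-1})$ and $\lambda_A(xy)\le\lambda_A(x)+\lambda_A(y)$ are correct, but the argument built on them does not close.
\begin{itemize}
\item The proposed final contradiction is false. The identity $\sum_{g\in F}\lambda_A(g)=a(N-a)$ only produces some $g$ with $\lambda_A(g)\ge a(N-a)/N$, which is strictly smaller than $a$. So it is consistent with $\lambda_A(g)<a$ for all $g$; that condition just says $A^{-1}A=F$, which holds, e.g., whenever $a>N/2$.
\item Averaging would contradict only a bound $\lambda_A(g)<\min(a,N-a)/2$ holding on all of $F$, and subadditivity cannot give that. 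Along a word of length $\ell$ the bound degrades to $\ell$ times the generator bound, so you control only a ball of bounded radius, while the Cayley graph of $X$ can have arbitrarily large diameter. The same factor is lost if you try to pass from a good $g\in F$ back to a generator.
\item The set $\{g:|Ag\cap A|>a/2\}$ is not closed under products either; take $A=\{0,\dots,a-1\}\subseteq\mathbb{Z}/N\mathbb{Z}$.
\item Nothing in the sketch produces the term $(|X\cup X^{-1}|+2)/4$.
\end{itemize}
Generation has to be used in an essential, diameter-free way: if $X\subseteq K<F$ and $A$ is a left coset of $K$, every $\lambda_A(x)$ vanishes. The only generation-free count, $\sum_g|A\cap Ag|=a^2$, gives merely $|X\cup X^{-1}|\le a(a-1)/(a-k)$ when every generator moves at most $k\le a/2$ points. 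That reaches the required bound $4k-2$ only when $k$ is close to $a/2$.

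The paper does not prove (iii) from scratch. It cites \cite[Lemma~2.3]{QuGao2024} for $|A|\le N/2$. It then reduces the general case by $\lambda_{F\setminus A}(x)=|A\setminus Ax|=|Ax\setminus A|=\lambda_A(x)$, valid since $|Ax|=|A|$, and applies the cited lemma to the smaller of $A$ and $F\setminus A$. You already have the ingredient for that reduction. For $|A|\le N/2$, however, you need either this citation or a genuinely different argument.
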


\begin{proof}
For (i), the sets $A$ and $gB^{-1}$ intersect for each $g\in F$, so $g\in AB$. Part (ii) is \cite[Lemma~2.2]{QuGao2024}. Part (iii), when $|A|\leq N/2$, is \cite[Lemma~2.3]{QuGao2024}. For the general case, observe that
$$ \lambda_{F\setminus A}(x)=|A\setminus Ax|=|Ax\setminus A|=\lambda_A(x), $$
where the middle equality follows from $|Ax|=|A|$. Apply the cited lemma to the smaller of $A$ and its complement.
\end{proof}

The following lemma converts a generating difference set into a lower bound for product-set growth.

\begin{lemma}\label{lem:difference-growth}
If $\varnothing\neq A\subsetneq F$, $B\neq\varnothing$, and $\langle BB^{-1}\rangle=F$, then
\begin{equation} |AB|\geq |A|+ \min\left\{ \left\lceil\frac{\min(|A|,|F|-|A|)+1}{2}\right\rceil, \left\lceil\frac{|BB^{-1}|+1}{4}\right\rceil \right\}. \tag{8.1}\label{eq:difference-growth} \end{equation}
\end{lemma}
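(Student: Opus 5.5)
The plan is to reduce to Lemma~\ref{lem:product-set-tools}(iii), applied with the generating set $X=BB^{-1}\setminus\{1\}$. First translate $B$: for any $b_0\in B$, the set $B'=b_0^{-1}B$ contains $1$ and satisfies $|AB'|=|AB|$. Also $B'B'^{-1}=b_0^{-1}BB^{-1}b_0$ is a conjugate of $BB^{-1}$, so it has the same size and still generates $F$. Hence we may assume $1\in B$.

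Next note that $|AB|\geq|A\cup Ab|=|A|+\lambda_A(b)$ for every $b\in B$. Once $1\in B$ we have $B\subseteq BB^{-1}$, but $X$ may contain elements $b_1b_2^{-1}$ that do not lie in $B$, so this inequality does not apply to $X$ directly. To handle this, use right multiplication by $b_2$: the set $Ab_1b_2^{-1}\setminus A$ has the same size as $Ab_1\setminus Ab_2$, and this set lies in $AB\setminus Ab_2$. Since $|Ab_2|=|A|$, we obtain
\begin{equation*}
|AB|\geq |A|+\lambda_A(b_1b_2^{-1})
\end{equation*}
for all $b_1,b_2\in B$. This is the key step. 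It holds for every element $x=b_1b_2^{-1}$ of $X$ and does not require $1\in B$.

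Now apply Lemma~\ref{lem:product-set-tools}(iii) with this $X$. It generates $F$ because $\langle BB^{-1}\rangle=F$, and we have $X\subseteq F\setminus\{1\}$ and $\varnothing\neq A\subsetneq F$. The lemma gives some $x\in X$ with $\lambda_A(x)$ at least the stated minimum. The set $BB^{-1}$ is closed under inverses, so $|X\cup X^{-1}|=|BB^{-1}|-1$. The second term of the minimum is therefore $(|BB^{-1}|+1)/4$.

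Finally, $\lambda_A(x)$ is an integer, so we may take ceilings of both terms, which yields \eqref{eq:difference-growth}. The degenerate case $X=\varnothing$ cannot occur: it would force $BB^{-1}=\{1\}$, so $F$ would be trivial, contradicting $\varnothing\neq A\subsetneq F$.

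The main obstacle is only to check that the right-translation identity gives the correct direction for elements of the form $b_1b_2^{-1}$, rather than $b_1^{-1}b_2$. If the convention for $\lambda_A$ instead demanded the other form, I would replace $B$ by $B^{-1}$ and use $|AB|$ together with left actions, or apply the statement to $A^{-1}$.
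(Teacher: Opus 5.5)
Your proposal is correct and follows the paper's proof. Both apply Lemma~\ref{lem:product-set-tools}(iii) to $X=BB^{-1}\setminus\{1\}$, then convert $\lambda_A(b_1b_2^{-1})$ into growth of $AB$ by right translation, and finish by integrality. The paper writes this step as $|Au\cup Av|=|Ax\cup A|$, which is the same as your identity $|Ab_1\setminus Ab_2|=|Ab_1b_2^{-1}\setminus A|$.

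The preliminary normalization $1\in B$ is unnecessary, as you note yourself. Your worry about the direction is also unfounded: $b_1b_2^{-1}$ is the correct form for the convention $\lambda_A(x)=|Ax\setminus A|$.
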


\begin{proof}
Apply Lemma~\ref{lem:product-set-tools}(iii) to the symmetric generating set $X=BB^{-1}\setminus\{1\}$. Write the resulting element $x$ as $uv^{-1}$, where $u,v\in B$. Right translation by $v^{-1}$ gives $|Au\cup Av|=|Ax\cup A|$. Since $Au\cup Av\subseteq AB$, the boundary estimate and integrality imply the required inequality.
\end{proof}

We next describe the coset action of an ordered product when no nonempty subsequence has a product in the subgroup.

\begin{lemma}\label{lem:coset-cycle}
Let $K\leq F$ have index $b\geq 2$. Suppose that a sequence $V$ of length $b-1$ has no nonempty subsequence with an ordered product in $K$. For every ordering $x_1,\ldots,x_{b-1}$ of $V$, its product $P=x_1\cdots x_{b-1}$ acts as a single $b$-cycle on the left cosets of $K$. Consequently, $F=\langle P\rangle K$.
\end{lemma}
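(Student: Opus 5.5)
We track the coset $K$ under left multiplication by the prefix products. Fix an ordering $x_1,\ldots,x_{b-1}$ of $V$, and set
$$ y_j=x_{j+1}x_{j+2}\cdots x_{b-1}, \qquad 0\leq j\leq b-1, $$
so that $y_0=P$ and $y_{b-1}=1$. We first show that the $b$ cosets $y_0K,\ldots,y_{b-1}K$ are pairwise distinct. Suppose that $y_iK=y_jK$ for some $i<j$. Then $y_j^{-1}y_i\in K$. Writing $y_i=x_{i+1}\cdots x_j\,y_j$, this says that
$$ y_j^{-1}(x_{i+1}\cdots x_j)\,y_j\in K. $$
This is not directly a product of a subsequence lying in $K$, so we will use the suffix form instead. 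Define $w_j=x_j x_{j+1}\cdots x_{b-1}$. Then $w_i^{-1}w_j$ is an inverse of a product of consecutive terms, which is awkward. The cleaner route is to work with right cosets: let $s_j=x_1\cdots x_j$. The equality $Ks_i=Ks_j$ with $i<j$ gives $x_{i+1}\cdots x_j=s_i^{-1}s_j\in s_i^{-1}Ks_i$. This is membership in a conjugate, again not $K$.

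So the plan is to apply the hypothesis cleverly to left cosets, using the suffixes of the ordering rather than the prefixes. Since $y_i=(x_{i+1}\cdots x_j)y_j$, we have $y_iK=y_jK$ if and only if $x_{i+1}\cdots x_j$ fixes the coset $y_jK$. In particular, taking $j=b-1$, the coset $y_{b-1}K=K$ is fixed by $x_{i+1}\cdots x_{b-1}$ exactly when that product lies in $K$. This is forbidden for every nonempty suffix, so the cosets $y_iK$ with $i<b-1$ differ from $K$. To compare two intermediate cosets $y_iK$ and $y_jK$, we reduce to this case by re-ordering. The sequence $V$ is a multiset, so the hypothesis holds for all orderings. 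Choose a new ordering in which the block $x_{i+1},\ldots,x_j$ is moved to the end; the product changes, but the distinctness assertion only requires the statement that no nonempty subsequence has a product in $K$. Cleanly: the map $j\mapsto y_jK$ is injective because $y_iK=y_jK$ gives $y_j^{-1}x_{i+1}\cdots x_jy_j\in K$. Here the difficulty appears: conjugation by $y_j$. The key step, and the main obstacle, is handling this conjugation. We plan to note that $y_j^{-1}x_{i+1}\cdots x_j y_j$ equals the product, in the cyclically rotated order $x_{j+1},\ldots,x_{b-1},x_{i+1},\ldots,x_j$ composed with $y_j^{-1}$. That is still not a subsequence product in general, so we instead consider the cosets $s_j^{-1}K$, i.e.\ left cosets $x_j^{-1}\cdots x_1^{-1}K$. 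The equality $s_i^{-1}K=s_j^{-1}K$ means $s_js_i^{-1}$... We will settle the exact bookkeeping by choosing whichever of prefixes or suffixes makes the difference a consecutive block acting on the identity coset; for left cosets, $gK=hK$ iff $h^{-1}g\in K$, and with $g=y_i$, $h=y_j$ this gives $y_j^{-1}y_i$. Taking instead $g=s_i^{-1}$ and $h=s_j^{-1}$ gives $s_js_i^{-1}$, which also fails to be a block. We therefore expect to use the weaker but sufficient fact that the orbit of $K$ under $\langle P\rangle$ has size $b$.

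Granting that the $b$ points $y_jK$ are distinct, they exhaust $F/K$, which has exactly $b$ elements. To show that $P$ is a single $b$-cycle, we show that its orbit through $K$ has length $b$, that is, $P^rK\neq K$ for $1\leq r<b$. For this we use the hypothesis on the subsequences of $V$ together with the pigeonhole principle on partial products of a suitable arrangement.

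Finally, a $b$-cycle is transitive on $F/K$. Hence every coset has the form $P^rK$, which gives $F=\langle P\rangle K$.
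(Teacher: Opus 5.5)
Your proposal has two gaps: the distinctness step is unresolved and the version you grant is false, and there is no argument at all for the cycle structure, which is the content of the lemma.

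\textbf{Distinctness.} You tried suffixes with left cosets, prefixes with right cosets, and the cosets $s_j^{-1}K$. You never tried prefixes with left cosets, which is the combination that works. Set $p_0=1$ and $p_j=x_1\cdots x_j$. Then $p_iK=p_jK$ for $i<j$ if and only if $p_i^{-1}p_j=x_{i+1}\cdots x_j\in K$. This is the product of a consecutive block of the ordering, which the hypothesis forbids, so $p_0K,\ldots,p_{b-1}K$ are exactly the $b$ left cosets. The suffix cosets $y_jK$ that you grant can in fact coincide. In $S_3$, composing right to left, let $K=\{1,(12)\}$, so that $gK$ is determined by $g(3)$, and let $V$ have terms $x_1=(23)$ and $x_2=(13)$. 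None of $(23)$, $(13)$, $(23)(13)=(123)$, $(13)(23)=(132)$ fixes $3$. Yet $y_0(3)=y_1(3)=1$, so $y_0K=y_1K$.

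\textbf{The cycle structure.} You give no argument that $P$ is a single $b$-cycle. The target $P^rK\neq K$ for $1\leq r<b$ is fine. But $P^r$ uses each term of $V$ $r$ times, so it is not a subsequence product. Pigeonhole on the partial products of one ordering only yields the distinctness above, and that does not determine how $P$ permutes the cosets. In the same $S_3$ setting, $x_1=(132)$, $x_2=(23)$ have distinct prefix cosets, but $P=(13)$ fixes the coset $x_1K$. Only the reversed ordering, with product $(12)\in K$, detects the failure.

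The missing idea is to use the cyclic rotations $x_{j+1},\ldots,x_{b-1},x_1,\ldots,x_j$, which are again orderings of $V$.
\begin{itemize}
\item Translating the prefix cosets of this rotation on the left by $p_j$ gives the $b$ distinct cosets $p_jK,\ldots,p_{b-1}K,Pp_1K,\ldots,Pp_jK$, hence all of $F/K$.
\item Remove $p_jK,\ldots,p_{b-1}K$ and compare with the list $p_0K,\ldots,p_{b-1}K$. This gives $P\{p_1K,\ldots,p_jK\}=\{p_0K,\ldots,p_{j-1}K\}$ for $1\leq j\leq b-1$.
\item Comparing consecutive values of $j$ yields $Pp_jK=p_{j-1}K$. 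Together with $PK=p_{b-1}K$, this shows that $P$ cycles $K\mapsto p_{b-1}K\mapsto\cdots\mapsto p_1K\mapsto K$.
\end{itemize}
Your final paragraph then correctly gives $F=\langle P\rangle K$.
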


\begin{proof}
Set $p_0=1$ and $p_i=x_1\cdots x_i$. If $p_iK=p_jK$ for $i<j$, then $x_{i+1}\cdots x_j=p_i^{-1}p_j\in K$, contrary to the hypothesis. Thus $p_0K,\ldots,p_{b-1}K$ exhaust the $b$ left cosets.

The same reasoning applies to any cyclic rotation of the word, since each subword still uses distinct indexed terms of $V$. Fix $1\leq j<b$, rotate after $x_j$, and multiply every prefix coset of the rotated word on the left by $p_j$. The resulting cosets are
$$ p_jK,p_{j+1}K,\ldots,p_{b-1}K,Pp_1K,Pp_2K,\ldots,Pp_jK. $$
They again exhaust the coset space without repetition. Delete the common cosets $p_jK,\ldots,p_{b-1}K$ from this list and the original prefix list. It follows that
$$ P\{p_1K,\ldots,p_jK\}=\{p_0K,\ldots,p_{j-1}K\}. $$
Taking the difference of the identities for $j$ and $j-1$ gives $Pp_jK=p_{j-1}K$; for $j=1$, the preceding identity already gives this equality. Also, $PK=p_{b-1}K$. Hence these cosets form one cycle under $P$. Every coset is consequently $P^iK$ for some $i$, proving that $F=\langle P\rangle K$.
\end{proof}

\begin{lemma}\label{lem:subgroup-extraction}
The following assertions hold.
\begin{itemize}
\item[(i)] If $|F:K|=b$, then any $b$ terms over $F$ contain a nonempty subsequence of length at most $b$ with an ordered product in $K$.
\item[(ii)] If, in addition, $F$ is non-solvable and $K$ is abelian, then $b-1$ terms suffice.
\item[(iii)] If $U\mid S$, $\Pi_j(U)=F$, $0\leq j\leq N=|F|$, and $|U|+N\leq |S|+j$, then $1\in\Pi_N(S)$.
\item[(iv)] If $H\leq F$, $h=|H|$, and $|S_H|\geq N+\mathsf{E}(H)-h$, then $1\in\Pi_N(S)$. In particular, $N+\mathsf{d}(H)$ terms in $H$ suffice when $\mathsf{E}(H)=h+\mathsf{d}(H)$ is known, and $N+h-1$ terms always suffice.
\end{itemize}
\end{lemma}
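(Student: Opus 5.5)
For (i) I will use the prefix argument from Lemma~\ref{lem:coset-cycle}. Order the $b$ terms as $x_1,\ldots,x_b$ and consider the $b+1$ prefixes $p_0=1,p_1,\ldots,p_b$ and their left cosets $p_iK$. There are only $b$ left cosets of $K$, so by pigeonhole $p_iK=p_jK$ for some $i<j$. Then $x_{i+1}\cdots x_j=p_i^{-1}p_j\in K$. This is an ordered product of a nonempty subsequence of length $j-i\leq b$, as required.

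For (ii) I will argue by contradiction. Suppose $V$ has $b-1$ terms and no nonempty subsequence has an ordered product in $K$. Then Lemma~\ref{lem:coset-cycle} gives $F=\langle P\rangle K$, where $P$ is the product of $V$ in any ordering. A product $CK$ of a cyclic subgroup and an abelian subgroup is then all of $F$. By the Itô--Kegel--Wielandt theorem, a product of two abelian subgroups is metabelian, and in particular solvable. This contradicts the hypothesis that $F$ is non-solvable. The only external input here is Itô's theorem; if that citation is unwelcome, I would try a direct argument, but I expect Itô is the intended route.

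For (iii), the hypothesis $\Pi_j(U)=F$ says every element of $F$ is an ordered product of some $j$ terms of $U$. The terms of $S$ outside $U$ number $|S|-|U|\geq N-j$. I take any $N-j$ of them, order them, and call their product $g$; when $j=N$ this is the empty product. I then choose $T\mid U$ with $|T|=j$ and an ordering with product $g^{-1}$. The concatenation of $T$ followed by the chosen outside terms has $N$ terms and product one.

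For (iv), I will repeat the proof of Lemma~\ref{lem:repeated-extraction} with $F$ in place of $G$. Note that $h\mid N$. Apply that lemma to the group $H$ with $j=N/h$. Since $|S_H|\geq \mathsf{E}(H)+(N/h-1)h$, we obtain $N/h$ disjoint product-one subsequences of length $h$. Concatenating them gives an $N$-term product-one subsequence of $S$. The two special cases follow by substitution. If $\mathsf{E}(H)=h+\mathsf{d}(H)$, the threshold becomes $N+\mathsf{d}(H)$. In general, Olson's bound $\mathsf{E}(H)\leq 2h-1$ from Lemma~\ref{lem:gao-bounds}(i) gives the threshold $N+h-1$.

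The main obstacle is (ii), since it is the only part needing a structural group-theoretic input (Itô's theorem) beyond the coset combinatorics already set up in the paper.
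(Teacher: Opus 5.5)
Your proposal is correct and matches the paper's proof step for step: the prefix pigeonhole argument for (i); Lemma~\ref{lem:coset-cycle} together with It\^o's theorem for (ii), where $b\geq 2$ holds automatically because an abelian $K$ cannot equal the non-solvable $F$; cancelling an $(N-j)$-term product by a $j$-term product from $U$ for (iii); and Lemma~\ref{lem:repeated-extraction} applied in $H$ with $N/h$ extractions for (iv).
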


\begin{proof}
For (i), order the $b$ terms arbitrarily. Among the $b+1$ prefix products, two lie in the same left coset, and their intervening nonempty subword has product in $K$. If (ii) failed, then by Lemma~\ref{lem:coset-cycle}, $F=\langle P\rangle K$. This contradicts It\^o's theorem, according to which a product of two abelian subgroups is solvable \cite[Proposition~1.2]{Tsang2024}.

For (iii), the inequality gives $|S|-|U|\geq N-j$. Select $N-j$ terms outside $U$ with an ordered product $z$. Choose a $j$-term product $z^{-1}$ from $U$ and concatenate the two orderings.

For (iv), apply Lemma~\ref{lem:repeated-extraction} in $H$ to extract $N/h$ disjoint subsequences of length $h$ and concatenate them. The final assertions follow from $\mathsf{E}(H)=h+\mathsf{d}(H)$ when this equality is known, and from $\mathsf{E}(H)\leq 2h-1$ in general.
\end{proof}

\begin{definition}\label{def:compatible-subsequence}
For $H\leq F$ and $a\in N_F(H)$, call a nonempty sequence $V$ \emph{$a$-compatible modulo $H$} when $\pi(V)\cap a^{|V|}H\neq\varnothing$. Define $b_F(a,H)=|F:H|-1$ in either improved case of the next lemma, and define $b_F(a,H)=|F:H|$ otherwise.
\end{definition}

\begin{lemma}\label{lem:compatible-extraction}
Disjoint compatible subsequences concatenate compatibly. Every $I=|F:H|$ terms contain a compatible subsequence of length at most $I$. If either of the following conditions holds, then $I-1$ terms suffice:
\begin{itemize}
\item[(i)] $F$ is non-solvable, $H$ is abelian, and $a\in C_F(H)$; or
\item[(ii)] $F$ is non-abelian simple and $|H|$ is odd.
\end{itemize}
These assertions follow from ordinary subgroup extraction in $F\times C_t$, where $t=\ord(a)$, using
$$ \widehat{H}=\{(a^ih,z^i):h\in H,\ i\in\mathbb{Z}/t\mathbb{Z}\}. $$
Here $C_t=\langle z\rangle$. This subgroup has index $I$, and its coset action is $(g,z^i):xH\mapsto gxa^{-i}H$.
\end{lemma}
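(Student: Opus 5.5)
The plan is to pass to the auxiliary group $\widehat F=F\times C_t$ and transfer the ordinary subgroup extraction of Lemma~\ref{lem:subgroup-extraction}(i),(ii) through the dictionary indicated in the statement.

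\textbf{Step 1: $\widehat H$ and its index.} Since $a\in N_F(H)$, the set $\langle a\rangle H$ is a subgroup. The map $i\mapsto a^iH$ is well defined modulo $t$. Closure of $\widehat H$ follows from
$$(a^ih,z^i)(a^jh',z^j)=(a^{i+j}(a^{-j}ha^j)h',z^{i+j}).$$
The projection $\widehat H\to C_t$ is onto, with kernel $H\times 1$, so $|\widehat H|=t|H|$ and $|\widehat F:\widehat H|=I$. For the coset action, the point stabilizer of $(x,z^i)\widehat H$ is computed by conjugation. It is more convenient to identify $\widehat F/\widehat H$ with $F/H$ via $(x,z^i)\widehat H\mapsto xa^{-i}H$. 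This is well defined because $(a^ih,z^i)$ maps $x$ to $xa^iha^{-i}H=xH$, using normality. This gives the stated action.

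\textbf{Step 2: the dictionary.} Lift each term $g$ to $\hat g=(g,z)$. An ordering of a subsequence $V$ has lifted product $(w,z^{|V|})$, where $w$ is the product of the ordering. This product lies in $\widehat H$ if and only if $w\in a^{|V|}H$, that is, exactly when $V$ is $a$-compatible. Concatenation of disjoint compatible subsequences is then just closure of $\widehat H$ under products, which also follows directly from $a^kHa^lH=a^{k+l}H$. Lemma~\ref{lem:subgroup-extraction}(i) applied in $\widehat F$ to the $I$ lifted terms gives the general statement.

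\textbf{Step 3: the improved cases.} These need Lemma~\ref{lem:subgroup-extraction}(ii), or rather its proof via Lemma~\ref{lem:coset-cycle}. If $I-1$ terms have no compatible subsequence, then Lemma~\ref{lem:coset-cycle} gives $\widehat F=\langle\widehat P\rangle\widehat H$ for the lifted product $\widehat P=(P,z^{I-1})$.

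\begin{itemize}
\item[(i)] Here $\widehat F$ is non-solvable and $\widehat H\cong\langle a\rangle H\cdot$ is abelian, since $a$ centralizes $H$ and $H$ is abelian. It\^o's theorem then gives a contradiction.
\item[(ii)] Here $\widehat H$ need not be abelian, so I would instead project to $F$. From $\widehat F=\langle\widehat P\rangle\widehat H$ we get $F=\langle P\rangle\langle a\rangle H$, and the coset action shows that a single element acts transitively on $F/H$. I expect this to be the main obstacle. I would use that $H$ has odd order, so every involution of $F$ lies outside every conjugate of $H$ and acts fixed-point-freely on $F/H$. By the sign argument of Lemma~\ref{lem:simple-group-basics}, every element acts as an even permutation, so an involution is an even product of $I/2$ transpositions; hence $4\mid I$. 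An $I$-cycle, however, is odd when $I$ is even. Since $F$ is simple with even order and $|H|$ is odd, $I$ is even, so this is a contradiction.

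There is a subtlety here: the transitive element is $\widehat P$ acting as $x\mapsto Pxa^{-(I-1)}$, which is not the left action of $F$. I would resolve this by working with the permutation group generated by left and right actions in $\widehat F$, all of whose elements act as even permutations because $\widehat F=F\times C_t$ maps through $F$ and the cyclic factor. If the $C_t$-part breaks parity, I would fall back on Lagrange together with $|\langle P\rangle\langle a\rangle H|\le \ord(P)\,t\,|H|$ to argue with orders.
\end{itemize}
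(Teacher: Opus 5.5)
Your index computation, the dictionary between compatibility and lifted products lying in $\widehat H$, the general bound, and case (i) all follow the paper's proof. One small slip in (i): $\widehat H$ is not isomorphic to $\langle a\rangle H$ in general, because the projection to $F$ has kernel $\{(1,z^i):a^i\in H\}$. It is nevertheless abelian, since it is generated by the commuting subgroups $H\times\{1\}$ and $\langle(a,z)\rangle$, and that is all It\^o's theorem needs.

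Case (ii) has a genuine gap. For the contradiction with an odd $I$-cycle, every element of $F\times C_t$ must act on $\widehat F/\widehat H$ as an even permutation. Left translations by $F$ are even by simplicity. However, $(1,z)$ acts by $xH\mapsto xa^{-1}H$, and you never control its sign. Your remark that the action ``maps through $F$ and the cyclic factor'' proves nothing, because $C_t$ can map onto $C_2$ when $t$ is even. Your fallback inequality $|\langle P\rangle\langle a\rangle H|\le\ord(P)\,t\,|H|$ only yields $I\le\ord(P)\,t$, which is not a contradiction. The observation that involutions act fixed-point-freely, so that $4\mid I$, is true but does not bear on this issue.

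The missing step, as in the paper, is the cycle structure of right multiplication by $a^{-1}$ on $F/H$. A coset returns after $j$ steps if and only if $a^j\in H$, so all cycles have the common length $d=|\langle a\rangle H:H|$. Moreover $d\mid t\mid N/2$ by Lemma~\ref{lem:simple-group-basics}, where $N=|F|$. Since $|H|$ is odd, the $2$-part of $I$ equals that of $N$, which is strictly larger than the $2$-part of $d$. Hence both $I$ and $I/d$ are even, and the sign is $(-1)^{I-I/d}=1$.

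Alternatively, a divisibility version of your order idea works. The image of $\widehat P=(P,z^{I-1})$ is an $I$-cycle, so $I\mid\ord(\widehat P)$. This order divides $\lcm(\ord(P),t)$, which divides $N/2$. But $N/|H|\mid N/2$ would force $|H|$ to be even, a contradiction.
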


\begin{proof}
The identity
$$ (a^uh)(a^vk)=a^{u+v}(a^{-v}ha^v)k\in a^{u+v}H $$
proves concatenation and, in the direct product, proves that $\widehat{H}$ is a subgroup. Its $t$ second coordinates each allow $|H|$ first coordinates, so its index is $I$. A product of lifted terms $(x,z)$ lies in $\widehat{H}$ if and only if the original subsequence is compatible. The displayed transitive action has stabilizer $\widehat{H}$, since $ga^{-i}H=H$ if and only if $g\in a^iH$. By Lemma~\ref{lem:subgroup-extraction}(i), the general bound follows.

In (i), $\widehat{H}$ is generated by the commuting subgroups $H\times\{1\}$ and $\langle(a,z)\rangle$, so $\widehat{H}$ is abelian. The group $F\times C_t$ has the non-solvable quotient $F$, and Lemma~\ref{lem:subgroup-extraction}(ii) applies.

For (ii), let $N=|F|$. Every left translation in the action is even, because its sign is a homomorphism $F\rightarrow C_2$. Right multiplication by $a^{-1}$ has cycles of common length $d=|\langle a\rangle H:H|$: a coset returns after $j$ steps if and only if $a^j\in H$. By Lemma~\ref{lem:simple-group-basics}, $d\mid N/2$. Since $|H|$ is odd, both $I=N/|H|$ and $I/d$ are even. Thus right multiplication also has sign $(-1)^{I-I/d}=1$. Every element of the lifted action is therefore even, whereas a failure with $I-1$ terms would produce an odd $I$-cycle by Lemma~\ref{lem:coset-cycle}. This contradiction proves (ii).
\end{proof}

The next lemma combines an interval of cosets among the prescribed-length product sets with compatible subsequences from the remaining terms.

\begin{lemma}\label{lem:compatible-interval}
Let $S=TR$, $H\leq F$, $a\in N_F(H)$, and $0\leq\alpha\leq\beta\leq\ell$, where $\ell\geq 1$. Suppose that $a^jH\subseteq\Pi_j(T)$ for every $\alpha\leq j\leq\beta$. Suppose also that every $L$ terms of $R$ contain a compatible subsequence of length at most $e$, where $1\leq e\leq L$. If either $\beta=\ell$ or both
$$ \beta-\alpha\geq e-1 \qquad\text{and}\qquad |R|\geq\ell-\beta+L-1, $$
then $a^\ell H\subseteq\Pi_\ell(S)$. In particular, this gives a product-one subsequence when $a^\ell\in H$. One may always use $L=e=b_F(a,H)$.
\end{lemma}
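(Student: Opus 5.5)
The plan is to extract compatible subsequences greedily from $R$ and then fill in the remaining length from $T$ using the interval hypothesis. By Lemma~\ref{lem:compatible-extraction}, concatenating disjoint $a$-compatible subsequences gives an $a$-compatible subsequence. So if $V\mid R$ is compatible of length $r$ with $\ell-r\in[\alpha,\beta]$, then, since $a\in N_F(H)$,
$$ a^{\ell-r}H\cdot\bigl(\pi(V)\cap a^{r}H\bigr)\subseteq a^{\ell-r}H\,a^{r}H=a^{\ell}H. $$
We would realize each element of $a^\ell H$ as follows. Fix $w\in\pi(V)\cap a^rH$ and $x\in a^\ell H$. Then $xw^{-1}\in a^\ell H a^{-r}=a^{\ell-r}H$, so $xw^{-1}\in\Pi_{\ell-r}(T)$. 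Concatenating that ordering of $\ell-r$ terms of $T$ with the ordering of $V$ gives product $x$, which lies in $\Pi_\ell(S)$ because $T$ and $R$ are disjoint parts of $S$. It therefore suffices to find a compatible $V\mid R$ (possibly empty) with $\ell-|V|\in[\alpha,\beta]$.

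If $\beta=\ell$, we take $V$ empty: the hypothesis at $j=\ell$ gives $a^\ell H\subseteq\Pi_\ell(T)$ directly. Otherwise we build $V$ by successive extraction. Start with $V_0$ empty. While $|V_i|<\ell-\beta$, at most $\ell-\beta-1$ terms have been used, so the remainder of $R$ has at least $|R|-(\ell-\beta-1)\geq L$ terms. The hypothesis then supplies a further disjoint compatible subsequence of length between $1$ and $e$, and we append it to obtain $V_{i+1}$. The lengths increase by at most $e$ at each step. At the first index with $|V_i|\geq\ell-\beta$, we therefore have $|V_i|\leq\ell-\beta-1+e\leq\ell-\alpha$, using $\beta-\alpha\geq e-1$. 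Thus $\ell-|V_i|\in[\alpha,\beta]$, as required, and the hypothesis $\beta\leq\ell$ ensures $\ell-|V_i|\geq0$ is meaningful.

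For the remaining assertions: if $a^\ell\in H$, then $1\in a^\ell H\subseteq\Pi_\ell(S)$, which is the product-one claim. The choice $L=e=b_F(a,H)$ is admissible by Lemma~\ref{lem:compatible-extraction}, which guarantees that every $b_F(a,H)$ terms contain a compatible subsequence of length at most $b_F(a,H)$. The only delicate point is the bookkeeping in the stopping step, which must ensure enough unused terms remain at each extraction; the bound $|R|\geq\ell-\beta+L-1$ is precisely what makes this work.
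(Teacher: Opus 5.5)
Your proof is correct and is essentially the paper's own argument. You extract disjoint compatible subsequences from $R$ greedily until their total length $u$ first reaches $\ell-\beta$, bound the overshoot by $u\leq\ell-\beta+e-1\leq\ell-\alpha$, and then multiply the coset $a^{\ell-u}H\subseteq\Pi_{\ell-u}(T)$ by a selected product in $a^uH$ to obtain $a^\ell H$. Your use of the normalizing hypothesis and your count of remaining terms, $|R|-(\ell-\beta-1)\geq L$, match the paper exactly.
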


\begin{proof}
The case $\beta=\ell$ is already part of the hypothesis. Otherwise, extract disjoint compatible subsequences from $R$ until their total length $u$ first reaches $\ell-\beta$. Before each required extraction, the selected total is at most $\ell-\beta-1$, so at least $L$ terms remain. At the first crossing,
$$ \ell-\beta\leq u\leq\ell-\beta+e-1\leq\ell-\alpha. $$
By Lemma~\ref{lem:compatible-extraction}, the concatenation has a selected product $y\in a^uH$. Set $j=\ell-u$, so $\alpha\leq j\leq\beta$. Because $a$ normalizes $H$, one has $(a^jH)y=a^\ell H$. Multiplying every available product from $a^jH\subseteq\Pi_j(T)$ by this fixed $y$ proves the conclusion, using disjoint terms of total length $\ell$.

If $j=0$, then the convention $\Pi_0(T)=\{1\}$ applies; a hypothesis with $\alpha=0$ consequently forces $H=\{1\}$. The final bound follows from Lemma~\ref{lem:compatible-extraction}.
\end{proof}

\section{The bounded-height case}\label{sec:bounded-height}

We first consider sequences with sufficiently many terms in a coset $aH$, where $a\in N_G(H)$. We then use product-set enlargement to treat the remaining sequences of height at most $m$.

\begin{lemma}\label{lem:height-selection}
If $q\geq v\geq 1$, $|T|=q+v$, and $h(T)\leq q$, then $T$ has a subsequence $W$ of length $2v$ and height at most $v$.
\end{lemma}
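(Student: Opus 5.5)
The plan is a direct greedy construction. We pick $W$ one value at a time, capping each value's contribution at $v$, and then check that the total available reaches $2v$. For each $g\in\supp(T)$ put $w_g=\min\{v_g(T),v\}$, and let $W_0$ be the subsequence consisting of $w_g$ copies of each $g$. By construction $h(W_0)\leq v$. Deleting terms from $W_0$ never raises its height, so it suffices to show $|W_0|\geq 2v$ and then take any subsequence $W$ of $W_0$ of length exactly $2v$.

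To prove $|W_0|\geq 2v$, split into two cases.
\begin{itemize}
\item If no value has multiplicity exceeding $v$, then $W_0=T$ and $|W_0|=q+v\geq 2v$, because $q\geq v$.
\item Otherwise, let $g_1,\ldots,g_r$ be the values with $v_{g_i}(T)>v$, where $r\geq1$.
\begin{itemize}
\item If $r\geq2$, then already $|W_0|\geq w_{g_1}+w_{g_2}=2v$.
\item If $r=1$, the cap removes exactly $v_{g_1}(T)-v$ terms. Since $v_{g_1}(T)\leq h(T)\leq q$, this number is at most $q-v$. Hence $|W_0|\geq (q+v)-(q-v)=2v$.
\end{itemize}
\end{itemize}

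The only point needing care is the case $r=1$. This is exactly where the hypothesis $h(T)\leq q$ enters: it guarantees that truncating the single large multiplicity to $v$ discards few enough terms. The hypothesis $q\geq v$ is needed only in the first case. No earlier results of the paper are required, so I expect no genuine obstacle; the argument is a short counting exercise.
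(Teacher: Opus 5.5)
Your proof is correct and follows essentially the same route as the paper's: a case split on how many values have large multiplicity, using $h(T)\leq q$ exactly when there is one such value. The paper picks the $2v$ terms directly in each case (thresholding at ``at least $v$''), while your capped sequence $W_0$ packages the three cases as a single length count, which is only a cosmetic difference.
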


\begin{proof}
If two distinct values each occur at least $v$ times, then choose $v$ copies of each. This gives the required subsequence. If exactly one value $g$ occurs at least $v$ times, then let $c=v_g(T)\leq q$. There are $q+v-c\geq v$ terms different from $g$, and each of their values occurs fewer than $v$ times. Choose $v$ copies of $g$ and any $v$ of those other terms; the resulting subsequence has length $2v$ and height at most $v$. If no value occurs at least $v$ times, then every subsequence has height at most $v$, and $|T|=q+v\geq2v$. In this case, any $2v$ terms suffice.
\end{proof}

\begin{definition}\label{def:balanced-sequences}
A sequence $E$ is \emph{balanced} when $|E|=2h$ and $h(E)\leq h$ for some integer $h\geq 1$. A balanced sequence with $|\Pi_h(E)|\leq h$ is \emph{minimal} when no proper balanced subsequence has the analogous property at its own half-length.
\end{definition}

\begin{lemma}\label{lem:balanced-coset}
Let $E$ be minimal as in Definition~\ref{def:balanced-sequences}, with $|E|=2h$. Then $h\geq 2$, and, for $P=\langle\supp(E)\rangle$, there are a normal subgroup $H\mathbin{\trianglelefteq} P$ of order $h$ and an element $a\in N_G(H)$ such that $\supp(E)\subseteq aH$ and
$$ \Pi_h(E)=a^hH. $$
One can take $H$ to be the right stabilizer of $\Pi_h(E)$. Every sequence $D$ obtained by deleting two terms and satisfying $h(D)\leq h-1$ satisfies $\Pi_{h-1}(D)=a^{h-1}H$.
\end{lemma}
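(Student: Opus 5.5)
The plan is induction on $h$, driven by the observation that $\Pi_h(E)$ is built from $\Pi_{h-1}$ of two-term deletions. First, $h\geq2$. If $h=1$, then $E=x\bm\cdot y$ with $x\neq y$, since $h(E)\leq1$, so $|\Pi_1(E)|=2>1$. For the main step, write $X=\Pi_h(E)$ with $|X|\leq h$. Take $H$ to be the right stabilizer $\{g:Xg=X\}$. This is a subgroup, and $X$ is a union of left cosets $xH$, so $|H|\leq|X|\leq h$. The goal is to show $|H|\geq h$. Then $X=xH$ is a single coset and $|X|=h$.

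For the lower bound, I split $E$ as $D\bm\cdot u\bm\cdot w$, with $u,w$ two terms chosen so that $h(D)\leq h-1$. This is possible: delete one copy of each value attaining multiplicity $h$ (at most two such values exist), and pad with arbitrary terms. Then $D$ is balanced with half-length $h-1$, and $X\supseteq \Pi_{h-1}(D)u\cup\Pi_{h-1}(D)w$. More usefully, $X\supseteq Y\{u,w\}$ where $Y=\Pi_{h-1}(D)$. There are two cases.
\begin{itemize}
\item[(a)] $|Y|\geq h$. Then $|X|\geq|Y|\geq h$, so $X=Yu=Yw$. Hence $Y$ is right-stable under $u^{-1}w$, and $X$ is stable under $u^{-1}Yu$-type translations.
\item[(b)] $|Y|\leq h-1$. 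Then $D$ contains a minimal balanced subsequence of smaller half-length, and induction describes it.
\end{itemize}
In case (b), minimality of $E$ forbids $D$ itself from being a proper balanced subsequence with $|\Pi_{h-1}(D)|\leq h-1$. So case (b) cannot occur, and we always have $|Y|\geq h$, hence $X=Yu=Yw$ and $|Y|=h=|X|$.

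Varying the deleted pair gives many elements $u^{-1}w$ with $Yu=Yw$. The key step is to show that these differences, over all admissible choices, generate a subgroup of order $h$ which stabilizes $X$ on the right. Since $|X|=h$, such a stabilizer forces $X$ to be one coset. Concretely, fix $u$. For every term $w\neq u$ for which the deletion is admissible, $X=\Pi_{h-1}(E\bm\cdot u^{[-1]}\bm\cdot w^{[-1]})w$. Comparing two such $w,w'$ sharing the same $u$ (and with $h(D)\leq h-1$) gives that $X w^{-1}w'$ and $X$ agree. This needs care: $D$ changes with $w$. I would instead compare $X=Y_{u,w}u=Y_{u,w}w$ and use $u^{-1}\cdot$ conjugation, obtaining $w u^{-1}\in$ the left stabilizer of $X u^{-1}$. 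Translating appropriately, all $\supp(E)$ lie in a single left coset $aH'$ of the right stabilizer $H'$ of $X$, and $|X|\geq|H'|$.

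Once $\supp(E)\subseteq aH$ with $a$ normalizing $H$, every $h$-term product lies in $a^hH$, so $X\subseteq a^hH$. The equality $|X|=h$ then requires $|H|=h$ exactly, giving $X=a^hH$. The same argument applied to $D$ gives $\Pi_{h-1}(D)=a^{h-1}H$. For normality of $H$ in $P$ and $a\in N_G(H)$, I would use $Xu=Xw\cdots$ relations: $X=Yu$ and $Y$ is a union of $H$-cosets with $|Y|=h$, so $Y$ is a coset $yH$. Then $yHu=yHw$ gives $u^{-1}Hu=\ldots$, showing each support element normalizes $H$. The main obstacle is this bookkeeping of left versus right stabilizers across different deletions: proving that the sets $Y$ for different pairs are all translates of a common coset. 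I would first try to handle it by showing the right stabilizer of $Y_{u,w}$ coincides with that of $X$ via $|Y|=|X|$ and $Y\{u,w\}\subseteq X$.
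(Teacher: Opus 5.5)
There is a genuine gap at the step you yourself flag as the key one. You never establish $|H|\geq h$, and the route you suggest for normality is circular.

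From $X=Yu=Yw$ you do get $Xu^{-1}w=X$, so $u^{-1}w\in H$. (Note that it is $wu^{-1}$, not $u^{-1}w$, that right-stabilizes $Y$.) Fix $u=a$, a value of multiplicity $h$ if one exists, and let $w$ range over the other values. Every such deletion is admissible, so this already yields $\supp(E)\subseteq aH$. Each comparison involves a single $Y_{a,w}$ and the fixed set $X$, so the cross-deletion bookkeeping you worry about is not the obstacle.

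The obstacle is that this containment says nothing about $|H|$. You only know that $X$ is a union of $h/|H|$ left $H$-cosets, and $(aH)^h$ need not lie in a single coset of $H$ unless $a$ normalizes $H$. Your proposed fix is that $Y$ is a union of $H$-cosets of total size $h$ and hence a single coset $yH$. This presupposes $|H|=h$, which is exactly what is to be proved. Moreover, $Y=Xu^{-1}$ is right-invariant under $uHu^{-1}$, not under $H$, unless $u$ already normalizes $H$.

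The missing idea is to append the two deleted terms on \emph{both} sides of the same set $Y=\Pi_{h-1}(D)$. All four sets $uY$, $Yu$, $wY$, $Yw$ lie in $X$ and have size at least $h\geq |X|$, so all equal $X$. Consequently,
\[
uX=u(Yu)=(uY)u=Xu,\qquad wX=Xw,\qquad Xu=(wY)u=w(Yu)=wX=Xw.
\]
With $u=a$ as above, this gives $xX=Xx=Xa$ for every $x\in\supp(E)$.

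The identity $xXx^{-1}=X$ shows that each $x$ normalizes the right stabilizer $H$. Also $H\leq P$, because $X\subseteq P$ and $Xg=X$ for $g\in H$. Hence $H\trianglelefteq P$ and $a\in N_G(H)$. The identity $Xx=Xa$ gives $xa^{-1}\in H$, so $\supp(E)\subseteq aH$ and $X\subseteq a^hH$. A nonempty right-$H$-invariant subset of $a^hH$ is all of it, so $X=a^hH$ and $|H|=|X|=h$.

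The final assertion then follows by counting, not by ``the same argument applied to $D$''. Minimality gives $|\Pi_{h-1}(D)|\geq h=|a^{h-1}H|$, while $\Pi_{h-1}(D)\subseteq a^{h-1}H$. Finally, the induction on $h$ you announce is never actually used, since minimality excludes case (b) directly.
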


\begin{proof}
When $h=1$, balance gives two different terms, so $|\Pi_1(E)|=2$. Thus $h\geq 2$. Let $B=\Pi_h(E)$. If a value has multiplicity $h$, then choose such a value as $a$; otherwise, choose any value in the support.

Fix $b\in\supp(E)\setminus\{a\}$ and delete one copy each of $a$ and $b$, obtaining $D_b$. Its height is at most $h-1$: if two values have multiplicity $h$, then they account for all terms and are $a,b$; if only one does, then it is $a$; otherwise, every original multiplicity was already at most $h-1$. Minimality yields $|Q_b|\geq h$ for $Q_b=\Pi_{h-1}(D_b)$. Each of $aQ_b$, $Q_ba$, $bQ_b$, and $Q_bb$ lies in $B$, by adding one deleted term at the appropriate end. Since $|B|\leq h$, all four sets equal $B$ and $|B|=h$.

The identities $aQ_b=Q_ba=B$ imply that $aB=Ba$; similarly, $bB=Bb$. Also, $Ba=(bQ_b)a=b(Q_ba)=bB=Bb$. Varying $b$, we obtain
\begin{equation} Bx=xB=Ba. \tag{9.1}\label{eq:balanced-translate} \end{equation}
This identity holds for every $x\in\supp(E)$.

Let $H=\{g:Bg=B\}$. This is a subgroup and lies in $P$: if $b_0\in B\subseteq P$ and $g\in H$, then both $b_0$ and $b_0g$ lie in $P$. The preceding identities show that $xBx^{-1}=B$, and hence $xHx^{-1}=H$, for each value in the support. Therefore $H\mathbin{\trianglelefteq} P$. The equality $Bx=Ba$ also implies that $xa^{-1}\in H$, so $x\in Ha=aH$.

All $h$-term products consequently lie in $a^hH$. Since $BH=B$, the nonempty subset $B$ of this single right $H$-orbit is the whole orbit. Thus $B=a^hH$ and $|H|=|B|=h$.

Finally, for any allowed deletion $D$, minimality gives $|\Pi_{h-1}(D)|\geq h$, while its terms lie in $aH$, so $\Pi_{h-1}(D)\subseteq a^{h-1}H$. Equal cardinalities prove the last assertion.
\end{proof}

We shall use the following enlargement argument with an independent height parameter $\rho$.

\begin{lemma}\label{lem:stabilizer-descent}
Let $K\leq G$, $k=|K|$, and $\rho\geq 1$. Suppose that $T$ has length $\rho+k-1$, height at most $\rho$, and support in $aK$, where $a\in N_G(K)$. Suppose that $U\mid T$, $1\leq j\leq|U|$, and $\varnothing\neq A\subseteq\Pi_j(U)$, with $|A|\geq|U|$, has a nontrivial left or right stabilizer in $K$. Then either
\begin{itemize}
\item[(i)] some triple with these properties has $A=a^jK$; or
\item[(ii)] there are $1<R<K$ and $x\in N_G(R)$ such that at least $\rho+|R|-1$ terms of $T$ lie in $xR$.
\end{itemize}
If the initial triple satisfies $2j>|U|$, then the triple in (i) can be chosen to satisfy the same inequality.
\end{lemma}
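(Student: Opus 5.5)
We argue by descent on the order of the stabilizer, using Lemma~\ref{lem:balanced-coset} and Lemma~\ref{lem:height-selection} to generate new triples. Among all triples $(U,j,A)$ with the stated properties, we choose one whose stabilizer has maximal order. Suppose, say, that the right stabilizer $R=\{g\in K:Ag=A\}$ is nontrivial; the left case is symmetric. All terms of $T$ lie in $aK$, so $A\subseteq\Pi_j(U)\subseteq a^jK$, and $A$ is a union of left $R$-cosets inside $a^jK$. If $R=K$, then $A=a^jK$ and (i) holds. So we assume $1<R<K$, and we try either to enlarge $A$ while keeping $|A|\geq|U|$, or to force the concentration in (ii).

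The first step is to enlarge $A$. Let $T'=T\bm\cdot U^{[-1]}$, which has length $\rho+k-1-|U|$. For a term $x\mid T'$, both $Ax$ and $xA$ lie in $\Pi_{j+1}(U\bm\cdot x)$. If some $x$ gives $|Ax\cup A'|\geq|A|+1$ for a suitable new set $A'$ containing $Ax$ with a nontrivial stabilizer, then replacing $U$ by $U\bm\cdot x$ preserves $|A|\geq|U|$. Adding a term raises $|U|$ by one and raises $j$ by one, so the property $2j>|U|$ is also preserved. The natural candidate is to take two terms $x,y$ of $T'$ and use $A\{x,y\}$ together with the union structure. Here Lemma~\ref{lem:difference-growth} gives a growth of at least $|A|+1$ whenever $\langle\{x,y\}\{x,y\}^{-1}\rangle$ is not contained in the stabilizer. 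To be careful, we use blocks of two terms, with $j$ raised by $1$ and $|U|$ raised by $2$, only when the growth is at least $2$. Otherwise we use single terms, and we keep track of $2j>|U|$.

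The second step handles the case where growth fails. Then $Ax\subseteq AR'$ for all $x$ in a large portion of $T'$, where $R'$ is the stabilizer. Equivalently, the right translates $Ax$ all coincide up to the coset structure, so the terms $x$ lie in a single coset $xR$ with $x\in N_G(R)$. Normality of $R$ follows because $A$ is a union of $R$-cosets and $xA=Ax$-type identities hold, as in \eqref{eq:balanced-translate}. We then count. Each successful step increases $|A|-|U|$ by at most a fixed amount while consuming terms, and the process must stop before $|A|$ exceeds $|a^jK|=k$. Balancing these two facts with $|T|=\rho+k-1$ and $h(T)\leq\rho$, we aim to show that if the process gets stuck at stabilizer $R$, then at least $\rho+|R|-1$ terms lie in one coset $xR$. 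Here the height bound $\rho$ is what prevents the stuck terms from all being a single repeated element; Lemma~\ref{lem:height-selection} lets us pick balanced pairs of distinct terms. Once the stabilizer strictly grows, maximality or iteration along the finite chain of subgroups of $K$ ends in $A=a^jK$.

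The main obstacle is the bookkeeping in the second step. We must show that, in the failure case, the number of terms trapped in $xR$ is at least $\rho+|R|-1$ and not merely about $|R|$. This requires using that every stuck term, beyond the at most $|A|-|U|$ slack, lies in the same $R$-coset, and that the left and right stabilizer versions interact correctly. I would first try inducting on $|K:R|$, applying the lemma recursively with $K$ replaced by the stabilizer of $AT'$-type products.
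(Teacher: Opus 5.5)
Your proposal has a genuine gap. The step you flag as the main obstacle is the whole content of the lemma, and the framework you set up cannot supply it. Maximizing the order of the stabilizer is the wrong extremal choice. The available enlargements ($Ax\cup Ay$, $Ax\cup xA$, and so on) keep a nontrivial stabilizer but need not enlarge it. So maximality gives no contradiction, and nothing drives the proposed descent along subgroups of $K$.

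The paper instead fixes a triple with $s=|U|$ maximal, among those with $2j>s$ when that is required. Then any new triple $(U',j',A')$ with $|U'|>s$, $|A'|\geq|U'|$ and a nontrivial one-sided stabilizer in $K$ is forbidden. Two of your tools also do not fit. Lemma~\ref{lem:difference-growth} does not apply here: it needs $\langle BB^{-1}\rangle$ to be the whole ambient group, whereas for $B=\{x,y\}$ this subgroup is cyclic. Lemmas~\ref{lem:balanced-coset} and~\ref{lem:height-selection} are not needed; the height bound is used only to produce two distinct unused values.

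The count $\rho+|R|-1$ needs no ``slack'' bookkeeping. If $A\neq a^jK$, its complement in $a^jK$ is nonempty and invariant under the left stabilizer $Q$ and the right stabilizer $R$. Hence $|U|\leq|A|\leq k-|R|$, and the unused terms $x_1,\dots,x_v$ satisfy $v=\rho+k-1-|U|\geq\rho+|R|-1$. What must be shown is that \emph{all} of them lie in one coset $x_1R$ with $x_1\in N_G(R)$.

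This step uses both stabilizers, and it is where your mixing of sides breaks down. If only the right stabilizer is nontrivial, the sets $Ax_i$ are invariant under the different groups $x_i^{-1}Rx_i$, and distinctness gives no growth of two. The argument runs as follows.
\begin{itemize}
\item By inversion, assume $|Q|>1$. Then $v\geq\rho+1$, so two unused values differ.
\item Every $Ax_i$ is left $Q$-invariant. So $Ax_i\neq Ax_l$ would give $|Ax_i\cup Ax_l|\geq|A|+|Q|\geq s+2$ inside $\Pi_{j+1}(U\bm\cdot x_i\bm\cdot x_l)$, contradicting maximality.
\item Hence $Ax_i=Ax_l$ for all $i,l$. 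This forces $x_ix_l^{-1}\in R$, so $|R|>1$.
\item Comparing inside $\Pi_{j+2}(U\bm\cdot x_i\bm\cdot x_l)$ in the same way gives $Ax_ix_l=Ax_lx_i$.
\item Take $x_i\neq x_l$ and put $\delta=x_ix_l^{-1}\neq 1$. The identities above give $Ax_ex_i=Ax_ex_l$ for every unused $x_e$. So both $Ax_e$ and $x_eA$ are right $\langle\delta\rangle$-invariant.
\item If $Ax_e\neq x_eA$, their union would enlarge $\Pi_{j+1}(U\bm\cdot x_e)$ beyond $s+1$, again contradicting maximality. Thus $x_eA=Ax_e$.
\item This yields $x_e\in N_G(R)$. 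Together with $Ax_e=Ax_1$, it gives $x_e\in Rx_1=x_1R$, which is alternative (ii).
\end{itemize}
Finally, the three moves change $(|U|,j)$ by $(2,1)$, $(2,2)$, $(1,1)$, so the maximality argument also respects the constraint $2j>|U|$.
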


\begin{proof}
Choose a triple satisfying the hypotheses with $s=|U|$ maximal. When $2j>s$ is imposed, maximize among triples satisfying this condition. Every represented product lies in $a^jK$. If $A=a^jK$, then conclusion (i) holds. Assume that $A\neq a^jK$. Let
$$ Q=\{g:gA=A\},\qquad R=\{g:Ag=A\}, \qquad q=|Q|,\qquad\text{and}\qquad d=|R|. $$
Both stabilizers lie in $K$. Indeed, for $u,ug\in a^jK$, one has $g=u^{-1}(ug)\in K$; the left version uses $a^jKa^{-j}=K$. The nonempty complement of $A$ in $a^jK$ is also invariant under these stabilizers. Therefore $s\leq|A|\leq k-q$ and $s\leq|A|\leq k-d$. Denote the unused terms by $x_1,\ldots,x_v$. Then
\begin{equation} v=\rho+k-1-s\geq\rho+q-1 \qquad\text{and}\qquad v\geq\rho+d-1. \tag{9.2}\label{eq:descent-lengths} \end{equation}
Inverting all terms and represented products preserves the hypotheses, maximality, and both conclusions. We may therefore assume that $q>1$. Then $v\geq\rho+1$, so two unused values differ.

If $Ax_i\neq Ax_l$, then their union is left $Q$-invariant, has at least $|A|+q\geq s+2$ elements, and lies in $\Pi_{j+1}(U\bm\cdot x_i\bm\cdot x_l)$. This contradicts maximality. Hence
\begin{equation} Ax_i=Ax_l. \tag{9.3}\label{eq:equal-translates} \end{equation}
This equality holds for $1\leq i,l\leq v$. In particular, distinct unused values satisfy $1\neq x_ix_l^{-1}\in R$, so $d>1$. The same maximality argument applied to $Ax_ix_l\cup Ax_lx_i\subseteq\Pi_{j+2}(U\bm\cdot x_i\bm\cdot x_l)$ gives
$$ Ax_ix_l=Ax_lx_i. $$
This identity holds for $1\leq i,l\leq v$; for equal indices it is automatic.

Choose $x_i\neq x_l$ and set $\delta=x_ix_l^{-1}\in R\setminus\{1\}$. For every unused index $e$, the preceding identities give
$$ Ax_ex_i=Ax_ix_e=Ax_lx_e=Ax_ex_l. $$
Thus both $Ax_e$ and $x_eA$ are right invariant under $\langle\delta\rangle\leq K$. If these sets differ, then their union has at least $s+1$ elements and lies in $\Pi_{j+1}(U\bm\cdot x_e)$, contradicting maximality again. Consequently, $x_eA=Ax_e$ for every $e$.

We have $x_iAx_i^{-1}=A$, so $x_iRx_i^{-1}=R$. Moreover, $Ax_i=Ax_1$ implies that $x_ix_1^{-1}\in R$. All unused terms therefore lie in $Rx_1=x_1R$, and $x_1\in N_G(R)$. The lower bound on $v$ shows that there are at least $\rho+d-1$ such terms. We already have $d>1$, while $d<k$, since right $K$-invariance of a nonempty subset of $a^jK$ would force that subset to equal the entire coset. This proves (ii).

Finally, the three enlargements change $(s,j)$ by $(2,1)$, $(2,2)$, or $(1,1)$. They leave $2j-s$ unchanged or increase it, so all maximality arguments remain valid with the additional strict inequality $2j>s$.
\end{proof}

\begin{lemma}\label{lem:short-compatible}
Let $1<K<G$, $k=|K|$, and $x\in N_G(K)$. Every $m-k+1$ terms contain an $x$-compatible subsequence modulo $K$ of length at most $m-k+1$.
\end{lemma}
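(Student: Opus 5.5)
Set $I=|G:K|$ and let $b=b_G(x,K)$. By Lemma~\ref{lem:compatible-extraction}, every $b\leq I$ terms contain an $x$-compatible subsequence of length at most $b$. It therefore suffices to show that $I\leq m-k+1$. Since $K\neq1$, this reduces to the numerical inequality
$$ \frac{2m}{k}\leq m-k+1 \qquad\text{for } 2\leq k\leq m. $$
Here $k\leq m$ holds because $K$ is a proper subgroup and hence has index at least five by Lemma~\ref{lem:simple-group-basics}; in fact $k\leq 2m/5$. Once the inequality is proved, take any $m-k+1\geq I$ terms, select $I$ of them, and extract a compatible subsequence of length at most $I\leq m-k+1$.

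For the inequality, consider $f(k)=k(m-k+1)-2m$ on the real interval $[2,2m/5]$. It is concave, so it suffices to check the endpoints. At $k=2$ we get $f(2)=2(m-1)-2m=-2<0$. This fails, so the small-$k$ range needs a separate argument and the naive bound is not enough.

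The main obstacle is therefore small $k$, where $I=2m/k$ exceeds $m-k+1$. Take $k=2$: then $I=m$ while $m-k+1=m-1$, so an improvement of one is needed, namely $b_G(x,K)=I-1$. Lemma~\ref{lem:compatible-extraction}(i) gives exactly this. For $|K|=2$, $K$ is abelian and normalized by $x$, so $x$ centralizes $K$, and $G$ is non-solvable. For $k=3$ we have $I=2m/3$, and $2m/3\leq m-2$ holds since $m\geq30$. In general, the right endpoint $k=2m/5$ gives $f=(2m/5)(3m/5+1)-2m\geq0$ for $m\geq30$. The inequality $f(k)\geq0$ is equivalent to $k^2-(m+1)k+2m\leq0$, whose roots are $\tfrac{(m+1)\pm\sqrt{(m+1)^2-8m}}{2}$. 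The smaller root lies strictly between $2$ and $3$ for $m\geq30$, and the larger root exceeds $2m/5$. So every $k\geq3$ satisfies the bound directly, and only $k=2$ requires the improved extraction.

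In summary, the proof will: (1) invoke Lemma~\ref{lem:simple-group-basics} to get $k\leq 2m/5$ and $m\geq30$; (2) verify the quadratic inequality for $3\leq k\leq 2m/5$ via its roots; (3) treat $k=2$ using Lemma~\ref{lem:compatible-extraction}(i), noting that an order-two subgroup normalized by $x$ is centralized by $x$.
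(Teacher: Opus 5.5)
Your proof is correct and follows the paper's route: the case $k=2$ uses Lemma~\ref{lem:compatible-extraction}(i), since an order-two subgroup normalized by $x$ is centralized by it, and the case $k\geq3$ uses the general bound of $|G:K|$ terms together with the inequality $n/k\leq m-k+1$. The only difference is how that inequality is checked: the paper uses the identity $2(m-k+1-b)=(k-2)(b-2)-2$ with $b=n/k\geq5$, which is quicker than your convexity and root analysis and does not need $m\geq30$.
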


\begin{proof}
For $k=2$, the normalizer centralizes $K$, and by Lemma~\ref{lem:compatible-extraction}(i), the bound is $m-1$. For $k\geq 3$, let $b=n/k\geq 5$. Then
$$ 2(m-k+1-b)=(k-2)(b-2)-2\geq 0, $$
so the general index bound in Lemma~\ref{lem:compatible-extraction} suffices.
\end{proof}

\begin{lemma}\label{lem:coset-saturation}
Let $H\leq G$, $h=|H|>1$, $a\in N_G(H)$, and $\rho\geq h-1$. Suppose that $T_0$ lies in $aH$, has length $\rho+h-1$, and has height at most $\rho$. There are $1<K\leq H$, $x\in N_G(K)$, and $U\mid T\mid T_0$ such that
$$ \supp(T)\subseteq xK\subseteq aH \qquad\text{and}\qquad |T|=\rho+k-1, $$
$$ 1\leq j\leq s\leq 2k-2,\qquad 0\leq c\leq k-1,\qquad\text{and}\qquad \Pi_j(U)=x^jK. $$
Here $k=|K|$, $s=|U|$, and $c=s-j$. Consequently,
$$ \Pi_i(T)=x^iK. $$
The last equality holds for every $j\leq i\leq\rho+k-1-c$.
\end{lemma}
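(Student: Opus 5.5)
We argue by induction on $h=|H|$, using Lemma~\ref{lem:stabilizer-descent} as the engine. First we produce an initial triple for $K=H$, $T=T_0$. Since $T_0$ has length $\rho+h-1\geq 2h-2$ and height at most $\rho$, we pick a short balanced piece and consider its product set. The simplest choice is $U$ consisting of two distinct terms $y\neq z$ of $T_0$; these exist because $h(T_0)\leq\rho<\rho+h-1$. Take $j=1$ and $A=\{y,z\}$, so that $|A|=2\geq |U|$. In this case $yz^{-1}\in H$, but $A$ need not have a nontrivial stabilizer, so this choice does not directly fit the hypothesis of Lemma~\ref{lem:stabilizer-descent}. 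Instead we take $U=y\bm\cdot z$ with $j=2$ and $A=\pi(U)=\{yz,zy\}$. If $yz\neq zy$, we enlarge by a third distinct term. The cleaner route is to take a minimal balanced subsequence $E\mid T_0$ with $|\Pi_{h'}(E)|\leq h'$, as in Lemma~\ref{lem:balanced-coset}. If $|\Pi_{e}(E)|>e$ for every balanced $E$, we run the growth argument directly. Otherwise, Lemma~\ref{lem:balanced-coset} provides a subgroup $H'$ of order $h'$ and a set $A=\Pi_{h'}(E)=a'^{h'}H'$ with nontrivial right stabilizer $H'$. Then $|A|=h'\geq$ nothing required, and the triple $(U,j,A)=(E,h',A)$ with $|U|=2h'$ must be compared against $|A|\geq|U|$. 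This fails, so we use the deletion form instead: $\Pi_{h'-1}(D)=a'^{h'-1}H'$ with $|D|=2h'-2$. This is still too long, so the intended initial triple is exactly a coset fill of a proper subgroup, i.e.\ case (i) already for a smaller $K$.

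The plan is therefore as follows. We apply Lemma~\ref{lem:stabilizer-descent} with $K=H$ to any valid initial triple. In outcome (i) we have $A=\Pi_j(U)=a^jH$ with $|U|\leq |A|=h$, hence $s\leq h\leq 2h-2$ because $h\geq 2$. We also have $2j>s$ if this was imposed, which gives $c=s-j\leq h-1$. In outcome (ii) we pass to $R$ and $x$: at least $\rho+|R|-1$ terms lie in $xR\subseteq aH$. Truncate these to exactly $\rho+|R|-1$ terms; the height is still at most $\rho$ and $\rho\geq |R|-1$. Then recurse with $(R,x)$ in place of $(H,a)$, which is possible since $|R|<h$ and $|R|>1$. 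Termination is guaranteed by the strict decrease of the subgroup order, and the base case is prime order. If no valid initial triple exists because product sets grow, we use Lemma~\ref{lem:difference-growth} to force $\Pi_j$ to fill a coset outright, which yields (i) with $K=H$.

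For the final assertion, fix $i$ with $j\leq i\leq \rho+k-1-c$. We have $|T\bm\cdot U^{[-1]}|=\rho+k-1-s\geq i-j$, so we choose $i-j$ terms of $T$ outside $U$ with product $w\in x^{i-j}K$. Since $x$ normalizes $K$, we get $(x^jK)w=x^iK$, and $\Pi_j(U)=x^jK$ then gives $x^iK\subseteq\Pi_i(T)$. The reverse inclusion is automatic from $\supp(T)\subseteq xK$. The main obstacle is constructing an initial triple with $|A|\geq|U|$ and a nontrivial stabilizer. For this we first try a two-term $U=y\bm\cdot z$ with $j=1$, enlarged greedily via Lemma~\ref{lem:difference-growth}, until either a nontrivial stabilizer appears or the coset is filled.
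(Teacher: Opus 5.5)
\textbf{There is a genuine gap: you never produce the initial triple that Lemma~\ref{lem:stabilizer-descent} needs.}

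Two parts of your proposal are fine. Your last paragraph, passing from $\Pi_j(U)=x^jK$ to $\Pi_i(T)=x^iK$, is correct and matches the paper. Recursing on $|H|$ through alternative (ii) of Lemma~\ref{lem:stabilizer-descent} is equivalent to the paper's choice of a nontrivial $K\leq H$ of least order having a normalizing coset with $\rho+k-1$ terms of $T_0$.

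The gap is the step you flag yourself. You need a triple $(U,j,A)$ with $A\subseteq\Pi_j(U)$, $|A|\geq|U|$, and a nontrivial one-sided stabilizer in $K$. Without one, the lemma is never applicable, not even at prime order, your base case. Your fallback does not close this:
\begin{itemize}
\item Lemma~\ref{lem:difference-growth} requires $\langle BB^{-1}\rangle$ to be the whole ambient group. Differences of terms of $T$ need not generate $K$, and that is exactly the situation alternative (ii) handles.
\item Greedy enlargement $A\mapsto Ax_1\cup Ax_2$ adds two terms to $U$ but guarantees only one new product. Starting from $U=y\bm\cdot z$, the inequality $|A|\geq|U|$ is therefore not preserved. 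A later stall gives a stabilizer but not an admissible triple.
\item Greedy choices also cannot stop the unused terms from all being copies of one heavy value.
\end{itemize}

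\textbf{The missing idea:} fix the length budget first, then use minimality of $K$ to \emph{double} a coset-shaped product set.
\begin{itemize}
\item By Lemma~\ref{lem:height-selection} with $q=\rho$ and $v=k-1$, choose $W\mid T$ with $|W|=2k-2$ and $h(W)\leq k-1$.
\item If $\Pi_{k-1}(W)=x^{k-1}K$, take $U=W$ and $j=k-1$. Then $s=2k-2$ and $c=k-1$.
\item Otherwise $|\Pi_{k-1}(W)|\leq k-1$, so $W$ contains a minimal balanced $E$. Lemma~\ref{lem:balanced-coset}, via its right-stabilizer description, gives $K_0<K$ of order $k_0\geq2$ and $z\in N_G(K_0)$ with $\supp(E)\subseteq zK_0$. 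It also gives a two-term deletion $D$ with $A_0=\Pi_{k_0-1}(D)=z^{k_0-1}K_0$. This is where your attempt with $D$ stopped.
\item Since $zK_0$ is a normalizing coset of a smaller subgroup, minimality gives a term $c_1$ of $T$ outside $zK_0$. In your framing, you would instead recurse on $(K_0,z)$ if no such term existed. Let $c_2$ be a deleted term of $E$.
\item The sets $A_0c_1$ and $A_0c_2$ are disjoint, since an intersection would force $c_1\in K_0c_2=zK_0$. Both are left $K_0$-invariant.
\item Hence $A=A_0c_1\cup A_0c_2\subseteq\Pi_{k_0}(D\bm\cdot c_1\bm\cdot c_2)$ has exactly $2k_0=|D\bm\cdot c_1\bm\cdot c_2|$ elements and a nontrivial left stabilizer. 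This is an admissible triple.
\item Alternative (ii) of Lemma~\ref{lem:stabilizer-descent} contradicts minimality. Alternative (i) gives $\Pi_j(U)=x^jK$ with $s\leq|A|=k\leq 2k-2$ and $c=s-j\leq k-1$.
\end{itemize}
At prime $k$, Lemma~\ref{lem:balanced-coset} cannot produce such a $K_0$, so the first case always occurs there.
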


\begin{proof}
Choose $K\leq H$ of least order $k>1$ for which a normalizing coset $xK$ contains at least $\rho+k-1$ terms of $T_0$. This is possible with $K=H$ and $x=a$. Select exactly that many terms as $T$. Since $xK$ meets $aH$ and $K\leq H$, one has $xK\subseteq aH$. By Lemma~\ref{lem:height-selection}, with $q=\rho$ and $v=k-1$, choose $W\mid T$ with $|W|=2k-2$ and $h(W)\leq k-1$. All its $(k-1)$-term products lie in $x^{k-1}K$.

\emph{Case 1: $\Pi_{k-1}(W)=x^{k-1}K$.} Take $U=W$ and $j=k-1$. Then $s=2k-2$ and $c=k-1$, as required.

\emph{Case 2: $\Pi_{k-1}(W)\neq x^{k-1}K$.} Then $|\Pi_{k-1}(W)|\leq k-1$. Choose a minimal balanced subsequence $E\mid W$ having a small half-length product set, and write $|E|=2k_0$. By Lemma~\ref{lem:balanced-coset}, there are $2\leq k_0\leq k-1$, a subgroup $K_0$ of order $k_0$, and a normalizing coset $zK_0$ containing $E$. The right-stabilizer description in that lemma shows that $K_0\leq K$: if $b,bg\in\Pi_{k_0}(E)\subseteq x^{k_0}K$, then $g\in K$. Thus $K_0<K$.

Delete two terms from $E$ to obtain $D$ of height at most $k_0-1$. This is possible by deleting one copy of each value of multiplicity $k_0$ (there are at most two) and choosing any additional deletion when needed. By Lemma~\ref{lem:balanced-coset}, $A_0=\Pi_{k_0-1}(D)=z^{k_0-1}K_0$. Minimality of $K$ implies that some term $c_1$ of $T$ lies outside $zK_0$; otherwise, that smaller normalizing coset would contain at least $\rho+k_0-1$ terms of $T_0$. Choose for $c_2$ one of the deleted terms of $E$. The indexed terms $D,c_1,c_2$ are disjoint, because every term of $E$ lies in $zK_0$.

The sets $A_0c_1$ and $A_0c_2$ are disjoint: an intersection would imply that $c_1\in K_0c_2=zK_0$. Both sets are left $K_0$-invariant, and hence
$$ A_0c_1\cup A_0c_2\subseteq\Pi_{k_0}(D\bm\cdot c_1\bm\cdot c_2) \qquad\text{and}\qquad |A_0c_1\cup A_0c_2|=2k_0=|D\bm\cdot c_1\bm\cdot c_2|. $$
Apply Lemma~\ref{lem:stabilizer-descent} inside $xK$ to this triple and $T$. Its concentration alternative contradicts the minimal choice of $K$, since $T\mid T_0$. Alternative (i) therefore provides $U\mid T$ and $j\geq 1$ with $\Pi_j(U)=x^jK$ and $s=|U|\leq k$. In particular, $s\leq 2k-2$ and $c=s-j\leq k-1$.

In either case, for $j\leq i\leq|T|-c$, there are at least $i-j$ terms of $T$ outside $U$. Choose any such terms, with ordered product $y\in x^{i-j}K$. Multiplying the coset $\Pi_j(U)=x^jK$ by $y$ yields $x^iK\subseteq\Pi_i(T)$. The reverse inclusion follows because all terms lie in $xK$.
\end{proof}

Combining the preceding lemma with compatible extraction gives the required concentration criterion.

\begin{proposition}\label{prop:coset-concentration}
Let $1<H<G$, $h=|H|$, $a\in N_G(H)$, and $\rho\geq m-1$. If
$$ |S|\geq 3m-1 \qquad\text{and}\qquad \sum_{g\in aH}\min\{\rho,v_g(S)\}\geq\rho+h-1, $$
then some nontrivial subgroup $K\leq H$ satisfies $K\subseteq\Pi_n(S)$. In particular, $1\in\Pi_n(S)$.
\end{proposition}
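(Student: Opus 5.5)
We take $T_0$ to consist of $\min\{\rho,v_g(S)\}$ copies of each $g\in aH$, truncated to length exactly $\rho+h-1$. Then $T_0$ lies in $aH$, has height at most $\rho$, and has length $\rho+h-1$. Since $h\leq m$ by Lemma~\ref{lem:simple-group-basics} (every proper subgroup has index at least five), we have $\rho\geq m-1\geq h-1$, so Lemma~\ref{lem:coset-saturation} applies. It produces $1<K\leq H$, $x\in N_G(K)$ and $U\mid T\mid T_0$ with $|T|=\rho+k-1$, $c=s-j\leq k-1$ and $j\leq s\leq 2k-2$, such that $\Pi_i(T)=x^iK$ for all $j\leq i\leq \rho+k-1-c$. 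Thus $T$ supplies a full interval of cosets $x^iK$ with $\alpha=j$ and $\beta=\rho+k-1-c\geq \rho$.

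We then apply Lemma~\ref{lem:compatible-interval} with $\ell=n$, taking the subgroup $K$, the element $x$, and $R=S\bm\cdot T^{[-1]}$. For the compatible extraction we use Lemma~\ref{lem:short-compatible}: every $L=m-k+1$ terms contain an $x$-compatible subsequence modulo $K$ of length at most $e=m-k+1$. Since $\ord(x)\mid m$ and $n=2m$, we have $x^n=1$, so the conclusion $x^nK=K\subseteq\Pi_n(S)$ gives the claim and, in particular, $1\in\Pi_n(S)$.

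If $\beta\geq n$, we simply truncate: choosing $\beta=n$ is allowed because $n\geq j$, and this case is immediate. Otherwise, we must verify two conditions. The first is
$$\beta-\alpha\geq e-1,$$
that is, $\rho+k-1-c-j\geq m-k$. Since $c+j=s\leq 2k-2$, this follows from $\rho\geq m-1$. The second is the length condition
$$|R|\geq \ell-\beta+L-1.$$
Here $|R|=|S|-\rho-k+1\geq 3m-\rho-k$, while
$$n-\beta+L-1=2m-\rho-k+1+c+m-k+1-1=3m-\rho-2k+1+c.$$
Using $c\leq k-1$, the right-hand side is at most $3m-\rho-k$, so the condition holds.

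The main obstacle is this bookkeeping. We must check that the interval endpoints from Lemma~\ref{lem:coset-saturation} satisfy $\alpha\leq\beta\leq\ell$. We also need $\beta\geq\alpha$, which holds since $\rho+k-1-c\geq j$ because $|T|\geq s$. Finally, the margin in the length inequality relies on $c\leq k-1$, and this bound is exactly tight.
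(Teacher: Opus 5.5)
Your proposal is correct and follows the paper's proof essentially step for step. You build $T_0$ of height at most $\rho$, apply Lemma~\ref{lem:coset-saturation}, and finish with Lemma~\ref{lem:compatible-interval} using $L=e=m-k+1$ from Lemma~\ref{lem:short-compatible}; the two numerical checks are the same as the paper's and rest on $s\le 2k-2$ and $c\le k-1$, and in the case $\beta\ge n$ the step $j\le n$ holds because $j\le 2k-2<n$.
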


\begin{proof}
For each $g\in aH$, a subsequence of height at most $\rho$ can contain at most $\min\{\rho,v_g(S)\}$ copies of $g$. Conversely, choosing that many copies for every $g$ gives a subsequence of length $\sum_{g\in aH}\min\{\rho,v_g(S)\}$ and height at most $\rho$. The assumed inequality therefore provides a subsequence $T_0$ of length exactly $\rho+h-1$ and height at most $\rho$. Since $h\leq n/5<m$, one has $\rho\geq h-1$. Apply Lemma~\ref{lem:coset-saturation} to obtain $K,x,T,U,j,s,c$. Set
$$ \alpha=j,\qquad \beta=\rho+k-1-c,\qquad\text{and}\qquad R=S\cdot T^{[-1]}. $$
One has $\Pi_i(T)=x^iK$ throughout $[\alpha,\beta]$. Moreover, $j\leq 2k-2<n$, since $k\leq n/5$. If $\beta\geq n$, then this interval already gives $\Pi_n(T)=x^nK=K$.

Assume that $\beta<n$. By Lemma~\ref{lem:short-compatible}, compatible subsequences are available with $L=e=m-k+1$. The two numerical requirements of Lemma~\ref{lem:compatible-interval} are
$$ \beta-\alpha=\rho+k-1-s\geq\rho-k+1\geq m-k=L-1 $$
and
$$ \begin{aligned} |R|-(n-\beta+L-1) &=|S|-n-c-L+1\\ &\geq m-1-(k-1)-L+1=0. \end{aligned} $$
Here we used $s\leq 2k-2$ and $c\leq k-1$. By Lemma~\ref{lem:compatible-interval}, with $\ell=n$, one now has $x^nK=K\subseteq\Pi_n(S)$.
\end{proof}

We now prove the bounded-height case.

\begin{theorem}\label{thm:bounded-height}
If $|S|=3m-1$ and $h(S)\leq m$, then $1\in\Pi_n(S)$.
\end{theorem}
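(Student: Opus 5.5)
We plan to grow a product set step by step and then finish with Lemma~\ref{lem:subgroup-extraction}(iii). Throughout, $G$ is non-abelian simple and $n=2m$. The case split is whether or not some normalizing coset of a proper nontrivial subgroup is heavily concentrated.

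\emph{Concentrated case.} Suppose there exist $1<H<G$ and $a\in N_G(H)$ with
$$\sum_{g\in aH}\min\{m,v_g(S)\}\geq m+|H|-1.$$
Then Proposition~\ref{prop:coset-concentration} with $\rho=m$ gives $1\in\Pi_n(S)$ directly. So we assume from now on that every such normalizing coset carries fewer than $m+|H|-1$ terms, counted with multiplicities truncated at $m$. Since $h(S)\leq m$, the truncation is harmless.

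\emph{Non-concentrated case.} By Lemma~\ref{lem:height-selection} (with $q=m$, $v=m-1$), choose $W\mid S$ with $|W|=2m-2$ and $h(W)\leq m-1$. We aim to show that $\Pi_j(U)=G$ for some $U\mid S$ and some $j$ with $|U|-j\leq m-1$. Then $|U|+n\leq |S|+j$ holds because $|S|=3m-1$, and Lemma~\ref{lem:subgroup-extraction}(iii) finishes the proof.

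To do this, build product sets $A_i=\Pi_{j_i}(U_i)$ by adding one term at a time, keeping $|A_i|\geq |U_i|-(\text{const})$ and $2j_i>|U_i|$. Each new term $y$ enlarges $A$ to $Ay\cup Ay'$ or $yA\cup y'A$ for two unused distinct values $y,y'$. Lemma~\ref{lem:difference-growth} controls the growth, provided $\langle BB^{-1}\rangle=G$ for the set $B$ of available unused values.

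Two ways this can stall need handling.
\begin{itemize}
\item If $\langle BB^{-1}\rangle=K<G$, then all unused terms lie in a single coset $xK$, and $x$ normalizes $K$ after suitable adjustment. This coset contains many terms, which contradicts the non-concentration assumption once the counts are compared with $m+|K|-1$.
\item If $A$ acquires a nontrivial stabilizer, then Lemma~\ref{lem:stabilizer-descent} either saturates a full coset $a^jK$, or produces a concentration in some $xR$ that is again excluded. A saturated coset then continues via Lemma~\ref{lem:compatible-interval} together with Lemma~\ref{lem:short-compatible}, exactly as in Proposition~\ref{prop:coset-concentration}.
\end{itemize}

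Once $|A|>n/2$, Lemma~\ref{lem:product-set-tools}(i) applied to $A\cdot A'$ gives all of $G$. Here $A'$ is a product set built symmetrically from the other half of the terms.

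\emph{Main obstacle.} The hardest part is the bookkeeping in the growth phase. Each step of Lemma~\ref{lem:difference-growth} gains only about $\min\{|A|/2,|BB^{-1}|/4\}$ elements. We must ensure that, using at most about $2m-2$ terms with excess length $|U|-j\leq m-1$, the set reaches size $m$. If $|BB^{-1}|$ is small, the increments of order $|BB^{-1}|/4$ may be too slow. In that situation we would first try to show that a small $BB^{-1}$ forces the unused terms into few cosets of a small subgroup. We would then use the index bound $|G:K|\geq 5$ from Lemma~\ref{lem:simple-group-basics} to show that one such coset is concentrated.
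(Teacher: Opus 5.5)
Your concentrated case is fine: the paper also disposes of heavy normalizing cosets with Proposition~\ref{prop:coset-concentration} at $\rho=m$. The non-concentrated case, however, has a genuine gap at the first stall.

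From $\langle BB^{-1}\rangle=K<G$ you only get $B\subseteq Kb$ for $b\in B$. That is a right coset of $K$, and nothing makes $b$ normalize $K$. Proposition~\ref{prop:coset-concentration} needs $a\in N_G(H)$, and your non-concentration hypothesis says nothing about non-normalizing cosets. The stall really occurs. Suppose all terms lie in $Kz$ with $K$ maximal and $z\notin K$. Then $\langle BB^{-1}\rangle\leq K$ and $\langle B^{-1}B\rangle\leq z^{-1}Kz$, so neither the right nor the left version of Lemma~\ref{lem:difference-growth} applies. Yet no normalizing coset of a proper subgroup need be heavy.

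The paper never relies on $BB^{-1}$ here. When the remaining terms fail to generate $G$, it uses the subgroup $\langle\supp(R)\rangle$ itself, which is trivially a normalizing coset. When they do generate $G$, it gains in one of two ways:
\begin{itemize}
\item if $A$ is not conjugation invariant, it uses $Ax\cup xA$;
\item if $A$ is conjugation invariant, it uses $Axy\cup Ayx=(Az\cup A)yx$ with $z=xy(yx)^{-1}$. Lemma~\ref{lem:product-set-tools}(iii), applied to the conjugacy class $z^G$ (which generates $G$ by simplicity), gives $\lambda_A(z)\geq 2$.
\end{itemize}

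The second gap is the endgame, which is exactly the obstacle you leave open. To use Lemma~\ref{lem:product-set-tools}(i), you need two disjoint product sets whose sizes sum to more than $n$, at lengths with total excess at most $m-1$. Growth alone does not guarantee this. The natural second set $\Pi_{m-1}(W)$ for your balanced $W$ can have only $m-1$ elements, for instance a coset $a^{m-1}H$. Nor does a nontrivial stabilizer appear on its own, so Lemma~\ref{lem:stabilizer-descent} is never triggered.

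The missing idea is a dichotomy on balanced subsequences.
\begin{itemize}
\item \emph{Some balanced $E$ of length $2h$ has $|\Pi_h(E)|\leq h$.} Take a minimal one. Lemma~\ref{lem:balanced-coset} gives $H\trianglelefteq P=\langle\supp(E)\rangle$ with $\Pi_h(E)=a^hH$. Delete $d$ and one other term from $E$ to get $D$ of height at most $h-1$. Then adjoin $d$ and a term $c\notin N_G(P)$ with $cdc^{-1}\notin P$. This gives the left $H$-invariant set $\Pi_{h-1}(D)dc\cup\Pi_{h-1}(D)cd$, of size $2h$, inside $\Pi_{h+1}$. That is precisely the stabilizer seed for Lemma~\ref{lem:stabilizer-descent} with $K=G$, $T=S$, $\rho=m$. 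Its conclusion $\Pi_j(U)=G$ with $2j>|U|$ gives excess at most $m-1$.
\item \emph{No such $E$ exists.} It then suffices to grow full product sets under the weak invariant $|\pi(U)|\geq|U|$, gaining at least one product per added term by the mechanisms above, until $|U|\in\{m+1,m+2\}$. Take a balanced $W$ of length $2(n-|U|)$ in the remainder. The hypothesis gives $|\pi(U)|+|\Pi_{n-|U|}(W)|\geq n+1$, so these two sets multiply to $G$ at total length exactly $n$.
\end{itemize}
Neither branch needs growth to produce a product set of size $n/2$, so the rate problem you identify never arises.
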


\begin{proof}
Suppose, for a contradiction, that $1\notin\Pi_n(S)$. First, $\supp(S)$ generates $G$. Indeed, if it generated a proper subgroup $H$ of order $h\leq n/5<m$, then $|S_H|=3m-1\geq n+h-1$, and Lemma~\ref{lem:subgroup-extraction}(iv) would imply the desired product. Two different values cannot each occur $m$ times, because each constant $m$-term sequence has product one by Lemma~\ref{lem:simple-group-basics}. Nor can $1$ occur at least $m-1$ times: by Lemma~\ref{lem:central-multiplicity}, with $d=m-1\geq \mathsf{d}(G)$ and $|S|=n+d$, one would have $1\in\Pi_n(S)$. Thus at most one value occurs $m$ times, and it is nonidentity.

\emph{Case 1: there is a balanced subsequence $E\mid S$ of length $2h$ such that $|\Pi_h(E)|\leq h$.} Choose such an $E$ of minimum length. By Lemma~\ref{lem:balanced-coset}, there is $H\mathbin{\trianglelefteq} P=\langle\supp(E)\rangle$ of order $h\geq 2$. Since $2h\leq 3m-1<2n$, the subgroup $H$ is proper in $G$, and simplicity implies that $N_G(H)<G$. Since $P\leq N_G(H)$, the subgroup $P$ is also proper and nontrivial, so $N_G(P)<G$.

Choose a term $c$ of $S$ outside $N_G(P)$, using the fact that the support generates $G$. Some $d\in\supp(E)$ satisfies $cdc^{-1}\notin P$: otherwise, conjugation by $c$ would take every generator of $P$ into $P$, and finiteness would imply that $cPc^{-1}=P$. Delete one copy of $d$ and a term of a different value from $E$ so that the remaining sequence $D$ has height at most $h-1$. Such a deletion is always possible: if two values have multiplicity $h$, then they are the only values; if a unique value has multiplicity $h$ and is not $d$, then delete it second; in the other cases, any value different from $d$ suffices.

By Lemma~\ref{lem:balanced-coset}, the set $B=\Pi_{h-1}(D)=a^{h-1}H$ has $h$ elements and satisfies $HB=B$. The indexed term $c$ is outside $E$, since its value is outside $N_G(P)\supseteq P$. Therefore
$$ A=Bdc\cup Bcd\subseteq\Pi_{h+1}(D\bm\cdot d\bm\cdot c). $$
The two sets lie in the distinct right $P$-cosets $Pc$ and $Pcd$: equality of those cosets would imply that $cdc^{-1}\in P$. Consequently, $|A|=2h=|D\bm\cdot d\bm\cdot c|$, and $A$ is left $H$-invariant.

Apply Lemma~\ref{lem:stabilizer-descent} with $K=G$, $T=S$, and $\rho=m$. Here $|T|=m+n-1$, and the initial represented length $h+1$ satisfies $2(h+1)>2h$. In the concentration alternative, there are $1<R<G$ and $x\in N_G(R)$ such that at least $m+|R|-1$ terms lie in $xR$, with height at most $m$. Proposition~\ref{prop:coset-concentration}, with $\rho=m$, contradicts the assumption. In alternative (i), there are $U\mid S$ and $j$ with $\Pi_j(U)=G$, $|U|\leq n$, and $2j>|U|$. Thus $|U|-j<|U|/2\leq m$, and integrality implies that $|U|-j\leq m-1=|S|-n$. By Lemma~\ref{lem:subgroup-extraction}(iii), this leads to the same contradiction.

\emph{Case 2: every balanced $W\mid S$ of length $2v$ satisfies $|\Pi_v(W)|\geq v+1$.} Choose $U\mid S$ of greatest length subject to
\begin{equation} 2\leq |U|\leq m+2,\qquad |\pi(U)|\geq |U|,\qquad\text{and}\qquad h(S\cdot U^{[-1]})\leq m-1. \tag{9.4}\label{eq:bounded-height-witness} \end{equation}
Such a $U$ exists: start with a noncommuting pair, whose two orderings give two different products. If a value $a$ occurs $m$ times, then include it in this pair. This is possible because $a\neq 1$, $Z(G)=1$, and the generating support cannot lie in $C_G(a)$. Otherwise, every pair already leaves height at most $m-1$.

Suppose that $s=|U|\leq m$, and let $A=\pi(U)$ and $R=S\bm\cdot U^{[-1]}$. Then $h(R)\leq m-1$ and $|R|=3m-1-s\geq n-1$. If $|A|>s$, then adjoining any unused term $x$ gives $Ax\subseteq\pi(U\bm\cdot x)$ with at least $s+1$ products, contradicting maximality. Hence $|A|=s$.

The remainder must generate $G$. Otherwise, let $H=\langle\supp(R)\rangle<G$, of order $h$. Necessarily $h\geq 3$, since a group of order at most two supports at most $2m-2$ terms at height $m-1$. Also, $h\leq n/5<m$, whence $|R|\geq n-1\geq(m-1)+h-1$. Select that many terms from $R$. They lie in $H$ with height at most $m-1$, so Proposition~\ref{prop:coset-concentration}, applied to $S$ with $\rho=m-1$, gives the forbidden product.

If $A$ is not conjugation invariant, then some unused value $x$ has $Ax\neq xA$. Indeed, equality for all values generating $G$ would imply conjugation invariance under all of $G$. Then $Ax\cup xA\subseteq\pi(U\bm\cdot x)$ has at least $s+1$ elements, contradicting maximality.

Suppose finally that $A$ is conjugation invariant. We claim that $\lambda_A(z)\geq 2$ for every $z\neq 1$. The conjugacy class $X=z^G$ generates a nontrivial normal subgroup, and hence generates $G$. Moreover, $|X|=|G:C_G(z)|\geq 5$ because $C_G(z)<G$. Conjugation invariance makes $\lambda_A$ constant on $X$. Since $2\leq|A|=s\leq m$, it follows from Lemma~\ref{lem:product-set-tools}(iii) that this common value is at least
$$ \min\left\{\frac{s+1}{2},\frac{|X\cup X^{-1}|+2}{4}\right\} \geq\frac{3}{2}. $$
This value is an integer, so it is at least two. Choose noncommuting unused terms $x,y$, using generation of the non-abelian group, and set $z=xy(yx)^{-1}\neq 1$. Right translation gives
$$ |Axy\cup Ayx|=|Az\cup A|=|A|+\lambda_A(z)\geq s+2. $$
Both sets lie in $\pi(U\bm\cdot x\bm\cdot y)$, again contradicting maximality. All the proposed enlargements have length at most $m+2$ and cannot increase the remaining height. Thus $|U|\in\{m+1,m+2\}$.

Let $k=|U|$ and $v=n-k\in\{m-1,m-2\}$. The remainder has length $v+(m-1)$ and height at most $m-1$. By Lemma~\ref{lem:height-selection}, it contains a balanced sequence $W$ of length $2v$. The assumption of Case 2 yields
$$ |\pi(U)|+|\Pi_v(W)|\geq k+v+1=n+1. $$
By Lemma~\ref{lem:product-set-tools}(i), $\pi(U)\Pi_v(W)=G$. Since $U$ and $W$ are disjoint and $k+v=n$, this implies that $1\in\Pi_n(S)$, the final contradiction.
\end{proof}

\section{High multiplicity and completion criteria}\label{sec:high-multiplicity}

Let $G$ be non-abelian simple, $|G|=n=2m$, and $|S|=3m-1$. By Theorem~\ref{thm:bounded-height}, it remains to consider $h(S)>m$. If an element occurs at least $n$ times, then $n$ copies of that element have product one. We may therefore write
\begin{equation} S=a^{[M]}R,\qquad t=\ord(a),\qquad\text{and}\qquad C=\langle a\rangle. \tag{10.1}\label{eq:high-mult-setup} \end{equation}
Here $a\notin\supp(R)$ and $m+1\leq M<n$. In particular, $|R|\geq m$, and by Lemma~\ref{lem:simple-group-basics}, $t\mid m$.

\begin{lemma}\label{lem:high-mult-completion}
Under the assumptions of \eqref{eq:high-mult-setup}, either of the following conditions implies that $1\in\Pi_n(S)$:
\begin{itemize}
\item[(i)] $\mathsf{d}(G\times C_t)+1\leq m+t-1$;
\item[(ii)] some $U\mid S$ satisfies $|U|\leq n$ and $\pi(U)=G$.
\end{itemize}
\end{lemma}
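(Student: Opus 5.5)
Part (ii) is the easy half, and I would do it first with Lemma~\ref{lem:subgroup-extraction}(iii), taking $j=|U|$ and $N=n$. The hypothesis of that lemma needs $|U|+n\leq|S|+|U|$, which is immediate since $|S|=3m-1\geq n$. Concretely: pick any $n-|U|$ terms outside $U$ and let $z$ be an ordered product of them. Then use $\pi(U)=G$ to order $U$ with product $z^{-1}$. Concatenating the two orderings gives $n$ terms with product one.

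For (i), I would imitate the lifting trick in the proof of Lemma~\ref{lem:augmentation-bounds}, but lift only the terms of $R$. Set $\ell=m+t-1$. Since $|R|\geq m$ and $M\geq m+1$, I can choose a subsequence $T\mid S$ of length $\ell$. I would take all of $R$ when possible, or at least enough of it to leave $a$-copies to spare. Lift each term $g$ of $T$ to $(g,z)\in G\times C_t$, where $z$ generates $C_t$. The hypothesis $\mathsf{d}(G\times C_t)+1\leq\ell$ gives a nonempty subsequence $V\mid T$ whose lift is product-one. This forces $t\mid |V|$ and $1\in\pi(V)$, with $1\leq|V|\leq\ell$.

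The main step is to pad $V$ up to exactly $n$ terms with copies of $a$. The basic fact is that inserting a copy of $a$ multiplies the product by a conjugate of $a$, which is not obviously harmless. So I would insert the copies as a single block: an ordering of $V$ with product $1$, followed by $a^{n-|V|}$, has product $a^{n-|V|}$. This is the identity exactly when $t\mid n-|V|$. That holds because $t\mid |V|$ and $t\mid m\mid n$ by Lemma~\ref{lem:simple-group-basics}. What remains is to make sure enough copies of $a$ lie outside $V$. The number of copies of $a$ not in $V$ is at least $M-v_a(T)$, and we need this to be at least $n-|V|$.

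I expect this counting to be the only delicate point. To get it, I would choose $T$ to contain as few copies of $a$ as possible, namely $\max\{0,\ell-|R|\}$ of them. If $|R|\geq\ell$, then $T\mid R$ and $M=3m-1-|R|$. But $|V|$ may be small, so $M\geq n-|V|$ is not automatic. To fix this, I would lift the copies of $a$ in $T$ as well and allow them in $V$. Then only the total length matters: the terms used are $V$ together with the block of $a$'s. These are at most $|V|+(n-|V|)=n$, drawn from $v_a(V)+(M-v_a(V))$ copies of $a$. The real condition is therefore $M\geq n-|V|+v_a(V)=n-|V_R|$, where $V_R$ is the part of $V$ taken from $R$.

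If this fails, I would apply the Davenport bound repeatedly: extract disjoint lifted product-one subsequences from the $R$-part, each of length divisible by $t$, until the number of $R$-terms used reaches $n-M$. At each step at least $\ell$ terms remain, counting spare $a$'s, since $|S|-(n-M)$ leaves enough room. The concatenation of these pieces stays product-one with length divisible by $t$, so the block of $a$'s then completes it to $n$ terms with product one.
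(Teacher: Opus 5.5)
Your part (ii) is exactly the paper's argument. Your plan for (i) is also the paper's: lift to $G\times C_t$, extract pieces whose lengths are multiples of $t$, then append one block of copies of $a$. The gap is in the iteration for (i).

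You stop when the number $u_R$ of terms of $R$ used reaches $n-M$, but you put copies of $a$ into the pool with no rule on how many enter each $\ell$-term block. Since $(a,z)^t=(1,1)$ in $G\times C_t$, the Davenport hypothesis may return a piece such as $a^{[t]}$, or a piece consisting mostly of copies of $a$. Such pieces add length without advancing $u_R$. So two facts you need are unproved:
\begin{itemize}
\item that $\ell$ terms are still available before every extraction, since your count $|S|-(n-M)$ ignores copies of $a$ already absorbed by earlier pieces;
\item more seriously, that the total length $u$ of the pieces satisfies $u\le n$ when you stop.
\end{itemize}
You never check the second one, and without it the padding block $a^{[n-u]}$ does not exist.

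The paper avoids this by reserving $M_0=t\lfloor M/t\rfloor\ge m$ copies of $a$ that are never lifted. It lifts everything else, including the fewer than $t$ leftover copies of $a$, and tracks the total length $u$ instead of $u_R$. Each piece has length a multiple of $t$ that is at most $m+t-1$, hence at most $m$ because $t\mid m$. Before each needed extraction $u\le n-M_0-t$, so at least $3m-1-M_0-u\ge m+t-1$ terms are available. At the first crossing $n-M_0\le u\le n-M_0-t+m\le n$, and $n-u\le M_0$ reserved copies complete the product.

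Alternatively, you can repair your own scheme by enforcing your ``as few copies of $a$ as possible'' rule at every step. While $u_R<n-M$, the number $r$ of unused terms of $R$ is at least $m$. So a block needs at most $\ell-r\le t-1$ copies of $a$, and needs any only when $r<\ell$. A piece drawn from such a block contains at least $t-(\ell-r)=r-m+1$ terms of $R$, so $u_R\ge n-M$ immediately afterwards. Hence only the last piece can contain copies of $a$, and $u\le(|R|-m)+m=|R|<n$. Finally, $u_R\ge n-M$ guarantees that enough copies of $a$ remain for the final block.
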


\begin{proof}
For (i), reserve $M_0=t\lfloor M/t\rfloor\geq m$ copies of $a$ and lift each remaining term $x$ to $(x,z)$, where $z$ generates $C_t$. Every $m+t-1$ available terms contain a nonempty product-one subsequence of length divisible by $t$, and hence of length at most $m$. Extract disjoint such subsequences until their total length $u$ first reaches $n-M_0$. Before a necessary extraction, $u\leq n-M_0-t$, so
$$ 3m-1-M_0-u\geq m+t-1. $$
At the first crossing, $n-M_0\leq u\leq n-M_0-t+m\leq n$. Both $n$ and $u$ are divisible by $t$. Appending $n-u\leq M_0$ reserved copies gives length $n$ and product one. If every $m+t-1$ terms have a nonempty product-one subsequence whose length is divisible by $t$ and is at most $m$, then the same argument applies.

Part (ii) follows from Lemma~\ref{lem:subgroup-extraction}(iii), with $j=|U|$.
\end{proof}

When $a=1$, Lemma~\ref{lem:high-mult-completion}(i) applies because $\mathsf{d}(G)+1\leq m$. Henceforth, assume that $a\neq 1$.

\begin{lemma}\label{lem:subgroup-cap}
Suppose that the assumptions of \eqref{eq:high-mult-setup} hold and $1\notin\Pi_n(S)$. Then every proper subgroup $L<G$ containing $a$ satisfies $|R_L|\leq|L|-2$. In particular, at least $m-t+2$ terms of $R$ lie outside $C$.
\end{lemma}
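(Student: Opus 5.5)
The second assertion follows from the first with $L=C$. Since $a\notin\supp(R)$, the bound $|R_C|\leq t-2$ and $|R|\geq m$ give
$$|R_{G\setminus C}|=|R|-|R_C|\geq m-t+2.$$
Note that $C<G$ because $G$ is not cyclic. So the real content is the first assertion, which I would prove by contradiction. Suppose $L<G$ contains $a$ and $|R_L|\geq|L|-1$. Put $\ell=|L|$, so that $t\mid\ell$, and fix $\ell-1$ terms $X\mid R_L$. I will use two facts throughout: $a^{[t]}$ has product one, and $t\mid n$ by Lemma~\ref{lem:simple-group-basics}. Hence any product-one subsequence $V\mid S_L$ whose length $|V|$ is divisible by $t$ can be padded to length $n$ by appending $n-|V|$ unused copies of $a$. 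This is possible as long as $v_a(V)+n-|V|\leq M$, which contradicts $1\notin\Pi_n(S)$.

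The key step is therefore to produce such a $V$ of length exactly $n-M_0$ or a bit more, using few copies of $a$. Here $M_0=t\lfloor M/t\rfloor\geq m$, as in the proof of Lemma~\ref{lem:high-mult-completion}. The tool I would use is compatibility with respect to the trivial subgroup $H=1\leq L$. A nonempty $V$ is $a$-compatible modulo $1$ exactly when $a^{|V|}\in\pi(V)$. For such $V$, the sequence $V\bm\cdot a^{[n-|V|]}$ has product $a^{|V|}a^{n-|V|}=a^n=1$, with no divisibility condition on $|V|$. By Lemma~\ref{lem:compatible-extraction}, applied in $F=L$, every $\ell$ terms of $L$ contain an $a$-compatible subsequence of length at most $\ell$. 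Compatible subsequences also concatenate. So I would extract from $X\bm\cdot a^{[\cdot]}$ a compatible subsequence containing at least one term of $X$. To force a term of $X$, I would pass to the lifted group $L\times C_t$ and the subgroup $\widehat H$ of Lemma~\ref{lem:compatible-extraction}. There the lifts of $a$ lie in $\widehat H$, so they are the identity coset action, and by Lemma~\ref{lem:coset-cycle} an extraction failing on $X$ alone would make some product of $\ell-1$ terms of $X$ act as an $\ell$-cycle.

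This gives only a short $V$, and the padding then needs $n-|V|$ copies of $a$, which may exceed $M$ since $M$ can be as small as $m+1$. To close this gap I would combine the local extraction inside $L$ with terms outside $L$. The sequence $R\bm\cdot X^{[-1]}$ still has at least $m-\ell+1$ terms, and $\ell\leq n/5$ by Lemma~\ref{lem:simple-group-basics}. From these I would extract, via Lemma~\ref{lem:compatible-interval} with $H=1$, compatible pieces whose total length reaches $n-M$. The interval hypothesis $a^j\in\Pi_j(a^{[M]}\bm\cdot X)$ holds for every $j\leq M$ trivially, so the compatible-interval lemma, with $\alpha=0$, $\beta=M$ and $\ell=n$, should finish. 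What remains is to check its numerical condition $|R|\geq n-M+L-1$, where the $L$ on the right is the extraction-threshold parameter of Lemma~\ref{lem:compatible-interval}, not the subgroup. The role of $X$, that is, of the hypothesis $|R_L|\geq\ell-1$, is to replace the full index bound $n$ for $H=1$ by a small threshold. Concretely, I would use $L=e=\ell$, relative to $a$ in $L$, which is available because the extra terms of $L$ supply the missing compatible completions.

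I expect this last step to be the main obstacle. Compatible extraction modulo $1$ in $G$ itself has threshold $n$, which is far too large. The argument must genuinely transfer the $\ell$-term threshold available inside $L$ to subsequences that involve terms outside $L$. If the direct route fails, I would try instead to apply Lemma~\ref{lem:subgroup-extraction}(iv) in the subgroup $L$ after enlarging $S_L$. The enlargement would use products: for example, I would pair terms outside $L$ into short subsequences with products in $L$, using Lemma~\ref{lem:subgroup-extraction}(i) with index $n/\ell$. The aim is to raise the effective count in $L$ to $n+\ell-1$, which is exactly the quantity that Lemma~\ref{lem:subgroup-extraction}(iv) requires.
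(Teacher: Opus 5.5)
Your reduction of the second assertion to the first is fine: take $L=C$, so $|R_C|\le t-2$, and use $|R|\ge m$. However, the first assertion is not proved, and the route you sketch cannot be completed.

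Apply Lemma~\ref{lem:compatible-interval} with $H=1$, $\alpha=0$ and $\beta=M$. The condition on the number of unused terms then forces the extraction threshold to be at most $m$. That threshold must hold for arbitrary terms of $G$, since the unused terms lie mostly outside $L$. Modulo the trivial subgroup, the available bound is only $b_G(a,1)=n-1$, because the lifted subgroup $\widehat H=\langle(a,z)\rangle$ has index $n$.

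Worse, a threshold of at most $m$ modulo $1$ would, through Lemma~\ref{lem:compatible-interval} applied to $T=a^{[M]}$ alone, settle every high-multiplicity configuration at once. It would never use the hypothesis $|R_L|\ge\ell-1$. So that hypothesis cannot be what supplies such a threshold, and you give no mechanism for ``transferring'' the $\ell$-term threshold of $L$ to terms outside $L$.

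Even the local step fails. Take $L=C$ and $X=(a^2)^{[t-1]}$. A subsequence $(a^2)^{[k]}$ is compatible modulo $1$ only if $t\mid k$, and adjoining copies of $a$ never changes compatibility. The $\ell$-cycle produced by Lemma~\ref{lem:coset-cycle} is no contradiction for a general $L$. Your fallback through Lemma~\ref{lem:subgroup-extraction}(iv) also breaks the length count: each product of outside terms that you would count as one term of $L$ consumes several terms of $S$.

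The missing idea is to work modulo a \emph{nontrivial} subgroup, where compatible thresholds are small. The paper's proof is a direct application of Proposition~\ref{prop:coset-concentration} with $H=L$ and $\rho=m$. Since $a\in L$, the coset $aL=L$ contains $M\ge m+1$ copies of $a$ and the $\ell-1$ remainder terms. Hence $\sum_{g\in L}\min\{m,v_g(S)\}\ge m+\ell-1$, using $\ell-1<m$, while $1<L<G$. The proposition then gives $1\in\Pi_n(S)$.

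Inside that proposition, Lemma~\ref{lem:coset-saturation} uses exactly this concentration: at least $\rho+k-1$ terms of height at most $\rho$ in a normalizing coset. It produces a subgroup $1<K\le L$ with $\Pi_i(T)=x^iK$ over a long interval of lengths $i$. Compatibility modulo $K$ has threshold $m-k+1$ by Lemma~\ref{lem:short-compatible}, which is just small enough for Lemma~\ref{lem:compatible-interval}.
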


\begin{proof}
Let $\ell=|L|$. If $\ell-1$ remainder terms lie in $L$, then combine them with $m$ copies of $a$. This subsequence in $L$ has length $m+\ell-1$ and height at most $m$, since $|G:L|\geq 5$. By Proposition~\ref{prop:coset-concentration}, with $H=L$ and $\rho=m$, the desired product-one subsequence exists, a contradiction. Apply the result to $L=C$ and use $|R|\geq m$.
\end{proof}

The remaining proof is organized by construction. Section~\ref{sec:subgroup-extraction} uses subgroups and compatible products; Section~\ref{sec:short-subsequences} constructs short product-one subsequences; Sections~\ref{sec:orbit-growth} and~\ref{sec:cyclic-seeds} enlarge conjugation-orbit products, with or without an initial cyclic subgroup. The structural coverage is checked in Section~\ref{sec:structural-coverage}, after these methods have been established.

\section{Subgroup and compatible extraction}\label{sec:subgroup-extraction}

The first construction replaces a sequence by products in a smaller group. All applications below use the same extraction principle; only the subgroup, its index, and its short-product bound change.

\begin{lemma}\label{lem:extraction-bounds}
Let $H\leq F$, $K\mathbin{\vartriangleleft} F$, and $e,f,t\geq 1$. Then
\begin{align} \mathsf{d}(F\times C_t)+1 &\leq b\bigl(\mathsf{d}(H\times C_t)+1\bigr), \tag{i}\\ \mathsf{\eta}_{ef}(F) &\leq\mathsf{\eta}_e(F/K)+e\bigl(\mathsf{\eta}_f(K)-1\bigr). \tag{ii} \end{align}
Here one may always take $b=|F:H|$. If the index is even and the coset action of $F$ consists of even permutations, then one may take $b=|F:H|-1$. In (ii), assume that $\mathsf{\eta}_e(F/K)$ and $\mathsf{\eta}_f(K)$ exist; the conclusion includes existence of $\mathsf{\eta}_{ef}(F)$. Consequently,
$$ \mathsf{d}(F)+1\leq\mathsf{\eta}_e(F/K)+e\,\mathsf{d}(K) $$
whenever $\mathsf{\eta}_e(F/K)$ exists, and unconditionally
$$ \mathsf{d}(F)+1\leq\bigl(\mathsf{d}(K)+1\bigr)\bigl(\mathsf{d}(F/K)+1\bigr). $$
Also, if $\mathsf{\eta}_e(P)$ exists and $\mathsf{\eta}_e(P)\leq L$, then
$$ \mathsf{d}(P\times C_t)+1\leq L+e(t-1). $$
\end{lemma}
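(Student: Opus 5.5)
Every inequality in the statement will be proved by the same greedy extraction: we repeatedly remove short subsequences whose ordered products land in a smaller group, and then apply the constant of that smaller group to the selected products. An ordered product of ordered products is again an ordered product of the original terms, and the lengths add.

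\emph{Assertion (i).} Write $D=\mathsf{d}(H\times C_t)+1$ and consider a sequence over $F\times C_t$ of length $bD$. Set $\widehat H=H\times C_t$, a subgroup of index $|F:H|$.
\begin{itemize}
\item By Lemma~\ref{lem:subgroup-extraction}(i), any $b=|F:H|$ terms contain a nonempty subsequence of length at most $b$ with an ordered product in $\widehat H$.
\item In the improved case the coset action of $F\times C_t$ on $\widehat H$-cosets factors through the action of $F$, so it consists of even permutations. If $b-1$ terms failed, Lemma~\ref{lem:coset-cycle} would produce a product acting as a $|F:H|$-cycle. That cycle has even length and is therefore odd, a contradiction. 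This is the same parity argument as in Lemma~\ref{lem:compatible-extraction}(ii).
\item Extract disjoint such subsequences greedily. Since $bD-(D-1)b\geq b$, at least $D$ of them are obtained.
\item Their $D$ products lie in $\widehat H$ and so contain a nonempty product-one subsequence.
\item Concatenating the corresponding blocks in that order gives a nonempty product-one subsequence of the original sequence.
\end{itemize}

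\emph{Assertion (ii).} Take a sequence over $F$ of length $\eta_e(F/K)+e(\eta_f(K)-1)$ and work modulo $K$.
\begin{itemize}
\item Extract disjoint subsequences of length at most $e$ whose images are product-one, so that their ordered products lie in $K$.
\item While fewer than $\eta_f(K)$ blocks have been taken, at most $e(\eta_f(K)-1)$ terms are used. Hence at least $\eta_e(F/K)$ terms remain and the next block exists.
\item The $\eta_f(K)$ products in $K$ contain a product-one subsequence of at most $f$ of them.
\item The corresponding union has length at most $ef$, is nonempty, and is product-one.
\end{itemize}
This also gives existence of $\eta_{ef}(F)$, because the threshold argument applies to every longer sequence.

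\emph{The consequences.}
\begin{itemize}
\item The bound $\mathsf d(F)+1\le \eta_e(F/K)+e\,\mathsf d(K)$ is the same extraction with no length restriction inside $K$. We take $\mathsf d(K)+1$ blocks, using $\eta_f(K)$ replaced by $\mathsf d(K)+1$ and ignoring lengths.
\item For the unconditional product bound, apply the previous one with $e=\mathsf d(F/K)+1$. Every nonempty product-one free sequence over $F/K$ has length at most $\mathsf d(F/K)$, so $\eta_e(F/K)=\mathsf d(F/K)+1$. This gives $(\mathsf d(F/K)+1)(\mathsf d(K)+1)$.
\item For the last inequality, apply the $\mathsf d$-version to $P\times C_t$ with normal subgroup $C_t$, so that the quotient is $P$. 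This yields $\mathsf d(P\times C_t)+1\le \eta_e(P)+e\,\mathsf d(C_t)\le L+e(t-1)$.
\end{itemize}

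The main point needing care is the improved index in (i). One must check that evenness of the coset action of $F$ transfers to $F\times C_t$ acting on the cosets of $H\times C_t$; it does, since $C_t$ acts trivially there. One must also check that Lemma~\ref{lem:coset-cycle} applies verbatim with $F\times C_t$ in place of $F$. Everything else is routine counting.
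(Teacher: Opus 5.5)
Your proposal is correct and follows the paper's proof essentially step for step. For (i) you use the same greedy extraction of $(H\times C_t)$-products in $F\times C_t$, with the parity contradiction from Lemma~\ref{lem:coset-cycle} in the improved case. For (ii) you use the same counting, and the consequences follow from the same specializations $f=\mathsf{d}(K)+1$, $e=\mathsf{d}(F/K)+1$, and the normal subgroup $C_t\mathbin{\vartriangleleft} P\times C_t$ with $\mathsf{d}(C_t)=t-1$.

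Your explicit check that the coset action of $F\times C_t$ on the cosets of $H\times C_t$ factors through $F$ is a detail the paper leaves implicit. In the improved case, take care to run the greedy count with $b=|F:H|-1$ rather than the $b=|F:H|$ you fixed in your first bullet.
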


\begin{proof}
By Lemma~\ref{lem:subgroup-extraction}(i), an $H$-product can be extracted from $|F:H|$ terms. In the improved case, a failure with $|F:H|-1$ terms would give an odd full coset cycle by Lemma~\ref{lem:coset-cycle}. Thus $b$ terms suffice. Extract $\mathsf{d}(H\times C_t)+1$ disjoint such subsequences, retaining their cyclic coordinates, and select a product-one subsequence of their products. This proves (i).

For (ii), let $L_0=\mathsf{\eta}_e(F/K)$ and $L_1=\mathsf{\eta}_f(K)$. Start with $L_0+e(L_1-1)$ terms. Before extraction $i\leq L_1$, at most $e(i-1)$ terms have been used, leaving at least $L_0+e(L_1-i)\geq L_0$. We can therefore extract $L_1$ disjoint nonempty subsequences of length at most $e$, each with a chosen ordered product in $K$. A nonempty selection of at most $f$ of these products has product one in some order. Concatenating the corresponding subsequences in that order gives length at most $ef$. Take $f=\mathsf{d}(K)+1$ for the first consequence, then $e=\mathsf{d}(F/K)+1$ for the second. For the last consequence, use the normal subgroup $C_t$ of $P\times C_t$, with quotient $P$ and $\mathsf{d}(C_t)+1=t$.
\end{proof}

A subgroup of $C_G(a)$ gives a second way to construct short $a$-compatible subsequences.

\begin{lemma}\label{lem:centralizer-criterion}
Under the assumptions of \eqref{eq:high-mult-setup}, suppose that $A\leq C_G(a)$. Define
$$ q_A= \begin{cases} |G:A|-1,&\text{when $A$ is abelian or $|A|$ is odd},\\ |G:A|,&\text{otherwise}. \end{cases} $$
If $q_A\bigl(\mathsf{d}(A)+1\bigr)\leq m$, then $1\in\Pi_n(S)$.
\end{lemma}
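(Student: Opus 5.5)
The plan is to reduce the statement to Lemma~\ref{lem:high-mult-completion}(i) by way of Lemma~\ref{lem:extraction-bounds}. More precisely, I will rerun the extraction argument in the proof of Lemma~\ref{lem:high-mult-completion}(i), with $a$-compatible subsequences modulo $A$ replacing product-one subsequences. Since $a$ centralizes $A$, we have $a\in N_G(A)$, so Definition~\ref{def:compatible-subsequence} applies to the pair $(a,A)$. The key claim is that every $m+t-1$ terms over $G$ (in fact every $m$ terms) contain a nonempty subsequence $V$ with $|V|\leq m$ that has an ordering with product $a^{|V|}$ exactly.

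To prove the claim, first note that $q_A$ terms always contain an $a$-compatible subsequence modulo $A$ of length at most $q_A$. This follows from Lemma~\ref{lem:compatible-extraction}: case (i) applies when $A$ is abelian, since $G$ is non-solvable and $a\in C_G(A)$. Case (ii) applies when $|A|$ is odd, since $G$ is simple. The general index bound covers the remaining case. Starting from $q_A(\mathsf{d}(A)+1)\leq m$ terms, I extract $\mathsf{d}(A)+1$ disjoint compatible subsequences $V_i$, each with a selected product $a^{|V_i|}h_i$ where $h_i\in A$. Some nonempty subfamily of the $h_i$ has product one in some order. For the concatenation of the corresponding $V_i$, the identity $(a^uh)(a^vk)=a^{u+v}(a^{-v}ha^v)k$ reduces, because $a$ centralizes $A$, to $a^{u+v}hk$. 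The concatenated product is therefore $a^{\sum|V_i|}$ times the chosen product of the $h_i$, which equals $a^{|V|}$. The total length is at most $q_A(\mathsf{d}(A)+1)\leq m$.

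Next I complete the count. Write $V^\ast$ for $V$ followed by one copy of $a^{-1}$ per term, or equivalently lift each term $x$ to $(xa^{-1}$-twisted$)$. It is cleaner to argue directly, mirroring Lemma~\ref{lem:high-mult-completion}(i). Reserve all $M\geq m+1$ copies of $a$, and repeatedly extract from the non-reserved terms disjoint subsequences $V$ of the type in the claim. Stop as soon as their total length $u$ satisfies $u\geq n-M$. Before each extraction we have $u\leq n-M-1$, so at least $3m-1-M-u\geq m$ non-reserved terms remain, and the claim applies. At the stopping point, $u\leq n-M-1+m\leq n-1$, because $M\geq m+1$. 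The concatenation of the extracted subsequences has product $a^{u}$, so appending the $n-u\leq M$ reserved copies of $a$ arranged as $a^{-u}$ would require $a^{n-u}=a^{-u}$, i.e.\ $a^n=1$. This holds because $t\mid m\mid n$ by Lemma~\ref{lem:simple-group-basics}. The result is an ordering of $n$ terms with product $a^u\cdot a^{n-u}=a^n=1$.

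The main obstacle is the bookkeeping of the reserve. I must check that $n-u\leq M$, which follows from $u\geq n-M$, and that $n-u\geq 0$, which follows from $u\leq n-1$. I also need the extracted terms to avoid the reserved copies of $a$. This holds if I extract only from $R$, which has length $3m-1-M\geq m$ at the first step, together with any non-reserved copies of $a$. So I will reserve exactly $M$ copies and extract from $R$ alone. The estimate $|R|-u\geq m$ before each extraction then needs $3m-1-M-(n-M-1)=m$, which holds. If some extracted $V$ happens to contain no terms at all, that is excluded because $V$ is nonempty. I would then double-check that compatibility modulo $A$, rather than modulo some larger group, is what makes the product exactly a power of $a$.
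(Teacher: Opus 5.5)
Your proof is correct and follows the paper's argument. You extract $\mathsf{d}(A)+1$ disjoint $a$-compatible subsequences modulo $A$ via Lemma~\ref{lem:compatible-extraction}, and use centralization to turn a product-one selection of the $h_i$ into a subsequence with product $a^{|V|}$. You then extract from $R$ until the total length reaches $n-M$ and pad with reserved copies of $a$. The only difference is packaging: the paper gets the last step by citing Lemma~\ref{lem:compatible-interval} with $H=1$, $T=a^{[M]}$, $[\alpha,\beta]=[0,M]$ and $e=L=q_A(\mathsf{d}(A)+1)$, whereas you unroll that lemma by hand. The abandoned ``$V^\ast$'' sentence can simply be deleted.
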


\begin{proof}
By Lemma~\ref{lem:compatible-extraction}, every $q_A$ terms contain a nonempty $a$-compatible subsequence modulo $A$ of length at most $q_A$. From $L=q_A\bigl(\mathsf{d}(A)+1\bigr)$ terms, extract $\mathsf{d}(A)+1$ such subsequences, with selected products $a^{s_i}h_i$, where $h_i\in A$. Select a nonempty product-one subsequence of the $h_i$. Since $a$ centralizes $A$, the corresponding concatenation has product $a^s$, where $s$ is its total length and $1\leq s\leq L$.

Apply Lemma~\ref{lem:compatible-interval} with $H=1$, $T=a^{[M]}$, $[\alpha,\beta]=[0,M]$, $\ell=n$, and $e=L=q_A\bigl(\mathsf{d}(A)+1\bigr)$. Indeed,
$$ M\geq m+1\geq L-1 \qquad\text{and}\qquad |R|=3m-1-M\geq n-M+L-1. $$
The lemma therefore implies that $a^n=1\in\Pi_n(S)$.
\end{proof}

\subsection{Fixed-point-free actions}\label{sec:fixed-point-free}

\begin{lemma}\label{lem:frobenius-extraction}
Let $F=A\rtimes C_b$, where $A$ is abelian of order $v$, and every nonidentity element of the complement acts without nonidentity fixed points on $A$. Then
\begin{equation} \mathsf{d}(F\times C_t)+1\leq t(v-1)+\ell+g-1. \tag{11.1}\label{eq:frobenius-davenport} \end{equation}
Here $g=\gcd(b,t)$ and $\ell=\lcm(b,t)$. In particular, if $t\mid b$, then $\mathsf{d}(F\times C_t)+1\leq tv+b-1$.
\end{lemma}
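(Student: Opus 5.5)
The plan is to use the normal-subgroup extraction principle of Lemma~\ref{lem:extraction-bounds}(ii), in the form of its last two consequences, applied to the normal subgroup $A\times C_t$ of $F\times C_t$. This subgroup has quotient $C_b$. The target bound suggests a finer count than the generic product estimate. I would first extract from the quotient $(F\times C_t)/(A\times\{1\})\cong C_b\times C_t\cong C_g\times C_\ell$ and then finish inside $A$ using the fixed-point-free action.

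\emph{Step 1.} Let $X$ be a sequence over $F\times C_t$ of length $t(v-1)+\ell+g-1$, and suppose it is product-one free. The projection to $C_b\times C_t\cong C_g\times C_\ell$ has $\mathsf{d}=g+\ell-2$ by Lemma~\ref{lem:davenport-bounds}(i). Hence any $g+\ell-1$ terms contain a nonempty subsequence whose product lies in $A\times\{1\}$. If every such subsequence had length at most $t$, successive extraction would give $v$ disjoint subsequences with products in $A$. Since $\mathsf{d}(A)\leq v-1$, these would yield a product-one subsequence. The extraction requires $(v-1)t+(g+\ell-1)$ terms, which is exactly the stated length.

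\emph{Step 2.} The length-$t$ claim is false in general, so the real argument must extract subsequences with products in $A\times\{1\}$ more cleverly. Here the Frobenius hypothesis enters. A subsequence whose image in $C_b$ is nontrivial but whose $C_t$-coordinate is trivial can have its $A$-product varied by conjugation. Because the action is fixed-point-free, the prefix-insertion Lemma~\ref{lem:orbit-insertion} turns orbits into full sets. I would instead count with the coarser approach: first pass to $C_t$-coordinates only, i.e. use $\mathsf{\eta}$ of the quotient by $F\times 1$. The resulting products live in $F$, which has $\mathsf{d}(F)+1\leq v+b-1$ (the Frobenius group's Davenport bound, obtainable by Lemma~\ref{lem:extraction-bounds}(ii) with $K=A$ and the fact that a nontrivial coset $aA$, $a\notin A$, consists of conjugates of $a$). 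Combining these gives the required count, with the $\gcd$ correction coming from the overlap of the $b$-part and the $t$-part.

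\emph{Step 3.} The special case $t\mid b$ gives $g=t$ and $\ell=b$, so the bound reads $tv-t+b+t-1=tv+b-1$, which is immediate.

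The main obstacle is Step~2: getting the exact constant $\ell+g-1$ rather than $tb$. I would first try proving $\mathsf{\eta}_t$-type extraction in $C_g\times C_\ell$ with length $\ell+g-1$, and handle the terms outside $A$ via orbit insertion so that each extracted block costs at most $t$ terms in $A$.
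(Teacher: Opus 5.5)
Your proposal has a genuine gap. Step~1 rests on an assumption you yourself flag as false: a minimal product-one sequence in $C_b\times C_t$ can have length up to $\ell+g-1$, not at most $t$. Step~2 then only asserts that ``combining these gives the required count,'' and the route it describes does not give it. Extract blocks with trivial $C_t$-coordinate, each of length at most $t$, and apply $\mathsf{d}(F)+1\leq v+b-1$ to their products. This yields only
$$\mathsf{d}(F\times C_t)+1\leq t(v+b-1),$$
which exceeds the target by $tb-\ell-g+1=(\ell-1)(g-1)$. The two agree only when $\gcd(b,t)=1$. In the case $t\mid b$, which is the case used later, the excess is $(b-1)(t-1)$. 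Moreover, $\mathsf{d}(F)+1\leq v+b-1$ does not follow from Lemma~\ref{lem:extraction-bounds}(ii) with $K=A$, which gives only $b\bigl(\mathsf{d}(A)+1\bigr)$. That bound is the case $t=1$ of the statement itself and needs the same Frobenius argument. You supply no mechanism for the claimed ``$\gcd$ correction.''

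The missing idea is an amortized count. Blocks may be long, but a block of length $s$ must supply at least $\lceil s/t\rceil$ distinct elements of $A$.

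\emph{Setup.} Let $T$ be product-one free over $F\times C_t$. Extract minimal product-one sequences $U_i$ of the image in $Q=C_b\times C_t$ until at most $\ell+g-2$ terms remain. Every ordering of $U_i$ has product in $A$, identified with $A\times\{1\}$, and none has product $1$.

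\emph{Many distinct products per block.} By minimality, the $s_i=|U_i|$ proper prefixes of a fixed ordering have distinct images in $Q$. So they have at least $\lceil s_i/t\rceil$ distinct $C_b$-coordinates. Rotating the ordering after a prefix conjugates its product $p_i$ by that prefix. On $A$ this conjugation depends only on the $C_b$-coordinate, and fixed-point-freeness makes the conjugates of $p_i\neq1$ by distinct elements of $C_b$ distinct. Hence $P_i=\pi(U_i)\subseteq A$ has at least $\lceil s_i/t\rceil$ elements, none equal to $1$.

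\emph{Growth in $A$.} Since $A$ is abelian, no nonempty choice of one element from each of several sets $P_i$ has product $1$. So Lemma~\ref{lem:product-set-tools}(ii), applied successively to the sets $\{1\}\cup P_i$, gives $1+\sum_i\lceil s_i/t\rceil\leq v$. Thus $\sum_i s_i\leq t(v-1)$, and $|T|\leq t(v-1)+\ell+g-2$.

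Your closing remark that each block should ``cost at most $t$ terms'' points in this direction. The actual mechanism, however, is rotation of each block combined with set growth in $A$, not insertion of short blocks or a Davenport bound for single products.
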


\begin{proof}
Let $T$ be a product-one free sequence over $F\times C_t$. Modulo $A$, the quotient is $Q=C_b\times C_t$, with Davenport constant $\mathsf{D}(Q)=\ell+g-1$. Extract minimal product-one sequences $U_i$ in this quotient until at most $\mathsf{D}(Q)-1$ terms remain. Write $s_i=|U_i|$ and select an ordering with product $p_i\in A\setminus\{1\}$. The $s_i$ proper prefixes, including the empty prefix, have distinct images in $Q$. At most $t$ have any fixed $C_b$-coordinate. Cyclic rotations conjugate $p_i$ by these prefixes; the fixed-point-free complement action therefore gives a subset $P_i\subseteq\pi(U_i)\cap A$ of size at least $\lceil s_i/t\rceil$.

No nonempty choice of one product from each of some of these sets has product $1$. Apply Lemma~\ref{lem:product-set-tools}(ii) successively to the sets $\{1\}\cup P_i$, using the unique representation of $1$ as $1\cdot 1$. This yields
$$ 1+\sum_i\lceil s_i/t\rceil\leq v. $$
Hence $|T|\leq t(v-1)+\mathsf{D}(Q)-1$, which proves the assertion.
\end{proof}

\begin{corollary}\label{cor:borel-divisor}
Let $G=\PSL_2(q)$. If either $q\geq 8$ is even and $t$ is a proper divisor of $q-1$, or $q\geq 5$ is odd, $t\mid(q-1)/2$, and $(q-1)/(2t)\geq 3$, then $\mathsf{d}(G\times C_t)+1\leq|G|/2$. In particular, $\mathsf{d}(\PSL_2(16)\times C_5)+1\leq 1598$.
\end{corollary}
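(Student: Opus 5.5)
The plan is to apply Lemma~\ref{lem:frobenius-extraction} to the Borel subgroup of $G=\PSL_2(q)$. Then Lemma~\ref{lem:extraction-bounds}(i) lifts the resulting bound from the Borel subgroup to all of $G\times C_t$.

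\emph{Structure of the Borel subgroup.} Let $F$ be the stabilizer of a point of the projective line. Then $F=A\rtimes C_b$, where $A\cong(\mathbb{F}_q,+)$ has order $v=q$. For $q$ even, $b=q-1$; for $q$ odd, $b=(q-1)/2$, since the torus acts by squares. Every nonidentity element of the complement acts on $A$ by multiplication by a nontrivial scalar (respectively, a nontrivial square), so it has no nonzero fixed points. Hence the hypotheses of Lemma~\ref{lem:frobenius-extraction} hold.

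\emph{The Borel bound.} In both cases of the statement $t\mid b$, since a proper divisor of $q-1$ or a divisor of $(q-1)/2$ divides $b$. Lemma~\ref{lem:frobenius-extraction} then gives
$$ \mathsf{d}(F\times C_t)+1\leq tq+b-1. $$

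\emph{Lifting to $G\times C_t$.} Apply Lemma~\ref{lem:extraction-bounds}(i) with $H=F$. The index is $|G:F|=q+1$, which is even when $q$ is odd. The coset action of the simple group $G$ consists of even permutations, since the sign is a homomorphism to $C_2$. So for $q$ odd we may take $b'=q$, and for $q$ even we take $b'=q+1$. This yields
$$ \mathsf{d}(G\times C_t)+1\leq b'(tq+b-1). $$

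\emph{Numerical check.} It remains to compare this with $|G|/2$.
\begin{itemize}
\item For $q$ even, $|G|/2=q(q^2-1)/2$. Writing $t\leq(q-1)/d$ with $d\geq 3$ the smallest admissible cofactor (since $q-1$ is odd), we need $(q+1)(tq+q-2)\leq q(q-1)(q+1)/2$, that is, $tq+q-2\leq q(q-1)/2$. With $t\leq(q-1)/3$ this reads $(q-1)/3+1-2/q\leq(q-1)/2$, which is true for $q\geq 8$.
\item For $q$ odd, $|G|/2=q(q^2-1)/4$. We need $q(tq+(q-1)/2-1)\leq q(q^2-1)/4$. With $t\leq(q-1)/6$ this becomes $q(q-1)/6+(q-3)/2\leq(q^2-1)/4$, which holds for all $q\geq 5$, since $(q^2-1)/4-q(q-1)/6=(q^2+2q-3)/12\geq (q-3)/2$.
\end{itemize}

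\emph{The special case.} For $\PSL_2(16)$ with $t=5$, we have $5\mid 15$ properly, and the bound is $17(80+14)=1598$.

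The only delicate point is the parity argument that lowers $b'$ by one for odd $q$, because the inequality there is tight-ish. For even $q$ the full index $q+1$ already suffices.
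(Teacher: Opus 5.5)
Your proposal is correct and is essentially the paper's proof. You apply Lemma~\ref{lem:frobenius-extraction} to the Borel subgroup $\mathbb{F}_q\rtimes C_b$ with $t\mid b$ to get $tq+b-1$, and you lift this through Lemma~\ref{lem:extraction-bounds}(i) with index factor $q+1$, improved to $q$ for odd $q$ by the parity argument. The final inequalities then reduce to $(q-3)(q-4)\geq0$ and $q^2-4q+15>0$, exactly as in the paper.

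As a minor aside, the parity improvement is not actually essential for odd $q$, so it is less delicate than you suggest. With the full index $q+1$, the required inequality becomes $q^2-7q+18\geq0$, which always holds.
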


\begin{proof}
A Borel subgroup has kernel of order $q$, complement of order $b=(q-1)/\gcd(2,q-1)$, and index $q+1$. If $q$ is even, then $t\leq(q-1)/3$, so by Lemmas~\ref{lem:extraction-bounds} and~\ref{lem:frobenius-extraction},
$$ \mathsf{d}(G\times C_t)+1\leq(q+1)(tq+q-2)\leq\frac{q(q^2-1)}{2}. $$
The last inequality follows from $(q-3)(q-4)\geq 0$. If $q$ is odd, then the coset action is even by simplicity of $G$, so its index factor improves to $q$. Since $t\leq(q-1)/6$,
$$ \mathsf{d}(G\times C_t)+1\leq q\bigl(tq+(q-3)/2\bigr) \leq\frac{q(q^2-1)}{4}. $$
After division by $q$, the difference is at least $(q^2-4q+15)/12>0$. For $q=16$ and $t=5$, the direct bound is $17(5\cdot16+15-1)=1598$.
\end{proof}

For $G=\PSL_2(q)$, the Borel subgroup $B$ has order $q(q-1)/\varepsilon$ and index $q+1$, where $\varepsilon=\gcd(2,q-1)$. By Lemma~\ref{lem:frobenius-extraction}, with cyclic factor one, $\mathsf{d}(B)+1\leq q+(q-1)/\varepsilon-1$. Direct-product extraction gives $\mathsf{d}(B\times C_t)+1\leq t\bigl(\mathsf{d}(B)+1\bigr)$. For odd $q$, the even coset action improves the index factor to $q$. Thus, by Lemma~\ref{lem:extraction-bounds},
\begin{equation} \mathsf{d}(G\times C_t)+1\leq \begin{cases} 2t(q^2-1),&\text{when $q$ is even},\\ 3tq(q-1)/2,&\text{when $q$ is odd}. \end{cases} \tag{11.2}\label{eq:borel-davenport} \end{equation}
These bounds are at most $n/2$ when $4t\leq q$ and $6t\leq q+1$, respectively.

\subsection{Short products in small subgroups}\label{sec:small-subgroup-products}

Let $G$ be non-abelian simple, and assume that $S=a^{[M]}R$ satisfies the assumptions of \eqref{eq:high-mult-setup}. Write $n=|G|=2m$ and $t=\ord(a)$. If $S$ is a counterexample, then $|R|\geq m$, and Lemma~\ref{lem:subgroup-cap} applies to every proper subgroup containing $a$. Define
$$ P=\SD_{16}=\langle x,y:x^8=y^2=1,\ yxy=x^3\rangle. $$

\begin{lemma}\label{lem:small-eta-bounds}
The following bounds hold:
$$ \begin{array}{c|cccccc} Q&C_2^r&C_4^2&C_8^2&C_2^2\times C_4&P\times C_8&S_3\\ e&2&4&8&4&8&3\\ \mathsf{\eta}_e(Q)&2^r&10&22&\leq 10&\leq 26&\leq 4 \end{array} $$
\end{lemma}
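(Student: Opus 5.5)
Each of the six entries will be treated separately, using only two tools: Lemma~\ref{lem:extraction-bounds}(ii) and known abelian values. For abelian $Q$, the constant $\mathsf{\eta}_e(Q)$ with $e=\exp(Q)$ is the classical Olson-type constant.

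\emph{Elementary abelian $2$-groups.} For $Q=C_2^r$ and $e=2$, a sequence of $2^r$ terms either repeats a value or lists every element, hence contains $1$; either way there is a product-one subsequence of length at most $2$. Conversely, the $2^r-1$ nonidentity elements, each taken once, form a sequence with no such subsequence. So $\mathsf{\eta}_2(C_2^r)=2^r$.

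\emph{Rank-two abelian groups.} For $C_m^2$ the known formula is $\mathsf{\eta}_m(C_m^2)=3m-2$. This is Reiher's theorem together with the standard relation $\mathsf{s}=\mathsf{\eta}+m-1$ in rank two, so I would quote it alongside Lemma~\ref{lem:reiher}. It gives $10$ for $m=4$ and $22$ for $m=8$.

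\emph{The quotient arguments.} For $C_2^2\times C_4$ with $e=4$, take $K=C_2^2\times 2C_4\cong C_2^3$; the quotient is $C_2$. Lemma~\ref{lem:extraction-bounds}(ii) with $e=2$, $f=2$ gives
$$\mathsf{\eta}_4\le \mathsf{\eta}_2(C_2)+2(\mathsf{\eta}_2(C_2^3)-1)=2+14=16,$$
which is too weak. Instead I would take $K=C_2\times C_4$ and quotient $C_2$, with $f=4$. Here $\mathsf{\eta}_4(C_2\times C_4)=\mathsf{d}+1=6$ (for groups of this type, $\mathsf{\eta}=\mathsf{D}$), giving $2+2\cdot5=12$. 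Still too weak, so I would fall back on the known value $\mathsf{\eta}(C_2^2\times C_4)=10$ from the literature on $\eta$ for groups of rank three with small exponent, or on a direct pigeonhole argument: pair up equal terms modulo $2C_4$ and then use $C_2^3$-type extraction.

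For $S_3$ with $e=3$, take $K=C_3$ and quotient $C_2$. A direct argument is easier: among $4$ terms, any involution repeated gives a pair with product one; any two distinct involutions $s,s'$ combine with a rotation $r$ to give the product-one triple $srs'$ for a suitable $r$; and a sequence of $4$ terms all in $C_3$ contains a product-one subsequence of length at most $3$, since $\mathsf{d}(C_3)=2$. This settles the case split.

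For $P\times C_8$, take $K=\langle x\rangle\times C_8\cong C_8^2$, which has index $2$. Lemma~\ref{lem:extraction-bounds}(ii) with $e=2$ gives a bound in terms of $\mathsf{\eta}_4$ of $K$, but $\exp(K)=8$, so that constant does not exist. I would therefore use $e=1$ differently. First collect the terms already in $K$ and the pairs from the nontrivial coset; each such pair has product in $K$, of the form $x^iy\,x^jy$. Then apply $\mathsf{\eta}_8(C_8^2)=22$ to a mixed sequence, where each pair counts double in length. The bound $26$ should follow if each pair has a product lying in $\langle x^2\rangle\times C_8$, a subgroup with smaller $\eta$. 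This is the step I expect to be the main obstacle: it needs a careful count of how many coset-pairs are needed, with the length budget at most $8$.
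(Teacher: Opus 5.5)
Your arguments for $C_2^r$ and $S_3$ are essentially the paper's. Citing $\mathsf{\eta}_m(C_m^2)=3m-2$ is also legitimate: the upper bound follows from Lemma~\ref{lem:reiher} and the easy inequality $\mathsf{s}_m(C_m^2)\geq\mathsf{\eta}_m(C_m^2)+m-1$, and the lower bound from $u^{[m-1]}\bm\cdot v^{[m-1]}\bm\cdot(u+v)^{[m-1]}$. In $S_3$, the $3$-cycle is given, so what you choose is the ordering ($srs'$ or $s'rs$). You also still have to treat the case of exactly one transposition; there the other three terms lie in $C_3$, and $\mathsf{d}(C_3)=2$ finishes.

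The genuine gap is that the entries $C_2^2\times C_4$ and $P\times C_8$ are left unproved, and your attempted routes cannot work.

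\begin{itemize}
\item For $C_2^2\times C_4$, an index-two kernel cannot give $\mathsf{\eta}_4$ via Lemma~\ref{lem:extraction-bounds}(ii). The quotient $C_2$ forces $e\geq2$, hence $f\leq2$, so the kernel must have exponent $2$. That kernel is $C_2^3$, and the bound is only $16$. Your second attempt, with $f=4$, bounds $\mathsf{\eta}_8$, not $\mathsf{\eta}_4$.
\item For $P\times C_8$, your kernel $\langle x\rangle\times C_8$ has exponent $8$, so no $\mathsf{\eta}_4$ of it exists. Your fallback rests on a false claim. We have $(x^iy)(x^jy)=x^{i+3j}$, which is an odd power of $x$ whenever $i+j$ is odd, and the $C_8$-coordinates add arbitrarily. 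So pair products need not lie in $\langle x^2\rangle\times C_8$.
\end{itemize}

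The missing idea is to choose the kernel so that the quotient is the elementary abelian group $C_2^3$, for which $\mathsf{\eta}_2(C_2^3)=8$. Then apply Lemma~\ref{lem:extraction-bounds}(ii) with $e=2$.

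\begin{itemize}
\item For $F=C_2^2\times C_4$, take the doubling subgroup $K=2F\cong C_2$. This gives $\mathsf{\eta}_4(F)\leq 8+2(\mathsf{\eta}_2(C_2)-1)=10$.
\item For $P\times\langle c\rangle$ with $\ord(c)=8$, take $K=\langle x^2,c^2\rangle\cong C_4^2$. It is normal, because $\langle x^2\rangle$ is characteristic in $\langle x\rangle\trianglelefteq P$ and $c$ is central. The quotient is $C_2^3$, because $[x,y]=x^2\in K$ and $x^2,y^2,c^2\in K$. Extract ten disjoint subsequences of length at most $2$ with products in $K$, and apply $\mathsf{\eta}_4(C_4^2)=10$. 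This yields a product-one subsequence of length at most $8$, so $\mathsf{\eta}_8(P\times C_8)\leq 8+2(10-1)=26$.
\end{itemize}

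The same device with $C_{2h}^2/2C_{2h}^2\cong C_2^2$ also gives the rank-two upper bounds $10$ and $22$ without Reiher's theorem.
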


\begin{proof}
In $C_2^r$, an identity or a repeated term gives a zero-sum subsequence of length at most two. Conversely, all nonzero elements taken once give a sequence of length $2^r-1$ with no such subsequence.

Apply Lemma~\ref{lem:extraction-bounds}(ii) to the doubling subgroup of $C_{2h}^2$. This yields
$$ \mathsf{\eta}_{2h}(C_{2h}^2)\leq 4+2\bigl(\mathsf{\eta}_h(C_h^2)-1\bigr). $$
Starting with $\mathsf{\eta}_2(C_2^2)=4$ gives $10$ and $22$. To see equality, take a basis $u,v$ of $C_h^2$ and the sequence $u^{[h-1]}v^{[h-1]}(u+v)^{[h-1]}$. A nonempty zero-sum subsequence must have $h-c,h-c,c$ copies of these terms, respectively, for $1\leq c<h$. Its length is $2h-c>h$. Set $h=4,8$.

For $C_2^2\times C_4$, the doubling subgroup is $C_2$ and the quotient is $C_2^3$, giving $8+2(2-1)=10$. If $c$ generates $C_8$, then the normal subgroup $\langle x^2,c^2\rangle$ of $P\times\langle c\rangle$ is $C_4^2$ and the quotient is $C_2^3$. Hence the same lemma yields $\mathsf{\eta}_8(P\times C_8)\leq 8+2(10-1)=26$.

Finally, consider four terms in $S_3$. An identity, a repeated transposition, inverse $3$-cycles, or three equal $3$-cycles suffice. If none occurs, then there must be two distinct transpositions and a $3$-cycle, since there are only three transpositions and two $3$-cycles. The two orders of the transpositions give the inverse pair of $3$-cycles. One cancels the selected $3$-cycle, proving the last bound.
\end{proof}

\begin{lemma}\label{lem:small-davenport-bounds}
For $U=\UT_3(\mathbb{F}_3)$ and $P=\SD_{16}$, one has
$$ \mathsf{d}(U\times C_3)\leq 10,\qquad \mathsf{d}(Q_8\times C_8)+1\leq 12,\qquad\text{and}\qquad \mathsf{d}(P\times C_2)+1\leq 10. $$
\end{lemma}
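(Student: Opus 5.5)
Each of the three bounds will come from the extraction inequalities of Lemma~\ref{lem:extraction-bounds}, fed with the short-product constants of Lemma~\ref{lem:small-eta-bounds} and the classical abelian values in Lemma~\ref{lem:davenport-bounds}(i). In every case I choose a normal subgroup $K$ with well-understood quotient and then apply either
$$\mathsf{d}(F)+1\leq\mathsf{\eta}_e(F/K)+e\,\mathsf{d}(K)$$
or the last consequence $\mathsf{d}(P\times C_t)+1\leq L+e(t-1)$.

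For $Q_8\times C_8$, I take $K=\langle c\rangle\cong C_8$ central, where $c$ generates $C_8$. This gives the quotient $Q_8$, with $\mathsf{\eta}_4(Q_8)$: any sequence over $Q_8$ of length $\geq$ some small number has a product-one subsequence of length $\leq4$. The cleaner route is to use the normal subgroup $\langle i^2\rangle\times C_8$ or $\langle i\rangle\times\langle c^2\rangle$. Concretely, I apply the last consequence with $P=Q_8$, $t=8$, $e=4$, which requires $\mathsf{\eta}_4(Q_8)+28\le 12$ and fails, so that choice is out. Instead I pass to $K=\langle i,c^2\rangle\cong C_4\times C_4$, normal with quotient $C_2^2$. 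Using Lemma~\ref{lem:extraction-bounds}(ii) with $\mathsf{\eta}_2(C_2^2)=4$, $e=2$ and $\mathsf d(C_4^2)=6$ gives $4+2\cdot 6=16$, which is still too large. The plan is therefore to exploit the quaternion structure more finely. In $Q_8\times C_8$, take $K=Z(Q_8)\times C_8\cong C_2\times C_8$, with quotient $C_2^2$. Then $\mathsf d(K)=8$ gives $4+16$, which is worse again. So for $Q_8\times C_8$ the expected argument is direct: a product-one free sequence projects to $C_8$, and I lift via the elementwise bound $\mathsf{d}(Q_8)=4$ (Lemma~\ref{lem:davenport-bounds}(ii), $Q_8$ having cyclic subgroup of index $2$) together with the central extraction $\mathsf d(F)+1\le \mathsf\eta_e(F/K)+e\,\mathsf d(K)$ for $K=C_8$, $F/K=Q_8$, $e=\exp$-type bound. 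This needs $\mathsf\eta_e(Q_8)$ with $e\cdot 7$ small, which forces $e=1$ and is impossible. Hence I would instead compute via Lemma~\ref{lem:augmentation-bounds} applied to the $2$-group $Q_8\times C_8$: $\mathsf d+1\le t(Q_8\times C_8)$, and the Loewy length is additive under tensor products, so $t=t(Q_8)+t(C_8)-1=5+8-1=12$. This is the bound claimed.

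The same augmentation argument handles $P\times C_2$ with $P=\SD_{16}$. Here the Loewy length of $\mathbb F_2[\SD_{16}]$ is $9$, known since $\SD_{16}$ has a cyclic subgroup of index $2$ (Koshitani's range gives $t\le 15$, but for this group it is $n/2+1=9$). Hence $t(P\times C_2)=9+2-1=10$. I will need to justify $t(\SD_{16})=9$, either by citing the Loewy length formula for $2$-groups of maximal class, or via Jennings' theorem from the dimension subgroups of $\SD_{16}$: $t=1+\sum_i i(p-1)d_i$ with $d_1=2$, $d_2=1$, $d_4=1$ gives $1+2+2+4=9$.

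For $U\times C_3$, with $U$ the extraspecial group of order $27$ and exponent $3$, Jennings gives $d_1=2$, $d_2=1$, so $t(U)=1+2(2)+2\cdot2=9$ and $t(U\times C_3)=9+3-1=11$. This yields $\mathsf d\le10$.

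The main obstacle is to verify the tensor-product additivity $t(A\times B)=t(A)+t(B)-1$ and the Jennings computations. Additivity follows, as in the proof of Lemma~\ref{lem:augmentation-bounds}, from the augmentation ideal being $J_A\otimes1+1\otimes J_B$ with commuting summands, so only the upper bound is needed.
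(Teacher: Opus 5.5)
Your final argument is correct, and its core is the paper's. Both bound $\mathsf{d}(F)+1$ by the nilpotency index $t(F)$ of the augmentation ideal, using Lemma~\ref{lem:augmentation-bounds}.

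The difference is in how you bound $t$. You quote Jennings' formula to get $t(U)=9$, $t(Q_8)=5$ and $t(\SD_{16})=9$. You then add the cyclic factor via $t(A\times B)\leq t(A)+t(B)-1$, which you justify correctly from $I^N\subseteq\sum_{i+j=N}J_A^i\otimes J_B^j$; this is the same mechanism as in the proof of Lemma~\ref{lem:augmentation-bounds}. Your Jennings data are right: $d_1=2$, $d_2=1$ for $U$ and $Q_8$, and $d_1=2$, $d_2=1$, $d_4=1$ for $\SD_{16}$.

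The paper instead works directly in $F\times C_t$. It writes explicit collection relations such as $BA=AB+C+AC+BC+ABC$ and assigns the weights by hand, giving $C=z-1$ weight $2$. It then checks that the normal monomials have maximum weights $10$, $11$ and $9$. In effect, this verifies by hand the upper-bound half of Jennings' theorem for these three groups.

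Your route is shorter and conceptually cleaner, but it uses Jennings' theorem as a black box, of which only the upper bound is needed. The paper's route is self-contained.

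You should also delete the opening paragraph and the abandoned extraction attempts. As written, the proposal announces that the bounds come from Lemma~\ref{lem:extraction-bounds}. Those attempts all overshoot, as you found, and they only obscure the actual proof.
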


\begin{proof}
By Lemma~\ref{lem:augmentation-bounds}, the relation $J^b=0$ for the augmentation ideal of $\mathbb{F}_p[F]$ implies that $\mathsf{d}(F)+1\leq b$. We compute such powers for the three groups using weighted monomials.

For $U$ and $Q_8$, choose generators $x,y,z$ with $z$ central and $yx=xyz$. Their remaining relations are
$$ U:\quad x^3=y^3=z^3=1 $$
and
$$ Q_8:\quad x^2=y^2=z \qquad\text{and}\qquad z^2=1, $$
respectively. For the first presentation, one may take $x=I+E_{12}$, $y=I+E_{23}$, and $z=I-E_{13}$. Let $c$ generate the indicated cyclic factor and define $A=x-1$, $B=y-1$, $C=z-1$, and $D=c-1$, with weights $1,1,2,1$, respectively. In characteristics three and two, respectively, both groups satisfy
$$ BA=AB+C+AC+BC+ABC, $$
with $C,D$ central. The power reductions and normal monomials are as follows:
\begin{center}
\small
\begin{tabular}{c c c c}
\hline
$F$&Power reductions&$A^iB^jC^kD^\ell$&Maximum weight\\
\hline
$U\times C_3$&$A^3=B^3=C^3=D^3=0$&$0\leq i,j,k,\ell\leq 2$&$10$\\
$Q_8\times C_8$&
\begin{tabular}{@{}c@{}}$A^2=B^2=C$ and\\$C^2=D^8=0$\end{tabular}&
\begin{tabular}{@{}c@{}}$0\leq i,j,k\leq 1$ and\\$0\leq\ell\leq 7$\end{tabular}&$11$\\
\hline
\end{tabular}
\end{center}
For $P\times C_2$, work in characteristic two and use $A=x-1$, $B=y-1$, and $D=c-1$, all of weight one. Here $D$ is central, and
$$ A^8=B^2=D^2=0 \qquad\text{and}\qquad BA=AB+A^2+A^3+A^2B+A^3B. $$
The normal monomials $A^iB^jD^\ell$ have maximum weight nine, where $0\leq i\leq 7$ and $0\leq j,\ell\leq 1$.

In all three cases, expansion in the normal-form group basis is triangular with diagonal entries $1$, so the displayed monomials form a basis. The collection relations never decrease weight. Since $J$ is spanned by all nonconstant basis monomials, its $b$th power is contained in the span of monomials of weight at least $b$. The three maximum weights therefore give $J^{11}=0$, $J^{12}=0$, and $J^{10}=0$, respectively.
\end{proof}

\subsection{Extraction from triangular and parabolic subgroups}\label{sec:triangular-parabolic}

\begin{proposition}\label{prop:psl3-psu3-extraction}
For $G=\PSU_3(3)$ and $t\mid24$, one has $\mathsf{d}(G\times C_t)+1\leq 2856<|G|/2$. For $G=\PSL_3(3)$ and $t\mid12$, one has $\mathsf{d}(G\times C_t)+1\leq 2600<|G|/2$; for $t=8$, the bound is $2405$.
\end{proposition}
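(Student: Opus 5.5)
Both assertions will come from Lemma~\ref{lem:extraction-bounds}(i) applied to a well-chosen subgroup $H$, combined with a bound on $\mathsf{d}(H\times C_t)+1$. That bound will be obtained from Lemma~\ref{lem:extraction-bounds}(ii) and its consequences along a normal series of $H$. Since $G$ is simple, its coset actions consist of even permutations, so whenever $|G:H|$ is even we may use the improved factor $b=|G:H|-1$.

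For $G=\PSU_3(3)$, of order $6048$, I take $H$ to be the Borel subgroup. Its structure is $3^{1+2}\rtimes C_8$, with unipotent radical $U\cong\UT_3(\mathbb{F}_3)$, and it has index $28$. The index is even, so $b=27$. The target inequality is $2856=28\cdot 102$, which requires $\mathsf{d}(H\times C_t)+1\leq 102$ when $b=28$; with $b=27$ there is a little slack. I estimate $\mathsf{d}(H\times C_t)+1$ through the normal subgroup $U\times C_3$, or $U$, of $H\times C_t$, with quotient $C_8\times C_t'$. The tools are Lemma~\ref{lem:small-davenport-bounds} ($\mathsf{d}(U\times C_3)\leq10$) and the consequence $\mathsf{d}(F)+1\leq\mathsf{\eta}_e(F/K)+e\,\mathsf{d}(K)$.

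For $t\mid24$ I split $C_t=C_{t_3}\times C_{t_2}$ and set $K=U\times C_{t_3}$. The quotient $C_8\times C_{t_2}$ has $\mathsf{\eta}_8\leq 8+(t_2-1)$ or similar small values, obtained from Lemma~\ref{lem:small-eta-bounds}-type reasoning in rank-two abelian $2$-groups. This gives roughly $\mathsf{\eta}_8+8\cdot10\leq 102$. The $2$-part of $C_t$ could alternatively be absorbed via $P\times C_8$ or $Q_8\times C_8$ bounds if the Sylow structure makes a different $H$ better. For instance, $H=N_G(\text{Sylow }2)$ of index $189$ is odd-index, so $b=189$, and it needs $\mathsf{d}(H\times C_t)+1\leq 15$, which is plausible from $\mathsf{d}(P\times C_2)+1\leq10$ or $\mathsf{d}(Q_8\times C_8)+1\leq 12$ together with Lemma~\ref{lem:extraction-bounds}(ii). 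I would first test the Borel route and fall back to whichever subgroup gives the stated $2856$.

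For $G=\PSL_3(3)$, of order $5616$, I use the parabolic $H=3^2\rtimes\GL_2(3)$ of index $13$. Here $2600=13\cdot200$, so I need $\mathsf{d}(H\times C_t)+1\leq 200$. The bound comes from the normal series $C_3^2\triangleleft 3^2\rtimes\SL_2(3)\triangleleft H$. For $t=8$ the target is $2405=13\cdot185$. Alternatively, the Borel subgroup $U\rtimes C_2^2$ of index $52$ (even, so $b=51$) gives $2600=52\cdot 50$, requiring about $\mathsf{d}(H\times C_t)+1\leq 50$. That quantity is estimated via $K=U\times C_3$ and the quotient $C_2^2\times C_4$ (using $\mathsf{\eta}_4\leq10$ from Lemma~\ref{lem:small-eta-bounds}), giving $10+4\cdot10=50$ for $t=12$. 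The divisors $t\mid12$ then follow by monotonicity in $t$. For $t=8$ I would use the quotient $C_2^2\times C_8$, estimating its $\mathsf{\eta}_8$ together with $\mathsf{d}(U)$, or the $(P\times C_8)$ bound $\mathsf{\eta}_8\leq26$, to reach the smaller figure.

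The main obstacle is matching the exact numerical constants $2856$, $2600$, $2405$. That requires choosing the precise subgroup $H$ and normal series for which the products of indices and $\eta$-values land on these numbers, and verifying the needed $\mathsf{\eta}_e$ bounds for the small abelian quotients not listed in Lemma~\ref{lem:small-eta-bounds}. The final comparison with $|G|/2$ ($3024$ and $2808$) is immediate.
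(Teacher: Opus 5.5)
Your plan for $\PSU_3(3)$ and for $\PSL_3(3)$ with $t\mid 12$ is the paper's proof. It uses the Borel subgroups $U\rtimes C_8$ and $U\rtimes C_2^2$ of indices $28$ and $52$, and the normal subgroup $U\times C_3$, whose quotients are $C_8^2$ and $C_2^2\times C_4$. It then applies $\mathsf{d}(U\times C_3)\leq 10$ and restricts to divisors of $t$. Correct one value: $\mathsf{\eta}_8(C_8\times C_8)$ is not $8+(8-1)=15$. By Lemma~\ref{lem:small-eta-bounds} it equals $22$, and $22+8\cdot 10=102$ is exactly what $2856=28\cdot 102$ requires. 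Your fallback subgroup of index $189$ could never work, because $\mathsf{d}(H\times C_{24})+1\geq \mathsf{d}(C_{24})+1=24>15$; fortunately it is not needed.

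The genuine gap is $\PSL_3(3)$ with $t=8$. The route you actually describe is $B_\ell\times C_8$ modulo $U$, with quotient $C_2^2\times C_8$ of exponent $8$, so it forces $e=8$. No admissible values can make this succeed:
\begin{itemize}
\item $\mathsf{\eta}_8(C_2^2\times C_8)\geq 10$, since two independent involutions together with seven copies of a generator of $C_8$ form a product-one free sequence of length $9$;
\item $\mathsf{d}(U)\geq \mathsf{d}(Z(U))+\mathsf{d}(U/Z(U))=2+4=6$.
\end{itemize}
Even with the parity factor $51$, the best this route gives is $51\,(10+8\cdot 6)=2958>2808=|G|/2$, so it does not reach $|G|/2$, let alone $2405$. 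Your bound $\mathsf{\eta}_8(\SD_{16}\times C_8)\leq 26$ is not attached to any subgroup or quotient in your sketch; in particular it is not a quotient of $B_\ell\times C_8$. The parabolic $C_3^2\rtimes\GL_2(3)$ is named but never carried out.

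The missing step is the paper's. Work in $P_0=C_3^2\rtimes\GL_2(3)$, of odd index $13$.
\begin{itemize}
\item Extract modulo the normal subgroup $C_3^2$ of $P_0\times C_8$; this costs the factor $\mathsf{d}(C_3^2)+1=5$.
\item Use $Q_8\times C_8\trianglelefteq \GL_2(3)\times C_8$, with quotient $S_3$. Lemmas~\ref{lem:small-eta-bounds} and~\ref{lem:small-davenport-bounds} then give $\mathsf{d}(\GL_2(3)\times C_8)+1\leq \mathsf{\eta}_3(S_3)+3\,\mathsf{d}(Q_8\times C_8)\leq 4+3\cdot 11=37$.
\end{itemize}
Hence $\mathsf{d}(\PSL_3(3)\times C_8)+1\leq 13\cdot 5\cdot 37=2405$.

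Alternatively, your $\SD_{16}\times C_8$ idea can be rescued inside the index-$39$ subgroup $C_3^2\rtimes\SD_{16}\leq P_0$. Taking $K=C_3^2$ there gives $39\,(26+8\cdot 4)=2262\leq 2405$. Either way, you must specify the subgroup and the normal series; as written, the case $t=8$ is not established.
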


\begin{proof}
The standard Borel and parabolic descriptions \cite[Sections~3.3.3 and~3.6.2]{Wilson2009} give the following subgroups, where $U=\UT_3(\mathbb{F}_3)$:
\begin{center}
\begin{tabular}{c c r r}
\hline
$G$&Subgroup&Order&Index\\
\hline
$\PSL_3(3)$&$B_\ell=U\rtimes C_2^2$&$108$&$52$\\
$\PSL_3(3)$&$P_0=C_3^2\rtimes\GL_2(3)$&$432$&$13$\\
$\PSU_3(3)$&$B_u=U\rtimes C_8$&$216$&$28$\\
\hline
\end{tabular}
\end{center}
In the unitary case, the radical has order $27$, center and commutator subgroup of order $3$, and exponent $3$, so it is the same group $U$. The quotients of $B_u\times C_{24}$ and $B_\ell\times C_{12}$ by $U\times C_3$ are $C_8^2$ and $C_2^2\times C_4$, respectively. By Lemmas~\ref{lem:extraction-bounds}, \ref{lem:small-eta-bounds}, and~\ref{lem:small-davenport-bounds},
$$ \mathsf{d}(B_u\times C_{24})+1\leq22+8\cdot10=102 $$
and
$$ \mathsf{d}(B_\ell\times C_{12})+1\leq10+4\cdot10=50. $$
Multiplication by the subgroup indices yields $28\cdot102=2856<3024$ and $52\cdot50=2600<2808$. Restricting the cyclic factor gives the claims for divisors $t$.

For $t=8$, use $Q_8\mathbin{\vartriangleleft}\GL_2(3)$ with quotient $S_3$ \cite[Section~5.3.8]{Wilson2009}. By the same lemmas, $\mathsf{d}(\GL_2(3)\times C_8)+1\leq4+3\cdot11=37$. Since $\mathsf{d}(C_3^2)+1=5$, normal extraction followed by the index-$13$ parabolic embedding yields
$$ \mathsf{d}(\PSL_3(3)\times C_8)+1\leq13\cdot5\cdot37=2405. $$
\end{proof}

For $\PSU_3(3)$, Jordan form gives the necessary element-order list $\{1,2,3,4,6,7,8,12\}$. For $\PSL_3(3)$, the corresponding list is $\{1,2,3,4,6,8,13\}$. Thus only orders $7$ and $13$, respectively, are not covered by Proposition~\ref{prop:psl3-psu3-extraction}.

\subsection{Extraction from an affine point stabilizer}\label{sec:affine-stabilizer}

\begin{proposition}\label{prop:m11-extraction}
If $G=M_{11}$ and $t\in\{1,2,3,4,5,6,8\}$, then $\mathsf{d}(G\times C_t)+1<|G|/2=3960$.
\end{proposition}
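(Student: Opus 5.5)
The plan follows the section title: use the point stabilizer $M_{10}$ of the $3$-transitive action of degree $11$, or better the affine stabilizer of the $3$-transitive action of $M_{11}$ on $12$ points. Recall that $M_{11}$ acts on $12$ points with point stabilizer $\PSL_2(11)$. It also acts on $11$ points with stabilizer $M_{10}\cong A_6\cdot 2$, and the two-point stabilizer there is $M_9\cong C_3^2\rtimes Q_8$, which is affine. The subgroup I would use is $F=M_9\rtimes C_2=C_3^2\rtimes\SD_{16}$ (the stabilizer of a pair of points, equivalently $A\Gamma L_1(9)$), of order $144$ and index $55$ in $M_{11}$. Its normal subgroup $C_3^2$ has quotient $\SD_{16}=P$. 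Since $|G|=7920$, the coset action has odd index $55$, so we only get the factor $b=55$ from Lemma~\ref{lem:extraction-bounds}(i). The target is therefore
$$ \mathsf{d}(F\times C_t)+1<3960/55=72. $$

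For this, apply Lemma~\ref{lem:extraction-bounds}(ii) with $K=C_3^2\times C_{t_3}$ (the $3$-part of the cyclic factor) and quotient $P\times C_{t'}$. Take $e=\mathsf{\eta}$-length $8$ and use $\mathsf{\eta}_8(P\times C_8)\leq 26$ from Lemma~\ref{lem:small-eta-bounds}. This covers $t\in\{1,2,4,8\}$, since $P\times C_t$ is a quotient or subgroup of $P\times C_8$ and $\mathsf{\eta}$ is monotone under passing to subgroups. Then $\mathsf{d}(C_3^2)=4$ gives
$$ \mathsf{d}(F\times C_t)+1\leq 26+8\cdot 4=58<72. $$
For $t\in\{3,6\}$, the kernel becomes $C_3^2\times C_3=C_3^3$ with $\mathsf{d}=6$, and the quotient is $P\times C_2$ or $P$. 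Here $\mathsf{\eta}_8(P\times C_2)\leq 26$ still holds, so the bound is $26+8\cdot 6=74$. This is slightly too big, and it is the main obstacle. To repair it I would instead use the exponent-$8$ bound on $P$ itself via Lemma~\ref{lem:small-davenport-bounds}: there $\mathsf{d}(P\times C_2)+1\leq 10$, which gives $\mathsf{\eta}_{10}$-type control, and the coarser product bound $(\mathsf{d}(K)+1)(\mathsf{d}(P\times C_2)+1)=7\cdot 10$ is $70<72$. Alternatively, for $t$ divisible by $3$, move the $C_3$ factor into the quotient and use $\mathsf{d}(F\times C_6)+1\leq(\mathsf{d}(C_3^2)+1)(\mathsf{d}(P\times C_6)+1)$. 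Here $P\times C_6=(P\times C_2)\times C_3$, and the last consequence of Lemma~\ref{lem:extraction-bounds} gives $\mathsf{d}(P\times C_6)+1\leq \mathsf{\eta}_8(P\times C_2)+8\cdot 2\leq 42$. That route is worse, so I commit to the $7\cdot10=70$ route. The multiplied bound is $55\cdot 70=3850<3960$.

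For $t=5$, the $3$-part and $2$-part route fails because $5\nmid|F|$. I would instead use $C_5$ in the kernel, with $K=C_3^2\times C_5\cong C_3\times C_{15}$ and $\mathsf{d}(K)=1+14+\dots$; by Lemma~\ref{lem:davenport-bounds}(i), $\mathsf{d}(C_3\times C_{15})=16$. Since $\mathsf{\eta}_8(P)\leq 26$, this gives $26+8\cdot16=154$, which is too large. The better choice for $t=5$ is a subgroup containing an element of order $5$, namely $M_{10}$ or the stabilizer $\PSL_2(11)$ of index $12$. Inside it I would use a Borel subgroup $C_{11}\rtimes C_5$ of index $12$ in $\PSL_2(11)$, so index $144$ in $G$. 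Lemma~\ref{lem:frobenius-extraction} with $v=11$, $b=5$, $t=5$ gives $\mathsf{d}\leq 5\cdot 11+5-1=59$. Since $\PSL_2(11)$ is simple, its coset action is even, and the index factor $11$ applies there. The total is then $12\cdot 11\cdot 59=7788$, which is too big. So for $t=5$ I would try instead the full affine group of $M_{11}$ on $12$ points, or use the $M_{10}$ Sylow-$5$ normalizer $C_5\rtimes C_4$ of index $396$. That gives $\mathsf{d}+1\leq 396\cdot(5\cdot5+4-1)$, also too large. Making $t=5$ work is the second difficulty. My first attempt would be the $A\Gamma L_1(9)$ route with the $C_5$ factor absorbed by $\mathsf{\eta}$-extraction of length $8\cdot5=40$ in the quotient. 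This gives $\mathsf{\eta}_{40}(P\times C_5)\leq \mathsf{\eta}_8(P)+8\cdot 4\leq 58$, and then $58+40\cdot 4=218$, which overshoots. I would therefore look for the sharper count that the paper likely has in mind.
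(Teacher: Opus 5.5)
Your treatment of $t\mid 8$ and $t\mid 6$ matches the paper's. You use the pair stabilizer $H=C_3^2\rtimes\SD_{16}$, whose index $55$ is odd, so no parity improvement is available. You bound $\mathsf{d}(H\times C_8)+1\le\mathsf{\eta}_8(\SD_{16}\times C_8)+8\,\mathsf{d}(C_3^2)\le 58$, and for $t=6$ you use the product bound $(\mathsf{d}(C_3^3)+1)(\mathsf{d}(\SD_{16}\times C_2)+1)\le 7\cdot 10=70$. This gives $3190$ and $3850$, and the divisors of $8$ and $6$ (including $t=1$) follow by passing to subgroups.

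The genuine gap is $t=5$, which your proposal leaves unproved. Subgroup extraction cannot reach the target here. The index-$55$ subgroup has $5\nmid|H|$, and every chain through a subgroup containing an element of order five multiplies indices too fast, as your own figures ($7788$ and the others) show.

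The missing idea is the distinct-term criterion of Proposition~\ref{prop:distinct-term-criterion}, which is how the paper handles $t=5$.
\begin{itemize}
\item By Olson's estimate \eqref{eq:quadratic-products}, a squarefree product-one free sequence over $G$ has at most $b=\lfloor 3\sqrt{7920}\rfloor=266$ terms.
\item Let $u=\meo(G)\le b+1$. Any $(u-1)b+1$ terms contain either $u$ copies of one element or $b+1$ distinct elements. Either way they contain a nonempty product-one subsequence of length at most $b+1$.
\item Extract five such subsequences in succession. Then choose a nonempty zero-sum among their $C_5$-coordinates, as in Proposition~\ref{prop:extraction-profiles}(ii). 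This gives $\mathsf{d}(G\times C_5)+1\le (u+3)b+5$.
\end{itemize}
Once $\meo(M_{11})=11$, this is $14\cdot 266+5=3729<3960$.

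That maximum element order must also be proved, using the sharply $4$-transitive degree-$11$ action, in which nonidentity elements fix at most three points.
\begin{itemize}
\item An element of order divisible by $11$ is an $11$-cycle.
\item An element of order divisible by $5$ has cycles of length divisible by $5$ covering exactly ten points. A $10$-cycle is odd and $M_{11}\le A_{11}$, so these are two $5$-cycles plus a fixed point, and the order is $5$.
\item Otherwise the order is $2^a3^c$. Here $a\le 3$ because the Sylow $2$-subgroup is $\SD_{16}$, and $c\le 1$ because the Sylow $3$-subgroup is $C_3^2$.
\item The case $c=1$, $a\ge 2$ is impossible. The cycles of length divisible by $3$ already cover at least nine points, so a cycle of length divisible by $4$ would need four more points, or a single shared cycle would have length at least $12$.
\end{itemize}
Hence the element orders are $1,2,3,4,5,6,8,11$.
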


\begin{proof}
The unordered-pair stabilizer in the degree-eleven action is $H=C_3^2\rtimes P$, of order $144$ and index $55$ \cite[Section~5.3.8]{Wilson2009}. For $t=6$, the normal subgroup $C_3^3$ of $H\times C_6$ has quotient $P\times C_2$. For $t=8$, use $C_3^2\mathbin{\vartriangleleft} H\times C_8$. Hence
$$ \mathsf{d}(M_{11}\times C_6)+1\leq55(7\cdot10)=3850 $$
and
$$ \mathsf{d}(M_{11}\times C_8)+1\leq55(26+8\cdot4)=3190. $$
These bounds also cover all divisors of $6$ and $8$. For $t=5$, the distinct-term bound with $u=11$ and $b=\lfloor3\sqrt{7920}\rfloor=266$ gives
$$ \mathsf{d}(G\times C_5)+1\leq(11+5-2)266+5=3729<3960. $$

The element-order bound follows from the sharply $4$-transitive action \cite[Theorem~6.4A]{DixonMortimer1996}, from $M_{11}\leq A_{11}$, and from its Sylow $3$-subgroup $C_3^2$. A nonidentity element and each of its nonidentity powers fixes at most three points. An element with order divisible by $11$ is an $11$-cycle. If its order is divisible by $5$, then an order-$5$ power moves ten points; the original element has two $5$-cycles, since a $10$-cycle is odd. Otherwise, its order is $2^a3^b$ with $b\leq1$. If $b=1$ and $a\geq2$, then an order-$3$ power requires at least nine points in cycles divisible by $3$, whereas an additional cycle divisible by $4$ requires four more; a common cycle would have length at least $12$. This is impossible. A $2$-element has order at most $8$. Thus the necessary list is $\{1,2,3,4,5,6,8,11\}$, and Cauchy's theorem implies that $u=11$. Only repeated elements of order $11$ remain.
\end{proof}

\subsection{Compatible extraction from root groups}\label{sec:root-compatible}

\begin{proposition}\label{prop:unipotent-extension}
Let $G=\PSL_2(p^f)$, and let the repeated element $a$ be unipotent. If either $f\geq3$ or both $f=2$ and $p\geq5$, then $1\in\Pi_n(S)$.
\end{proposition}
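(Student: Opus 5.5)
The approach is Lemma~\ref{lem:centralizer-criterion} applied to the root group containing $a$. Write $q=p^f$ and $n=|G|=q(q^2-1)/\varepsilon$, where $\varepsilon=\gcd(2,q-1)$. A unipotent $a\neq1$ has order $p$. It lies in a Sylow $p$-subgroup $A\cong C_p^f$, the unipotent radical of a Borel subgroup, and $A\leq C_G(a)$ because $A$ is abelian. Since $A$ is abelian, the criterion uses
$$ q_A=|G:A|-1=\frac{q^2-1}{\varepsilon}-1, $$
and Lemma~\ref{lem:davenport-bounds}(i) gives $\mathsf{d}(A)+1=f(p-1)+1$. It therefore suffices to verify
$$ \Bigl(\frac{q^2-1}{\varepsilon}-1\Bigr)\bigl(f(p-1)+1\bigr)\leq m=\frac{q(q^2-1)}{2\varepsilon}. $$
This reduces to $f(p-1)+1\lesssim q/2$ for odd $q$, and to $\lesssim q/2$ with slightly better constants for even $q$. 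For $f\geq3$, $q=p^f$ grows much faster than $f(p-1)$, so the inequality should follow from a short estimate such as $p^f\geq 2f(p-1)+3$. For $f=2$ and $p\geq5$, it reads roughly $2p-1\leq p^2/2$, which also holds. I will check the boundary cases $q=8$, $q=25$ and $q=27$ numerically with the exact expressions.

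If the plain criterion fails narrowly at some small $q$, I will fall back on stronger extraction. One option is to replace $A$ by the full Borel subgroup $B=A\rtimes C_b$. Elements of $B$ need not centralize $a$, so I will instead use compatible extraction modulo $A$ via Lemma~\ref{lem:compatible-extraction}, with $b_G(a,A)=|G:A|-1$ coming from improvement (i) or (ii). I will combine this with Lemma~\ref{lem:compatible-interval} applied to $T=a^{[M]}$: in $A$, the products $a^s h$ with $h\in A$ have $a$-part determined by the length, and products in $A$ can be cancelled as in the proof of Lemma~\ref{lem:centralizer-criterion}.

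The main obstacle is the small case $f=2$, $p=5$, where the margin is tightest. If the constant fails there, I would use the sharper bound $\mathsf{d}(A)=f(p-1)$ exactly and possibly Lemma~\ref{lem:subgroup-cap} to discard terms inside $A$. The excluded cases $f=1$ and $q=4,9$ are exactly those where $|A|$ is too small relative to its index, which is why they are omitted from the statement.
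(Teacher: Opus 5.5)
Your proposal is correct and is essentially the paper's proof: apply Lemma~\ref{lem:centralizer-criterion} to the root group $A\cong C_p^f$ with $q_A=|G:A|-1$, which reduces the claim to $2\bigl(f(p-1)+1\bigr)\leq p^f$; the paper checks this at $f=2$ with $p\geq5$ and at $f=3$, then propagates it by induction on $f$. The tight case is $q=8$, not $q=25$: at $q=8$ one has $2\bigl(f(p-1)+1\bigr)=p^f$, so your suggested estimate $p^f\geq 2f(p-1)+3$ fails there, and the strictness comes from $q_A<|G:A|$ (indeed $62\cdot4=248\leq252$), so none of the fallback arguments are needed.
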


\begin{proof}
A root subgroup $A\cong C_p^f$ centralizes $a$. The compatible extraction bound of Lemma~\ref{lem:centralizer-criterion} has threshold
$$ \begin{aligned} q_A\bigl(\mathsf{d}(A)+1\bigr) &=\left(\frac{n}{p^f}-1\right)\bigl(f(p-1)+1\bigr)\\ &<\frac{n}{p^f}\bigl(f(p-1)+1\bigr)\leq\frac{n}{2}. \end{aligned} $$
Here $q_A=n/p^f-1$, since $A$ is abelian. The last inequality is $2\bigl(f(p-1)+1\bigr)\leq p^f$. For $f=2$, it follows from $p^2-4p+2\geq0$ when $p\geq5$. For $f=3$, it follows from $p^3-6p+4\geq0$ when $p\geq2$. Induction in $f$ follows because
$$ p\bigl(f(p-1)+1\bigr) -\bigl((f+1)(p-1)+1\bigr)=f(p-1)^2\geq0. $$
Lemma~\ref{lem:centralizer-criterion} now proves the result.
\end{proof}

\begin{corollary}\label{cor:a6-order-four}
One has $\mathsf{d}(A_6\times C_4)+1\leq180$, so the high-multiplicity case $(G,t)=(A_6,4)$ satisfies Lemma~\ref{lem:high-mult-completion}(i).
\end{corollary}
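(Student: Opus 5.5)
The plan is to bound $\mathsf{d}(A_6\times C_4)+1$ by extraction through a subgroup of $A_6$ of small index, as in Proposition~\ref{prop:psl3-psu3-extraction}. Note $|A_6|=360$, so $n/2=180$. The natural choice is a point stabilizer $A_5$ (index $6$) or the stabilizer $H=C_3^2\rtimes C_4$ of a $3{+}3$ partition, of order $36$ and index $10$. Since $A_6$ is simple, every coset action consists of even permutations. With $|G:H|=10$ even, Lemma~\ref{lem:extraction-bounds}(i) then allows $b=9$, and it remains to show $\mathsf{d}(H\times C_4)+1\leq 20$.

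For $H\times C_4$ I would use the normal subgroup $C_3^2$ with quotient $C_4\times C_4$, and apply Lemma~\ref{lem:extraction-bounds}(ii) with $e=4$. By Lemma~\ref{lem:small-eta-bounds}, $\mathsf{\eta}_4(C_4^2)=10$, and $\mathsf{d}(C_3^2)=4$, so
$$\mathsf{d}(H\times C_4)+1\leq 10+4\cdot 4=26,$$
which gives only $9\cdot 26=234$. This is too weak, so a sharper route is needed.

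\emph{Alternative via Lemma~\ref{lem:frobenius-extraction}.} The action of $C_4$ on $C_3^2$ in $H$ is fixed-point-free, since the generator acts as an element of order four in $\GL_2(3)$ squaring to $-1$. With $v=9$, $b=t=4$, that lemma gives $\mathsf{d}(H\times C_4)+1\leq tv+b-1=39$. This is worse than the previous bound.

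\emph{Alternative via a $2$-local subgroup.} Since $A_6\cong\PSL_2(9)$, the order-four element can be treated through the Borel subgroup $B=C_3^2\rtimes C_4$ of index $q+1=10$, which is the same $H$ as above. Use instead the Sylow $2$-normalizer, namely $D_8$ of index $45$. Odd index only gives $b=45$, and then we would need $\mathsf{d}(D_8\times C_4)+1\leq 4$, which is impossible.

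\emph{Intended route.} The most promising combination is the index-$6$ subgroup $A_5$ with $b=5$, because the index is even. This needs $\mathsf{d}(A_5\times C_4)+1\leq 36$. Inside $A_5$, take $A_4$ of index $5$; the index is odd, so $b=5$. That requires $\mathsf{d}(A_4\times C_4)+1\leq 7$. From $C_2^2\times C_4$ normal with quotient $C_3$, Lemma~\ref{lem:extraction-bounds}(ii) gives $\mathsf{d}(A_4\times C_4)+1\leq 3+3\cdot 5$, which is too weak.

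I therefore expect the real argument to be a hybrid: extract $H$-products from $H$-cosets with $b=9$, but bound $\mathsf{d}(H\times C_4)+1\leq 20$ directly. For this I would refine the extraction in Lemma~\ref{lem:extraction-bounds}(ii). Use $\mathsf{\eta}$ with $e=4$ in $C_4^2$, but only the subsequences whose products lie in $C_3^2$ and have size $\geq 2$ under the fixed-point-free conjugation, as in the proof of Lemma~\ref{lem:frobenius-extraction}. Then combine the product sets by Lemma~\ref{lem:product-set-tools}(ii) inside $C_3^2$. Each extracted block contributes at least two elements; blocks of length $4$ cover all cosets, so they contribute orbits of size $4$.

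I expect this counting to give $|T|\leq 4\cdot 2+9+3$ or similar, landing at $20$, and hence $9\cdot 20=180$. This step, getting $\mathsf{d}(H\times C_4)+1\leq 20$, is the main obstacle, and I would verify it carefully or else fall back on an augmentation-ideal computation in the style of Lemma~\ref{lem:small-davenport-bounds}. Once $\mathsf{d}(A_6\times C_4)+1\leq 180=m+t-1-3$ is established, it is at most $m+t-1=183$, so Lemma~\ref{lem:high-mult-completion}(i) applies.
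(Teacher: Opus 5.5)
Your proposal does not prove the bound. The route you finally commit to goes through the Borel subgroup $H=C_3^2\rtimes C_4$ of index $10$ with $b=9$. It needs $\mathsf{d}(H\times C_4)+1\leq 20$, and for this you offer only the expectation ``$4\cdot 2+9+3$ or similar''. None of the tools you invoke gives this. Lemma~\ref{lem:extraction-bounds}(ii) yields $26$, Lemma~\ref{lem:frobenius-extraction} yields $39$, and the suggested refinement (blocks with product sets of size at least two, combined via Lemma~\ref{lem:product-set-tools}(ii)) is never carried out. There is also little slack: $H\times C_4$ contains $C_3^2\times C_4\cong C_3\times C_{12}$, so $\mathsf{d}(H\times C_4)+1\geq 14$ by Lemma~\ref{lem:davenport-bounds}(i). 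A proof of $\leq 20$ would therefore be a new, nontrivial computation, and as written the key step is missing.

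The missing idea lies in the route you abandoned. You correctly reduced to $\mathsf{d}(A_5\times C_4)+1\leq 36$, using the point stabilizer $A_5$ of even index $6$ and the even coset action, so that $b=5$ in Lemma~\ref{lem:extraction-bounds}(i). The mistake was to attack this by descending to $A_4$, where the odd index $5$ loses too much. In fact your requirement $\mathsf{d}(A_4\times C_4)+1\leq 7$ can never hold, since $\mathsf{d}(A_4\times C_4)\geq\mathsf{d}(A_4)+\mathsf{d}(C_4)=7$.

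Instead, use the published value $\mathsf{d}(A_5)=8$, quoted in Section~\ref{sec:profile-applications}. Combine it with the unconditional normal-subgroup consequence of Lemma~\ref{lem:extraction-bounds}, namely $\mathsf{d}(F)+1\leq(\mathsf{d}(K)+1)(\mathsf{d}(F/K)+1)$, applied to $F=A_5\times C_4$ and $K=C_4$. This gives exactly $\mathsf{d}(A_5\times C_4)+1\leq 4\cdot 9=36$. Hence $\mathsf{d}(A_6\times C_4)+1\leq 5\cdot 36=180<183=m+t-1$, which is the paper's argument.
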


\begin{proof}
The published value $\mathsf{d}(A_5)=8$ gives $\mathsf{d}(A_5)+1=9$, whence $\mathsf{d}(A_5\times C_4)+1\leq4\bigl(\mathsf{d}(A_5)+1\bigr)=36$ by normal-subgroup extraction. The degree-six action of $A_6$ consists of even permutations. Thus, by Lemma~\ref{lem:extraction-bounds}, applied to its point stabilizer, $\mathsf{d}(A_6\times C_4)+1\leq5\cdot36=180<183$.
\end{proof}

\subsection{Intervals and subgroup covers}\label{sec:subgroup-covers}

\begin{lemma}\label{lem:interval-completion}
Let $H\leq G$, $a\in N_G(H)$, and $b=|G:H|\leq m$. Select $u$ terms of $R$, and let $T$ consist of these terms and all $M$ copies of $a$. If $0\leq\alpha\leq n$, $M+u-\alpha\geq b-1$, and
$$ a^jH\subseteq\Pi_j(T) $$
for every $\alpha\leq j\leq M+u$, then $1\in\Pi_n(S)$.
\end{lemma}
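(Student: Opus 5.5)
The plan is to apply Lemma~\ref{lem:compatible-interval} with $\ell=n$ and the given coset $aH$, taking $T$ to be the selected terms together with all $M$ copies of $a$. The remaining sequence is $R'=S\bm\cdot T^{[-1]}$, consisting of the unselected terms of $R$. Since $a^n=1$, the coset $a^nH=H$ contains the identity, so the conclusion $a^nH\subseteq\Pi_n(S)$ gives $1\in\Pi_n(S)$. The hypothesis $a^jH\subseteq\Pi_j(T)$ for $\alpha\leq j\leq M+u$ is exactly the interval hypothesis with $\beta=M+u$, except that we need $\beta\leq\ell=n$.

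First I treat the case $M+u\geq n$. If $\alpha\leq n\leq M+u$, then $n$ lies in the interval, so $H=a^nH\subseteq\Pi_n(T)\subseteq\Pi_n(S)$ and we are done. This uses the assumption $\alpha\leq n$.

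In the main case $\beta=M+u<n$, I use compatible extraction with $L=e=b_G(a,H)\leq b$, as allowed by Lemma~\ref{lem:compatible-extraction}. Lemma~\ref{lem:compatible-interval} requires two inequalities. The first is $\beta-\alpha\geq e-1$, which follows from $M+u-\alpha\geq b-1\geq e-1$. The second is
$$ |R'|\geq n-\beta+L-1, $$
and here $|R'|=3m-1-M-u$. The required inequality therefore reads $3m-1-M-u\geq 2m-M-u+L-1$, which simplifies to $m\geq L$. This holds because $L\leq b\leq m$. With both inequalities in place, Lemma~\ref{lem:compatible-interval} gives $H\subseteq\Pi_n(S)$.

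The main point to check is that the interval lemma is applied with $\beta\leq\ell$. This is exactly why the case $M+u\geq n$ has to be separated out. The edge case $\alpha=0$ is covered by the convention in Lemma~\ref{lem:compatible-interval}, so no further casework should be needed.
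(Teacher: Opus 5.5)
Your proposal is correct and follows the paper's proof: you handle $M+u\geq n$ by taking $j=n$ in the given interval, and otherwise you apply Lemma~\ref{lem:compatible-interval} with $\ell=n$ and $\beta=M+u$, where the length condition reduces to $L\leq b\leq m$. The only difference is cosmetic. You take $L=e=b_G(a,H)$, whereas the paper takes $L=e=b$, and both choices are permitted by Lemma~\ref{lem:compatible-extraction}.
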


\begin{proof}
If $M+u\geq n$, then use the given interval at $j=n$. Otherwise, set $\beta=M+u$. By Lemma~\ref{lem:compatible-extraction}, compatible subsequences are available with $L=e=b$, and
$$ |S|-|T|=3m-1-\beta\geq n-\beta+b-1. $$
The other hypothesis is precisely $\beta-\alpha\geq b-1$. By Lemma~\ref{lem:compatible-interval}, with $\ell=n$, one has $a^nH=H\subseteq\Pi_n(S)$.
\end{proof}

\begin{lemma}\label{lem:subgroup-cover-caps}
Suppose that $S$ is a counterexample. For a proper subgroup $L\geq C$, the number of terms of $R$ in $L\setminus C$ is at most $c=|L|-2$. The following sharper upper bounds $c$ are also valid:
\begin{itemize}
\item[(i)] If $L=C_\ell\rtimes C$, where $\ell$ is an odd prime, the action of $C$ is faithful, and $t\geq2$, then $c=k_0+t-2$, where $k_0=t\lceil(\ell-1)/(t(t-1))\rceil$.
\item[(ii)] If $L$ is dihedral of order $2t$ and $t\geq3$ is odd, then $c=1$.
\item[(iii)] If $G=\PSL_2(7)$, $t=3$, and $L\cong S_4$, then $c=8$.
\item[(iv)] If $L\cong A_5$ and $t=5$, then $c=11$.
\end{itemize}
\end{lemma}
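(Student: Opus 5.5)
The general cap and each sharpened cap will come from one mechanism. Assume $S$ is a counterexample and that $R$ has at least $c+1$ terms in $L\setminus C$. Take $T=a^{[M]}\bm\cdot V$, where $V$ consists of suitable such terms. I will show that $\Pi_j(T)$ fills whole cosets $a^jK$ of a subgroup $K$ normalized by $a$, for every $j$ in a long interval. Then Lemma~\ref{lem:interval-completion} (with $H=K$, or with $H=L$ when $L$ is normalized by $a$) or Proposition~\ref{prop:coset-concentration} gives $1\in\Pi_n(S)$, a contradiction.

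\textbf{General cap.} This is Lemma~\ref{lem:subgroup-cap} applied to $L$. If $|L|-1$ terms of $R$ lay in $L$, then together with $m$ copies of $a$ we get $m+|L|-1$ terms of $L$ of height at most $m$. Proposition~\ref{prop:coset-concentration} with $\rho=m$ then applies, since $|G:L|\geq5$. So $|R_L|\leq|L|-2$, and in particular at most $|L|-2$ terms lie in $L\setminus C$.

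\textbf{Part (i).} Here $L=N\rtimes C$ with $N\cong C_\ell$, and $a$ acts with orbits of size $t$ on $N\setminus\{1\}$. A term $x=wa^i\in L\setminus C$ inserted into a run of copies of $a$ realizes $a^{j}$ times any element of the conjugation orbit $O(w)=\{a^ewa^{-e}\}$ of size $t$. This is Lemma~\ref{lem:orbit-insertion} with prefixes of the $a$-run covering $L/N$. I would group the $c+1$ terms by their $C$-coordinate modulo $t$, so that blocks have product in $N\cdot a^{s}$ with $s\equiv0\pmod t$. After absorbing the leftover into copies of $a$, each block contributes a set of size $t$, or at least $2t-1$ when two factors combine, by Cauchy--Davenport (Lemma~\ref{lem:kneser-cd}).

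With $k_0/t=\lceil(\ell-1)/(t(t-1))\rceil$ blocks, the product set has size at least $\min\{\ell,1+k_0(t-1)\}=\ell$, so it covers all of $N$. The extra $t-2$ terms, together with the long run $a^{[M]}$ and $M\geq m+1$, let me shift lengths so that $a^jN\subseteq\Pi_j(T)$ on an interval of length at least $b-1$. Lemma~\ref{lem:interval-completion} with $H=N$ then finishes (part (i) of) the argument. I expect this bookkeeping to be the main obstacle: the divisibility of the lengths by $t$, and the requirement that the interval start low enough.

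\textbf{Parts (ii)--(iv).} These are small concrete checks. For (ii), $L=D_{2t}$: two terms $x,y\in L\setminus C$ are reflections, and $x a^{i}y$ with $i$ varying gives every rotation. Hence $\Pi_{j}$ of two reflections plus copies of $a$ equals $a^jC$ on a full interval, and two such terms suffice. Thus $c=1$, and Lemma~\ref{lem:interval-completion} with $H=C$ applies. For (iii), $L=S_4\leq\PSL_2(7)$ with $t=3$. For (iv), $L=A_5$ with $t=5$. In both cases I would show that $9$, respectively $12$, terms of $L\setminus C$ together with $a^{[M]}$ produce $\Pi_j(T)\supseteq L$ for a window of $j$ of length at least $|G:L|-1$. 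I plan to get this via Lemma~\ref{lem:stabilizer-descent} inside $L$, with the Klein four subgroup or $A_4$ as the intermediate stabilizer in (iii). The concentration alternative is excluded by Lemma~\ref{lem:subgroup-cap} applied to smaller subgroups containing $a$. Finally, Lemma~\ref{lem:subgroup-extraction}(iii) closes once $\Pi_j(U)=L$ is combined with compatible extraction modulo $L$.
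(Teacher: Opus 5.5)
Your general cap is exactly Lemma~\ref{lem:subgroup-cap}. Part (ii) also works, provided you keep the length fixed: $a^ixa^{t-1-i}y=a^{2i-(t-1)}xy$ runs through $C$ at the single length $t+1$, and this is where oddness of $t$ enters, whereas $xa^iy$ gives the rotations only at varying lengths.

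\textbf{Part (i) is missing its key step.} Write a term as $x=za^{j}$ with $1\neq z\in N$. The factor $a^{[t-1]}\bm\cdot x$ contributes $O_a(z)a^{j-1}$ at length $t$, so $r$ such factors give products in $a^{\sum_i(j_i-1)}N$ at length $rt$. Lemma~\ref{lem:interval-completion}, however, needs the coset $a^{rt}N=N$. Extra copies of $a$ shift length and exponent equally, so ``absorbing the leftover into copies of $a$'' cannot repair a nonzero defect. Grouping equal $C$-coordinates does not help either, since it wastes up to $t-1$ terms in each residue class. What you need is $k_0$ terms, with $t\mid k_0$, whose images in $L/N\cong C_t$ have product one. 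The cyclic Erd\H{o}s--Ginzburg--Ziv theorem supplies these from exactly $\mathsf{s}_{k_0}(C_t)=k_0+t-1=c+1$ terms (Lemma~\ref{lem:repeated-extraction}). So the extra $t-1$ terms are consumed by this selection, not by shifting lengths. The interval $a^jN\subseteq\Pi_j(T)$ for $j\in[k_0t,M+k_0]$ comes from the unused copies of $a$, and the remaining bookkeeping is routine via $k_0\leq\ell-1$ and $\ell t\leq n/5$.

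\textbf{Parts (iii) and (iv) rest on a false intermediate claim.} Nine terms of $L\setminus C$ can all lie in $A_4\setminus C$, which has nine elements. Twelve can all be reflections in $N_L(C)\cong D_{10}$. In either case every product lies in a proper subgroup of $L$, so $\Pi_j(T)\supseteq L$ is impossible. Nor does Lemma~\ref{lem:stabilizer-descent} apply: with $K=L$ it requires $|L|-1$ terms besides the copies of $a$, namely $23$ or $59$. Lemma~\ref{lem:subgroup-cap} does not rule out concentration either, since it allows up to ten terms in $A_4$. The missing idea is to partition $L\setminus C$ and cap each piece separately, using interval completion with a smaller $H$.
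In (iii), let $J\cong S_3$ be a point stabilizer containing $C$.
\begin{itemize}
\item Five terms in $A_4\setminus C$ give, by EGZ in $A_4/V_4\cong C_3$, three orbits of size three in $V_4$-cosets whose product is $V_4$. Hence at most four terms lie there (use $H=V_4$).
\item $J\setminus C$ holds at most one term, by (ii).
\item In $L\setminus(A_4\cup J)$, two orbits multiply to a set of size seven in $A_4$, so four terms give $A_4$ (use $H=A_4$). Hence at most three terms lie there.
\end{itemize}
The total is $4+1+3=8$.
In (iv), $N_L(C)\setminus C$ holds at most one term, by (ii). Six terms in either of the two $C$-double cosets of size $25$, together with $24$ copies of $a$, give all of $L$ (Section~\ref{sec:a5-involution-product}). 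So each double coset holds at most five terms, for a total of $11$.

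Finally, Lemma~\ref{lem:subgroup-extraction}(iii) needs $\Pi_j(U)=G$. For a proper $L$ you must use Lemma~\ref{lem:interval-completion} with $H=L$ instead.
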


\begin{proof}
The general bound follows from Lemma~\ref{lem:subgroup-cap}. For (i), suppose that $k_0+t-1$ terms lie in $L\setminus C$. The cyclic EGZ theorem, applied repeatedly in $L/C_\ell\cong C_t$, selects $k_0$ terms whose images have product one; here $t\mid k_0$. Each selected term has an orbit of size $t$, lying in a coset of $C_\ell$. To verify the orbit size, write the term as $za^j$, with $1\neq z\in C_\ell$; a nonidentity power of $a$ cannot fix $z$ under a faithful action on a prime-order group. The repeated Cauchy--Davenport inequality in $C_\ell$ shows that the product of these orbit sets is the entire kernel, since $k_0(t-1)+1\geq\ell$ and their quotient product is one. Since $t\mid k_0$, the construction in Section~\ref{sec:orbit-growth}, expressed in \eqref{eq:orbit-realization}, realizes this kernel as products of $k_0t$ terms. Unused copies of $a$ then give $a^jC_\ell$ for every $j\in[k_0t,M+k_0]$.

Since $t\mid\ell-1$, one has $k_0\leq\ell-1$. Every proper subgroup of a non-abelian simple group has index at least five, so $\ell t\leq n/5$ and
$$ k_0(t-1)<t(\ell-1)\leq\frac{5}{2}t(\ell-2)\leq m-n/\ell. $$
Consequently, $k_0t<n$ and $M+k_0-k_0t\geq n/\ell-1$; also, $n/\ell\leq m$. Lemma~\ref{lem:interval-completion}, with $H=C_\ell$, contradicts a counterexample.

For (ii), two terms $x,y\in L\setminus C$ give
$$ a^ixa^{t-1-i}y=a^{2i-(t-1)}xy. $$
This equality holds for $0\leq i<t$. Because $t$ is odd, these products run through $C$. Together with unused copies of $a$, they supply $C=a^jC$ for every $j\in[t+1,M+2]$. Here $n\geq10t$ and $M-t+1\geq m-t+2\geq n/t-1$. Apply Lemma~\ref{lem:interval-completion} with $H=C$.

For (iii), write $L=S_4$, choose a point stabilizer $J\cong S_3$ containing $C$, and partition
$$ L\setminus C=(A_4\setminus C)\,\dot{\cup}\, (J\setminus C)\,\dot{\cup}\, \bigl(L\setminus(A_4\cup J)\bigr). $$
Any five terms in the first part contain three with quotient product one in $A_4/V_4\cong C_3$. Each of their orbits has three elements in a $V_4$-coset. The product of any two such sets is a full coset, since $3+3>4$; the three selected orbits therefore multiply to $V_4$. These terms and six copies of $a$ give $V_4$ in length nine, and hence the interval $[9,M+3]$ of cosets $a^jV_4$. With $m=84$, $M\geq85$, and $b=168/4=42$, Lemma~\ref{lem:interval-completion} applies. Thus the first part contains at most four remainder terms. Part (ii) bounds the second part by one.

Identify $S_4$ with the affine semilinear group of $\mathbb{F}_4$. Write $\mathbb{F}_4^*=\langle\omega\rangle$, let $a:z\mapsto\omega z$ and $\sigma:z\mapsto z^2$, and choose $J$ to stabilize zero. Elements in the third part are $(v,\omega^j\sigma)$ with $v\neq0$. For two such elements, the products of their orbit terms have linear and translation parts
$$ \omega^{j-k+2d} \qquad\text{and}\qquad \omega^i\bigl(v+\omega^{j+d}w^2\bigr), $$
respectively, where $d=i-\ell$. Exactly one value of $d\in\mathbb{Z}/3\mathbb{Z}$ makes the second parenthesis zero. For each of the other two values, the translation part runs through the three nonzero translations, while the three linear parts are distinct. Thus the two orbit sets have product size $1+3+3=7$ in $A_4$. Four such orbits multiply to $A_4$, since $7+7>12$. These terms and eight copies of $a$ supply $A_4$ in length twelve, and then for all lengths in $[12,M+4]$. Lemma~\ref{lem:interval-completion}, with $b=168/12=14$, excludes four terms in this part. The total bound is $4+1+3=8$.

For (iv), the normalizer $N_L(C)\cong D_{10}$ contributes at most one term outside $C$, by (ii). The two other $C$-double cosets in $L$ have size $25$. Section~\ref{sec:a5-involution-product} proves that six terms in either one, together with twenty-four copies of $a$, have product set $L$. Unused copies give $L$ throughout $[30,M+6]$. Since $M-24\geq m-23\geq n/60-1$, Lemma~\ref{lem:interval-completion} applies with $H=L$ and $b=n/60$. Thus each double coset contributes at most five terms, and $c=1+2\cdot5=11$.
\end{proof}

\section{Short product-one subsequences}\label{sec:short-subsequences}

For $e\geq \meo(F)$, let $\mathsf{\eta}_e(F)$ be the least length forcing a nonempty product-one subsequence of length at most $e$. For a sequence $T$, let $\Pi(T)$ be the products of its nonempty subsequences. We use Olson's quadratic estimate \cite{Olson1975}:
\begin{equation} |\Pi(T)|\geq |T|^2/9. \tag{12.1}\label{eq:quadratic-products} \end{equation}
This estimate applies when $T$ is squarefree and product-one free. The method has two steps. A bound on concentrated terms supplies disjoint squarefree subsequences. Their product sets give a short product-one subsequence, and successive extraction imposes the required cyclic coordinate.

\begin{lemma}\label{lem:quasirandom-growth}
Let $|F|=n$, and suppose that every nontrivial irreducible complex representation of $F$ has degree at least $d$. For nonempty subsets $A,B$ with $AB\neq F$, define $\rho(X)=|X|/(n-|X|)$. Then
$$ \rho(AB)\geq d\rho(A)\rho(B). $$
In particular, if
\begin{equation} d^{r-2}\prod_{i=1}^r p_i>\prod_{i=1}^r(n-p_i), \tag{12.2}\label{eq:quasirandom-cover} \end{equation}
then $P_1\cdots P_r=F$. Here $r\geq3$, $|P_i|\geq p_i$, and $0<p_i<n$ for every $1\leq i\leq r$.
\end{lemma}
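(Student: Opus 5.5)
This is the Gowers / Nikolov–Pyber quasirandom product estimate, and I will prove it through the spectral argument on the group algebra. Write $\mathbf 1_X$ for indicator functions and set $\alpha=|A|/n$, $\beta=|B|/n$, $\gamma=|AB|/n$. Let $C$ be the complement of $AB$, which is nonempty because $AB\neq F$, and put $\gamma'=1-\gamma=|C|/n$. By construction there are no $a\in A$, $b\in B$, $c\in C$ with $ab=c$. In other words, the convolution $\mathbf 1_A*\mathbf 1_B$ vanishes on $C$:
$$\langle \mathbf 1_A*\mathbf 1_B,\mathbf 1_C\rangle=0 .$$

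I will estimate this inner product from below using Fourier analysis on $F$. Write $f_X=\mathbf 1_X-|X|/n$ for the balanced functions. Expanding the inner product, the main term is $|A||B||C|/n$. The cross terms vanish because $f_X$ has mean zero. What remains is the error term $\langle f_A*f_B,f_C\rangle$.

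The key input is the standard quasirandom inequality. If every nontrivial irreducible representation of $F$ has degree at least $d$, then for mean-zero $u,v$ one has
$$\|u*v\|_2\leq \sqrt{n/d}\,\|u\|_2\|v\|_2 .$$
This follows from Plancherel: each nontrivial block $\hat u(\rho)$ occurs with multiplicity $d_\rho\geq d$, and one bounds the operator norm of $\hat v(\rho)$ by its Hilbert–Schmidt norm divided by $\sqrt{d_\rho}$. Combining it with Cauchy–Schwarz gives
$$|A||B||C|/n\leq \sqrt{n/d}\,\|f_A\|_2\|f_B\|_2\|f_C\|_2,$$
where $\|f_X\|_2^2=|X|(n-|X|)/n$.

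Squaring and simplifying, the inequality reads
$$d\,\alpha\beta\gamma'\leq (1-\alpha)(1-\beta)(1-\gamma'),$$
which is exactly
$$\frac{1-\gamma'}{\gamma'}\geq d\,\frac{\alpha}{1-\alpha}\cdot\frac{\beta}{1-\beta}.$$
Since $(1-\gamma')/\gamma'=\gamma/(1-\gamma)=\rho(AB)$, this is $\rho(AB)\geq d\rho(A)\rho(B)$. The main care here is the normalization constants in Plancherel, which I would check against the case $d=1$, where the bound must be trivially true.

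For the corollary I will induct. Suppose $P_1\cdots P_r\neq F$. Then every partial product $P_1\cdots P_i$ is also a proper subset, since enlarging it by further factors cannot reach $F$. So the inequality applies at each step. Because $\rho$ is increasing in $|X|$, we have $\rho(P_i)\geq p_i/(n-p_i)$. Applying the inequality $r-1$ times gives
$$\rho(P_1\cdots P_r)\geq d^{\,r-1}\prod_i\frac{p_i}{n-p_i}.$$
To match the exponent $d^{r-2}$ in \eqref{eq:quasirandom-cover}, I will use only $r-2$ applications, producing $X=P_1\cdots P_{r-1}$ with $\rho(X)\geq d^{\,r-2}\prod_{i<r}\rho(P_i)$.

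To finish I apply Lemma~\ref{lem:product-set-tools}(i) to $X$ and $P_r$. It suffices to show $|X|+|P_r|>n$, i.e. $n-|X|<|P_r|$. This is equivalent to $\rho(X)>(n-|P_r|)/|P_r|=1/\rho(P_r)$. The hypothesis \eqref{eq:quasirandom-cover} gives precisely $d^{\,r-2}\prod_i\rho(P_i)>1$, which is this condition. So $XP_r=F$, contradicting $P_1\cdots P_r\neq F$.
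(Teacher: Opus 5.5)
Your proof is correct and follows the paper's argument: the Plancherel bound $\|u*v\|_2\le\sqrt{n/d}\,\|u\|_2\|v\|_2$ with $u=f_A$ is exactly the paper's estimate $\|H\|_{\mathrm{op}}^2\le a(n-a)/d$ for the balanced convolution matrix, it yields the same inequality $d\,|A||B||C|\le(n-|A|)(n-|B|)(n-|C|)$, and your finish ($r-2$ applications, then $\rho(X)\rho(P_r)>1\iff|X|+|P_r|>n$) is also the paper's. One small correction to your sketch: for a nontrivial irreducible representation $\pi$, the gain $1/\sqrt{d_\pi}$ comes from the Plancherel weight, $d_\pi\|\hat v(\pi)\|_{\mathrm{HS}}^2\le n\|v\|_2^2$, and not from comparing operator and Hilbert--Schmidt norms, since $\|\hat v(\pi)\|_{\mathrm{op}}\le\|\hat v(\pi)\|_{\mathrm{HS}}$ cannot be improved in general.
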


\begin{proof}
Set $a=|A|$, $b=|B|$, $Z=F\setminus AB$, and $z=|Z|$. For matrices indexed by $F$, define $M_{x,y}=\mathbf{1}_A(xy^{-1})$ and $H=M-(a/n)J$. The matrix $H$ annihilates constants, commutes with right translations, and has squared Frobenius norm $a(n-a)$. Every nonzero eigenspace of $H^*H$ is invariant under right translations and orthogonal to constants, so it has dimension at least $d$. Hence $\Vert H\Vert_{\mathrm{op}}^2\leq a(n-a)/d$.

Writing $f_X=\mathbf{1}_X-(|X|/n)\mathbf{1}$, the absence of incidences between $B$ and $Z$ gives
$$ \frac{abz}{n} =|\langle f_Z,Hf_B\rangle| \leq \sqrt{\frac{z(n-z)}{n}}\, \sqrt{\frac{a(n-a)}{d}}\, \sqrt{\frac{b(n-b)}{n}}. $$
Squaring and cancelling gives $dabz\leq(n-a)(n-b)(n-z)$, which is the first assertion.

Iterate this bound through $P_1\cdots P_{r-1}$. Unless an earlier product is already $F$, the hypothesis gives $\rho(P_1\cdots P_{r-1})\rho(P_r)>1$. Thus the last two sets have cardinalities summing to more than $n$, and their product is $F$.
\end{proof}

\begin{proposition}\label{prop:extraction-profiles}
Let $F$ be a finite group of order $n$, and let $d$ be a lower bound for the degrees of its nontrivial irreducible complex representations. Write $x_+=\max\{x,0\}$.
\begin{itemize}
\item[(i)] Suppose that every sequence containing no nonempty product-one subsequence of length at most $e_0$ admits a partition into parts of sizes $b_j$, with all occurrences of any given value in the same part, such that
$$ \sum_j(b_j-s)_+\leq E. $$
Here $s\geq1$ and $E\geq0$ are fixed integers. Choose $r,K\geq1$ with $K\geq r$, and let $k_1,\ldots,k_r$ be the balanced positive integers of sum $K$; thus their values differ by at most one. Define
\begin{equation} \begin{gathered} p_i=\left\lceil k_i^2/9\right\rceil, \qquad e=\max\{e_0,K\},\\ \text{and}\qquad L=K+(s-r)_+\left\lfloor\frac{K-1}{r}\right\rfloor+E. \end{gathered} \tag{12.3}\label{eq:profile-parameters} \end{equation}
If some $p_i\geq n$, or $r=2$ and $p_1+p_2>n$, or $r\geq3$ and the hypothesis of Lemma~\ref{lem:quasirandom-growth} is satisfied, then $\mathsf{\eta}_e(F)\leq L$.
\item[(ii)] If $\mathsf{\eta}_{e_i}(F)\leq L_i$ for every $1\leq i\leq t$, then
\begin{equation} \mathsf{d}(F\times C_t)+1\leq \max_{1\leq i\leq t}\left(L_i+\sum_{j<i}e_j\right). \tag{12.4}\label{eq:iterated-extraction} \end{equation}
\end{itemize}
\end{proposition}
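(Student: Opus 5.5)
For part (i), I will take a sequence $T$ over $F$ of length $L$ and suppose that it has no nonempty product-one subsequence of length at most $e$. Since $e\geq e_0$, the hypothesis supplies a partition of $T$ into parts of sizes $b_j$. All occurrences of each value lie in one part, and $\sum_j(b_j-s)_+\leq E$. I will first discard at most $E$ terms, cutting every part down to at most $s$ terms, and keep the partition property.

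The aim is to assemble $r$ disjoint squarefree subsequences $T_1,\ldots,T_r$ with $|T_i|=k_i$. The natural choice is to take one term from each part for each $T_i$. Distinct parts carry distinct values, so each such $T_i$ is squarefree. A greedy count should show that the remaining $K+(s-r)_+\lfloor (K-1)/r\rfloor$ terms, with at most $s$ per part, always allow $r$ disjoint squarefree sets of the balanced sizes $k_i$. The point is that a part with $b$ terms contributes to at most $\min\{b,r\}$ of the $T_i$. So the shortfall caused by parts larger than $r$ is at most $(s-r)_+$ per part. The number of parts that can be "overloaded" before some $T_i$ reaches size $\lfloor(K-1)/r\rfloor+1$ is bounded by $\lfloor (K-1)/r\rfloor$. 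This counting step is the main obstacle I expect, and I would handle it by a pigeonhole on the parts of size exceeding $r$.

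Each $T_i$ is squarefree and product-one free, so Olson's estimate \eqref{eq:quadratic-products} gives $|\Pi(T_i)|\geq k_i^2/9$, hence $|\Pi(T_i)|\geq p_i$. Take $P_i=\Pi(T_i)$. If some $p_i\geq n$, then $P_i=F\ni 1$, which already contradicts the assumption. If $r=2$ and $p_1+p_2>n$, then Lemma~\ref{lem:product-set-tools}(i) gives $P_1P_2=F$. If $r\geq3$ and \eqref{eq:quasirandom-cover} holds, then Lemma~\ref{lem:quasirandom-growth} gives $P_1\cdots P_r=F$. In each of these cases, $1$ is a product of nonempty subsequences, one from each $T_i$ used. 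Concatenating them in order gives a nonempty product-one subsequence of length at most $K\leq e$, a contradiction.

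For part (ii), I will lift a sequence over $F\times C_t$ to its $F$-coordinates and extract disjoint short product-one subsequences in $F$, in the spirit of Lemma~\ref{lem:extraction-bounds}. Let $L^\ast$ be the right-hand side of \eqref{eq:iterated-extraction}. Before the $i$th extraction, at most $\sum_{j<i}e_j$ terms have been used, so at least $L_i$ remain. Then $\mathsf{\eta}_{e_i}(F)\leq L_i$ yields a further subsequence with product one in $F$. Doing this for $i=1,\ldots,t$ gives $t$ disjoint subsequences, each product-one in the $F$-coordinate. Their $C_t$-coordinates form $t$ elements of $C_t$, and some nonempty consecutive block of partial sums is zero, since $\mathsf{d}(C_t)=t-1$. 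The concatenation of the corresponding subsequences is then a nonempty product-one subsequence in $F\times C_t$, which proves the bound.
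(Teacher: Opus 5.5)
Your proposal is correct and follows essentially the same route as the paper. In (i), the paper likewise sets $c=\sum_j\min\{b_j,r\}$ and shows $|T|\le c+(s-r)_+\lfloor c/r\rfloor+E$, which forces at least $K$ terms to be selectable with at most $r$ per value. It then lists equal values consecutively and distributes them cyclically among $r$ subsequences; the same cyclic assignment, applied part by part to your one-term-per-part selection, is what produces the exact balanced sizes $k_i$ that you left implicit. Part (ii) is the same successive extraction followed by a zero-sum argument in $C_t$.
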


\begin{proof}
For (i), let $T$ be a sequence of length $L$ without the asserted short product-one subsequence, and let $c=\sum_j\min\{b_j,r\}$. Since the parts only merge equal-value classes, one has $c\leq\sum_g\min\{v_g(T),r\}$. Summing the elementary inequality
$$ b\leq\min\{b,r\}+(s-r)_+\mathbf{1}_{b\geq r}+(b-s)_+ $$
gives
\begin{equation} |T|\leq c+(s-r)_+\lfloor c/r\rfloor+E. \tag{12.5}\label{eq:partition-capacity} \end{equation}
If $\sum_g\min\{v_g(T),r\}\leq K-1$, then $c\leq K-1$, and \eqref{eq:partition-capacity} would give $|T|\leq L-1$, contrary to $|T|=L$. Hence $\sum_g\min\{v_g(T),r\}\geq K$. This sum is the maximum number of terms that can be selected with every multiplicity at most $r$: each value supplies independently up to $r$ of its available occurrences. Choose $K$ such terms, list equal values consecutively, and distribute them cyclically among $r$ subsequences. These subsequences are squarefree and have sizes $k_i$. If one has a product-one subsequence, then its length is at most $K\leq e$. Otherwise, Olson's estimate \cite{Olson1975} gives at least $p_i$ nonempty products from the $i$th subsequence. The relevant cardinality criterion produces a product equal to $1$ from disjoint subsequences, again with length at most $K$.

For (ii), successively extract $t$ disjoint subsequences with selected $F$-product $1$, the $i$th of length at most $e_i$. The stated maximum ensures that at least $L_i$ terms remain at stage $i$. Their selected products in $F\times C_t$ have the form $(1,z_i)$. Some nonempty subsequence of $z_1,\ldots,z_t$ has product $1$ in $C_t$, giving the required product-one subsequence.
\end{proof}

\subsection{Bounds for concentrated terms}\label{sec:concentration-bounds}

The simplest data in Proposition~\ref{prop:extraction-profiles}(i) are
$$ e_0=\meo(F),\qquad s=\meo(F)-1,\qquad\text{and}\qquad E=0. $$
For these data, take a part for each element value. For balanced subsequences of size $k$, this gives the squarefree-allocation threshold $\max\{s,r\}(k-1)+r$. A sharper excess estimate is available when the large multiplicities lie in cyclic root groups.

\begin{lemma}\label{lem:root-concentration}
Let $p\geq5$ be prime. Suppose that a sequence over $\PSL_2(p)$ contains no nonempty product-one subsequence of length at most $3(p-1)$. Let $b_x$ count its nonidentity terms in the root group $U_x\cong C_p$, where $x\in\mathbb{P}^1(\mathbb{F}_p)$. Then
\begin{equation} \sum_x(b_x-s)_+ \leq\left\lfloor\frac{(3p-3s-1)^2}{12}\right\rfloor. \tag{12.6}\label{eq:root-excess-bound} \end{equation}
This holds for every $0\leq s\leq p-1$.
\end{lemma}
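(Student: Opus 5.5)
The plan is to turn each root-group excess into many product-one-free sumsets in $C_p$, and then play these against one another using conjugation by a fixed diagonal torus. In $\PSL_2(p)$ there are $p+1$ root groups $U_x\cong C_p$. They intersect pairwise trivially, and $N_G(U_x)$ is the Borel subgroup $U_x\rtimes C_{(p-1)/2}$.

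The first step is a single root group. Write the terms in $U_x$ additively in $\mathbb{F}_p$. The absence of short product-one subsequences gives $b_x\leq p-1$, since $\mathsf{d}(C_p)=p-1$. More precisely, by Lemma~\ref{lem:kneser-cd} the nonempty subsums of these $b_x$ terms form a set $\Sigma_x\subseteq\mathbb{F}_p^{\times}$ with $|\Sigma_x|\geq b_x$. Each element of $\Sigma_x$ comes from at most $p-1$ terms.

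The second step combines two root groups $U_x\neq U_y$. I would choose elements $u\in U_x$ and $w\in U_y$ and look at products in the group they generate, which is all of $G$. Combining subsums from $U_x$ and $U_y$ with a short auxiliary word gives product-one relations of total length at most $3(p-1)$. The aim is a lower bound on the number of distinct products from the two excess portions, once the first $s$ terms of each root group are set aside as free. I would get this bound from the Cauchy--Davenport bound in Lemma~\ref{lem:kneser-cd}, applied after conjugating into a common root group. Concretely, if $\beta_x=(b_x-s)_+$, I want to show that the groups with positive excess cannot coexist unless
$$\sum_x\beta_x+3s\leq 3p-1.$$
In other words, the excesses together with $3s$ must fit inside a budget of length $3p-3$ minus one. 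The short length $3(p-1)$ is exactly what permits three blocks, each of up to $p-1$ terms.

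The third step is the optimisation. Under a length constraint of the form $\sum_x\beta_x\leq 3p-3s-1$, together with a pairwise interaction constraint, maximise $\sum_x\beta_x$. If the interaction works through pairs, the natural extremal configuration splits the budget $3p-3s-1$ evenly into pieces, which gives the quadratic $\lfloor(3p-3s-1)^2/12\rfloor$. I would therefore expect the true constraint to be a pairwise one, namely $\beta_x\beta_y$-type products bounded by a quadratic form. Maximising it by AM--GM gives $N^2/12$ with $N=3p-3s-1$.

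The main obstacle is step two: producing a genuine product-one subsequence of length at most $3(p-1)$ from terms in two or three different root groups, and showing that its product set grows additively. My first attempt would use the Bruhat decomposition. Two distinct root groups generate $G$, and $U_xU_yU_x$ contains a torus coset. So I would seek products $u\,w\,u'$ that land in $N_G(U_x)$, and then apply Lemma~\ref{lem:orbit-insertion} with the torus action on $U_x$, which multiplies by squares. This should give Cauchy--Davenport growth from each block, with the floor coming from integrality.
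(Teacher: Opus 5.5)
Your Step 1 is fine: distinct prefix products give $b_x\le p-1$ and a set $P_x\subseteq U_x\setminus\{1\}$ of nonempty subsequence products with $|P_x|\ge b_x$. The gap is Step 2, which carries all the content and which you leave open.

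Its proposed target, $\sum_x\beta_x+3s\le 3p-1$, is neither proved nor of the right shape, and Step 3 cannot turn it into $\lfloor(3p-3s-1)^2/12\rfloor$. Maximising $\sum_x\beta_x$ subject to $\sum_x\beta_x\le N$ just returns $N$. That is weaker than the claim whenever $N<12$; for example, $s=p-2$ gives $5$ instead of $2$. For large $p-s$ it would be an order-$p$ rather than order-$p^2$ statement, and nothing in your sketch supports that. The ``pairwise interaction plus AM--GM'' is reverse-engineered from the answer rather than derived.

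The concrete mechanism you propose also cannot work. For $y\neq x$ one has $U_y\cap N_G(U_x)=1$, while $U_x\le N_G(U_x)$. So $u\,w\,u'\in N_G(U_x)$ with $u,u'\in U_x$ and $w\in U_y$ forces $w=1$. More generally, Cauchy--Davenport or orbit-insertion growth is the wrong kind of tool here; what is needed is a rigidity statement plus a double count.

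The missing idea is that the interaction runs through \emph{triples} of root groups. For each ordered triple of distinct points $x,y,z\in\mathbb{P}^1(\mathbb{F}_p)$, there is exactly one triple $(u,v,w)$ of nonidentity elements of $U_x,U_y,U_z$ with $uvw=1$. Moreover, $u$ together with $y$ determines the whole relation, including $z$. This is a direct $2\times2$ matrix computation after moving $x$ to $\infty$.

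Let $H=\{x:b_x>s\}$, $h=|H|$, and $M=\sum_{x\in H}(p-1-|P_x|)$. There are $h(h-1)(h-2)/3$ cyclic classes of such relations on points of $H$. Each must use some element of $U_x\setminus(P_x\cup\{1\})$ for some $x\in H$. Otherwise three disjoint blocks, each of length at most $p-1$, would give a product-one subsequence of length at most $3(p-1)$. By uniqueness, each missing element lies in at most $h-1$ classes, so $M\ge h(h-2)/3$.

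Hence $\sum_x(b_x-s)_+\le h(p-1-s)-M\le h(3p-3s-1-h)/3\le(3p-3s-1)^2/12$. The quadratic comes from optimising over $h$, the number of root groups with positive excess, not from splitting a length budget into pieces.
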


\begin{proof}
Let $P_x$ be the nonempty products from the part in $U_x$. Distinct prefix products imply that $b_x\leq p-1$, $1\notin P_x$, and $|P_x|\geq b_x$. For every ordered triple of distinct roots, there is exactly one triple of nonidentity root elements with product $1$. To verify this, move the first root to infinity and define
$$ u(a)=\begin{pmatrix}1&a\\0&1\end{pmatrix} \qquad\text{and}\qquad v_y(b)=\begin{pmatrix}1+yb&-y^2b\\b&1-yb\end{pmatrix}. $$
One has
$$ u\bigl(2(z-y)\bigr)\, v_y\bigl(-2/(z-y)\bigr)\, v_z\bigl(-2/(z-y)\bigr)=-I $$
whenever $y\neq z$. Conversely, compare $u(a)v_y(b)$ with $\varepsilon v_z(c)^{-1}$, where $\varepsilon=\pm1$. The negative sign forces $b=c$, $2=(y-z)b$, and $a=2(z-y)$. The positive sign forces $b=-c$ and $(z-y)b=0$, which is impossible. In particular, the first element and the next root determine the triple.

Let $H=\{x:b_x>s\}$, $h=|H|$, and $M=\sum_{x\in H}(p-1-|P_x|)$. Ordered triples of roots in $H$, modulo cyclic rotation, give $h(h-1)(h-2)/3$ product-one triples. Each must contain a vertex missing from its $P_x$, since otherwise three disjoint root parts produce a forbidden product of length at most $3(p-1)$. A missing vertex belongs to at most $h-1$ triples, by the uniqueness just proved. Thus, for $h\geq2$, one has $M\geq h(h-2)/3$, and
$$ \begin{aligned} \sum_x(b_x-s)_+ &\leq h(p-1-s)-M\\ &\leq\frac{h(3p-3s-1-h)}{3} \leq\frac{(3p-3s-1)^2}{12}. \end{aligned} $$
The cases $h=0,1$ satisfy the last bound directly. Integrality proves the assertion.
\end{proof}

For $F=\PSL_2(p)$, set $s=(p-1)/2$. Root parts contain all unipotent terms, and every remaining value has order dividing $s$ or $s+1$, so its multiplicity is at most $s$. We may therefore use
\begin{equation} e_0=3(p-1),\qquad s=(p-1)/2,\qquad\text{and}\qquad E=\left\lfloor\frac{(3p-3s-1)^2}{12}\right\rfloor. \tag{12.7}\label{eq:root-profile} \end{equation}
The coarse choice $e_0=p$ and $E=s(p+1)$ is also valid, since each of the $p+1$ root parts contains at most $2s$ terms. For $r\leq s$ and $K=rk$, the threshold is $s(k+p)+r$ by Proposition~\ref{prop:extraction-profiles}(i). Thus the same allocation argument applies over prime fields.

For $M_{11}$, use its degree-twelve action with point stabilizer $\PSL_2(11)$ \cite{Wilson2009}. Its element orders are $1,2,3,4,5,6,8,11$, as established in the proof of Proposition~\ref{prop:m11-extraction}. An element of order eleven has cycle type $11\,1$ and therefore belongs to exactly one point stabilizer. Its terms partition among the twelve stabilizers. In each stabilizer, Lemma~\ref{lem:root-concentration}, with $p=11$ and $s=7$, bounds the root excess by ten. Other element values occur at most seven times. Consequently,
$$ e_0=30,\qquad s=7,\qquad\text{and}\qquad E=12\cdot10=120 $$
are valid data for Proposition~\ref{prop:extraction-profiles}(i).

\subsection{Representation degrees from weights}\label{sec:representation-degrees}

\begin{lemma}\label{lem:representation-degrees}
The following lower bounds hold for the degrees of nontrivial irreducible complex representations. In the first two rows, $q\geq4$.
\begin{center}
\begin{tabular}{c c}
\hline
$F$&Degree at least\\
\hline
$\PSL_2(q)$&$(q-1)/\gcd(2,q-1)$\\
$\PSL_2(q)$, where $q\equiv1\pmod{4}$&$(q+1)/2$\\
$A_7$&$5$\\
$A_8$&$7$\\
$\PSU_3(3)$&$6$\\
\hline
\end{tabular}
\end{center}
\end{lemma}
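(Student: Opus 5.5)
These are classical minimal-degree bounds, and the plan is to verify each row from known character-theoretic information rather than reprove anything deep.

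\emph{First row, $\PSL_2(q)$.} I will use the complete character table of $\SL_2(q)$, due to Jordan and Schur. The nontrivial irreducible degrees of $\PSL_2(q)$ lie among $q$, $q+1$, $q-1$, and, for odd $q$, $(q\pm1)/2$. For odd $q$ the only degrees that can fall below $q-1$ are $(q\pm1)/2$, so the minimum is at least $(q-1)/2$. For even $q$ one has $\PSL_2(q)=\SL_2(q)$, the half-degrees do not occur, and the minimum is $q-1$.

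\emph{Second row, $q\equiv1\pmod 4$.} Here the extra point is which half-degree survives in the quotient. The two characters of degree $(q+1)/2$ are constituents of the principal series induced from the quadratic character of the split torus. They are trivial on $-I$ exactly when $(-1)^{(q-1)/2}=1$, that is, when $q\equiv1\pmod4$. The two characters of degree $(q-1)/2$ come from the discrete series attached to the quadratic character of the nonsplit torus. They are trivial on $-I$ only when $q\equiv3\pmod4$. Hence for $q\equiv1\pmod 4$ the smallest surviving degree is $(q+1)/2$. This sign bookkeeping on the central element is the step I expect to need the most care; I will check it by evaluating the central character, which equals $\psi(-1)$ for the inducing character $\psi$.

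\emph{Rows for $A_7$ and $A_8$.} I will use the branching rules. The irreducible characters of $S_n$ are indexed by partitions, and by the hook-length formula the smallest nontrivial degrees are $n-1$, from $(n-1,1)$ and its conjugate, and then $n(n-3)/2$ and beyond. An irreducible character of $A_n$ is either a restriction or half of a self-conjugate character. For $n=7$ the self-conjugate partitions are $(4,1,1,1)$ of degree $20$ and $(3,3,1)$ of degree $21$, whose halves are $10$ and about $10.5$. Since $21$ is odd, $(3,3,1)$ is not self-conjugate, so only $(4,1,1,1)$ splits. Every other restricted degree is at least $6$. Therefore the minimal nontrivial degree of $A_7$ is $6\ge5$.

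For $n=8$ the self-conjugate partitions are $(4,2,1,1)$ of degree $90$ and $(3,3,2)$. Their halves are large, so the minimum is $7$. For this row I will simply cite the ATLAS tables, which record the minimal degrees $6$ for $A_7$ and $7$ for $A_8$.

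\emph{Row for $\PSU_3(3)$.} The ATLAS gives the degrees $1,6,7,7,7,14,\dots$, so the minimal nontrivial degree is $6$. An independent check is available from the unitary Weil representation of $\mathrm{SU}_3(3)$, whose smallest constituent has degree $(q^3-q)/(q+1)=q(q-1)=6$. A lower bound of $q(q-1)$ for $\PSU_3(q)$ is also known from the Landazuri–Seitz bounds, and I can cite that as well.

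Since each row is a direct lookup, the proof will be a short citation paragraph. The only genuine verification is the central-character parity in the second row.
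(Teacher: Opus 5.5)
Your proposal is correct, but it takes a different route from the paper. You read the minimal degrees off known character tables. For $\PSL_2(q)$ you use the Jordan--Schur table of $\SL_2(q)$ together with the action of $-I$. For $A_7$ and $A_8$ you use the hook-length formula and the restriction rule from $S_n$ to $A_n$. For $\PSU_3(3)$ you use the ATLAS, or the Weil representation and the Landazuri--Seitz bound.

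The paper instead gives a short, self-contained Clifford-type weight argument. A nontrivial representation of these simple groups is faithful, so its restriction to a suitable abelian or Heisenberg subgroup $U$ has a nontrivial weight. Such weights occur in full orbits under any subgroup normalizing $U$.
\begin{itemize}
\item For $\PSL_2(q)$, $U$ is a root group, and the split torus permutes its nontrivial characters in orbits of size $(q-1)/\gcd(2,q-1)$. For $q\equiv1\pmod 4$, degree exactly $(q-1)/2$ is excluded because a torus generator would cyclically permute $(q-1)/2$ weight lines and so have determinant $-1$. This plays the role of your central-character parity check.
\item For $A_7$, the paper restricts to $A_6\cong\PSL_2(9)$.
\item For $A_8$, it uses $C_2^3\rtimes\GL_3(2)$, which is transitive on the seven nontrivial weights.
\item For $\PSU_3(3)$, it uses the Heisenberg group of order $27$, where each of the two nontrivial central weights needs a dimension divisible by $3$.
\end{itemize}
Your route gives the exact minima, for example $6$ rather than $5$ for $A_7$, but it imports full character tables. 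The paper's route proves only the lower bounds it later uses, with nothing beyond elementary representation theory.

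Two details in your write-up should be cleaned up, although neither affects the conclusion.
\begin{itemize}
\item The only self-conjugate partition of $7$ is $(4,1,1,1)$, so $(3,3,1)$ should not be listed at all.
\item Your assertion that every degree of $S_n$ after $n-1$ is at least $n(n-3)/2$ fails for $n=8$: the partitions $(4,4)$ and $(2,2,2,2)$ have degree $14$.
\end{itemize}
What you actually need is only the classical fact that, for $n\geq 7$, the characters of $S_n$ of degree less than $n$ are those of $(n)$, $(1^n)$, $(n-1,1)$ and $(2,1^{n-2})$. Combine this with the split halves, of degree $10$ for $n=7$ and of degrees $21$ and $45$ for $n=8$.
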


\begin{proof}
All groups are non-abelian simple, so their nontrivial representations are faithful. If a subgroup normalizes an abelian group $U$, then any nontrivial $U$-weight occurring in a representation brings its entire conjugation orbit, with equal positive multiplicities.

For $\PSL_2(q)$, take the upper root group $U\cong(\mathbb{F}_q,+)$. Fix a nontrivial additive character $\psi$. The characters of $U$ are $x\mapsto\psi(cx)$, with $c\in\mathbb{F}_q$. The split torus multiplies nonzero parameters by squares, giving orbits of size $h=(q-1)/\gcd(2,q-1)$. This proves the first bound. If $q\equiv1\pmod{4}$ and the degree were $h=(q-1)/2$, then the restriction would consist of one orbit of $h$ one-dimensional weight spaces. A generator of the order-$h$ torus permutes these spaces in an $h$-cycle. Its $h$th power is one, so its transition scalars have product one and its determinant is $(-1)^{h-1}=-1$. The determinant character of a non-abelian simple group is trivial, a contradiction. The degree is therefore at least $h+1$.

For $A_7$, restrict to $A_6\cong\PSL_2(9)$ and apply the bound five just proved to a nontrivial irreducible constituent. For $A_8$, use $U\rtimes\GL_3(2)$ with $U=C_2^3$. This group embeds in $A_8$: translations are products of four transpositions, and generating transvections are products of two transpositions. The linear group is transitive on the seven nontrivial weights of $U$. This proves the two alternating-group bounds.

For $\PSU_3(3)$, a Borel subgroup contains the group
$$ U=\left\{ \begin{pmatrix} 1&a&b\\ 0&1&-a^3\\ 0&0&1 \end{pmatrix} :a,b\in\mathbb{F}_9,\ b+b^3+a^4=0 \right\}, $$
which is a Heisenberg group of order $27$. Its center has $a=0$, $b+b^3=0$, and order three. If $\lambda$ generates $\mathbb{F}_9^\times$, then $\diag(\lambda,\lambda^2,\lambda^{-3})$ normalizes $U$ and sends $b$ to $\lambda^4b=-b$. Thus both nontrivial central weights occur, with equal multiplicity. In either weight space, the Heisenberg commutator relation makes one generator cyclically permute the three eigenspaces of another. The dimension of that weight space is a positive multiple of three. The two central weights therefore require at least six dimensions.
\end{proof}

\subsection{A criterion from distinct terms}\label{sec:distinct-term-criterion}

\begin{proposition}\label{prop:distinct-term-criterion}
Let $G$ be non-abelian simple, $n=|G|=2m$, $u=\meo(G)$, and $b=\lfloor3\sqrt{n}\rfloor$. If
\begin{equation} (2u-2)b+1\leq n/2, \tag{12.8}\label{eq:meo-extraction-criterion} \end{equation}
then $\mathsf{E}(G)\leq3n/2-1$. Either $n\geq144(u-1)^2+4$ or the stronger condition $n\geq144u^2$ suffices. For a fixed repeated order $t$, it is enough that $(u+t-2)b+1\leq m$.
\end{proposition}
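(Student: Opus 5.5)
We must show $\mathsf{E}(G)\leq 3n/2-1$, i.e. every $S$ with $|S|=3m-1$ has $1\in\Pi_n(S)$. The plan is to combine Theorem~\ref{thm:bounded-height} with Lemma~\ref{lem:high-mult-completion}(i). By Theorem~\ref{thm:bounded-height} we may assume $h(S)>m$, so $S=a^{[M]}R$ as in \eqref{eq:high-mult-setup}, with $t=\ord(a)\mid m$ and $t\leq u$. It then suffices to prove
$$ \mathsf{d}(G\times C_t)+1\leq m+t-1. $$
For this I intend to show $\mathsf{d}(G\times C_t)+1\leq (u+t-2)b+1$. Given the hypothesis $(u+t-2)b+1\leq m$, this yields the last assertion of the proposition. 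Since $t\leq u$, it also gives $(2u-2)b+1\leq n/2=m$, which is \eqref{eq:meo-extraction-criterion}.

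\emph{Step 1: a short-product bound.} I would show $\mathsf{\eta}_u(G)\leq (u-1)(b-1)+b$, or a bound of that form. Let $T$ be a sequence with no nonempty product-one subsequence of length at most $u$. No value occurs $u$ or more times, since $\ord(g)$ copies of $g$ have product one and $\ord(g)\leq u$. Hence every multiplicity is at most $u-1$. If $T$ has at least $(u-1)(b-1)+1$ terms, then it has at least $b$ distinct values, giving a squarefree subsequence $W$ with $|W|=b=\lfloor 3\sqrt n\rfloor$.

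Now apply Olson's estimate \eqref{eq:quadratic-products}. If $W$ is product-one free, then $|\Pi(W)|\geq b^2/9$. Since $b>3\sqrt n-1$, this is roughly $n$, so the estimate nearly covers $G$. This forces a product-one subsequence, but only of length at most $b$, not $u$.

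So the usable statement is an $\mathsf{\eta}_e$ bound with $e=\max\{u,b\}$. This is exactly the data $e_0=u$, $s=u-1$, $E=0$, $r=1$, $K=b$ in Proposition~\ref{prop:extraction-profiles}(i), where $p_1=\lceil b^2/9\rceil$. The borderline case $p_1<n$ must be handled by replacing the required condition with $p_1\geq n$ up to a single exceptional element. The fix is to keep one extra term in reserve and use $1\notin\Pi(W)$ together with a pigeonhole on $\Pi(W)\cup\{1\}$. Proposition~\ref{prop:extraction-profiles}(i) then gives $\mathsf{\eta}_e(G)\leq L=b+(u-2)(b-1)\leq (u-1)b$.

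\emph{Step 2: lifting to $C_t$.} Apply Proposition~\ref{prop:extraction-profiles}(ii) with all $L_i=(u-1)b$ and $e_i=e$. Taken literally this gives $(u-1)b+(t-1)e$, which is too large if $e=b$.

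The better route is to track lengths directly via Lemma~\ref{lem:high-mult-completion}(i). Its proof only needs every $m+t-1$ available terms to contain a nonempty product-one subsequence of length divisible by $t$ and at most $m$. I would get this from the last consequence of Lemma~\ref{lem:extraction-bounds}, taking $P=G$, $L=(u-1)b+1$, and the crude short-length $e\leq b$ counted once per cyclic step. The step I expect to be the main obstacle is reconciling the $t$ extractions so that the total threshold is $(u+t-2)b+1$ rather than $(u-1)b+(t-1)b$ plus slack. Concretely, this means showing each additional $C_t$-step costs only about $b$ fresh terms, and checking that the resulting length stays at most $m$ so that Lemma~\ref{lem:high-mult-completion}(i) applies.

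\emph{Step 3: the sufficient size conditions.} Set $b\leq 3\sqrt n$ in \eqref{eq:meo-extraction-criterion}. The condition $6(u-1)\sqrt n+1\leq n/2$ holds once $\sqrt n\geq 12(u-1)+\varepsilon$, and this is implied by $n\geq 144(u-1)^2+4$ after a routine quadratic check. It is also implied by $n\geq 144u^2$, since then $\sqrt n\geq 12u$.
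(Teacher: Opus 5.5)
Your overall route is the paper's. Theorem~\ref{thm:bounded-height} handles $h(S)\le m$, and otherwise Lemma~\ref{lem:high-mult-completion}(i) is fed a bound on $\mathsf{d}(G\times C_t)+1$ coming from Olson's estimate, the multiplicity bound $u-1$, and iterated extraction in $G\times C_t$. However, Step~1 does not work as written.

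You apply \eqref{eq:quadratic-products} to $b=\lfloor 3\sqrt n\rfloor$ distinct terms, which only gives $|\Pi(W)|\ge b^2/9$. Since $9n-b^2<3\sqrt n+b$, this can fall short of $n-1$ by up to about $\tfrac23\sqrt n$ elements. For example, when $n=168$ one has $b=38$ and $\lceil 38^2/9\rceil=161<167$. So there is no contradiction with $\Pi(W)\subseteq G\setminus\{1\}$. Holding one term in reserve and pigeonholing on $\Pi(W)\cup\{1\}$ cannot bridge a deficit of that size. The hypothesis $p_1\ge n$ of Proposition~\ref{prop:extraction-profiles}(i) fails in general at $K=b$, so your bound $\mathsf{\eta}_e(G)\le(u-1)b$ is unproved.

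The fix is to use $b+1$ distinct values. Since $b+1>3\sqrt n$, one has $(b+1)^2/9>n$, so any $b+1$ distinct elements contain a nonempty product-one subsequence. As multiplicities are at most $u-1$, every $(u-1)b+1$ terms contain one of length at most $e=\max\{u,b+1\}$. You should also check $b+1\ge u$ rather than silently setting $e=b$. If $b\le u-2$, the left side of either hypothesis is at least $b(b+1)+1>n/2$, so in fact $e=b+1$.

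With this correction, the obstacle you anticipate in Step~2 disappears. It came from aiming at $(u+t-2)b+1$, which is more than Lemma~\ref{lem:high-mult-completion}(i) needs. Proposition~\ref{prop:extraction-profiles}(ii) with $L_i=(u-1)b+1$ and $e_i=b+1$ gives
\[
\mathsf{d}(G\times C_t)+1\le(u-1)b+1+(t-1)(b+1)=(u+t-2)b+t .
\]
This is at most $m+t-1$ precisely when $(u+t-2)b+1\le m$: the slack $t-1$ in the lemma absorbs the extra $t-1$ terms.

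Your pairing of $L=(u-1)b+1$ with $e\le b$ is not available. Indeed, $b+1$ distinct values only yield a product-one subsequence of length at most $b+1$.

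For the main assertion, argue in the direction $(u+t-2)b+1\le(2u-2)b+1\le m$, using $t\le u$ and \eqref{eq:meo-extraction-criterion}. Your sentence states this implication backwards.

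Step~3 is fine. A clean form is $(n-2)^2-144(u-1)^2n=n\bigl(n-144(u-1)^2-4\bigr)+4>0$, which gives $6(u-1)\sqrt n+1<n/2$. Then $b\le 3\sqrt n$ finishes.
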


\begin{proof}
Olson's subset theorem \cite{Olson1975} implies that any $b+1$ distinct elements contain a nonempty product-one subsequence. Every $(u-1)b+1$ terms therefore contain one of length at most $e=\max\{u,b+1\}$: either one value occurs $u$ times, or there are $b+1$ distinct values. By Proposition~\ref{prop:extraction-profiles}(ii),
$$ \mathsf{d}(G\times C_t)+1\leq(u-1)b+1+(t-1)e. $$
The hypothesis forces $b\geq u-1$: otherwise, its left side is at least $2b(b+1)+1>n/2$. Thus $e=b+1$, and
\begin{equation} \mathsf{d}(G\times C_t)+1\leq(u+t-2)b+t. \tag{12.9}\label{eq:distinct-term-davenport} \end{equation}
Since $t\leq u$, Lemma~\ref{lem:high-mult-completion} proves the conclusion when $h(S)>m$. Theorem~\ref{thm:bounded-height} applies when $h(S)\leq m$. The fixed-order condition likewise forces $b\geq u-1$ and gives the asserted bound.

Finally, let $c=u-1$. If $n\geq144c^2+4$, then
$$ (n-2)^2-144c^2n=n(n-144c^2-4)+4>0. $$
It follows that $6c\sqrt{n}+1<n/2$, which implies the required inequality.
\end{proof}

\subsection{Applications of the profiles}\label{sec:profile-applications}

For the rows with $E=0$, the required maximum element orders are
$$ \meo(\PSL_2(25))=13,\qquad \meo(\PSL_2(27))=14,\qquad\text{and}\qquad \meo(\PSU_3(3))=12, $$
$$ \meo(A_7)=7 \qquad\text{and}\qquad \meo(A_8)=15. $$
The first two follow from the unipotent and torus orders, and the last two from cycle decompositions. The unitary element-order list accompanies Proposition~\ref{prop:psl3-psu3-extraction}. These supply $e_0$ and $s=e_0-1$.

The final unrestricted subsequences use only subgroup extraction. For every odd prime power $q\geq5$, a Borel subgroup $(\mathbb{F}_q,+)\rtimes C_{(q-1)/2}$, Lemma~\ref{lem:frobenius-extraction}, and the parity improvement in Lemma~\ref{lem:extraction-bounds} give
\begin{equation} \mathsf{d}(\PSL_2(q))+1 \leq q\bigl(q+(q-1)/2-1\bigr)=3q(q-1)/2. \tag{12.10}\label{eq:psl2-davenport} \end{equation}
Also, $\mathsf{d}(A_5)=8$ by \cite{AndrasCziszterDomokosSzollosi2025}; the even degree-six action and then the index-seven subgroup give $\mathsf{d}(A_6)+1\leq45$ and $\mathsf{d}(A_7)+1\leq315$. The point stabilizer $M_{10}=A_6.2$ in $M_{11}$ contains an $A_6$ \cite{WilsonEtAlND}. This subgroup has index $22$, so the even coset action gives $\mathsf{d}(M_{11})+1\leq21\cdot45=945$. In the table, $D$ denotes any of these unrestricted bounds; it may be used more than once.

\begin{proposition}\label{prop:profile-completion}
Every pair $(F,t)$ in Table~\ref{tab:extraction-profiles} satisfies the high-multiplicity criterion of Lemma~\ref{lem:high-mult-completion}. For an unstarred row, the displayed $B$ bounds $\mathsf{d}(F\times C_t)+1$. The starred row instead forces a product-one subsequence of length divisible by $t$ and at most $|F|/2$ from any $B$ terms over $F$. In addition, $\mathsf{d}(A_6\times C_4)+1\leq180$, by Corollary~\ref{cor:a6-order-four}. Thus $(A_6,4)$, $(A_7,5)$, and $(A_7,6)$ are all covered.
\end{proposition}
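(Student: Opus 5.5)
The plan is a row-by-row verification of Table~\ref{tab:extraction-profiles}, all rows reduced to one comparison. For each pair $(F,t)$ I will write $n=|F|=2m$. Lemma~\ref{lem:high-mult-completion}(i) asks for $\mathsf{d}(F\times C_t)+1\leq m+t-1$. It is therefore enough to obtain a bound $B$ with $B\leq m$.

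\emph{Unstarred rows.} I first choose the data for Proposition~\ref{prop:extraction-profiles}(i):
\begin{itemize}
\item for $\PSL_2(25)$, $\PSL_2(27)$, $\PSU_3(3)$, $A_7$ and $A_8$, the simple data $e_0=\meo(F)$, $s=e_0-1$, $E=0$, with the maximal element orders recorded just before the proposition;
\item for prime fields $\PSL_2(p)$, the root data \eqref{eq:root-profile};
\item for $M_{11}$, the data $e_0=30$, $s=7$, $E=120$.
\end{itemize}
For the degree $d$ I use Lemma~\ref{lem:representation-degrees}. Next, for each listed $(r,K)$ I compute the balanced $k_i$, then $p_i=\lceil k_i^2/9\rceil$, and check the covering condition. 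When $r=2$ this means $p_1+p_2>n$; when $r\geq3$ it is \eqref{eq:quasirandom-cover}. This gives $\mathsf{\eta}_{e_i}(F)\leq L_i$ with $L_i$ as in \eqref{eq:profile-parameters}. The rows may use several profiles $(e_i,L_i)$ for $i<t$, and the last stage may use an unrestricted bound $D$ from \eqref{eq:psl2-davenport} or from the $A_6$, $A_7$, $M_{11}$ bounds. This is legitimate because $\mathsf{\eta}_{e}(F)\leq \mathsf{d}(F)+1$ with $e=\mathsf{d}(F)+1$. Feeding these into \eqref{eq:iterated-extraction} yields
\[
B=\max_i\Bigl(L_i+\sum_{j<i}e_j\Bigr),
\]
and the remaining work is the numerical check $B\leq m$ in each row.

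\emph{Starred row.} Here the product-one subsequence does not come from $\mathsf{d}(F\times C_t)$. I apply the extraction of part (ii) directly: take $t$ successive subsequences with $F$-product $1$, of lengths at most $e_1,\dots,e_t$, and select a nonempty subfamily whose length is divisible by $t$. By the cyclic Davenport argument this subfamily has total length at most $\sum e_i$, and I check that this is at most $m$. This is exactly the variant clause at the end of Lemma~\ref{lem:high-mult-completion}(i), which asks for a product-one subsequence of length divisible by $t$ and at most $m$ among any $m+t-1$ terms. Hence we need both $B\leq m+t-1$ and $\sum e_i\leq m$.

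\emph{The three named pairs.} For $(A_6,4)$, Corollary~\ref{cor:a6-order-four} gives $180\leq 180+3$. For $(A_7,5)$ and $(A_7,6)$, the $A_7$ row supplies $B\leq 1260$ from $d=5$ and $\meo=7$, together with $D\leq315$.

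The main obstacle is the sheer arithmetic. It is most delicate for the $M_{11}$ and $\PSL_2(p)$ rows, where $E$ is large and \eqref{eq:quasirandom-cover} holds only narrowly. There I would first try $r=3$ with $K$ just large enough that $p_i\approx n/d^{1/3}$, and increase $r$ only if $L$ exceeds $m$.
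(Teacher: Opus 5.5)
\paragraph{Unstarred rows.} Your treatment of these rows follows the paper: certify the first-column data, test $P\ge n$, $2P>n$, or $d^{r-2}P^r>(n-P)^r$ for each balanced vector, then evaluate \eqref{eq:iterated-extraction} on the extraction word. The one flaw is that you shrink the target to $B\le m$, and that target fails for two rows. For $\PSU_3(3)$ with $t=7$ you get $B=3028>3024=m$. For $A_8$ with $t=15$ you get $B=10094=m+t-1$. You need the full allowance $m+t-1$ from Lemma~\ref{lem:high-mult-completion}(i), which you stated at the outset.

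\paragraph{The starred row.} The genuine gap is the row $(\PSL_2(13),3)$, where $m=546$ and only $548$ terms are available. Run Proposition~\ref{prop:extraction-profiles}(ii) on the word $1\,2\,D$, with $(e_i,L_i)=(141,454),(175,382),(234,234)$. This needs $\max\{454,\,141+382,\,141+175+234\}=550$ terms, and your length test $\sum e_i=550\le 546$ also fails. Reshuffling the available profiles does not remove this two-term deficit. With $s=6$, $E=33$, $d=7$, the values $K=141$ and $K=175$ are already the least passing the covering test for $r=2$ and $r=3$. Also, $D=234$ is the cheapest available final stage.

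The missing idea is to track the lengths modulo $t$ and stop as soon as a partial union has length divisible by $3$:
\begin{itemize}
\item If $3\nmid \ell_1$, then $\ell_1\le 140$, so at least $408$ terms remain for the second extraction.
\item If moreover neither $\ell_2$ nor $\ell_1+\ell_2$ is divisible by $3$, then $\ell_1\equiv\ell_2\not\equiv0\pmod 3$.
\item This forces $\ell_1+\ell_2\le\max\{139+175,\,140+173\}=314$, which leaves exactly the $234$ terms needed for the unrestricted third extraction.
\item Among the three lengths, some nonempty subfamily has length divisible by $3$.
\end{itemize}
The length bound then comes not from $\sum e_i$ but from disjointness. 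The chosen subfamily uses at most $548$ terms and has length divisible by $3$, hence length at most $3\lfloor 548/3\rfloor=546=m$. In general $t\lfloor (m+t-1)/t\rfloor=m$, because $t\mid m$.
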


In the table, $L_q=\PSL_2(q)$ and $U=\PSU_3(3)$. The first column lists $(n,s,E,d)$; a dash means that no representation degree is needed. The admissible $e_0$ is as proved above. Profiles are numbered consecutively and record $(r,K,L)$, so $\mathsf{\eta}_K(F)\leq L$. A word such as $1^2\,2\,3$ specifies two extractions using profile $1$, followed by profiles $2$ and $3$. An occurrence of $D$ means $(e,L)=(D,D)$.

The first five rows use one pair of squarefree subsequences, followed by two unrestricted extractions. Their common word and bound are
\begin{equation} 1^{t-2}D^2 \qquad\text{and}\qquad B=\max\{L+(t-3)K,(t-2)K+2D\}. \tag{12.11}\label{eq:extraction-word} \end{equation}
Only the elementary condition $2\lceil(K/2)^2/9\rceil>n$ is needed. The next two rows use pairs and then four squarefree subsequences, with common word $1^{t-2}\,2D$. For their profile with four subsequences, $d=9$, so the mixing condition reduces to $4\lceil(K/4)^2/9\rceil>n$.

\begin{table}[htbp]
\caption{Short-subsequence extraction, grouped by its product-set criterion.}\label{tab:extraction-profiles}
\centering
\small
\setlength{\tabcolsep}{7pt}
\begin{tabular}{@{}c c c@{}}
\toprule
$F$ and $(n,s,E,d)$&Profiles $(r,K,L)$&$t$: extraction word $\mapsto B$\\
\midrule
\begin{tabular}{@{}c@{}}$L_{23}$\\$(6072,11,102,-)$\end{tabular}&
$(2,332,1919)$&
\begin{tabular}{@{}c@{}}$6:\ 1^4D^2\mapsto2915$\\$D=759$\end{tabular}\\[3pt]
\begin{tabular}{@{}c@{}}$L_{25}$\\$(7800,12,0,-)$\end{tabular}&
$(2,376,2246)$&
\begin{tabular}{@{}c@{}}$6:\ 1^4D^2\mapsto3374$\\$D=900$\end{tabular}\\[3pt]
\begin{tabular}{@{}c@{}}$L_{27}$\\$(9828,13,0,-)$\end{tabular}&
$(2,422,2732)$&
\begin{tabular}{@{}c@{}}$7:\ 1^5D^2\mapsto4420$\\$D=1053$\end{tabular}\\[3pt]
\begin{tabular}{@{}c@{}}$L_{29}$\\$(12180,14,161,-)$\end{tabular}&
$(2,470,3439)$&
\begin{tabular}{@{}c@{}}$7:\ 1^5D^2\mapsto5319$\\$D=1218$\end{tabular}\\[3pt]
\begin{tabular}{@{}c@{}}$L_{31}$\\$(14880,15,184,-)$\end{tabular}&
$(2,518,4056)$&
\begin{tabular}{@{}c@{}}$8:\ 1^6D^2\mapsto6646$\\$D=1395$\end{tabular}\\
\midrule
\begin{tabular}{@{}c@{}}$L_{17}$\\$(2448,8,56,9)$\end{tabular}&
\begin{tabular}{@{}c@{}}$(2,210,890)$\\$(4,300,652)$\end{tabular}&
\begin{tabular}{@{}c@{}}$4:\ 1^2\,2\,D\mapsto1128$\\$D=408$\end{tabular}\\[3pt]
\begin{tabular}{@{}c@{}}$L_{19}$\\$(3420,9,70,9)$\end{tabular}&
\begin{tabular}{@{}c@{}}$(2,250,1188)$\\$(4,352,857)$\end{tabular}&
\begin{tabular}{@{}c@{}}$5:\ 1^3\,2\,D\mapsto1688$\\$D=513$\end{tabular}\\
\midrule
\begin{tabular}{@{}c@{}}$M_{11}$\\$(7920,7,120,-)$\end{tabular}&
\begin{tabular}{@{}c@{}}$(1,267,1983)$\\$(2,378,1438)$\end{tabular}&
\begin{tabular}{@{}c@{}}$11:\ 1^8\,2^2\,D\mapsto3952$\\$D=945$\end{tabular}\\[3pt]
\begin{tabular}{@{}c@{}}$U$\\$(6048,11,0,6)$\end{tabular}&
\begin{tabular}{@{}c@{}}$(1,234,2564)$\\$(2,330,1806)$\\$(3,417,1521)$\\$(9,945,1153)$\end{tabular}&
$7:\ 1^2\,2^3\,3\,4\mapsto3028$\\[3pt]
\begin{tabular}{@{}c@{}}$A_7$\\$(2520,6,0,5)$\end{tabular}&
\begin{tabular}{@{}c@{}}$(1,151,901)$\\$(2,214,638)$\\$(3,276,549)$\end{tabular}&
\begin{tabular}{@{}c@{}}$4:\ 1^2\,2\,D\mapsto1052$\\$5:\ 1^3\,2\,D\mapsto1203$\\$6:\ 1^3\,2\,3\,D\mapsto1258$\\$D=315$\end{tabular}\\[3pt]
\begin{tabular}{@{}c@{}}$A_8$\\$(20160,14,0,7)$\end{tabular}&
\begin{tabular}{@{}c@{}}$(1,426,5951)$\\$(2,604,4216)$\\$(4,896,3126)$\end{tabular}&
$15:\ 1^{10}\,2^3\,3^2\mapsto10094$\\
\midrule
\begin{tabular}{@{}c@{}}$L_{13}$\\$(1092,6,33,7)$\end{tabular}&
\begin{tabular}{@{}c@{}}$(2,141,454)$\\$(3,175,382)$\end{tabular}&
\begin{tabular}{@{}c@{}}$3:\ 1\,2\,D\mapsto548^*$\\$D=234$\end{tabular}\\
\bottomrule
\end{tabular}
\end{table}

\begin{proof}[Proof of Proposition~\ref{prop:profile-completion}]
The preceding concentration and weight arguments establish the data in the first column. Every $K$ is at least its required $e_0$. For each displayed $(r,K,L)$, the stated threshold follows from Proposition~\ref{prop:extraction-profiles}(i), and the corresponding balanced subsequence sizes satisfy the product-set criterion. This verification uses only integer arithmetic. For example, $(r,K)=(2,141)$ gives sizes $71,70$ and $561+545>1092$, whereas $(3,175)$ gives sizes $59,58,58$ and
$$ 7\cdot387\cdot374^2=378924084 >363444420=705\cdot718^2. $$
For every balanced vector $(k,\ldots,k)$ in the table, let $P=\lceil k^2/9\rceil$. The required checks are
$$ \begin{array}{ll} P\geq n,&\text{when }r=1,\\ 2P>n,&\text{when }r=2,\\ d^{r-2}P^r>(n-P)^r,&\text{when }r\geq3. \end{array} $$
Thus Proposition~\ref{prop:extraction-profiles}(i) proves all listed profiles. For the first five rows, \eqref{eq:extraction-word} gives the displayed $B$; for the other unstarred rows, apply Proposition~\ref{prop:extraction-profiles}(ii) to the displayed extraction words. All satisfy $B\leq n/2+t-1$.

For the starred row $L_{13}$, start with $548$ terms and extract a product-one subsequence of length $\ell_1\leq141$ by profile $1$. If its length is divisible by three, then the required extraction is complete. Otherwise, $\ell_1\leq140$, so at least $408\geq382$ terms remain, and profile $2$ gives a second subsequence of length $\ell_2\leq175$. If neither subsequence nor their union has length divisible by three, then the two lengths have the same nonzero residue. Therefore
$$ \ell_1+\ell_2\leq\max\{139+175,140+173\}=314. $$
At least $234$ terms remain, and the unrestricted bound $D=234$ gives a third product-one subsequence. Among their three lengths, some nonempty selection sums to zero modulo three. Concatenating the corresponding product-one orderings gives a length divisible by three and at most $3\lfloor548/3\rfloor=546=n/2$. Since $548=n/2+3-1$, the strengthened extraction form in the proof of Lemma~\ref{lem:high-mult-completion}(i) applies to this row as well.
\end{proof}

\section{Conjugation-orbit growth}\label{sec:orbit-growth}

The sequence applications assume \eqref{eq:high-mult-setup} and proceed by contradiction, so that $1\notin\Pi_n(S)$. Let $C=\langle a\rangle$, let $t=\ord(a)$, and define
$$ O_a(x)=\{a^ixa^{-i}:0\leq i<t\}. $$
Moving $x$ among $t-1$ copies of $a$ gives the exact identity
\begin{equation} \pi(a^{[t-1]}x)=O_a(x)a^{-1}. \tag{13.1}\label{eq:orbit-realization} \end{equation}
We also write $O(x)=O_a(x)$, or simply $O$ when $x$ is fixed. Each orbit commutes with $a$ as a set. Moreover, $O_a(a^ux a^v)=a^{u+v}O_a(x)$. Thus $r$ terms from one double coset $CxC$, together with $r(t-1)$ copies of $a$, realize a translate of $O_a(x)^r$ as part of their product set. Translations preserve the size of a difference set and preserve its generation property up to conjugation.

A product of $w$ orbit factors uses $w$ remainder terms and $(t-1)w$ copies of $a$, hence $tw$ terms in all. If an initial orbit product contains $C$, then restrict that product to $C$. Subsequent products $A$ are bi-$C$-invariant and satisfy $AO=A(CO)$.

In the matrix applications, $G=\PSL_2(q)$ and $\varepsilon=\gcd(2,q-1)$.

\begin{lemma}\label{lem:orbit-generation}
Let $G$ be non-abelian simple, and suppose that $\langle C,x\rangle=G$. Suppose that every maximal subgroup containing $\langle C,xCx^{-1}\rangle$ is the only member of its $G$-conjugacy class containing $C$. Then
$$ \langle C,xCx^{-1}\rangle=\langle O_a(x)O_a(x)^{-1}\rangle=G. $$
The orbit-difference conclusion also holds for every positive power of $O_a(x)$. In particular, if each conjugacy class of maximal subgroups has at most one member containing $C$, then the hypothesis holds.
\end{lemma}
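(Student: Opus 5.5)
Write $O=O_a(x)$ and $Y=\langle OO^{-1}\rangle$. The plan is to prove three things in order: first that $Y$ contains $xCx^{-1}$ and $C$, so that $Y$ contains $\langle C,xCx^{-1}\rangle$; then that this subgroup is all of $G$; and finally that the same holds for powers of $O$.

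\emph{Step 1: $Y\supseteq xCx^{-1}$.} The elements of $O$ are $a^ixa^{-i}$. Taking the difference of the $i=0$ and $i=1$ elements gives
$$ x\,(axa^{-1})^{-1}=x a x^{-1}a^{-1}\in Y. $$
Conjugating by powers of $a$ permutes $O$, so $Y$ is normalized by $C$. Hence all commutators $a^i(xax^{-1}a^{-1})a^{-i}$ lie in $Y$.

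\emph{Step 2: $Y\supseteq C$.} This is not immediate, and it is the main obstacle; I would handle it as follows. Let $Y_0=\langle O\rangle$. It contains $x$ and is $C$-invariant, so $Y_0C=\langle C,x\rangle=G$. Moreover $Y\trianglelefteq Y_0$, because $Y$ is generated by $uv^{-1}$ with $u,v\in O$. Modulo $Y$, all elements of $O$ coincide, so $Y_0/Y$ is cyclic, generated by the image of $x$. Since $C$ normalizes $Y$, the set $YC$ is a subgroup. The subgroup $N=\bigcap_{c\in C}Y_0^{\,c}$ is normalized by $Y_0$ and $C$, hence normal in $G=Y_0C$. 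By simplicity $Y_0=G$ or $N=1$. If $N=1$, then $G$ embeds in a product of $|C:C\cap N_G(Y_0)|$ copies of $G/Y_0$-type data; I expect this to collapse to $Y_0=G$ via the index bound $\geq5$ from Lemma~\ref{lem:simple-group-basics}. With $Y_0=G$, the subgroup $Y$ is normal in $G$ with cyclic quotient. Simplicity then gives $Y=G$, unless $Y=1$. But $Y=1$ means $O=\{x\}$, so $x$ commutes with $a$ and $\langle C,x\rangle$ is abelian, a contradiction.

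\emph{Step 3: the subgroup $H=\langle C,xCx^{-1}\rangle$ equals $G$.} Suppose instead that $H\le M$ for some maximal subgroup $M$. Then $C\le M$ and $C\le x^{-1}Mx$. By hypothesis $M$ is the only conjugate containing $C$, so $x^{-1}Mx=M$ and $x\in N_G(M)=M$. This gives $G=\langle C,x\rangle\le M$, a contradiction. The final sentence of the lemma is the special case of this hypothesis.

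\emph{Step 4: powers of $O$.} For $O^r$, the differences include $u^{r-1}v\,(u^{r-1}w)^{-1}$, which is conjugate to $vw^{-1}$. So $\langle O^r(O^r)^{-1}\rangle$ contains a conjugate of every generator of $Y$. Conversely, it contains $(axa^{-1})^{j}x^{r-j}\cdot(\ldots)^{-1}$ patterns giving $C$-stability, since conjugation by $a$ permutes $O^r$. The Step 2 argument then runs verbatim with $O^r$, which lies in a single coset of $Y$. If Step 2 resists, I would instead use $H=G$ from Step 3 directly and show $\langle OO^{-1}\rangle\supseteq H$ through the commutators of Step 1 together with normality under $C$.
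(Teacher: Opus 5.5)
\textbf{There is a genuine gap in Step 2, and as planned it cannot be repaired.} Your Step 3 is correct and is exactly the paper's argument that $K=\langle C,xCx^{-1}\rangle=G$. Step 2, however, rests on a false claim: that $Y\trianglelefteq Y_0=\langle O\rangle$ ``because $Y$ is generated by $uv^{-1}$''.

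Here is a counterexample. In $G=A_5$ take $a=(1\,2\,3)$ and $x=(3\,4\,5)$.
\begin{itemize}
\item $\langle C,x\rangle=A_5$, since this subgroup is transitive and has order divisible by $15$.
\item $O=\{(3\,4\,5),(1\,4\,5),(2\,4\,5)\}$, which also generates $A_5$.
\item Every element of $O$ sends $4\mapsto 5$. So each $uv^{-1}$ fixes the point $5$ (composing maps from right to left), and $Y$ lies in a point stabilizer $A_4$.
\end{itemize}
Thus $Y$ is not normal in $Y_0=A_5$, and ``$Y_0/Y$ cyclic'' makes no sense.

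The same example shows that no argument using only $\langle C,x\rangle=G$ and simplicity can give $Y=G$. The maximal-subgroup hypothesis has to enter, and here it fails: $C$ lies in the two conjugate stabilizers of $4$ and of $5$. Two smaller points:
\begin{itemize}
\item Step 1 does not show $xCx^{-1}\subseteq Y$. It only shows $xa^ix^{-1}a^{-i}\in Y$.
\item $Y_0$ is already $C$-invariant, so $Y_0\trianglelefteq Y_0C=G$ gives $Y_0=G$ at once. This does not help with $Y$.
\end{itemize}

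\textbf{The missing idea is the identity $YC=K$, and the fix is to do Step 3 first.} Since $C$ normalizes $Y$, the set $YC$ is a subgroup. It contains $xa^ix^{-1}=(xa^ix^{-1}a^{-i})a^i$, so $YC\supseteq K$. Conversely, $uv^{-1}=a^ixa^{j-i}x^{-1}a^{-j}\in K$, so $YC\subseteq K$. Hence $Y\trianglelefteq K$, and $K/Y\cong C/(C\cap Y)$ is cyclic. Step 3 gives $K=G$, so $Y$ is normal in $G$ with cyclic quotient. Since $G$ is non-abelian simple, $Y=G$.

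Your closing fallback points this way but, as phrased, is not right. The commutators together with $C$-normality give only $YC\supseteq H$, not $Y\supseteq H$; the normality-plus-simplicity step is what finishes the argument.

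Once $Y=G$, your Step 4 observation suffices: $O^r(O^r)^{-1}$ contains conjugates of all $vw^{-1}$. The paper instead uses $(vz)(wz)^{-1}=vw^{-1}$ with $z\in O^{r-1}$, which gives a direct inclusion. Drop the remark that Step 2 ``runs verbatim'' for $O^r$.
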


\begin{proof}
Write $O=O_a(x)$, $H=\langle OO^{-1}\rangle$, and $K=\langle C,xCx^{-1}\rangle$. Conjugation by $C$ normalizes $H$, and
$$ xa^ix^{-1}a^{-i}\in H \quad\text{and}\quad (a^ixa^{-i})(a^jxa^{-j})^{-1}=a^ixa^{j-i}x^{-1}a^{-j}\in K. $$
It follows that $K=HC$, $H\trianglelefteq K$, and $K/H$ is cyclic. If $K<G$, then choose a maximal subgroup $L$ containing $K$. Both $L$ and $x^{-1}Lx$ contain $C$, so the hypothesis implies that $x^{-1}Lx=L$. A maximal subgroup of a non-abelian simple group is self-normalizing: its normalizer is either the subgroup itself or the whole group, and the latter possibility would make the subgroup normal. Thus $x\in L$, contrary to $\langle C,x\rangle=G$. Hence $K=G$. Simplicity and the cyclic quotient $G/H$ imply that $H=G$.

Finally, fixing $v\in O^{r-1}$ gives $(uv)(wv)^{-1}=uw^{-1}$ for $u,w\in O$, whence $OO^{-1}\subseteq O^r(O^r)^{-1}$.
\end{proof}

\begin{lemma}\label{lem:full-entry-orbits}
Let $a\in G=\PSL_2(q)$ be semisimple of order $t>2$, and let $\mu=\langle\lambda^2\rangle$, where $\lambda,\lambda^{-1}$ are the eigenvalues of a determinant-one lift. Suppose that the orbit $O=O_a(x)$ has representatives
$$ X_u= \begin{pmatrix} \alpha&\beta u\\ \gamma/u&\delta \end{pmatrix} $$
in this eigenbasis, where $u\in\mu$, $\alpha\delta-\beta\gamma=1$, and $\alpha\beta\gamma\delta\neq0$. Then $|\mu|=|O|=t$. If $q$ is odd, then
$$ |OO^{-1}|\geq(t-1)^2 \quad\text{and}\quad |O^2|\geq(t-1)^2. $$
If $q$ is even, then $|OO^{-1}|=t^2-t+1$ and $|O^2|\geq t^2-t+1$. In particular, these bounds apply whenever $a$ is nonsplit and $\langle a,x\rangle=G$.
\end{lemma}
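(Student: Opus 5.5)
We work in the eigenbasis of a determinant-one lift $\hat a=\diag(\lambda,\lambda^{-1})$, defined over $\mathbb{F}_q$ or $\mathbb{F}_{q^2}$. Conjugation by $\hat a^i$ sends $\begin{pmatrix}\alpha&\beta\\ \gamma&\delta\end{pmatrix}$ to $\begin{pmatrix}\alpha&\lambda^{2i}\beta\\ \lambda^{-2i}\gamma&\delta\end{pmatrix}$, which gives the parametrization $X_u$ with $u$ ranging over $\mu$.

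\emph{Step 1: the orbit has exactly $t$ elements.} Two representatives $X_u$ and $X_{u'}$ are equal in $\PSL_2(q)$ exactly when $X_u=\pm X_{u'}$. The minus sign would force $\alpha=-\alpha$, which is impossible in odd characteristic since $\alpha\neq0$. Because $\beta\neq0$, equality therefore forces $u=u'$, so $|O|=|\mu|$. Moreover, $\hat a^i$ is central in $\PSL_2$ exactly when $\lambda^{2i}=1$, because $\lambda^i=\pm\lambda^{-i}$ and $\lambda^i=-\lambda^{-i}$ would mean that $\hat a^i$ is $\pm$ a scalar of order four. Checking this carefully gives $|\mu|=t$, using $t>2$.

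\emph{Step 2: compute the products and quotients.} For the quotients,
$$X_uX_v^{-1}=\begin{pmatrix}\alpha\delta-\beta\gamma u/v & \alpha\beta(v-u)\\ \gamma\delta(1/u-1/v)& \alpha\delta-\beta\gamma v/u\end{pmatrix},$$
which depends only on the pair $(u,v)$. Its diagonal entries determine $w=u/v$, since $\beta\gamma\neq0$. Its off-diagonal entry $\alpha\beta v(1-w)$ then determines $v$ whenever $w\neq1$. Hence the $t(t-1)$ pairs with $u\neq v$ give distinct matrices, and all pairs with $u=v$ give the identity.

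In $\PSL_2$ we must also account for the identification $Y\sim -Y$. The matrix $-X_uX_v^{-1}$ coincides with $X_{u'}X_{v'}^{-1}$ only if the diagonals satisfy $\beta\gamma(w+w')=2\alpha\delta$ together with a matching condition. For each pair this has a bounded number of solutions; I expect at most $t-1$ coincidences in total, which yields $|OO^{-1}|\geq t(t-1)+1-(t-1)=(t-1)^2$. In even characteristic there is no sign ambiguity, so we get exactly $t^2-t+1$.

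For $O^2$, compute $X_uX_v$: its $(1,1)$ entry is $\alpha^2+\beta\gamma u/v$, and its $(1,2)$ entry is $\beta(\alpha u+\delta v)$. Since $\alpha\delta\neq0$, the entry $\alpha u+\delta v$ can vanish for at most one $v$ per $u$, and the degenerate pairs must be handled separately. Apart from these, the map from pairs to matrices is injective by the same determination of $w$ and $v$. Together with the sign identification, this gives the same lower bounds.

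\emph{Step 3: the final clause.} If $a$ is nonsplit, its eigenvalues lie in $\mathbb{F}_{q^2}\setminus\mathbb{F}_q$, and conjugation by the Frobenius pairs the two eigenlines. We must show $\alpha\beta\gamma\delta\neq0$.
\begin{itemize}
\item If $\beta=0$ or $\gamma=0$, then $x$ fixes an eigenline of $a$. In the nonsplit setting the Galois relation forces both eigenlines to be fixed, so $\langle a,x\rangle$ lies in the torus normalizer, contradicting generation.
\item If $\alpha=0$ or $\delta=0$, then in the nonsplit case the Galois relation $\delta=\alpha^q$ forces both to vanish. Then $x$ swaps the eigenlines, and again $\langle a,x\rangle\leq N_G(\langle a\rangle)$, a proper subgroup.
\end{itemize}

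The main obstacle is the sign identification in Step 2 for odd $q$. We must count the solutions of $X_uX_v^{-1}=-X_{u'}X_{v'}^{-1}$, and likewise for products, precisely enough to lose at most $t-1$ elements. I would do this by comparing traces first, which reduces the problem to a relation $w+w^{-1}=-(w'+w'^{-1})+\text{const}$, and then solving for $w'$ given $w$.
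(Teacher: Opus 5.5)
\paragraph{Verdict.} Your route is the paper's. You compute $X_uX_v^{-1}$ and $X_uX_v$ explicitly, get injectivity from the $(1,1)$ and $(1,2)$ entries, and then correct for the identification $Y\sim -Y$. There is a genuine gap, however: the correction step, which is the whole content of the odd-$q$ bounds, is not proved, and what you assert about it is wrong.

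\paragraph{What goes wrong.} You say you expect at most $t-1$ coincidences, but the loss can be exactly $t$. If some $r\in\mu\setminus\{\pm1\}$ satisfies $r+r^{-1}=2\alpha\delta/(\beta\gamma)$, then $X_uX_v^{-1}$ has trace $0$ for every pair with $u/v=r$. Hence $-X_uX_v^{-1}=X_vX_u^{-1}$, which is a pair of ratio $r^{-1}$. These $2t$ matrices are paired off, so exactly $t$ images are lost. Your arithmetic is also off: $t(t-1)+1-(t-1)=(t-1)^2+1$, whereas the correct count is $1+t(t-1)-t=(t-1)^2$. Finally, the repair you propose, comparing traces only, cannot work. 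The trace gives one relation between $w$ and $w'$. That allows up to two partners for every ratio, so it gives no global bound on the number of identified pairs. A per-pair bound on solutions is not enough.

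\paragraph{The missing idea.} Use the two diagonal entries separately.
\begin{itemize}
\item For the differences, suppose $-X_uX_v^{-1}=X_{u'}X_{v'}^{-1}$ with $r=u/v$ and $s=u'/v'$. Then $\beta\gamma(r+s)=2\alpha\delta=\beta\gamma(r^{-1}+s^{-1})$. Since $\alpha\delta\neq0$ and $q$ is odd, $r+s\neq0$, so $rs=1$ and $r+r^{-1}=2\alpha\delta/(\beta\gamma)$. Thus at most two ratios occur, so at most $2t$ matrices take part and at most $t$ images are lost.
\item For $O^2$, parametrize the products as $X_{rz}X_z$. The diagonal entries give $r+s=-2\alpha^2/(\beta\gamma)$ and $rs=\alpha^2/\delta^2$. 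Again at most two ratios occur and at most $t$ images are lost.
\end{itemize}

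\paragraph{The degenerate ratio.} You flag the degenerate ratio but must actually handle it. The correct entry is $(X_uX_v)_{12}=\beta(\alpha v+\delta u)$, not $\beta(\alpha u+\delta v)$. For $u/v=-\alpha/\delta$, both off-diagonal entries vanish. The corresponding $t$ pairs therefore collapse to a single diagonal matrix, which the $(1,1)$ entry still separates from all other ratios. This gives $t(t-1)+1$ distinct matrices before the sign correction, which is what yields $|O^2|\geq t^2-t+1$ for even $q$ and $(t-1)^2$ for odd $q$.

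\paragraph{Minor points.} The $(1,2)$ entry of $X_uX_v^{-1}$ is $\alpha\beta(u-v)$; this sign slip is harmless. Step~1 is correct, though the reason for $|\mu|=t$ is simply that $\hat a^{\,i}=\pm I$ if and only if $\lambda^{2i}=1$. Step~3 is correct as written.
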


\begin{proof}
Distinct values of $u$ give projectively distinct matrices. Writing $r=u/v$, multiplication gives
\begin{equation} X_uX_v^{-1}= \begin{pmatrix} \alpha\delta-\beta\gamma r&\alpha\beta(u-v)\\ \gamma\delta(u^{-1}-v^{-1})&\alpha\delta-\beta\gamma r^{-1} \end{pmatrix}. \tag{13.2}\label{eq:orbit-differences} \end{equation}
For $r\neq1$, the upper-left and upper-right entries recover $r$ and $u-v$, hence $u,v$. There are therefore $t(t-1)$ distinct nonscalar matrices. In odd characteristic, equality to the negative of a matrix with ratio $s$ requires
$$ r+s=\frac{2\alpha\delta}{\beta\gamma}=r^{-1}+s^{-1} \quad\text{and}\quad rs=1. $$
At most two ratios are involved, hence at most $2t$ matrices. Pairing negatives loses at most $t$ images. Including the identity gives $1+t(t-1)-t=(t-1)^2$. In characteristic two there is no loss.

The square products have the common parametrization
\begin{equation} X_{rz}X_z= \begin{pmatrix} \alpha^2+\beta\gamma r&\beta(\alpha+\delta r)z\\ \gamma(\alpha/r+\delta)/z&\delta^2+\beta\gamma/r \end{pmatrix}. \tag{13.3}\label{eq:orbit-squares} \end{equation}
Here $r,z\in\mu$. The upper-left entry recovers $r$, and then the upper-right entry recovers $z$, except possibly when $r=-\alpha/\delta$. Thus there are at least $t^2-t+1$ distinct matrices. In odd characteristic, an identification up to sign between ratios $r,s$ requires
$$ r+s=-\frac{2\alpha^2}{\beta\gamma} \quad\text{and}\quad rs=\frac{\alpha^2}{\delta^2}. $$
Again at most two ratios and $2t$ matrices are involved, so at most $t$ further images are lost. This proves the square bounds.

For a nonsplit $a$, use a Frobenius-conjugate eigenbasis, in which $\delta=\alpha^q$ and $\gamma=\beta^q$. A zero entry would then make $x$ diagonal or antidiagonal, contrary to $\langle a,x\rangle=G$.
\end{proof}

\begin{lemma}\label{lem:zero-diagonal-pairs}
Let $C=\langle a\rangle$ be a split semisimple subgroup of $G=\PSL_2(q)$, of order $t>2$, and let $\varepsilon=\gcd(2,q-1)$. Let $O_1,O_2$ be two $C$-conjugation orbits whose representatives have exactly one zero diagonal entry, in the same position. Write $B=O_1O_2$. Then
\begin{equation} |B|=t^2,\qquad |BB^{-1}|\geq1+\frac{t^2(t-1)}{\varepsilon}, \quad\text{and}\quad |CB|=t^3. \tag{13.4}\label{eq:paired-orbit-bounds} \end{equation}
If $\mathbb{F}_p[\mu]=\mathbb{F}_q$, where $\mu=\langle\lambda^2\rangle$ is the orbit-parameter subgroup and $q$ is a power of $p$, then $\langle BB^{-1}\rangle=G$. This field condition holds when $q$ is prime or $t>\sqrt{q}-1$.
\end{lemma}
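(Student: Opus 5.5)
The plan is a direct matrix computation in the eigenbasis of a determinant-one lift of $a$, modeled on the proof of Lemma~\ref{lem:full-entry-orbits}. Write $a=\diag(\lambda,\lambda^{-1})$ (split, so $\lambda\in\mathbb{F}_q^\times$). Then conjugation by $a^i$ sends $\begin{pmatrix}\alpha&\beta\\\gamma&\delta\end{pmatrix}$ to $\begin{pmatrix}\alpha&\beta u\\\gamma/u&\delta\end{pmatrix}$ with $u=\lambda^{2i}\in\mu$, and $|\mu|=t$. By symmetry (transpose-inverse or conjugation by the antidiagonal Weyl element, which inverts $C$) I will assume the zero entry is the lower-right one in both orbits. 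The representatives are then
$$X_u=\begin{pmatrix}\alpha&\beta u\\ \gamma/u&0\end{pmatrix}, \qquad Y_v=\begin{pmatrix}\alpha'&\beta' v\\ \gamma'/v&0\end{pmatrix},$$
with $\beta\gamma=\beta'\gamma'=-1$ and $\alpha\alpha'\neq0$.

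First compute $X_uY_v$. Its upper-left entry is $\alpha\alpha'+\beta\gamma' u/v$, which recovers $r=u/v$. The lower-right entry is $\gamma\beta' v/u$, and the lower-left entry is $\gamma\alpha'/u$, which recovers $u$. This gives $t^2$ distinct matrices. I still have to check that no two are negatives of each other in odd characteristic. The lower-left entry forces $u'=-u$, and then comparing the lower-right entries with the sign forces a contradiction. So $|B|=t^2$.

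For $|CB|=t^3$, left multiplication by $\diag(\lambda^k,\lambda^{-k})$ scales the rows. The ratio of (lower-left)/(lower-right) of $X_uY_v$ is $\alpha'/(\beta' v)$, which determines $v$. Then the product of the lower-left entry with the upper-right entry removes the $\lambda^{\pm k}$ scaling and, together with $r$, recovers $u$. The remaining scale factor determines $k$ up to sign; the $\pm$ projective ambiguity is exactly absorbed since $C$ has order $t$ in $\PSL_2$. I expect to handle this by showing that the stabilizer of $B$ under left $C$-translation is trivial entrywise.

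For $BB^{-1}$, compute $X_uY_vY_{v'}^{-1}X_{u'}^{-1}$ and count the distinct nonscalar values in terms of the parameters. The $t^2(t-1)$ count comes from pairs with $(u,v)\neq(u',v')$ modulo the diagonal reparametrization, divided by $\varepsilon$ for the sign identification, plus the identity. This counting, and controlling the coincidences, is the main obstacle. I would first fix the ratio parameters recovered from diagonal entries, as in \eqref{eq:orbit-differences}, and bound the fibres by $\varepsilon$ using quadratic equations in the remaining parameter.

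Generation is handled last. The set $BB^{-1}$ contains the elements $X_uX_{u'}^{-1}$ (take $v=v'$), whose entries involve $u-u'$ over $\mu$. Under the condition $\mathbb{F}_p[\mu]=\mathbb{F}_q$, the subgroup generated is not contained in any subfield subgroup $\PSL_2(q_0)$ or $\PGL_2(q_0)$, nor in a Borel, dihedral, or exceptional subgroup; the last two are excluded by the order count $|BB^{-1}|$. Dickson's list then gives $G$. For the field condition, if $q$ is prime it is trivial; otherwise $\mathbb{F}_p[\mu]$ is a subfield of order at least $t+1>\sqrt q$, so it must be $\mathbb{F}_q$.
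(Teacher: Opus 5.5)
Your arguments for $|B|=t^2$, for $|CB|=t^3$, and for the sufficiency of the field condition are fine. Two small precisions. In the sign check for $|B|$, after $u'=-u$ the lower-right entries force $v'=v$ and the upper-right entries force $v'=-v$, so you need both. For $|CB|$, the invariants you list already give injectivity of $C\times B\to CB$, which is what is required; a trivial left stabilizer of $B$ would not suffice.

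\textbf{The bound on $|BB^{-1}|$ is not proved.} Your account of where $t^2(t-1)$ comes from does not identify a family on which the map is injective: there are $t^2(t^2-1)$ pairs $(u,v)\neq(u',v')$, and no reparametrization is specified. No quadratic fibre analysis is needed. The missing idea is to use only differences with a common ratio. Fix $r\in\mu$ and set $M_u=X_uY_{u/r}$. In your coordinates
\[
M_u=\begin{pmatrix} c_r& e_ru\\ f/u& g_r\end{pmatrix},\qquad c_r=\alpha\alpha'+\beta\gamma'r,\quad e_r=\frac{\alpha\beta'}{r},\quad f=\gamma\alpha',\quad g_r=\frac{\gamma\beta'}{r}.
\]
This is exactly the orbit shape of \eqref{eq:orbit-differences}, with $c_rg_r-e_rf=1$ and $e_rfg_r\neq0$; only $c_r$ may vanish. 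For $s=u/u'\neq1$, the diagonal entries of $M_uM_{u'}^{-1}$ are $1+e_rf(1-s)$ and $1+e_rf(1-s^{-1})$. Their ratio $(1-P_{11})/(P_{22}-1)$ recovers $s$, then $1-P_{11}$ recovers $e_rf$ and hence $r$. The nonzero lower-left entry $fg_r(u^{-1}-u'^{-1})$ then recovers $u'$. So these are $t\cdot t\cdot(t-1)$ distinct nonscalar matrices of $\SL_2(q)$, and passing to $\PSL_2(q)$ loses at most the factor $\varepsilon$. This is the paper's computation, done there after normalizing $B$ to the form \eqref{eq:paired-orbit-form}.

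\textbf{The generation step has a genuine hole.} The elements you start from are
\[
X_uX_{u'}^{-1}=\begin{pmatrix} s&\alpha\beta(u-u')\\ 0&s^{-1}\end{pmatrix},
\]
and they are all upper triangular. So the Borel case is exactly the one that must be ruled out, and you give no argument for it.

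Excluding dihedral and exceptional overgroups by the size of $BB^{-1}$ also fails in general. The bound $1+t^2(t-1)/\varepsilon$ is cubic in $t$, while $t$ can be small compared with $q$. At $(q,t)=(13,3)$ it equals $10$, which is below $|A_4|=12$ and below the dihedral orders $12$ and $14$. Nor do matrix entries in one basis decide membership in a conjugate of a subfield subgroup.

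What works, and makes Dickson's list unnecessary, is the following.
\begin{itemize}
\item Ratios of the displayed matrices with a fixed $s$ are upper unipotents whose parameters fill $\alpha\beta(s-1)(\mu-\mu)$.
\item The $\mathbb{F}_p$-span of this set is stable under multiplication by $\mu$. It is therefore a nonzero $\mathbb{F}_p[\mu]=\mathbb{F}_q$-subspace of $\mathbb{F}_q$, namely $\mathbb{F}_q$ itself.
\item Hence $\langle BB^{-1}\rangle$ contains the whole root group $U$ fixing $\langle e_1\rangle$. The same holds for the second orbit.
\item Since $(X_uY_v)(X_uY_{v'})^{-1}=X_u(Y_vY_{v'}^{-1})X_u^{-1}$, the group $\langle BB^{-1}\rangle$ also contains $X_uUX_u^{-1}$. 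This is the root group at $X_u\langle e_1\rangle\neq\langle e_1\rangle$, the point being distinct because $\gamma\neq0$.
\item Root groups at two distinct points generate $\SL_2(q)$, which is how the paper concludes.
\end{itemize}
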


\begin{proof}
Interchange the eigenlines when necessary, and write
$$ X_u= \begin{pmatrix}0&\beta u\\ \gamma/u&d\end{pmatrix} \quad\text{and}\quad Y_v= \begin{pmatrix}0&\beta'v\\ \gamma'/v&d'\end{pmatrix}. $$
Here $u,v\in\mu$, $\beta\gamma=\beta'\gamma'=-1$, and $dd'\neq0$. Let $k=\beta\gamma'$, $D=dd'k$, and $A=\beta d'k$. Right multiplication of $X_uY_v$ by $\diag(k^{-1},k)$, followed by conjugation by $\diag(A^{-1},1)$, gives
\begin{equation} B=\left\{B(r,z)= \begin{pmatrix} r&z\\ Dr/z&D+r^{-1} \end{pmatrix}:r,z\in\mu\right\}. \tag{13.5}\label{eq:paired-orbit-form} \end{equation}
Here $D\neq0$, $r=u/v$, and $z=u$. These operations preserve all three cardinalities in the statement and conjugate the difference subgroup. The matrices have distinct projective images, since equality recovers $r,z$. In odd characteristic, an additional identification up to a minus sign would require $(r,z)\mapsto(-r,-z)$, contradicting the nonzero lower-left entry.

For $z/w=\ell\neq1$, the entries of $P=B(r,z)B(r,w)^{-1}$ satisfy
$$ P_{11}=1+Dr(1-\ell),\qquad P_{12}=rw(\ell-1), \quad\text{and}\quad P_{22}=1+Dr(1-\ell^{-1}). $$
These entries recover successively $\ell=(1-P_{11})/(P_{22}-1)$, $r$, and $w$. Thus this construction gives $t^2(t-1)$ distinct nonscalar matrices, and projectivization loses a factor of at most $\varepsilon$. Adding the identity proves the difference bound. The diagonal product $1+Dr$ takes $t$ distinct values. For fixed $r$, the upper-left and upper-right entries show that the $t^2$ projective matrices in $C\{B(r,z):z\in\mu\}$ are distinct. Therefore $|CB|=t^3$.

To prove generation, fix a ratio $u/v\neq1$ in a single orbit's differences. Their diagonal entries are fixed, while their lower-left entries run through a nonzero multiple of $\mu$. Ratios of these differences give lower unipotents with parameters in a nonzero multiple of $\mu-\mu$. Its nonzero $\mathbb{F}_p$-span is invariant under $\mu$, hence under $\mathbb{F}_p[\mu]=\mathbb{F}_q$, and therefore equals $\mathbb{F}_q$. Thus each single-orbit difference subgroup contains the full lower root group.

By fixing one factor at a time, we see that $BB^{-1}$ contains the first orbit's differences and the second orbit's differences conjugated by a member of the first orbit. The conjugating matrix moves the point fixed by the lower root group. Root groups at two distinct points generate $\PSL_2(q)$ by elementary row operations. Finally, a proper subfield of $\mathbb{F}_q$ has multiplicative group of order at most $\sqrt{q}-1$, which proves the sufficient field condition.
\end{proof}

\begin{lemma}\label{lem:cyclic-amplification}
Let $C=\langle a\rangle\leq\PSL_2(q)$ be semisimple of order $t>2$, and let $T$ be the full torus containing $C$. If $x\notin N_G(T)$, then $|CxC|=t^2$. Let $D=CO_a(x)=CxC$. For every bi-$C$-invariant set $A$,
$$ AO_a(x)=AD,\qquad DD^{-1}=CxCx^{-1}C, \quad\text{and}\quad \langle DD^{-1}\rangle=\langle C,xCx^{-1}\rangle. $$
If all four entries of $x$ in a torus eigenbasis are nonzero, then
\begin{equation} |DD^{-1}|\geq t+\left\lfloor\frac{t-1}{2}\right\rfloor t^2. \tag{13.6}\label{eq:amplified-differences} \end{equation}
\end{lemma}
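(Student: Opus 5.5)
Work in $\PSL_2(q)$ with a determinant-one lift of $a$ diagonalized in the eigenbasis of the full torus $T\supseteq C$, possibly over $\mathbb{F}_{q^2}$ when $a$ is nonsplit. Write $a=\diag(\lambda,\lambda^{-1})$ and $x=\begin{pmatrix}\alpha&\beta\\ \gamma&\delta\end{pmatrix}$. The proof has three steps: the double-coset count, the three set identities, and the difference bound \eqref{eq:amplified-differences}.

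\emph{Double coset.} We have $|CxC|=|C|^2/|C\cap xCx^{-1}|$. The condition $x\notin N_G(T)$ means $xTx^{-1}\neq T$. Distinct maximal tori of $\PSL_2(q)$ meet trivially, since a nonidentity semisimple element determines its pair of eigenlines. Hence $C\cap xCx^{-1}=1$ and $|CxC|=t^2$.

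\emph{Set identities.} These are formal. First, $CO_a(x)=\bigcup_i Ca^ixa^{-i}=\bigcup_i Cxa^{-i}=CxC$, so $D=CxC$. For bi-$C$-invariant $A$ we get $AD=ACxC=AxC=\bigcup_i Aa^ixa^{-i}=AO_a(x)$, using $Aa^i=A$. Next, $DD^{-1}=CxC\,Cx^{-1}C=CxCx^{-1}C$. The subgroup generated by $DD^{-1}$ contains $C=C\cdot xx^{-1}\cdot C$ and $xCx^{-1}$. Conversely, $CxCx^{-1}C\subseteq\langle C,xCx^{-1}\rangle$, which gives the generation identity.

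\emph{Counting $DD^{-1}$.} Write $DD^{-1}=\bigcup_{j}C\,(xa^jx^{-1})\,C$, with $j\in\mathbb{Z}/t\mathbb{Z}$.
\begin{itemize}
\item For $j=0$ the piece is $C$, giving $t$ elements.
\item For $j\neq0$ the element $y_j=xa^jx^{-1}$ lies in $xTx^{-1}\neq T$, so $y_j\notin N_G(T)$ and $|Cy_jC|=t^2$ by the first step.
\item The pieces for $j$ and $-j$ may coincide. Indeed, $C y_{-j}C=C y_jC$ can happen because $y_{-j}=y_j^{-1}$ and $wy_jw^{-1}$ may invert. For this reason I only claim that the classes $\{j,-j\}$ give at least $\lfloor (t-1)/2\rfloor$ distinct double cosets.
\item Distinct double cosets are disjoint, which yields $t+\lfloor(t-1)/2\rfloor t^2$.
\end{itemize}

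\emph{Main obstacle.} The difficulty is proving that $Cy_jC\neq Cy_kC$ when $k\neq\pm j$. I would use a double-coset invariant, namely the product of the diagonal entries up to sign and scalar. For $g$ with entries $g_{ij}$ in the eigenbasis, multiplying on either side by $C$ rescales the off-diagonal entries but fixes $g_{11}g_{22}$ projectively. For $y_j$ the diagonal entries are $\alpha\delta\lambda^{j}-\beta\gamma\lambda^{-j}$ and $\alpha\delta\lambda^{-j}-\beta\gamma\lambda^{j}$. Since $\alpha\delta-\beta\gamma=1$, their product is
\[
1+\beta\gamma\,\alpha\delta\,(2-\lambda^{2j}-\lambda^{-2j})
=1-\alpha\beta\gamma\delta\,(\lambda^{j}-\lambda^{-j})^2 .
\]
The nonvanishing hypothesis gives $\alpha\beta\gamma\delta\neq0$, so this invariant determines $\lambda^{2j}+\lambda^{-2j}$. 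Using that $\lambda^2$ has order $t$ in $\PSL_2$, this in turn determines $j$ up to sign, modulo the identification of $\pm$ lifts. That identification is the one point I would check carefully in odd characteristic, possibly via the sign $-1$ mapping $\lambda^j$ to $-\lambda^j$, which only affects parity and is absorbed by the floor. If equal invariants can still arise, I would fall back on the argument of Lemma~\ref{lem:full-entry-orbits}: recover $r$ and $u$ from the entries in \eqref{eq:orbit-differences}.
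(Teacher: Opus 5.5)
Your route is the paper's. You use the same decomposition $DD^{-1}=\bigcup_j C\,xa^jx^{-1}C$ and the same double-coset invariant: the diagonal product $1+\alpha\beta\gamma\delta(2-\lambda^{2j}-\lambda^{-2j})$ is the paper's $1+k(k-1)(2-r-r^{-1})$ with $r=\lambda^{2j}$. Your eigenline-pair argument for $T\cap xTx^{-1}=1$ is a clean substitute for the paper's centralizer argument, and your set identities are fine.

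One step is false as written and needs repair. For even $t$ and $j=t/2$, the element $y_{t/2}$ is an involution, and it can lie in $N_G(T)\setminus T$. So ``$y_j\in xTx^{-1}\neq T$, hence $y_j\notin N_G(T)$'' fails. For instance, when $q$ is odd and $\alpha\delta=1/2$, both diagonal entries of $y_{t/2}$ vanish. Concretely, take $q=13$, $\lambda=2$, $t=6$ and $x=\begin{pmatrix}1&1\\6&7\end{pmatrix}$; then $y_3=\begin{pmatrix}0&10\\9&0\end{pmatrix}$. In that case $Cy_{t/2}C=Cy_{t/2}$ has only $t$ elements.

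The fix is to count only the classes $\{j,-j\}$ with $j\not\equiv -j\pmod t$, of which there are exactly $\lfloor(t-1)/2\rfloor$. For these $j$, the element $y_j$ has order greater than two and lies outside $T$. Every element of $N_G(T)\setminus T$ is an involution, so $y_j\notin N_G(T)$ and your first step gives $|Cy_jC|=t^2$. This is the paper's ``discard $r=-1$''.

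Your closing worry about $\pm$ lifts is unfounded, and no fallback is needed. The quantity $g_{11}g_{22}$ is unchanged by $g\mapsto -g$, and its value depends only on $\lambda^{2j}$, which has order exactly $t$. Hence distinct classes $\{\pm j\}$ already give distinct invariants.
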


\begin{proof}
Two distinct conjugates of $C$ intersect trivially. Indeed, a common element of order greater than two has a unique torus centralizer. For a common involution, its centralizer is the torus normalizer; every element outside its torus has order two, so a cyclic subgroup of order $t>2$ lies in the torus. Either possibility forces the two full tori, and hence their order-$t$ subgroups, to coincide. Thus $C\cap xCx^{-1}=1$ and $|CxC|=t^2$. The displayed set identities follow from $CO_a(x)=O_a(x)C=CxC$.

For the last bound, let $k=\alpha\delta$ in the coordinates of Lemma~\ref{lem:full-entry-orbits}. The diagonal product of $X_uX_v^{-1}$ is
\begin{equation} 1+k(k-1)(2-r-r^{-1}). \tag{13.7}\label{eq:double-coset-invariant} \end{equation}
Here $r=u/v$. For $r\neq1$, the off-diagonal entries of $X_uX_v^{-1}$ are nonzero. Both diagonal entries can vanish only when $r=-1$ and $k=1/2$ in odd characteristic. Discard $r=-1$ when $t$ is even. The remaining $\lfloor(t-1)/2\rfloor$ inverse pairs give distinct values of this diagonal product. The product is invariant under left and right multiplication by $C$, as well as projective signs. Each value therefore supplies a different double coset of size $t^2$ inside the bi-$C$-invariant set $DD^{-1}$. These double cosets are disjoint from $C\subseteq DD^{-1}$, which proves the bound.
\end{proof}

\begin{lemma}\label{lem:squared-orbit-bounds}
Let $G=\PSL_2(q)$, let $C=\langle a\rangle$ be a full torus of order $t$, and suppose that $\langle C,x\rangle=G$. For the parameters in the following table, $B=O_a(x)^2$ satisfies $|B|\geq b$, $|BB^{-1}|\geq d$, and $\langle BB^{-1}\rangle=G$:
\begin{equation} \begin{array}{rrrr} q&t&b&d\\ \hline 11&6&25&76\\ 13&7&36&99\\ 17&8&49&160 \end{array} \tag{13.8}\label{eq:squared-orbit-params} \end{equation}
\end{lemma}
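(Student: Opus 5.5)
We parametrize the orbit exactly as in Lemma~\ref{lem:full-entry-orbits} and then count. Since $C$ is a full torus of order $t=(q+1)/2$ for $q=11,13$ and $t=(q-1)/2$ for $q=17$, the cases $q=11,13$ are nonsplit and $q=17$ is split.

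\emph{Step 1 (normal form).} In an eigenbasis of $C$, write $x=\begin{pmatrix}\alpha&\beta\\ \gamma&\delta\end{pmatrix}$ with $\alpha\delta-\beta\gamma=1$. In the nonsplit cases the Frobenius-conjugate basis and $\langle C,x\rangle=G$ give $\alpha\beta\gamma\delta\neq0$, as in the last paragraph of Lemma~\ref{lem:full-entry-orbits}. For $q=17$, generation excludes $\beta=0$, $\gamma=0$, and $x$ monomial. Either all four entries are nonzero, or exactly one diagonal entry vanishes. In the latter case we use the parametrization \eqref{eq:paired-orbit-form} of Lemma~\ref{lem:zero-diagonal-pairs} with $O_1=O_2$. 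In every case the orbit is $O=\{X_u:u\in\mu\}$ with $|\mu|=t$.

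\emph{Step 2 (size of $B$).} By \eqref{eq:orbit-squares}, $B=O^2$ consists of the matrices $X_{rz}X_z$. The upper-left entry determines $r$, and then $z$ is recovered from the upper-right entry unless $\alpha+\delta r=0$. In odd characteristic, at most $t$ further images are lost to sign identifications. This gives $|B|\geq t^2-t+1-t=(t-1)^2$, which equals $25,36,49$. In the zero-diagonal case, \eqref{eq:paired-orbit-bounds} even gives $|B|=t^2$.

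\emph{Step 3 (generation).} Since $O^2(O^2)^{-1}\supseteq OO^{-1}$, it suffices to show $\langle OO^{-1}\rangle=G$, and we verify the hypothesis of Lemma~\ref{lem:orbit-generation}. For $q=11,13,17$, the maximal subgroups of $\PSL_2(q)$ containing a full torus are its normalizer (dihedral) and, in a few cases, subgroups isomorphic to $A_4$, $S_4$, or $A_5$. A full torus lies in exactly one torus normalizer. For $q=11$, $t=6$ lies in no $A_5$. For $q=13$, $t=7$ lies only in $D_{14}$. For $q=17$, $t=8$ lies in $D_{16}$ and in $S_4$; for $S_4$, $C_8\not\le S_4$. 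So only the torus normalizer contains $C$. We then apply Lemma~\ref{lem:orbit-generation}, and in the zero-diagonal case alternatively Lemma~\ref{lem:zero-diagonal-pairs}, since $17$ is prime.

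\emph{Step 4 (difference sets, the main obstacle).} The bounds $d=76,99,160$ exceed what the single-orbit count $(t-1)^2$ from \eqref{eq:orbit-differences} gives. The plan is to use the fact that $BB^{-1}$ is bi-$C$-conjugation invariant and contains $OO^{-1}$ together with conjugates $X_wOO^{-1}X_w^{-1}$.

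For the full-entry cases, we count via the double-coset invariant \eqref{eq:double-coset-invariant}. The differences $X_{rz}X_z(X_{sw}X_w)^{-1}$ realize many distinct values of the trace-type invariant, each value contributing a union of conjugation-orbit pieces of size near $t$. We would show that at least $\lceil (d-1)/t\rceil$ such classes occur, plus the identity. For $q=11$ this is $1+(t-1)^2+\cdots$, needing $76$, which is about $2t^2$. Concretely, we compute the diagonal entries of $P=X_{rz}X_zX_w^{-1}X_{sw}^{-1}$ as functions of $(r,s,z/w)$. For fixed $r\neq s$, the map $z/w\mapsto P_{11}$ is injective up to at most two collisions, as in Step 2. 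This gives roughly $t(t-1)$ new elements beyond $OO^{-1}$, with sign losses bounded by $t$.

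For $q=17$ in the zero-diagonal case, \eqref{eq:paired-orbit-bounds} directly gives $|BB^{-1}|\ge 1+64\cdot7/2=225\ge160$.

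The remaining finite checks, for the three specific $q$, are verified by this explicit entry-recovery counting. This is the step I expect to need the most care, especially for controlling coincidences when $\alpha+\delta r=0$.
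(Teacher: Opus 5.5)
Your Steps 1 and 2 agree with the paper. Step 4, however, carries the whole quantitative content of the lemma, and as sketched it is not a proof and would not succeed. The set $BB^{-1}$ with $B=O_a(x)^2$ is invariant under $C$-conjugation. Unlike $CxC$ in Lemma~\ref{lem:cyclic-amplification}, it is in general not invariant under left and right multiplication by $C$. So each value of the double-coset invariant \eqref{eq:double-coset-invariant} that $BB^{-1}$ meets guarantees only one $C$-conjugacy orbit, of size at most $t$. Any single $\mathbb{F}_q$-valued invariant, whether this one or the trace, takes at most $q$ values. A count organized that way can therefore certify at most $tq+1=67,92,137$ elements, below $76,99,160$. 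The concrete estimate you do describe is also too small: $(t-1)^2$ from $OO^{-1}$, plus roughly $t(t-1)$ new elements, minus $t$ sign losses, totals about $49,71,97$. You have also not shown that those new elements avoid $OO^{-1}$.

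The missing idea is to take differences with the \emph{same} ratio, $B(r,z)B(r,w)^{-1}$ with $\ell=z/w\neq1$, and to separate $C$-orbits by the pair of diagonal entries, a finer invariant. Put $k=\alpha\delta$, and let $c_r=\frac{k-1}{r}(\alpha+\delta r)^2$ be the off-diagonal product of $X_{rz}X_z$. The diagonal entries of the difference are then $\bigl(1+c_r(1-\ell),\,1+c_r(1-\ell^{-1})\bigr)$. Distinct pairs, up to simultaneous sign, give disjoint $C$-orbits. Each orbit has full size $t$, with two exceptions: $c_r=-1/2,\ \ell=-1$, which can give a half orbit, and $c_r=-1,\ k=1/2$, where the off-diagonal entries vanish. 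You then have to count these pairs for every $k\in\mathbb{F}_q\setminus\{0,1\}$, over the actual value set of $c_r$. The paper does this through $\bigl((P+Q)^2,PQ\bigr)$ and explicit tables, obtaining at least $13$, $14$, and $23$ pairs. It treats $(q,k)=(17,9)$ separately: there $c_r=-1$ gives diagonal differences contributing only four elements, and the count comes out to exactly $160$.

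Step 3 also has an error at $q=17$. The split torus lies in its two eigenline Borel subgroups, which are conjugate to each other. So it is false that only the torus normalizer contains $C$, and the ``in particular'' form of Lemma~\ref{lem:orbit-generation} does not apply. You must instead observe that when all four entries of $x$ are nonzero, $C$ and $xCx^{-1}$ have disjoint eigenline pairs, so $\langle C,xCx^{-1}\rangle$ lies in neither Borel. Likewise, your reduction to $\langle OO^{-1}\rangle=G$ is false in the zero-diagonal case. There $x$ sends one eigenline of $C$ to the other, so $OO^{-1}$ lies in a Borel, and Lemma~\ref{lem:zero-diagonal-pairs} must be applied directly to $B=O\cdot O$, as you eventually do.
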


\begin{proof}
The first two tori are nonsplit; the third is split. Use the orbit coordinates of Lemma~\ref{lem:full-entry-orbits}. First suppose that all four entries of $x$ are nonzero, and let $k=\alpha\delta\in\mathbb{F}_q\setminus\{0,1\}$. By Lemma~\ref{lem:full-entry-orbits}, $|B|\geq(t-1)^2=b$. Write the square products as
$$ B(r,z)= \begin{pmatrix} A_r&b_rz\\ d_r/z&D_r \end{pmatrix} \quad\text{and}\quad c_r=b_rd_r=\frac{k-1}{r}(\alpha+\delta r)^2. $$
For $\ell=z/w\neq1$, the two diagonal entries of $B(r,z)B(r,w)^{-1}$ are
\begin{equation} (P,Q)=\bigl(1+c_r(1-\ell),\,1+c_r(1-\ell^{-1})\bigr). \tag{13.9}\label{eq:diagonal-differences} \end{equation}
The off-diagonal entries are $A_rb_r(z-w)$ and $D_rd_r(z^{-1}-w^{-1})$, whose product is
\begin{equation} c_r(c_r+1)(2-\ell-\ell^{-1}). \tag{13.10}\label{eq:off-diagonal-product} \end{equation}

We count the diagonal pairs using only arithmetic in $\mathbb{F}_q$. Let $\chi$ be its quadratic character, and define
$$ H_\pm=\{u+u^{-1}:u\in T_0,\ u^t=\pm1\} \quad\text{and}\quad \Sigma=H_+\setminus\{2\}. $$
Here $T_0$ is the norm-one group of order $q+1$ in the nonsplit cases, and $T_0=\mathbb{F}_{17}^{\times}$ in the split case. Thus $\mu=\{u\in T_0:u^t=1\}$. The sets needed here are
$$ \begin{array}{rcc} q&H_+&H_-\\ \hline 11&\{1,2,9,10\}&\{0,5,6\}\\ 13&\{2,7,8,10\}&\{3,5,6,11\}\\ 17&\{0,2,6,11,15\}&\{5,8,9,12\} \end{array} $$
These lists follow by solving $u^2-hu+1=0$, or directly by taking powers of $3+2w$, $9+w$, and $3$, respectively, where $w^2=2$ in the first two fields.

With $s=\delta r/\alpha$, the nonzero values of $c_r$ form
\begin{equation} V_k=\{k(k-1)(h+2):h\in H_{\chi(k)}\}\setminus\{0\}. \tag{13.11}\label{eq:scalar-orbit-values} \end{equation}
Indeed, in the nonsplit cases $\delta=\alpha^q$ and $(\delta/\alpha)^t=\alpha^{(q^2-1)/2}=\chi(k)$. In the split case $\delta/\alpha=k/\alpha^2$, so it lies in the same square class as $k$.

For completeness, the following scalar rule counts the diagonal pairs $(P,Q)$ up to simultaneous sign. For $c\in V_k$ and $h\in\Sigma$, define
$$ T=2+c(2-h) \quad\text{and}\quad N=1+c(c+1)(2-h), $$
and let
$$ J_k=\{(T^2,N):c\in V_k,\ h\in\Sigma\}. $$
Then the number of pairs is exactly
\begin{equation} N_k=\sum_{(u,v)\in J_k} \begin{cases} 1,&\text{when }u=0\text{ or }u=4v,\\ 2,&\text{otherwise}. \end{cases} \tag{13.12}\label{eq:diagonal-pair-count} \end{equation}
To see this, observe that $T=P+Q$ and $N=PQ$. Hence $(T^2,N)$ determines $(P,Q)$ up to simultaneous sign and interchange. Interchanging $\ell$ and $\ell^{-1}$ realizes both orders. These orders give one pair precisely when $P=-Q$ or $P=Q$, that is, when $T=0$ or $T^2=4N$; otherwise they give two pairs. This also shows that no unlisted identifications occur.

The complete scalar calculations are small:
$$ \begin{array}{ccc} \multicolumn{3}{c}{q=11}\\ k&V_k&N_k\\ \hline 2,10&\{3,4,5\}&14\\ 3,9&\{2,6,7\}&15\\ 4&\{1,3,4\}&14\\ 5&\{3,5,9\}&13\\ 6&\{1,5,9\}&13\\ 7&\{6,7,8\}&15\\ 8&\{2,7,8\}&14 \end{array} \qquad\text{and}\qquad \begin{array}{ccc} \multicolumn{3}{c}{q=13}\\ k&V_k&N_k\\ \hline 2&\{1,3,10\}&16\\ 3&\{2,7,8,11\}&22\\ 4,10&\{1,3,4,9\}&22\\ 5&\{4,9,10\}&14\\ 6,8&\{2,6,7\}&18\\ 7&\{2,8,11\}&16\\ 9&\{2,5,6,11\}&24\\ 11&\{3,4,9\}&16\\ 12&\{5,7,8,11\}&24 \end{array} $$
$$ \begin{array}{ccc} \multicolumn{3}{c}{q=17}\\ k&V_k&N_k\\ \hline 2,16&\{4,8,9,16\}&23\\ 3&\{8,9,15,16\}&24\\ 4&\{3,7,11,14\}&28\\ 5&\{4,8,13,16\}&24\\ 6,12&\{6,7,11,12\}&26\\ 7,11&\{3,5,10,12\}&28\\ 8&\{3,6,10,14\}&27\\ 9&\{1,8,15,16\}&24\\ 10&\{1,2,4,16\}&24\\ 13&\{5,6,7,12\}&28\\ 14&\{1,13,15,16\}&24\\ 15&\{7,10,12,14\}&28 \end{array} $$
For example, the minimum row $q=13$, $k=5$ has
$$ J_5=\{(0,5),(0,7),(3,6),(3,10),(3,11),(10,4),(12,5),(12,9)\}. $$
The first two pairs have weight one and the other six have weight two, giving $N_5=14$. Every other entry is checked by the same rule, using at most sixteen scalar pairs.

Each counted diagonal pair supplies a $C$-conjugacy orbit of matrices in $BB^{-1}$. If $c\neq-1$, then the off-diagonal product in \eqref{eq:off-diagonal-product} is nonzero. The orbit has $t$ projective elements unless both diagonal entries vanish; in that case it has $t/2$ elements. The latter case occurs only for $c=-1/2$ and $\ell=-1$, so at most one counted pair is affected. If $c=-1$, then both off-diagonal entries can vanish only when $A_r=D_r=0$. These equations imply $k^2=(k-1)^2$, hence $k=1/2$. Outside this last case, the same full-orbit count applies.

For $q=11$, the lists contain no $c=-1$. Since $N_k\geq13$, adding the identity gives
$$ |BB^{-1}|\geq6\cdot13-3+1=76. $$
For $q=13$, again $c=-1$ never occurs, and $-1\notin\mu$, so no half-size orbit occurs. Therefore
$$ |BB^{-1}|\geq7\cdot14+1=99. $$
For $q=17$ and $k\neq9$, one obtains $|BB^{-1}|\geq8\cdot23-4+1=181$. When $k=9$, the value $c=-1$ gives diagonal differences $(\ell,\ell^{-1})$. Their four projective pairs are $(1,1)$, $(2,9)$, $(4,13)$, and $(8,15)$, supplying four elements including the identity. Here $A_rD_r=1+c=0$, and $k^2=(k-1)^2$ makes either equality $A_r=0$ or $D_r=0$ force the other, so the off-diagonal entries do vanish. These pairs cannot arise from another value of $c$, because $PQ=1$ in \eqref{eq:diagonal-differences} forces $c=-1$. The other $N_9-4=20$ pairs each supply eight elements, except for the possible zero pair, which supplies four. Consequently,
$$ |BB^{-1}|\geq4+20\cdot8-4=160. $$

Generation follows from Lemma~\ref{lem:orbit-generation}. In the two nonsplit cases, the torus normalizer is the unique maximal overgroup of $C$. At $(q,t)=(17,8)$, the only proper overgroups are the two eigenline Borels and the torus normalizer, by Dickson's classification \cite{Giudici2007}. The disjoint eigenline pairs exclude both Borels. The dihedral normalizer has a unique cyclic subgroup of order eight; containing $xCx^{-1}$ would therefore force $x\in N_G(C)$, contrary to $\langle C,x\rangle=G$.

Finally, a zero diagonal entry is possible only in the split case. By Lemma~\ref{lem:zero-diagonal-pairs}, with $O_1=O_2=O_a(x)$, we then have $|B|=64$, $|BB^{-1}|\geq225$, and generation. This proves all rows.
\end{proof}

\subsection{Selecting and enlarging orbit subsequences}\label{sec:orbit-selection}

The same argument applies before and after a product set contains $C$. In the latter case, rounding to the possible cardinalities of the product set can give stronger bounds.

\begin{proposition}\label{prop:orbit-growth-criterion}
Assume \eqref{eq:high-mult-setup}. Let $r,k,b,d\geq1$ and $E,Q,s\geq0$ be integers. Suppose that at least $E$ terms of $R$ are eligible, lying in at most $Q$ double cosets of $C$. Choose one of the following settings:
\begin{itemize}
\item[(i)] No initial seed is used; let $s=0$ and $B_x=O_a(x)^r$.
\item[(ii)] An orbit product on at most $s$ eligible terms contains $C$; let $B_x=CO_a(x)^r$.
\end{itemize}
Suppose that, for every eligible $x$,
$$ |B_x|\geq b,\qquad |B_xB_x^{-1}|\geq d, \quad\text{and}\quad \langle B_xB_x^{-1}\rangle=G. $$
In case \textup{(i)}, let $\Omega=\{1,\ldots,n\}$. In case \textup{(ii)}, let $\Omega\subseteq\{1,\ldots,n\}$ contain the cardinality of every nonempty bi-$C$-invariant subset of $G$. Define
$$ \lceil u\rceil_\Omega=\min\{v\in\Omega:v\geq u\} $$
for $1\leq u\leq n$. Let $\delta=\lceil(d+1)/4\rceil$ and $v_1=\lceil b\rceil_\Omega$, and define
$$ v_{j+1}= \begin{cases} n,&\text{when }v_j+b>n,\\[3pt] \displaystyle \left\lceil \min\left\{n,\ v_j+ \min\left\{ \left\lceil\frac{\min(v_j,n-v_j)+1}{2}\right\rceil,\delta \right\}\right\} \right\rceil_\Omega, &\text{when }v_j+b\leq n. \end{cases} $$
If $v_k=n$ and
\begin{equation} \begin{gathered} E-s-(r-1)Q\geq rk,\\ (t-1)(s+rk)\leq m+1 \quad\text{and}\quad t(s+rk)\leq n, \end{gathered} \tag{13.13}\label{eq:orbit-resource-bounds} \end{equation}
then $1\in\Pi_n(S)$. When $r=1$, no bound on $Q$ is needed.
\end{proposition}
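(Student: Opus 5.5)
We argue by contradiction, assuming $1\notin\Pi_n(S)$, and build an increasing chain of product sets $A_1\subseteq\cdots$ with $A_k=G$, each realized by a bounded number of eligible remainder terms together with copies of $a$. By \eqref{eq:orbit-realization}, a single eligible term $x$ placed among $t-1$ copies of $a$ realizes $O_a(x)a^{-1}$. So $r$ terms from one double coset $CxC$, with $r(t-1)$ copies of $a$, realize a translate of $O_a(x)^r$, since $O_a(a^uxa^v)=a^{u+v}O_a(x)$. In setting (ii) we first spend the seed, at most $s$ eligible terms, to obtain a product set containing $C$. After that, every later product is bi-$C$-invariant and absorbs $C$, so that $AO_a(x)^r=AB_x$.

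\textbf{Step 1 (the groups of $r$ terms).} For the $j$th block we need $r$ unused eligible terms in a common double coset. Pigeonholing over the at most $Q$ double cosets, the number of unused eligible terms that can be grouped into full $r$-blocks is at least $E-s-(r-1)Q$ minus those already used. The first inequality of \eqref{eq:orbit-resource-bounds} therefore supplies $k$ disjoint blocks. For $r=1$ every single term is a block, so no bound on $Q$ is needed.

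\textbf{Step 2 (growth).} Start with $A_1=B_{x_1}$, or with $C$ times the seed product in case (ii); then $|A_1|\geq b$. In case (ii), $|A_1|$ is the cardinality of a bi-$C$-invariant set, hence lies in $\Omega$ and is at least $\lceil b\rceil_\Omega$. Inductively, suppose $|A_j|\geq v_j$.
\begin{itemize}
\item If $|A_j|+|B_x|>n$, then Lemma~\ref{lem:product-set-tools}(i) gives $A_jB_x=G$.
\item Otherwise, apply Lemma~\ref{lem:difference-growth} to $A_j$ and $B=B_x$, using $\langle B_xB_x^{-1}\rangle=G$ and $|B_xB_x^{-1}|\geq d$. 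This gives growth by at least $\min\{\lceil(\min(|A_j|,n-|A_j|)+1)/2\rceil,\delta\}$.
\end{itemize}
The growth term is not monotone in $|A_j|$, so we check that the recursion still controls it. The map $u\mapsto u+\min\{\lceil(\min(u,n-u)+1)/2\rceil,\delta\}$ is nondecreasing in $u$: near $n-u$ small, an increase of $u$ by one lowers the growth term by at most one. Hence $|A_j|\geq v_j$ implies $|A_{j+1}|\geq$ the recursive value. In case (ii) we may round up to $\Omega$, because $A_{j+1}$ is again bi-$C$-invariant. Also, if $|A_j|=n$ already, we simply keep it. Thus $A_k=G$.

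\textbf{Step 3 (realizing products).} The terms used are $s+rk$ remainder terms and $(t-1)(s+rk)$ copies of $a$, a total of $\ell_0=t(s+rk)\leq n$. Since $(t-1)(s+rk)\leq m+1\leq M$, enough copies of $a$ exist. Up to the fixed translation by a power of $a$, every element of $G$ lies in $\pi$ of this subsequence $U$, so $\pi(U)=G$ with $|U|\leq n$. Lemma~\ref{lem:high-mult-completion}(ii) then gives $1\in\Pi_n(S)$, a contradiction.

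\textbf{Main obstacle.} The delicate point is the legitimacy of the simultaneous orbit choices in Step 3. Each $x$ must be inserted among its own $t-1$ copies of $a$, and the resulting blocks must be concatenated so that the product set really contains the product of the chosen orbit sets, translated by a single fixed power of $a$. We handle this by concatenating the blocks $a^{[t-1]}x$ in order and applying \eqref{eq:orbit-realization} block by block. This gives the product set $\prod_j O_a(x_j)a^{-1}$, which equals the product of the orbits times $a^{-w}$, because each orbit commutes with $a$ as a set.
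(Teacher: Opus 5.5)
Your proposal is correct and follows essentially the same route as the paper's proof. The shared steps are: grouping the eligible terms into $r$-blocks inside double cosets with at most $(r-1)Q$ left over, using toral translates of $O_a(x)^r$, driving the recurrence by Lemma~\ref{lem:difference-growth} and Lemma~\ref{lem:product-set-tools}(i), using monotonicity of the update together with rounding into $\Omega$, checking the copy and length counts, and closing with Lemma~\ref{lem:high-mult-completion}(ii). The one fix is in case (ii): as the paper does, explicitly restrict the seed's product set to its subset $C$ and start from $A_1=C\,O_a(x_1)^r=B_{x_1}$ (up to a toral translate), because ``$C$ times the seed product'' need neither have $b$ elements nor be bi-$C$-invariant.
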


\begin{proof}
After the seed, group the remaining eligible terms into subsequences of $r$ terms within their double cosets. At most $(r-1)Q$ terms remain unused, so the first inequality supplies $k$ disjoint subsequences. If $x_i=a^{u_i}xa^{w_i}$, then $O_a(x_i)=a^{u_i+w_i}O_a(x)$. Orbit sets commute with $C$, so each selected subsequence gives a toral translate of $O_a(x)^r$, with the same difference bounds up to conjugacy.

In case (ii), restrict the initial product to $C$. Subsequent sets are bi-$C$-invariant and satisfy $AO_a(x)^r=A(CO_a(x)^r)$. Their cardinalities lie in $\Omega$. The first factor therefore gives at least $v_1$ products. By Lemma~\ref{lem:difference-growth}, the cardinalities satisfy the increment bound before rounding, while $|A|+|B_x|>n$ implies $AB_x=G$ directly. Rounding a lower bound up to the next member of $\Omega$ preserves the bound, since the actual cardinality belongs to $\Omega$. Both the update before rounding and the map $u\mapsto\lceil u\rceil_\Omega$ are nondecreasing. Thus larger actual sets preserve the recurrence.

Each orbit uses one remainder term and $t-1$ copies of $a$, by \eqref{eq:orbit-realization}. The last two inequalities make the whole construction a subsequence of $S$ of length at most $n$, with product set $G$. By Lemma~\ref{lem:high-mult-completion}(ii), the conclusion follows.
\end{proof}

In the ordinary case, rounding uses all integers. After a cyclic seed, one may always use the multiples $t,2t,\ldots,n$. By Lemma~\ref{lem:prime-double-cosets}, a smaller set of possible cardinalities, and hence a stronger rounding step, is sometimes available when $C$ has prime order.

\subsection{One growth estimate for ordinary and paired factors}\label{sec:weighted-orbit-growth}

\begin{lemma}\label{lem:weighted-growth}
Let $n=|G|$, let $\delta,b\geq1$ be integers, and suppose that factors of costs $r\in\{1,2\}$ satisfy
$$ |AB|-|A|\geq \min\left\{ \left\lceil\frac{\min(|A|,n-|A|)+1}{2}\right\rceil,r\delta \right\} $$
whenever $A\neq G$ is a preceding product set. Suppose that every factor of cost one has at least $b$ elements and every factor of cost two has at least $4\delta$ elements. If $4\delta-1<n/2$, then define
$$ A_b=\min\{j\geq0:(b+1)3^j\geq2\delta\,2^j\} \quad\text{and}\quad B_\delta=\lfloor\log_2(2\delta)\rfloor. $$
The total cost needed for the product to reach $G$ satisfies
\begin{equation} W<\frac{n+2}{\delta}+A_b+B_\delta+1. \tag{13.14}\label{eq:mixed-cost-growth} \end{equation}
If every factor has cost one, then the stronger bound is
\begin{equation} W<\frac{n+2}{\delta}+A_b+B_\delta-2. \tag{13.15}\label{eq:unit-cost-growth} \end{equation}
\end{lemma}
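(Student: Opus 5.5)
The plan is to track the sizes $a_0<a_1<\cdots$ of the successive product sets and to split the growth into three phases. In the \emph{small} phase the increment is governed by $\lceil(|A|+1)/2\rceil$. In the \emph{middle} phase the cap $r\delta$ is the binding term. In the \emph{large} phase $n-|A|$ is small and the increment is again governed by $\lceil(n-|A|+1)/2\rceil$. The cost spent in each phase will be bounded separately. Throughout, each cost-$2$ factor will be compared with two cost-one steps, so that a single potential argument handles both.

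\emph{Small phase.} Suppose $|A|\le n/2$ and $\lceil(|A|+1)/2\rceil< r\delta$. Then the new size satisfies $|AB|\ge |A|+(|A|+1)/2$, which gives $|AB|+1\ge \tfrac32(|A|+1)$. The first factor of cost one already gives $|A|\ge b$. A cost-two factor gives $|A|\ge4\delta$, which immediately reaches the middle regime.
\begin{itemize}
\item If only cost-one factors are used, then after $j$ further steps $|A|+1\ge (b+1)(3/2)^j$. This reaches $2\delta$, the threshold at which the cap $\delta$ becomes binding, within $A_b$ steps, by the definition of $A_b$.
\item A cost-two factor is handled as follows. Its cap $2\delta$ is reached once $|A|+1\ge 4\delta$, and the value $|AB|\ge 4\delta$ gives that directly. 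Thus one cost-two step can replace the whole small phase at the price of at most $2$ units of cost. This accounts for the extra $+3$ in the mixed bound \eqref{eq:mixed-cost-growth} compared with \eqref{eq:unit-cost-growth}.
\end{itemize}

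\emph{Middle phase.} While $\min(|A|,n-|A|)+1\ge 2r\delta$, every factor of cost $r$ adds at least $r\delta$. Each unit of cost therefore contributes at least $\delta$ elements. Hence the cost spent here is at most about $(n-2\cdot 2\delta)/\delta+1$, taking the interval from $|A|\approx 2\delta$ up to $n-|A|\approx 2\delta$. There is one subtlety: a cost-two factor whose cap $2\delta$ is not binding but whose $\min(\cdot)/2$ bound still exceeds $\delta$. This occurs only near the phase boundaries, and I will absorb it into the additive constants.

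\emph{Large phase.} Put $c=n-|A|$ with $c<2\delta$. Each step gives $c'\le c-\lceil (c+1)/2\rceil\le \lfloor (c-1)/2\rfloor$, so $c$ at least halves. Starting from $c<2\delta$, it reaches $0$ in at most $\lfloor\log_2(2\delta)\rfloor+1$ steps; a slightly sharper count gives $B_\delta$. As an alternative ending, once $|A|+|B|>n$ one has $AB=G$ by Lemma~\ref{lem:product-set-tools}(i). This is what the hypothesis $4\delta-1<n/2$ secures for cost-two factors, and it guarantees that the phases occur in the stated order.

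Summing the three phases gives $W<(n+2)/\delta+A_b+B_\delta+C$ for a constant $C$: $C=-2$ in the unit-cost case and $C=+1$ in the mixed case. The main obstacle is this bookkeeping of the constants. The boundary rounding at the transitions $|A|\approx2\delta$ and $n-|A|\approx 2\delta$, together with the ceilings, must be made to fit exactly into $(n+2)/\delta$ with these offsets.

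I would try first to set up a single potential function. On $[0,2\delta]$ it would be $\log_{3/2}$-type, on the middle interval linear with slope $1/\delta$, and on the final interval $\log_2(n-|A|)$. The task is then to verify that every factor of cost $r$ decreases this potential by at least $r$, except for a bounded number of transition steps, which are counted explicitly.
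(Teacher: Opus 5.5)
Your three-phase decomposition is the one the paper uses. In the unit-cost case it closes exactly once the thresholds are fixed at $T=2\delta-1$:
\begin{itemize}
\item For $|A|<T\le n/2$ the increment is at least $(|A|+1)/2$, so the first factor and at most $A_b$ more reach $T$.
\item On $[T,n-T)$ each factor adds at least $\delta$, so this stage, including its crossing factor, uses fewer than $(n-2T)/\delta+1$ factors.
\item Once $n-|A|\le T$, the quantity $n-|A|+1$ at least halves, so $B_\delta$ factors finish.
\end{itemize}
The sum $1+A_b+\bigl((n-4\delta+2)/\delta+1\bigr)+B_\delta$ is exactly \eqref{eq:unit-cost-growth}. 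Your proposal stops before any of these counts. Since the statement consists of nothing but these constants, deferring ``the bookkeeping'' leaves it unproved.

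The real gap is the mixed case. Your single-potential plan relies on a cost-two factor being worth two cost-one steps. That is false in the strips where $\min(|A|,n-|A|)+1<4\delta$. For instance, at $|A|=3\delta$ with $\delta\ge3$, the hypotheses give only $|AB|-|A|\ge\lceil(3\delta+1)/2\rceil<2\delta$, even after using $|AB|\ge|B|$. You flag this and propose to absorb it into the constants, but that absorption is precisely what decides whether one gets $+1$. Likewise, the extra $+3$ does not come from one cost-two factor replacing the small phase; in the correct count it arises as $+3$ (initial), $-3$ (middle), $+3$ (tail).

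The missing idea is to move the thresholds to $T=4\delta-1$ in the mixed case. This is what $4\delta-1<n/2$ is for: it keeps $T<n-T$. The finishing step $AB=G$ needs only $|B|\ge4\delta>n-|A|$, not that hypothesis. The three stages are then counted as follows.
\begin{itemize}
\item \emph{Middle.} On $[T,n-T)$ every cost-$r$ factor adds at least $r\delta$. So this stage costs less than $(n-2T)/\delta+2$, since the crossing factor may cost two.
\item \emph{Initial.} Below $T$, cost-two factors are used only through their size: such a factor lands above $T$ because $|AB|\ge|B|\ge4\delta$. Cost-one factors alone need at most $A_b+3$ steps ($1+A_b$ to reach $2\delta-1$, then two increments of $\delta$). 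Hence at most $A_b+2$ cost-one factors precede a cost-two one, and the initial cost is at most $A_b+4$.
\item \emph{Tail.} Above $n-T$, two cost-one factors bring the complement to at most $2\delta-1$, and $B_\delta$ more finish. A cost-two factor finishes at once, so at most $B_\delta+1$ cost-one factors precede it, and the tail costs at most $B_\delta+3$.
\end{itemize}
Adding $(A_b+4)+\bigl((n-8\delta+2)/\delta+2\bigr)+(B_\delta+3)$ gives \eqref{eq:mixed-cost-growth}.
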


\begin{proof}
First consider factors of cost one, and let $T=2\delta-1$. The first factor has size at least $b$. Below $T$, the size plus one grows by a factor of at least $3/2$, so at most $A_b$ additional factors reach $T$. While $T\leq v<n-T$, every factor increases the size by at least $\delta$. Including the crossing factor, this middle stage takes fewer than $(n-2T)/\delta+1$ factors. Thereafter the complement has size at most $T$; its size plus one at least halves at each step, so at most $B_\delta$ further factors finish. The sum proves the stronger bound for factors of cost one.

For mixed costs, let $T=4\delta-1$. With only factors of cost one, the initial stage reaches $T$ in at most $A_b+3$ factors: reach $2\delta-1$ as above and then add two increments of $\delta$. A factor of cost two reaches $T$ immediately, because its own size is greater than $T$. Allowing such a factor as the last initial factor gives initial cost at most $A_b+4$.

In $[T,n-T)$, a factor of cost $r$ increases the size by at least $r\delta$, so the middle stage, including its crossing factor, costs less than $(n-2T)/\delta+2$. For a complement of size at most $T$, two factors of cost one reduce its size to at most $2\delta-1$, after which $B_\delta$ factors finish. A factor of cost two finishes immediately, since its size exceeds the complement. Allowing such a factor in the last position bounds the tail cost by $B_\delta+3$. Summing proves the mixed-cost bound. An overshoot only omits part of a stage and cannot increase these bounds.
\end{proof}

\subsection{Growth from two root groups}\label{sec:root-orbit-growth}

\begin{proposition}\label{prop:prime-root-growth}
If $G=\PSL_2(p)$ with $p\geq11$ prime and $a$ unipotent, then $1\in\Pi_n(S)$.
\end{proposition}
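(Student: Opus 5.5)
We argue by contradiction under \eqref{eq:high-mult-setup}, with $1\notin\Pi_n(S)$, $G=\PSL_2(p)$, $n=p(p^2-1)/2$, $m=n/2$, and $a$ a nonidentity unipotent element. Thus $t=p$, $C=\langle a\rangle$ is the root group $U_\infty$ at the point $\infty$, and $N_G(C)$ is the Borel subgroup $B_\infty$ of order $p(p-1)/2$. The plan is to apply Proposition~\ref{prop:orbit-growth-criterion} in setting (i), with $r=1$ or $r=2$. The eligible terms are those of $R$ outside $B_\infty$.

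\textbf{Counting eligible terms.} Lemma~\ref{lem:subgroup-cap}, applied to $L=B_\infty\geq C$, gives $|R_{B_\infty}|\leq p(p-1)/2-2$. Hence at least
\[
E\geq m-p(p-1)/2+2
\]
terms of $R$ lie outside $B_\infty$. Every such $x$ moves $\infty$, so $\langle C,x\rangle$ contains two distinct root groups. Therefore $\langle C,x\rangle=G$.

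\textbf{Orbit structure.} For $x=\begin{pmatrix}\alpha&\beta\\ \gamma&\delta\end{pmatrix}$ with $\gamma\neq0$, conjugation by $u(s)$ sends $x$ to $u(s)xu(-s)$. Its lower-left entry stays $\gamma$, and its diagonal becomes $(\alpha+s\gamma,\ \delta-s\gamma)$. So $O_a(x)$ has exactly $p$ elements, and $O_a(x)O_a(x)^{-1}$ consists of matrices $u(s)xu(-s)\,u(s')x^{-1}u(-s')$. I would then compute directly, much as in Lemma~\ref{lem:full-entry-orbits}, that
\[
|O_a(x)O_a(x)^{-1}|\geq 1+(p-1)p/\varepsilon,
\]
with the entries recovering $s-s'$ and $s$.

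For generation, the differences with fixed $s-s'$ contain lower-triangular unipotent elements, namely the root group at $x^{-1}\infty$ conjugated into position. Together with $C$, a second root group gives all of $G$, so $\langle O O^{-1}\rangle=G$. This also follows from Lemma~\ref{lem:orbit-generation}, because the only maximal overgroups of $C$ are $B_\infty$ and, for small $p$, exceptional $A_4$, $S_4$ or $A_5$ subgroups. These exceptional subgroups have order coprime to $p$ once $p\geq11$, except for $A_5$ when $p=11$. So the hypothesis of that lemma holds, or the exceptional case is handled separately.

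\textbf{Running the recurrence.} With $r=1$, $b=p$ and $\delta=\lceil(d+1)/4\rceil\approx p^2/8$, Lemma~\ref{lem:weighted-growth} gives a number of factors
\[
k\lesssim\frac{n+2}{\delta}+O(\log p)\approx 4p+O(\log p).
\]
The resource conditions \eqref{eq:orbit-resource-bounds} then require $E\geq k$ and $(p-1)k\leq m+1$. The second is about $4p^2\leq p^3/4$, which is true for $p\geq 17$. The condition $tk\leq n$ is immediate. The main obstacle is the small primes $p=11$ and $p=13$, where the inequality $(p-1)k\leq m+1$ is tight. For these I would run the recurrence of Proposition~\ref{prop:orbit-growth-criterion} numerically. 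If that fails, I would switch to $r=2$ using paired orbits $O_a(x)^2$, whose difference sets have size of order $p^3$. A paired factor costs two remainder terms and $2(p-1)$ copies of $a$, and the mixed bound \eqref{eq:mixed-cost-growth} would then be used. Failing that, the last resort is Lemma~\ref{lem:root-concentration} for these two groups. Once the construction reaches product set $G$ within length $n$, Lemma~\ref{lem:high-mult-completion}(ii) gives $1\in\Pi_n(S)$, the contradiction.
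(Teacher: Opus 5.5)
\textbf{A quantitative gap at $p=11,13$.} Your outline follows the paper's route: single orbit factors $O_a(x)$ for $x$ outside the Borel, growth through the difference-set recurrence, and completion by Lemma~\ref{lem:high-mult-completion}(ii). However, the difference bound you propose, $|O_a(x)O_a(x)^{-1}|\ge 1+p(p-1)/2$, is too weak, and it fails exactly at the primes you flag. With this $d$ you get $\delta=15$ for $p=11$ and $\delta=20$ for $p=13$. Apart from the final jump, each step of the recurrence in Proposition~\ref{prop:orbit-growth-criterion} gains at most $\delta$. Reaching $n=660$ therefore takes $k=46$ factors, and reaching $n=1092$ takes $k=57$. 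The copy condition $(p-1)k\le m+1$ in \eqref{eq:orbit-resource-bounds} allows at most $33$ and $45$, respectively. So no numerical run of the $r=1$ recurrence can rescue these primes.

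Your fallbacks are not established. The $r=2$ option needs proved lower bounds for $|O^2|$ and $|O^2(O^2)^{-1}|$, plus a generation statement, and you supply none of these. Lemma~\ref{lem:root-concentration} is a concentration bound used for profile extraction and gives no product-set growth. The heuristic $4p^2\le p^3/4$ also does not settle $p=17$. With $\delta=35$, Lemma~\ref{lem:weighted-growth} only gives $W\le 77$, and $16\cdot 77=1232>1225=m+1$. The exact recurrence does give $k=74$, which suffices, but this has to be checked.

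\textbf{The missing ingredient: counting sign coincidences.} Reduce $x$ by left and right multiplication by $C$ to $w_c$; this translates the orbit and conjugates its difference set. The nonidentity differences are then $p(p-1)$ distinct matrices $D(i,k)$ in $\SL_2(p)$, with lower-left entry $-c^2k$ and trace $2+c^2k^2$. In your coordinates the trace is $2+\gamma^2(s-s')^2$. A coincidence $D(i,k)=-D(i',k')$ forces $k'=-k$ and $c^2k^2=-2$. So only two values of $k$, hence at most $2p$ matrices, are involved, and at most $p$ images are lost. This gives $|OO^{-1}|\ge 1+p(p-1)-p=(p-1)^2$.

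This doubles the growth constant to $\delta=(p-1)^2/4+1$, so $(n+2)/\delta<2p+4$. The cost-one bound of Lemma~\ref{lem:weighted-growth} then gives at most $2p+A_p+B_\delta+1$ factors: $32$ for $p=11$, $38$ for $p=13$, and fewer than $3p$ for $p\ge17$. The inequalities $3p(p-1)\le m$ and $3p^2\le n$ finish the argument uniformly for all $p\ge 11$.

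\textbf{A minor correction.} Since $11\nmid 60$, no exceptional subgroup contains $C$ when $p=11$. The Borel is the unique maximal overgroup of $C$, and generation follows at once from $\langle OO^{-1}\rangle C\supseteq\langle C,xCx^{-1}\rangle=G$ together with simplicity.
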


\begin{proof}
Choose coordinates with $a$ represented by $\begin{psmallmatrix}1&1\\0&1\end{psmallmatrix}$, and let $B$ be the upper triangular subgroup. We first show that, for $x\notin B$, the orbit $O=O_a(x)$ has $p$ members, at least $(p-1)^2$ differences, and a generating difference set.

Left and right multiplication by upper unipotents reduce $x$ to
$$ w_c=\begin{pmatrix}0&-c^{-1}\\c&0\end{pmatrix}, $$
where $c\neq0$. These operations translate the orbit and conjugate its difference set. Its nonidentity differences are represented by
$$ D(i,k)= \begin{pmatrix} 1-c^2ik&-k+c^2ik(i+k)\\ -c^2k&1+c^2k(i+k) \end{pmatrix}, $$
where $i\in\mathbb{F}_p$ and $k\in\mathbb{F}_p^*$. The lower-left and upper-left entries determine $k$ and $i$. Equality up to a minus sign forces $c^2k^2=-2$ and pairs $(i,k)$ with $(i+k,-k)$. At most $2p$ matrices take part in these identifications, so including the identity gives at least $1+p(p-1)-p=(p-1)^2$ images.

Let $K=\langle OO^{-1}\rangle$. It is normalized by $C$, and $KC$ contains $C$ and $xCx^{-1}$, the full root groups at two distinct points. These groups generate $G$: after moving the points to $0,\infty$, upper and lower elementary matrices generate $\SL_2(\mathbb{F}_p)$ by row operations. Thus $KC=G$, and simplicity implies $K=G$. The orbit has size $p$, since its size divides the prime $p$ and a singleton orbit would have trivial difference subgroup.

Let $\delta=(p-1)^2/4+1$, and let $A_p,B_\delta$ be the constants of Lemma~\ref{lem:weighted-growth} with $b=p$. Its estimate for factors of cost one applies, since $4\delta-1<n/2$, and
$$ \frac{n+2}{\delta} =2p+4-\frac{4p+12}{p^2-2p+5} <2p+4. $$
Thus at most $2p+A_p+B_\delta+1$ orbit factors suffice. For $p=11,13$, the pairs $(A_p,B_\delta)$ are $(4,5)$ and $(5,6)$, giving totals of at most $32$ and $38$, respectively. For $p\geq17$,
$$ A_p+B_\delta+1 <\log_{3/2}(p/2)+2\log_2p+1 \leq p. $$
The last inequality holds at $17$, and the derivative of its right side minus its left side is
$$ 1-\frac{1/\log(3/2)+2/\log2}{p}>0 $$
thereafter. Hence any ordered product of $3p$ such orbit factors is $G$.

There are at least $m-|B|+1=p(p-1)^2/4+1\geq3p$ terms outside $B$. Also $3p(p-1)\leq m$ and $3p^2\leq n$ for $p\geq11$. Identity \eqref{eq:orbit-realization} realizes all the factors using available copies of $a$, and Lemma~\ref{lem:high-mult-completion} completes the proof.
\end{proof}

\subsection{Semisimple orbit factors}\label{sec:semisimple-factors}

\begin{lemma}\label{lem:semisimple-factors}
Suppose that $a\in\PSL_2(q)$ is semisimple of order $t\geq9$, and that either $q$ is prime or $t>\sqrt{q}+1$. Let
$$ \delta=\left\lfloor\frac{(t-1)^2}{4}\right\rfloor+1 \quad\text{and}\quad K_q=\frac{2(q^2-1)}{\varepsilon}-6. $$
At most $K_q$ remainder terms lie in the union of the proper overgroups of $C$. The other terms can be grouped, leaving at most two unused, into factors of costs $r=1$ or $2$. A factor of cost $r$ uses $r$ remainder terms and $r(t-1)$ copies of $a$. It has at least $t$ products, and multiplication by this factor increases a product set of size $v$ by at least
\begin{equation} \min\left\{ \left\lceil\frac{\min(v,n-v)+1}{2}\right\rceil,r\delta \right\}. \tag{13.16}\label{eq:semisimple-increment} \end{equation}
\end{lemma}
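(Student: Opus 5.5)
The plan is to split the lemma into three parts: the cap on terms in proper overgroups of $C$, the grouping of the remaining terms into factors, and the growth estimate \eqref{eq:semisimple-increment} obtained from Lemma~\ref{lem:difference-growth}.

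\emph{The cap on overgroup terms.} Since $t\geq9$ and $C$ is semisimple, Dickson's classification limits the proper overgroups of $C$ in $\PSL_2(q)$. They are the torus normalizer $N_G(T)$, of order $2(q\mp1)/\varepsilon$, and in the split case also the two eigenline Borels, of order $q(q-1)/\varepsilon$. The hypothesis ($q$ prime or $t>\sqrt q+1$) is meant to exclude subfield subgroups containing $C$. It is also meant to exclude exceptional $A_4,S_4,A_5$ overgroups, since these have element orders at most $5$. Each overgroup $L$ contains $a$, so Lemma~\ref{lem:subgroup-cap} gives $|R_L|\leq|L|-2$. Summing over at most three overgroups yields at most $2q(q-1)/\varepsilon+2(q+1)/\varepsilon-6=K_q$.

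\emph{Grouping the remaining terms.} Every remaining term $x$ satisfies $\langle C,x\rangle=G$. If the orbit representatives of $x$ in the eigenbasis have all four entries nonzero, I use $x$ alone as a cost-one factor $O_a(x)$. Lemma~\ref{lem:full-entry-orbits} gives $|O|=t$ and $|OO^{-1}|\geq(t-1)^2$. Lemma~\ref{lem:orbit-generation} gives generation, since for generic $C$ each class of maximal subgroups has at most one member containing $C$. Then Lemma~\ref{lem:difference-growth} with $\lceil(|OO^{-1}|+1)/4\rceil\geq\delta$ gives \eqref{eq:semisimple-increment} for $r=1$. Otherwise $C$ is split and $x$ has exactly one zero diagonal entry; both zero diagonal entries would put $x$ in $N_G(T)$. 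I pair such terms whose zero lies in the same position, which leaves at most one unused term per position, hence at most two in total. For a pair, Lemma~\ref{lem:zero-diagonal-pairs} gives $|B|=t^2\geq t$, $|BB^{-1}|\geq1+t^2(t-1)/2$, and generation under the stated field condition. Then $\lceil(|BB^{-1}|+1)/4\rceil\geq2\delta$ because $t^2(t-1)/2\geq2(t-1)^2$, which gives the cost-two increment.

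\emph{Realizing factors.} Identity \eqref{eq:orbit-realization} realizes each factor with $r$ remainder terms and $r(t-1)$ copies of $a$. This part is bookkeeping.

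\emph{Main obstacle.} I expect the hardest part to be the overgroup classification and its use in the generation hypothesis of Lemma~\ref{lem:orbit-generation}. In the split case, the two Borels containing $C$ are not conjugate-unique. I plan to handle them as in the proof of Lemma~\ref{lem:squared-orbit-bounds}: terms outside both Borels and outside $N_G(T)$ have disjoint eigenline pairs, which excludes them directly.
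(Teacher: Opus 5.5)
Your proposal is essentially the paper's proof: Dickson's classification plus Lemma~\ref{lem:subgroup-cap} for the cap; Lemma~\ref{lem:full-entry-orbits} with disjoint eigenline pairs (to exclude the two Borels) and Lemma~\ref{lem:orbit-generation} for the cost-one factors; Lemma~\ref{lem:zero-diagonal-pairs} for the paired cost-two factors; and Lemma~\ref{lem:difference-growth} for the increments.

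Two arithmetic details need fixing.
\begin{itemize}
\item The Borels occur only in the split case, where $|N_G(T)|=2(q-1)/\varepsilon$, and this is what makes the sum equal $K_q$. With $2(q+1)/\varepsilon$ you get $(2q^2+2)/\varepsilon-6$ instead.
\item The inequality $t^2(t-1)/2\geq2(t-1)^2$ only yields $\lceil(|BB^{-1}|+1)/4\rceil\geq2\delta-1$. Use instead $(t-1)(t-2)^2\geq12$, or the paper's slightly stronger $t^2(t-1)/8\geq(t-1)^2/2+2$; both hold for $t\geq9$.
\end{itemize}
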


\begin{proof}
By Dickson's classification, every proper overgroup of $C$ is contained in its torus normalizer or, in the split case, in one of the two eigenline Borel subgroups. Subfield subgroups have element orders at most $\sqrt{q}+1$, and exceptional subgroups have maximum element order five. The subgroup cap bounds the union in the split case by
$$ 2\left(\frac{q(q-1)}{\varepsilon}-2\right) +\left(\frac{2(q-1)}{\varepsilon}-2\right) =\frac{2(q^2-1)}{\varepsilon}-6=K_q. $$
In the nonsplit case, the single normalizer has order $2(q+1)/\varepsilon$, giving a still smaller bound.

By Lemma~\ref{lem:full-entry-orbits}, the orbit of a term outside this union with four nonzero entries has size $t$ and at least $(t-1)^2$ differences. Its torus and the conjugate of its torus by $x$ have disjoint eigenline pairs. They lie together in neither an eigenline Borel nor the torus normalizer; the other overgroups were excluded above. By Lemma~\ref{lem:orbit-generation}, the differences therefore generate $G$. By Lemma~\ref{lem:difference-growth}, the factor has cost one and constant $\lceil((t-1)^2+1)/4\rceil=\delta$.

Pair the terms with exactly one zero diagonal entry within each of the two types. Lemma~\ref{lem:zero-diagonal-pairs} applies, since $q$ is prime or $t>\sqrt{q}+1$. Each pair has size $t^2$, has a generating difference set, and has growth constant at least
$$ \left\lceil\frac{2+t^2(t-1)/2}{4}\right\rceil\geq2\delta. $$
Here $t\geq9$. The last inequality follows from $t^2(t-1)/8\geq(t-1)^2/2+2\geq2\delta$. Thus the pairs have cost two and satisfy the asserted growth bound. At most two terms remain unpaired. Also $t^2\geq4\delta$, so these factors satisfy the size hypothesis of Lemma~\ref{lem:weighted-growth} with $b=t$.
\end{proof}

\subsection{One resource bound for semisimple factors}\label{sec:semisimple-resources}

\begin{proposition}\label{prop:semisimple-completion}
Let $G=\PSL_2(q)$ with $q\geq11$, and suppose that $a$ is semisimple of order $t\geq9$. If $q$ is prime or $t>\sqrt{q}+1$, then $1\in\Pi_n(S)$.
\end{proposition}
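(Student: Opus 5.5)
We argue by contradiction, assuming \eqref{eq:high-mult-setup} and $1\notin\Pi_n(S)$. The construction is to multiply semisimple orbit factors until the product set is all of $G$, and then apply Lemma~\ref{lem:high-mult-completion}(ii). First, Lemma~\ref{lem:subgroup-cap} gives at least $m-t+2$ remainder terms outside $C$. By Lemma~\ref{lem:semisimple-factors}, at most $K_q=2(q^2-1)/\varepsilon-6$ of these lie in proper overgroups of $C$. Hence at least
$$ E_0=m-t+2-K_q-2 $$
eligible terms remain after grouping. Each is grouped into a factor of cost one or two, with at least $t$ products and the increment \eqref{eq:semisimple-increment}. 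Since $m=q(q^2-1)/(2\varepsilon)$, we have $E_0$ of order $q^3/(2\varepsilon)$.

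\emph{Growth count.} We apply Lemma~\ref{lem:weighted-growth} with $b=t$ and $\delta=\lfloor(t-1)^2/4\rfloor+1$. This requires checking $4\delta-1<n/2$, which is immediate because $t\leq(q+1)/\varepsilon$. The lemma bounds the total cost by
$$ W<\frac{n+2}{\delta}+A_t+B_\delta+1. $$
Because $t\geq 9$, we get $A_t\leq\lceil\log_{3/2}(t/2)\rceil$ and $B_\delta\leq2\log_2 t$, and $\delta\geq(t-1)^2/4$. Thus $W\lesssim 4n/(t-1)^2+O(\log t)$. We process factors in any order, with cost-two pairs interleaved; the lemma is designed for exactly this.

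\emph{Resource check.} A factor of total cost $W$ uses $W$ remainder terms and $W(t-1)$ copies of $a$, so the whole product has length $Wt$. We need three inequalities:
$$ W\leq E_0,\qquad W(t-1)\leq M, \qquad\text{and}\qquad Wt\leq n. $$
Since $M\geq m+1$, the second follows from $W(t-1)\leq m+1$, which is the same shape as \eqref{eq:orbit-resource-bounds}. The main content is $(t-1)W\lesssim 4n/(t-1)+O(t\log t)\leq m$. This holds once $t-1\geq 8$ plus a lower-order correction. The margin is therefore tightest at $t=9$, where $4n/8=m$ exactly. We must then use the precise values $\delta=17$ and $(n+2)/\delta<n/17+1$, which gives about $8n/17<m$. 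The logarithmic terms are absorbed using $n\geq q(q^2-1)/2$ together with $t\leq q+1$.

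\emph{Main obstacle.} The main obstacle is this small-$t$, small-$q$ numerical regime, where the additive terms $A_t+B_\delta+1$ and the loss $K_q$ are not negligible. Examples are $t\in\{9,10,11,12\}$ with $q$ close to $t$, such as $q=17,19,23$. In that regime, we first try the crude inequalities stated above. If they fail, we fall back on the actual recurrence $v_j$ from Proposition~\ref{prop:orbit-growth-criterion}, which is sharper than the lemma's closed form. We also use the rounding set $\Omega$ of multiples of $t$ when a cyclic seed is available, and compute $k$ directly for the finitely many exceptional pairs $(q,t)$. The condition $W\leq E_0$ is easy, because $E_0\sim q^3/(2\varepsilon)$ far exceeds $W=O(q^3/t^2)$ for $t\geq 9$. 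The condition $Wt\leq n$ follows from the second inequality since $t\geq 2$. Once these are verified, Lemma~\ref{lem:high-mult-completion}(ii) yields $1\in\Pi_n(S)$, the desired contradiction.
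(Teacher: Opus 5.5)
Your route is the same as the paper's: take factors from Lemma~\ref{lem:semisimple-factors}, bound the total cost $W$ by Lemma~\ref{lem:weighted-growth} with $b=t$, and check the resource inequalities before applying Lemma~\ref{lem:high-mult-completion}(ii). But the proof of this proposition is almost entirely that numerical verification, and yours leaves a genuine gap. At one pair your claim that the additive terms are absorbed is false.

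For $(q,t)=(17,9)$ one has $n=2448$, $m=1224$, $\delta=17$, $A_9=4$ and $B_\delta=5$. The mixed-cost estimate \eqref{eq:mixed-cost-growth} then gives only $W<2450/17+10$, i.e.\ $W\le154$, so $(t-1)W\le1232>m+1$.

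The missing idea is that the order-nine torus of $\PSL_2(17)$ is nonsplit, because $9\mid(17+1)/2$. In a Frobenius-conjugate eigenbasis ($\delta=\alpha^q$, $\gamma=\beta^q$), one vanishing diagonal entry forces the other to vanish, which puts the term in $N_G(C)$. Hence every eligible term has four nonzero entries and no cost-two pairs occur. The unit-cost estimate \eqref{eq:unit-cost-growth} therefore applies: $W<2450/17+7$, so $W\le151$ and $8W=1208<1224$.

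Your fallbacks do not supply this. No cyclic seed is constructed in this setting, so rounding to multiples of $t$ is unavailable. The recurrence of Proposition~\ref{prop:orbit-growth-criterion} handles only a single factor type $O_a(x)^r$, so using it already presupposes that all factors have cost one.

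The other cases also need an actual check, not an $O(t\log t)$ heuristic.
\begin{itemize}
\item For $t=9$, every other admissible $q$ is at least $19$. So $n\ge3420$ and $8W<8n/17+81<n/2$.
\item For $t\ge10$, the paper uses $A_t,B_\delta\le t-2$ to get $W<4n/(t-1)^2+2t$. It then verifies $(t-1)W<m$ in two regimes. When $q\ge2t-1$, one has $n\ge2t(t-1)(2t-1)$ and the inequality reduces to $(2t-1)(t-9)\ge2(t-1)$. When $q$ is even with $t=q\pm1\ge15$, one has $n\ge t(t-1)(t-2)$ and it reduces to $(t-2)(t-9)\ge4(t-1)$.
\end{itemize}
The strict inequality $(t-1)W<m$ also gives $tW<tm/(t-1)\le n$. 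Together with $m-K_q-m/8-2>0$, it guarantees enough eligible terms. Deriving $tW\le n$ from $(t-1)W\le m+1$ via $t\ge2$ alone, as you do, is not quite enough.
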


\begin{proof}
Use $\delta,K_q$ from Lemma~\ref{lem:semisimple-factors}, and take $b=t$ in Lemma~\ref{lem:weighted-growth}; write $A_t=A_b$ and $B_t=B_\delta$. The hypotheses force $q\geq16$, and the order formula gives
$$ 4\delta-1\leq(t-1)^2+3\leq q^2+3 <\frac{q(q^2-1)}{4}\leq m. $$
When $t=9$, we have $\delta=17$, $A_t=4$, and $B_t=5$. At $q=17$, the torus is nonsplit, so all factors have cost one. By Lemma~\ref{lem:weighted-growth}, applied to factors of cost one, $W<2450/17+7<152$, hence $8W\leq1208<m=1224$. In every other case $n\geq3420$, and for $t=9$,
$$ W<\frac{n+2}{17}+10 \quad\text{and}\quad 8W<\frac{8n}{17}+81<\frac{n}{2}. $$

For $t\geq10$, we have $A_t,B_t\leq t-2$. Indeed, $2\delta/(t+1)<t/2<(3/2)^{t-2}$ and $2\delta<2^{t-1}$; both exponential comparisons hold at $10$ and strengthen thereafter. Consequently,
$$ W<\frac{4n}{(t-1)^2}+2t. $$
We verify $(t-1)W<m$ in two numerical cases. If $q\geq2t-1$, then $n\geq2t(t-1)(2t-1)$, and the assertion follows from $(2t-1)(t-9)\geq2(t-1)$ for $t\geq10$. Otherwise $q$ is even and $t=q-1$ or $q+1$, because a proper divisor of an odd integer $q\pm1$ is at most $(q+1)/3$. Here $t\geq15$, $n\geq t(t-1)(t-2)$, and the required inequality is $(t-2)(t-9)\geq4(t-1)$, which holds for $t\geq15$.

Thus in all cases $W<m/(t-1)\leq m/8$. There are enough eligible terms, even after pairing leaves at most two unused, since
$$ m-K_q-\frac{m}{8}-2 =\frac{(7q-32)(q^2-1)}{16\varepsilon}+4>0. $$
The factors use fewer than $m$ copies of $a$, and their total length is $tW<tm/(t-1)<n$. Identity \eqref{eq:orbit-realization} realizes the factors inside $S$, and Lemma~\ref{lem:high-mult-completion} completes the full product set to length $n$.
\end{proof}

\subsection{A single-orbit calculation}\label{sec:psl2-8-t9}

For $G=\PSL_2(8)$ and $t=9$, we have $n=504$ and $m=252$. The unique maximal overgroup of $C$ is its normalizer $N$, of order $18$. At least $252-16=236$ remainder terms lie outside $N$. For each such term, Lemmas~\ref{lem:orbit-generation} and \ref{lem:full-entry-orbits} imply that
$$ |O_a(x)|=9,\qquad |O_a(x)O_a(x)^{-1}|=73, \quad\text{and}\quad \langle O_a(x)O_a(x)^{-1}\rangle=G. $$
These bounds establish row $(8,9)$ of Table~\ref{tab:orbit-growth}; Proposition~\ref{prop:orbit-growth-criterion} completes this case.

\subsection{Higher orbit powers}\label{sec:higher-orbit-powers}

For the remaining small orders, higher orbit powers give the required growth constants. The order-three case below uses explicit matrix witnesses; the other three cases use the common scalar calculation in Lemma~\ref{lem:squared-orbit-bounds}. Selection, growth, and length checks are supplied by Proposition~\ref{prop:orbit-growth-criterion} and Table~\ref{tab:orbit-growth}.

\begin{lemma}\label{lem:psl2-8-cubic-orbit}
Let $G=\PSL_2(8)$, let $C=\langle a\rangle$ have order three, and suppose that $\langle a,x\rangle=G$. Let $O=\{a^ixa^{-i}:0\leq i<3\}$ and $B=O^3$. Then
$$ |B|\geq15,\qquad |BB^{-1}|\geq49, \quad\text{and}\quad \langle BB^{-1}\rangle=G. $$
\end{lemma}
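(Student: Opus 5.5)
The plan is an explicit matrix computation in $\SL_2(8)=\PSL_2(8)$, modelled on Lemma~\ref{lem:squared-orbit-bounds}.

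\emph{Normal form.} An element of order three lies in the nonsplit torus of order $q+1=9$, since $3\nmid 7$. Over $\mathbb{F}_{64}$ we diagonalize $a$ as $\diag(\omega,\omega^{-1})$ with $\omega^3=1$. In this Frobenius-conjugate eigenbasis, as in the proof of Lemma~\ref{lem:full-entry-orbits}, we write
$$ x=\begin{pmatrix}\alpha&\beta\\ \beta^8&\alpha^8\end{pmatrix}. $$
The orbit is $O=\{X_u:u\in\mu\}$, where $\mu=\{1,\omega,\omega^2\}$ and $X_u$ is $x$ with its off-diagonal entries scaled by $u^{\pm1}$. The hypothesis $\langle a,x\rangle=G$ forces $\alpha\beta\neq0$. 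Otherwise $x$ would be diagonal or antidiagonal in this basis, and so would lie in the torus normalizer. We also have $k=\alpha^9\in\mathbb{F}_8\setminus\{0,1\}$. Conjugating by the torus and scaling lets us normalize the parameters, so only $k$ and the two parameters $r$ remain; in particular $k$ takes at most six values.

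\emph{The size of $B$.} The cube $O^3$ consists of the products $X_{u_1}X_{u_2}X_{u_3}$, and there are $27$ ordered triples. Characteristic two means there is no sign loss. We parametrize a product by $(r_1,r_2,z)=(u_1/u_2,u_2/u_3,u_3)$. The diagonal entries then depend only on $(r_1,r_2)$, and the off-diagonal entries are those values times $z^{\pm1}$. So the products split into nine classes, one for each pair $(r_1,r_2)$, and each class contributes $3$ elements unless its off-diagonal entries vanish. It is enough to show that the diagonal pair determines $(r_1,r_2)$ except in at most four coincidences. That gives $|B|\geq 5\cdot 3=15$. I expect this to reduce to checking that a few quadratic expressions in $k$ over $\mathbb{F}_8$ do not vanish simultaneously, and I would run through all six values of $k$ directly.

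\emph{Differences and generation.} For $|BB^{-1}|\geq49$, we count $C$-conjugacy orbits in $BB^{-1}$ by their diagonal pairs $(P,Q)$, as in \eqref{eq:diagonal-pair-count}. We use the invariants $P+Q$ and $PQ$ and the fact that a nonscalar orbit has $3$ elements. We aim for at least $16$ distinct non-identity orbits of size three, which together with the identity gives $16\cdot3+1=49$. Generation should follow from Lemma~\ref{lem:orbit-generation} applied to $O^3$. By Dickson's classification, the only maximal overgroup of the order-three torus subgroup $C$ in $\PSL_2(8)$ is the torus normalizer $D_{18}$. The subgroup $\langle C,xCx^{-1}\rangle$ is not contained in it, because $D_{18}$ has a unique subgroup of order three. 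So the difference set of $O$, and hence that of $O^3$, generates $G$.

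\emph{Main obstacle.} The hard step is the difference count. The $\geq 49$ bound must hold for every admissible $k$, with explicit witnesses for each, and I expect the count to come close to the bound for some values of $k$. I would first try to reduce this to a scalar table analogous to the one in Lemma~\ref{lem:squared-orbit-bounds}. If coincidences make that tedious, the fallback is to exhibit $17$ explicit elements of $BB^{-1}$ with pairwise distinct diagonal invariants.
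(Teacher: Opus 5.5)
\textbf{The gap is in the step you treated as routine, the bound $|B|\geq 15$.} Distinct diagonal pairs cannot produce five orbits in general.

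Consider $x$ with $\alpha,\beta\in\mathbb{F}_8$, for instance $\alpha=k^4$ and $\beta=(1+k)^4$. Such $x$ exist for every $k$, and they generate $G$ together with $a$. Then $x$ is an involution. Let $X_j$ denote $x$ with its off-diagonal entries multiplied by $\omega^{j}$ and $\omega^{-j}$. The $(1,1)$ entry of $X_0X_rX_s$ is
\[
\alpha^3+\alpha\beta^2\bigl(\omega^{-r}+\omega^{-s}+\omega^{r-s}\bigr).
\]
As $(r,s)$ runs over the nine classes, the bracket takes only the values $1,0,\omega,\omega^2$. So only four diagonal pairs occur, and your criterion can certify at most four orbits, i.e.\ $|B|\geq 12$.

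This is not an artefact of the method. For $k=w$ (where $w^3=w+1$) in this family one has $|B|=15$ exactly. The five classes with bracket $1$ collapse to $O$ itself. The products $x(axa^{-1})x$ and $x(a^{-1}xa)x$ both have $(1,1)$ entry $\alpha^3$, yet they lie in different $C$-orbits. You therefore need a complete invariant of $C$-conjugacy orbits, not the diagonal alone. Conjugation by $C$ multiplies the $(1,2)$ entry $V$ by cube roots of unity, so the pair $(U,V^3)$ works, where $U$ is the $(1,1)$ entry. This is exactly the invariant the paper uses.

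\textbf{There is also a problem upstream: your normal form loses a parameter.} Torus conjugation fixes $\alpha$. The moves $x\mapsto a^ixa^j$, which leave $O^3$ unchanged, multiply $\alpha$ only by cube roots of unity. But $k=\alpha^9$ determines $\alpha$ only up to the norm-one group of order nine. So each $k$ leaves a residual parameter in $\mathbb{Z}/3$. This is the paper's $h$, cut down to $h\in\{0,1\}$ by the antidiagonal involution.

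The cube products depend on this parameter, because their diagonal entries involve $\alpha$ and $\bar\alpha=\alpha^8$ separately. For example, $X_0^3$ has $(1,1)$ entry $\alpha^3+(1+k)\bar\alpha$. Hence six values of $k$ do not cover the $54$ generating double cosets. You need the pairs $(k,h)$, which the Frobenius symmetry reduces to the paper's four forms.

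The rest of your plan is sound. Your generation argument via Lemma~\ref{lem:orbit-generation} is correct and does not depend on any count; the paper instead deduces generation from $|BB^{-1}|\geq 49$ and simplicity. The difference count is also less delicate than you expected. For $k=w$ and $\alpha\in\mathbb{F}_8$, the diagonal entries of $BB^{-1}$ already take well over sixteen distinct nonidentity values. It is the bound for $B$ that is sharp.
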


\begin{proof}
We use a Frobenius-conjugate eigenbasis for $a$. Write
$$ F=\mathbb{F}_2[w]/(w^3+w+1) \quad\text{and}\quad E=F[\omega]/(\omega^2+\omega+1). $$
Thus $|F|=8$, $|E|=64$, and the involution of $E/F$ is $z\mapsto\bar{z}=z^8$. In this basis, the group consists of the matrices
$$ [u,v]:= \begin{pmatrix} u&v\\ \bar{v}&\bar{u} \end{pmatrix}, $$
where $u\bar{u}+v\bar{v}=1$, and we may take $C=\langle c\rangle$ with $c=\diag(\omega,\omega^2)$. Indeed, the two columns of $P=\begin{psmallmatrix}1&1\\\omega&\omega^2\end{psmallmatrix}$ are interchanged by the involution. Conjugating $\SL_2(F)$ by $P$ therefore gives exactly this matrix model.

The norm-one group $T=\{z\in E^*:z\bar{z}=1\}$ has order nine. Let $\tau=w^2+w\omega$. The defining equations give $\tau\bar{\tau}=1$ and $\tau^3=\omega$, so $T=\langle\tau\rangle$. If either entry $u$ or $v$ of $x=[u,v]$ were zero, then $x$ would normalize the diagonal torus, contrary to $\langle a,x\rangle=G$. Consequently,
$$ k=u\bar{u}\in F\setminus\{0,1\} \quad\text{and}\quad v\bar{v}=1+k. $$
Conjugation by $\diag(z,z^{-1})$, with $z\in T$, multiplies $v$ by $z^2$. Since squaring permutes $T$, we may arrange that $v=(1+k)^4\in F$. We can then write $u=k^4\tau^h$.

Replacing $x$ by $c^ixc^i$ leaves $v$ fixed and changes $h$ by $6i$. This replacement does not change $O^3$: every conjugation orbit commutes with $C$ as a set, and
$$ O(c^ixc^i)^3=(c^{2i}O(x))^3=O(x)^3. $$
We may thus reduce $h$ modulo three. Conjugation by $\begin{psmallmatrix}0&1\\1&0\end{psmallmatrix}$ replaces $h$ by $-h$, so $h$ may be taken to be zero or one. Finally, entrywise squaring replaces $(k,h)$ by $(k^2,2h)$. When $h=1$, conjugation by the same matrix and a further replacement $c^ixc^i$ restore $h=1$. The two squaring orbits in $F\setminus\{0,1\}$ have representatives $w$ and $1+w$. All cardinalities under consideration are therefore reduced to
\begin{equation} x=[k^4\tau^h,(1+k)^4], \tag{13.17}\label{eq:psl2-8-orbit-forms} \end{equation}
where $(k,h)\in\{(w,0),(w,1),(1+w,0),(1+w,1)\}$.

We give the short matrix calculations for these four forms. For $r=s_0+2s_1+4s_2\in\{0,\ldots,7\}$, write $\underline{r}=s_0+s_1w+s_2w^2\in F$, and extend this notation by $\underline{r+8s}=\underline{r}+\underline{s}\omega$. Integers in the following tables denote these field elements. Multiplication uses the identities
$$ (p+q\omega)(r+s\omega) =(pr+qs)+(ps+qr+qs)\omega \quad\text{and}\quad w^3=w+1, $$
together with
$$ [u,v][r,s]=[ur+v\bar{s},us+v\bar{r}] \quad\text{and}\quad [u,v]^{-1}=[\bar{u},v]. $$
Conjugation is $\overline{p+q\omega}=(p+q)+q\omega$. In particular, $\tau=\underline{20}$, and the pairs $(u,v)$ in the four forms are
$$ (\underline{6},\underline{7}),\qquad (\underline{61},\underline{7}),\qquad (\underline{7},\underline{6}), \quad\text{and}\quad (\underline{41},\underline{6}), $$
respectively. Let $X_i=[u,v\omega^i]$ for $i\in\{0,1,2\}$, and number nine triple words by
$$ Y_{3r+s}=X_0X_rX_s. $$
Here $0\leq r,s<3$.

For a matrix $Z=[U,V]$, use the invariant $\mathcal{I}(Z)=(U,V^3)$. Conjugation by $C$ preserves this pair. If $V\neq0$, then its $C$-conjugacy orbit has exactly three elements. Thus distinct displayed pairs with nonzero second coordinate give disjoint three-element orbits. The following five triple words in each column have distinct invariants:
$$ \begin{array}{cc|cccc} \text{word for }h=0&\text{word for }h=1 &(w,0)&(w,1)&(1+w,0)&(1+w,1)\\ \hline Y_0&Y_0&(6,2)&(46,6)&(7,7)&(27,3)\\ Y_3&Y_1&(7,63)&(62,27)&(2,32)&(19,40)\\ Y_5&Y_2&(15,3)&(60,36)&(42,2)&(18,32)\\ Y_6&Y_3&(7,56)&(58,2)&(2,36)&(17,45)\\ Y_7&Y_4&(14,3)&(62,24)&(47,2)&(19,45) \end{array} $$
Since $B$ is invariant under conjugation by $C$, this proves that $|B|\geq15$. For the difference set, define
$$ Z_{pqh}=Y_p(c^hY_qc^{-h})^{-1}\in BB^{-1}. $$
For example, in the first form $X_0^2=1$, so $Y_0=X_0$ and
$$ Z_{001}=X_0X_2=[w+(1+w)\omega,w^2\omega]. $$
Its invariant is $(\underline{26},\underline{5})$, the first entry below. The difference set is also closed under inversion, and $[U,V]^{-1}=[\bar{U},V]$. Thus a row with $U\notin F$ supplies two conjugacy orbits, with invariants $(U,V^3)$ and $(\bar{U},V^3)$. The same multiplication rules give the following eight pairs:
$$ \begin{array}{c|cccc} (p,q,h)&(w,0)&(w,1)&(1+w,0)&(1+w,1)\\ \hline (0,0,1)&(26,5)&(62,27)&(19,5)&(37,7)\\ (0,3,1)&(47,2)&(43,6)&(39,4)&(28,4)\\ (0,3,2)&(49,24)&(29,48)&(49,27)&(37,63)\\ (0,5,0)&(31,4)&(16,4)&(23,2)&(27,3)\\ (0,6,1)&(55,27)&(27,27)&(55,24)&(36,54)\\ (3,3,1)&(19,45)&(26,45)&(44,56)&(55,24)\\ (3,5,0)&(29,54)&(25,40)&(18,36)&(17,45)\\ (3,7,0)&(54,16)&(58,16)&(48,16)&(60,32) \end{array} $$
All second coordinates are nonzero, every first coordinate lies outside $F$, and in each column the eight pairs remain distinct when the first coordinate is replaced by its conjugate. They therefore give sixteen disjoint orbits. Together with the identity, these yield $|BB^{-1}|\geq16\cdot3+1=49$.

Finally, $K=\langle BB^{-1}\rangle$ is normalized by $C$. By Dickson's classification \cite{Giudici2007}, every proper overgroup of $C$ is contained in its nonsplit torus normalizer, of order eighteen. Indeed, the other maximal subgroup orders are $56$ and $14$, and the subfield groups of order six containing $C$ normalize $C$. Thus $KC=G$, since $|K|\geq49$. As $K\trianglelefteq KC$ and $G$ is simple, $K=G$.
\end{proof}

\begin{proposition}\label{prop:psl2-8-t3}
In the high-multiplicity setting, if $G=\PSL_2(8)$ and $\ord(a)=3$, then $1\in\Pi_{504}(S)$.
\end{proposition}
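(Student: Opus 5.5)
The plan is to apply Proposition~\ref{prop:orbit-growth-criterion} in setting~(i), with cubic orbit factors $r=3$ and the data of Lemma~\ref{lem:psl2-8-cubic-orbit}: $b=15$, $d=49$ and a generating difference set. The setting is \eqref{eq:high-mult-setup} with $n=504$, $m=252$ and $t=3$, and we assume $1\notin\Pi_{504}(S)$.

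\textbf{Eligible terms.} We call a term $x$ of $R$ eligible when $\langle a,x\rangle=G$. Every non-eligible term lies in some proper overgroup of $C$. By Dickson's classification, as already used in the proof of Lemma~\ref{lem:psl2-8-cubic-orbit}, every such overgroup lies in the torus normalizer $N$, of order $18$. Lemma~\ref{lem:subgroup-cap} gives $|R_N|\leq16$. Since $|R|\geq m$, at least $E=252-16=236$ terms are eligible. Each eligible $x$ lies outside $N$, so by Lemma~\ref{lem:cyclic-amplification} its double coset $CxC$ has size $9$. Hence the eligible terms lie in at most $Q\leq(504-18)/9=54$ double cosets of $C$.

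\textbf{Growth recursion.} Next we run the recursion of Proposition~\ref{prop:orbit-growth-criterion} with $\Omega=\{1,\dots,504\}$, $\delta=\lceil 50/4\rceil=13$ and $v_1=15$.
\begin{itemize}
\item While $v_j<25$, the increments are at least $\lceil (v_j+1)/2\rceil$.
\item In the middle range, each factor adds $13$.
\item Near $504$, the complement roughly halves at each step.
\end{itemize}
Lemma~\ref{lem:weighted-growth}, in its unit-cost form with $b=15$, already bounds the number of factors by $k<506/13+A_{15}+B_{13}-2$. Here $A_{15}=1$ and $B_{13}=4$, so $k\leq 41$. A direct iteration of the recursion should give about the same value, namely $k\approx 40$.

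\textbf{Resource check.} Finally we verify \eqref{eq:orbit-resource-bounds} with $s=0$ and $r=3$. The first condition is $E-2Q\geq 3k$, that is, $236-108=128\geq 123$. The second is $(t-1)\cdot 3k=6k\leq 253$, which holds for $k\leq 42$. The third is $9k\leq 504$. The tightest point is $E-2Q$. If $k=41$ is too large there, we would sharpen the count: only double cosets actually containing eligible terms matter, and leftovers number at most $2$ per occupied double coset. Alternatively we would iterate the recursion exactly, which should give $k\leq 40$.

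\textbf{Main obstacle.} The delicate step is exactly this balance between the number of cubic factors and the loss $(r-1)Q$. If the crude bound $Q\leq 54$ fails, I would first try to improve it using Lemma~\ref{lem:prime-double-cosets} or the structure of double cosets. Failing that, I would use mixed factors: single orbits, with $|O|=3$ and $|OO^{-1}|\geq 7$ from Lemma~\ref{lem:full-entry-orbits}, applied to the leftover terms in each double coset. Once the criterion holds, Proposition~\ref{prop:orbit-growth-criterion} gives $1\in\Pi_{504}(S)$.
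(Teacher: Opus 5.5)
Your proposal is correct and follows the paper's proof essentially step for step: eligible terms via Lemma~\ref{lem:subgroup-cap} ($E=236$), at most $Q=54$ double cosets of size $9$, cubic factors with $(r,b,d)=(3,15,49)$ from Lemma~\ref{lem:psl2-8-cubic-orbit}, and Proposition~\ref{prop:orbit-growth-criterion} in setting (i). One small slip is that $A_{15}=2$, not $1$, since $16\cdot 3=48<52$; Lemma~\ref{lem:weighted-growth} therefore gives only $k\le 42$, but the resource checks still hold there ($3\cdot 42=126\le 128$, $6\cdot 42=252\le 253$, $9\cdot 42\le 504$), and iterating the recursion exactly ($15,23,35,48,\dots,490,504$) gives $k=39$, as in the paper's table.
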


\begin{proof}
Here $m=252$, $M\geq253$, and $|R|\geq252$. The unique maximal overgroup of $C=\langle a\rangle$ is its normalizer $N$, of order eighteen. By Lemma~\ref{lem:subgroup-cap}, $|R_N|\leq16$, so at least $236$ terms of $R$ lie outside $N$. For $x\notin N$, the groups $C$ and $xCx^{-1}$ intersect trivially; therefore $|CxC|=9$. There are exactly $(504-18)/9=54$ double cosets outside $N$. The local bounds in Lemma~\ref{lem:psl2-8-cubic-orbit} give $(r,b,d)=(3,15,49)$. Thus the third row of Table~\ref{tab:orbit-growth} applies, and Proposition~\ref{prop:orbit-growth-criterion} completes the proof.
\end{proof}

\begin{proposition}\label{prop:small-squared-orbits}
Let $G=\PSL_2(q)$. In the high-multiplicity setting, the conclusion $1\in\Pi_n(S)$ holds for $(q,t)=(11,6),(13,7),(17,8)$.
\end{proposition}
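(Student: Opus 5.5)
The plan follows the template of Proposition~\ref{prop:psl2-8-t3}. In each of the three cases $C=\langle a\rangle$ is a full torus of order $t=(q\pm1)/2$: nonsplit for $q=11,13$ and split for $q=17$.

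The first step is to locate the eligible terms. By Dickson's classification \cite{Giudici2007}, as used at the end of the proof of Lemma~\ref{lem:squared-orbit-bounds}, the proper overgroups of $C$ are contained in $N=N_G(C)$, of order $2t$, except when $q=17$. In that case the two eigenline Borel subgroups, of order $136$, also contain $C$. Lemma~\ref{lem:subgroup-cap} then bounds the number of remainder terms in these overgroups:
\begin{itemize}
\item for $q=11,13$, at most $2t-2$ terms lie in $N$;
\item for $q=17$, at most $2\cdot134+14$ terms lie in the union, with a little overlap saved.
\end{itemize}
Since $|R|\geq m$, this leaves $E\geq m-2t+2$ eligible terms in the nonsplit cases and $E\geq 1224-282=942$ when $q=17$. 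Every eligible $x$ satisfies $\langle C,x\rangle=G$. By Lemma~\ref{lem:cyclic-amplification}, each double coset $CxC$ outside the overgroups has size $t^2$. This gives an explicit bound $Q\leq(n-|\text{overgroup union}|)/t^2$ on the number of double cosets, for example $Q\leq(660-12)/36=18$ for $q=11$.

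The second step supplies the local data. Lemma~\ref{lem:squared-orbit-bounds} gives $B_x=O_a(x)^2$ with $(b,d)=(25,76),(36,99),(49,160)$ respectively, together with $\langle B_xB_x^{-1}\rangle=G$. I would apply Proposition~\ref{prop:orbit-growth-criterion}(i) with $r=2$. Then $\delta=\lceil(d+1)/4\rceil$ equals $20$, $25$ and $41$. Run the recurrence $v_1=b$, $v_{j+1}=v_j+\min\{\lceil(\min(v_j,n-v_j)+1)/2\rceil,\delta\}$, stopping once $v_j+b>n$, to find the least $k$ with $v_k=n$. For $q=11$ ($n=660$) a rough count gives about $30$ factors: a few doubling steps, then roughly $(660-100)/20$ linear steps, then a halving tail. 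This yields a table row of the form $(q,t,r,b,d,k)$, to be recorded in Table~\ref{tab:orbit-growth}.

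The final step, which I expect to be the main obstacle, is checking \eqref{eq:orbit-resource-bounds}. We need $E-Q\geq 2k$, $(t-1)\cdot 2k\leq m+1$, and $t\cdot 2k\leq n$. For $q=11$ with $k\approx 30$, the length condition $6\cdot60=360\leq660$ holds, but $(t-1)2k=300\leq331$ is tight. The count $E-Q\geq 330-10-18$ is comfortable. For $q=13$ ($m=546$, $t=7$), $k$ must satisfy $12k\leq547$, so $k\leq45$. For $q=17$ ($m=1224$, $t=8$), we need $14k\leq1225$. If the plain integer recurrence overshoots, I would first try sharpening it with the exact size $|B_x|$ where it is larger, or use Lemma~\ref{lem:weighted-growth} with cost-two factors. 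As a fallback for $q=17$, I would use the bi-$C$-invariant rounding of case~(ii) after seeding $C$ with Lemma~\ref{lem:zero-diagonal-pairs} factors. Once the numerical row is verified, Proposition~\ref{prop:orbit-growth-criterion} combined with Lemma~\ref{lem:high-mult-completion}(ii) gives $1\in\Pi_n(S)$ in each case.
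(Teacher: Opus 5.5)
Your plan is exactly the paper's argument: eligible terms counted via Lemma~\ref{lem:subgroup-cap}, eligible double cosets of size $t^2$ via Lemma~\ref{lem:cyclic-amplification}, local data $(b,d)$ from Lemma~\ref{lem:squared-orbit-bounds}, and Proposition~\ref{prop:orbit-growth-criterion}(i) with $r=2$ and $s=0$. Your counts $E\geq942$ and $Q\leq34$ for $q=17$ are valid and slightly sharper than the paper's $936$ and $38$.

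The one thing you left open is the recurrence, and it does close. It gives $k=33,44,60$ respectively, for example $25,38,58,78,\ldots,638,660$ when $q=11$. The tight check for $q=11$ is therefore $5\cdot66=330\leq331$, while $528\leq547$, $840\leq1225$, and the length and $E-Q$ bounds hold comfortably. No fallback is needed.
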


\begin{proof}
Call a remainder term eligible when it generates $G$ together with $C$. For the two nonsplit rows, the unique maximal overgroup is the normalizer, of orders twelve and fourteen, respectively. By Lemma~\ref{lem:subgroup-cap}, there are at least $E=320$ and $E=534$ eligible terms. For the split row, at least $m-t+2=1218$ terms lie outside $C$. The two Borels of order $136$ and the normalizer of order sixteen contribute at most $2(136-2)+(16-2)=282$ terms, leaving $E=936$. By Lemma~\ref{lem:cyclic-amplification}, every eligible double coset has size $t^2$. Thus the number of eligible double cosets is at most $Q=\lfloor n/t^2\rfloor$.

Together with Lemma~\ref{lem:squared-orbit-bounds}, these bounds establish the rows $(11,6)$, $(13,7)$, and $(17,8)$ of Table~\ref{tab:orbit-growth}. By Proposition~\ref{prop:orbit-growth-criterion}, all three cases follow.
\end{proof}

\section{Cyclic seeds}\label{sec:cyclic-seeds}

The applications below assume \eqref{eq:high-mult-setup} and proceed under the assumption that $1\notin\Pi_n(S)$. Let $C=\langle a\rangle$, let $t=|C|$, and define $O_a(x)=\{a^ixa^{-i}:0\leq i<t\}$. Recall that an orbit factor uses one term $x$ and $t-1$ copies of $a$, by \eqref{eq:orbit-realization}. Orbit products commute with $C$ as sets, and replacing a term by another member of its $C$-double coset translates its orbit by an element of $C$.

\begin{lemma}\label{lem:power-relation-seeds}
Let $r,l,d$ be positive integers. If $(xa^i)^r=1$, then
\begin{equation} a^{-ir}\in O_a(x)^r. \tag{14.1}\label{eq:power-relation-orbit} \end{equation}
Consequently, if $(xa^i)^l=(xa^j)^l=1$ and $t\nmid l(j-i)$, then $O_a(x)^l\cap C$ contains at least two elements. If $t$ is odd and $x^d$ is an involution inverting $a$, then $C\subseteq O_a(x)^{2d}$.
\end{lemma}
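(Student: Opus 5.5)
The plan is to prove \eqref{eq:power-relation-orbit} by writing $(xa^i)^r$ as a product of conjugates of $x$ and using $O_a(a^uxa^v)=a^{u+v}O_a(x)$. First telescope:
$$ (xa^i)^r=\prod_{j=0}^{r-1}a^{-ji}\,x\,a^{ji}\cdot a^{ir}, $$
where the factors are taken in increasing order of $j$. This holds because inserting $a^{ji}a^{-ji}$ between consecutive copies of $x$ reproduces the word.

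Each factor $a^{-ji}xa^{ji}$ lies in $O_a(x)$. Hence the product lies in $O_a(x)^r$, with one element chosen from each of the $r$ factors. The relation $(xa^i)^r=1$ then gives $a^{-ir}=\prod_j a^{-ji}xa^{ji}\in O_a(x)^r$.

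For the second assertion, apply \eqref{eq:power-relation-orbit} with $r=l$ to both $i$ and $j$. This gives $a^{-il}$ and $a^{-jl}$ in $O_a(x)^l\cap C$. They are distinct exactly when $t\nmid l(j-i)$.

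For the final assertion, let $y=x^d$. Then $y^2=1$ and $yay^{-1}=a^{-1}$, so every $ya^i$ is an involution: $(ya^i)^2=ya^iya^i=a^{-i}a^i=1$. Since $y$ is a product of $d$ copies of $x$, we have $O_a(y)\subseteq O_a(x)^d$, because $a^kx^da^{-k}=(a^kxa^{-k})^d$. Hence $O_a(y)^2\subseteq O_a(x)^{2d}$.

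It therefore suffices to show $C\subseteq O_a(y)^2$. Apply \eqref{eq:power-relation-orbit} to $y$ with $r=2$ and each $i$; this yields $a^{-2i}\in O_a(y)^2$ for every $i$. When $t$ is odd, the map $i\mapsto -2i$ is a bijection modulo $t$, so $C\subseteq O_a(y)^2$.

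Alternatively, one can compute directly: $y\cdot a^kya^{-k}=a^{-k}yya^{-k}=a^{-2k}$ since $y$ inverts $a$. This gives the same conclusion.

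There is no real obstacle here. The only points needing care are the ordering convention in the telescoped product and the inclusion $O_a(x^d)\subseteq O_a(x)^d$, which holds elementwise with the same conjugator used in every factor. The set $O_a(x)^d$ allows independent choices from each factor, so it contains these diagonal choices.
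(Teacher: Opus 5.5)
Your route is the paper's: expand $(xa^i)^r$ as a product of $r$ conjugates $a^kxa^{-k}$ followed by $a^{ir}$, read off the second assertion from $a^{-il}\neq a^{-jl}$, and for the last assertion use $y\cdot a^kya^{-k}=a^{-2k}$. Your explicit check that $O_a(x^d)\subseteq O_a(x)^d$ fills in a step the paper leaves implicit.

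One correction is needed: the telescoped identity you display is false. With factors $a^{-ji}xa^{ji}$ in increasing order of $j$, the product collapses to $(xa^{-i})^ra^{ri}$, so your right-hand side equals $(xa^{-i})^ra^{2ri}$. Already for $r=2$ this is $xa^{-i}xa^{3i}$, which equals $(xa^i)^2$ only when $x$ commutes with $a^{2i}$. Applied to $y=x^d$, it would even give $1=a^{4i}$. Your justification by "inserting $a^{ji}a^{-ji}$" fails because the element between consecutive copies of $x$ comes out as $a^{-i}$, not $a^{i}$.

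The correct expansion, which is the one the paper uses, is
\[
(xa^i)^r=\Bigl(\prod_{j=0}^{r-1}a^{ji}xa^{-ji}\Bigr)a^{ir},
\]
with factors in increasing order of $j$. Between the copies of $x$ in factors $j-1$ and $j$ one gets $a^{-(j-1)i}a^{ji}=a^{i}$, and after the last copy one gets $a^{-(r-1)i}a^{ir}=a^i$. Since $a^{ji}xa^{-ji}$ lies in $O_a(x)$ just as well, the rest of your proof goes through unchanged once this sign is fixed.
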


\begin{proof}
Expanding the power relation gives
$$ 1=(xa^i)^r =\left(\prod_{q=0}^{r-1}a^{qi}xa^{-qi}\right)a^{ir}, $$
which proves the first assertion and its consequence.

For the last assertion, let $z=x^d$. Then $z^2=1$ and
$$ z(a^jza^{-j})=a^{-2j}\in O_a(x)^{2d}. $$
As $t$ is odd, these products run through $C$.
\end{proof}

\begin{lemma}\label{lem:seed-selection}
Suppose that $N$ available terms lie in $Q$ specified $C$-double cosets. For the $i$th double coset, let $r_i$ be a positive integer and suppose that the orbit product of any $r_i$ of its terms contains a subset $T\subseteq C$ such that
\begin{equation} |AT|\geq\min\{t,|A|+w_i\}. \tag{14.2}\label{eq:seed-weight-growth} \end{equation}
This condition is required for every nonempty $A\subseteq C$, where $1\leq w_i\leq t-1$. Define
$$ E_0=\sum_{i=1}^{Q}(r_i-1) \quad\text{and}\quad \lambda=\max_i\frac{r_i}{w_i}. $$
If $N>E_0+\lambda(t-2)$, then disjoint subsequences can be selected whose orbit product contains $C$. The collection can be chosen inclusion-minimal subject to having total weight at least $t-1$.

If $t$ is prime, then the growth condition holds with $w=|T|-1$. If $T$ contains $\{c,cd,\ldots,cd^w\}$ for some generator $d$ of $C$, then the condition also holds with weight $w$. In particular, consecutive pairs in $C_4$ have weight one and three-element subsets have weight two.
\end{lemma}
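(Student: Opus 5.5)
The plan is a greedy extraction followed by a weight count, with a separate check of the growth criteria. First I fix the realization mechanism. By \eqref{eq:orbit-realization}, a subsequence of $r_i$ terms from the $i$th double coset, together with $r_i(t-1)$ copies of $a$, realizes a $C$-translate of the orbit product of those terms. Orbit products commute with $C$ as sets, so disjoint selected subsequences multiply to realize the product of their orbit products. For each selected subsequence I retain only the subset $T\subseteq C$ provided by hypothesis \eqref{eq:seed-weight-growth}, translated back into $C$ when needed. Set $A_0=\{1\}$ and $A_{j+1}=A_jT_{j+1}$. Each step raises $|A_j|$ by at least the weight $w_{j+1}$, unless $|A_j|$ has already reached $t$. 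Once the total weight is at least $t-1$, we have $|A|\geq\min\{t,1+(t-1)\}=t$, so the realized product set contains $C$.

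Next comes the selection. From each double coset I take disjoint blocks of $r_i$ terms greedily, each block having weight $w_i$, and stop as soon as the cumulative weight reaches $t-1$. The count runs by contradiction. Suppose the available blocks are exhausted while the accumulated weight $W$ is still at most $t-2$. Coset $i$ has at most $r_i-1$ leftover terms, so at most $E_0$ terms are unused overall. The used terms number $\sum r_i(\#\text{blocks}_i)\leq\lambda\sum w_i(\#\text{blocks}_i)=\lambda W\leq\lambda(t-2)$. Hence $N\leq E_0+\lambda(t-2)$, contradicting the hypothesis. Inclusion-minimality is then immediate: discard blocks one at a time while the total weight stays at least $t-1$.

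The main work lies in the last paragraph, the verification of the growth criteria, and the prime-order case is where I expect the obstacle. For $t$ prime, Lemma~\ref{lem:kneser-cd} (Cauchy--Davenport in $C_t$) gives $|AT|\geq\min\{t,|A|+|T|-1\}$, so the condition holds with $w=|T|-1$. For the arithmetic-progression case, translate so that $c=1$ and identify $C$ with $\mathbb{Z}/t$ with $d\mapsto1$; then $T\supseteq\{0,1,\ldots,w\}$. If $A\neq C$, some element of $A$ has a successor outside $A$, so $A+\{0,1\}$ strictly contains $A$. Iterating this $w$ times gives $|A+T|\geq\min\{t,|A|+w\}$. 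The $C_4$ remarks follow from this progression case. A consecutive pair $\{c,cd\}$ is a progression of weight one. For a three-element subset of $C_4$, the missing element $e$ determines the arrangement, and the subset equals $\{c,cd,cd^2\}$ with $c=ed$ and $d$ the generator, so it is a progression of weight two.
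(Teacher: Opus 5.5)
Your proposal is correct and follows the paper's proof essentially step for step: the same partition into blocks of $r_i$ terms with at most $r_i-1$ leftovers per double coset, the same bound $\lambda(t-2)$ on the length of blocks of total weight at most $t-2$, successive application of \eqref{eq:seed-weight-growth} starting from $A=\{1\}$, Cauchy--Davenport for prime $t$, and non-invariance of a proper nonempty subset under a generator for the progression case. Your extra paragraph on realization via \eqref{eq:orbit-realization} goes beyond what the lemma asserts, but it is harmless.
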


\begin{proof}
Partition the available terms in the $i$th double coset into subsequences of $r_i$ terms, leaving at most $r_i-1$ terms unused. If the resulting subsequences had total weight at most $t-2$, then their total length would be at most $\lambda(t-2)$. Including the unused terms would give $N\leq E_0+\lambda(t-2)$, a contradiction. Choose an inclusion-minimal collection of subsequences whose total weight is at least $t-1$. Starting with $A=\{1\}$ and applying the growth condition successively proves that the product of the selected cyclic subsets is $C$.

The prime-order assertion is the Cauchy--Davenport inequality. For the progression assertion, a nonempty proper subset of $C$ cannot be invariant under multiplication by a generator $d$. Thus adjoining one more translate by $d$ increases its size by at least one until it reaches $t$. Iterating $w$ times proves the growth condition. Every three-element subset of $C_4$ is a translate of $\{1,a,a^2\}$, which proves the last assertion.
\end{proof}

\begin{lemma}\label{lem:prime-double-cosets}
Let $C$ be a Sylow subgroup of prime order $p$ in $G$, and let $h=|N_G(C):C|<p$. Then $q=(|G|-ph)/p^2$ is an integer, and every nonempty bi-$C$-invariant subset has cardinality in
\begin{equation} \Omega_p=\{p^2u+pj:0\leq u\leq q,\ 0\leq j\leq h\}\setminus\{0\}. \tag{14.3}\label{eq:prime-cardinalities} \end{equation}
If $\langle C,xCx^{-1}\rangle=G\neq C$, then $B=CxC$ satisfies
$$ |B|=p^2,\qquad |BB^{-1}|\geq p^2+p, \quad\text{and}\quad \langle BB^{-1}\rangle=G. $$
\end{lemma}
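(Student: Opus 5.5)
The plan is to classify the $C$-double cosets by size, show that any bi-$C$-invariant set has cardinality in $\Omega_p$, and then deduce the bounds for $B=CxC$ from the same size analysis.

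\textbf{Double coset sizes.} For $g\in G$ one has $|CgC|=p^2/|C\cap gCg^{-1}|$. Since $|C|=p$ is prime, the intersection is either trivial or all of $C$. It is all of $C$ exactly when $g\in N_G(C)$, and then $CgC=gC$ has size $p$.

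\textbf{The set $\Omega_p$.} The normalizer $N_G(C)$ is a union of $h$ double cosets of size $p$. Every other double coset has size $p^2$. Counting elements gives $|G|-ph=p^2q$, where $q$ is the number of double cosets of size $p^2$, so $q$ is an integer. A nonempty bi-$C$-invariant set is a union of double cosets. If it uses $u$ of size $p^2$ and $j$ of size $p$, its cardinality is $p^2u+pj$ with $0\le u\le q$ and $0\le j\le h$. This proves \eqref{eq:prime-cardinalities}. The hypothesis $h<p$ is not needed for this part; it only matters for the interpretation of the set.

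\textbf{The bounds for $B=CxC$.} Suppose $\langle C,xCx^{-1}\rangle=G\ne C$. Then $x\notin N_G(C)$, because otherwise that subgroup would equal $C$. Hence $|B|=p^2$.

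Next, $BB^{-1}=CxCx^{-1}C$ is bi-$C$-invariant and contains $C$. It also contains $xCx^{-1}$, which meets $C$ only in $1$. So it contains an element outside $C$, and therefore contains a whole double coset different from $C$. That double coset has size at least $p$, which gives $|BB^{-1}|\ge p+p$.

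To get $p^2+p$, it suffices to produce a double coset of size $p^2$ inside $BB^{-1}$. Suppose instead that $BB^{-1}\subseteq N_G(C)$. Then $xCx^{-1}\subseteq N_G(C)$, so $xCx^{-1}$ is a subgroup of order $p$ in $N_G(C)$. As $C$ is a Sylow subgroup, it is normal in $N_G(C)$ and is the unique subgroup of order $p$ there. This forces $xCx^{-1}=C$, a contradiction. Hence $BB^{-1}$ contains a double coset of size $p^2$ in addition to $C$, and $|BB^{-1}|\ge p^2+p$.

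\textbf{Generation.} Let $K=\langle BB^{-1}\rangle$. It is normalized by $C$ and contains $xcx^{-1}c'$ for all $c,c'\in C$, as in the proof of Lemma~\ref{lem:orbit-generation}. Therefore $KC\supseteq\langle C,xCx^{-1}\rangle=G$.

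In fact $K$ already contains $C$, since $C\subseteq BB^{-1}$. Thus $K\supseteq C$ and $K\ni xcx^{-1}$, so $K\supseteq\langle C,xCx^{-1}\rangle=G$. No simplicity argument is needed.

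I expect no real obstacle. The only point needing care is the size bound $p^2+p$, where one has to rule out $BB^{-1}\subseteq N_G(C)$ using the uniqueness of the subgroup $C$ of order $p$ in $N_G(C)$.
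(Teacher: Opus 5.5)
Your proposal is correct and follows essentially the same route as the paper. You classify the double cosets by whether $C\cap gCg^{-1}$ is trivial, count them to obtain $q$ and $\Omega_p$, and get the bound $p^2+p$ from the fact that $xCx^{-1}$ cannot lie in $N_G(C)$, because $C$ is the unique subgroup of order $p$ there. Your observation that $C\cup xCx^{-1}\subseteq BB^{-1}$ gives generation directly simply spells out what the paper asserts in one line.
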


\begin{proof}
The double cosets inside $N_G(C)$ are its $h$ cosets of size $p$; every other double coset has size $p^2$, since two distinct subgroups of order $p$ intersect trivially. This proves the assertion about $q$ and the stated set of possible cardinalities.

For the last assertion, $x\notin N_G(C)$, so $|B|=p^2$ and $BB^{-1}=CxCx^{-1}C$ generates $G$. If $1\neq z\in xCx^{-1}$, then $z\notin N_G(C)$: the normal Sylow $p$-subgroup $C$ is the unique Sylow $p$-subgroup of $N_G(C)$, whereas $C\cap xCx^{-1}=1$. The disjoint subsets $C$ and $CzC$ of $BB^{-1}$ have sizes $p$ and $p^2$, respectively, which proves the bound.
\end{proof}

\subsection{A full product from involutions}\label{sec:a5-involution-product}

For $G=A_5$ and $t=5$, we have $m=30$ and $M\geq31$. Take $a=(12345)$. The unique maximal overgroup of $C$ is $N=N_{A_5}(C)\cong D_{10}$, so at least $30-8=22$ terms of $R$ lie outside $N$. Every double coset there has size $25$. Representatives of the two double cosets are
$$ x=(12)(34) \quad\text{and}\quad x'=(13)(25). $$
To check this without enumeration, label the points cyclically by $\mathbb{Z}/5\mathbb{Z}$. The multiset $\Delta(y)=\{y(i+1)-y(i):i\in\mathbb{Z}/5\mathbb{Z}\}$ is unchanged by left or right multiplication by $C$, whereas
$$ \Delta(x)=\{2,2,3,4,4\} \quad\text{and}\quad \Delta(x')=\{1,1,2,3,3\}. $$
The two disjoint double cosets account for all $50$ elements outside $N$. Moreover, $b=(2354)$ satisfies $bab^{-1}=a^2$ and $bxb^{-1}=x'$.

Let $X_j=a^jxa^{-j}$ and $O=\{X_0,\ldots,X_4\}$. The three products
$$ X_0^3=(12)(34),\qquad X_0X_1X_0=(14)(35), \quad\text{and}\quad X_0X_2X_0=(25)(34) $$
lie in distinct $C$-conjugacy orbits. Indeed, move the unique fixed point of each involution to $5$ by a power of $a$; the resulting matchings of the other four points are distinct. Each orbit has size five, so $O^3$ contains all fifteen involutions of $A_5$.

Every element of $A_5$ is a product of two involutions. Besides the identity, this follows for the three possible cycle types from
$$ \begin{aligned} (12)(34)&=(13)(24)\,(14)(23),\\ (123)&=(12)(45)\,(23)(45),\\ (12345)&=(12)(35)\,(25)(34). \end{aligned} $$
Conjugation by $S_5$ covers every element of each type, including both $A_5$-classes of five-cycles. Thus $O^6=A_5$. Conjugation by $b$ gives the same conclusion for the second double coset.

Of the $22$ available remainder terms, at least eleven lie in one double coset; select six of them. Identity \eqref{eq:orbit-realization} gives a full product set using these six terms and $24\leq M$ copies of $a$. The total length is $30<60$, and Lemma~\ref{lem:high-mult-completion} completes this case.

\subsection{Seeds from prime cyclic sumsets}\label{sec:psl2-8-t7-seed}

Let $G=\PSL_2(8)$ and $t=7$. Then $C$ is the split torus. By Dickson's subgroup classification \cite{Giudici2007}, its maximal overgroups are the two eigenline Borels $B_1,B_2$, of order $56$, and its normalizer $N$, of order $14$. Their union contributes at most $54+54+12=120$ terms of $R$, leaving at least $132$ terms $x$ with $\langle C,x\rangle=G$. All pairwise intersections are $C$, so the union has $56+56+14-3\cdot7+7=112$ elements. Outside this union, the double cosets have size $49$; there are $(504-112)/49=8$ of them. In particular, one such double coset supplies twelve remainder terms.

\smallskip
\noindent\emph{A product containing $C$.}
Work in $\SL_2(\mathbb{F}_8)=\PSL_2(8)$ with $a=\diag(\lambda,\lambda^{-1})$. Since squaring permutes $\mathbb{F}_8^*$, toral left and right translations and diagonal conjugation give the following orbit normal forms. These operations preserve the property that a power contains $C$. Every generating matrix has both off-diagonal entries nonzero.

\smallskip
\noindent\emph{Case 1.} Suppose that all four entries are nonzero. Let $k=\alpha\delta\notin\{0,1\}$, choose $s^2=k$, and let $c=k+1$. The normalized orbit consists of
$$ X_u= \begin{pmatrix} s&u\\ c/u&s \end{pmatrix}, $$
where $u\in\mathbb{F}_8^*$. Its elements are involutions. For $r,v\neq0$, define
$$ B_r(v)=X_{rv}X_v= \begin{pmatrix} k+cr&sv(r+1)\\ sc(1+r^{-1})/v&k+c/r \end{pmatrix}. $$
Choose $r\neq1$ with $A=k+cr\neq0$ and $D=k+c/r\neq0$; at most two of the six choices are excluded. With $h=A/D$ and $w=vr/h$, direct multiplication gives
$$ B_r(v)=\diag(h,h^{-1})B_{r^{-1}}(w). $$
Here $h\neq1$, since $r\neq r^{-1}$. Thus $z=\diag(h,h^{-1})\in O^4$. Reversing the involution word gives $z^{-1}\in O^4$, and $1=X_1^4\in O^4$. Since $z$ generates $C$,
$$ C=\{z^j:-3\leq j\leq3\}\subseteq(O^4)^3=O^{12}. $$

\smallskip
\noindent\emph{Case 2.} Suppose that exactly one diagonal entry is zero. Interchange the eigenlines when necessary. The normal form and its two-factor products are
$$ X_u= \begin{pmatrix}0&u\\u^{-1}&1\end{pmatrix} \quad\text{and}\quad B_r(u)=X_uX_{u/r}= \begin{pmatrix}r&u\\r/u&1+r^{-1}\end{pmatrix}. $$
For $r\neq0,1$, let $h(r)=r^2/(r+1)$. Then
$$ B_r(u)B_r\bigl(u/h(r)\bigr) =\diag\bigl(h(r),h(r)^{-1}\bigr). $$
The six parameters $r$ yield exactly three different values $h(r)$: the equation $h(r)=z$ is $r^2+zr+z=0$, and its other root is $r/(r+1)$, different from $0,1,r$. Hence $O^4$ contains a three-element subset $D$ of $C_7$. By the Cauchy--Davenport inequality, $D^3=C$, so once again $C\subseteq O^{12}$. Twelve actual terms in the selected double coset give a toral translate of $O^{12}$, and thus a product set containing $C$.

\smallskip
\noindent\emph{Completion by growth.}
Restrict this product set to $C$. Subsequent products $A$ are bi-$C$-invariant and satisfy $AO=A(CO)$. For an orbit with four nonzero entries, let $B=CO=CxC$, so that $|B|=49$. By Lemma~\ref{lem:cyclic-amplification},
\begin{equation} |BB^{-1}|\geq7+\left\lfloor\frac{7-1}{2}\right\rfloor7^2=154. \tag{14.4}\label{eq:psl2-8-amplification} \end{equation}
The difference set contains $C$ and $xCx^{-1}$. These groups generate $G$: the eigenline pairs are disjoint for a matrix with four nonzero entries, excluding both Borels containing $C$, and $N$ has a unique subgroup of order seven. The parameter in Lemma~\ref{lem:difference-growth} is $\lceil155/4\rceil=39$. All sizes are multiples of seven, so thirteen further factors with four nonzero entries suffice, with successive lower bounds
$$ 7,\ 49,\ 77,\ 119,\ 161,\ 203,\ 245,\ 287,\ 329,\ 371,\ 413,\ 455,\ 483,\ 504. $$
The last step uses $483+49>504$.

Pair terms having a zero diagonal entry in the same position. By Lemma~\ref{lem:zero-diagonal-pairs}, each pair gives an amplified factor of size $7^3=343$. This applies to any two such terms; their double cosets need not be equal.

After the initial twelve terms, at least $120$ generating terms remain. If thirteen have four nonzero entries, then use the preceding recurrence. Otherwise at least $108$ terms have a zero diagonal entry, and one of the two types supplies four terms. These terms give two amplified paired factors of size $343$, whose product is $G$ because $343+343>504$. Thus at most $25$ remainder terms suffice in either case. They use $150\leq253\leq M$ copies of $a$ and $175<504$ total terms. By Lemma~\ref{lem:high-mult-completion}, the conclusion follows.

\subsection{A seed construction inside a covering subgroup}\label{sec:psl2-7-t7-seed}

Let $G=\PSL_2(7)$ and $t=7$. Here $n=168$, $m=84$, and $M\geq85$. The unique maximal overgroup of $C$ is $N=N_G(C)$, of order $21$. There are three double cosets outside $N$, each of size $49$. The subgroup bound leaves at least $84-19=65$ terms outside $N$; in particular, one double coset supplies fourteen terms. We prove that the fourteenth power of its orbit is $G$.

Work modulo signs with determinant-one matrices over $\mathbb{F}_7$. Write
$$ u(j)=\begin{pmatrix}1&j\\0&1\end{pmatrix} \quad\text{and}\quad w=\begin{pmatrix}0&-1\\1&0\end{pmatrix}. $$
Left and right multiplication by $C=\{u(j)\}$ reduce every matrix outside $N$ to the form $w_c=\begin{psmallmatrix}0&-1/c\\c&0\end{psmallmatrix}$. Conjugation by $\diag(c,1)$ preserves $C$ and sends $w_c$ to $w$. Thus it suffices to consider
$$ O=\{X_j:j\in\mathbb{F}_7\} \quad\text{and}\quad X_j=u(j)wu(-j)= \begin{pmatrix}j&-j^2-1\\1&-j\end{pmatrix}. $$
These matrices represent projective involutions. For $h\neq0$,
\begin{equation} X_jX_{j+h}= \begin{pmatrix} jh-1&h(1-j^2-jh)\\ h&-h(j+h)-1 \end{pmatrix}. \tag{14.5}\label{eq:psl2-7-orbit-pair} \end{equation}
For a fixed $h$, the seven products form a $C$-conjugacy orbit, with trace and lower-left entry $(-h^2-2,h)$. These pairs, understood up to simultaneous sign, distinguish the six orbits: comparing $h$ and $-h$ would require $h^2+2=0$, which is impossible in $\mathbb{F}_7$. Therefore $|O^2|=1+6\cdot7=43$.

Let $K=\langle O^2\rangle$. It is normalized by $C$ and contains $X_0X_1$, whose order is four: its trace is $4$, and its square has trace $4^2-2=0$. Hence $KC$ is not contained in the order-$21$ normalizer. The unique maximal overgroup property implies that $KC=G$; then $K\trianglelefteq G$, and simplicity implies $K=G$.

Since every element of $O$ is an involution, $OO^{-1}=O^2$. Lemma~\ref{lem:difference-growth} first applies to $B=O$, with growth parameter $\lceil44/4\rceil=11$. Starting from $|O^2|=43$, two steps give
$$ |O^3|\geq54 \quad\text{and}\quad |O^4|\geq65. $$
Apply Lemma~\ref{lem:difference-growth} to $B=O^2$. Now $BB^{-1}=O^4$ generates $G$, since $1\in O^2$ implies that $O^2\subseteq O^4$. Its growth parameter is at least $\lceil66/4\rceil=17$. Successive powers have lower bounds
$$ \begin{gathered} |B|\geq43,\qquad |B^2|\geq60,\qquad |B^3|\geq77,\\ |B^4|\geq94,\qquad |B^5|\geq111, \quad\text{and}\quad |B^6|\geq128. \end{gathered} $$
The variable part of the growth bound is at least the stated constant at each preceding value, and the update is nondecreasing. Since $128+43>168$, we have $B^7=O^{14}=G$. Fourteen terms in the chosen double coset use $84\leq M$ copies of $a$ and $98<168$ total terms. By Lemma~\ref{lem:high-mult-completion}, the conclusion follows.

\subsection{Seeds from a trace parametrization}\label{sec:singer-trace-seed}

Let $G=\PSL_3(3)$ and $t=13$. We first prove the finite-field lemma used in this case.

\begin{lemma}\label{lem:singer-trace}
Let $F=\mathbb{F}_{27}$, choose $\zeta$ satisfying $\zeta^3-\zeta-1=0$, and let $c$ be multiplication by $\zeta$ on the three-dimensional $\mathbb{F}_3$-space $F$. Let $C=\langle c\rangle$. If $x\in\SL_3(3)\setminus C$, then $xC$ contains at least three elements with reducible characteristic polynomial. Consequently, there exist $y\in xC$, $d\in\{6,8\}$, and $j\not\equiv0\pmod{13}$ such that
$$ \{1,c^j\}\subseteq O_c(y)^d. $$
\end{lemma}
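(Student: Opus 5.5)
The plan has two parts. First I count elements of the coset $xC$ by characteristic polynomial. Then I turn a reducible element into a power relation and apply Lemma~\ref{lem:power-relation-seeds}.

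For the first part, note that $C$ is a Singer cycle of order $13$ in $\SL_3(3)$, acting by multiplication by the norm-one elements $\zeta^{2i}$. An element $g\in\SL_3(3)$ has irreducible characteristic polynomial exactly when it has no eigenvector in $F$. Such a $g$ then generates a field $\mathbb{F}_3[g]\cong\mathbb{F}_{27}$ and lies in a conjugate of the full Singer torus. The Singer torus has order $26$, with determinant-one part of order $13$.

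I will count the irreducible elements in $xc^i$, $0\le i<13$, using the trace. The characteristic polynomial of an element of $\SL_3(3)$ is $X^3-\tau X^2+\sigma X-1$. As $i$ varies, the trace $\tr(xc^i)$ is an $\mathbb{F}_3$-linear function of $c^i$, namely $\tr(x\cdot\zeta^{2i})$ viewed through the trace form on $F$. So it takes each value in a controlled way; the "trace parametrization" of the title.

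The direct route is to bound the number of $i$ for which $xc^i$ lies in some Singer torus. There are $144$ Singer subgroups of order $13$ in $\PSL_3(3)$, and two distinct ones meet trivially. Hence a coset $xC\ne C$ meets each conjugate $C^g$ in at most one element. The irreducible elements of order $13$ number $144\cdot12$, and the remaining irreducible elements have orders $26/\gcd$, which is excluded in $\SL$, so all irreducible elements have order $13$. I then count the pairs $(i,C')$ with $xc^i\in C'\setminus\{1\}$. I expect this to give at most $10$ irreducible elements in each coset. If the counting cannot be done uniformly, I will fall back on the explicit classification of the $12$ nontrivial cosets $xC$ up to conjugation by $N(C)$ (of order $39$) and check each representative by its trace. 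This step is the main obstacle.

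For the second part, a reducible $y\in xC$ has an eigenvalue $\pm1$, so its order lies in $\{1,2,3,4,6,8\}$. The elements of order $13$ are irreducible. Since $x\notin C$ and $C\cap\langle y\rangle$ must be small, I want $y^d=1$ with $d\in\{6,8\}$: elements of orders $2,3,6$ satisfy $y^6=1$, and those of orders $4,8$ satisfy $y^8=1$. Order $1$ is excluded because $x\notin C$. With three reducible elements $y_1=yc^{i_1}$, $y_2=yc^{i_2}$, $y_3=yc^{i_3}$, two of them share the same $d$ by pigeonhole. Then Lemma~\ref{lem:power-relation-seeds} gives $c^{-di_1},c^{-di_2}\in O_c(y)^d$. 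Since $13\nmid d(i_2-i_1)$, translating by $c^{di_1}$ puts $\{1,c^j\}$ into a translate of $O_c(y)^d$. To remove this translate, I replace $y$ by $y c^{i_1}$: orbits of $C$-translates shift by powers of $c$. This gives $\{1,c^j\}\subseteq O_c(y')^d$ with $j=d(i_1-i_2)\not\equiv0$.
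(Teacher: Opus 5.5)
Your second half is correct and matches the paper's argument. Reducible elements of $\SL_3(3)$ have order dividing $6$ or $8$, two of three reducible elements share such a $d$, and Lemma~\ref{lem:power-relation-seeds} applied to $y'=yc^{i_1}$ and to $y'c^{i_2-i_1}$ gives $\{1,c^{d(i_1-i_2)}\}\subseteq O_c(y')^d$.

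The gap is the first half, which is the real content of the lemma: nothing in your proposal proves that $xC$ contains three reducible elements. The counting route cannot succeed. The fact that $xC$ meets each Singer subgroup $C'\neq C$ in at most one element bounds the number of irreducible elements of $xC$ only by $13$. The global count of $144\cdot12$ elements of order $13$ shows merely that a nontrivial coset contains about four irreducible elements on average. No per-coset maximum of $10$ can come out of such counts. Your fallback is a case check that you do not perform. As described it would not settle the question, because the trace does not determine irreducibility. Write the characteristic polynomial of $xc^i$ as $T^3-s_iT^2+t_iT-1$ with $t_i=\tr((xc^i)^{-1})$. It is irreducible exactly when $(s_i,t_i)\in\{(0,2),(1,2),(2,0),(2,1)\}$, so $s_i=2$ occurs for reducible and irreducible elements alike, and you need both traces. (A minor slip: $N(\zeta)=1$, so $\zeta$ itself has order $13$ and $C=\langle\zeta\rangle$; your description by $\zeta^{2i}$ gives the same set.)

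What is missing is a way to make the trace parametrization quantitative while using $x\notin C$. The paper writes $x$ as $z\mapsto Az+Bz^3+Dz^9$ and lets $A'$ be the coefficient of $z$ in $x^{-1}$, so that $s_i=\Tr(A\zeta^i)$ and $t_i=\Tr(A'\zeta^{-i})$. Your remark that $\tr(xc^i)$ is a linear functional in $c^i$ is the first half of this.
\begin{itemize}
\item Irreducibility forces exactly one of $s_i,t_i$ to equal $2$.
\item The index set $\{i:s_i=2\}$ has $6$, $3$, or $0$ elements according as $N(A)=1$, $N(A)=-1$, or $A=0$. When $A=\zeta^\alpha$ it is $D_2-\alpha$ with $D_2=\{2,4,5,6,10,12\}$. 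Similarly $\{i:t_i=2\}=\beta-D_2$ when $A'=\zeta^\beta$.
\item Unless both norms equal $1$, at most $9$ elements of $xC$ are irreducible. If both equal $1$, then $D_2+D_2=(\mathbb{Z}/13\mathbb{Z})\setminus\{0\}$ makes the two six-element sets meet unless $AA'=1$, leaving at most $10$.
\item Finally, the Dickson-matrix identities $A'=A^{12}-B^3D^9$ and $1=N(A)+N(B)+N(D)-\Tr(AB^3D^9)$ show that $AA'=1$ and $N(A)=1$ force $B=D=0$, that is, $x\in C$.
\end{itemize}
A relation of this kind between $A$ and $A'$ is unavoidable. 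For $x\in C$ the coset is $C$ itself, twelve of whose thirteen elements are irreducible. So any valid argument must use how $x^{-1}$ depends on $x$.
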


\begin{proof}
The polynomial has no root in $\mathbb{F}_3$, hence is irreducible. Its root has norm $1$, is not $1$, and consequently has order $13$. Write the linearized polynomial of $x$ as $x(z)=Az+Bz^3+Dz^9$, and let $A'$ be the coefficient of $z$ in $x^{-1}$. The Dickson matrix, representing the same map after scalar extension, is
$$ \begin{pmatrix} A&B&D\\ D^3&A^3&B^3\\ B^9&D^9&A^9 \end{pmatrix}. $$
Its inverse minor and determinant give
\begin{equation} \begin{gathered} A'=A^{12}-B^3D^9,\\ 1=N(A)+N(B)+N(D)-\Tr(AB^3D^9). \end{gathered} \tag{14.6}\label{eq:dickson-identities} \end{equation}
Here $N$ and $\Tr$ are the norm and trace from $F$ to $\mathbb{F}_3$. If $s_i=\tr(xc^i)$ and $t_i=\tr((xc^i)^{-1})$, then
$$ s_i=\Tr(A\zeta^i) \quad\text{and}\quad t_i=\Tr(A'\zeta^{-i}). $$
The characteristic polynomial is $T^3-s_iT^2+t_iT-1$. Checking its possible roots $1,-1$ shows that it is irreducible if and only if
\begin{equation} (s_i,t_i)\in\{(0,2),(1,2),(2,0),(2,1)\}. \tag{14.7}\label{eq:cubic-irreducibility} \end{equation}
The required traces are
$$ \begin{array}{c|l} \Tr(\zeta^i)&i\pmod{13}\\ \hline 0&0,1,3,9\\ 1&7,8,11\\ 2&2,4,5,6,10,12 \end{array} $$
They follow from $T_0=T_1=0$, $T_2=2$, and $T_i=T_{i-2}+T_{i-3}$. Let $D_2=\{2,4,5,6,10,12\}$. Then $D_2+D_2=(\mathbb{Z}/13\mathbb{Z})\setminus\{0\}$: witnesses for residues $1,\ldots,12$ are, respectively,
$$ \begin{gathered} 2+12,\ 5+10,\ 4+12,\ 2+2,\ 6+12,\ 2+4,\\ 2+5,\ 2+6,\ 4+5,\ 4+6,\ 5+6,\ 2+10. \end{gathered} $$
Zero is absent because $D_2\cap(-D_2)=\varnothing$.

For a coefficient of norm $1$, the set of indices at which its trace is $2$ has six elements; for norm $-1$ it has three elements; for coefficient zero it is empty. The irreducibility condition can thus hold at most nine times unless $N(A)=N(A')=1$. In that exceptional case, write $A=\zeta^\alpha$ and $A'=\zeta^\beta$. The two sets of indices at which the traces are $2$ are $D_2-\alpha$ and $\beta-D_2$. They intersect unless $AA'=1$. If they intersect, then membership in exactly one of these sets occurs at most $6+6-2=10$ times, bounding the irreducible polynomials by ten.

Finally, $AA'=1$ and $N(A)=1$ in the inverse-minor identity imply that $B^3D^9=0$. The determinant identity then forces both $B$ and $D$ to vanish. This makes $x$ multiplication by $A\in\langle\zeta\rangle$, contrary to the hypothesis. Thus at least three polynomials are reducible.

A matrix with reducible characteristic polynomial here has order dividing $8$ or $6$. An irreducible quadratic factor gives a semisimple component over $\mathbb{F}_9$ and order dividing $8$; if all eigenvalues lie in $\mathbb{F}_3$, then the semisimple and unipotent parts have orders dividing $2$ and $3$, respectively. Of the three elements, two therefore have orders dividing a common $d\in\{6,8\}$. Write them as $y$ and $yc^j$, with $j\not\equiv0\pmod{13}$. Since $13\nmid dj$, Lemma~\ref{lem:power-relation-seeds} implies that $\{1,c^{-dj}\}\subseteq O_c(y)^d$, which proves the assertion.
\end{proof}

Now $G=\PSL_3(3)=\SL_3(3)$, $n=5616$, $m=2808$, and $M\geq2809$. The unique maximal overgroup of $C$ is its normalizer, of order $39$. There are $33$ double cosets of size $169$ outside the normalizer, and three double cosets of size $13$ inside it \cite{Wilson2009}. There are at least $2808-37=2771$ remainder terms outside the normalizer; the weaker bound $2760$ will suffice.

By Lemma~\ref{lem:singer-trace} and double-coset translation, each of the thirty-three generating double cosets supplies a subsequence of length at most eight whose orbit product contains two elements of $C$. Apply Lemma~\ref{lem:seed-selection} with weight one. The allowance for unused terms is at most $33(8-1)=231$, and $\lambda\leq8$. Since
$$ 2760>231+8(13-2)=319, $$
a cyclic seed uses at most $12\cdot8=96$ remainder terms.

The unique maximal overgroup property verifies the hypothesis of Lemma~\ref{lem:orbit-generation} for every generating term. By Lemma~\ref{lem:prime-double-cosets}, we may use $h=3$, $\Omega_{13}$, and the bounds $b=169$ and $d=182$. Together with $E=2760$ and $s=96$, these parameters establish the $\PSL_3(3)$ row of Table~\ref{tab:orbit-growth}. By Proposition~\ref{prop:orbit-growth-criterion}, the Singer case follows.

\subsection{Seeds from short permutation identities}\label{sec:a6-permutation-seed}

\begin{proposition}\label{prop:a6-order-five}
The high-multiplicity case $G=A_6$, $t=5$ satisfies $1\in\Pi_{360}(S)$.
\end{proposition}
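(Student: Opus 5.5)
Fix $a=(12345)$ and $C=\langle a\rangle$. The plan follows the cyclic-seed template of Section~\ref{sec:cyclic-seeds}: first build a product set containing $C$ from a few orbit factors, then enlarge it by orbit growth (Proposition~\ref{prop:orbit-growth-criterion}, case (ii)), and finish with Lemma~\ref{lem:high-mult-completion}(ii). Here $n=360$, $m=180$, $M\geq181$, and $|R|\geq180$.

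\textbf{Step 1: subgroup structure.} The maximal overgroups of $C$ in $A_6$ are the two point stabilizers $A_5$: the natural one fixing $6$, and the one from the exotic class acting transitively on six points. There is one of each, since $C$ lies in exactly one conjugate in each class. Their intersection is $N_{A_6}(C)\cong D_{10}$, so their union has $60+60-10=110$ elements. By Lemma~\ref{lem:subgroup-cover-caps}(iv), each $A_5$ contains at most $11$ remainder terms outside $C$. Together with the bound from Lemma~\ref{lem:subgroup-cap}, this leaves at least $180-3-22=155$ generating terms, that is, terms $x$ with $\langle C,x\rangle=G$. The $250$ elements outside the union form $10$ double cosets of size $25$ (Lemma~\ref{lem:prime-double-cosets} with $p=5$, $h=2$). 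Since each maximal class has a unique member containing $C$, Lemma~\ref{lem:orbit-generation} shows that every generating term has a generating orbit-difference set. Lemma~\ref{lem:prime-double-cosets} then gives $b=25$ and $d\geq30$ for $B_x=CxC$.

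\textbf{Step 2: the seed.} For a representative $x$ of each of the ten double cosets, find a short permutation identity giving two distinct elements of $C$ in $O_a(x)^r$ for small $r$. Two tools apply.
\begin{itemize}
\item Lemma~\ref{lem:power-relation-seeds}: $(xa^i)^l=(xa^j)^l=1$ with $5\nmid l(j-i)$ gives $|O_a(x)^l\cap C|\geq2$. Elements of $A_6$ have orders $1,2,3,4,5$, so in each double coset $xC$ one looks for two elements whose orders divide a common $l\in\{2,3,4\}$.
\item Alternatively, if some power $x^d$ is an involution inverting $a$, then $C\subseteq O_a(x)^{2d}$.
\end{itemize}
With weight one (valid since $t=5$ is prime), Lemma~\ref{lem:seed-selection} needs $N>10(r-1)+4r$. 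For $r\leq4$ this is at most $70<155$, so a seed uses at most $16$ terms.

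\textbf{Step 3: growth and resources.} After the seed, round cardinalities to $\Omega_5=\{25u+5j: 0\leq j\leq 2\}$ and use $\delta=\lceil 31/4\rceil=8$. Starting from $25$ and adding $8$ each step, rounded up to $\Omega_5$, the set grows by roughly $25$ every one or two factors. Reaching $360$ should take about $k\approx 20$ single-orbit factors. Check \eqref{eq:orbit-resource-bounds}: with $s+k\leq 36$, we have $4\cdot 36=144\leq181$ and $5\cdot36=180\leq360$, and $E=155\geq s+k$. If the plain recurrence is too slow, use $CO^2$ factors as in Lemma~\ref{lem:squared-orbit-bounds} for a larger $d$.

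\textbf{Main obstacle.} The hard part is Step 2: checking, for all ten double cosets, that $xC$ contains two elements of a common order in $\{2,3,4\}$, or an element with an involutory power inverting $a$. I would label points by $\mathbb{Z}/5\cup\{\infty\}$ and use the invariance of the cycle-difference data $\Delta$ (as in Section~\ref{sec:a5-involution-product}) together with the $S_6$-normalizer action by $b=(2354)$ to reduce to a few representatives. Then I would list the cycle types of the five elements $xa^i$ directly. Since $xC$ avoids $N(C)$, it contains no $5$-cycle powers of $a$. By pigeonhole, five elements with orders among $2,3,4,5$ should typically give two of order $2$ or $4$, or two of order $3$. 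The delicate cases are cosets where the orders are mostly $5$.
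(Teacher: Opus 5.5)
Your outline matches the paper's proof. You cap the two $A_5$ overgroups by Lemma~\ref{lem:subgroup-cover-caps}(iv), obtain $155$ generating terms, build a weight-one seed of at most $16$ terms via Lemma~\ref{lem:seed-selection}, and finish with Proposition~\ref{prop:orbit-growth-criterion}(ii) using $\Omega_5$, $b=25$, $d=30$. Your count of exactly ten generating double cosets is correct and sharper than the paper's bound of $14$.

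\textbf{The gap is Step~2.} It is the only part of this proposition with real content, you leave it as a plan, and your pigeonhole heuristic fails on two double cosets. For $x=(56)(1243)$, the left coset $xC$ consists of one element of order $4$ and four $5$-cycles; the same holds for $x=(56)(1342)$. Since $a^kxa^j$ is conjugate to $xa^{j+k}$, no other representative of $CxC$ does better. So Lemma~\ref{lem:power-relation-seeds} with $5\nmid l$ gives nothing on these two double cosets. They must go through the inversion clause: $x^2=(14)(23)$ inverts $a$, so $C\subseteq O_a(x)^4$.

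\textbf{How to close Step~2.}
\begin{itemize}
\item Normalize each double coset outside $L_1$ (the stabilizer of $6$) to a unique $x=(56)\sigma$ with $\sigma$ odd on $\{1,2,3,4\}$. Right multiplication by $C$ forces $x(5)=6$, then left multiplication forces $x(6)=5$. The $\Delta$-invariant of Section~\ref{sec:a5-involution-product} does not apply to permutations moving the sixth point.
\item Conjugation by $(1243)$ fixes $5$ and $6$ and normalizes $C$, so it reduces the twelve cosets to five representatives.
\item Use the count $\sum_{i=0}^{4}|\Fix(xa^i)|=4$. The point $6$ is never fixed, and each of the four points $v\neq x(6)$ in $\{1,\dots,5\}$ is fixed by exactly one $xa^i$.
\item Nonidentity elements of $A_6$ of types $3$, $2^2$, $5$, $3^2$, $4\,2$ fix $3,2,1,0,0$ points respectively. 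Hence the number of $5$-cycles in $xC$ is $0$, $1$, $2$ or $4$.
\item If that number is at most $2$, at least three members have orders in $\{2,3,4\}$. Two of them have orders dividing a common $l\in\{3,4\}$, which is your pigeonhole argument.
\item The value $4$ occurs exactly for $\sigma=(1243)^{\pm1}$, which the inversion clause handles as above.
\end{itemize}
The paper instead lists explicit pairs $(i,j,l)$ for $\sigma=(12),(14),(1234)$.

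\textbf{Step~3 is too optimistic.} The recurrence is $v_{2j-1}=25j$, $v_{2j}=25j+10$, reaching $v_{27}=350$ and then $v_{28}=360$. So $k=28$ and $s+k\leq 44$, not $36$. The inequalities \eqref{eq:orbit-resource-bounds} still hold: $4\cdot 44=176\leq 181$, $5\cdot 44\leq 360$, and $155-16\geq 28$. The slack is much smaller than you claimed.
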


\begin{proof}
Write $S=a^{[M]}R$, where $a=(12345)$, $C=\langle a\rangle$, $M\geq181$, and $|R|\geq180$, and suppose that $1\notin\Pi_{360}(S)$.

The maximal subgroups of $A_6$ whose orders are divisible by five are the two conjugacy classes of $A_5$ subgroups \cite{WilsonEtAlND}. Each class has six members and each member has six Sylow five-subgroups. Since $N_{A_6}(C)\cong D_{10}$, there are $36$ Sylow five-subgroups in $A_6$. Counting incidences shows that precisely one member of each class contains $C$. Denote these members by $L_1,L_2$, taking $L_1$ to be the stabilizer of $6$. Their normalizers of $C$ both equal $N=N_{A_6}(C)$.

By Lemma~\ref{lem:subgroup-cover-caps}(iv), $|R_{L_i\setminus C}|\leq11$. By Lemma~\ref{lem:subgroup-cap}, at least $180-5+2=177$ terms lie outside $C$, so at least $177-22=155$ terms lie outside $L_1\cup L_2$. Call these terms generating. For each such $x$, we have $\langle C,x\rangle=A_6$, and $CxC$ has size $25$. There are at most $\lfloor360/25\rfloor=14$ generating double cosets.

\smallskip
\noindent\emph{Claim 1.}
For every $x\notin L_1$, there is an integer $2\leq l\leq4$ such that $O_a(x)^l$ contains at least two elements of $C$.

Each double coset outside $L_1$ has a unique representative
$$ x=(56)\sigma, $$
where $\sigma\in S_{\{1,2,3,4\}}\setminus A_{\{1,2,3,4\}}$. Indeed, first multiply on the right by the unique power of $a$ which makes $x(5)=6$, and then on the left by the unique power which makes $x(6)=5$. These two conditions are unaffected by the other multiplication. The resulting permutation interchanges $5$ and $6$, and its restriction to the remaining four points must be odd.

Let $b=(1243)$, fixing $5$ and $6$. It normalizes $C$, since $bab^{-1}=a^2$. Conjugation by $b$ has two orbits on the six transpositions of $\{1,2,3,4\}$, represented by $(12)$ and $(14)$. On the six four-cycles it has one orbit of size four represented by $(1234)$, and the two singleton orbits $b,b^{-1}$. For the first three representatives, the following parameters give elementary permutation identities:
$$ \begin{array}{c|ccc} \sigma&i&j&l\\ \hline (12)&2&3&4\\ (14)&1&4&3\\ (1234)&0&1&4 \end{array} $$
For each row, $(xa^i)^l=(xa^j)^l=1$. These identities follow by multiplying the displayed cycles; for example, when $x=(12)(56)$, both $xa^2$ and $xa^3$ have cycle type $(4,2)$. Since $5$ divides neither $l$ nor $j-i$, Lemma~\ref{lem:power-relation-seeds} implies that the indicated orbit power contains two elements of $C$. Conjugation by $b$ preserves this property.

In the remaining cases $\sigma=b$ or $b^{-1}$, we have $x^2=(14)(23)$, which inverts $a$. By the second assertion of Lemma~\ref{lem:power-relation-seeds}, with $d=2$, we therefore have $C\subseteq O_a(x)^4$.

Finally, left and right multiplication by $C$ merely translates an orbit power by an element of $C$. Thus the claim holds for every member of each double coset, proving Claim 1.

Each of the at most fourteen generating double cosets therefore supplies a subsequence of length at most four and weight one in Lemma~\ref{lem:seed-selection}. The parameters satisfy
$$ E_0\leq14(4-1)=42 \quad\text{and}\quad \lambda\leq4. $$
Since $155>42+4(5-2)=54$, a cyclic seed uses at most $4\cdot4=16$ terms.

The incidence argument verifies the hypothesis of Lemma~\ref{lem:orbit-generation} for every generating term. By Lemma~\ref{lem:prime-double-cosets}, we may use $h=2$, $\Omega_5$, and the bounds $b=25$ and $d=30$. Together with $E=155$ and $s=16$, these parameters establish the $A_6$ row of Table~\ref{tab:orbit-growth}. By Proposition~\ref{prop:orbit-growth-criterion}, the conclusion follows.
\end{proof}

\subsection{Seeds from coprime element orders}\label{sec:a7-order-seed}

\begin{proposition}\label{prop:a7-order-seven}
The high-multiplicity case $G=A_7$, $t=7$ satisfies $1\in\Pi_{2520}(S)$.
\end{proposition}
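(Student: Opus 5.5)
The plan follows the template of the $A_6$ and $\PSL_3(3)$ seed arguments: build a cyclic seed, then grow it with orbit factors via Proposition~\ref{prop:orbit-growth-criterion}. Take $a=(1234567)$ and $C=\langle a\rangle$, so that $n=2520$, $m=1260$, $M\geq1261$ and $|R|\geq1260$, and suppose $1\notin\Pi_{2520}(S)$.

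\emph{Step 1: maximal overgroups.} The maximal subgroups of $A_7$ of order divisible by seven are the point stabilizers $A_6$ (order $360$, index $7$) and the two classes of $\PSL_2(7)$ (order $168$, index $15$). I will count incidences as in Proposition~\ref{prop:a6-order-five}. A seven-cycle fixes no point, so no $A_6$ contains $C$. There are $120$ Sylow seven-subgroups, since $N_{A_7}(C)$ has order $21$. Each $\PSL_2(7)$ contains $8$ of them, so each class of $15$ conjugates has $15\cdot 8/120=1$ member containing $C$. Hence exactly two maximal subgroups $L_1,L_2\cong\PSL_2(7)$ contain $C$, one from each class. This verifies the hypothesis of Lemma~\ref{lem:orbit-generation}.

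\emph{Step 2: counting generating terms.} By Lemma~\ref{lem:subgroup-cap}, at least $1260-7+2=1255$ terms of $R$ lie outside $C$. To bound the terms in $L_i\setminus C$, I would use Lemma~\ref{lem:subgroup-cover-caps}: part (i) applied to $N=C_7\rtimes C_3$ gives about one term, and the double-coset structure of $\PSL_2(7)$ gives a small cap. The crude bound $166$ per $L_i$ already suffices. This leaves $E\geq 1255-2\cdot166\approx 920$ generating terms. By Lemma~\ref{lem:prime-double-cosets} with $p=7$ and $h=3$, each generating double coset has size $49$. So there are at most $\lfloor(2520-21)/49\rfloor=51$ such double cosets, and $|BB^{-1}|\geq56$.

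\emph{Step 3: the seed.} This is the step the title ``coprime element orders'' refers to, and I expect it to be the main obstacle. The element orders in $A_7$ are $1,2,3,4,5,6,7$. For a generating $x$, I want $i\neq j$ modulo $7$ such that $xa^i$ and $xa^j$ both have order dividing a common $l$ with $7\nmid l$. Lemma~\ref{lem:power-relation-seeds} then puts two elements of $C$ into $O_a(x)^l$.

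To get such $i,j$, consider the coset $xC$, which has seven elements. If at most one member is a seven-cycle, then six members have orders in $\{1,\dots,6\}$, all dividing $60$ or $12$. I would show that two of them have orders dividing a common $l\in\{4,5,6\}$, or the obvious substitutes $l=12$ or $l=60$. Choosing $l$ as small as possible keeps the seed short; with $l\leq 6$ the seed uses at most $6\cdot 6=36$ terms by Lemma~\ref{lem:seed-selection}. The condition $7\nmid l(j-i)$ is automatic.

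The hard part is excluding cosets $xC$ with many seven-cycles. For this I would use a character-sum or direct count of seven-cycles in $xC$, in the spirit of the trace argument of Lemma~\ref{lem:singer-trace}. Failing that, I would use explicit normal forms of double-coset representatives, as in the $A_6$ case. If needed, I fall back on $l=60$; then $\lambda\leq60$ and $E_0\leq 51\cdot59$. That exceeds $E$, so I would first verify that the short $l$ exists in every double coset. This verification is a finite permutation computation over at most $51$ representatives, reduced further by conjugation by $N_{S_7}(C)$.

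\emph{Step 4: growth.} With the seed of weight one in Lemma~\ref{lem:seed-selection}, the requirement is $E>E_0+\lambda\cdot 5$, which holds comfortably for $\lambda\leq 6$. I would then run the recurrence of Proposition~\ref{prop:orbit-growth-criterion}(ii) with $\Omega_7$, $b=49$, $d=56$ and $\delta=15$. This needs about $2520/15\approx170$ factors plus the seed, and costs $6\cdot 200\leq m+1$ copies of $a$. That is within the resource bounds \eqref{eq:orbit-resource-bounds}, so Lemma~\ref{lem:high-mult-completion}(ii) finishes the proof.
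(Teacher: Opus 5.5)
Your architecture is the paper's. You count incidences to get the two $\PSL_2(7)$ overgroups, take the generating terms, build a weight-one cyclic seed from Lemma~\ref{lem:power-relation-seeds}, and finish with Proposition~\ref{prop:orbit-growth-criterion} using $\Omega_7$, $b=49$, $d=56$. Your crude cap $|R_{L_i\setminus C}|\leq166$ is actually enough: it gives $E\geq1255-332=923$, so the paper's sharper count of $43$ terms per $L_i$ can be skipped. Two minor errors do not affect the argument. First, $|A_6|=360$ is not divisible by seven. Second, Lemma~\ref{lem:subgroup-cover-caps}(i) does not apply to $N=C\rtimes C_3$, because there $C$ is the normal factor, not the acting complement.

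\textbf{The gap is Step~3, which you leave open.} You only treat cosets $xC$ with at most one seven-cycle, and nothing in your plan bounds the number of seven-cycles. The missing idea is elementary. Since $C$ acts regularly on the seven points, each point $v$ has exactly one $j$ with $a^j(v)=x^{-1}(v)$. Hence
\[
\sum_{j=0}^{6}\bigl|\Fix(xa^j)\bigr|=7 .
\]
The orders of nonidentity even permutations of seven points are constrained as follows:
\begin{itemize}
\item one with a fixed point has order $2,3,4$ or $5$, with at most $3,4,1,2$ fixed points respectively;
\item elements of orders $6$ and $7$ are fixed-point-free.
\end{itemize}
So at least two members of $xC$ have fixed points. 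Suppose no two of these had orders with least common multiple at most $12$. Then their orders would be distinct, order $2$ would be absent, and $3$ and $4$ could not both occur. That leaves at most two such members, with orders $\{3,5\}$ or $\{4,5\}$ and at most six fixed points in total, which is a contradiction. Hence every $x\notin C$ gives $l\leq12$ with $7\nmid l$. Lemma~\ref{lem:power-relation-seeds} then puts two elements of $C$ into $O_a(x)^l$.

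This argument gives $l\leq12$, not your hoped-for $l\leq6$. The count does not exclude a coset whose fixed-point members are a $3$-cycle, a $(4,2,1)$-element and a $(5,1,1)$-element, with the other four members seven-cycles. In that case $12$ is the best exponent available. With $l\leq12$ the seed bounds are:
\begin{itemize}
\item $E_0\leq51\cdot11=561$ and $\lambda\leq12$;
\item $923>561+12\cdot5$, so the seed costs at most $s=72$ terms.
\end{itemize}
This affects Step~4. Your estimate of about $170$ factors ignores rounding, and with $s=72$ it gives $(t-1)(s+k)\approx6\cdot242>m+1$, so it would fail. You genuinely need the $\Omega_7$ rounding. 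Under it the bounds alternate $v_{2j-1}=49j$ and $v_{2j}=49j+21$, and reach $n$ after $k=102$ factors. That gives $w=174$, $6w=1044\leq1261$ and $7w=1218\leq2520$, which closes the argument.
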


\begin{proof}
Let $C=\langle a\rangle$, where $a$ is a seven-cycle, and write $S=a^{[M]}R$ as in \eqref{eq:high-mult-setup}. Thus $|S|=3779$, $M\geq1261$, and $|R|\geq1260$. Suppose, for a contradiction, that $1\notin\Pi_{2520}(S)$.

We use the standard maximal-subgroup description of $A_7$: the maximal subgroups with order divisible by seven form two conjugacy classes, each consisting of subgroups isomorphic to $\PSL_3(2)\cong\PSL_2(7)$; see the $A_7$ entry of the ATLAS \cite{WilsonEtAlND}. The other maximal-subgroup types are $A_6$, $S_5$, and $(A_4\times C_3):C_2$, whose orders are not divisible by seven. The two classes each have $2520/168=15$ members. Each member has eight Sylow seven-subgroups, while $A_7$ has $2520/21=120$ such subgroups. Counting pairs shows that $C$ is contained in exactly one subgroup from each class, say $L_1,L_2$. Here $N=N_{A_7}(C)\cong C_7\rtimes C_3$ has order $21$. Indeed, the normalizer of a regular seven-cycle in $S_7$ is the affine group of order $42$, and its intersection with $A_7$ has index two. In each $L_i$, the normalizer of $C$ has order $21$, so it equals $N$.

\smallskip
\noindent\emph{Claim 1.}
Each $L_i\setminus C$ contains at most $43$ terms of $R$.

Every element of $N\setminus C$ has a $C$-conjugation orbit equal to its full coset of $C$: in the affine model this follows because the complement multiplier is different from one. If five terms of $R$ lie in $N\setminus C$, then the cyclic EGZ theorem in $N/C\cong C_3$ selects three whose quotient product is one. Their three orbit sets multiply to $C$. Together with eighteen copies of $a$, these terms therefore give $C$ in length $21$. Unused copies of $a$ then give the product set $C$ at every length $21\leq j\leq M+3$, since $C=a^jC$. Lemma~\ref{lem:interval-completion}, with $H=C$ and $b=2520/7=360$, applies because
$$ M+3-21=M-18\geq1243\geq359=b-1. $$
This contradicts the choice of $S$. Thus at most four terms lie in $N\setminus C$.

There are three $C$-double cosets in $L_i\setminus N$, each of size $49$. The matrix argument in Section~\ref{sec:psl2-7-t7-seed} proves that fourteen terms in any one of these double cosets, together with $84$ copies of $a$, have product set $L_i$. Their length is $98$. Unused copies give the product set $L_i$ at every length $98\leq j\leq M+14$, since $L_i=a^jL_i$. Applying Lemma~\ref{lem:interval-completion} with $H=L_i$ and $b=15$ is valid because $M+14-98=M-84\geq1177\geq14$. Therefore each of these three double cosets contains at most thirteen terms of $R$. The total is at most $4+3\cdot13=43$, proving Claim 1.

By Lemma~\ref{lem:subgroup-cap}, applied to $C$, at least $1260-7+2=1255$ terms of $R$ lie outside $C$. By Claim 1, at least $1255-86=1169$ of these terms lie outside $L_1\cup L_2$. Call these terms generating. For each such term $x$, we have $\langle C,x\rangle=A_7$. Each generating $C$-double coset has size $49$, because two distinct subgroups of order seven have trivial intersection. There are consequently at most $\lfloor2520/49\rfloor=51$ generating double cosets.

\smallskip
\noindent\emph{Claim 2.}
For every $x\notin C$, there is an integer $2\leq l\leq12$ such that $O_a(x)^l$ contains at least two elements of $C$.

For the natural action on seven points,
$$ \sum_{j=0}^{6}|\Fix(xa^j)|=7. $$
Indeed, for each point $v$, exactly one power $a^j$ sends $v$ to $x^{-1}(v)$, since $C$ acts regularly. No member of $xC$ is the identity. A nonidentity even permutation with a fixed point has order $2$, $3$, $4$, or $5$; the corresponding maximum numbers of fixed points are $3$, $4$, $1$, and $2$. There are at least two members with a fixed point, since each member has at most four fixed points.

There must be two members of $xC$ whose orders have least common multiple at most $12$. Otherwise no two members with a fixed point could have the same order, order two could not occur, and orders three and four could not both occur. Thus there would be at most two members with a fixed point, with orders either $3,5$ or $4,5$. Their total number of fixed points would be at most six, contrary to the displayed identity.

Let $l$ be the least common multiple of these two orders. Then $l\leq12$ and $7\nmid l$, so Lemma~\ref{lem:power-relation-seeds} implies that $O_a(x)^l$ contains two elements of $C$. This proves Claim 2.

The at most fifty-one generating double cosets thus supply subsequences of length at most twelve and weight one. In Lemma~\ref{lem:seed-selection},
$$ E_0\leq51(12-1)=561 \quad\text{and}\quad \lambda\leq12. $$
Since $1169>561+12(7-2)=621$, a cyclic seed uses at most $6\cdot12=72$ terms.

The incidence argument verifies the hypothesis of Lemma~\ref{lem:orbit-generation} for every generating term. By Lemma~\ref{lem:prime-double-cosets}, we may use $h=3$, $\Omega_7$, and the bounds $b=49$ and $d=56$. Together with $E=1169$ and $s=72$, these parameters establish the $A_7$ row of Table~\ref{tab:orbit-growth}. By Proposition~\ref{prop:orbit-growth-criterion}, the conclusion follows.
\end{proof}

\subsection{Weighted seeds from element orders}\label{sec:weighted-seeds}

\subsubsection{The order-three case in $\PSL_2(7)$}\label{sec:psl2-7-t3-seed}

\begin{proposition}\label{prop:psl2-7-order-three}
In the setting \eqref{eq:high-mult-setup}, the case $G=\PSL_2(7)$, $t=3$ satisfies $1\in\Pi_{168}(S)$.
\end{proposition}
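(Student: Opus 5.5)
We follow the cyclic-seed pattern of Sections~\ref{sec:a6-permutation-seed} and~\ref{sec:a7-order-seed}. Here $n=168$, $m=84$, $M\geq85$ and $|R|\geq84$, and we suppose that $1\notin\Pi_{168}(S)$. Let $C=\langle a\rangle$ have order three.

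\textbf{Step 1: locate the generating terms.} By Dickson's classification, the maximal overgroups of $C$ in $\PSL_2(7)$ are $N_G(C)\cong S_3$, one subgroup from each of the two $S_4$ classes (the count of Sylow $3$-subgroups should show this), and the order-$21$ subgroups containing $C$, which are isomorphic to $C_7\rtimes C_3$. We bound the terms of $R$ in their union using Lemma~\ref{lem:subgroup-cover-caps}: part~(ii) gives at most one term in the dihedral normalizer outside $C$, and part~(iii) gives at most eight terms in each $S_4$ outside $C$. For the Frobenius groups $C_7\rtimes C$ we use part~(i) with $\ell=7$ and $t=3$, so $k_0=3\lceil 6/6\rceil=3$ and the cap is $c=4$. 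Lemma~\ref{lem:subgroup-cap} places at least $84-3+2=83$ terms outside $C$. We must still determine how many order-$21$ overgroups contain $C$; we expect roughly seven. After subtracting the caps, we expect several dozen generating terms, that is, terms $x$ with $\langle C,x\rangle=G$. Each generating double coset has size $9$, so there are at most $\lfloor168/9\rfloor=18$ of them.

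\textbf{Step 2: build a cyclic seed.} Since $t=3$ is prime, Lemma~\ref{lem:seed-selection} only needs, for each generating double coset, a short power $O_a(x)^l$ that contains two elements of $C$. This gives weight one. Lemma~\ref{lem:power-relation-seeds} supplies such a power as soon as two members $xa^i$ and $xa^j$ of the coset $xC$ satisfy $(xa^i)^l=(xa^j)^l=1$ with $3\nmid l$. The element orders of $\PSL_2(7)$ are $1,2,3,4,7$. A fixed-point count like the one in the $A_7$ argument, now done in the degree-$7$ or degree-$8$ action, should force two members of $xC$ whose orders have least common multiple in $\{2,4,7\}$. This fails only if the members have orders $3$ and $7$ in a bad pattern. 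In that case we would instead look for an involution power that inverts $a$, and apply the last part of Lemma~\ref{lem:power-relation-seeds}. If the general argument resists, the fallback is a direct enumeration of the eighteen double-coset representatives as matrices over $\mathbb{F}_7$. With $l\leq7$, the seed cost is $E_0+\lambda(t-2)\leq18\cdot6+7$. That is too large if only several dozen generating terms are available. So we need $l\leq4$ for most cosets, or a sharper count of the generating double cosets actually occupied.

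\textbf{Step 3: grow the seed to all of $G$.} Once a seed gives a product set containing $C$, we apply Proposition~\ref{prop:orbit-growth-criterion}(ii) with $r=1$. The admissible cardinalities are $\Omega_3$ from Lemma~\ref{lem:prime-double-cosets}, with $h=2$. The factor bounds from that lemma are $b=9$ and $d\geq12$. Generation of the difference sets comes from Lemma~\ref{lem:orbit-generation}, using the uniqueness established in Step~1. Since $d=12$ gives only $\delta=4$, we may need more: the double-coset structure in Lemma~\ref{lem:cyclic-amplification} improves the growth to at least $3+3\cdot 9$ differences when all entries are nonzero. We then run the recurrence for the factor count $k$ and check the resources in \eqref{eq:orbit-resource-bounds}: $(t-1)(s+k)\leq85$ and $3(s+k)\leq168$.

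\textbf{Main obstacle.} The hard part is the resource budget: $m=84$ is small, so the seed cost and the roughly $40$--$50$ growth factors together must fit within about $42$ remainder terms. I would first try to sharpen the growth with Lemma~\ref{lem:cyclic-amplification}, which should give $\delta\approx8$. Failing that, I would reduce to the completion criterion of Lemma~\ref{lem:interval-completion} using an $S_4$ or $C_7\rtimes C_3$ subgroup.
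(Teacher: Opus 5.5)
\textbf{Step~3 fails as stated.} With $r=1$ you need $\langle CxCx^{-1}C\rangle=G$ for every generating term $x$. You take this from Lemma~\ref{lem:orbit-generation}, but Step~1 does not supply its uniqueness hypothesis. There are $8$ subgroups of order $21$, each containing $7$ of the $28$ subgroups of order three. So $C$ lies in exactly two of them, its two eigenline Borels, not about seven, and these two are conjugate.

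The conclusion genuinely fails, not just the hypothesis. Take $a=\diag(2,4)$, and let a generating term $x$ have exactly one zero diagonal entry, for instance $x=\begin{psmallmatrix}0&1\\6&1\end{psmallmatrix}$. Then $x$ sends one eigenline of $C$ to the other. Hence $C$ and $xCx^{-1}$ lie in a common Borel of order $21$; here $xax^{-1}=\begin{psmallmatrix}4&0\\2&2\end{psmallmatrix}$. So $CxCx^{-1}C$ generates only that Borel, and Lemma~\ref{lem:difference-growth} cannot be applied to $CxC$.

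Four of the ten generating double cosets are of this kind, and nothing prevents every generating term of $S$ from lying in them. Your recurrence may then have no admissible factor at all. The paper avoids this by using paired factors $B=(CxC)^2$, that is $r=2$. It checks through explicit products $xa^ixa^jx^{-1}a^kx^{-1}$ that $|B|\geq21$ and $|BB^{-1}|\geq48$. Every proper subgroup of $\PSL_2(7)$ has order at most $24$, so generation is automatic for all types, and $\delta=13$ needs only $k=11$ pairs. Pairing zero-diagonal terms as in Lemma~\ref{lem:zero-diagonal-pairs} would be another repair.

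\textbf{Step~2 is left open, and the generic tools you name do not close it.} First correct the counts. $N_G(C)$ lies inside both $S_4$ overgroups, so the caps total $4+4+8+8=24$. This leaves at least $59$ generating terms, in exactly ten double cosets rather than eighteen. The cosets $xC$ then have member-order patterns $(4,4,3)$, $(7,4,7)$, $(2,7,7)$ and $(3,4,7)$.

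Lemma~\ref{lem:power-relation-seeds} gives $l=4,7,7$ for the first three patterns. For the last it gives only $l=28$, unless you combine $1\in O^3$ with $a^2\in O^4$ to get $\{a,a^2\}\subseteq O^7$. Even with that combination, $E_0+\lambda=54+7>59$, so Lemma~\ref{lem:seed-selection} does not apply. The paper also needs a direct identity for the $(7,4,7)$ type: $(xX)^2=1$ and $x^2X^2=a$, where $X=axa^{-1}$. This gives $l=4$ there, so $E_0=48$ and $48+7<59$.

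Two smaller points. $C$ is not regular in the degree-$7$ or degree-$8$ action, so the $A_7$ fixed-point identity is unavailable. And Lemma~\ref{lem:cyclic-amplification} gives only $3+9=12$ for $t=3$, not $30$; indeed $|CxCx^{-1}C|\leq 3+2\cdot 9=21$ always.
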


\begin{proof}
Work with determinant-one matrices over $\mathbb{F}_7$, modulo signs, and let
$$ a=\begin{pmatrix}2&0\\0&4\end{pmatrix} \quad\text{and}\quad C=\langle a\rangle. $$
The maximal overgroups of $C$ are two groups $C_7\rtimes C_3$ and two groups $S_4$. Indeed, the two triangular groups fix the two eigenlines. Dickson's classification \cite[Theorem~2.2]{Giudici2007} gives two conjugacy classes of $S_4$ subgroups, each with seven members. Each member contains four order-three subgroups, and $G$ contains $168/6=28$ such subgroups. Thus exactly one member of each class contains $C$. The covering bounds already proved give at most $4+4+8+8=24$ remainder terms outside $C$ in their union. Since at least $m-t+2=83$ remainder terms lie outside $C$, at least $83-24=59$ remainder terms $x$ satisfy $\langle C,x\rangle=G$. Call these terms eligible.

\smallskip
\noindent\emph{Claim 1.}
There are ten eligible double cosets. Up to automorphisms preserving $C$, their representatives are
$$ x_k=\begin{pmatrix}1&1\\k-1&k\end{pmatrix} \quad\text{and}\quad z=\begin{pmatrix}0&1\\6&1\end{pmatrix}. $$
Here $k=2,4,6$. The three $x_k$ types each account for two double cosets; the $z$ type accounts for four.

\smallskip
\noindent\emph{Proof of Claim 1.}
The two groups $C_7\rtimes C_3$ are the upper and lower triangular subgroups. Thus an eligible matrix has both off-diagonal entries nonzero. If its two diagonal entries are nonzero, then left and right multiplication by $C$, and simultaneous negation, give the unique form
$$ x_{k,\epsilon}= \begin{pmatrix} 1&\epsilon\\ (k-1)/\epsilon&k \end{pmatrix}, $$
where $k\in\{2,3,4,5,6\}$ and $\epsilon\in\{1,3\}$. Indeed, the diagonal product $k$ and the square class of the product of the upper-left and upper-right entries are invariant. Diagonal rescaling normalizes the upper-left entry to one and the upper-right entry to the indicated representative of its square class. If exactly one diagonal entry vanishes, then the same rescaling gives
$$ \begin{pmatrix}0&\epsilon\\-1/\epsilon&1\end{pmatrix} \quad\text{or}\quad \begin{pmatrix}1&\epsilon\\-1/\epsilon&0\end{pmatrix}, $$
where $\epsilon\in\{1,3\}$. If both diagonal entries vanish, then the matrix normalizes $C$ and is not eligible. Conjugation by $\diag(3,1)$ exchanges the two square classes and fixes $a$. Conjugation by $\begin{psmallmatrix}0&-1\\1&0\end{psmallmatrix}$ interchanges the two forms with a zero diagonal entry and inverts $a$.

For $k=3$, the projective relations $a^3=x_{3,1}^4=(ax_{3,1})^2=1$ place the generated subgroup in a quotient of the spherical triangle group $S_4$. For $k=5$, the relations $(a^2)^3=x_{5,1}^3=(a^2x_{5,1})^2=1$ similarly give a quotient of $A_4$. These four double cosets are not eligible.

The remaining types generate $G$. The element $x_2$ has order four, and the commutator $ax_2a^{-1}x_2^{-1}$ is represented by $\begin{psmallmatrix}5&3\\2&0\end{psmallmatrix}$, a noncentral matrix of trace $-2$, hence of projective order seven. For $x_4$, the orders of $x_4$ and $ax_4$ are seven and four. For $x_6$, the orders of $x_6$ and $ax_6$ are two and seven. For $z$, the orders of $az$ and $a^2z$ are four and seven. No proper subgroup of $G$ contains these respective pairs of orders, since its maximal subgroups are $S_4$ and $C_7\rtimes C_3$. This proves Claim 1.

Every double coset outside $N_G(C)$ has size nine, since $|C|=3$ and a nontrivial intersection of two conjugates of $C$ forces them to coincide. The normalizer consists of the diagonal and antidiagonal matrices and has order six.

\smallskip
\noindent\emph{Claim 2.}
At most fourteen eligible terms give an orbit product containing $C$.

\smallskip
\noindent\emph{Proof of Claim 2.}
Cayley--Hamilton applied to the displayed matrices gives
$$ \begin{array}{c|ccc} x&\ord(x)&\ord(xa)&\ord(xa^2)\\ \hline x_2&4&4&3\\ x_6&2&7&7\\ z&3&4&7 \end{array} $$
By Lemma~\ref{lem:power-relation-seeds}, both $O(x_2)^4$ and $O(x_6)^7$ contain two members of $C$. For $z$, the same lemma implies that $1\in O(z)^3$, $a^2\in O(z)^4$, and $a\in O(z)^7$; thus $O(z)^7=O(z)^4O(z)^3$ contains both $a$ and $a^2$. For $x=x_4$, let $X=axa^{-1}$. Direct multiplication gives
$$ (xX)^2=1 \quad\text{and}\quad x^2X^2=a $$
in $\PSL_2(7)$, so $O(x_4)^4$ also contains two members of $C$. These conclusions pass to all terms in the corresponding double cosets. Every selected subsequence may therefore be assigned weight one, with lengths four for the $x_2,x_4$ types and seven for the $x_6,z$ types. Hence
$$ E_0=2\cdot3+2\cdot3+2\cdot6+4\cdot6=48 \quad\text{and}\quad \lambda=7. $$
By Lemma~\ref{lem:seed-selection}, the inequality $59>48+7=55$ ensures that two subsequences can be selected using at most fourteen terms. This proves Claim 2.

\smallskip
\noindent\emph{Claim 3.}
For every eligible $x$, the set $B=CO(x)^2=(CxC)^2$ satisfies
$$ |B|\geq21 \quad\text{and}\quad |BB^{-1}|\geq48. $$

\smallskip
\noindent\emph{Proof of Claim 3.}
It suffices to use the four representatives in Claim 1. A matrix is denoted by the string $\alpha\beta\gamma\delta$ of its four entries. The products $xa^jx$, for $j=0,1,2$, have representatives
$$ \begin{array}{c|ccc} x&j=0&j=1&j=2\\ \hline x_2&5442&1443&1662\\ x_4&3262&0320&3512\\ x_6&1001&6230&0231\\ z&1610&3432&2522 \end{array} $$
For $x_2$, the first two matrices have the same diagonal product three but opposite square classes of $\alpha\beta$; the third has diagonal product two. For $x_4$, the first and last have diagonal product six and opposite square classes, while the middle matrix is antidiagonal. For $x_6$, the first is the identity and the other two have their zero diagonal entry in different positions. For $z$, the three diagonal products are $0,6,4$. These tests distinguish the three double cosets in every row. The sizes of these double cosets are $9,9,9$ or $9,3,9$, proving that $|B|\geq21$.

For the difference-set estimate, define
$$ W(i,j,k)=xa^ixa^jx^{-1}a^kx^{-1}. $$
Each $CW(i,j,k)C$ lies in $BB^{-1}$. The following table gives five such products for each representative; an entry consists of the triple $ijk$, followed by the four entries of its product matrix:
$$ \begin{array}{c|ccccc} x&\multicolumn{5}{c}{ijk:W(i,j,k)}\\ \hline x_2&001:0541&010:5551&011:1532&012:6313&021:1061\\ x_4&001:3243&002:3533&010:1230&011:3651&020:0541\\ x_6&001:6230&002:0231&111:2462&112:4302&121:2623\\ z&001:4022&011:2433&012:5631&022:0162&121:6253 \end{array} $$
These identities involve only multiplication of four fixed $2\times2$ matrices and three diagonal powers. None of the resulting matrices is diagonal or antidiagonal, so each of their double cosets has size nine. In the first row, their diagonal products are $0,5,2,4,1$, all different. In the second row, the first two have diagonal product two and opposite square classes of $\alpha\beta$; the third and fifth have diagonal product zero but different vanishing diagonal positions; the fourth has diagonal product three. In the third row, the first two are separated by their vanishing diagonal position, and the other three have diagonal products $4,1,6$. In the last row, the five diagonal products are $1,6,5,0,4$. Thus the five double cosets in each row are disjoint.

In addition, $C\subseteq BB^{-1}$, because $B$ is nonempty and bi-$C$-invariant. It follows that $|BB^{-1}|\geq5\cdot9+3=48$. The difference set generates $G$, as every proper subgroup has order at most twenty-four. This proves Claim 3.

Claims 1--3 give $E=59$, $Q=10$, a cyclic seed of cost at most $s=14$, and paired factors with $b=21$, $d=48$, and generating differences. These parameters establish the first row of Table~\ref{tab:orbit-growth}; Proposition~\ref{prop:orbit-growth-criterion} completes the proof.
\end{proof}

\subsubsection{The order-four case in $\PSL_2(7)$}\label{sec:psl2-7-t4-seed}

\begin{proposition}\label{prop:psl2-7-order-four}
In the setting \eqref{eq:high-mult-setup}, the case $G=\PSL_2(7)$, $t=4$ satisfies $1\in\Pi_{168}(S)$.
\end{proposition}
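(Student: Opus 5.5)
We follow the template of Proposition~\ref{prop:psl2-7-order-three}: identify the eligible terms, build a cyclic seed containing $C$, and finish with Proposition~\ref{prop:orbit-growth-criterion}. Here $n=168$, $m=84$, $M\geq85$, and $C=\langle a\rangle$ has order four. Fix $a$ as the image of a determinant-one matrix of trace zero over $\mathbb{F}_7$, so $a^2$ is an involution.

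\textbf{Eligible terms.} By Dickson's classification \cite{Giudici2007}, the maximal overgroups of $C$ are $S_4$ subgroups. In $G$ there are $168/8=21$ Sylow $2$-subgroups $D_8$, and each $S_4$ contains three of them. A counting argument of the same kind as in Proposition~\ref{prop:psl2-7-order-three} should show that $C$ lies in exactly two of the fourteen $S_4$ subgroups, one from each class, and that $N_G(C)=D_8$ is their intersection. Lemma~\ref{lem:subgroup-cover-caps} caps each $S_4$ at $22$ terms outside $C$, and I would sharpen this by interval arguments as in Lemma~\ref{lem:subgroup-cover-caps}(iii). Lemma~\ref{lem:subgroup-cap} gives at least $m-t+2=82$ terms outside $C$. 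Even with the crude cap the union removes at most $44$ terms, leaving $E\geq38$ eligible terms; the sharper caps should give substantially more. Each eligible double coset $CxC$ has size $16$, since distinct conjugates of $C$ meet trivially when $t=4$ (both would contain the same central involution of a $D_8$, which needs care). Hence the number of eligible double cosets is at most $Q\leq(168-|S_4\cup S_4'|)/16$, roughly $7$. I would enumerate their representatives up to automorphisms normalizing $C$, normalizing as in Claim~1 of that proof by the diagonal product and square-class invariants in an eigenbasis of $a$ over $\mathbb{F}_{49}$ or $\mathbb{F}_7$.

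\textbf{Seed.} For each representative $x$, compute the orders of $x,xa,xa^2,xa^3$ by Cayley--Hamilton. If $(xa^i)^l=(xa^j)^l=1$ with $4\nmid l(j-i)$, then Lemma~\ref{lem:power-relation-seeds} puts two elements of $C$ in $O(x)^l$. For weight in $C_4$, Lemma~\ref{lem:seed-selection} requires a consecutive pair $\{c,cd\}$, which here means two elements differing by a generator $a^{\pm1}$. So I need $l(j-i)$ odd, and by Cayley--Hamilton this should come from element orders $3$ or $7$. Two weight-one subsequences then reach total weight $3=t-1$, though a weight-two triple would be better.

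I would check Lemma~\ref{lem:seed-selection}'s inequality $E>E_0+\lambda(t-2)$ with $\lambda\leq7$ and $E_0\leq 6Q$. With the crude $E\geq38$ and $Q\approx 7$ this is tight, so here the sharpened covering caps really matter. If some representative yields only $a^2$-differences, I would fall back on explicit matrix words like $x^2X^2=a$ in the $x_4$ case of Proposition~\ref{prop:psl2-7-order-three}.

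\textbf{Growth.} With the seed containing $C$, take $B=CO(x)^r$, with $r=1$ or $2$, and bound $|B|$ and $|BB^{-1}|$ by listing products $W(i,j,k)$ and separating their double cosets by invariants. Generation holds because every proper subgroup of $G$ has order at most $24$. The rounding set $\Omega$ consists of multiples of $4$, refined by the double coset sizes $4$ and $16$. I then verify \eqref{eq:orbit-resource-bounds} with $(t-1)(s+rk)\leq 85$ and $4(s+rk)\leq168$ as a new row of Table~\ref{tab:orbit-growth}.

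I expect the main obstacle to be the resource budget. Only about $28$ remainder terms are usable in total, so the seed and growth stages both must be cheap. This forces good explicit difference-set counts, perhaps $|BB^{-1}|$ of order $60$ from paired factors, to make $v_k=168$ within the budget.
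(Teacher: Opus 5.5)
Your architecture is the paper's: eligible terms outside the two $S_4$ overgroups of $C$, a cyclic seed from Lemma~\ref{lem:power-relation-seeds} and Lemma~\ref{lem:seed-selection}, then Proposition~\ref{prop:orbit-growth-criterion} with $\Omega=\{4,8,\ldots,168\}$. However, the seed stage as you set it up does not close, and what rescues it in the paper is not sharper covering caps.

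\emph{Weight.} For $t=4$ the selected collection must have total weight $t-1=3$. Two consecutive-pair seeds give only weight two, i.e.\ a three-element subset of $C$. You need three weight-one seeds, one of weight one plus one of weight two, or two of weight two.

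\emph{Count.} The two $S_4$'s meet in $N_G(C)\cong D_8$, so their union has $40$ elements and there are exactly $Q=128/16=8$ eligible double cosets, not about $7$. With your crude data ($r_i\le 7$, $\lambda\le 7$), Lemma~\ref{lem:seed-selection} needs $E>48+14=62$, so it fails outright rather than being tight. Even with the best seed data (below), the threshold is $E_0+\lambda(t-2)=28+10=38$, so your $E\ge 82-44=38$ is still one short. The paper gets $E\ge 40$ by applying Lemma~\ref{lem:subgroup-cap} to all of $R_{L_i}$, including its terms in $C$, and subtracting $44$ from $|R|\ge m=84$.

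The missing input is the explicit list of cheap seeds. The paper identifies $G$ with $\GL_3(2)$ and takes $a$ unitriangular. This avoids your nonsplit eigenbasis, which would have to be over $\mathbb{F}_{49}$, not $\mathbb{F}_7$. Row and column operations then give normal forms $x_1,\ldots,x_8$, paired by two automorphisms inverting $a$. The order vectors $(\ord(x_ia^j))_{j=0}^{3}$ for $i=1,2,3$ are $(2,7,3,7)$, $(4,4,3,3)$, $(3,3,7,7)$.
\begin{itemize}
\item The types $x_2,x_3$ and their partners give three-term weight-one seeds, from two elements of order $3$ with odd exponent difference.
\item The type $x_1$ and its partner give seven-term weight-two seeds $\{a,a^2,a^3\}\subseteq O(x_1)^7$, using $7=2+2+3$.
\item For $x_5$ the orders of $x_5a^j$ are $7,7,4,4$, so power relations give weight one only at length seven. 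That would raise the threshold to $32+14=46>40$. The paper instead uses the explicit words $X_0^3X_2X_1=a$ and $X_0X_1X_0X_2X_1=a^2$.
\end{itemize}
This yields $E_0=28$, $\lambda=5$, $40>38$, and a seed of cost at most $15$.

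On the growth side, no paired factors are needed. For $B=CxC$ one has $BB^{-1}=C\cup\bigcup_{k=1}^{3}Cxa^kx^{-1}C$, with three distinct double cosets outside $N_G(C)$, so $|BB^{-1}|=52$ and $\delta=14$. Eleven single factors then reach $168$ after rounding to multiples of $4$, for a total of $w=26\le 28$ remainder terms.
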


\begin{proof}
Identify $G$ with $\GL_3(2)$, and write
$$ a= \begin{pmatrix} 1&1&0\\0&1&1\\0&0&1 \end{pmatrix} \quad\text{and}\quad C=\langle a\rangle= \left\{ \begin{pmatrix} 1&u&v\\0&1&u\\0&0&1 \end{pmatrix}:u,v\in\mathbb{F}_2 \right\}. $$
Here $n=168$, $m=84$, and $M\geq85$. The two maximal overgroups of $C$ are the stabilizers of the point $p=\langle e_1\rangle$ and plane $L=\langle e_1,e_2\rangle$, both isomorphic to $S_4$. Thus at least $84-22-22=40$ remainder terms lie outside their union. Call these terms eligible. We shall use at most $26$ of them.

\smallskip
\noindent\emph{Claim 1.}
There are eight eligible $C$-double cosets, with representatives $x_1,\ldots,x_8$ in the following table. If $B=Cx_iC$, then $|B|=16$ and $|BB^{-1}|=52$.

Represent a binary matrix by its three rows. The eight double cosets fall into four pairs under automorphisms preserving $C$:
$$ \begin{array}{c|ccc} (i,j)&x_i&x_j&\text{conjugate indices}\\ \hline (1,4)&(001;010;100)&(010;011;100)&8,1,5\\ (2,6)&(001;011;100)&(011;010;100)&5,1,8\\ (3,7)&(010;001;100)&(001;100;010)&6,9,2\\ (5,8)&(011;001;100)&(001;110;010)&2,9,6 \end{array} $$
Here $x_9=(100;001;010)$; the last column gives the double-coset indices of $x_ia^kx_i^{-1}$ for $k=1,2,3$. For the pairing, let $h=(110;010;001)$ and $\sigma(g)=x_1g^{-T}x_1$. Both conjugation by $h$ and $\sigma$ invert $a$, and
$$ \begin{gathered} hx_1ha^3=x_4 \quad\text{and}\quad hx_2ha^3=x_6,\\ \sigma(x_3)=x_7 \quad\text{and}\quad \sigma(x_5)=x_8. \end{gathered} $$
Thus orbit-product bounds pass between each pair.

\smallskip
\noindent\emph{Proof of Claim 1.}
The normalizer $N_G(C)$ is the upper unitriangular subgroup, of order eight. It also equals $C_G(a^2)$: indeed, $a^2=I+E_{13}$, and commuting with $E_{13}$ forces the first column and last row to be $(1,0,0)^T$ and $(0,0,1)$, respectively. If $C$ intersects $xCx^{-1}$ nontrivially, then the unique involutions coincide, so $x\in C_G(a^2)=N_G(C)$. Consequently every double coset outside this normalizer has size sixteen.

Let $x$ be eligible. Then either $x_{31}=1$, or $x_{31}=0$ and $x_{21}=x_{32}=1$. In the first case, left and right multiplication by the displayed matrices in $C$ gives a unique form
$$ \begin{pmatrix} 0&b&c\\0&d&e\\1&0&0 \end{pmatrix}, $$
where $be+cd=1$. To obtain this form, use the left parameter $u$ to clear $x_{21}$, the right parameter $u$ to clear $x_{32}$, and the left and right parameters $v$ to clear $x_{11}$ and $x_{33}$. There are exactly six choices for $(b,c,d,e)$, giving $x_1,\ldots,x_6$. In the second case, the same row and column operations first clear $x_{11}$ and $x_{33}$, and then $x_{12}$ and $x_{23}$. The unique form is
$$ \begin{pmatrix} 0&0&1\\1&d&0\\0&1&0 \end{pmatrix}, $$
where $d\in\mathbb{F}_2$, giving $x_7,x_8$. This proves completeness and disjointness. The extra representative $x_9$ lies in the point stabilizer outside $N_G(C)$, so its double coset is distinct from the eight eligible double cosets.

For the four representatives, multiply $x_ia^kx_i^{-1}$ and apply the same row and column reductions, obtaining the three indices displayed in the last column. These indices are distinct, and none corresponds to a double coset in $N_G(C)$. Since
$$ BB^{-1}=Cx_iCx_i^{-1}C =C\cup\bigcup_{k=1}^{3}Cx_ia^kx_i^{-1}C, $$
the union is disjoint and has size $4+3\cdot16=52$. This proves Claim 1.

\smallskip
\noindent\emph{Claim 2.}
At most fifteen eligible terms give an orbit product containing $C$.

\smallskip
\noindent\emph{Proof of Claim 2.}
For the first three representatives, the element orders are
$$ \begin{array}{c|cccc} i&\ord(x_i)&\ord(x_ia)&\ord(x_ia^2)&\ord(x_ia^3)\\ \hline 1&2&7&3&7\\ 2&4&4&3&3\\ 3&3&3&7&7 \end{array} $$
These orders follow by squaring or cubing, or from an irreducible cubic characteristic polynomial for order seven. By Lemma~\ref{lem:power-relation-seeds}, both $O(x_2)^3$ and $O(x_3)^3$ contain consecutive powers of $a$. For $x_1$, the same lemma and $7=2+2+3$ imply that $\{a,a^2,a^3\}\subseteq O(x_1)^7$. For $x=x_5$, let $X_j=a^jxa^{-j}$. The two remaining identities are
$$ X_0^3X_2X_1=a \quad\text{and}\quad X_0X_1X_0X_2X_1=a^2. $$
The pairing in Claim 1 gives all eight double cosets from these four calculations.

The four three-term subsequences and two five-term subsequences have weight one, and the two seven-term subsequences have weight two. Thus
$$ E_0=4\cdot2+2\cdot4+2\cdot6=28 \quad\text{and}\quad \lambda=5. $$
Since $40>28+2\cdot5=38$, Lemma~\ref{lem:seed-selection} applies. An inclusion-minimal collection of weight at least three consists of three subsequences of weight one, one subsequence of each weight, or two subsequences of weight two. Their lengths are at most fifteen, twelve, and fourteen, respectively, which proves Claim 2.

Claims 1--2 give $E=40$, a cyclic seed of cost at most $s=15$, and factors $CO(x)$ with size $16$ and difference size $52$. The differences generate $G$, since every proper subgroup has order at most twenty-four. Thus the second row of Table~\ref{tab:orbit-growth} applies, and Proposition~\ref{prop:orbit-growth-criterion} completes the proof.
\end{proof}

\subsection{Torus seeds with a common weighted-growth calculation}\label{sec:split-torus-seeds}

We prove the high-multiplicity assertion for $G=\PSL_2(q)$ with $(q,t)=(11,5),(13,6)$. As usual, $n=|G|=2m$, the sequence $S=a^{[M]}R$ satisfies \eqref{eq:high-mult-setup}, and $C=\langle a\rangle$ is a full split torus. All products of matrices in this subsection are understood modulo $\{I,-I\}$.

\smallskip
\noindent\emph{The matrix normalization.}
Work over $\mathbb{F}_q$, let $z=2$, and let $\mu=\langle z^2\rangle$. The order of $z$ is $2t$, and $C$ is represented by $\diag(z,z^{-1})$. Choosing a different generator of $C$ does not change its conjugation orbits.

A generating representative with all four entries nonzero has orbit normal form
\begin{equation} X_j= \begin{pmatrix} \alpha&z^{2j}\\ (k-1)z^{-2j}&\delta \end{pmatrix}. \tag{14.8}\label{eq:split-orbit-form} \end{equation}
Here $0\leq j<t$, $\alpha=1$, and $\delta=k\notin\{0,1\}$. Indeed, torus left multiplication makes the upper-left entry one, and diagonal conjugation makes the upper-right entry one; the determinant fixes the lower-left entry as $k-1$. Diagonal conjugation fixes $C$ pointwise, and torus left multiplication translates every orbit power by an element of $C$. Both operations preserve the assertion that an orbit power contains $C$.

If exactly one diagonal entry is zero, then interchange the eigenlines when necessary and use the same normal form with $k=0$, $\alpha=0$, and $\delta=1$. The same two operations give this normalization. If both diagonal entries are zero, or an off-diagonal entry is zero, then the element lies in a proper torus overgroup and is excluded from the generating terms. Thus the displayed forms cover every term used below.

\smallskip
\noindent\emph{Torus products from short power relations.}
We apply Lemma~\ref{lem:power-relation-seeds} before making any word calculations. In the tables below, a pair $r:I$ means that $(X_0a^i)^r=1$ for every $i\in I$. It therefore supplies $a^{-ri}\in O^r$, where $O=\{X_0,\ldots,X_{t-1}\}$. Concatenating these products gives the exponents in the column $P_s$, so that $\{a^j:j\in P_s\}\subseteq O^s$. At length six, we use two three-term products, three two-term products, or one six-term product; at length seven, we use one seven-term product or lengths $2+2+3$. Thus this column requires only addition modulo $t$.

The short power relations follow directly from Cayley--Hamilton. More explicitly, a determinant-one matrix $Y$ of trace $T$ satisfies
$$ Y^r=F_r(T)Y-F_{r-1}(T)I. $$
Here the polynomials are defined by $F_0=0$, $F_1=1$, and $F_{r+1}=TF_r-F_{r-1}$. For $Y=X_0a^i$, the trace is $\alpha2^i+\delta2^{-i}$. Every displayed relation has $F_r(T)=0$, so $Y^r=\pm I$. No element-order classification is required.

A word $i_1\cdots i_s$ in the last column denotes $X_{i_1}\cdots X_{i_s}$. An entry $j:w$ asserts that the product of the word $w$ is $a^j$; these are the few extra products not supplied by the power relations. They are checked by the common scalar rule
$$ \begin{aligned} A'&=\alpha A+(k-1)2^{-2i}B &\quad\text{and}\quad B'&=2^{2i}A+\delta B,\\ C'&=\alpha C+(k-1)2^{-2i}D &\quad\text{and}\quad D'&=2^{2i}C+\delta D, \end{aligned} $$
starting with $(A,B,C,D)=(1,0,0,1)$. For an entry corresponding to $j$, the result is $\pm(2^j,0,0,2^{-j})$, with arithmetic modulo $q$.

For $q=11$ and $t=5$, every row has $s=6$:
\begingroup
\small
\setlength{\arraycolsep}{4pt}
$$ \begin{array}{r|l|l|l} k&\text{power relations }r:I&P_6&\text{additional products }j:w\\ \hline 0&3:\{0\},\ 6:\{1\}&0,4&1:010232\\ 2&3:\{2,4\}&1,2,3&\text{---}\\ 3&3:\{4\}&1&0:001101,\ 2:010131\\ 4&6:\{0,2\}&0,3&1:013412\\ 5&3:\{1,3\}&2,3,4&\text{---}\\ 6&2:\{2\},\ 6:\{1,3\}&2,3,4&\text{---}\\ 7&2:\{1\}&4&1:003311,\ 2:021324\\ 8&2:\{4\},\ 6:\{1,2\}&1,3,4&\text{---}\\ 9&3:\{0,1\}&0,2,4&\text{---}\\ 10&3:\{2,3\}&0,2,3&\text{---} \end{array} $$
\endgroup
For $q=13$ and $t=6$, the calculations are as follows:
\begingroup
\small
\setlength{\arraycolsep}{3pt}
$$ \begin{array}{rr|l|l|l} k&s&\text{power relations }r:I&P_s&\text{additional products }j:w\\ \hline 0&7&7:\{1,2,3\}&3,4,5&0:0000101\\ 2&7&7:\{0,1,3,4\}&0,2,3,5&\text{---}\\ 3&7&7:\{1,2,3\}&3,4,5&1:0030334\\ 4&7&7:\{0,2\}&0,4&1:0330330,\ 2:0544005\\ 5&7&7:\{0,3\}&0,3&1:0000014,\ 2:0001403\\ 6&7&7:\{0,1,4,5\}&0,1,2,5&\text{---}\\ 7&6&3:\{1,4\}&0,3&1:002252,\ 2:000222\\ 8&7&7:\{1,2,4,5\}&1,2,4,5&\text{---}\\ 9&7&2:\{1\},\ 3:\{3\},\ 7:\{0,2,4\}&0,2,4,5&\text{---}\\ 10&7&2:\{2\},\ 3:\{5\},\ 7:\{1,3\}&1,3,5&4:0003303\\ 11&6&3:\{0,1\}&0,3&1:002022,\ 2:003430\\ 12&7&7:\{1,2,3,4,5\}&1,2,3,4,5&\text{---} \end{array} $$
\endgroup
In each row, the listed products give a set $D\subseteq C$ with $|D|>|C|/2$. Hence $D^2=C$, because $|D|+|cD^{-1}|>|C|$ for every $c\in C$. Squaring the corresponding orbit power proves the seed bounds in both split cases:
$$ \begin{array}{l|rr} (q,t)&(11,5)&(13,6)\\ \hline \text{number of terms from one double coset sufficient}&12&14 \end{array} $$
Each bound means \emph{at most} the displayed number: the two six-term rows at $q=13$ need only twelve terms. For actual terms in the same double coset, their orbit sets are toral translates of the normalized orbit. The product of the chosen factors thus contains a translate of $C$, hence $C$ itself.

\smallskip
\noindent\emph{Availability of a seed.}
By Lemma~\ref{lem:cyclic-amplification}, every generating double coset has size $t^2$. The number of generating double cosets is therefore at most $\lfloor n/t^2\rfloor$.

For $(13,6)$, the proper overgroups are the normalizer of order $12$ and the two eigenline Borels of order $78$. Each Borel contributes at most ten terms outside $C$: apply Lemma~\ref{lem:subgroup-cover-caps}(i) with $(\ell,t)=(13,6)$, so that $k_0=6$ and $k_0+t-2=10$. The normalizer contributes at most ten terms. Since at least $m-t+2=542$ terms lie outside $C$, at least $512$ are generating.

For $(11,5)$, the proper overgroups are the two eigenline Borels, together with the two $A_5$ overgroups. The normalizer of $C$ is contained in either $A_5$. By Lemma~\ref{lem:subgroup-cover-caps}(i), with $(\ell,t)=(11,5)$ and $k_0=5$, each Borel contributes at most eight terms outside $C$. By Lemma~\ref{lem:subgroup-cover-caps}(iv), each $A_5$ contributes at most eleven terms. Thus at most $2\cdot8+2\cdot11=38$ of the $m-t+2=327$ terms outside $C$ are nongenerating, leaving at least $289$ generating terms.

The assertions about the proper overgroups follow from Dickson's classification \cite{Giudici2007}. For clarity, the two $A_5$ overgroups at $q=11$ consist of one from each of its two conjugacy classes of $A_5$ subgroups. Each class has eleven subgroups, each containing six order-five subgroups, while $G$ has $660/10=66$ order-five subgroups. Double counting gives one overgroup from each class for every $C$.

The pigeonhole inequalities needed for the two seed bounds are
$$ \begin{array}{c|rrrr} (q,t)&\text{generating terms at least}&Q\text{ at most}&s_0&Q(s_0-1)\\ \hline (11,5)&289&26&12&286\\ (13,6)&512&30&14&390 \end{array} $$
In every row, more than $Q(s_0-1)$ generating terms are available, so one generating double coset supplies the required seed.

\smallskip
\noindent\emph{Amplified difference sets.}
After constructing the seed, restrict its product set to $C$. Subsequent product sets are bi-$C$-invariant. By Lemma~\ref{lem:cyclic-amplification}, an orbit with four nonzero entries may therefore be replaced by $D=CxC$ at no additional cost, with the difference-set bound stated in that lemma.

These differences generate $G$. The disjoint eigenline pairs exclude Borels and the torus normalizer. At $(q,t)=(13,6)$, there is no exceptional overgroup. At $(11,5)$, the only remaining possibilities are the $A_5$ overgroups; each is the unique overgroup of $C$ in its conjugacy class, as counted above. By Lemma~\ref{lem:orbit-generation}, these possibilities are excluded as well.

Pair the split terms with a zero diagonal entry within each type, leaving at most two unused. By Lemma~\ref{lem:zero-diagonal-pairs}, over the prime field, each pair satisfies $|CB|=t^3$, and its differences generate $G$. Consequently, the amplified paired factor satisfies $|(CB)(CB)^{-1}|\geq t^3$, and its differences generate $G$.

\smallskip
\noindent\emph{Completion in the split cases.}
For $(11,5)$, a factor with four nonzero entries has size $25$ and growth constant $\lceil(55+1)/4\rceil=14$; a paired factor has size $125$ and growth constant at least $32\geq2\cdot14$. For $(13,6)$, the corresponding sizes are $36,216$ and growth constants are $20,55\geq2\cdot20$. Lemma~\ref{lem:weighted-growth} therefore applies with
$$ (n,b,\delta,A_b,B_\delta) =(660,25,14,1,4) \quad\text{and}\quad (1092,36,20,1,5), $$
respectively. Indeed, the paired sizes exceed $4\delta-1$, and $4\delta-1<m$ reads $55<330$ and $79<546$. The additional costs satisfy
$$ W<\frac{662}{14}+6<54 \quad\text{and}\quad W<\frac{1094}{20}+7<62, $$
so they are at most $53$ and $61$, respectively. Starting with the seed $C$ does not change the first-factor lower bound, since $|CB|\geq|B|$. There are enough unused generating terms, even after pairing, because $289-12\geq53+2$ and $512-14\geq61+2$. Finally, the total numbers $s$ of remainder terms used satisfy
$$ \begin{array}{c|rrrr} (q,t)&s\text{ at most}&(t-1)s&m+1&ts\leq n\\ \hline (11,5)&65&260&331&325\leq660\\ (13,6)&75&375&547&450\leq1092 \end{array} $$
Identity \eqref{eq:orbit-realization} realizes every factor with its remainder terms and the indicated copies of $a$. By Lemma~\ref{lem:high-mult-completion}, the resulting full product set extends to length $n$.

\subsection{One completion table for the orbit calculations}\label{sec:orbit-completion}

The following calculations apply Proposition~\ref{prop:orbit-growth-criterion} to the bounds established above. In the table, $(q,t)$ denotes $(\PSL_2(q),t)$, a dash means that $Q$ is not needed, and $w=s+rk$ is the maximum number of remainder terms used.

\begin{table}[htbp]
\centering
\caption{Bounds for the common subsequence-growth criterion.}\label{tab:orbit-growth}
\small
\begin{tabular}{c*{9}{r}}
\hline
$(G,t)$&$s$&$r$&$E$&$Q$&$b$&$d$&$k$&$w$&$(t-1)w$\\
\hline
$(7,3)$&14&2&59&10&21&48&11&36&72\\
$(7,4)$&15&1&40&---&16&52&11&26&78\\
$(8,3)$&0&3&236&54&15&49&39&117&234\\
$(8,9)$&0&1&236&---&9&73&30&30&240\\
$(11,6)$&0&2&320&18&25&76&33&66&330\\
$(13,7)$&0&2&534&22&36&99&44&88&528\\
$(17,8)$&0&2&936&38&49&160&60&120&840\\
$(A_6,5)$&16&1&155&---&25&30&28&44&176\\
$(A_7,7)$&72&1&1169&---&49&56&102&174&1044\\
$(\PSL_3(3),13)$&96&1&2760&---&169&182&34&130&1560\\
\hline
\end{tabular}
\end{table}

For the first two rows, take $\Omega=\{t,2t,\ldots,n\}$. The successive lower bounds, including the final application of $v+b>n$, are
$$ \begin{array}{c|l} (7,3)&21,\ 33,\ 48,\ 63,\ 78,\ 93,\ 108,\ 123,\ 138,\ 153,\ 168,\\ (7,4)&16,\ 28,\ 44,\ 60,\ 76,\ 92,\ 108,\ 124,\ 140,\ 156,\ 168. \end{array} $$
For the next five rows, take $\Omega=\{1,\ldots,n\}$. The higher orbit powers give the recurrences in the following table:
$$ \begin{array}{c|c|l|l} (q,t)&(v_1,v_2)&\text{linear part}&\text{last values}\\ \hline (8,3)&(15,23)&v_j=35+13(j-3)&v_{38}=490\text{ and }v_{39}=504\\ (11,6)&(25,38)&v_j=58+20(j-3)&v_{32}=638\text{ and }v_{33}=660\\ (13,7)&(36,55)&v_j=80+25(j-3)&v_{43}=1074\text{ and }v_{44}=1092\\ (17,8)&(49,74)&v_j=112+41(j-3)&v_{59}=2408\text{ and }v_{60}=2448 \end{array} $$
The respective ranges for the linear parts are $3\leq j\leq38$, $3\leq j\leq32$, $3\leq j\leq42$, and $3\leq j\leq59$.

For $(8,9)$, the first four values are $9,14,22,34$, followed by
$$ v_j=52+19(j-5) $$
for $5\leq j\leq27$, and then
$$ v_{28}=488,\qquad v_{29}=497, \quad\text{and}\quad v_{30}=504. $$
For the last three rows, use $\Omega_p$ from Lemma~\ref{lem:prime-double-cosets}. The parameters $(p,h,q)$ are $(5,2,14)$, $(7,3,51)$, and $(13,3,33)$, respectively. For $A_6$ and $A_7$, writing $b=p^2$, the recurrence is
$$ v_{2j-1}=bj \quad\text{and}\quad v_{2j}=bj+ph, $$
where $1\leq j\leq q$ for the first formula and $1\leq j<q$ for the second, followed by $v_{2q}=n$. For $\PSL_3(3)$, the recurrence is $v_j=169j$ for $1\leq j\leq33$, followed by $v_{34}=5616$.

Indeed, the constant increments before rounding are $8,15,46$. At every displayed nonfinal update, the variable increment in Proposition~\ref{prop:orbit-growth-criterion} is at least this constant. The final steps use, respectively,
$$ 350+25>360,\qquad 2499+49>2520, \quad\text{and}\quad 5577+169>5616. $$
Every other final step above also uses $v+b>n$.

Finally, the values of $E-s-(r-1)Q$, in table order, are
$$ 35,\ 25,\ 128,\ 236,\ 302,\ 512,\ 898,\ 139,\ 1097,\ 2664, $$
and each is at least $rk$. The displayed copy counts are at most $m+1$. The total lengths $tw$ are, respectively,
$$ 108,\ 104,\ 351,\ 270,\ 396,\ 616,\ 960,\ 220,\ 1218,\ 1690, $$
and each is at most $n$. Thus all three inequalities in Proposition~\ref{prop:orbit-growth-criterion} hold, proving the completion asserted in each local application of the table.

\section{Structural coverage and the simple-group theorem}\label{sec:structural-coverage}

This section uses the classification of finite non-abelian simple groups into alternating groups, groups of Lie type, and sporadic groups, together with the standard isomorphisms in low rank~\cite{Solomon2018,Wilson2009}. We use the group-order formulas in those references, the bounds for maximum element orders in~\cite{GuestMorrisPraegerSpiga2015,KantorSeress2009,MarotiTamburiniBellani2013}, and the sporadic data in~\cite{WilsonEtAlND}. The element-order bounds below apply to all elements, including those with a nontrivial unipotent part.

\begin{proposition}\label{prop:simple-coverage}
Let $G$ be non-abelian simple, and let $n=|G|=2m$. Define $u=\meo(G)$ and $b=\lfloor3\sqrt{n}\rfloor$. Every group outside the list
\begin{equation} A_5,\ A_6,\ A_7,\ A_8,\ \PSL_2(q),\ \PSL_3(3),\ \PSU_3(3),\ M_{11} \tag{15.1}\label{eq:local-coverage-list} \end{equation}
satisfies
$$ (2u-2)b+1\leq m. $$
For a group on this list and an element of order $t>1$, either one of the criteria in Table~\ref{tab:coverage-criteria} applies, one of the subgroup bounds in~\eqref{eq:subgroup-coverage} applies, or $(G,t)$ occurs in Table~\ref{tab:remaining-pairs}. Thus these criteria and the listed local arguments cover every possible repeated element in the setting of~\eqref{eq:high-mult-setup}.
\end{proposition}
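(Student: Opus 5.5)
The proof has two parts: a generic numerical estimate via the classification for groups outside \eqref{eq:local-coverage-list}, and a finite case check for the groups on the list.

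\emph{Generic groups.} By the final sentence of Proposition~\ref{prop:distinct-term-criterion}, the inequality $(2u-2)b+1\leq m$ follows from $n\geq 144(u-1)^2+4$, and it suffices to show $n\geq 144u^2$. I go through the classification family by family.
\begin{itemize}
\item[] \textbf{Alternating groups $A_k$, $k\geq 9$.} Landau's function gives $u\leq e^{\sqrt{k\log k}(1+o(1))}$, which is far below $\sqrt{k!/2}/12$. For small $k$ I use explicit values: $\meo(A_9)=15$ against $|A_9|=181440$, which exceeds $144\cdot 225$. Monotonicity in $k$ then handles the rest.
\item[] \textbf{Groups of Lie type of rank at least two, or over larger fields.} The bounds of Kantor--Seress and Guest--Morris--Praeger--Spiga give $u\leq q^{\ell}(1+\text{small})$ for untwisted rank $\ell$, more precisely at most about $(q^{\ell+1}-1)/(q-1)$. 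The order is at least about $q^{\ell^2+2\ell}/\gcd$. So $n/u^2\geq q^{\ell^2-c}$, and this beats $144$ except for finitely many $(\ell,q)$.
\item[] \textbf{Exceptional small cases.} These are $\PSL_3(q)$ and $\PSU_3(q)$ for small $q$, $\PSp_4(q)$ for small $q$, $G_2(3)$, ${}^2B_2(8)$, and so on. I check them individually from tabulated orders and maximum element orders. For example, $\PSL_3(4)$ has $u=7$ and $n=20160$, which is enough. The only failures are $\PSL_3(3)$ and $\PSU_3(3)$. Rank-one groups $\PSL_2(q)$ are excluded by the list; ${}^2B_2$ and ${}^2G_2$ have $u\sim q$ and $n\sim q^5$ or $q^7$.
\item[] \textbf{Sporadic groups.} I read $u$ from the ATLAS: $M_{12}$ has $u=11$ and $n=95040$, enough; the others are larger. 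Only $M_{11}$ fails, since $144\cdot 121>7920$.
\end{itemize}
This step is routine but long, and I expect it to be the bulk of the writing.

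\emph{Listed groups.} For each group in \eqref{eq:local-coverage-list}, I enumerate the element orders $t>1$ and assign a covering argument.
\begin{itemize}
\item[] \textbf{$\PSL_2(q)$.} The possible orders are $t=p$ (unipotent) and the divisors of $(q\pm1)/\varepsilon$.
  \begin{itemize}
  \item[] Unipotent $a$: Proposition~\ref{prop:unipotent-extension} covers $f\geq 2$ with $p\geq 5$ or $f\geq 3$. Proposition~\ref{prop:prime-root-growth} covers primes $p\geq 11$. The small fields $q\in\{4,5,7,8,9\}$ reduce, by the isomorphisms $\PSL_2(4)\cong\PSL_2(5)\cong A_5$ and $\PSL_2(9)\cong A_6$, to separate entries.
  \item[] Semisimple $a$: Corollary~\ref{cor:borel-divisor} and \eqref{eq:borel-davenport} handle $t$ small relative to $q$. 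Proposition~\ref{prop:semisimple-completion} handles $t\geq 9$ when $q$ is prime or $t>\sqrt q+1$. The distinct-term criterion with fixed $t$ handles large $q$ generally.
  \item[] Residual pairs: these are the small-$(q,t)$ cases treated in Sections~\ref{sec:orbit-growth}--\ref{sec:cyclic-seeds}, namely $(7,3)$, $(7,4)$, $(7,7)$, $(8,3)$, $(8,7)$, $(8,9)$, $(11,5)$, $(11,6)$, $(13,6)$, $(13,7)$ and $(17,8)$, together with the Table~\ref{tab:extraction-profiles} rows for $q\in\{13,17,19,23,\dots,31\}$. These go into Table~\ref{tab:remaining-pairs}.
  \end{itemize}
\item[] \textbf{Other listed groups.} For $\PSL_3(3)$ and $\PSU_3(3)$, I combine Proposition~\ref{prop:psl3-psu3-extraction} with the Singer case and the Table~\ref{tab:extraction-profiles} row for order $7$. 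For $M_{11}$, I use Proposition~\ref{prop:m11-extraction} and the profile row for order $11$. For $A_5$ through $A_8$, I use Corollary~\ref{cor:a6-order-four}, Propositions~\ref{prop:a6-order-five} and~\ref{prop:a7-order-seven}, Section~\ref{sec:a5-involution-product}, and the profile rows.
\end{itemize}
Orders of the form $2$ or $3$ in large groups need the Borel bound $4t\leq q$, which clearly holds.

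\emph{Main obstacle.} The hard part is completeness of the $\PSL_2(q)$ sweep. For each prime power $q$, every divisor $t$ of $q$ or $(q\pm1)/\varepsilon$ must fall under one criterion. I would first fix a threshold $q_0$, perhaps $q\geq 37$, beyond which $4t\leq q$ or $6t\leq q+1$ holds for all nonmaximal $t$. Maximal $t$ is then handled by Proposition~\ref{prop:semisimple-completion}, or by the fixed-order distinct-term criterion $(u+t-2)b+1\leq m$ with $u\leq q+1$ and $b\approx 3q^{3/2}/\sqrt2$. Below $q_0$, I check the remaining $(q,t)$ by hand against the tables and record the finite residue as Table~\ref{tab:remaining-pairs}.
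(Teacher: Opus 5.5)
\textbf{Gap: the case sweep for the listed groups is incomplete.} Your architecture matches the paper's: first reduce the first assertion to $n\ge 144u^2$ family by family, then sweep the listed groups order by order. The sweep is the real content of the proposition, and as written it leaves cases uncovered.

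The pairs $(A_5,2)$, $(A_5,3)$, $(A_6,2)$, $(A_6,3)$, $(\PSL_2(7),2)$ and $(\PSL_2(11),3)$ fall through every tool you name.
\begin{itemize}
\item The Borel inequality fails for them: $4t>q$ for $\PSL_2(4)$, and $6t>q+1$ for $q=5,7,9,11$.
\item The fixed-order criterion fails numerically, for example $(7+2-2)\cdot 38+1=267>84$ for $\PSL_2(7)$ and $(11+3-2)\cdot 77+1=925>330$ for $\PSL_2(11)$.
\item They appear in neither Table~\ref{tab:extraction-profiles} nor Table~\ref{tab:remaining-pairs}.
\end{itemize}
Your remark that orders $2$ and $3$ are handled by $4t\le q$ is valid only for large $q$.

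What is missing is the statement's second alternative, the subgroup bounds \eqref{eq:subgroup-coverage}, which your proposal never establishes. The paper derives them from Lemma~\ref{lem:extraction-bounds} and the published values $\mathsf{d}(A_5)=8$ and $\mathsf{d}(A_4)=4$.
\begin{itemize}
\item One has $\mathsf{d}(A_5\times C_t)+1\le 9t$.
\item This lifts through a subgroup $A_5$ of index $6$ in $A_6$ and of index $11$ in $\PSL_2(11)$.
\item For $\PSL_2(7)$, use $A_4\trianglelefteq S_4$ with $\mathsf{\eta}_2(C_2^2)=4$, then the index-$7$ subgroup $S_4$. This gives $7(4+2\cdot 4)=84<85$.
\end{itemize}

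A similar omission occurs for the alternating groups. The results you cite for $A_5$ through $A_8$ are the profile rows, Corollary~\ref{cor:a6-order-four} and the two orbit propositions. They leave $t\le 3$ in $A_7$ and $2\le t\le 7$ in $A_8$ untreated. These cases need the fixed-order criterion: $(7+3-2)\cdot150+1=1201<1260$ and $(15+7-2)\cdot425+1=8501<10080$.

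\textbf{Smaller points.}
\begin{itemize}
\item Your last paragraph claims that beyond $q_0\approx 37$ every nonmaximal $t$ satisfies $4t\le q$ or $6t\le q+1$. This is false: $t=12$ at $q=47$ and $t=11$ at $q=32$ are counterexamples. The correct split is as follows. For primes $p\ge 47$, Borel extraction covers $t\le 8$ and Proposition~\ref{prop:semisimple-completion} covers $t\ge 9$. For even $q\ge 32$ and odd nonprime $q\ge 49$, any $t$ missed by Borel already has $t\ge 9$ and $t>\sqrt q+1$.
\item Your ``monotonicity in $k$'' for $A_k$ needs an explicit bound such as $\meo(A_k)\le 3^{k/3}$.
\item Among small Lie-type groups, the failures $\PSL_4(2)$, $\PSp_4(2)'$, $G_2(2)'$ and ${}^2G_2(3)'$ must be removed via their isomorphisms with listed groups, namely $A_8$, $A_6$, $\PSU_3(3)$ and $\PSL_2(8)$.
\item The orbit and seed arguments of Sections~\ref{sec:orbit-growth}--\ref{sec:cyclic-seeds} are carried out for one fixed $a$ per order. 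To claim that every repeated element is covered, you must add that the relevant cyclic subgroups are permuted transitively by automorphisms. This holds by $S_d$-conjugacy in $A_d$, by $\PGL_2(q)$-conjugacy of semisimple cyclic subgroups, and by Sylow conjugacy in the prime-order cases. You must also note that changing the generator of $C$ changes neither orbits nor double cosets.
\end{itemize}
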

\begin{proof}
\emph{1. The uniform comparison.} By Proposition~\ref{prop:distinct-term-criterion}, the condition $n\geq144(u-1)^2+4$ implies $(2u-2)b+1\leq m$. Since $u\geq2$, it is enough to verify
\begin{equation} n\geq144u^2. \tag{15.2}\label{eq:uniform-meo-bound} \end{equation}
We check this condition by three comparisons: factorial growth, increasing products of cyclotomic factors, and a monomial quotient.

For the factorial comparison, a permutation on $d$ letters has order at most $3^{d/3}$. Indeed, its cycle lengths $\ell_i$ satisfy $\prod_i\ell_i\leq\prod_i3^{\ell_i/3}$. At $d=9$, one has $|A_9|=181440>144\cdot3^6$, and the ratio $d!/(2\cdot3^{2d/3})$ increases for $d\geq9$. Thus~\eqref{eq:uniform-meo-bound} covers all $A_d$ with $d\geq9$.

For the product comparisons, define
$$ U_d^L=\frac{q^d-1}{q-1} \qquad\text{and}\qquad U_r^S=\frac{q^{r+1}}{q-1}. $$

\begin{table}[htbp]
\caption{Criteria used after the uniform comparison of maximum element orders. The notation $(q,t)$ means $(\PSL_2(q),t)$.}\label{tab:coverage-criteria}
\small
\begin{tabularx}{\textwidth}{@{}L{0.23\textwidth}>{\raggedright\arraybackslash}XL{0.22\textwidth}@{}}
\toprule
Method & Sufficient conditions or explicit pairs & Result\\
\midrule
Fixed-order extraction & $(u+t-2)b+1\leq m$ & Proposition~\ref{prop:distinct-term-criterion}\\
Borel extraction & $G=\PSL_2(q)$, with $4t\leq q$ for even $q$, or $6t\leq q+1$ for odd $q$ & Lemmas~\ref{lem:extraction-bounds} and~\ref{lem:frobenius-extraction}\\
Compatible centralizer & $q=p^f$ and $t=p$, with $f\geq3$, or $f=2$ and $p\geq5$ & Proposition~\ref{prop:unipotent-extension}\\
Growth from root orbits & $q=p\geq11$ is prime and $t=p$ & Proposition~\ref{prop:prime-root-growth}\\
Weighted semisimple growth & $q\geq11$, $t\geq9$, and either $q$ is prime or $t>\sqrt{q}+1$ & Proposition~\ref{prop:semisimple-completion}\\
Profile extraction & All pairs in Table~\ref{tab:extraction-profiles}, and $(A_6,4)$ & Proposition~\ref{prop:profile-completion} and Corollary~\ref{cor:a6-order-four}\\
Split Frobenius extraction & $(16,5)$ & Corollary~\ref{cor:borel-divisor}\\
\bottomrule
\end{tabularx}
\end{table}

The structural bounds $u\leq U$ yield the following lower bounds for $n/U^2$:
\begin{equation} \begin{array}{c|c|l} \text{Range}&U&\text{Lower bound for }n/U^2\\ \hline \PSL_d(q),\ d\geq4 &U_d^L &(q-1)q^{d^2-2d-1}/2\\[3pt] \PSp_{2r}(q),\ r\geq2 &U_r^S& \displaystyle\frac{(q-1)^2q^{r^2-2r-2}}{\gcd(2,q-1)} \prod_{i=1}^{r}(q^{2i}-1)\\[10pt] \POmega_{2r}^{\pm}(q),\ r\geq4 &U_r^S& \displaystyle\frac{1}{4}q^{r^2-3r-2}(q-1)^2(q^r-1) \prod_{i=1}^{r-1}(q^{2i}-1)\\[10pt] \PSU_d(q),\ d\geq3 &3q^{d-1}/2& \displaystyle\frac{8q^{(d-1)^2}}{27(q+1)} \end{array} \tag{15.3}\label{eq:classical-order-ratios} \end{equation}
The first and last inequalities are strict. We now derive these bounds and check all endpoints.

For the first row, the order formula is
$$ n=\frac{q^{d^2-1}}{z}\prod_{i=2}^{d}(1-q^{-i}), $$
where $z=\gcd(d,q-1)\leq q-1$. Moreover,
$$ \prod_{i=2}^{d}(1-q^{-i})> 1-\frac{1}{q(q-1)}\geq\frac{1}{2}. $$
The resulting lower bound is at least $8192$ for $d\geq5$, and at least $2187$ for $d=4$ and $q\geq3$. For $d=3$, use $z\leq3$ instead. This gives
$$ \frac{n}{u^2}> \frac{(q-1)^2q^2}{3}\left(1-\frac{1}{q(q-1)}\right)\geq574 $$
when $q\geq7$. At $q=5$, the equality $z=1$ improves the bound to $380$.

The second row follows from
$$ n=\frac{q^{r^2}}{\gcd(2,q-1)} \prod_{i=1}^{r}(q^{2i}-1). $$
At $r=3$, the lower bound in~\eqref{eq:classical-order-ratios} is at least $2835$. Increasing the rank from $r-1$ to $r$ multiplies that bound by $q^{2r-3}(q^{2r}-1)>1$. At $r=2$ and $q\geq4$, it is at least $(9/2)(15/16)255>144$. The odd-dimensional orthogonal groups in odd characteristic satisfy the same order comparison, with central divisor two; in characteristic two, they are isomorphic to the symplectic groups.

In the third row, every factor is increasing for $r\geq4$ and $q\geq2$. At the endpoint $(r,q)=(4,2)$, the bound is $42525$.

For the last row, the cited estimate for the maximum order of an arbitrary element is as follows. If $d$ is odd and $q$ is nonprime, then $u\leq q^{d-1}-1$. If $d$ is odd and $q$ is prime, then $u\leq q(q^{d-2}+1)$. If $d$ is even and $q>2$, then $u\leq q^{d-1}+1$. If $d$ is even and $q=2$, then $u\leq4(2^{d-3}+1)$. Each of these bounds is at most $3q^{d-1}/2$.

In the order formula
$$ n=\frac{q^{d^2-1}}{\gcd(d,q+1)} \prod_{i=2}^{d}\bigl(1-(-1)^iq^{-i}\bigr), $$
discarding factors greater than one leaves a product greater than
$$ 1-\sum_{j\geq1}q^{-2j}\geq\frac{2}{3}. $$
This proves the fourth row of~\eqref{eq:classical-order-ratios}. Its bound exceeds $144$ for $d\geq5$, and is at least $1458$ for $d=4$ and $q\geq3$. When $d=3$ and $q\geq7$, the sharper bound $u\leq q(q+1)$ yields
$$ \frac{n}{u^2}\geq\frac{q(q-1)(q^2-q+1)}{3}\geq602. $$

The small endpoints not covered by these comparisons are listed below.
\begin{center}
\small
\begin{tabular}{@{}lrrl@{}}
\toprule
$G$&$n$&$U\geq u$&Comparison\\
\midrule
$\PSL_3(4)$&20160&7&$n>144U^2$\\
$\PSp_4(3)$&25920&13&$n>144U^2$\\
$\PSU_4(2)$&25920&12&$n>144U^2$\\
$\PSU_3(4)$&62400&15&$n>144U^2$\\
$\PSU_3(5)$&126000&30&$(2U-2)1064+1=61713<63000$\\
\bottomrule
\end{tabular}
\end{center}
For $\PSp_4(3)$, integrality improves $U_2^S=27/2$ to $13$. For $\PSL_3(4)$, the irreducible and $2+1$ tori in $\SL_3(4)$ have orders $21$ and $15$, yielding projective orders at most $7$ and $5$. The completely split torus has exponent three. A repeated eigenvalue lies in $\mathbb{F}_4$, since a nontrivial Frobenius orbit repeated twice would require dimension at least four. The determinant condition then makes the semisimple part scalar, so an element with a nontrivial unipotent part has projective order at most four. The last endpoint uses $\lfloor3\sqrt{126000}\rfloor=1064$.

For the monomial comparison, the bounds for the exceptional families are
$$ u\leq c(q+1)^r \qquad\text{and}\qquad n>\frac{q^a}{2z}. $$
The parameters are as follows.
\begin{center}
\begin{tabular}{@{}lrrrrr@{}}
\toprule
Family&$q_{\min}$&$a$&$r$&$c$&$z$\\
\midrule
$G_2$&3&14&2&8&1\\
${}^3D_4$&2&28&4&8&1\\
$F_4$&2&52&4&16&1\\
$E_6$&2&78&6&16&3\\
${}^2E_6$&2&78&6&16&3\\
$E_7$&2&133&7&32&2\\
$E_8$&2&248&8&32&1\\
\bottomrule
\end{tabular}
\end{center}
The order estimates follow by discarding factors above one and using $\prod(1-q^{-i})>1/2$ for the distinct indices $i\geq2$. In even characteristic, the semisimple-order bounds and the unipotent-order bounds in \cite[Table 5]{GuestMorrisPraegerSpiga2015} give $u\leq c(q+1)^r$. In odd characteristic, the explicit maximum-order formulas in \cite[Table A.7]{KantorSeress2009} give the stronger bound $u\leq (q+1)^r$. Since $a>2r$, the common quotient
$$ R(q)=\frac{q^a}{2zc^2(q+1)^{2r}} $$
is increasing. Its first two endpoint values are $4782969/32768>144$ and $2097152/6561>144$. The other five exceed $10^9$, $10^{14}$, $10^{14}$, $10^{29}$, and $10^{63}$, respectively.

The remaining twisted families satisfy the following increasing bounds:
\begin{center}
\small
\begin{tabularx}{\textwidth}{@{}L{0.29\textwidth}l>{\raggedright\arraybackslash}X@{}}
\toprule
Range&$U\geq u$&Lower bound for $n/U^2$\\
\midrule
${}^2B_2(q)$, $q=2^{2j+1}\geq8$&$13q/8$&$64(q^2+1)(q-1)/169\geq29120/169>144$\\
${}^2G_2(q)$, $q=3^{2j+1}\geq27$&$3q/2$&$4q(q^3+1)(q-1)/9\geq6141408$\\
${}^2F_4(q)$, $q=2^{2j+1}\geq8$&$32q^2$&$q^{22}/2048\geq2^{55}$\\
\bottomrule
\end{tabularx}
\end{center}
For these three families, respectively, we use
$$ \begin{aligned} u&=q+\sqrt{2q}+1\leq13q/8,\\ u&=q+\sqrt{3q}+1<3q/2,\\ \text{and}\qquad u&\leq16\bigl(q^2+\sqrt{2q^3}+q+\sqrt{2q}+1\bigr)<32q^2. \end{aligned} $$
Their standard orders are $q^2(q^2+1)(q-1)$, $q^3(q^3+1)(q-1)$, and $n>q^{26}/2$, respectively. The Tits group has $(n,u)=(17971200,16)$, so $n/u^2=70200$.

Finally, the four smallest relevant sporadic groups have the following data:
\begin{center}
\begin{tabular}{@{}lrr@{}}
\toprule
$G$&$n$&$u$\\
\midrule
$M_{12}$&95040&11\\
$J_1$&175560&19\\
$M_{22}$&443520&11\\
$J_2$&604800&15\\
\bottomrule
\end{tabular}
\end{center}
Each satisfies $n>144u^2$. For the other twenty-one sporadic groups apart from $M_{11}$, the ATLAS bounds $n\geq10200960$ and $u\leq119$ imply
$$ 144u^2\leq2039184<10200960. $$

These comparisons prove the first assertion, using the standard identifications
$$ \PSL_3(2)\cong\PSL_2(7) \qquad\text{and}\qquad \PSL_4(2)\cong A_8, $$
$$ \PSp_4(2)'\cong A_6 \qquad\text{and}\qquad G_2(2)'\cong\PSU_3(3), $$
$$ {}^2G_2(3)'\cong\PSL_2(8) \qquad\text{and}\qquad \POmega_3(q)\cong\PSL_2(q), $$
$$ \POmega_5(q)\cong\PSp_4(q), $$
and
$$ \POmega_6^+(q)\cong\PSL_4(q) \qquad\text{and}\qquad \POmega_6^-(q)\cong\PSU_4(q). $$
The nonsimple group $\PSU_3(2)$ is excluded throughout.

\medskip
\emph{2. Uniform coverage in rank one.} For $\PSL_2(q)$, where $q=p^f\geq4$, let $\varepsilon=\gcd(2,q-1)$. Jordan form gives
$$ n=\frac{q(q^2-1)}{\varepsilon} \qquad\text{and}\qquad t\in\{p\}\cup\{d:d\mid(q-1)/\varepsilon\} \cup\{d:d\mid(q+1)/\varepsilon\}. $$
The Borel thresholds follow from Lemmas~\ref{lem:extraction-bounds} and~\ref{lem:frobenius-extraction}, as explained in Section~\ref{sec:fixed-point-free}. For unipotent elements over a nonprime field, the root subgroup $C_p^f$ gives the compatible-centralizer condition $2(f(p-1)+1)\leq p^f$. This holds for $f=2$ and $p\geq5$, and for $f=3$ and $p\geq2$, since $p^3-6p+4\geq0$. The induction uses
$$ p\bigl(f(p-1)+1\bigr) -\bigl((f+1)(p-1)+1\bigr) =f(p-1)^2\geq0. $$
For prime fields with $p\geq11$, Proposition~\ref{prop:prime-root-growth} applies instead.

For semisimple elements, the Borel and weighted-growth criteria are complementary in each of the following ranges: $q=p\geq47$ is prime; $q\geq32$ is even; or $q\geq49$ is odd and nonprime. For primes, the Borel criterion covers $t\leq8$, and weighted growth covers $t\geq9$. In the other two ranges, respectively, one has
$$ \frac{q}{4}\geq\max\{8,\sqrt{q}+1\} \qquad\text{and}\qquad \frac{q+1}{6}\geq\max\{8,\sqrt{q}+1\}. $$
These inequalities hold at $32$ and $49$, respectively, and strengthen thereafter. Any order not covered by Borel extraction therefore satisfies both $t\geq9$ and $t>\sqrt{q}+1$, as required for weighted growth.

For the primes $11\leq p<47$, exclude from the divisors of $(p-1)/2$ and $(p+1)/2$ all orders satisfying $t\geq9$ or $6t\leq p+1$. The remaining orders are exactly those in the following table:
\begin{center}
\begin{tabular}{@{}rl@{\qquad}rl@{}}
\toprule
$p$&Remaining $t$&$p$&Remaining $t$\\
\midrule
11&$3,5,6$&13&$3,6,7$\\
17&$4,8$&19&$5$\\
23&$6$&29&$7$\\
31&$8$&37&none\\
41&none&43&none\\
\bottomrule
\end{tabular}
\end{center}
Profile extraction, the subgroup bounds below, and Table~\ref{tab:remaining-pairs} account for every entry. For $p=5$, use $\PSL_2(5)\cong A_5$. For $p=7$, the nonidentity element orders are $2,3,4,7$.

The remaining nonprime prime powers are exactly
\begin{equation} 4,\ 8,\ 9,\ 16,\ 25,\ 27. \tag{15.4}\label{eq:psl2-small-fields} \end{equation}
Use $\PSL_2(4)\cong A_5$ and $\PSL_2(9)\cong A_6$. Compatible centralizers cover the other unipotent orders. Listing the divisors of the two torus orders, then applying Borel extraction and weighted growth, leaves only
$$ (8,3),\ (8,7),\ (8,9),\ (16,5),\ (25,6),\ (27,7). $$
By Corollary~\ref{cor:borel-divisor}, the pair $(16,5)$ is covered by the bound
$$ \mathsf{d}(\PSL_2(16)\times C_5)+1\leq1598<2044. $$
The pairs $(25,6)$ and $(27,7)$ are covered by profile extraction, and the three pairs with $q=8$ appear in Table~\ref{tab:remaining-pairs}.

\medskip
\emph{3. Subgroup extraction and the final local cases.} The exact values $\mathsf{d}(A_4)=4$ and $\mathsf{d}(A_5)=8$ are published in~\cite{AndrasCziszterDomokosSzollosi2025}. By the subgroup and normal-subgroup extraction lemmas, the following bounds hold:
\begin{equation} \begin{array}{ccl} G&\text{Orders covered}&\text{Upper bound for }\mathsf{d}(G\times C_t)+1\\ \hline A_5&t=2,3&9t\leq30+t-1\\ A_6&t=2,3&54t\leq180+t-1\\ \PSL_2(11)&t=2,3&99t\leq330+t-1\\ \PSL_2(7)&t=2&7(4+2\mathsf{d}(A_4))=84<85\\ \PSU_3(3)&t\mid24&2856<3024\\ \PSL_3(3)&t\mid12&2600<2808\\ \PSL_3(3)&t=8&2405<2808\\ M_{11}&t\mid6&3850<3960\\ M_{11}&t\mid8&3190<3960\\ M_{11}&t=5&3729<3960 \end{array} \tag{15.5}\label{eq:subgroup-coverage} \end{equation}
For the first three rows, use $A_5$, a subgroup $A_5$ of index six, and a subgroup $A_5$ of index eleven, respectively. For the fourth row, the relations $A_4\trianglelefteq S_4$ and $\mathsf{\eta}_2(C_2^2)=4$ yield
$$ \mathsf{d}(S_4\times C_2)+1\leq4+2\mathsf{d}(A_4). $$
Then use the subgroup $S_4$ of index seven in $\PSL_2(7)$. The last six rows follow from Propositions~\ref{prop:psl3-psu3-extraction} and~\ref{prop:m11-extraction}. Their element-order lists leave orders $13$, $7$, and $11$ in $\PSL_3(3)$, $\PSU_3(3)$, and $M_{11}$, respectively.

Finally, fixed-order extraction covers $t\leq3$ in $A_7$ and $t\leq7$ in $A_8$, since
$$ (7+3-2)150+1=1201<1260 $$
and
$$ (15+7-2)425+1=8501<10080. $$
The possible nonidentity orders in $A_7$ are $2,3,4,5,6,7$. Those in $A_8$ are $2,3,4,5,6,7,15$. The profile cases are collected in Table~\ref{tab:extraction-profiles}, and all other remaining cases appear in Table~\ref{tab:remaining-pairs}.

For the alternating groups, a symmetric-group conjugacy type determines the relevant cyclic-subgroup type. Conjugation by $S_d$ acts on $A_d$, so split $A_d$-classes cause no additional case. Semisimple cyclic subgroups of fixed order in rank one are conjugate under $\PGL_2(q)$. Changing a generator changes neither its orbits nor its double cosets. Sylow conjugacy covers the prime-order Singer, unitary, and Mathieu cases. Section~\ref{sec:psl2-7-t7-seed} gives an explicit normalization for the remaining unipotent order seven. Thus all local arguments apply to every repeated element of the specified order.

If $t=1$, then $\mathsf{d}(G)+1\leq m$ by Lemma~\ref{lem:davenport-bounds}(iii), since a non-abelian simple group has no cyclic subgroup of index two. Lemma~\ref{lem:high-mult-completion} then applies. The complementary bounded-height case is Theorem~\ref{thm:bounded-height}. This completes the coverage.
\end{proof}

\begin{table}[htbp]
\caption{The remaining pairs, arranged by proof method. In the rank-one rows, $(q,t)$ denotes $(\PSL_2(q),t)$.}\label{tab:remaining-pairs}
\small
\begin{tabularx}{\textwidth}{@{}L{0.25\textwidth}>{\raggedright\arraybackslash}XL{0.30\textwidth}@{}}
\toprule
Method & Pairs & Argument\\
\midrule
Explicit orbit products & $A_5,\ t=5;\ (7,7);\ (8,7)$ & Sections~\ref{sec:a5-involution-product}, \ref{sec:psl2-7-t7-seed}, and~\ref{sec:psl2-8-t7-seed}\\
& $\PSL_3(3),\ t=13$ & Section~\ref{sec:singer-trace-seed}\\
Orbit growth after a cyclic seed & $A_6,\ t=5;\ A_7,\ t=7$ & Propositions~\ref{prop:a6-order-five} and~\ref{prop:a7-order-seven}\\
& $(7,3),\ (7,4)$ & Propositions~\ref{prop:psl2-7-order-three} and~\ref{prop:psl2-7-order-four}\\
& $(11,5),\ (13,6)$ & Section~\ref{sec:split-torus-seeds}\\
Higher orbit powers & $(8,3),\ (8,9)$ & Proposition~\ref{prop:psl2-8-t3} and Section~\ref{sec:psl2-8-t9}\\
& $(11,6),\ (13,7),\ (17,8)$ & Proposition~\ref{prop:small-squared-orbits}\\
\bottomrule
\end{tabularx}
\end{table}

\subsection{The simple-group bound}\label{sec:simple-bound}

\begin{theorem}\label{thm:simple-bound}
Every finite non-abelian simple group $G$ satisfies
$$ \mathsf{E}(G)\leq3|G|/2-1. $$
\end{theorem}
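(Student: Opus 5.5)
We argue by contradiction. Let $n=|G|=2m$ and suppose $S$ is a sequence of length $3m-1$ with $1\notin\Pi_n(S)$. By Lemma~\ref{lem:simple-group-basics} we have $n\geq60$, $\ord(g)\mid m$ for all $g$, and $\mathsf{d}(G)\leq m-1$; these are the standing facts used throughout Sections~\ref{sec:growth-compatibility}--\ref{sec:cyclic-seeds}. We then split according to the height of $S$. If $h(S)\leq m$, then Theorem~\ref{thm:bounded-height} already gives $1\in\Pi_n(S)$, which is a contradiction. If some value occurs at least $n$ times, then $n$ equal copies have product one by Lemma~\ref{lem:simple-group-basics}. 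The only remaining situation is the high-multiplicity setting \eqref{eq:high-mult-setup}: $S=a^{[M]}R$ with $m+1\leq M<n$, $t=\ord(a)\mid m$, and $|R|\geq m$.

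In this setting everything reduces to the two completion criteria of Lemma~\ref{lem:high-mult-completion}. For $a=1$, criterion (i) applies directly, since $\mathsf{d}(G)+1\leq m$. For $a\neq1$, we invoke Proposition~\ref{prop:simple-coverage}. If $G$ is not on the list \eqref{eq:local-coverage-list}, then $(2u-2)b+1\leq m$ with $u=\meo(G)$ and $b=\lfloor3\sqrt n\rfloor$, so Proposition~\ref{prop:distinct-term-criterion} gives the bound outright. Here $t\leq u$, so \eqref{eq:distinct-term-davenport} yields $\mathsf{d}(G\times C_t)+1\leq m+t-1$. If $G$ is on the list, the proposition guarantees that the pair $(G,t)$ falls into one of three categories, and each has already been treated:
\begin{itemize}
\item a criterion in Table~\ref{tab:coverage-criteria}, namely fixed-order, Borel, compatible-centralizer, root-orbit growth, weighted semisimple growth, profile extraction, or split Frobenius extraction;
\item a subgroup bound in \eqref{eq:subgroup-coverage};
\item an entry of Table~\ref{tab:remaining-pairs}.
\end{itemize}
Each of these either bounds $\mathsf{d}(G\times C_t)+1$ by $m+t-1$, including the strengthened divisible-length form used for the starred $L_{13}$ row, or produces $U\mid S$ with $|U|\leq n$ and $\pi(U)=G$. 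In every case Lemma~\ref{lem:high-mult-completion} gives $1\in\Pi_n(S)$, again a contradiction.

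The main point of care is bookkeeping rather than new mathematics. We must make sure the local arguments apply to \emph{every} element of the given order, not just the normalized representative. The final paragraph of the proof of Proposition~\ref{prop:simple-coverage} addresses this through $S_d$-conjugation for alternating groups, $\PGL_2(q)$-conjugacy of cyclic tori, Sylow conjugacy in the prime-order cases, and independence from the choice of generator of $C$. We should also check that every criterion is stated in precisely the form Lemma~\ref{lem:high-mult-completion} consumes, with all $t$ dividing $m$. Once this is confirmed, the three cases, $h(S)\leq m$, $a=1$, and $a\neq1$, are exhaustive, and $\mathsf{E}(G)\leq 3m-1=3|G|/2-1$ follows.
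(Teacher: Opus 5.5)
Your proposal follows the paper's proof essentially step for step. You split by height, use Theorem~\ref{thm:bounded-height} when $h(S)\le m$, handle $a=1$ through $\mathsf{d}(G)+1\le m$, and route $a\neq 1$ through Proposition~\ref{prop:simple-coverage} into the completion criteria; this is correct.

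One description is slightly inaccurate. You say every method ends in one of the two forms of Lemma~\ref{lem:high-mult-completion}, but the compatible-centralizer case (Proposition~\ref{prop:unipotent-extension}) obtains $1\in\Pi_n(S)$ directly from Lemma~\ref{lem:centralizer-criterion}, via Lemma~\ref{lem:compatible-interval}. The paper lists this as a separate third mode, and it does not affect correctness.
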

\begin{proof}
Write $|G|=n=2m$, and let $S$ have length $3m-1$. If $h(S)\leq m$, then apply Theorem~\ref{thm:bounded-height}. If some element occurs at least $n$ times, then its $n$ copies have product one. Otherwise, the assumptions of~\eqref{eq:high-mult-setup} hold. The case $a=1$ follows from $\mathsf{d}(G)+1\leq m$ and Lemma~\ref{lem:high-mult-completion}(i).

For $a\neq1$, Proposition~\ref{prop:simple-coverage} assigns $(G,a)$ to one of the methods already proved. Subgroup or profile extraction yields $\mathsf{d}(G\times C_t)+1\leq m+t-1$, or the bounded subsequences of length divisible by $t$ required in Lemma~\ref{lem:high-mult-completion}(i). Compatible extraction proves the conclusion directly by Lemma~\ref{lem:centralizer-criterion} or Lemma~\ref{lem:interval-completion}. Orbit growth, with or without a cyclic seed, yields a subsequence $U$ of length at most $n$ such that $\pi(U)=G$, so Lemma~\ref{lem:high-mult-completion}(ii) applies. Thus $1\in\Pi_n(S)$ in every case, proving the theorem.
\end{proof}

\section{Proof of the main results}\label{sec:main-results}

\begin{proof}[Proof of Theorem~\ref{thm:main-bound}]
The solvable case follows from Corollary~\ref{cor:solvable-bound}. Suppose that $G$ is non-solvable. By Theorem~\ref{thm:simple-bound} and Proposition~\ref{prop:simple-reduction},
$$ \mathsf{E}(G)\leq\frac{3|G|}{2}-1. $$
By Lemma~\ref{lem:group-structure}(vii), one has $p=2$, so this is the required bound.
\end{proof}

\begin{proof}[Proof of Corollary~\ref{cor:prime-index-equality}]
Let $n=|G|$. If $G$ has a cyclic subgroup of index $p$, then, by Lemma~\ref{lem:gao-identities}(ii),
$$ \mathsf{E}(G)=n+\frac{n}{p}+p-2. $$
Otherwise, by Theorem~\ref{thm:main-bound},
$$ \mathsf{E}(G)\leq n+\frac{n}{p}-1<n+\frac{n}{p}+p-2, $$
because $p-1>0$. This proves the bound and its equality condition. 

If $p=2$, then the first bound becomes $\mathsf{E}(G)\leq 3n/2$, with equality if and only if $G$ has a cyclic subgroup of index $2$. If $p\geq3$, then the non-cyclicity gives $n\geq p^2$, whence
$$ \frac{n}{2}-\frac{n}{p}-p+2\geq \frac{p^2}{2}-2p+2=\frac{(p-2)^2}{2}>0. $$
Hence $\mathsf{E}(G)\leq n +n/p +p-2<3n/2$ in this case. This proves both conjectures and the stated equality conditions.
\end{proof}

\noindent {\bf Acknowledgements.} This work was supported by the National Natural Science Foundation of China (No. 11701256), and the Henan Provincial Selective Research Funding Program for Returned Scholars Studying Abroad (No. HNLX202611).

\noindent{\bf AI usage.} AI tools were used to assist with literature searches, proofs in the sections on finite simple groups, and language editing.

\end{document}